\def\blfootnote{\xdef\@thefnmark{}\@footnotetext}
\newcommand\ccnote{
    \blfootnote{\copyright\,\, Herbert Koch, Angkana R\"uland, and Mikko Salo}
    \blfootnote{\ccLogo\, \ccAttribution\,\, Licensed under a \href{https://creativecommons.org/licenses/by/4.0/}{Creative Commons Attribution License (CC-BY)}.}
}
\numberwithin{equation}{section}
\renewcommand{\le}{\leqslant}
\renewcommand{\leq}{\leqslant}
\renewcommand{\geq}{\geqslant}
\renewcommand{\mathbb}{\varmathbb}
\newtheorem{theorem}{Theorem}[section]
\newtheorem{lemma}[theorem]{Lemma}
\newtheorem{proposition}[theorem]{Proposition}
\newtheorem{definition}[theorem]{Definition}
\newtheorem{remark}[theorem]{Remark}
\newtheorem{example}[theorem]{Example}
\newcommand {\R} {\mathbb{R}} \newcommand {\Z} {\mathbb{Z}}
\newcommand {\T} {\mathbb{T}} \newcommand {\N} {\mathbb{N}}
\newcommand {\C} {\mathbb{C}} 
\newcommand {\p} {\partial}
\newcommand {\D} {\Delta}
\DeclareMathOperator{\tr}{tr}
\DeclareMathOperator {\dist} {dist}
\DeclareMathOperator {\Ree} {Re}
\DeclareMathOperator{\spa} {span}
\DeclareMathOperator{\F} {\mathcal{F}}
\newcommand{\mR}{\mathbb{R}}                    
\newcommand{\mC}{\mathbb{C}}                    
\newcommand{\mZ}{\mathbb{Z}}                    
\newcommand{\abs}[1]{\lvert #1 \rvert}          
\newcommand{\norm}[1]{\lVert #1 \rVert}         
\newcommand{\br}[1]{\langle #1 \rangle}         
\newcommand{\ol}[1]{\overline{#1}}
\newcommand{\eps}{\varepsilon}
\newcommand{\supp}{\mathrm{supp}}
\address{Herbert Koch, Mathematisches Institut, Universit\"at Bonn, Endenicher Allee 60, 53115 Bonn, Germany}
\email{koch@math.uni-bonn.de}
\address{Angkana R\"uland, Institut f\"ur Angewandte Mathematik, Ruprecht-Karls-Universit\"at Heidelberg, Im Neuenheimer Feld 205, 69120 Heidelberg, Germany; Current address: Institut für Angewandte Mathematik, Universit\"at Bonn, Endenicher Allee 60, 53115 Bonn, Germany}
\email{Angkana.Rueland@uni-heidelberg.de}
\address{Mikko Salo, University of Jyvaskyla, Department of Mathematics and Statistics, PO Box 35, 40014 University of Jyvaskyla, Finland}
\email{mikko.j.salo@jyu.fi}
\begin{document}

\thispagestyle{empty}

\begin{minipage}{0.28\textwidth}
\begin{figure}[H]
\includegraphics[width=2.5cm,height=2.5cm,left]{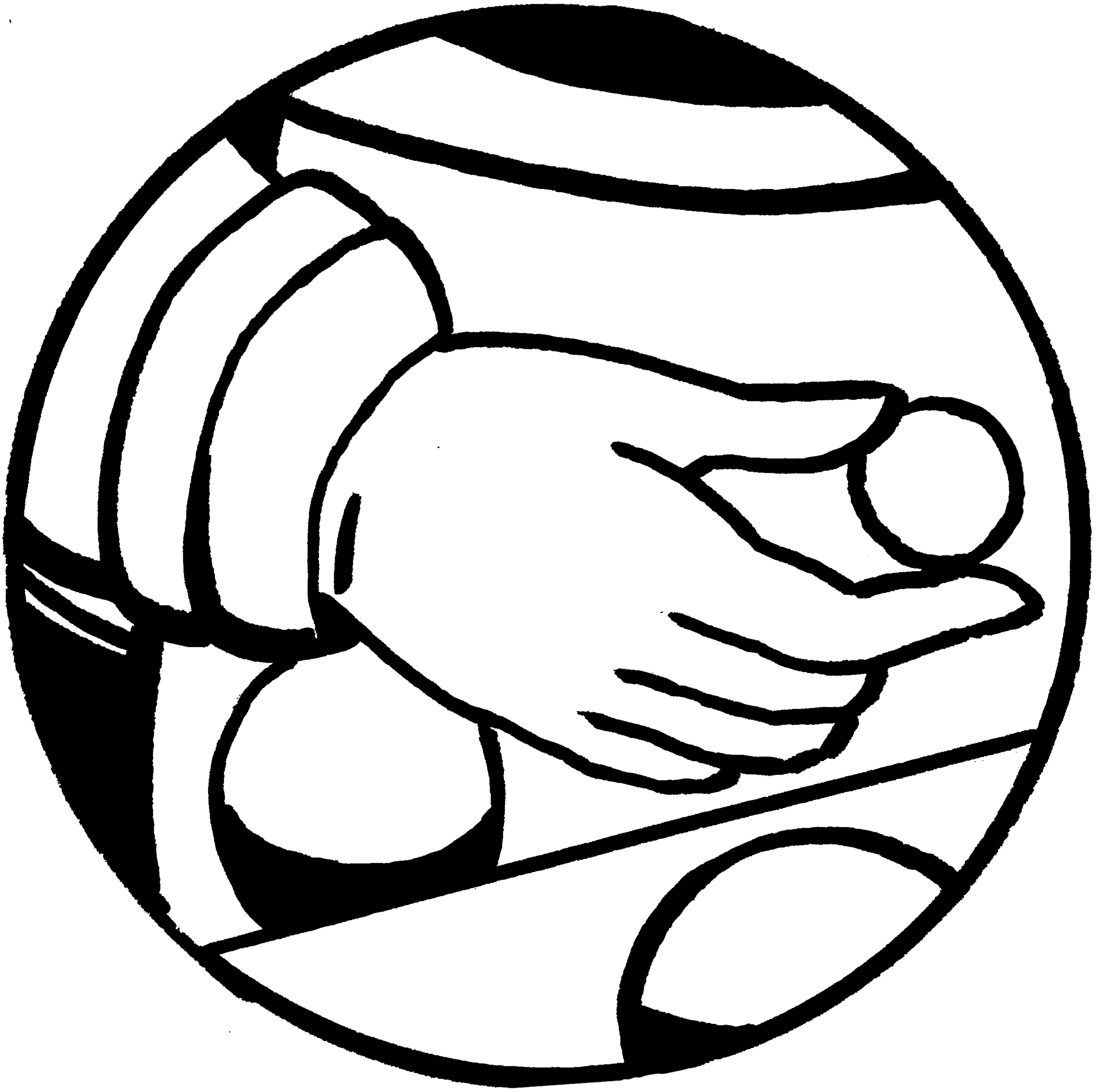}
\end{figure}
\end{minipage}
\begin{minipage}{0.7\textwidth} 
\begin{flushright}
Ars Inveniendi Analytica (2025), Revised Paper No. 1, 94 pp.
\\
DOI 10.15781/kvf0-ds86
\\
ISSN: 2769-8505
\end{flushright}
\end{minipage}

\ccnote

\vspace{1cm}


\begin{center}
\begin{huge}
\textit{On  instability mechanisms\\for inverse problems}
\end{huge}
\end{center}

\vspace{1cm}


\begin{minipage}[t]{.28\textwidth}
\begin{center}
{\large{\bf{Herbert Koch}}} \\
\vskip0.15cm
\footnotesize{Universit\"at Bonn}
\end{center}
\end{minipage}
\hfill
\noindent
\begin{minipage}[t]{.28\textwidth}
\begin{center}
{\large{\bf{Angkana R\"uland}}} \\
\vskip0.15cm
\footnotesize{Ruprecht-Karls-Universit\"at Heidelberg}\\
\footnotesize{Current address: Universit\"at Bonn}
\end{center}
\end{minipage}
\hfill
\noindent
\begin{minipage}[t]{.28\textwidth}
\begin{center}
{\large{\bf{Mikko Salo}}} \\
\vskip0.15cm
\footnotesize{University of Jyvaskyla}
\end{center}
\end{minipage}

\vspace{1cm}


\begin{center}
\noindent \em{Revision communicated by Francesco Maggi}
\end{center}
\vspace{1cm}


\noindent \textbf{Abstract.} \textit{In this article we present three robust instability mechanisms for linear and nonlinear inverse problems. All of these are based on strong compression properties (in the sense of singular value or entropy number bounds) which we deduce through either strong global smoothing, only weak global smoothing or microlocal smoothing for the corresponding forward operators, respectively. As applications we for instance present new instability arguments for unique continuation, for the backward heat equation and for linear and nonlinear Calder\'on type problems in general geometries, possibly in the presence of rough coefficients. 
Our instability mechanisms could also be of interest in the context of control theory, providing estimates on the cost of (approximate) controllability in rather general settings. This is a revised version of the article ``On instability mechanisms for inverse problems'' Ars Inveniendi Analytica (2021), Paper No. 7, 93 pp by the same authors.}
\vskip0.3cm

\noindent \textbf{Keywords.} Inverse problems, instability mechanisms, entropy numbers. 
\vspace{0.5cm}


\section{Introduction} \label{sec_introduction}

The purpose of this article is to revisit and to systematically extend instability arguments for certain classes of inverse problems. In such problems the forward operator does not have a continuous inverse, and these problems are thus automatically unstable. In fact instability, or ill-posedness, lies at the heart of inverse problems research and much of the related literature addresses various aspects of it. We mention very briefly a few selected aspects:

\begin{itemize}
\item 
Inverting the two-dimensional Radon transform, or the geodesic X-ray transform in geometries without conjugate points, is a mildly ill-posed inverse problem \cite{Natterer_book, AS20}. Variants involving limited data or geometries with conjugate points may be severely ill-posed \cite{Natterer_book, SU04, MSU15}.
\item 
For the boundary rigidity problem a stability theory has been developed in \cite{SU04, SU08}. A main feature is that suitable stability for the linearized problem implies local uniqueness and stability for the corresponding nonlinear problem. Corresponding abstract results are given in \cite{SU09}.
\item 
In the Calder\'on problem one has logarithmic stability \cite{Alessandrini} and this is optimal in general \cite{Mandache}. Stability improves if the unknowns are restricted to a finite dimensional space \cite{AlessandriniVessella}. For certain partial data or anisotropic variants only double logarithmic stability is known \cite{CaroSalo, CDR}.
\item 
For the wave equation, the problem of determining coefficients from the hyperbolic Dirichlet-to-Neumann map is often quite stable in geometries without conjugate points \cite{SU05, Montalto}. In the presence of conjugate points only weak stability results are known \cite{AKKLT, BKL}.
\item 
Even in the presence of ill-posedness, regularization can be used to reduce the effect of measurement errors and to obtain more stable approximate reconstructions \cite{EHN}.
\end{itemize}

Many of the above results focus on showing that the inverse problem in question has at least a certain (weak) degree of stability. In this article we consider corresponding \emph{instability} results, showing that a certain type of instability is inherent to the inverse problem and cannot be improved. In the literature several arguments deducing the degree of instability are known also in cases where direct singular value computations are not immediately feasible. Prominent examples for linear inverse problems are based on (microlocal) regularization for the forward operator e.g.\ in the setting of the geodesic X-ray transform and related problems, see \cite{Quinto, Quinto1, SU09, StefanovUhlmann, SU13, MSU15, HU18}. For nonlinear Calder\'on type inverse problems there is a method due to Mandache \cite{Mandache} based on the construction of certain exponentially decaying bases and entropy arguments, see also \cite{DiCristoRondi,Isaev,IsaevI,IsaevII, A07, ZZ19}. However, the latter results rely on strong assumptions on the operators (e.g.\ constant coefficients in the principal part) or on the geometry in which the problem is phrased (e.g.\ in the form of symmetry of the underlying domain such that separation of variables arguments can be used). 
Even for linear inverse problems the available results showing that logarithmic stability is optimal often rely on explicit computations for special operators and geometries, see e.g.\ \cite{AnderssonBoman} or the classical works \cite{Hadamard, John} for instability of the unique continuation principle. We further remark that, in general, there is no direct relation between the instability of a nonlinear inverse problem and its linearization \cite[Appendix]{EKN89}.

As a key novelty of this article, we present three robust instability mechanisms which can be applied to many different inverse problems involving general variable coefficient operators in general domains. A prototypical setting to which this applies are Calder\'on type problems (both linearized and nonlinear) in nonsmooth domains with coefficients having low, scaling critical regularity. As other examples we consider instability in general settings for the unique continuation principle for elliptic and hyperbolic equations, instability for the backward heat equation, the cost of approximate controllability in various problems from control theory (of elliptic, parabolic and hyperbolic type), and examples involving microlocal smoothing such as Radon transforms with limited data. An important aspect of these instability mechanisms is that they allow us to read off the degree of instability of an inverse problem from directly accessible properties of the forward operator (which may now lack symmetry or high regularity). 
In spite of their wide applicability, a central feature of our arguments which is shared by all three mechanisms is that while a stability result for an inverse problem is ``hard'' (one needs a quantitative version of a uniqueness result), the corresponding instability result is often rather ``soft''. 

Before turning to the explicit mechanisms, let us formulate the abstract set-up of the problem we are interested in. To this end, consider a (possibly nonlinear) map 
\begin{align*}
F:X \rightarrow Y,
\end{align*}
where $X,Y$ are metric spaces (often Banach spaces) and $F$ is continuous injective map modelling the forward operator of the inverse problem under consideration. We then seek to investigate the stability properties of the associated inverse problem: given $y \in F(X)$, find $x \in X$ so that 
\[
F(x) = y.
\]
In typical inverse problems the map $F: X \to F(X)$ does not have a continuous inverse. Yet, it is well-known that when restricted to a compact set $K\subset X$ the operator $F|_{K}:K\mapsto F(K)$ is a homeomorphism whose inverse has some modulus of continuity $\omega$ (see Lemma \ref{lemma_modulus_abstract}). It is the purpose of this article to identify general properties of the forward operator $F$ yielding information on the possible moduli $\omega$ in dependence of the choice of $X,Y,K$.

In the sequel, we discuss the three instability mechanisms and address prominent examples to which these apply. We however emphasise that the methods which we introduce in this article are applicable to a much larger class of problems and that we have only selected a number of expository examples to illustrate these.

\subsection{Instability by strong (analytic) smoothing properties of the forward operator}

\label{sec:instI}

In our first instability mechanism we are mainly motivated by inverse problems for elliptic or parabolic equations. Here one of the most prominent examples which had been studied by Mandache \cite{Mandache} and Di Cristo-Rondi \cite{DiCristoRondi} is the instability of the Calder\'on problem. In this problem which goes back to work of Calder\'on \cite{Calderon} one is interested in recovering a positive conductivity function $\gamma$ in the equation
\begin{align}
\label{eq:Calderon}
\begin{split}
\nabla \cdot \gamma \nabla u &= 0 \mbox{ in } \Omega \subset \R^n,\\
u & = f \mbox{ on } \partial \Omega,
\end{split}
\end{align}
from the knowledge of the Dirichlet-to-Neumann map 
\begin{align*}
\Lambda_{\gamma}: f \mapsto \gamma \p_{\nu} u|_{\partial \Omega},
\end{align*}
where $\p_{\nu} u|_{\partial \Omega}$ denotes the normal derivative of $u$ on the boundary (viewed in an appropriate weak sense). This problem is further closely related to the problem of recovering the potential $q$ in the Schr\"odinger equation 
\begin{align}
\label{eq:Gelfand}
\begin{split}
-\D u + q u &= 0 \mbox{ in } \Omega, \\
u & = f \mbox{ on } \partial \Omega,
\end{split}
\end{align}
from the knowledge of the Dirichlet-to-Neumann operator
\begin{align*}
\Lambda_q: f \mapsto \partial_{\nu }u|_{\partial \Omega}.
\end{align*}
The Calder\'on problem with a sufficiently regular scalar conductivity can be reduced to this problem by a Liouville transform. For simplicity, we assume that $q$ is such that zero is not a Dirichlet eigenvalue of the equation \eqref{eq:Gelfand}.
Both problems are well-studied non-linear inverse problems with (for $n\geq 3$) uniqueness proofs at different regularities \cite{SylvesterUhlmann, Novikov, Chanillo, LavineNachman, Haberman_conductivity, Haberman_magnetic}, stability estimates \cite{Alessandrini} and reconstruction algorithms \cite{Nachman} available (we refer to \cite{Uhlmann_survey} for an overview of the known results and further references). We are mainly interested in the stability properties of these problems. By virtue of the work of Alessandrini \cite{Alessandrini} it is known that both problems (in suitable regularity scales) enjoy logarithmic stability estimates under mild a priori assumptions on the data, e.g.\ if $q_1, q_2\in L^{\infty}(\Omega)$ with $\|q_j\|_{L^{\infty}(\Omega)} \leq M$, there exists a constant $C = C(\Omega, n, M)>0$ such that
\begin{align*}
\|q_1-q_2\|_{H^{-1}(\Omega)}
 \leq \omega(\|\Lambda_{q_1}-\Lambda_{q_2}\|_{H^{\frac{1}{2}}(\partial \Omega) \rightarrow H^{-\frac{1}{2}}(\p \Omega)}),
\end{align*}
where $\omega(t) \leq C \abs{\log\,t}^{-\frac{2}{n+2}}$ for $t$ small. A natural question then concerns the optimality of these estimates. This was studied by Mandache \cite{Mandache} who proved the following necessity of the exponential instability:

\begin{theorem}[\cite{Mandache}]
\label{thm:Mandache_Cal}
Let $B_1(0) \subset \R^n$ for $n\geq 2$ denote the unit ball. Let $q_0 \in L^{\infty}(B_1)\cap C^m(\overline{B}_1) $ with $\supp(q_0) \subset B_{1/2}(0)$. Then there is $\eps_0>0$ such that for any $\eps\in (0,\eps_0)$ and $m \geq 1$ there exist potentials  $q_1, q_2 \in C^m(\ol{B}_1)\cap L^{\infty}(B_1)$ with $\supp(q_j)\subset B_{1/2}(0)$ and constants $C>0$, $\gamma>0$ such that we have
\begin{align*}
\|\Lambda_{q_1}- \Lambda_{q_2}\|_{H^{\frac{1}{2}}(\partial B_1) \rightarrow H^{-\frac{1}{2}}(\partial B_1)} 
&\leq  \exp(-C\eps^{-\gamma}),\\
\|q_1 -q_2\|_{L^{\infty}(B_1)}& = \eps,\\
\|q_j - q_0\|_{L^{\infty}(B_1)} & \leq \eps, \ j \in \{1,2\},\\
\|q_j - q_0\|_{C^m(\ol{B}_1)} & \leq 1.
\end{align*}
\end{theorem}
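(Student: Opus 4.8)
The plan is the entropy (covering--number) comparison argument of Mandache. In one direction we build an exponentially large family of admissible potentials which is $\eps$--separated in $L^\infty$; in the other, we exploit the strong (analytic--type) smoothing of the forward map $q\mapsto\Lambda_q$ --- which is forced by the separation $\dist(\supp(q-q_0),\partial B_1)\ge 1/2$ --- to confine the corresponding family of Dirichlet--to--Neumann maps to a set whose $\delta$--covering number is only $\exp(C(\log(1/\delta))^{2n-1})$. A pigeonhole argument then produces two potentials in the family whose boundary data are exponentially close.

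\emph{The family of potentials.} Fix $\phi\in C_c^\infty(B_1(0))$ with $\norm{\phi}_{L^\infty}=1$ and put $c_m:=\norm{\phi}_{C^m}$. For small $h>0$ choose a maximal set $\{x_1,\dots,x_N\}\subset B_{1/4}(0)$ with mutual distances $\ge 3h$, so that $N\asymp h^{-n}$ and the bumps $\phi_i:=\phi((\,\cdot\,-x_i)/h)$ have pairwise disjoint supports contained in $B_{1/2}(0)$. For $z\in\{0,1\}^N\setminus\{0\}$ set $q_z:=q_0+\eps\sum_{i}z_i\phi_i$. Disjointness of the supports gives $\supp q_z\subset B_{1/2}(0)$, $\norm{q_z-q_0}_{L^\infty(B_1)}=\eps$ and $\norm{q_z-q_0}_{C^m(\ol{B}_1)}=\eps\,c_m\,h^{-m}$; the choice $h:=(c_m\eps)^{1/m}$ makes the last quantity equal to $1$ and yields $N\asymp\eps^{-n/m}$. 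If $z\ne z'$ then $q_z-q_{z'}$ is again a sum of disjoint bumps of height $\eps$, so $\norm{q_z-q_{z'}}_{L^\infty(B_1)}=\eps$. By the standing assumption preceding the theorem, $0$ is not a Dirichlet eigenvalue of $-\D+q_0$ in $B_1$; hence for $\eps<\eps_0$ (with $\eps_0$ depending on $n,m,q_0$) the same holds for every $q_z$, with a uniform bound on the solution operator. In particular all $\Lambda_{q_z}$ are defined, $\norm{\Lambda_{q_z}-\Lambda_{q_{z'}}}_{H^{1/2}(\partial B_1)\to H^{-1/2}(\partial B_1)}\le C\eps$ (by Alessandrini's identity and the estimate $\norm{u}_{L^2(B_1)}\le C\norm{u|_{\partial B_1}}_{H^{1/2}(\partial B_1)}$), and we have produced $2^N-1\ge\exp(c\,\eps^{-n/m})$ admissible potentials.

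\emph{The smoothing estimate.} Let $\Pi_L$ denote the $L^2(\partial B_1)$--orthogonal projection onto spherical harmonics of degree $\le L$; its range has dimension $d_L\asymp L^{n-1}$. The analytic core is the bound
\[
\norm{(\Lambda_{q_z}-\Lambda_{q_{z'}})-\Pi_L(\Lambda_{q_z}-\Lambda_{q_{z'}})\Pi_L}_{H^{1/2}(\partial B_1)\to H^{-1/2}(\partial B_1)}\ \le\ C\,\eps\,2^{-L}.
\]
It follows from Alessandrini's identity $\br{(\Lambda_q-\Lambda_{\tilde q})f,g}=\int_{B_{1/2}(0)}(\tilde q-q)\,u\,\tilde v\,dx$, where $u,\tilde v$ solve $(-\D+q)u=0$, $(-\D+\tilde q)\tilde v=0$ in $B_1$ with boundary values $f,g$, together with the \emph{penetration bound}: if $\Pi_L f=0$ then $\norm{u}_{L^2(B_{1/2}(0))}\le C\,2^{-L}\norm{f}_{H^{-1/2}(\partial B_1)}$ with $C$ uniform over the family. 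For the latter, write $u=h-(-\D+q)^{-1}(q\,h)$, with $h$ the harmonic extension of $f$ and $(-\D+q)^{-1}$ the Dirichlet inverse in $B_1$: since $\supp q\subset B_{1/2}(0)$, only $h|_{B_{1/2}(0)}$ enters, each spherical--harmonic mode $r^kY$ of $h$ with $k\ge L+1$ contributes at most $\sim 2^{-k}$ of its boundary size on $B_{1/2}(0)$, and $\norm{(-\D+q)^{-1}}_{L^2(B_1)\to L^2(B_1)}$ is bounded uniformly (here the non--eigenvalue property enters), the resulting polynomial loss in $k$ being absorbed into a slightly worse geometric rate. Inserting this into Alessandrini's identity, once with $f$ replaced by $(\Id-\Pi_L)f$ and once with $g$ replaced by $(\Id-\Pi_L)g$, and using $\norm{\tilde q-q}_{L^\infty}\le C\eps$, gives the displayed estimate.

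\emph{Pigeonhole and choice of parameters.} Write $T_z:=\Lambda_{q_z}-\Lambda_{q_0}$, so $\norm{T_z}_{H^{1/2}\to H^{-1/2}}\le C\eps$ and, by the previous step, $T_z$ lies within $C\eps\,2^{-L}$ of $\Pi_L T_z\Pi_L$, an element of norm $\le C\eps$ in the space $V_L$ of operators of the form $\Pi_L(\cdot)\Pi_L$, which has $\dim V_L=d_L^2\asymp L^{2(n-1)}$. A ball of radius $C\eps$ in $V_L$ has a $\delta$--net of cardinality $\le(3C\eps/\delta)^{d_L^2}$, so choosing $L\asymp\log(1/\delta)$ with $C\eps\,2^{-L}\le\delta$ we obtain a set of $M$ operators, $\log M\le C\,d_L^2\,\log(\eps/\delta)\le C\,(\log(1/\delta))^{2n-1}$, such that each $T_z$ lies within $2\delta$ of one of them. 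As soon as $2^N-1>M$, two distinct indices $z\ne z'$ collide, whence $\norm{\Lambda_{q_z}-\Lambda_{q_{z'}}}_{H^{1/2}(\partial B_1)\to H^{-1/2}(\partial B_1)}\le 4\delta$; and $2^N-1>M$ is guaranteed once $(\log(1/\delta))^{2n-1}\le c\,\eps^{-n/m}$. It therefore suffices to take $\delta:=\exp(-c'\eps^{-\gamma})$ with $\gamma:=\frac{n}{m(2n-1)}$ and to set $q_1:=q_z$, $q_2:=q_{z'}$: then $\norm{q_1-q_2}_{L^\infty(B_1)}=\eps$, $\norm{q_j-q_0}_{L^\infty(B_1)}\le\eps$, $\norm{q_j-q_0}_{C^m(\ol{B}_1)}\le 1$, and $\norm{\Lambda_{q_1}-\Lambda_{q_2}}_{H^{1/2}\to H^{-1/2}}\le 4\delta\le\exp(-C\eps^{-\gamma})$ for $\eps<\eps_0$, which is the assertion.

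\emph{Main obstacle.} Everything except the penetration bound of the second step is ``soft'': the entropy of $C^m$--balls, the dimension of the spaces of low--degree spherical harmonics, and covering numbers of finite--dimensional balls are all standard. The genuinely analytic point is exactly the uniform exponential decay into $B_{1/2}(0)$ of the solution of $(-\D+q)u=0$ with high--frequency boundary data, \emph{uniformly} over the whole family of bounded potentials $q_z$. For small $\norm{q}_{L^\infty}$ this is a Neumann series around the explicit harmonic case; for general bounded $q$ one must instead work with the resolvent $(-\D+q)^{-1}$, whose existence and uniform $L^2$--boundedness rely precisely on $0$ not being a Dirichlet eigenvalue for any $q_z$ (true for $\eps<\eps_0$). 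No attempt is made here to optimize the exponent $\gamma$.
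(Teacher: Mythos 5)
Your argument is correct and is essentially the proof the paper attributes to Mandache (which it cites rather than reproves): your spherical-harmonic penetration bound is exactly the exponential decay of the matrix elements $((\Lambda_q-\Lambda_0)f_{j_1},f_{j_2})$ in step (i) of the paper's sketch of Mandache's argument, and your disjoint-bump family together with the covering-number pigeonhole is step (ii), yielding the expected exponent $\gamma=\frac{n}{m(2n-1)}$. The paper itself only generalizes this statement (Theorem \ref{thm:Cald1}) via the abstract entropy/analytic-smoothing framework of Sections \ref{sec:abstract}--\ref{sec:examples1}, so your concrete route is faithful to the cited original rather than a new departure, and no gaps are apparent beyond the (correctly invoked) standing assumption that $0$ is not a Dirichlet eigenvalue for $q_0$, which propagates to the whole family for small $\eps$.
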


We emphasize that bounds as in Theorem \ref{thm:Mandache_Cal} directly imply conditions on possible moduli of continuity for the Calder\'on problem showing that for the function spaces from the theorem the modulus of continuity \emph{cannot} be better than of logarithmic type. More precisely, if for any potentials $q$ with $\|q\|_{C^m(\ol{B}_1)}\leq 1$ and $r>0$ an estimate of the form 
\begin{align*}
\|q_1- q_2\|_{L^{\infty}(B_1)} \leq \omega(\|\Lambda_{q_1}-\Lambda_{q_2}\|_{H^{\frac{1}{2}}(\partial \Omega) \rightarrow H^{-\frac{1}{2}}(\p \Omega)})
\end{align*}
holds for $ \Vert q_j-q \Vert_{L^\infty(B_1)} < r$, $ \Vert q_j-q \Vert_{C^m(\overline{B_1})}\le 1$, then necessarily $\omega(t)\geq C|\log(t)|^{-\frac{1}{\gamma}}$.

Mandache's argument relied on two key observations which were later systematically investigated and extended by Di Cristo-Rondi \cite{DiCristoRondi}:
\begin{itemize}
\item[(i)] For the setting of the unit ball $B_{1}(0)$ it is possible to explicitly construct a basis $(f_j)_{j=1}^{\infty} \subset H^{\frac{1}{2}}(\partial B_1(0))$ such that
\begin{align*}
|((\Lambda_{q}-\Lambda_0)f_{j_1}, f_{j_2})_{L^2(\partial B_1)}| \leq C e^{-c\max\{j_1, j_2\}} ,
\end{align*}
for some constants $c,C>0$.
\item[(ii)] Using (i), on the one hand, Mandache proved an entropy estimate showing that the map $q \mapsto \Gamma(q) := \Lambda_{q}-\Lambda_0$ is highly compressing (with respect to suitable Hilbert-Schmidt type norms). This means that the image of a ball in $L^{\infty}(B_1)$ under the operator $\Gamma$ can be covered by a relatively small $\delta$-net. On the other hand, the metric space of admissible potentials has a large $\eps$-discrete set, which corresponds to a capacity estimate (see Section \ref{sec:abstract} for the corresponding definitions). Relying on a pigeonhole argument, the comparison of the entropy vs the capacity estimate thus leads to the result of Theorem \ref{thm:Mandache_Cal}.
\end{itemize}
While the argument in (ii) is rather general (and mainly based on entropy and capacity estimates), (i) is a rather problem specific ingredient in Mandache's original work. It highly uses the interplay of the geometry of $B_1(0)$ and the constant coefficient Laplacian in separating variables and working with spherical harmonics. 

Thus, natural questions which arise are:
\begin{itemize}
\item[(Q1)]
Does the exponential instability remain true in more general geometries which enjoy less symmetry than the unit ball? 
\item[(Q2)] Do the instability arguments persist for more general elliptic operators with variable coefficients (i.e., with $L_0 = \p_i a^{ij} \p_j + \p_j b_j^1 + b_j^2 \p_j + c $)?
\end{itemize}

Our first instability mechanism gives a positive answer to these questions in the presence of \emph{strong (analytic) smoothing properties}: We prove that if the forward operator is analytically smoothing, then exponential instability in the associated inverse problem is unavoidable. This in particular allows us, for instance, to reprove Mandache type instability results for Calder\'on type inverse problems in rather general (analytic) geometries with analytic background metrics. One of the novelties here is that we obtain instability results for general geometries and coefficients under real-analyticity assumptions, whereas most earlier results have been restricted to special geometries that allow explicit computations based on separation of variables. It is possible to establish a rather general, abstract framework for this (see Section \ref{sec:abstract}).

As an example of this abstract scheme, we present a new proof of the exponential instability of the Calder\'on problem relying on abstract smoothing arguments (see Section \ref{sec:examples1} for the details and proofs).

\begin{theorem}
\label{thm:Cald1}
Let $(M,g)$ be a compact Riemannian $n$-manifold with smooth boundary $\partial M$. Let $M' \Subset M^{\mathrm{int}}$.
Let $\lambda_1>0$ denote the first Dirichlet eigenvalue of the Laplace-Beltrami operator on $(M,g)$ and let $C>0$.
Fix $s > \frac{n}{2}$ and $\delta > 0$, and suppose that 
\[
\norm{q_1-q_2}_{H^s(M)} \leq \omega(\norm{\Lambda_{q_1}-\Lambda_{q_2}}_{H^{1/2}(\p M) \to H^{-1/2}(\p M)}),
\]
whenever $\norm{q_j}_{L^{\infty}(M)} \leq \lambda_1/2$, $\norm{q_j}_{H^{s+\delta}(M)} \leq C$ and $\supp(q_j) \subset M'$. Then $\omega(t)$ cannot be a H\"older modulus of continuity. Moreover, if $M$, $g$ and $\p M$ are real-analytic, then $\omega(t) \gtrsim \abs{\log t}^{-\frac{\delta(2n-1)}{n}}$ for $t$ small.
\end{theorem}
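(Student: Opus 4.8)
The plan is to deduce Theorem \ref{thm:Cald1} from the abstract instability machinery described in part (ii) of Mandache's scheme, by verifying the two ingredients in the general geometry: an \emph{entropy (compression) estimate} for the forward map and a \emph{capacity estimate} for the space of admissible potentials. Concretely, set $\Gamma(q) := \Lambda_q - \Lambda_0$, viewed as a map from the unit ball of $H^{s+\delta}(M')$ (potentials supported in $M'$) into a suitable Hilbert-Schmidt-type space of operators $H^{1/2}(\p M) \to H^{-1/2}(\p M)$. The first step is to show that $\Gamma$ is highly compressing, i.e.\ that its image can be covered by an $e^{-c N^{1/(2n-1)}}$-net (or similar) of cardinality $e^{cN}$; equivalently that the singular values / entropy numbers of $\Gamma$ decay like $e^{-cj^{1/(2n-1)}}$. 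The second step is the standard capacity lower bound: a ball in $H^{s+\delta}(M')$ restricted to $H^s(M')$ contains an $\eps$-separated set of cardinality $\geq e^{c\eps^{-n/\delta}}$ (a Kolmogorov-type estimate for Sobolev embeddings on an $n$-dimensional domain). Feeding these two bounds into the pigeonhole argument of Section \ref{sec:abstract} then forces $\omega$ to satisfy $\omega(t) \gtrsim |\log t|^{-\delta(2n-1)/n}$, and the non-Hölder claim (in the merely smooth case) follows from the weaker qualitative compression that one still gets without analyticity.

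The heart of the matter is the compression estimate for $\Gamma$, and this is where real-analyticity enters. The mechanism is: if $q$ is supported in $M' \Subset M^{\mathrm{int}}$, then by interior elliptic regularity (here analytic hypoellipticity of $-\Delta_g + q$ away from $\supp q$, using that $g$ is real-analytic) the solution $u$ of the Dirichlet problem, hence its Cauchy data difference $(\Lambda_q - \Lambda_0)f$, extends holomorphically into a fixed complex neighborhood of $\p M$. Thus $\Gamma(q)$ is not just a bounded operator but maps into a space of boundary data that are real-analytic with a uniform radius of analyticity; composing with the analytic elliptic regularity on the boundary collar, one obtains that $\Gamma$ factors as $\Gamma = \iota \circ \widetilde\Gamma$ where $\iota$ is the (very compact) embedding of an analytic-class space into $H^{-1/2}(\p M)$ whose singular values are known to decay like $e^{-cj^{1/(n-1)}}$ on the $(n-1)$-dimensional boundary. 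Combining this with the fact that $q \mapsto \Gamma(q)$ is Lipschitz (standard second-Neumann-series / Alessandrini-type bound under the eigenvalue assumption) yields the claimed stretched-exponential decay of the entropy numbers of the image of the unit ball; the exponent $1/(2n-1)$ rather than $1/(n-1)$ presumably arises from tracking both boundary variables and the dependence on $q$ through the analytic continuation, matching the $\frac{\delta(2n-1)}{n}$ in the statement.

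I would organize the proof as follows. (1) Recall from the abstract section the precise statement that ``compression rate $e^{-cj^{\mu}}$ plus capacity $e^{c\eps^{-\nu}}$ implies $\omega(t) \gtrsim |\log t|^{-1/(\mu\nu)} \cdot(\text{const})$,'' and identify $\nu = n/\delta$ from the Sobolev capacity bound. (2) Prove the Lipschitz bound $\|\Gamma(q_1) - \Gamma(q_2)\|_{H^{1/2}\to H^{-1/2}} \lesssim \|q_1 - q_2\|_{L^\infty}$ on the admissible set, using the resolvent expansion and the non-eigenvalue hypothesis, together with the Sobolev embedding $H^{s+\delta} \hookrightarrow L^\infty$ (valid since $s > n/2$). (3) Establish the analytic smoothing: for $q$ supported in $M'$, show $(\Lambda_q - \Lambda_0)f$ lies in an analytic-class space on $\p M$ with norm $\lesssim \|f\|_{H^{1/2}}$, using analytic hypoellipticity and an a priori interior estimate that propagates analyticity from $M \setminus M'$ to the collar of $\p M$. (4) Quote the classical singular-value asymptotics $e^{-cj^{1/(n-1)}}$ for the embedding of that analytic space into $L^2(\p M)$, and assemble (2)–(4) into the compression estimate with the correct exponent. (5) In the merely-smooth case, replace step (4) by the observation that $\Gamma$ still factors through $C^\infty$-regularizing behavior, giving super-polynomial (faster than any $j^{-k}$) decay of entropy numbers, which already excludes every Hölder modulus via the same pigeonhole comparison.

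The main obstacle I anticipate is step (3): making the analytic smoothing \emph{quantitative and uniform in $q$}. One must show that the radius of analyticity of the extension near $\p M$ does not degenerate as the admissible potentials vary, and one must convert ``holomorphic in a fixed neighborhood'' into explicit bounds on derivatives (Cauchy estimates) that feed the singular-value count. This requires an analytic-regularity estimate for the elliptic equation $-\Delta_g u + q u = 0$ on $M \setminus M'$ that is uniform over $\|q\|_{H^{s+\delta}} \le 1$ — delicate because $q$ is only of finite (though subcritical) smoothness there; the point is that $q \equiv 0$ on the region of interest, so only the analyticity of $g$ matters, and the dependence on $q$ is confined to the (Lipschitz, finite-smoothness) solution operator on $M'$. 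Carefully separating these two regimes — analytic propagation in the coefficient-free collar versus finite-regularity solvability in the support region — is the technical crux, and everything else is assembling known entropy/capacity lemmas.
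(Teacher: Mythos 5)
Your overall route is the paper's: both run the entropy/capacity pigeonhole of Section \ref{sec:abstract}, with the capacity input $e_k \gtrsim k^{-\delta/n}$ for the ball of $H^{s+\delta}$-potentials supported in $M'$, and both obtain the compression from the fact that $q$ vanishes near $\partial M$, so that $\Gamma(q)=\Lambda_q-\Lambda_0$ is ($C^\infty$-, resp.\ analytically) smoothing uniformly over the admissible set. The obstacle you single out in step (3) is handled exactly as you suggest: $v=u_f-u_f^0$ solves $(-\Delta_g+q)v=-qu_f^0$ with $q\equiv 0$ on a collar of $\partial M$, so only the analyticity of $(M,g,\partial M)$ enters there, and the uniform bound $\norm{v}_{H^1}\lesssim \norm{f}_{H^{1/2}}$ together with the Lions--Magenes analytic estimates gives uniform Cauchy bounds (the closed graph theorem then upgrades this to a uniform operator bound into a fixed analytic class).

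The genuine gap is in your step (4) and its assembly. The pigeonhole needs a $\delta$-net, in the norm of $B(H^{1/2}(\partial M),H^{-1/2}(\partial M))$, for the whole family $\{\Gamma(q):q\in K\}$; neither the Lipschitz dependence of $\Gamma$ on $q$ (which transfers entropy in the wrong direction) nor the singular-value decay of each individual operator $\Gamma(q)$ yields this. What is needed is an entropy bound for the embedding of the space of operators $T$ such that $T$ \emph{and its adjoint} map $H^{1/2}(\partial M)$ boundedly into a fixed analytic class $A^{1,\rho}(\partial M)$, into $B(H^{1/2},H^{-1/2})$; this is Theorem \ref{thm_entropy_bounded_operators}(b), and the adjoint condition is not cosmetic, since without it the embedding $B(H^{1/2},H^{-1/2+m})\to B(H^{1/2},H^{-1/2})$ is not even compact (see the remark after that theorem) --- here it is rescued by the self-adjointness of $\Gamma(q)$, which you never invoke. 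The proof of that embedding bound goes through Hilbert--Schmidt classes, i.e.\ kernels on $\partial M\times\partial M$ (Lemma \ref{lemma_hs_embedding}), and this doubling of the boundary dimension is where the factor $2(n-1)$ in the exponent actually comes from; the remaining $+1$ in $2n-1$ comes from converting singular-value decay $e^{-ck^{\mu}}$ into entropy decay $e^{-\tilde c k^{\mu/(1+\mu)}}$ (Lemma \ref{lemma_singular_entropy_relation}), not from ``the dependence on $q$ through the analytic continuation'' as you guess. With that operator-space entropy estimate in place, your steps (1), (3) and (5) assemble as planned via Theorem \ref{thm_smoothing_instability_dnmap} (your Lipschitz step (2) is in fact not needed for the stated conclusion).
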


\begin{remark}
We highlight that while the formulation of the instability statement in Theorem \ref{thm:Mandache_Cal} may appear to be more quantitative at first sight than the formulation of our Theorem \ref{thm:Cald1}, under very mild conditions both types instability formulations are indeed equivalent. We refer to Remark \ref{rmk:equiv_Mandache} for a detailed discussion of this.
\end{remark}

In contrast to the argument of Mandache, a key feature of this first instability mechanism is the robustness of our arguments with respect to (analytic) domain or coefficient perturbations. Instead of constructing explicit singular value bases by hand as in the first step of Mandache's argument, we use certain analytic smoothing properties and abstract entropy/capacity number arguments. These allow us to conclude the existence of corresponding singular value bases for which the operator has exponential decay.
We remark that this is \emph{not} restricted to linear problems and that similar arguments apply to any inverse problem where the forward operator has suitable analytic smoothing properties.

\subsection{Instability through an iterated small regularity gain} 
\label{sec:instII}

While our first instability mechanism from Section \ref{sec:instI} allows us to generalize the Mandache type exponential instability results to more general geometries, it still relies on very strong (real-analytic) smoothing properties of the forward operators and can only be used if the underlying structures (coefficients and boundaries) are real-analytic. A further natural question thus addresses the necessary regularity of the forward operator. 

\begin{itemize}
\item[(Q3)] Are the analytic smoothing properties of the forward operator necessary for exponential instability of the inverse problem? Can one find operators with possibly irregular coefficients so that there are stronger stability results?
\end{itemize}

In the case of the Calder\'on problem there are particularly three points which have an influence on the smoothing properties of the forward operator: These are the regularity of the underlying domain, the regularity of the coefficients of the operator $L_0$ and the fact that the potential $q_0$ is still chosen to vanish near $\p \Omega$. It is thus natural to wonder whether it is possible to weaken these assumptions and whether this has an effect on the instability properties of the inverse problem.

As our second mechanism we show that (in mainly elliptic and parabolic inverse problems) it is \emph{not} necessary to have a high degree of smoothing for instability. In these cases it is not possible to construct operators at lower regularity for which better stability estimates hold.
On the contrary, we show that by a suitable iteration already a very small amount of regularity in the equation suffices to deduce (exponential) instability in the inverse problem. As an example of this type of instability mechanism, we present instability results for the Calder\'on problem with scaling critical low regularity metrics and potentials which are non-smooth up to the boundary and prove that even in this framework (in which e.g.\ the unique continuation principle in general fails) one can prove logarithmic instability.

\begin{theorem}
\label{thm:thm_Calderon}
Let $\Omega \subset \R^n$ with $n\geq 2$ be a bounded Lipschitz domain and let 
\[
L = - \p_j a^{jk} \p_k + b_j \p_j + \p_j c_j + q_0
\]
with $(a^{jk}) \in L^{\infty}(\Omega, \R^{n\times n})$ symmetric and uniformly elliptic, $b_j, c_j \in L^{n}(\Omega)$ and $q_0 \in L^{\frac{n}{2}}(\Omega)$. Assume that zero is not a Dirichlet eigenvalue of $L$. Denote the associated Dirichlet-to-Neumann map by $\Lambda_{L}$. 
Then, for any $\delta>0$ there are $\eps_0 >0$ and $C>0$ such that for any $\eps \in (0,\eps_0)$, there exist potentials $q_1, q_2 \in  W^{\delta,\frac{n}{2}}(\Omega)$ with 
\begin{align*}
\|\Lambda_{L + q_1}- \Lambda_{L + q_2}\|_{H^{\frac{1}{2}}(\partial \Omega) \rightarrow H^{-\frac{1}{2}}(\partial \Omega)} 
&\leq  \exp(-C\eps^{-\frac{n}{\delta(2n+1)}}),\\
\|q_1 -q_2\|_{L^{\frac{n}{2}}(\Omega)}&\geq \eps,\\
\|q_j \|_{W^{\delta,\frac{n}{2}}(\Omega)} & \leq 1, \ j \in \{1,2\}.
\end{align*}
\end{theorem}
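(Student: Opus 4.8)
The plan is to follow the general instability scheme (entropy versus capacity comparison) but replace the analytic smoothing input by an iterated application of a \emph{small} regularity gain, thereby accommodating the rough coefficients and the Lipschitz boundary. The starting point is the standard stability estimate for the forward map: if $q_1,q_2$ are supported in a fixed interior region and are bounded in $W^{\delta,n/2}(\Omega)$, then the boundary integral kernel of $\Lambda_{L+q_1}-\Lambda_{L+q_2}$ gains a fixed amount of smoothness relative to the $L^{n/2}$ size of $q_1-q_2$, essentially because the single-gain mapping $q \mapsto \Lambda_{L+q}-\Lambda_L$ (to first order) is governed by $u_1 q u_2$ with $u_1,u_2$ solutions of $L u = 0$, and these solutions enjoy at least one higher derivative of regularity from the $L^\infty$-ellipticity of $(a^{jk})$ and the $L^n/L^{n/2}$ lower-order terms (Meyers-type estimates and the scaling-critical Sobolev embeddings). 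First I would record this as a quantitative estimate: the Hilbert–Schmidt norm (in a fixed basis, e.g. boundary spherical-harmonic-type data rescaled to $\partial\Omega$) of the frequency component $\geq N$ of $\Lambda_{L+q_1}-\Lambda_{L+q_2}$ is bounded by $N^{-\sigma}\|q_1-q_2\|_{L^{n/2}}$ for some fixed $\sigma>0$ coming from one regularity gain.

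The key new step is the \textbf{iteration}. A single regularity gain only yields polynomial, not exponential, compression. To boost this, I would iterate the resolvent/Neumann-series expansion of the forward operator: writing the Dirichlet-to-Neumann difference via the Birman–Schwinger type identity, each additional term in the expansion introduces an extra factor of $q$ sandwiched between Green's operators for $L$, and each such Green's operator contributes another fixed fractional gain $\sigma$. After $k$ iterations one gains smoothing of order $\sim k\sigma$ at the cost of a combinatorial constant $C^k$ and a factor $\|q_j\|^{k}$ in the relevant norm. Optimising $k$ against the frequency cutoff $N$ — taking $k \sim \log N$ — converts the polynomial decay $N^{-k\sigma}$ into super-polynomial decay $N^{-c\log N} = e^{-c(\log N)^2}$, which after tracking the dependence on $\eps$ (the separation scale) and on $\delta$ produces the stated bound $\exp(-C\eps^{-n/(\delta(2n+1))})$; the precise exponent $n/(\delta(2n+1))$ should fall out of balancing the entropy numbers of the image against the capacity of an $\eps$-separated family of potentials in $W^{\delta,n/2}(\Omega)$, exactly as in the abstract comparison lemma (Section \ref{sec:abstract}). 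Concretely, one then invokes the abstract pigeonhole argument: the image of the unit ball of $W^{\delta,n/2}$ under the forward map has small entropy numbers because of the super-polynomial frequency compression, while a Kolmogorov-width/capacity lower bound furnishes a large $\eps$-discrete set of admissible potentials (supported away from $\partial\Omega$, with controlled $W^{\delta,n/2}$ norm, and with $L^{n/2}$-separation $\geq\eps$), and the mismatch forces two of them to have exponentially close data.

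There are two points I would handle with care. First, the rough-coefficient and Lipschitz-domain setting means one cannot use classical elliptic regularity or separation of variables; instead the single-step gain $\sigma$ must be extracted from $L^p$-based estimates that are stable under the scaling-critical hypotheses $b_j,c_j\in L^n$, $q_0\in L^{n/2}$ — one needs well-posedness of $L$ on $H^1_0(\Omega)$ (guaranteed by the non-eigenvalue assumption and Sobolev embedding), together with a uniform fractional-smoothing estimate for the Green's operator of $L$ mapping, say, $L^{n/2}$-type data into a space with a little extra regularity near $\partial\Omega$. Since the potentials $q_j$ themselves are only in $W^{\delta,n/2}$ and need not vanish at the boundary, the frequency-localisation of the boundary kernel must be done intrinsically (e.g. via a boundary heat semigroup or Littlewood–Paley decomposition on $\partial\Omega$) rather than by smoothness of a global cutoff. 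Second, the iteration must be carried out at the level of \emph{two} potentials $q_1,q_2$ simultaneously, i.e. one expands $\Lambda_{L+q_1}-\Lambda_{L+q_2}$ and must show each term in the telescoped difference enjoys the same per-step gain; this is routine once the single-gain estimate is in place but requires bookkeeping of the $C^k$ constants so that the $k\sim\log N$ choice is admissible. \textbf{I expect the main obstacle to be the robust single-step fractional smoothing estimate under scaling-critical low-regularity coefficients on a Lipschitz domain}, i.e. proving that the Green's operator for $L$ gains a fixed $\sigma>0$ derivatives in the right topology uniformly over the admissible coefficient class; once that is available, the iteration-and-optimisation step and the entropy/capacity comparison proceed along the lines already set up in the abstract framework.
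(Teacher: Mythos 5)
There is a genuine gap in the central "iteration" step. You propose to boost a single regularity gain by expanding $\Lambda_{L+q_1}-\Lambda_{L+q_2}$ in a Birman--Schwinger/Neumann series in $q$ and letting each Green's operator contribute an extra fractional gain. This does not work: the term \emph{linear} in $q$ (essentially $f \mapsto \p_{\nu}^L G_L(q\,P_L f)|_{\p\Omega}$) is present in every truncation of the series, and its smoothness is capped by the one-step gain available for a $W^{\delta,n/2}$ potential; the smoothness of a sum is limited by its least smooth term, so iterating the expansion in $q$ cannot produce super-polynomial compression of the full operator. The paper's iteration is of a different nature: one first \emph{chooses} the potentials compactly supported in a smooth interior subdomain $\Omega'\Subset\Omega$ (the theorem is an existence statement, so your concern about $q_j$ not vanishing near $\p\Omega$ and the intrinsic Littlewood--Paley decomposition on $\p\Omega$ is unnecessary), then dominates $\|(\Lambda_{L+q}-\Lambda_L)f\|_{H^{-1/2}(\p\Omega)}$ uniformly over the class by $\|Af\|_{H^{1/2}(\p\Omega')}$, where $A$ is the $q$-\emph{independent} map sending $u_0|_{\p\Omega}$ to $u_0|_{\p\Omega'}$ for solutions of $Lu_0=0$. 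The small (Meyers) regularity gain is then iterated by composing restriction maps for the fixed operator $L$ across a nested family of $N$ Lipschitz subdomains and optimizing $N\sim k^{1/n}$, exactly as in the proof of Theorem \ref{thm:UCP_instab}, which yields the stretched-exponential bound $\sigma_k(A)\leq e^{-ck^{1/n}}$. Taking the singular basis of $A$ as the fixed basis and using self-adjointness of $\Lambda_{L+q}-\Lambda_L$, one verifies the hypothesis of the abstract result (Theorem \ref{thm_lowreg_calderon_abstract}) with $\mu=1/n$, $m=\delta/n$, and the exponent $\frac{n}{\delta(2n+1)}$ falls out of $\frac{\mu}{m(\mu+2)}$.

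Even setting aside the structural problem, your quantitative output is too weak. Optimizing $k\sim\log N$ against a per-step gain gives decay of the form $e^{-c(\log N)^2}$, and feeding such decay into the entropy/capacity comparison produces a lower bound on the modulus that is far from the claimed $\exp(-C\eps^{-n/(\delta(2n+1))})$: to get an exponential-in-$\eps^{-\beta}$ separation of the data one needs singular value or entropy decay of stretched-exponential type $e^{-ck^{\mu}}$ in $k$, uniformly over the admissible class and with respect to a fixed, $q$-independent basis (this uniformity is precisely what condition \eqref{fu_decay} encodes and what your Hilbert--Schmidt frequency estimate, whose basis is not tied down independently of $q_1,q_2$, does not supply). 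Finally, the obstacle you flag as the main difficulty -- a robust single-step fractional smoothing estimate for the Green's operator of $L$ up to the Lipschitz boundary -- is not actually needed: the paper only uses interior Meyers estimates (plus boundary flattening and reflection) for the homogeneous equation $Lu_0=0$ on the nested subdomains, together with standard energy estimates for the inhomogeneous problem defining $F(q)f$.
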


In the setting of the Calder\'on problem this gives a complete answer to the questions (Q1)-(Q3) showing that even in settings in which the unique continuation property fails, one can at best hope for logarithmic stability in the inverse problem. 

In contrast to the arguments in the previous section, where we deduced the decay of the singular values of the relevant maps through a high degree of regularity, we here deduce the decay of the singular values through an iteration of only a very small amount of regularity. This improvement allows us to treat equations and systems with rough coefficients in (relatively) rough domains. Related observations which however build on the separability of Green's functions have been used in the numerical analysis literature to prove (quantitative) approximation properties by means of hierarchical matrices \cite{BH03, EZ18}. Also, in these works only minimal regularity -- encoded in the validity of Caccioppoli inequalities -- is required.

While the Calder\'on problem is a prototypical example of a nonlinear inverse problem to which these ideas apply, we emphasize that they are not restricted to this specific problem but could also be used to explain the instability of related inverse problems such as for instance the fractional Calder\'on problem \cite{RulandSalo_instability}, inverse inclusion or scattering problems as treated in \cite{DiCristoRondi} and the deterioration of Lipschitz stability estimates for finite dimensional problems depending on the available degrees of freedom \cite{R06}. 
As further applications, we employ these ideas to prove that the unique continuation property for elliptic equations up to the boundary can at best have a logarithmic modulus of continuity (see Theorem \ref{thm:UCP_instab}). This shows that the classical instability result of Hadamard \cite{Hadamard} (see also \cite{AlessandriniRondiRossetVessella}), involving the constant coefficient Laplacian in a flat domain, remains valid for quite general coefficients and geometries.
The instability of unique continuation is also closely related to estimates on the \emph{cost of approximation} in control theoretic problems. For instance, in Section \ref{sec:control} we present a sample result of this for lower bounds on the cost of controllability for the heat equation under low regularity assumptions, see Theorem \ref{thm:controllability_heat_cost}.

\subsection{Instability through microlocal smoothing properties}

\label{sec:instabIII}

As a last instability mechanism, we discuss microlocal smoothing properties.

In contrast to the previous two mechanisms, this type of instability mechanism does not require the forward operator to be \emph{globally} smoothing. Instabilities of different type can already be deduced from microlocal smoothing properties. This includes for instance instability properties of the limited data Radon transform \cite{Quinto, Quinto1} as well as the super-polynomial instability of the geodesic X-ray transform \cite{StefanovUhlmann, MSU15, HU18}. We will discuss these examples and give an abstract framework for this in Section \ref{sec:micro_smooth_1}.

A further example to which this applies, and which might be of interest also to the control theory community, is the logarithmic instability of the unique continuation property for the wave equation in the absence of the geometric control condition of Bardos-Lebeau-Rauch \cite{BardosLebeauRauch}:

\begin{theorem}
Let $(M,g)$ be an analytic, closed Riemannian manifold. Let $\Omega \subset M$ be an open set. Assume that $T > 2\max\{\dist(x,\Omega), \ x \in M\}$ and assume that there exists a ray of geometric optics which does not intersect $\Omega \times [0,T]$.
Consider the solution to
\begin{align*}
\p_t^2 u - \D_g u & = 0 \mbox{ in } M \times (0,T),\\
(u, \p_t u) & =(u_0,u_1) \mbox{ on } M \times \{0\}.
\end{align*}
Suppose that there is an inequality of the form
\begin{align*}
\|(u_0,u_1)\|_{L^2(M)\times H^{-1}(M)}
\leq \omega\left( \frac{\|u\|_{L^2((0,T),H^1(\Omega))}}{\|(u_0,u_1)\|_{H^1(M)\times L^2(M)}} \right) \|(u_0,u_1)\|_{H^1(M)\times L^2(M)}
\end{align*}
for some modulus of continuity $\omega$. Then for any $\eps>0$ there exists $C_{\eps}>0$ such that we have $\omega(t)\geq C_{\eps} |\log(t)|^{- \frac{n+2}{n}-\eps}$ for $t$ small.
\end{theorem}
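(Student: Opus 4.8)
The plan is to use the third instability mechanism (microlocal smoothing) from Section~\ref{sec:micro_smooth_1}. The abstract scheme there should reduce the statement to establishing two facts: a microlocal smoothing property for the forward map associated with unique continuation for the wave equation, and a microlocal singularity-carrying capacity estimate for its domain. The forward operator here sends initial data $(u_0,u_1)$ to the restriction $u|_{\Omega\times(0,T)}$ (measured in $L^2((0,T),H^1(\Omega))$). The key point is that because $T > 2\max\{\dist(x,\Omega)\}$ but there is a ray of geometric optics avoiding $\Omega\times[0,T]$, one is \emph{not} in the Bardos--Lebeau--Rauch geometric control situation, so energy can concentrate microlocally along that escaping ray. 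By propagation of singularities for the wave equation (here one uses analyticity of $(M,g)$, exactly as in the X-ray transform examples of \cite{StefanovUhlmann, MSU15, HU18}), a wave packet of frequency $\lambda$ whose wavefront set sits near the escaping ray at $t=0$ remains concentrated near that ray for $t\in[0,T]$ and hence is \emph{analytically smoothing} when restricted to $\Omega\times(0,T)$: its image is exponentially small in $\lambda$ in the relevant norm, while its $H^1(M)\times L^2(M)$ norm is of unit order.

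The concrete construction I would carry out: pick the escaping ray $\rho(t)=(x(t),\xi(t))$ in $T^*M$ and build, for each large $\lambda\in\N$, a Gaussian-beam / coherent-state family of initial data $(u_0^\lambda, u_1^\lambda)$ microlocalized at $\lambda\rho(0)$, with $\|(u_0^\lambda,u_1^\lambda)\|_{H^1\times L^2}\sim 1$ and $\|(u_0^\lambda,u_1^\lambda)\|_{L^2\times H^{-1}}\sim \lambda^{-1}$. One then superposes these over a $\lambda$-dependent family of nearby rays or frequencies to manufacture, inside a ball of radius $\eps$ in the weak ($L^2\times H^{-1}$) norm, an $\eps$-discrete (separated) set of cardinality $\exp(c\,\eps^{-n/(n+2)})$ or so: this is the capacity/entropy input, and the exponent $\frac{n+2}{n}$ in the statement is exactly the reciprocal of the exponent governing how many microlocally distinguishable wave packets of a given size fit near the ray in $n$ space dimensions (dimension $n$ of the relevant phase-space slice, scaled by frequency). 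On the other side, the forward map compresses this set into an exponentially small net in $L^2((0,T),H^1(\Omega))$, again because each basis element maps to something of size $\exp(-c\lambda)$ there. Feeding the capacity bound versus the compression bound into the abstract pigeonhole lemma of Section~\ref{sec:abstract} (the same mechanism underlying Theorems~\ref{thm:Cald1} and~\ref{thm:thm_Calderon}) yields that $\omega$ cannot decay faster than $|\log t|^{-\frac{n+2}{n}-\eps}$, with the loss of $\eps$ coming from absorbing polynomial-in-$\lambda$ prefactors in the construction.

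The main obstacle I anticipate is making the ``analytic microlocal smoothing along the escaping ray'' rigorous and quantitative. Gaussian beams only give concentration modulo $O(\lambda^{-\infty})$, not the $O(e^{-c\lambda})$ decay that the logarithmic (rather than merely non-polynomial) lower bound requires; to get the exponential bound one genuinely needs the analytic propagation of singularities / analytic wavefront set machinery (e.g.\ FBI transform estimates as in Lebeau's work on the wave equation, or the analytic-smoothing results invoked for the X-ray transform in this paper), controlling solutions in a complex-neighborhood / analytic-class sense and verifying that the escaping ray stays uniformly away from $\Omega$ for all $t\in[0,T]$ so that the exponential gain is uniform. The hypothesis $T>2\max\{\dist(x,\Omega)\}$ is used only to guarantee (via finite speed of propagation and a Holmgren--John unique continuation argument) that the forward map is \emph{injective} — i.e.\ that asking for a modulus of continuity $\omega$ at all is meaningful — so the real work is entirely in the quantitative smoothing/capacity comparison, and the rest is bookkeeping with the abstract lemmas already established.
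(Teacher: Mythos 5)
Your overall strategy is indeed the paper's third mechanism (microlocal smoothing along the escaping ray, then capacity versus compression and a pigeonhole), but the step you yourself flag as the main obstacle is precisely the step the proposal never supplies, and your proposed route would not deliver it as described. Gaussian beams/coherent states only concentrate modulo $O(\lambda^{-\infty})$, and genuinely \emph{analytic} microlocalization is obstructed by the absence of compactly supported analytic cutoffs. The paper's resolution is different from ``invoke FBI/analytic wavefront machinery'': it works in Gevrey classes $G^{\sigma}$ with $\sigma>1$, where compactly supported cutoffs exist, builds a Gevrey pseudodifferential cutoff $P$ microlocalized on a cone around the escaping ray, factorizes the wave operator as $(\p_t-iB)(\p_t+iB)$ with $B=(-\Delta_g)^{1/2}$ and restricts to data $u_1=-iBu_0$ so that singularities propagate only along the \emph{forward} ray (for general $(u_0,u_1)$ the singularity splits into two rays and only one is assumed to avoid $\Omega\times[0,T]$ -- your proposal does not address this), and then uses $G^{\sigma}$ propagation of singularities to conclude that $v\mapsto \psi_1\psi_2\,w$ composed with $P$ has a $G^{\sigma}$ kernel, hence maps boundedly into $A^{\sigma,\rho}_Q(M\times\mR)$. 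Theorem~\ref{thm_microlocal_smoothing}(b) then applies, and the exponent is $\frac{\sigma(n+1)+1}{n}$; letting $\sigma\downarrow 1$ is what produces the $\eps$-loss. Your attribution of the $\eps$-loss to ``polynomial-in-$\lambda$ prefactors'' is therefore not where the loss actually comes from, and without the Gevrey device (or a genuinely exponential quasimode construction, which is a much harder, Lebeau-type argument) the proposal has no proof of the exponential compression it needs.

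The exponent bookkeeping on the capacity side is also off. In the paper the denominator $n$ comes from the Weyl law for the order-zero microlocal cutoff $P$ viewed between Sobolev spaces with gap $\delta=1$ on the $n$-manifold $M$ (Theorem~\ref{thm:Weyl_microlocal}), which yields $\eps$-discrete sets in $P(K)$ of cardinality $e^{c\eps^{-n}}$ -- no explicit wave-packet superposition is needed -- while the numerator $n+2=\sigma(n+1)+1|_{\sigma=1}$ comes from the entropy numbers of the embedding $A^{\sigma,\rho}_Q(M\times\mR)\to H^t_Q(M\times\mR)$ on the $(n+1)$-dimensional space-time cylinder (Theorem~\ref{thm_analytic_inclusion}). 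Your claimed capacity $\exp(c\,\eps^{-n/(n+2)})$ and the heuristic ``reciprocal of the phase-space packing exponent'' do not reproduce this split, and fed into Theorem~\ref{thm:Mandache} they would not give the stated modulus. Your remark on the role of $T>2\max\{\dist(x,\Omega)\}$ (injectivity/observability, not used in the instability proof itself) is consistent with the paper.
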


Although this is a known result in the control theory community (see for instance \cite{Lebeau} or the recent article \cite{LaurentLeautaud} providing corresponding stability estimates), our argument presents a simple and conceptually very clear way of obtaining this instability property. It shows that in particular our third mechanism applies in non-elliptic and non-parabolic contexts.

\medskip

All in all, the three outlined mechanisms provide methods of deducing instability in inverse and control theory problems in a ``soft'' but robust way.
In particular our second instability mechanism shows that high order regularity is not the only mechanism leading to instability. Instability should rather be viewed as relying on a \emph{strong compression mechanism}.

\subsection{Outline of the article}

The remainder of the article is organised as follows. Section \ref{sec:pre} includes some useful facts related to abstract inverse problems, singular values, Weyl asymptotics, and Sobolev and Gevrey/real-analytic spaces. In Section \ref{sec:abstract} we formulate the abstract instability framework based on entropy and capacity estimates. Entropy numbers for various embeddings between spaces of functions or operators are also discussed. Next, in Section \ref{sec:examples1}, we present our first robust instability mechanism, the ``analytic smoothing implies exponential instability'' argument outlined in Section \ref{sec:instI}. As examples, we discuss the instability of unique continuation and the linearized and nonlinear Calder\'on problems for $C^{\infty}$ or analytic coefficients. In Section \ref{sec:instab_low_reg} we then address the instability arguments in the presence of low regularity as outlined in Section \ref{sec:instII}. In addition to the low regularity Calder\'on problem we also investigate here linear problems such as the backward heat equation or unique continuation at low regularity. In Section \ref{sec:micro_smooth_1} we discuss our last instability mechanism based on microlocal smoothing properties, and apply this to the limited data Radon transform, geodesic X-ray transform and an inverse problem for the transport equation.

The article includes four appendices. Appendix \ref{sec:abstr} presents a framework for instability properties of linear inverse problems based on singular value arguments. In the linear case this gives an alternative, more direct approach than the entropy/capacity approach in Section \ref{sec:abstract}. Appendix \ref{sec_appendix_nonlinear} includes some (technical) proofs from Section \ref{sec:pre}. Appendix \ref{sec:Carl} discusses optimal stability for interior unique continuation, and shows how Carleman estimates can lead to complementary lower bounds on the singular values of the relevant operators. Finally, Appendix \ref{sec:Gevrey} includes some facts related to Gevrey/analytic pseudodifferential operators.

\subsection*{Notation}
We will write $A \lesssim B$ (resp.\ $A \gtrsim B$) if $A \leq C B$ (resp.\ $A \geq c B$) for some constants $C, c > 0$ which do not depend on asymptotic parameters. We also write $A \sim B$ if $c A \leq B \leq C A$ for such constants $C, c > 0$. For example, the singular value estimate $\sigma_k \sim k^{-s}$ means that $c k^{-s} \leq \sigma_k \leq C k^{-s}$ uniformly over $k \geq 1$, and the modulus of continuity estimate $\omega(t) \gtrsim t^{\alpha}$ for small $t$ means that $\omega(t) \geq c t^{\alpha}$ when $0 \leq t \leq t_0$ for some small $t_0 > 0$. We will also use the Einstein summation convention where a repeated index in upper and lower position is summed, e.g.\ $\p_j(a^{jk} \p_k u)$ denotes $\sum_{j,k=1}^n \p_j(a^{jk} \p_k u)$ if $u$ is a function in a domain in $\mR^n$.

\subsection*{Acknowledgements}

The authors are deeply grateful to Luca Rondi, who has made important contributions to this article. Conversations between the last named author and Rondi initiated this research, and the presentation has improved considerably thanks to several helpful comments from Rondi.  The authors are also grateful to Thorsten Hohage, Barbara Kaltenbacher, Richard Nickl and Hongkai Zhao for providing relevant references, and to the anonymous referees for helpful suggestions. The authors would further like to thank Leonard Busch and Lauri Oksanen for pointing out various misprints in the first published version of this article.
H.K. was partially supported by the Deutsche Forschungsgemeinschaft (DFG, German Research Foundation) through the Hausdorff Center for Mathematics under Germany's Excellence Strategy - EXC-2047/1 - 390685813 and through CRC 1060 - project number 211504053. 
A.R.\ was supported by the Deutsche Forschungsgemeinschaft (DFG, German Research Foundation) under Germany's Excellence Strategy EXC-2181/1 - 390900948 (the Heidelberg STRUCTURES Cluster of Excellence). She would also like to thank the MPI MIS where part of this work was carried out.
M.S.\ was supported by the Academy of Finland (Finnish Centre of Excellence in Inverse Modelling and Imaging, grant numbers 312121 and 309963) and by the European Research Council under Horizon 2020 (ERC CoG 770924).

\section{Preliminaries}
\label{sec:pre}

\subsection{Abstract setup for stability}

Let $F: X \to Y$ be a continuous map (linear or nonlinear) between two metric spaces. We also call $F$ the \emph{forward operator}. We consider the basic inverse problem for $F$: given $y \in Y$, find $x \in X$ with 
\[
F(x) = y.
\]
In this section, we will make the following simplifying assumptions (which will be true in most of our examples): 
\begin{itemize}
\item 
$y \in F(X)$, so that there exists at least one $x \in X$ with $F(x) = y$ (more generally, one could consider approximate solutions such as regularized or least squares solutions);
\item 
$F$ is injective, so that there exists at most one $x \in X$ with $F(x) = y$ (more generally, one could consider equivalence classes of solutions such as those related by some gauge invariance, or minimal norm solutions).
\end{itemize}
We are interested in cases where the inverse problem for $F$ is \emph{ill-posed}, meaning that the continuous bijective map $F: X \to F(X)$ does not have a continuous inverse.

It is well known that one can restore some stability in ill-posed problems when the unknowns are restricted to a compact subset of $X$ (see for instance \cite{Tikhonov, John}). This corresponds to imposing \emph{a priori bounds} on the unknowns, and leads to \emph{conditional stability results} which are often formulated in terms of a \emph{modulus of continuity}, i.e.\ an increasing function $\omega: [0,\infty] \to [0,\infty]$ which is continuous at $0$ with $\omega(0) = 0$. The basis for this is the following simple topological result.

\begin{lemma} \label{lemma_modulus_abstract}
Let $F: X \to Y$ be a continuous injective map between two metric spaces. If $K$ is any compact subset of $X$, the map $F|_K: K \to F(K)$ is a homeomorphism and there is a modulus of continuity $\omega$ such that 
\[
d_X(x_1, x_2) \leq \omega(d_Y(F(x_1), F(x_2))), \qquad x_1, x_2 \in K.
\]
\end{lemma}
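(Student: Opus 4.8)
The plan is to prove the two assertions of the lemma separately: first that $F|_K \colon K \to F(K)$ is a homeomorphism, and then that its inverse admits a modulus of continuity in the claimed sense.

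For the first part, I would recall the standard fact that a continuous bijection from a compact space to a Hausdorff space is a homeomorphism. Since $F$ is injective, $F|_K$ is a continuous bijection onto its image $F(K)$. As $K$ is compact and $Y$ is a metric space (hence Hausdorff), $F|_K$ maps closed subsets of $K$ (which are compact) to compact, hence closed, subsets of $F(K)$. Thus $F|_K$ is a closed map, and a continuous closed bijection is a homeomorphism. In particular $(F|_K)^{-1} \colon F(K) \to K$ is continuous.

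For the second part, I would define the candidate modulus explicitly by
\[
\omega(t) := \sup\{ d_X(x_1,x_2) : x_1, x_2 \in K, \ d_Y(F(x_1),F(x_2)) \leq t \}, \qquad t \geq 0,
\]
with the convention $\sup \emptyset = 0$. By construction the desired inequality $d_X(x_1,x_2) \leq \omega(d_Y(F(x_1),F(x_2)))$ holds for all $x_1, x_2 \in K$, and $\omega$ is nondecreasing in $t$; it is finite because $K$ is bounded (being compact in a metric space, its diameter is finite). The substantive point, and the main obstacle, is to verify continuity of $\omega$ at $0$ with $\omega(0) = 0$. Suppose not: then there is $\eta > 0$ and sequences $x_1^{(k)}, x_2^{(k)} \in K$ with $d_Y(F(x_1^{(k)}), F(x_2^{(k)})) \to 0$ but $d_X(x_1^{(k)}, x_2^{(k)}) \geq \eta$ for all $k$. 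By compactness of $K$, pass to subsequences so that $x_1^{(k)} \to x_1 \in K$ and $x_2^{(k)} \to x_2 \in K$; then $d_X(x_1,x_2) \geq \eta > 0$, so $x_1 \neq x_2$, whereas continuity of $F$ gives $F(x_1^{(k)}) \to F(x_1)$ and $F(x_2^{(k)}) \to F(x_2)$, forcing $d_Y(F(x_1), F(x_2)) = 0$, i.e.\ $F(x_1) = F(x_2)$, contradicting injectivity of $F$. Hence $\omega(t) \to 0$ as $t \to 0^+$, and since we may redefine $\omega(0) := 0$ (consistent with the inequality), $\omega$ is a genuine modulus of continuity.

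Finally, if one wants $\omega$ to be defined on all of $[0,\infty]$ and valued in $[0,\infty]$ as in the stated definition, one can simply extend it by its supremal value (the diameter of $K$) for large $t$, or leave it as is since it is already bounded; and strict monotonicity, if desired, can be arranged by adding a small increasing term such as $t \mapsto \omega(t) + t$, which only weakens the inequality in the harmless direction. I expect the homeomorphism step to be routine point-set topology, and the only place requiring genuine care to be the compactness-plus-injectivity argument for continuity of $\omega$ at $0$.
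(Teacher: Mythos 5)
Your proof is correct and follows essentially the same route as the paper: the homeomorphism part is the identical standard fact, and your explicit construction of the minimal modulus together with the compactness--injectivity sequence argument is just an unrolled version of the paper's appeal to uniform continuity of the inverse $(F|_K)^{-1}$ on the compact set $F(K)$. No gaps.
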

\begin{proof}
The map $F|_K: K \to F(K)$ is a continuous bijective map from a compact space to a Hausdorff space, hence it is a homeomorphism. Its inverse $G: F(K) \to K$ is uniformly continuous since $F(K)$ is compact, and thus 
\[
d_X(G(y_1), G(y_2)) \leq \omega(d_Y(y_1, y_2)), \qquad y_j \in F(K),
\]
for some modulus of continuity $\omega$. The result follows since $y_j = F(x_j)$ for a unique $x_j \in K$.
\end{proof}

With the previous observation in hand, we thus define the modulus of stability for an inverse problem given the spaces $X,Y$ and the compact set $K$:

\begin{definition}
\label{def:stab_mod}
Let $F: X \to Y$ be a continuous map between metric spaces, let $K$ be a subset of $X$ (usually compact), and let $\omega$ be a modulus of continuity. We say that the \emph{inverse problem for $F$ is $\omega$-stable in $K$} if 
\[
d_X(x_1, x_2) \leq \omega(d_Y(F(x_1), F(x_2))), \qquad x_1, x_2 \in K.
\]
\end{definition}

We note that the modulus of continuity $\omega$ in the above definition depends on 
\begin{itemize}
\item 
the forward operator $F$;
\item 
the spaces $X$ and $Y$ and their topologies (there could be several reasonable choices); and 
\item 
the set $K$ (again there could be many possible choices).
\end{itemize}

In the following sections, given $F$, $X$, $Y$, $K$, we seek to find necessary conditions for $\omega$ (thus providing bounds on the instability of an inverse problem).

In many places of this article, we will consider a less general set-up than the one outlined in Definition \ref{def:stab_mod}: Often the relevant operator $A:X\rightarrow Y$ will be linear with $X$, $Y$ normed spaces (or even separable Hilbert spaces) and $K:=\{x\in X: \ \|x\|_{X_1} \leq 1\}$ where $X_1$ is a normed space (or even separable Hilbert space) which embeds into $X$ compactly. In this setting we will consider two closely related notions of $\omega$-stability:
\begin{itemize}
\item[(a)] $\|x\|_{X}\leq \omega(\|A x\|_{Y})$ for all $x\in X$ with $\|x\|_{X_1}\leq 1$,
\item[(b)] $\|x\|_{X} \leq \omega\left( \frac{\|Ax\|_{Y}}{\|x\|_{X_1}} \right) \|x\|_{X_1}$ for all $x \in X_1 \setminus \{0\}$.
\end{itemize}
We note that, on the one hand, by linearity of $A$ and scaling, the stability condition (a) always  implies the condition (b). If, on the other hand, (b) is satisfied and if $\omega(t)$ is such that ``a concavity condition at zero'' holds, i.e.
\begin{align}
\label{eq:modulus_cond}
r \omega \left(\frac{t}{r} \right) \leq \omega(t) \mbox{ whenever } 0 < r \leq 1,
\end{align}
then also the condition (a) holds. 

We remark that the condition \eqref{eq:modulus_cond} does not pose any essential restrictions, since any continuous map on a compact metric space admits a concave modulus of continuity satisfying \eqref{eq:modulus_cond}. Under these conditions one thus has the equivalence of (a) and (b).

\subsection{Singular value decomposition}

If the forward map $F$ is a linear, compact operator between separable Hilbert spaces (in which case we often write $F = A$), then it has a singular value decomposition and the decay of singular values plays a crucial role in the instability properties. We recall the properties of singular values that will be useful for our purposes.

\begin{proposition} \label{prop_singular_value_decomposition}
Let $A: X \to Y$ be a compact linear operator between separable Hilbert spaces with $X$ infinite dimensional. Let $\lambda_1 \geq \lambda_2 \geq \ldots \geq 0$ be the eigenvalues of $A^* A$, and let $(\varphi_j)_{j=1}^{\infty}$ be a corresponding orthonormal basis of $X$ consisting of eigenfunctions. Let $\sigma_j := \sqrt{\lambda_j}$ for $j \geq 1$, and let $\psi_j := \frac{1}{\sigma_j} A \varphi_j$ when $\sigma_j \neq 0$. Then $\psi_j$ are orthonormal in $Y$, and $A$ has the singular value decomposition 
\[
Au = \sum_{\sigma_j \neq 0} \sigma_j (u, \varphi_j)_X \psi_j, \qquad u \in X.
\]
Here $(\varphi_j)_{j=1}^{\infty}$ is called a singular value basis for $A$. In particular one has 
\[
A\varphi_j = \sigma_j \psi_j, \ \  A^* \psi_j = \sigma_j \varphi_j \ \ \text{ when $\sigma_j \neq 0$}.
\]

The singular values $\sigma_j = \sigma_j(A) = \sigma_j(A^*)$ have the following properties:
\begin{itemize}
\item[(a)] $\norm{A} = \sigma_1(A) \geq \sigma_2(A)\geq \ldots \geq 0$.
\item[(b)] If $A$ is injective, then $\sigma_j(A) > 0$ for all $j \geq 1$.
\item[(c)] If $B: X \to Y$ is compact, then for all $j,k \geq 1$ one has 
\begin{align*}
\sigma_{j+k-1}(A+B) \leq \sigma_j(A) + \sigma_k(B).
\end{align*}
\item[(d)] If $C: X_1 \to X$ is compact, then for all $j,k \geq 1$ one has 
\begin{align*}
\sigma_{j+k-1}(A \circ C) \leq \sigma_j(A) \sigma_k(C).
\end{align*}
\item[(e)] If $\Phi: X_1 \to X$ and $\Psi: Y \to Y_1$ are bounded linear operators, one has 
\[
\sigma_j(A \circ \Phi) \leq \sigma_j(A) \norm{\Phi}, \qquad \sigma_j(\Psi \circ A) \leq \norm{\Psi} \sigma_j(A).
\]
Moreover, if $\Phi$ and $\Psi$ are isometries, then 
\[
\sigma_j(A \circ \Phi) = \sigma_j(\Psi \circ A) = \sigma_j(A).
\]
\end{itemize}
\end{proposition}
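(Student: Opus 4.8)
The plan is to obtain the singular value decomposition directly from the spectral theorem for compact self-adjoint operators, and then to derive all of (a)--(e) from the approximation-number characterization
\[
\sigma_j(A) = \inf\{ \norm{A - R} : R : X \to Y \text{ bounded linear}, \ \rank R \leq j-1 \},
\]
which I regard as the real workhorse of the statement. I do not anticipate a genuine obstacle, as the whole proposition is standard Hilbert-space operator theory; the only points calling for a little care are the bookkeeping in the spectral step when $A$ has finite rank (handled by separability and infinite-dimensionality of $X$) and the reading of ``isometry'' in (e).

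For the decomposition itself, I would note that $A^*A : X \to X$ is compact, self-adjoint and positive semi-definite, hence by the Hilbert--Schmidt spectral theorem it admits an orthonormal eigenbasis $(\varphi_j)_{j=1}^\infty$ of $X$ with eigenvalues $\lambda_1 \geq \lambda_2 \geq \cdots \geq 0$; when $A^*A$ has finite rank the remaining $\lambda_j$ equal $0$ and the corresponding $\varphi_j$ form an orthonormal basis of $\ker(A^*A)$, which is legitimate since $X$ is separable. Setting $\sigma_j := \sqrt{\lambda_j}$ and $\psi_j := \sigma_j^{-1} A\varphi_j$ for $\sigma_j \neq 0$, the identity $\norm{A\varphi_j}_Y^2 = (A^*A\varphi_j, \varphi_j)_X = \lambda_j$ shows $A\varphi_j = 0$ whenever $\sigma_j = 0$, while $(\psi_i, \psi_j)_Y = (\sigma_i \sigma_j)^{-1}(A^*A\varphi_i, \varphi_j)_X = \delta_{ij}$, so the $\psi_j$ are orthonormal in $Y$. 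Expanding $u = \sum_j (u,\varphi_j)_X \varphi_j$ and applying the bounded operator $A$ term by term gives $Au = \sum_{\sigma_j \neq 0} \sigma_j (u,\varphi_j)_X \psi_j$, and then $A\varphi_j = \sigma_j\psi_j$ and $A^*\psi_j = \sigma_j^{-1}A^*A\varphi_j = \sigma_j\varphi_j$ for $\sigma_j \neq 0$.

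Next I would prove the approximation-number identity above. The bound ``$\geq$'' comes from the explicit truncation $R_{j-1} := \sum_{i \leq j-1} \sigma_i(A) (\,\cdot\,, \varphi_i)_X \psi_i$, for which the decomposition formula gives $\norm{A - R_{j-1}} = \sigma_j(A)$. For ``$\leq$'', if $\rank R \leq j-1$ then $\ker R$ meets the $j$-dimensional subspace $\spa\{\varphi_1, \dots, \varphi_j\}$ in a nonzero vector; picking a unit vector $u = \sum_{i \leq j} c_i \varphi_i$ there yields $\norm{(A-R)u}_Y = \norm{Au}_Y = \bigl(\sum_{i\leq j}\sigma_i(A)^2 |c_i|^2\bigr)^{1/2} \geq \sigma_j(A)$. (Equivalently one could invoke the Courant--Fischer min-max principle for the eigenvalues of $A^*A$; I prefer the approximation-number form because (c)--(e) then follow mechanically.) Property (a) is now immediate, since $\sigma_1(A) = \norm{A - 0} = \norm{A}$ and the infima are nested in $j$; property (b) holds because $\sigma_j(A) = 0$ would force $\lambda_j = 0$ to be an eigenvalue, i.e.\ $\varphi_j \in \ker(A^*A) = \ker A$, contradicting injectivity.

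Finally I would read off (c)--(e). For (c), choose $R$, $S$ of rank $\leq j-1$, $\leq k-1$ nearly optimal for $A$, $B$; then $\rank(R+S) \leq j+k-2$ and $\norm{(A+B) - (R+S)} \leq \norm{A-R} + \norm{B-S}$, so passing to the infimum gives $\sigma_{j+k-1}(A+B) \leq \sigma_j(A) + \sigma_k(B)$. For (d), with $R$ near-optimal for $A$ and $S$ near-optimal for $C$, the algebraic identity
\[
AC - \bigl(RC + (A-R)S\bigr) = (A-R)(C-S)
\]
exhibits $A \circ C$ as an operator of rank $\leq j+k-2$ plus one of norm $\leq \norm{A-R}\,\norm{C-S}$; taking infima (the product factorizes) gives $\sigma_{j+k-1}(A\circ C) \leq \sigma_j(A)\sigma_k(C)$. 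For (e), $(A\Phi) - R\Phi = (A-R)\Phi$ with $\rank(R\Phi)\leq j-1$ and $\norm{(A-R)\Phi} \leq \norm{A-R}\norm{\Phi}$ yields $\sigma_j(A\Phi) \leq \sigma_j(A)\norm{\Phi}$, and symmetrically $\Psi A - \Psi R = \Psi(A-R)$ yields $\sigma_j(\Psi A) \leq \norm{\Psi}\sigma_j(A)$; when $\Phi,\Psi$ are isometric isomorphisms one has $\norm{\Phi}=\norm{\Psi}=1$ and $\Phi^*\Phi=\Phi\Phi^*=\Id$, $\Psi^*\Psi=\Psi\Psi^*=\Id$, so $\sigma_j(A) = \sigma_j(\Psi^*(\Psi A)) \leq \sigma_j(\Psi A) \leq \sigma_j(A)$ and likewise $\sigma_j(A) = \sigma_j((A\Phi)\Phi^*) \leq \sigma_j(A\Phi) \leq \sigma_j(A)$, forcing the claimed equalities. (For a merely isometric embedding $\Phi$ only the inequality $\sigma_j(A\Phi) \leq \sigma_j(A)$ remains valid, as $A = \mathrm{diag}(0,1)$ on $\R^2$ with $\Phi(t)=(t,0)$ shows, so I would read ``isometry'' in (e) as a surjective isometry.)
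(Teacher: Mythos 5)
Your argument is correct, but after the (shared) spectral-theorem construction of the decomposition it diverges from the paper's route: the paper derives (c)--(e) from the Courant minimax characterization of $\lambda_j(A^*A)$, constructing subspaces $U,V$ of dimensions $j-1,k-1$ on whose orthogonal complements $\norm{Au}\leq\sigma_j(A)\norm{u}$, $\norm{Bv}\leq\sigma_k(B)\norm{v}$ hold and taking a spanning subspace $S$ of dimension $j+k-2$ (with $C^*U$ replacing $U$ for the composition), whereas you first establish the approximation-number identity $\sigma_j(A)=\inf\{\norm{A-R}:\rank R\leq j-1\}$ and then get (c) from rank additivity, (d) from the identity $AC-(RC+(A-R)S)=(A-R)(C-S)$, and (e) by composing near-optimal finite-rank approximants with $\Phi$ or $\Psi$. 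Both are standard and complete; your route is more mechanical once the approximation-number identity is in place and is precisely the definition used for $s$-numbers in Banach spaces (so it meshes naturally with the entropy-number machinery of Section \ref{sec:abstract}), while the paper's minimax route stays entirely inside the eigenvalue picture of $A^*A$ and, for the isometry claim, simply invokes invariance of the spectrum of $A^*A$. Your closing caveat is also well taken: for $\sigma_j(A\circ\Phi)=\sigma_j(A)$ one genuinely needs $\Phi$ to be a surjective isometry (your $2\times 2$ example shows a strict drop otherwise), which is the reading consistent with how the paper applies the result (conjugation by the invertible Bessel potentials $J^{\pm s}$); note, though, that for $\Psi\circ A$ the equality holds for any isometry $\Psi$, surjective or not, since $(\Psi A)^*(\Psi A)=A^*A$.
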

\begin{proof}
Since $A^* A: X \to X$ is compact and self-adjoint, by the spectral theorem there is an orthonormal basis $(\varphi_j)_{j=1}^{\infty}$ of $X$ and eigenvalues $\lambda_1 \geq \lambda_2 \geq \ldots \geq 0$ with $\lambda_j \to 0$ so that $A^* A \varphi_j = \lambda_j \varphi_j$. The expression for $Au$ and the equations $A \varphi_j = \sigma_j \psi_j$ and $A^* \psi_j = \sigma_j \varphi_j$ follow immediately, and one also has $\sigma_j(A) = \sigma_j(A^*)$.

For (a) it is enough to note that $\sigma_1(A)^2 = \lambda_1(A^* A) = \norm{A^* A} = \norm{A}^2$, and (b) is clear. The standard Weyl inequality in (c) follows from the Courant minimax principle 
\[
\sigma_{j+k-1}(A+B) = \lambda_{j+k-1}((A+B)^*(A+B))^{1/2} = \min_S \max_{u \perp S, \norm{u} = 1} \norm{Au + Bu}
\]
where the minimum is over all subspaces $S \subset X$ with $\dim(S) \leq j+k-2$. Applying the minimax principle to $A^* A$ and $B^* B$, there are subspaces $U$ and $V$ of $X$ of dimension $j-1$ and $k-1$, respectively, so that 
\[
\norm{Au} \leq \sigma_j(A) \norm{u}, \qquad \norm{Bv} \leq \sigma_k(B) \norm{v}
\]
when $u \perp U$ and $v \perp V$. Let $S$ be a subspace of $X$ containing $U$ and $V$ with $\dim(S) = j+k-2$. Then 
\[
\sigma_{j+k-1}(A+B) \leq \max_{u \perp S, \norm{u} = 1} \norm{Au + Bu} \leq \sigma_j(A) + \sigma_k(B).
\]

For (d) we argue as in (c) and find subspaces $U$ and $V$ of $X$ having dimension $j-1$ and $k-1$, respectively, so that when $u \perp U$ and $v \perp V$ one has 
\[
\norm{Au} \leq \sigma_j(A) \norm{u}, \qquad \norm{Cv} \leq \sigma_k(C) \norm{v}.
\]
Let $S$ be a subspace of $X$ having dimension $j+k-2$ and containing both $C^* U$ and $V$. If $w \perp S$, one has $Cw \perp U$ and 
\[
\norm{ACw} \leq \sigma_j(A) \norm{Cw} \leq \sigma_j(A) \sigma_k(C) \norm{w}.
\]
The minimax principle proves (d). Finally, for (e) note that 
\begin{align*}
\sigma_j(\Psi \circ A) = \min_S \max_{u \perp S, \norm{u} = 1} \norm{\Psi(Au)} \leq \norm{\Psi} \sigma_j(A)
\end{align*}
where the minimum is over subspaces $S$ with dimension $j-1$. It also follows that $\sigma_j(A \circ \Phi) = \sigma_j(\Phi^* \circ A^*) \leq \norm{\Phi^*} \sigma_j(A^*) = \norm{\Phi} \sigma_j(A)$. The last part of (e) follows by looking at $A^* A$ and using that eigenvalues are preserved under isometries.
\end{proof}

\subsection{Weyl asymptotics}

Most of the instability results proved in this article involve some form of Weyl asymptotics for the eigenvalues or singular values of certain operators. We will state three such results. The first one is just the usual Weyl law for the Laplace-Beltrami operator on a compact manifold without boundary.

Let $(M,g)$ be a closed (i.e.\ compact with no boundary) oriented smooth Riemannian manifold with $\dim(M) = n$. Then there is a natural $L^2$ space $L^2(M)$ equipped with the measure induced by the volume form on $(M,g)$. The Laplace-Beltrami operator $\Delta_g$ of $(M,g)$ is given in local coordinates by 
\[
\Delta_g = \abs{g}^{-1/2} \p_j( \abs{g}^{1/2} g^{jk} \p_k u)
\]
where $(g_{jk})$ is the metric $g$ in local coordinates, $(g^{jk})$ is the inverse matrix of $(g_{jk})$, and $\abs{g} = \det(g_{jk})$. The operator $-\Delta_g$ (with domain $C^{\infty}(M)$) is an unbounded self-adjoint operator on $L^2(M)$ with compact resolvent and hence has a discrete spectrum. The eigenvalues have the following Weyl asymptotics (see e.g.\ \cite[Section 8.3 in vol. II]{Taylor}).

\begin{theorem} \label{thm_weyl1}
Let $(M,g)$ be a closed smooth $n$-manifold, and let $0 = \lambda_1 \leq \lambda_2 \leq \ldots$ be the eigenvalues of $-\Delta_g$ on $L^2(M)$. The eigenvalues of $-\Delta_g$ satisfy 
\[
\lim_{\lambda \to \infty} \frac{\#\{ j \,;\, \lambda_j \leq \lambda \}}{\lambda^{n/2}} = \frac{\mathrm{Vol}(M)}{\Gamma(\frac{n}{2}+1) (4\pi)^{n/2}}.
\]
In particular, the nonzero eigenvalues satisfy 
\[
\lambda_j \sim j^{\frac{2}{n}}.
\]
\end{theorem}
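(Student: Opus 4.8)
The plan is to follow the classical heat-kernel argument combined with a Tauberian theorem. First I would recall that, since $-\Delta_g$ is a nonnegative self-adjoint elliptic operator on the closed manifold $M$ with compact resolvent, it has discrete spectrum $0 = \lambda_1 \leq \lambda_2 \leq \ldots \to \infty$ together with an orthonormal basis $(\varphi_j)$ of $L^2(M)$ consisting of smooth eigenfunctions. Then for $t > 0$ the operator $e^{t\Delta_g}$ is trace class, with Schwartz kernel (the heat kernel) $p_t(x,y) = \sum_j e^{-t\lambda_j}\varphi_j(x)\varphi_j(y)$, so that
\[
\Theta(t) := \mathrm{tr}(e^{t\Delta_g}) = \sum_{j=1}^{\infty} e^{-t\lambda_j} = \int_M p_t(x,x)\, dV_g(x).
\]

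The analytic input is the short-time behaviour of the heat kernel on the diagonal. Constructing a parametrix for $\partial_t - \Delta_g$ --- for instance via the Minakshisundaram--Pleijel/Hadamard expansion in geodesic normal coordinates, or by the methods in \cite[Ch.~7--8, vol.~II]{Taylor} --- and correcting it to the exact heat kernel by a Duhamel iteration, one obtains, uniformly in $x \in M$,
\[
p_t(x,x) = (4\pi t)^{-n/2}\bigl(1 + a_1(x)\,t + O(t^2)\bigr), \qquad t \to 0^+,
\]
with $a_1$ smooth. Integrating over $M$ yields $\Theta(t) = (4\pi t)^{-n/2}\bigl(\mathrm{Vol}(M) + O(t)\bigr)$, and in particular $\Theta(t) \sim (4\pi)^{-n/2}\,\mathrm{Vol}(M)\, t^{-n/2}$ as $t \to 0^+$.

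Next I would write $\Theta(t) = \int_{[0,\infty)} e^{-t\lambda}\, dN(\lambda)$, where $N(\lambda) = \#\{\,j : \lambda_j \leq \lambda\,\}$ is the nondecreasing, right-continuous counting function, and apply Karamata's Tauberian theorem: if $\int_{[0,\infty)} e^{-t\lambda}\, dN(\lambda) \sim C\, t^{-\alpha}$ as $t \to 0^+$ for some $\alpha > 0$, then $N(\lambda) \sim \frac{C}{\Gamma(\alpha+1)}\,\lambda^{\alpha}$ as $\lambda \to \infty$. With $\alpha = n/2$ and $C = (4\pi)^{-n/2}\,\mathrm{Vol}(M)$ this gives exactly
\[
\lim_{\lambda \to \infty} \frac{N(\lambda)}{\lambda^{n/2}} = \frac{\mathrm{Vol}(M)}{\Gamma(\frac{n}{2}+1)\,(4\pi)^{n/2}} =: c_n.
\]
Finally, since $N(\lambda_j) \geq j$ while $N(\lambda_j - \eps) < j$ for every $\eps > 0$, the relation $N(\lambda) = c_n\,\lambda^{n/2}(1 + o(1))$ forces $\lambda_j = (j/c_n)^{2/n}(1 + o(1))$ as $j \to \infty$; in particular $\lambda_j \sim j^{2/n}$ for the nonzero eigenvalues.

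The main obstacle is the heat-kernel expansion: one must verify that the formal parametrix differs from the exact heat kernel by a remainder that is smooth on the diagonal and $o(t^{-n/2})$ as $t \to 0^+$ uniformly in $x$, which is where parabolic regularity estimates and the Duhamel iteration enter. Everything downstream --- the trace identity, the Karamata step, and the inversion to eigenvalue asymptotics --- is soft. An alternative that sidesteps the heat kernel entirely is Dirichlet--Neumann bracketing: partition $M$ into many small pieces on which $g$ is nearly Euclidean, compare $-\Delta_g$ with the Dirichlet and Neumann Laplacians of small Euclidean cubes (for which the Weyl law is elementary and explicit), sum the resulting counting functions, and let the mesh tend to $0$; there the delicate point is the bookkeeping with the variable metric and the error introduced by the partition.
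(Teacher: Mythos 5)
Your proof is correct, and it is essentially the argument the paper relies on: the paper does not prove this theorem itself but cites \cite[Section 8.3 in vol.\ II]{Taylor}, where exactly this heat-kernel trace expansion plus Karamata Tauberian argument is carried out. The downstream inversion to $\lambda_j \sim j^{2/n}$ via $N(\lambda_j) \geq j > N(\lambda_j - \eps)$ is also handled correctly.
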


The next result is a Weyl law for the Dirichlet Laplacian with bounded measurable coefficients \cite{BirmanSolomyak_nonsmooth}. It is sufficient for us to state this result in a bounded Euclidean domain with smooth boundary, but various generalizations are available (see e.g. \cite{BNR}).

\begin{theorem} \label{thm_weyl2}
Let $\Omega \subset \mR^n$ be a bounded domain with smooth boundary, and let $g$ be a symmetric matrix function with $g \in L^{\infty}(\Omega, \mR^{n \times n})$ and $g^{jk}(x) \xi_j \xi_k \geq c \abs{\xi}^2$ for a.e.\ $x \in \Omega$ where $c > 0$. If $0 < \lambda_1 \leq \lambda_2 \leq \ldots$ are the eigenvalues of the Dirichlet Laplacian $-\Delta_g$ on $\Omega$, then 
\[
\lambda_j \sim j^{2/n}.
\]
\end{theorem}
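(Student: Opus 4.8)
The plan is to reduce the variable-coefficient spectrum to the constant-coefficient one by the Courant--Fischer min--max principle, and then to pin down the constant-coefficient eigenvalues by Dirichlet--Neumann bracketing against cubes. First I would fix the functional-analytic setup: $-\Delta_g$ is the self-adjoint operator on $L^2(\Omega)$ associated with the closed, coercive quadratic form $a[u] := \int_\Omega g^{jk} \p_j u\, \p_k u\, dx$ with form domain $H_0^1(\Omega)$. Uniform ellipticity and the $L^\infty$ bound on $g$ give constants $0 < c_0 \le C_0$, depending only on $n$, the ellipticity constant and $\norm{g}_{L^\infty}$, such that $c_0 \int_\Omega \abs{\nabla u}^2\,dx \le a[u] \le C_0 \int_\Omega \abs{\nabla u}^2\,dx$ for all $u \in H_0^1(\Omega)$. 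Since $\Omega$ is bounded, $H_0^1(\Omega) \hookrightarrow L^2(\Omega)$ compactly, so the spectrum is discrete and the $j$-th eigenvalue is $\lambda_j = \min_{\dim V = j} \max_{u \in V \setminus \{0\}} a[u]/\norm{u}_{L^2(\Omega)}^2$, the minimum taken over $j$-dimensional subspaces $V \subset H_0^1(\Omega)$. Writing $\mu_j$ for the eigenvalues of the ordinary Dirichlet Laplacian on $\Omega$ (the case $g = \mathrm{Id}$), the two-sided form comparison applied with the same test subspaces yields $c_0 \mu_j \le \lambda_j \le C_0 \mu_j$ for all $j$, so the claim reduces to $\mu_j \sim j^{2/n}$.

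For the constant-coefficient eigenvalues I would invoke domain monotonicity: if $\Omega_1 \subset \Omega_2$, extension by zero maps $H_0^1(\Omega_1)$ into $H_0^1(\Omega_2)$ preserving both the Dirichlet energy and the $L^2$ norm, so $\mu_j(\Omega_2) \le \mu_j(\Omega_1)$. Pick an open cube $Q_{\mathrm{in}} \subset \Omega$ (possible since $\Omega$ is open and nonempty) and a cube $Q_{\mathrm{out}} \supset \Omega$ (possible since $\Omega$ is bounded); then $\mu_j(Q_{\mathrm{out}}) \le \mu_j(\Omega) \le \mu_j(Q_{\mathrm{in}})$. The Dirichlet eigenvalues of a cube of side $L$ are $\pi^2 L^{-2}(k_1^2 + \cdots + k_n^2)$, $k_i \in \mN$, so the counting function equals the number of lattice points in the positive-orthant part of a ball of radius $\sqrt{\lambda}\, L/\pi$ and hence is $\sim \lambda^{n/2}$; this gives $\mu_j(Q) \sim j^{2/n}$ for each fixed cube, with constants depending only on $n$ and $L$. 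Combining the inequalities shows $\mu_j(\Omega) \sim j^{2/n}$, and therefore $\lambda_j \sim j^{2/n}$.

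The argument is soft and there is no serious obstacle; the only points needing care are that $-\Delta_g$ must be defined through its form domain $H_0^1(\Omega)$ so that min--max is legitimate for merely $L^\infty$ coefficients, and the compact embedding $H_0^1(\Omega) \hookrightarrow L^2(\Omega)$, which is exactly where boundedness of $\Omega$ is used. Smoothness of $\p\Omega$ plays no role in the two-sided bound (it would matter only for the sharp Weyl constant), and one could equally well replace the bracketing step by a citation to the classical Weyl law for the Dirichlet Laplacian on a bounded Euclidean domain.
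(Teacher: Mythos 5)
Your argument is correct, but it is genuinely different from what the paper does: the paper offers no proof of this statement at all, simply invoking the Birman--Solomyak result on spectral asymptotics for nonsmooth elliptic operators (the reference \cite{BirmanSolomyak_nonsmooth}, with \cite{BNR} for generalizations). That citation buys the true Weyl asymptotics, including the leading constant, for merely measurable coefficients --- a substantially harder theorem than what is asserted here. Your proof exploits the fact that, with the paper's convention that $\sim$ means two-sided comparability up to constants, only the crude bound $c\, j^{2/n} \le \lambda_j \le C\, j^{2/n}$ is claimed, and this follows from soft tools: the variational characterization of eigenvalues for the form $a[u]=\int_\Omega g^{jk}\p_j u\, \p_k u\,dx$ on the form domain $H^1_0(\Omega)$ (which is the right way to define $-\Delta_g$ for $L^\infty$ coefficients), the two-sided form comparison $c_0 \mu_j \le \lambda_j \le C_0 \mu_j$ with the ordinary Dirichlet eigenvalues $\mu_j$, Dirichlet domain monotonicity between an inscribed and a circumscribed cube, and the explicit lattice-point count on a cube. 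Each step is standard and correctly executed, and your observation that smoothness of $\p\Omega$ is irrelevant at this level of precision matches the paper's own remark that generalizations to rougher settings exist. So your proof is a legitimately more elementary, self-contained substitute for the citation; what it does not give, and what the cited work does, is the sharp Weyl constant --- which is never used in the paper.
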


Finally we state a Weyl law for classical pseudodifferential operators of negative order on a closed manifold $(M,g)$ that have nonvanishing principal symbol (i.e.\ are \emph{elliptic} or \emph{noncharacteristic}) at some point of $T^* M$. These operators are compact on $L^2(M)$, and the behaviour of their singular values will be useful when studying instability in the presence of microlocal smoothing effects. The result essentially follows from \cite{BS_psdo1, BS_psdo2, Karol} but for completeness we give a proof in Appendix \ref{sec_appendix_nonlinear}. We refer to \cite[Chapter 18]{Hoermander} for the notation and basic facts related to pseudodifferential operators ($\Psi$DOs).

\begin{theorem} \label{thm_weyl3}
Let $(M,g)$ be a closed smooth $n$-manifold and let $A \in \Psi^{-m}_{\mathrm{cl}}(M)$ have nonvanishing principal symbol at some $(x_0, \xi_0) \in T^* M \setminus \{0\}$, where $m > 0$. The compact operator $A: L^2(M) \to L^2(M)$ satisfies 
\[
\sigma_j(A) \sim j^{-m/n}.
\]
\end{theorem}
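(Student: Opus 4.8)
The plan is to reduce the statement to a model computation on the torus and then patch this up via the calculus of pseudodifferential operators and the Courant minimax characterisation of singular values (which is available through Proposition \ref{prop_singular_value_decomposition}). The key point is that an elliptic $\Psi$DO of order $-m$ behaves, microlocally near the elliptic point $(x_0,\xi_0)$, like the Bessel-type operator $(1-\Delta)^{-m/2}$, whose singular values on a compact $n$-manifold obey $\sigma_j \sim j^{-m/n}$ by Theorem \ref{thm_weyl1} (together with Proposition \ref{prop_singular_value_decomposition}(e), since $(1-\Delta_g)^{-m/2}$ and $(-\Delta_g)$ share an eigenbasis and the nonzero eigenvalues satisfy $\lambda_j \sim j^{2/n}$, so $\sigma_j((1-\Delta_g)^{-m/2}) = (1+\lambda_j)^{-m/2} \sim j^{-m/n}$).

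First I would prove the \emph{upper bound} $\sigma_j(A) \lesssim j^{-m/n}$, which holds for \emph{any} $A \in \Psi^{-m}_{\mathrm{cl}}(M)$ with no ellipticity hypothesis. Write $A = B(1-\Delta_g)^{-m/2} + R$ where $B := A(1-\Delta_g)^{m/2} \in \Psi^0(M)$ is bounded on $L^2(M)$ and $R$ is smoothing (hence $\sigma_j(R)$ decays faster than any power, e.g.\ $\sigma_j(R) \lesssim j^{-2m/n}$). By Proposition \ref{prop_singular_value_decomposition}(e), $\sigma_j(B(1-\Delta_g)^{-m/2}) \leq \norm{B} \sigma_j((1-\Delta_g)^{-m/2}) \lesssim j^{-m/n}$, and then Proposition \ref{prop_singular_value_decomposition}(c) absorbs $R$: $\sigma_{2j-1}(A) \leq \sigma_j(B(1-\Delta_g)^{-m/2}) + \sigma_j(R) \lesssim j^{-m/n}$, giving the claim after relabelling.

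Next I would prove the \emph{lower bound} $\sigma_j(A) \gtrsim j^{-m/n}$, which is where ellipticity at $(x_0,\xi_0)$ enters. Choose a small conic neighbourhood $\Gamma$ of $(x_0,\xi_0)$ in $T^*M\setminus\{0\}$ and a coordinate chart around $x_0$ identifying a neighbourhood of $x_0$ with a neighbourhood of a point in the torus $\T^n$; fix a cutoff $\chi(x,\xi)$ supported in $\Gamma$, homogeneous of degree $0$ for large $\xi$, equal to $1$ near $(x_0,\xi_0)$, and such that $a(x,\xi)$ is bounded below by $c\abs{\xi}^{-m}$ on $\supp \chi$ (shrinking $\Gamma$ if needed using continuity and homogeneity of the principal symbol). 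Let $P = \chi(x,D) \in \Psi^0$ and $Q = (\abs{\xi}^{-m}\chi)(x,D)^{\dagger}$-type parametrix so that, by the symbol calculus, $QAP = P + (\text{order } -1) = \Psi^0$-elliptic on $\Gamma$ up to lower order. More concretely, construct $E \in \Psi^{m}$ with principal symbol $a^{-1}\chi^2$ such that $EA P = P^2 + R_1$ with $R_1 \in \Psi^{-1}$; then on the range of the spectral-type projection associated with $P$, $AP$ is bounded below. The cleanest way to package this: pick an $n$-dimensional-growth family of functions — a cube of dilated exponentials $e_\xi$ with $\xi$ ranging over a lattice cone inside $\Gamma$ and $\abs{\xi}\sim N$, of which there are $\sim N^n$ — transplanted to $M$ via the chart and the cutoff; on the span of the $\sim N^n$ such functions with $\abs{\xi}\leq N$ one checks $\norm{Au} \gtrsim N^{-m}\norm{u}$ using that $a(x,\xi)\gtrsim \abs{\xi}^{-m}$ on $\Gamma$ and that the off-diagonal/remainder terms are lower order (an $L^2$ almost-orthogonality estimate). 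By the minimax principle this forces $\sigma_{cN^n}(A) \gtrsim N^{-m}$, i.e.\ $\sigma_j(A)\gtrsim j^{-m/n}$.

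The main obstacle is the lower bound, specifically making rigorous the statement ``$AP$ is bounded below on an $\sim N^n$-dimensional space'' in a way that is robust to the lower-order remainder terms from the $\Psi$DO calculus; one must ensure the remainder $R_1 \in \Psi^{-m-1}$ contributes $O(N^{-m-1})$ on the relevant subspace, which is a factor $N^{-1}$ smaller than the main term, and control the almost-orthogonality of the transplanted exponentials (their Gram matrix is close to the identity for $N$ large because the cutoff is essentially localising to frequency $\sim N$). Alternatively, and perhaps more cleanly, one invokes the known two-term results of \cite{BS_psdo1, BS_psdo2, Karol} on the asymptotic distribution of eigenvalues of $\abs{A}^2 = A^*A \in \Psi^{-2m}_{\mathrm{cl}}$, whose principal symbol $\abs{a}^2$ is nonnegative and strictly positive at $(x_0,\xi_0)$: the counting function $\#\{j : \lambda_j(A^*A) \geq \lambda\}$ is bounded below by $c\lambda^{-n/(2m)}$ as $\lambda \to 0^+$ (integrate the symbol over the open set where $\abs{a(x,\xi)}^2 > \lambda$, which has volume $\gtrsim \lambda^{-n/(2m)}$ once $\lambda$ is small, since near $(x_0,\xi_0)$ the set $\{\abs{a}^2 > \lambda\}$ contains $\{\abs{\xi} < (c/\lambda)^{1/(2m)}\} \cap (\text{nbhd})$), which is exactly $\sigma_j(A)^2 \gtrsim j^{-2m/n}$. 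I would present the proof using whichever of these two routes is shorter to state given the references already cited, deferring the detailed $\Psi$DO bookkeeping to Appendix \ref{sec_appendix_nonlinear} as announced in the text.
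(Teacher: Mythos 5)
Your proposal is correct in outline but takes a genuinely different route from the paper. The paper's proof (Appendix B) is a self-contained semiclassical argument: after passing to $A^*A$, it relates $A$ to the semiclassical quantization through the identity $h^m \mathrm{Op}_h(a) = A - A\,\mathrm{Op}_h(\psi) + R$, computes $\mathrm{Tr}\,\chi(\mathrm{Op}_h(\cdot))$ in two ways to obtain a semiclassical Weyl law for the counting function, and then transfers between $A$ and $h^m \mathrm{Op}_h(a)$ using Weyl's inequalities for singular values; both the upper and lower bounds come from the phase-space volume $V(\eps) \sim \eps^{-n/m}$ of $\{\abs{a} \geq \eps\}$, the noncharacteristic hypothesis entering only to make this volume grow at the right rate. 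Your upper bound is simpler than the paper's: the exact factorization $A = \bigl[A(1-\Delta_g)^{m/2}\bigr](1-\Delta_g)^{-m/2}$ together with Proposition \ref{prop_singular_value_decomposition}(e) and Theorem \ref{thm_weyl1} needs no semiclassics (and no remainder $R$ at all). For the lower bound, your second route (quoting \cite{BS_psdo1, BS_psdo2, Karol} for the eigenvalue asymptotics of $A^*A$) is precisely what the paper acknowledges but deliberately avoids in order to be self-contained, so the substantive alternative is your first route: an $\sim N^n$-dimensional test subspace of cut-off exponentials with frequencies in the elliptic cone, plus the max--min principle. That route can be made to work, but the bookkeeping you flag is where the real work lies: you should restrict to frequencies $\abs{\xi} \sim N$ (or at least $\abs{\xi} \geq C_0$ large), not $\abs{\xi} \leq N$, so that the symbol-expansion remainder $O(\abs{\xi}^{-m-1})$ is uniformly dominated by the main term; and the intermediate step involving $E$ with $EAP = P^2 + R_1$ and a ``spectral-type projection associated with $P$'' should be dropped or reformulated, since $P$ is not a projection and has infinite-dimensional range -- the lower bound must come from the explicit almost-orthogonal family (Gram matrices of the $\chi_0 e_\xi$ and of their images close to diagonal), not from a parametrix identity alone. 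In effect, the paper's trace computation delivers exactly this microlocal counting without hand-building a coherent family, while your approach buys a more elementary upper bound and a lower bound that avoids the semiclassical functional calculus at the price of the almost-orthogonality estimates.
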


\subsection{Sobolev spaces}

In many inverse problems the forward operator acts between Sobolev type function spaces. Here we will set up certain spaces that will be relevant for this article, mostly in the $L^2$ setting on $C^{\infty}$ manifolds (more general $L^p$ based spaces will be considered in the low regularity setting in Section \ref{sec:instab_low_reg}).

Let first $(M,g)$ be a closed smooth $n$-manifold. For any $s \in \mR$ one can define the $L^2$ based Sobolev space $H^s(M)$ using local coordinates and the space $H^s(\mR^n)$, see \cite[Section 4.3 in vol.\ I]{Taylor}. This is a Hilbert space and there are many equivalent definitions. For example, if $k \geq 0$ is an integer one has the equivalent norm (see \cite[Appendix B]{Besse})
\begin{equation} \label{eq:hkm_norm_equivalent}
\norm{u}_{H^k(M)}^2 = \sum_{j=0}^k \norm{\nabla^j u}_{L^2(M)}^2
\end{equation}
where $\nabla$ is the total covariant derivative induced by the Levi-Civita connection on $(M,g)$, and the $L^2$ norms on the right are norms on tensor fields induced by the metric $g$. Moreover, for any $s \in \mR$ we may consider the Bessel potential 
\[
J^s u = (1-\Delta_g)^{s/2} u.
\]
Here $J^s$ is an elliptic pseudodifferential operator of order $s$ on $M$, and we may use the equivalent norm $\norm{u}_{H^s(M)} = \norm{J^s u}_{L^2(M)}$ on $H^s(M)$ (see e.g.\ \cite{Shubin}).

It will be particularly convenient for us that $H^s(M)$ can be identified with a sequence space with polynomial weights. To see this, let $(\varphi_j)_{j=1}^{\infty}$ be an orthonormal basis of $L^2(M)$ consisting of eigenfunctions of the Laplace-Beltrami operator $-\Delta_g$, corresponding to eigenvalues $0 = \lambda_1 \leq \lambda_2 \leq \ldots \to \infty$. Then 
\[
J^s u = \sum_{j=1}^{\infty} (1+\lambda_j)^{s/2} (u, \varphi_j)_{L^2(M)} \varphi_j.
\]
By Theorem \ref{thm_weyl1} one has the Weyl law $1 + \lambda_j \sim j^{2/n}$ for $j \geq 1$. This gives the following equivalent norm on $H^s(M)$:

\begin{proposition} \label{prop_sobolev_sequence_space}
Let $(M,g)$ be a closed smooth $n$-manifold, and let $(\varphi_j)_{j=1}^{\infty}$ be an orthonormal basis of $L^2(M)$ consisting of eigenfunctions of $-\Delta_g$ as above. Then 
\[
\norm{u}_{H^s(M)}^{2} \sim \sum_{j=1}^{\infty} j^{\frac{2s}{n}} \abs{(u,\varphi_j)}^2.
\]
\end{proposition}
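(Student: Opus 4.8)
The plan is to reduce the claimed norm equivalence to the standard fact that the Bessel potential $J^s = (1-\Delta_g)^{s/2}$ is an isomorphism $H^s(M) \to L^2(M)$, and then to translate the $L^2$-norm of $J^s u$ into the sequence space norm using the spectral expansion of $-\Delta_g$ together with the Weyl law of Theorem~\ref{thm_weyl1}. Concretely, I would first recall (citing \cite{Shubin, Taylor}) that $\norm{u}_{H^s(M)} \sim \norm{J^s u}_{L^2(M)}$, so it suffices to prove $\norm{J^s u}_{L^2(M)}^2 \sim \sum_{j=1}^\infty j^{2s/n} \abs{(u,\varphi_j)}^2$.

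Next I would write $u = \sum_j (u,\varphi_j) \varphi_j$ in $L^2(M)$ and use that $\varphi_j$ is an eigenfunction of $-\Delta_g$ with eigenvalue $\lambda_j$, so that $J^s u = \sum_j (1+\lambda_j)^{s/2} (u,\varphi_j) \varphi_j$, with convergence in $L^2$ when the right-hand side is finite. By Parseval's identity applied to the orthonormal basis $(\varphi_j)$,
\begin{equation*}
\norm{J^s u}_{L^2(M)}^2 = \sum_{j=1}^\infty (1+\lambda_j)^{s} \abs{(u,\varphi_j)}^2.
\end{equation*}
Then I would invoke the Weyl asymptotics $1 + \lambda_j \sim j^{2/n}$ from Theorem~\ref{thm_weyl1} (note $1+\lambda_1 = 1 \sim 1$, so the comparison holds uniformly over all $j \geq 1$), which gives $(1+\lambda_j)^s \sim j^{2s/n}$ uniformly in $j$ — here one must be a little careful that for $s < 0$ the equivalence $a_j \sim b_j$ still implies $a_j^s \sim b_j^s$, which is immediate since raising to the power $s$ is monotone (decreasing if $s<0$) and the implied constants just get raised to $\abs{s}$. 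Combining, $\norm{J^s u}_{L^2(M)}^2 \sim \sum_j j^{2s/n} \abs{(u,\varphi_j)}^2$, which is the claim; in particular this also shows $u \in H^s(M)$ iff the sequence norm is finite, so the two expressions define the same space with equivalent norms.

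There is no serious obstacle here — the statement is essentially bookkeeping once the right ingredients are assembled. The only points requiring a word of care are: (i) justifying that the Bessel potential characterization of $H^s(M)$ (valid a priori for all real $s$ including negative $s$, where $H^s$ is defined by duality or via local coordinates) is the one used, so that the argument works for the full range of $s$ appearing in Theorem~\ref{thm:Cald1}; and (ii) handling the case $s < 0$ in the step $(1+\lambda_j)^s \sim j^{2s/n}$, as noted above. Both are routine, so the proof is short.
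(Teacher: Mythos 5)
Your argument is correct and is essentially the paper's own: the paper also identifies $H^s(M)$ via the Bessel potential norm $\norm{J^s u}_{L^2}$, expands $J^s u = \sum_j (1+\lambda_j)^{s/2}(u,\varphi_j)\varphi_j$ in the eigenbasis, and applies the Weyl law $1+\lambda_j \sim j^{2/n}$ from Theorem \ref{thm_weyl1}. Your extra remarks about negative $s$ are fine but add nothing beyond routine bookkeeping.
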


The previous result shows that indeed $H^s(M)$ is isomorphic to a sequence space. We will also need such a space in an abstract setting.

\begin{definition} \label{def_hs_sequence}
Let $X$ be a separable Hilbert space and let $\varphi = (\varphi_j)_{j=1}^{\infty}$ be an orthonormal basis of $X$. Let also $n \geq 1$. For any $s \in \mR$ define 
\[
J^s u =  \sum_{j=1}^{\infty} j^{\frac{s}{n}} (u, \varphi_j) \varphi_j
\]
and the norm 
\[
\norm{u}_{h^s} = \norm{J^s u}_{\ell^2} = \left( \sum_{j=1}^{\infty} j^{\frac{2s}{n}} \abs{(u, \varphi_j)}^2 \right)^{1/2}.
\]
For $s \geq 0$ let $h^s = h^s_{n,X,\varphi}$ be the subspace of $X$ consisting of elements with finite $h^s$ norm, and for $s < 0$ let $h^s$ be the completion of $X$ under the $h^s$ norm.
\end{definition}

Thus Proposition \ref{prop_sobolev_sequence_space} states that $H^s(M) = h^s$ with equivalent norms when $n = \dim(M)$, $X = L^2(M)$, and $\varphi = (\varphi_j)_{j=1}^{\infty}$ is an orthonormal basis of $L^2(M)$ consisting of eigenfunctions of the Laplace-Beltrami operator $-\Delta_g$. Note that we slightly abuse notation and write $J^s$ both for the Bessel potential $(1-\Delta_g)^{s/2}$, which is convenient when we want to use $\Psi$DO properties, and for the sequence space operator in Definition \ref{def_hs_sequence}.

We will also need Sobolev spaces $H^s(M)$ when $(M,g)$ is a compact manifold with smooth boundary. In this case the $H^k(M)$ spaces can be defined using the norm \eqref{eq:hkm_norm_equivalent}, and more generally $H^s(M)$ is defined as in \cite[Section 4.4 in vol.\ I]{Taylor}. In the low regularity setting we will use the spaces $H^s(\Omega)$ when $\Omega \subset \mR^n$ is a bounded open set with Lipschitz boundary and also the spaces $H^t(\p \Omega)$ for $-1 \leq t \leq 1$, see e.g.\ \cite{McLean}.

For $M$ a smooth, not necessarily compact manifold, we define ``localized'' Sobolev spaces:

\begin{definition}
\label{def:loc_Hs}
For $M$ a smooth manifold, the space $H^s_L(M)$ with $L \subset M^{\text{int}}$ compact will be identified with the space $H^s_L(M_1):=\{u\in H^s(M_1) \,;\, \supp(u)\subset L\}$, where $M_1$ is a (fixed) closed manifold containing an open neighbourhood of $L$ in $M$.
\end{definition}

\subsection{Normal derivatives of solutions} \label{subseq_nd}

We record here a standard fact regarding weak normal derivatives of solutions of elliptic equations, which will be used several times later. Let $\Omega \subset \mR^n$ be a bounded open set with Lipschitz boundary, and consider the operator 
\[
L = - \p_j a^{jk} \p_k + b_j \p_j + \p_j c_j + q_0
\]
with $(a^{jk}) \in L^{\infty}(\Omega, \R^{n\times n})$ symmetric and uniformly elliptic, $b_j, c_j \in L^{n}(\Omega)$ and $q_0 \in L^{\frac{n}{2}}(\Omega)$. In terms of scaling this is the roughest possible framework in which the Dirichlet problem for $L$ is well-posed outside its spectrum and satisfies the Fredholm alternative. This also includes the case where $q_0 \in W^{-1,n}(\Omega)$ (see Remark \ref{rmk:gen_Cald}).

If $u \in H^1(\Omega)$ is a weak solution of $Lu = 0$ in $\Omega$, the normal derivative $\p_{\nu}^L u|_{\p \Omega}$ is defined weakly as an element of $H^{-1/2}(\p \Omega)$ by
\begin{align} 
\label{eq:bilinear}
\langle \p_{\nu}^L u|_{\p \Omega}, h \rangle_{\partial \Omega}:=
\int\limits_{\Omega} (a^{jk} \p_j u \p_k v + v b_j \p_j u - u c_j \p_j v + q_0 u v) \,dx, \ h \in H^{\frac{1}{2}}(\partial \Omega),
\end{align}
where $v \in H^1(\Omega)$ is a function such that $v|_{\partial \Omega} = h$. We remark that the right hand side of \eqref{eq:bilinear} is well-defined by the H{\"o}lder inequality and Sobolev embedding and that the weak definition of the normal derivative is independent of the choice of the extension $v \in H^{1}(\Omega)$ of $h \in H^{\frac{1}{2}}(\partial \Omega)$. Indeed, the latter follows from the fact that $u$ is a weak solution of the equation $Lu = 0$, which means that the right hand side of \eqref{eq:bilinear} vanishes if $v \in H^1_0(\Omega)$.

The definition \eqref{eq:bilinear} is justified by the fact that when $u$ and the coefficients are sufficiently regular, an integration by parts shows that $\p_{\nu}^{L}u = (a^{jk} \p_j u - c_k u)\nu_k$. Similar considerations are valid for second order elliptic operators on a compact Riemannian manifold $(M,g)$ with smooth boundary.

\subsection{Gevrey spaces}
\label{sec:Gev_def}

In cases where the forward operator is analytic smoothing, one needs to consider spaces of real analytic functions. It is convenient to work more generally with Gevrey functions. We recall the necessary definitions following \cite{Gramtchev, Rodino_book, HuaRodino, Treves} and \cite[Chapter 8.4]{Hoermander} (and use the convention that $0^0 =1$). 

\begin{definition}
\label{defi:Gsigma}
Let $U \subset \R^n$ be an open set, $\sigma\geq 1$ and $f:U\rightarrow \C$. We say that $f$ is \emph{Gevrey-$\sigma$ regular}, or $f\in G^{\sigma}(U)$, iff $f\in C^{\infty}(U)$ and for each compact set $K\subset U$ there exists a constant $C_K>1$ such that
\begin{align*}
\sup\limits_{x\in K}|\partial^{\alpha}f(x)|\leq C_K^{|\alpha|+1} |\alpha|^{\sigma |\alpha|} \mbox{ with } \alpha \in (\N \cup \{0\})^{n}.
\end{align*}
Further, $G^{\sigma}_c(U):=G^{\sigma}(U)\cap C^{\infty}_c(U)$. \end{definition}

We note that the space $G^1(U)$ corresponds to the real analytic functions and thus $G_c^1(U)= \emptyset$. This leads to a number of technical difficulties, since partitions of unity are not available and one needs to work with sequences of cutoff functions that are ``analytic up to a finite order'' (see Lemma \ref{lem:spatial_cutoff}). However, for $\sigma >1$, we have that $G_c^{\sigma}(U)\neq \emptyset$ (even with $G_c^{\sigma}(U)$ dense in function spaces like $C_c^{\infty}(U)$ or in $L^1_{\mathrm{loc}}(U)$, see \cite[Section 1.4]{Rodino_book}). In the sequel, for simplicity, we will in certain applications restrict our attention to the case $\sigma > 1$. We remark that Gevrey spaces for $\sigma > 1$ allow for the construction of partitions of unity, implying that all local definitions can also be transferred to manifolds with $G^{\sigma}$ atlases.

Next we consider certain Hilbert spaces of Gevrey functions on closed manifolds, defined in terms of Fourier coefficients. We first define an abstract sequence space. Here it is natural to consider sequences over $j \geq 0$ instead of $j \geq 1$.

\begin{definition} \label{def_asigmarho_sequence}
Let $X$ be a separable Hilbert space and let $\varphi = (\varphi_j)_{j=0}^{\infty}$ be an orthonormal basis of $X$. Let also $n \geq 1$. For $1 \leq \sigma < \infty$ and $\rho > 0$, define 
\[
\norm{u}_{a^{\sigma,\rho}} = \left( \sum_{j=0}^{\infty} e^{2\rho j^{\frac{1}{n \sigma}}} \abs{(u, \varphi_j)}^2 \right)^{1/2}.
\]
Let $a^{\sigma,\rho} = a^{\sigma,\rho}_{n,X,\varphi}$ be the subspace of $X$ consisting of elements with finite $a^{\sigma,\rho}$ norm.
\end{definition}

Now consider a closed smooth $n$-manifold $(M,g)$. Let $\varphi = (\varphi_j)_{j=0}^{\infty}$ be an orthonormal basis of eigenfunctions for $-\Delta_g$ with eigenvalues $0 = \lambda_0 \leq  \lambda_1 \leq \lambda_2 \leq \ldots$. By Weyl asymptotics, $\lambda_j \sim j^{2/n}$ for $j \geq 1$. Given $1 \leq \sigma < \infty$ and $\rho > 0$, consider the Hilbert space 
\begin{align}
\label{eq:Asr}
A^{\sigma,\rho}(M) = a^{\sigma,\rho}_{n,L^2(M), \varphi}
\end{align}
with the norm 
\[
\norm{u}_{A^{\sigma,\rho}} := (\sum_{j=0}^{\infty} e^{2\rho j^{\frac{1}{n \sigma}}} \abs{(u,\varphi_j)}^2 )^{1/2}.
\]
Clearly $A^{\sigma, \rho}(M) \subset C^{\infty}(M)$. For $(M,g)$ analytic, we also define the Gevrey space (with the convention $0^0 = 1$) 
\[
G^{\sigma}(M) = \{ u \in C^{\infty}(M) \,;\, \text{there is $C > 0$ with $\norm{\nabla^k u}_{L^{\infty}(M)} \leq C^{k+1} k^{\sigma k}$ for $k \geq 0$} \}.
\]
Then $G^1(M)$ is the space of real-analytic functions.

The connection of $G^{\sigma}(M)$ with the Hilbert spaces $A^{\sigma,\rho}(M)$ is given in Lemma \ref{lemma_arho_comega}, which states that 
\[
G^{\sigma}(M) = \cup_{\rho > 0} A^{\sigma,\rho}(M).
\]
The point is roughly that functions satisfying $\abs{\nabla^k u} \leq C R^k k^{\sigma k}$ for \emph{fixed} $R > 0$ form a Banach space (where $C > 0$ corresponds to the norm), and the Fourier coefficients of these functions satisfy $\abs{(u,\varphi_j)} \leq C' e^{-c j^{\frac{1}{n\sigma}}}$ for some \emph{fixed} $c > 0$ leading to the spaces $A^{\sigma,\rho}(M)$ above. We refer to Appendix \ref{sec_appendix_nonlinear} for more details.

Similarly as in Definition \ref{def:loc_Hs} we also use the following localized $A^{\sigma,\rho}$ spaces:

\begin{definition}
\label{defi:Gev_loc}
Let $(M,g)$ be a smooth manifold and $L\subset M^{\text{int}}$ compact. We define $$A^{\sigma, \rho}_L(M):=\{u\in A^{\sigma,\rho}(M_1) \,;\, \supp(u)\subset L\}\,.$$ Here $M_1$ is a (fixed) closed manifold containing a neighbourhood of $L$. 
\end{definition}

We remark that by virtue of Lemma \ref{lemma_arho_comega} and the connection to the Gevrey spaces, the choice of $M_1$ has an influence on the value of $\rho$ but not the one of $\sigma$.

\section{Abstract framework for instability}
\label{sec:abstract}

In this section, following ideas introduced by Mandache \cite{Mandache} in the context of the Calder\'on problem (see also \cite{SV93} for an even earlier application of these ideas in the specific context of analytic continuation), we exploit the observation that the instability of a map can be encoded in entropy and capacity estimates in suitable function spaces. These quantities measure complexity in an information-theoretic sense (see the Appendix II in \cite{KolmogorovTikhomirov}). These and related concepts have also been used in the context of statistical minimax theory (see \cite[Chapter 6.3 and Theorem 6.3.2]{GN21} and the references therein) and, more recently, in machine learning contexts \cite{BGKP19}. While using the same notions and objects as in these works, in our context and applications, we have to make problem-dependent choices and develop the theory for these (e.g.~including general complexity estimates on certain operator spaces, see Section \ref{sec:ops}).
We stress that this approach works equally well for linear and nonlinear inverse problems.

We recall that the argument of Mandache relied on two main steps:
\begin{itemize}
\item[(i)] the construction of an explicit orthonormal basis with (exponential) decay,
\item[(ii)] general entropy and capacity estimates.
\end{itemize}
While (ii) consisted of a very general argument, Mandache strongly exploited symmetries of the domain and operator to infer (i).

In the sequel, we argue that step (i) is in fact not required: instability can be deduced from pure singular value, entropy and capacity considerations. In particular, this allows us to discard the strong symmetry assumptions that have been used in Mandache's argument and all adaptations of it (see for instance \cite{DiCristoRondi,Isaev,IsaevI,IsaevII}). Examples for applications of this in inverse problems will be discussed in Section \ref{sec:examples1}.

In Section \ref{sec:instab_low_reg} we extend this idea even further and prove that it is also applicable if only a minimal amount of regularization is available.

\subsection{General principle}

First we address step (ii) in the argument of Mandache and formulate this as a general principle. To this end, we begin by recalling the notions of $\eps$-discrete sets and $\delta$-nets.  

\begin{definition}
\label{eq:discrete_net}
Let $(X,d_X)$ denote a metric space and let $\delta >0$. A set $X_1 \subset X$ is a $\delta$-net for $X$, if for every $x\in X$ there exists $\tilde{x}\in X_1$ such that $d_X(x,\tilde{x}) \leq \delta$. 

Let $\eps>0$. A subset $X_2 \subset X$ is an $\eps$-discrete subset of $X$, if for each pair $x,\tilde{x} \in X_2$ it holds $d_X(x,\tilde{x})\geq \eps$.    
\end{definition}

Informally speaking, on the one hand, $\eps$-discrete sets measure how large a function space is, by providing lower bounds on its ``extendedness''. On the other hand, $\delta$-nets measure how compact a space is by yielding upper bounds on its size. In order to deduce instability results for a forward map $F: X \to Y$ restricted to a compact set $K \subset X$, it is enough to show that the image $F(K)$ in $Y$ is ``compressed'' (meaning that it can be covered by relatively few $\delta$-balls), while $K$ is sufficiently ``extended'' (meaning that it has many points at least $\eps$ apart).

The two notions are closely related:

\begin{lemma}
\label{lem:nets_discrete} 
Let $X$ be a metric space and let $\delta>0$.
\begin{itemize}
\item[(a)]  Let $A$ be a $\delta$-net and $B$ be $\eps$-discrete where $\eps > 2\delta$. Then
  $\#B \le \#A$.
\item[(b)]  A maximal $\delta$-discrete set is a $\delta$-net. 
\item[(c)]  If $ f : X \to Y$ is continuous with modulus of continuity $\mu(r)$, then a $\delta$-net of $X$ is mapped to a $\mu(\delta)$-net of $f(X)$. If $\rho$ is a monotonically increasing function and 
  \[ d(x,y) \le \rho( d(f(x),f(y))), \] 
  the image of a $\delta$-discrete set in $X$ is $\rho^{-1} (\delta)$-discrete in $f(X)$.
\end{itemize}
\end{lemma}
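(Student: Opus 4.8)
The plan is to establish each part directly from the definitions of $\delta$-net and $\eps$-discrete set, in each case building an explicit comparison map or invoking a maximality argument; no auxiliary machinery is needed.

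For part (a), I would fix for each $b \in B$ a point $\phi(b) \in A$ with $d_X(b,\phi(b)) \le \delta$, which is possible because $A$ is a $\delta$-net, thereby defining a map $\phi : B \to A$. If $b_1, b_2 \in B$ satisfy $\phi(b_1) = \phi(b_2)$, then the triangle inequality gives $d_X(b_1,b_2) \le d_X(b_1,\phi(b_1)) + d_X(\phi(b_2),b_2) \le 2\delta < \eps$, so $b_1 = b_2$ since $B$ is $\eps$-discrete. Hence $\phi$ is injective and $\#B \le \#A$. For part (b), suppose $A$ is a maximal $\delta$-discrete subset of $X$ that fails to be a $\delta$-net. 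Then there is $x \in X$ with $d_X(x,a) > \delta$ for every $a \in A$; consequently $A \cup \{x\}$ is again $\delta$-discrete and strictly larger than $A$, contradicting maximality. Thus $A$ is a $\delta$-net.

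For part (c), the two claims follow from monotonicity. Given a $\delta$-net $A$ of $X$ and a point $y = f(x) \in f(X)$, I would pick $a \in A$ with $d_X(x,a) \le \delta$; since $\mu$ is increasing, $d_Y(f(x),f(a)) \le \mu(d_X(x,a)) \le \mu(\delta)$, so $f(A)$ is a $\mu(\delta)$-net of $f(X)$. For the second claim, given a $\delta$-discrete set $B \subset X$ and distinct $b_1,b_2 \in B$, the hypothesis yields $\delta \le d_X(b_1,b_2) \le \rho(d_Y(f(b_1),f(b_2)))$, and since $\rho$ is increasing this forces $d_Y(f(b_1),f(b_2)) \ge \rho^{-1}(\delta)$, where I read $\rho^{-1}(\delta) := \inf\{ t \ge 0 \,;\, \rho(t) \ge \delta \}$ so that $\rho(t) \ge \delta$ implies $t \ge \rho^{-1}(\delta)$; in particular $f$ is injective on $B$ and $f(B)$ is $\rho^{-1}(\delta)$-discrete in $f(X)$.

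Every step is an instance of the triangle inequality, of monotonicity of $\mu$ or $\rho$, or of a maximality argument, so I do not anticipate any real obstacle. The only point requiring mild care is the correct interpretation of $\rho^{-1}$ when $\rho$ is only (weakly) monotonically increasing, which the generalized-inverse convention above resolves.
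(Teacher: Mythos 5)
Your proof is correct and follows essentially the same route as the paper: part (a) is the paper's pigeonhole observation (at most one point of $B$ within $\delta$ of each point of $A$) phrased as an injective map, part (b) is the same maximality argument, and part (c) is the same direct appeal to the definitions and monotonicity, with your generalized-inverse reading of $\rho^{-1}$ being a harmless extra precaution. No gaps.
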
  

\begin{proof}
For (a), it suffices to observe that for any point $x \in A$ there is at most one point $y \in B$ with $d(x,y) \leq \delta$. 
For (b) we
  note that if $S$ is a maximal $\delta$-discrete set, then $S \cup \{ x \}$ cannot be $\delta$-discrete for any $x \notin S$, which means that for any $x \notin S$ there is $y \in S$ with $d(x,y) < \delta$. Thus $S$ is a $\delta$-net.
  The claims in (c)
  also follow directly from the definition. The claim on the
  $\delta$-nets follows by the definition of continuity. The claim on
  the $\delta$-discrete sets follow from the fact that if $x,y$ are
  elements in a $\delta$-discrete set, then
\begin{align*}
\delta \leq d(x,y) \leq \rho(d(f(x),f(y))).
\end{align*}
Inverting $\rho$ yields the claim.
\end{proof}

The closely related concepts of \emph{entropy} and \emph{capacity} of a metric space are discussed in \cite{KolmogorovTikhomirov, ET08}.

With the notions of $\delta$-nets and $\eps$-discrete sets in hand, we formulate the second step (ii) of the instability argument of Mandache \cite{Mandache} in a general framework. This framework has the advantage that one can consider mappings between general metric spaces (it is not necessary to work with Sobolev type spaces).

\begin{theorem}[\cite{Mandache}]
\label{thm:Mandache}
Let $X$ and $Y$ be metric spaces, let $K \subset X$ be compact, and let $F: K \rightarrow Y$ be an injective map. Let $f, g$ be strictly decreasing continuous functions on $\mR_+$, with $f(0+) = g(0+) = +\infty$, so that for any sufficiently small $\delta, \eps > 0$, 
\begin{itemize}
\item[(i)] there is a $\delta$-net $Y_{\delta}\subset Y$ for $F(K)$ with $\leq f(\delta)$ elements,
\item[(ii)] there is a $\eps$-discrete set $X_{\eps} \subset K$ with $> g(\eps)$ elements.
\end{itemize}
Then the following statements hold:
\begin{enumerate}
\item[(a)]
For any small $\eps > 0$ there are $x_1, x_2$ such that 
\begin{align}
\label{eq:elements}
x_1, x_2 \in K, \qquad d_X(x_1, x_2) \geq \eps, \qquad d_Y(F(x_1), F(x_2)) \leq 2 f^{-1}(g(\eps)).
\end{align}
\item[(b)]
If $\omega$ is a modulus of continuity such that 
\begin{equation} \label{distance_k_stability}
d_X(x_1, x_2) \leq \omega(d_Y(F(x_1), F(x_2))), \qquad x_1, x_2 \in K,
\end{equation}
then 
$\omega(t) \geq g^{-1}(f(t/2))$ for $t$ small.
\end{enumerate}
\end{theorem}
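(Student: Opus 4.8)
The plan is to combine the two hypotheses via a pigeonhole argument, exactly as in Mandache's original scheme. First I would fix a small $\eps>0$ and invoke (ii) to obtain an $\eps$-discrete set $X_\eps\subset K$ with $\#X_\eps > g(\eps)$. Then I would choose $\delta$ so that the $\delta$-net for $F(K)$ from (i) is forced to have fewer elements than $X_\eps$; concretely, I want $f(\delta) < g(\eps)$, and since $f$ is strictly decreasing with $f(0+)=+\infty$, I can take $\delta$ slightly larger than $f^{-1}(g(\eps))$, say $\delta = f^{-1}(g(\eps)) + \kappa$ for arbitrarily small $\kappa>0$, or more cleanly pass to a limit at the end. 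With this choice, $\#Y_\delta \leq f(\delta) < g(\eps) < \#X_\eps$.

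Next I would apply the pigeonhole principle together with Lemma \ref{lem:nets_discrete}(a) in spirit: since $F$ is injective on the finite set $X_\eps$, the image $F(X_\eps)\subset F(K)$ has $\#X_\eps$ distinct elements, and it is covered by the $\#Y_\delta$ balls of radius $\delta$ centered at points of $Y_\delta$. As $\#Y_\delta < \#X_\eps$, some ball $B(y,\delta)$ must contain two distinct images $F(x_1), F(x_2)$ with $x_1,x_2\in X_\eps$, $x_1\neq x_2$. Then $d_X(x_1,x_2)\geq \eps$ by $\eps$-discreteness, while $d_Y(F(x_1),F(x_2)) \leq d_Y(F(x_1),y) + d_Y(y,F(x_2)) \leq 2\delta$. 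Letting $\kappa\to 0$ (or by a direct continuity/compactness argument on the finite set $X_\eps$ to realize the infimum), one obtains $x_1,x_2\in K$ with $d_X(x_1,x_2)\geq\eps$ and $d_Y(F(x_1),F(x_2)) \leq 2 f^{-1}(g(\eps))$, which is precisely \eqref{eq:elements}. This proves (a).

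For (b), I would simply plug the pair from (a) into the assumed stability inequality \eqref{distance_k_stability}: one gets $\eps \leq d_X(x_1,x_2) \leq \omega(d_Y(F(x_1),F(x_2))) \leq \omega(2 f^{-1}(g(\eps)))$, using that $\omega$ is increasing. Setting $t := 2f^{-1}(g(\eps))$, which ranges over small positive values as $\eps$ ranges over small positive values (since $f^{-1}$ is continuous, strictly decreasing, and $g(\eps)\to+\infty$ as $\eps\to 0$ forces $f^{-1}(g(\eps))\to 0$), I can solve for $\eps$: the relation $t = 2f^{-1}(g(\eps))$ gives $g(\eps) = f(t/2)$, hence $\eps = g^{-1}(f(t/2))$. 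Therefore $\omega(t) \geq \eps = g^{-1}(f(t/2))$ for $t$ small, as claimed.

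The only genuinely delicate point is the bookkeeping with the strict versus non-strict inequalities: hypothesis (i) gives a net with \emph{at most} $f(\delta)$ elements while (ii) gives a discrete set with \emph{strictly more than} $g(\eps)$ elements, so the pigeonhole step needs $f(\delta) \leq g(\eps)$ at the chosen $\delta$, which by monotonicity of $f$ can be arranged by taking $\delta \geq f^{-1}(g(\eps))$; one then has to push $\delta$ down to $f^{-1}(g(\eps))$ either by a limiting argument over a sequence $\delta_m \downarrow f^{-1}(g(\eps))$ and extracting (using compactness of $K$ and $F(K)$, or finiteness of $X_\eps$) a single pair that works in the limit, or by noting that the finitely many pairwise distances realize their minimum. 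This is routine but is where care is required; everything else is the standard pigeonhole and a change of variables.
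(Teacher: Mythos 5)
Your proposal is correct and follows essentially the same pigeonhole argument as the paper, which simply chooses $\eps = g^{-1}(f(\delta))$ so that $f(\delta) = g(\eps)$ and then compares $\# Y_{\delta} \leq f(\delta) = g(\eps) < \# X_{\eps}$. The ``delicate'' limiting step you flag is unnecessary: since (ii) already provides strictly more than $g(\eps)$ points while (i) gives at most $f(\delta)$ net points, you may take $\delta = f^{-1}(g(\eps))$ exactly and the pigeonhole goes through with no $\kappa$ or limit.
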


We deduce the result as a consequence of the pigeonhole principle.

\begin{proof}
Let $\delta > 0$ be small, and let $Y_{\delta}\subset Y$ be a $\delta$-net for $F(K)$ with $\abs{Y_{\delta}} \leq f(\delta)$. Let $\eps > 0$ be such that $f(\delta) = g(\eps)$, i.e.\ $\eps = g^{-1}(f(\delta))$. Then there is a $\eps$-discrete set $X_{\eps} \subset K$ with $\abs{X_{\eps}} > g(\eps)$. It follows that 
\begin{align}
\label{eq:pidgeon}
\abs{Y_{\delta}} \leq f(\delta) = g(\eps) < \abs{X_{\eps}}.
\end{align}
By the pigeonhole principle, there exist $x_1, x_2 \in X_{\eps}$ such that 
\begin{align*}
d_Y(F(x_1), F(x_2)) \leq 2 \delta, \mbox{ but } d_X(x_1, x_2) \geq \eps.
\end{align*} 
This proves (a) since $\delta = f^{-1}(g(\eps))$. If \eqref{distance_k_stability} holds, by the monotonicity of $\omega$, one has 
\[
\eps \leq \omega(2\delta).
\]
Thus $\omega(t) \geq g^{-1}(f(t/2))$ for $t > 0$ small, proving (b).
\end{proof}

\begin{remark}
\label{rmk:equiv_Mandache}
It is easy to see that (a) and (b) above are equivalent, if $F$ is required to be continuous. Indeed, assume that (a) is valid. If \eqref{distance_k_stability} holds, then choosing $\eps > 0$ small and $x_1, x_2 \in K$ as in (a) gives that $\eps \leq \omega(2 f^{-1}(g(\eps))$, which implies (b). Conversely, assume that $F$ is continuous and (b) is valid. By Lemma \ref{lemma_modulus_abstract} there is a modulus of continuity $\omega$ such that 
\[
d_X(x_1, x_2) \leq \omega(d_Y(F(x_1), F(x_2))), \qquad x_1, x_2 \in K.
\]
Here we may replace $\omega$ by the corresponding minimal modulus of continuity. Then (b) gives that $\omega(t) \geq g^{-1}(f(t/2))$. On the other hand, the minimal modulus satisfies 
\[
\omega(t) = \sup \{ d_X(x_1, x_2) \,;\, d_Y(F(x_1), F(x_2)) \leq t, \ x_j \in K \}.
\]
By compactness the supremum is in fact a maximum, and hence given $t > 0$ there are $x_1, x_2 \in K$ with $d_Y(F(x_1), F(x_2)) \leq t$ and $d_X(x_1, x_2) \geq \omega(t)$. Since $\omega(t) \geq g^{-1}(f(t/2))$, the claim in (a) follows.

We emphasise that the assumption on the continuity of $F$ is natural and is always true in our applications (see for instance Section \ref{sec:pre} where our general setting is described). Thus it makes no difference if the instability results are formulated in the form of (a) or in the form of (b) above.
\end{remark}

\subsection{Entropy numbers}

Our first instability mechanism is based on global smoothing properties of the forward map $F$. Here we assume that $F$ maps into a Banach space $Y$. If $F(K)$ is contained in a ``smooth'' or ``compressed'' subspace $Y_1$ of $Y$, we may write 
\[
F|_K = i \circ \tilde{F}
\]
where $\tilde{F}: K \to Y_1$ satisfies $\tilde{F}(x) = F(x)$, and $i$ is the inclusion $Y_1 \to Y$. If $\tilde{F}$ is continuous, then $F(K) \subset i(B)$ where $B$ is a ball in $Y_1$. To show that $F(K)$ is compressed, it is enough to show that $i(B)$ can be covered by relatively few $\delta$-balls in $Y$.

The notion of \emph{entropy numbers} (see \cite{CS90, ET08}), which measure the compactness of a linear operator, is ideally suited to the setup described above. We also introduce the related \emph{capacity numbers} \cite{CS90}.

\begin{definition}
\label{defi:entropy_numb}
Let $X,Y$ be Banach spaces and let $A: X \to Y$ be a bounded linear operator. Put $U_X:=\{x\in X: \|x\|_X \leq 1 \}$. Then for any $k \geq 1$, the $k$th entropy number $e_k(A)$ of $A$ is defined by
\begin{align*}
  e_k(A)&:= \inf\left\{ \eps > 0: A(U_X) \subset \bigcup\limits_{j=1}^{2^{k-1}}(y_j + \eps U_Y) \mbox{ for some } y_1,\dots, y_{2^{k-1}} \in Y  \right\}\\
 &= \inf \{ \eps>0: \text{ there exists an $\varepsilon$-net for $A(U_X)$ of cardinality $2^{k-1}$}\} 
    . \end{align*}

The $k$th capacity number $c_k(A)$ of $A$ is 
\begin{align*}
c_k(A) &:= \sup \{ \eps > 0 : \ \text{there are $x_1, \ldots, x_N \in U_X$ with $N > 2^{k-1}$ and} \\
&  \qquad \qquad \qquad \qquad \text{$d_Y(Ax_j, Ax_k) \geq 2 \eps$ for $j \neq k$} \}
 \\ &  =   \sup \{ \eps>0:  \text{ there is a $2\eps$-discrete set for $A(U_X)$ of cardinality $2^{k-1}+1$}\} 
.
\end{align*}
\end{definition}

The entropy and capacity numbers are very similar (see also \cite[Section 1.1]{CS90}):

\begin{lemma}
One has 
\begin{equation} \label{entropy_capacity_relation}
\frac{1}{2} e_k(A) \leq c_k(A) \leq e_k(A).
\end{equation}
\end{lemma}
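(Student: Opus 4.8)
The plan is to prove the two inequalities in \eqref{entropy_capacity_relation} directly from the definitions of $e_k(A)$ and $c_k(A)$, using the comparison between $\delta$-nets and $\eps$-discrete sets from Lemma \ref{lem:nets_discrete}(a)–(b). Both numbers are defined as infima/suprema over coverings of the set $S := A(U_X)$, so it suffices to argue at the level of a fixed $\eps$ and transfer the net/discrete-set properties.

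\emph{Upper bound $c_k(A) \leq e_k(A)$.} First I would fix $\eps > e_k(A)$. By definition of $e_k(A)$, the set $S$ admits an $\eps'$-net of cardinality $2^{k-1}$ for some $\eps' < \eps$ (indeed the infimum is attained as a limit, so one can take $\eps' \le \eps$ after a harmless adjustment; more carefully, for every $\eta>0$ there is an $(e_k(A)+\eta)$-net of size $2^{k-1}$). Now suppose, for contradiction, that $S$ contained a $2\eps$-discrete set of cardinality $2^{k-1}+1$; since $2\eps > 2\eps'$, Lemma \ref{lem:nets_discrete}(a) (with $A$ the $\eps'$-net and $B$ the discrete set, noting $2\eps > 2\eps' \geq 2\eps'$) would force $2^{k-1}+1 \leq 2^{k-1}$, a contradiction. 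Hence $c_k(A) \leq \eps$, and letting $\eps \downarrow e_k(A)$ gives $c_k(A) \leq e_k(A)$.

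\emph{Lower bound $\frac{1}{2} e_k(A) \leq c_k(A)$.} Fix $\eps > c_k(A)$. Then by definition of $c_k(A)$ there is no $2\eps$-discrete subset of $S$ of cardinality $2^{k-1}+1$, i.e.\ every $2\eps$-discrete subset of $S$ has at most $2^{k-1}$ elements. Take a maximal $2\eps$-discrete subset $S_0 \subset S$; by the previous sentence $\#S_0 \leq 2^{k-1}$, and by Lemma \ref{lem:nets_discrete}(b) $S_0$ is a $2\eps$-net for $S$. This exhibits a $2\eps$-net for $A(U_X)$ of cardinality $\leq 2^{k-1}$, so $e_k(A) \leq 2\eps$ by definition of the entropy number. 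Letting $\eps \downarrow c_k(A)$ yields $e_k(A) \leq 2 c_k(A)$, i.e.\ $\frac{1}{2} e_k(A) \leq c_k(A)$.

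\textbf{Main obstacle.} The genuine mathematical content is light; the only point that needs care is the bookkeeping of strict versus non-strict inequalities in the definitions (whether the infimum/supremum is attained, and whether ``$2\eps$-discrete'' uses $\geq$ or $>$), which is why I phrase the arguments with an auxiliary $\eps$ strictly past the extremal value and then pass to the limit. Once that is handled, the two bounds are immediate applications of parts (a) and (b) of Lemma \ref{lem:nets_discrete}, so I expect no real difficulty.
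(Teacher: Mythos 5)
Your proof is correct and follows essentially the same route as the paper: the bound $c_k(A)\le e_k(A)$ comes from comparing a slightly-larger-than-optimal net with a putative $2\eps$-discrete set (the paper does this by the pigeonhole directly, you via Lemma \ref{lem:nets_discrete}(a), which is the same argument), and $e_k(A)\le 2c_k(A)$ comes from taking a maximal $2\eps$-discrete set and using Lemma \ref{lem:nets_discrete}(b) to see it is a $2\eps$-net. Your care with strict inequalities and the limiting step $\eps\downarrow e_k(A)$, resp.\ $\eps\downarrow c_k(A)$, is exactly the bookkeeping implicit in the paper's proof, so there is nothing to fix.
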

\begin{proof}
Let $\eps > e_k(A)$, so that $A(U_X) \subset \cup_{j=1}^{2^{k-1}} \ol{B(y_j, \eps)}$. Let also $\rho < c_k(A)$, so that there is a $2\rho$-discrete set $S$ in $A(U_X)$ with cardinality $2^{k-1}+1$. Then some two points $z, w \in S$ must lie in some ball $\ol{B}(y_j,\eps)$. One has $2\rho \leq d(z, w) \leq 2\eps$, which proves that $c_k(A) \leq e_k(A)$.

On the other hand, if $c_k(A) < \rho$, then there is a maximal $2\rho$-discrete set $S$ in $A(U_X)$ with cardinality $\leq 2^{k-1}$. By Lemma \ref{lem:nets_discrete} $S$ is also a $2\rho$-net, showing that $e_k(A) \leq 2\rho$. Thus $e_k(A) \leq 2 c_k(A)$.
\end{proof}

By \eqref{entropy_capacity_relation} it will be sufficient to focus on entropy number estimates. The following simple result shows how typical entropy number bounds could be used for showing instability based on Theorem \ref{thm:Mandache}.

\begin{lemma} \label{lemma_entropy_typical}
Let $X, Y$ be Banach spaces, and let $X_1 \subset X$ and $Y_1 \subset Y$ be closed subspaces so that the inclusions $i: X_1 \to X$ and $j: Y_1 \to Y$ are compact. Let $K = \{ x \in X \,;\, \norm{x}_{X_1} \leq r \}$ for some $r > 0$, and assume that $F: K \to Y$ is a map such that $F(K) \subset \{ y \in Y_1 \,;\, \norm{y}_{Y_1} \leq R \}$ for some $R > 0$. Suppose that, for some function $\omega: \mR_+ \to \mR_+$, 
\[
\norm{x_1 - x_2}_X \leq \omega(\norm{F(x_1) - F(x_2)}_Y), \qquad x_1, x_2 \in K.
\]
\begin{enumerate}
\item[(a)] 
If $e_k(i) \gtrsim k^{-m}$ and $e_k(j) \lesssim k^{-s}$ for some $m, s > 0$, then for $t$ small 
\[
\label{eq:mod_a}
\omega(t) \gtrsim t^{m/s}.
\]
\item[(b)] 
If $e_k(i) \gtrsim k^{-m}$ and $e_k(j) \lesssim e^{-ck^{\alpha}}$ for some $m, c, \alpha > 0$, then for $t$ small 
\[
\omega(t) \gtrsim \abs{\log t}^{-m/\alpha}.
\]
\item[(c)] 
If $e_k(i) \gtrsim e^{-dk^{\beta}}$ and $e_k(j) \lesssim e^{-ck^{\alpha}}$ for some $c, d, \alpha, \beta > 0$, then for $t$ small 
\[
\omega(t) \gtrsim e^{-c_1 \abs{\log t}^{\beta/\alpha}}.
\]
\end{enumerate}
\end{lemma}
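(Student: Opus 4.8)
The plan is to reduce everything to a single application of Theorem \ref{thm:Mandache} by turning the hypothesis on entropy numbers of the inclusions $i$ and $j$ into the two inputs (i) and (ii) required there. First I would build the $\delta$-net for $F(K)$ in $Y$. Since $F(K)$ is contained in the ball of radius $R$ in $Y_1$, which equals $R$ times the image $j(U_{Y_1})$, the definition of the entropy number $e_k(j)$ says that $R \cdot j(U_{Y_1})$ can be covered by $2^{k-1}$ balls of radius $R e_k(j)$ in $Y$. Choosing $k$ appropriately as a function of $\delta$ so that $R e_k(j) \le \delta$ then yields a $\delta$-net $Y_\delta \subset Y$ for $F(K)$ with at most $2^{k-1}$ elements; writing this count as $f(\delta)$ gives the function $f$ in Theorem \ref{thm:Mandache}. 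Concretely, in case (a) the bound $e_k(j) \lesssim k^{-s}$ forces $k \sim \delta^{-1/s}$, so $f(\delta) \sim 2^{c\delta^{-1/s}}$; in cases (b) and (c) the bound $e_k(j) \lesssim e^{-ck^\alpha}$ gives $k \sim \abs{\log \delta}^{1/\alpha}$, so $\log f(\delta) \sim \abs{\log\delta}^{1/\alpha}$.

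Next I would produce the $\eps$-discrete set inside $K$. The lower bound on $e_k(i)$ gives, via the relation $\frac12 e_k(i) \le c_k(i) \le e_k(i)$ from \eqref{entropy_capacity_relation}, a lower bound on the capacity numbers $c_k(i)$; unwinding the definition of $c_k(i)$ this means that $r \cdot i(U_{X_1}) = K$ (up to the factor $r$, handled by scaling) contains a $2\rho$-discrete set of cardinality $2^{k-1}+1$ with $\rho \gtrsim r e_k(i)$. Choosing $k$ as a function of $\eps$ so that $r e_k(i) \gtrsim \eps$ then gives an $\eps$-discrete set $X_\eps \subset K$ with $> 2^{k-1}$ elements; calling this count $g(\eps)$ supplies the second function. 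In case (a) and (b) the bound $e_k(i) \gtrsim k^{-m}$ gives $k \sim \eps^{-1/m}$ and $\log g(\eps) \sim \eps^{-1/m}$; in case (c) the bound $e_k(i) \gtrsim e^{-dk^\beta}$ gives $k \sim \abs{\log\eps}^{1/\beta}$ and $\log g(\eps) \sim \abs{\log\eps}^{1/\beta}$. One should double-check that $f, g$ can be taken strictly decreasing and continuous with $f(0+) = g(0+) = +\infty$, which is immediate from these asymptotics (or one passes to continuous majorants/minorants).

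With $f$ and $g$ in hand, Theorem \ref{thm:Mandache}(b) yields $\omega(t) \ge g^{-1}(f(t/2))$ for $t$ small, and it only remains to compute this composition asymptotically in each of the three cases. In case (a), $f(t) \sim 2^{c t^{-1/s}}$ and $g^{-1}(u) \sim (\log u)^{-m}$ (inverting $\log g(\eps) \sim \eps^{-1/m}$), so $g^{-1}(f(t/2)) \sim (t^{-1/s})^{-m} = t^{m/s}$. In case (b), $\log f(t) \sim \abs{\log t}^{1/\alpha}$ and $g^{-1}(u) \sim (\log u)^{-m}$, giving $g^{-1}(f(t/2)) \sim (\abs{\log t}^{1/\alpha})^{-m} = \abs{\log t}^{-m/\alpha}$. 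In case (c), $\log f(t) \sim \abs{\log t}^{1/\alpha}$ and $g^{-1}(u) \sim e^{-c(\log u)^{1/\beta}}$ (inverting $\log g(\eps) \sim \abs{\log \eps}^{1/\beta}$), giving $g^{-1}(f(t/2)) \sim e^{-c(\abs{\log t}^{1/\alpha})^{1/\beta}} = e^{-c_1 \abs{\log t}^{1/(\alpha\beta)}}$. I note a possible discrepancy: as stated, case (c) claims exponent $\beta/\alpha$ rather than $1/(\alpha\beta)$, so the intended hypothesis is presumably $e_k(i) \gtrsim e^{-dk^{1/\beta}}$ (or the exponent in the conclusion should read $1/(\alpha\beta)$); I would reconcile the bookkeeping of the exponents carefully here.

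The main obstacle is not conceptual but bookkeeping: carrying the constants $r$, $R$ and the factors of $2$ from \eqref{entropy_capacity_relation} through the inversions of $f$ and $g$ without error, and making sure the implicit constant in each asymptotic ``$\gtrsim$'' genuinely does not depend on $t$. The other mild subtlety is that $f$ and $g$ are naturally defined only at the discrete values of $\delta,\eps$ coming from integer $k$; one interpolates by monotonicity (a $\delta$-net is also a $\delta'$-net for $\delta' > \delta$, an $\eps$-discrete set is $\eps'$-discrete for $\eps' < \eps$) to get functions defined for all small $\delta,\eps$, which is harmless since we only need the stated one-sided asymptotic bounds on $\omega$.
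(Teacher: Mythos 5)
Your overall strategy is exactly the paper's: convert the upper bound on $e_k(j)$ into the counting function $f(\delta)$ for a $\delta$-net of $F(K)$, convert the lower bound on $e_k(i)$ (via the capacity numbers and \eqref{entropy_capacity_relation}) into the counting function $g(\eps)$ for an $\eps$-discrete set in $K$, and then read off $\omega(t) \geq g^{-1}(f(t/2))$ from Theorem \ref{thm:Mandache}. Your treatment of cases (a) and (b), including the bookkeeping of $r$, $R$ and the factor $2$ and the interpolation over integer $k$, is sound and matches the paper's proof.

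However, case (c) contains an algebra error in inverting $g$, and the ``discrepancy'' you flag is of your own making, not the lemma's. From $e_k(i) \gtrsim e^{-dk^{\beta}}$ you correctly get $k \sim \abs{\log \eps}^{1/\beta}$ and hence $\log g(\eps) \sim \abs{\log \eps}^{1/\beta}$, i.e.\ $g(\eps) = e^{C_1 \abs{\log \eps}^{1/\beta}}$. Inverting this: if $\log u = C_1 \abs{\log \eps}^{1/\beta}$ then $\abs{\log \eps} = (\log u / C_1)^{\beta}$, so $g^{-1}(u) \sim e^{-c_0 (\log u)^{\beta}}$ --- with exponent $\beta$, not $1/\beta$ as you wrote. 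Plugging in $\log f(t/2) \sim \abs{\log t}^{1/\alpha}$ then gives $g^{-1}(f(t/2)) \sim e^{-c_1 (\abs{\log t}^{1/\alpha})^{\beta}} = e^{-c_1 \abs{\log t}^{\beta/\alpha}}$, which is precisely the stated conclusion. So the hypothesis $e_k(i) \gtrsim e^{-dk^{\beta}}$ and the exponent $\beta/\alpha$ in the lemma are mutually consistent, and no emendation of either is needed; your alternative exponent $1/(\alpha\beta)$ would only arise under the different hypothesis $e_k(i) \gtrsim e^{-dk^{1/\beta}}$, which is not what is assumed. Once this inversion is corrected, your proof coincides with the paper's.
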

\begin{proof}
The main point is to use the entropy bounds to estimate the functions $f(\delta)$ and $g(\eps)$ in Theorem \ref{thm:Mandache}.

(a) If $e_k(i) \geq c_0 k^{-m}$, by \eqref{entropy_capacity_relation} one has $c_k(i) \geq \frac{c_0}{2} k^{-m}$, and thus for any $\eps < \frac{c_0}{2} r k^{-m}$ there is a set $X_{\eps} \subset K$ so that $\abs{X_{\eps}} > 2^{k-1}$ and $X_{\eps}$ is $\eps$-discrete in $X$. Now 
\[
\eps = \frac{c_0}{2} r k^{-m} \quad \Longleftrightarrow \quad 2^{k-1} = \frac{1}{2} e^{( (\frac{c_0 r}{2} )^{1/m} \log 2) \eps^{-1/m}}.
\]
Thus one may choose 
\[
g(\eps) = e^{c^{1/m} \eps^{-1/m}}, \qquad c = \frac{c_0 r}{2^{m+1}}.
\]
The inverse function is $g^{-1}(\eta) = c (\log \eta)^{-m}$.

If $e_k(j) < C_0 k^{-s}$, by the bound on $F(K)$, the set $F(K)$ can be covered with $2^{k-1}$ balls of radius $R C_0 k^{-s}$. Now 
\[
\delta = R C_0 k^{-s} \quad \Longleftrightarrow \quad 2^{k-1} = \frac{1}{2} e^{((C_0 R)^{1/s} \log 2) \delta^{-1/s}}.
\]
Hence, if $\delta \geq R C_0 k^{-s}$, there is a $\delta$-net $Y_{\delta}$ for $F(K)$ with $\abs{Y_{\delta}} \leq e^{(C_0 R)^{1/s} \delta^{-1/s}}$. This proves that one may choose 
\[
f(\delta) = e^{C\delta^{-1/s}}, \qquad C = (C_0 R)^{1/s}.
\]

By Theorem \ref{thm:Mandache}, for $t$ small one has 
\begin{align*}
\omega(t) &\geq g^{-1}(f(t/2)) = c (C(t/2)^{-1/s})^{-m}
\gtrsim t^{m/s}.
\end{align*}

(b) We can use the same $g(\eps)$ as in part (b). If $e_k(j) \leq C_0 e^{-ck^{\alpha}}$ for some $C_0, c, \alpha > 0$, then a computation shows that one may choose for $\delta$ small 
\[
f(\delta) = e^{C \abs{\log \delta}^{1/\alpha}}.
\]
It follows that $\omega(t) \geq g^{-1}(f(t/2)) \gtrsim \abs{\log t}^{-m/\alpha}$.

(c) We can take $f(\delta)$ as in part (b), and a computation shows that one may take 
\[
g(\eps) = e^{C_1 \abs{\log \eps}^{1/\beta}}.
\]
The inverse function is $g^{-1}(\eta) = e^{-c_0 (\log \eta)^{\beta}}$, and $\omega(t) \geq g^{-1}(f(t/2)) \gtrsim e^{-c_1 \abs{\log t}^{\beta/\alpha}}$.
\end{proof}

\subsection{Properties of entropy numbers}
In the remainder of this section we will study estimates for entropy numbers. We note that entropy numbers have the following properties (cf.\ Chapter 1.3 in \cite{ET08}).

\begin{lemma}
\label{lem:prop_entropy}
Let $X,Y,Z$ be Banach. For bounded linear operators $A, B: X \to Y$ and $C: Y \to Z$ with $A \not\equiv 0$, we have the following properties:
\begin{itemize}
\item[(i)] $\|A\| = e_1(A) \geq e_2(A)\geq \ldots > 0$.
\item[(ii)] For all $j,k \geq 1$ one has 
\begin{align*}
e_{j+k-1}(C \circ A) \leq e_j(C) e_k(A).
\end{align*}
\item[(iii)] For all $j,k \geq 1$ one has 
\begin{align*}
e_{j+k-1}(A+B) \leq e_j(A) + e_k(B).
\end{align*}
\item[(iv)] $A$ is compact if and only if $e_k(A) \to 0$ as $k \to \infty$.
\item[(v)] $A$ has finite rank if and only if $e_k(A) \lesssim e^{-ck}$ for some $c > 0$.
\item[(vi)] If $X$ and $Y$ are Hilbert spaces, then $e_k(A) = e_k(A^*) = e_k(\sqrt{A^* A})$.
\end{itemize}
\end{lemma}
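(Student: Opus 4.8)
The plan is to establish (i)--(vi) by elementary covering arguments for the image set $A(U_X)$, in close analogy with the proof of Proposition \ref{prop_singular_value_decomposition}(a)--(e) for singular values; throughout I write $U_X,U_Y,U_Z$ for the closed unit balls of $X,Y,Z$. For (i), I would first note that $e_1(A)=\inf_{y\in Y}\sup_{\norm{x}\le 1}\norm{Ax-y}$ is the radius of the smallest ball in $Y$ containing $A(U_X)$. Since $A(U_X)$ is balanced, for every center $y$ one has $\sup_{\norm{x}\le 1}\norm{Ax-y}\ge\sup_{\norm{x}\le 1}\tfrac12\bigl(\norm{Ax-y}+\norm{-Ax-y}\bigr)\ge\norm{A}$, with equality for $y=0$, hence $e_1(A)=\norm{A}$. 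Monotonicity $e_{k+1}(A)\le e_k(A)$ is immediate since a covering by $2^{k-1}$ balls is a fortiori a covering by $2^{k}$ balls, and strict positivity for $A\not\equiv 0$ follows because $A(U_X)$ contains a nondegenerate segment $[-v,v]$ with $v\ne 0$: the intersection of any ball of radius $\eps$ with the line through $v$ has length $\le 2\eps$, so $2^{k-1}$ such balls covering $[-v,v]$ force $\eps\ge 2^{-(k-1)}\norm{v}>0$.

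Parts (ii) and (iii) rest on the same ``product of coverings'' construction. For (ii), given $\eps>e_k(A)$ and $\eta>e_j(C)$, cover $A(U_X)$ by $2^{k-1}$ balls $y_i+\eps U_Y$ and $C(U_Y)$ by $2^{j-1}$ balls $z_\ell+\eta U_Z$; then $C(A(U_X))\subset\bigcup_{i,\ell}\bigl(Cy_i+\eps z_\ell+\eps\eta\,U_Z\bigr)$, a union of $2^{j+k-2}$ balls, so $e_{j+k-1}(C\circ A)\le\eps\eta$, and one lets $\eps\downarrow e_k(A)$, $\eta\downarrow e_j(C)$. For (iii) the analogous estimate applied to the maps $x\mapsto Ax$ and $x\mapsto Bx$ gives: if $Ax\in a_i+\eps U_Y$ and $Bx\in b_\ell+\eta U_Y$, then $(A+B)x\in a_i+b_\ell+(\eps+\eta)U_Y$, whence $e_{j+k-1}(A+B)\le\eps+\eta$. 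Combining (ii) with $e_1(\cdot)=\norm{\cdot}$ from (i) yields the ideal property $e_k(\Psi\circ A\circ\Phi)\le\norm{\Psi}\,\norm{\Phi}\,e_k(A)$ for bounded $\Phi,\Psi$, which I will use for (vi).

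For (iv): if $e_k(A)\to 0$ then $A(U_X)$ is totally bounded, so its closure is compact (as $Y$ is complete) and $A$ is compact; conversely, if $A$ is compact then $\overline{A(U_X)}$ is totally bounded, so for each $\eps>0$ there is a finite $\eps$-net, and choosing $k$ with $2^{k-1}$ at least its cardinality shows $e_k(A)\le\eps$. For (v), the direction ``$A$ finite rank $\Rightarrow$ exponential decay'' is a volume count: if $\rank A=r$ then $A(U_X)$ lies in a ball of radius $\norm{A}$ inside a subspace of real dimension $d\le 2r$, which is covered by $\lesssim(\norm{A}/\eps)^{d}$ $\eps$-balls, so $e_k(A)\lesssim\norm{A}\,2^{-(k-1)/d}$, i.e.\ $e_k(A)\lesssim e^{-ck}$. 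For the converse, exponential decay forces compactness by (iv), and if $\rank A=\infty$ I would, for each $n$, choose linearly independent $v_1,\dots,v_n\in A(U_X)$; by convexity and balancedness $\mathrm{conv}(\pm v_1,\dots,\pm v_n)\subset A(U_X)$, and this symmetric body contains a ball of some radius $\rho_n>0$ of the $n$-dimensional space $\spa(v_1,\dots,v_n)$, so the $\eps$-covering numbers of $A(U_X)$ are $\ge(\rho_n/\eps)^n$ for $\eps<\rho_n$. Since $e_k(A)\lesssim e^{-ck}$ forces the covering numbers to satisfy $\log N(\eps)\lesssim\log(1/\eps)$, letting $n\to\infty$ produces a contradiction. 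I expect this last implication to be the only genuinely delicate point, the subtlety being to extract a true finite-dimensional ball from $A(U_X)$ rather than a merely flat slice of it; the convex-hull argument above is exactly what resolves this.

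Finally, for (vi) I would use the polar decomposition $A=U\sqrt{A^{*}A}$, where $U$ is a partial isometry with $U^{*}A=\sqrt{A^{*}A}$. The ideal property gives $e_k(A)=e_k(U\sqrt{A^{*}A})\le e_k(\sqrt{A^{*}A})$ and $e_k(\sqrt{A^{*}A})=e_k(U^{*}A)\le e_k(A)$, hence $e_k(A)=e_k(\sqrt{A^{*}A})$; applying this to $A^{*}$ gives $e_k(A^{*})=e_k(\sqrt{AA^{*}})$. It then remains to check $e_k(\sqrt{A^{*}A})=e_k(\sqrt{AA^{*}})$, which follows from $\sqrt{AA^{*}}=U\sqrt{A^{*}A}\,U^{*}$ (the right side is positive and squares to $AA^{*}$, hence equals the positive square root by uniqueness) together with $\sqrt{A^{*}A}=U^{*}\sqrt{AA^{*}}\,U$ and the ideal property. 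The lemma is then assembled from (i)--(iii) as the basic toolkit, in complete parallel to how Proposition \ref{prop_singular_value_decomposition} is used for singular values.
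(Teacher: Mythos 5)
Your proof is correct: the paper itself gives no argument for Lemma \ref{lem:prop_entropy}, stating it with a reference to Chapter 1.3 of \cite{ET08}, and your covering constructions for (i)--(iii), the total-boundedness characterization in (iv), the volume-comparison argument in (v) (including the absolutely convex hull step that extracts a genuine $n$-dimensional ball from $A(U_X)$ in the infinite-rank case, which is indeed the one delicate point), and the polar-decomposition plus ideal-property argument for (vi) are exactly the standard arguments found in that reference. Nothing needs to be changed.
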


The above result shows that entropy numbers are indeed related to compactness. Singular values also encode compactness properties of linear operators. There are various relations between these two notions (see \cite{CS90, ET08}). For us, the following result stating the equivalence of typical decay rates will be sufficient.

\begin{lemma} \label{lemma_singular_entropy_relation}
Let $A: X \to Y$ be a compact operator between separable Hilbert spaces. For any $s > 0$, one has 
\begin{align*}
\sigma_k(A) \lesssim k^{-s} \quad &\Longleftrightarrow \quad e_k(A) \lesssim k^{-s}, \\
\sigma_k(A) \gtrsim k^{-s} \quad &\hspace{-1.5pt}\implies \hspace{-2pt} \quad e_k(A) \gtrsim k^{-s}. \\
\intertext{Moreover, if $\mu > 0$, then}
\sigma_k(A) \lesssim e^{-ck^{\mu}} \text{ for some $c > 0$} \quad &\Longleftrightarrow \quad e_k(A) \lesssim e^{-\tilde{c} k^{\frac{\mu}{1+\mu}}} \text{ for some $\tilde{c} > 0$}, \\
\sigma_k(A) \gtrsim e^{-ck^{\mu}} \text{ for some $c > 0$} \quad &\hspace{-1.5pt}\implies \hspace{-2pt} \quad e_k(A) \gtrsim e^{-\tilde{c} k^{\frac{\mu}{1+\mu}}} \text{ for some $\tilde{c} > 0$}.
\end{align*}
\end{lemma}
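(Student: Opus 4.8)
The strategy is to reduce all four equivalences/implications to two elementary two-sided comparisons: a lower comparison, controlling $\sigma_k(A)$ by $e_k(A)$, obtained from a volume argument; and an upper comparison, controlling $e_k(A)$ by the $\sigma_j(A)$, obtained by truncating the singular value decomposition to finite rank. Optimising a free integer parameter in each then produces the stated polynomial and exponential rates.

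\textbf{Step 1 (volume inequality).} Using the singular value decomposition of Proposition~\ref{prop_singular_value_decomposition}, with orthonormal systems $(\varphi_j)$ in $X$ and $(\psi_j)$ in $Y$, I would fix $n$ and let $P_n$ denote the orthogonal projection of $Y$ onto $V_n := \mathrm{span}\{\psi_1,\dots,\psi_n\}$. Then $P_n(A(U_X))$ contains the solid $n$-dimensional ellipsoid $\mathcal{E}_n$ with semi-axes $\sigma_1(A) \ge \dots \ge \sigma_n(A)$, so $\mathrm{vol}(\mathcal{E}_n) = \sigma_1(A)\cdots\sigma_n(A)\,\mathrm{vol}(B^{V_n})$ with $B^{V_n}$ the unit ball of $V_n$. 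On the other hand, if $\varepsilon > e_k(A)$ then $A(U_X)$, hence $P_n(A(U_X))$ (as $\|P_n\|\le 1$), is covered by $2^{k-1}$ balls of radius $\varepsilon$; comparing Lebesgue measures on $V_n$ (passing to the underlying real space when $Y$ is complex) gives $\prod_{j=1}^n\sigma_j(A) \le 2^{k-1}\varepsilon^n$, whence
\[
\sigma_n(A) \ \le\ \Big(\prod_{j=1}^n \sigma_j(A)\Big)^{1/n} \ \le\ 2^{(k-1)/n}\, e_k(A), \qquad n,k \ge 1,
\]
equivalently $e_k(A) \ge 2^{-(k-1)/n}\sigma_n(A)$. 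Taking $k = n$ yields $\sigma_n(A)\le 2e_n(A)$, which immediately gives ``$e_k(A)\lesssim k^{-s}\Rightarrow\sigma_k(A)\lesssim k^{-s}$'' and ``$\sigma_k(A)\gtrsim k^{-s}\Rightarrow e_k(A)\gtrsim k^{-s}$''. For the exponential rates I would keep the free index: given $\sigma_k(A)\gtrsim e^{-ck^\mu}$, from $e_k(A)\ge 2^{-(k-1)/n}\sigma_n(A)\gtrsim 2^{-(k-1)/n}e^{-cn^\mu}$ I would choose $n\sim k^{1/(1+\mu)}$ to balance the two factors and obtain $e_k(A)\gtrsim e^{-\tilde c k^{\mu/(1+\mu)}}$; given $e_k(A)\lesssim e^{-\tilde c k^{\mu/(1+\mu)}}$, from $\sigma_n(A)\le 2^{(k-1)/n}e_k(A)$ I would optimise over $k$ for fixed $n$ (optimal $k\sim n^{1+\mu}$) and obtain $\sigma_n(A)\lesssim e^{-c n^\mu}$.

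\textbf{Step 2 (finite-rank truncation).} For the two remaining implications, which bound $e_k$ from above by the $\sigma_j$, I would split $A = A_n + R_n$, where $A_n$ keeps the first $n$ singular triples (so $\rank(A_n)\le n$) and $\|R_n\| = \sigma_{n+1}(A)$. By subadditivity (Lemma~\ref{lem:prop_entropy}, using $e_k(A)\le e_k(A_n)+e_1(R_n)$) and the elementary covering estimate $e_m(B)\lesssim\|B\|\,2^{-(m-1)/\rank(B)}$ for finite-rank $B$ (covering a Euclidean ball by $\varepsilon$-balls; cf.\ \cite{ET08}), this gives
\[
e_k(A) \ \lesssim\ \|A\|\,2^{-(k-1)/n} + \sigma_{n+1}(A).
\]
For $\sigma_k(A)\lesssim e^{-ck^\mu}$ the choice $n\sim k^{1/(1+\mu)}$ balances the two terms and yields $e_k(A)\lesssim e^{-\tilde c k^{\mu/(1+\mu)}}$. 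For $\sigma_k(A)\lesssim k^{-s}$, however, this single split loses a logarithmic factor (it gives only $e_k(A)\lesssim k^{-s}(\log k)^s$); to reach the sharp bound $e_k(A)\lesssim k^{-s}$ I would either invoke Carl's inequality, $\sup_{1\le j\le k}j^s e_j(A)\lesssim_s\sup_{1\le j\le k}j^s\sigma_j(A)$ (\cite{CS90,ET08}), or decompose $A = \sum_{\ell\ge 0}B_\ell$ with $B_\ell$ supported on singular indices $j\in[2^\ell,2^{\ell+1})$, allocate to $B_\ell$ an entropy budget $k_\ell\sim s(L-\ell+1)2^\ell$ (geometrically growing, with a safety margin), and sum the resulting geometric series.

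\textbf{Main obstacle.} Once the parameters are chosen correctly, Step~1 and the exponential cases of Step~2 are one-line estimates. The genuinely delicate point is the logarithm-free polynomial bound $e_k(A)\lesssim k^{-s}$: a single truncation is insufficient, and one must either appeal to Carl's inequality or run the dyadic block decomposition with exactly the right allocation of the entropy budget. The remaining care is purely bookkeeping in the exponential case, where the exponent $\mu/(1+\mu)$ is forced by balancing $2^{-k/n}$ against $e^{-cn^\mu}$, i.e.\ by the choice $n\sim k^{1/(1+\mu)}$.
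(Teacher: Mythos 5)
Your proposal is correct, but it runs along a genuinely different route than the paper's proof. The paper reduces $A$ to a diagonal operator via $\sqrt{A^*A}$ and then imports the two-sided inequality of \cite[Proposition 1.3.2]{CS90}, which controls $e_N(A)$ from both sides by $\sup_k 2^{-\frac{N-1}{2k}}(\sigma_1\cdots\sigma_k)^{1/k}$; all four statements then follow by optimizing in $k$ (Stirling's formula in the polynomial case, Riemann-sum estimates for $(\sigma_1\cdots\sigma_k)^{1/k}$ in the exponential case). Your Step~1 volume/ellipsoid argument is in effect a direct proof of the lower half of that cited inequality (with the same complex-versus-real dimension bookkeeping, which only shifts constants), so the implications ``$e_k\lesssim k^{-s}\Rightarrow\sigma_k\lesssim k^{-s}$'', statement two, statement four, and the backward direction of the exponential equivalence come out the same way, just with the geometric mean replaced by the cruder bound $\sigma_n\le(\sigma_1\cdots\sigma_n)^{1/n}$, which suffices. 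Your Step~2 replaces the upper half of the cited inequality by finite-rank truncation plus the elementary covering bound for rank-$n$ operators; this handles the exponential case cleanly, and you correctly identify the one delicate point, namely that a single truncation loses a $(\log k)^s$ factor in the polynomial case. Both of your proposed fixes are sound: Carl's inequality $\sup_{j\le k}j^s e_j(A)\lesssim_s\sup_{j\le k}j^s\sigma_j(A)$ is available in \cite{CS90, ET08}, and your dyadic block decomposition with entropy budget $k_\ell\sim s(L-\ell+1)2^\ell$ does sum to total budget $\lesssim s\,2^L$ while each block contributes $\lesssim 2^{s\ell-2sL}$, yielding the logarithm-free bound after summing the geometric series (this is essentially the standard proof of Carl's inequality, so the two fixes are really the same argument at different levels of encapsulation). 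What your approach buys is a self-contained, elementary proof needing only Proposition \ref{prop_singular_value_decomposition}, Lemma \ref{lem:prop_entropy}(iii) and the finite-dimensional covering estimate; what the paper's approach buys is brevity and, through the geometric-mean formulation, particularly clean bookkeeping in the exponential regime.
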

\begin{proof}
The argument is based on \cite[Proposition 1.3.2]{CS90}, which states that for any $N \geq 1$, 
\begin{equation} \label{carl_ineq_strong}
\sup_{k \geq 1} 2^{-\frac{N-1}{2k}} (\sigma_1(A) \cdots \sigma_k(A))^{1/k} \leq e_N(A) \leq 6 \,\sup_{k \geq 1} 2^{-\frac{N-1}{2k}} (\sigma_1(A) \cdots \sigma_k(A))^{1/k}.
\end{equation}
In fact this is stated in \cite{CS90} for diagonal operators $\ell^2 \to \ell^2$, but $\sqrt{A^* A}$ is unitarily equivalent to such an operator $D$ and unitary equivalence does not affect singular values or entropy numbers. Thus the result holds first for $D$, and then also for $A$ since $\sigma_j(\sqrt{A^* A}) = \sigma_j(A)$ and $e_j(\sqrt{A^* A}) = e_j(A)$. We rewrite \eqref{carl_ineq_strong} in the more convenient form, with $c_1 > 0$ an absolute constant, 
\begin{equation} \label{carl_ineq_strong_second}
\sup_{k \geq 1}  e^{-\frac{N}{2k}}  (\sigma_1 \cdots \sigma_k)^{1/k} \leq e_N \lesssim \,\sup_{k \geq 1} e^{-c_1 N/k} (\sigma_1 \cdots \sigma_k)^{1/k}.
\end{equation}

If $\sigma_j \lesssim j^{-s}$, then $(\sigma_1 \cdots \sigma_k)^{1/k} \lesssim (k!)^{-s/k} \sim k^{-s}$ by Stirling's formula. Now optimizing the function $f(k):= ck^{-s} e^{-c_1 N/k}$ in $k$ yields that $k \sim c_s N$ and thus \eqref{carl_ineq_strong_second} gives $e_N \lesssim N^{-s}$. Conversely, if $e_N \lesssim N^{-s}$, then choosing $k=N$ on the left of \eqref{carl_ineq_strong_second} and using $\sigma_1 \geq \sigma_2 \geq \ldots$ gives $\sigma_N \lesssim N^{-s}$. The statement that $\sigma_j \gtrsim j^{-s}$ implies $e_N \gtrsim N^{-s}$ follows from the Stirling formula as above.

Now if $\sigma_k \lesssim e^{-ck^{\mu}}$, then evaluating a Riemann sum gives 
\[
(\sigma_1 \cdots \sigma_k)^{1/k} \lesssim e^{-\frac{c}{\mu+1} k^{\mu}}.
\]
Given $N$, the expression $e^{-c_1 N/k} e^{-\frac{c}{\mu+1} k^{\mu}}$ is maximal when $k \sim N^{\frac{1}{1+\mu}}$. Choosing this value of $k$ on the right of \eqref{carl_ineq_strong_second} gives $e_N \lesssim e^{-\tilde{c} N^{\frac{\mu}{1+\mu}}}$. Conversely, assume $e_N \lesssim e^{-\tilde{c} N^{\frac{\mu}{1+\mu}}}$. Fixing $j \geq 1$, choosing $k=j$ on the left of \eqref{carl_ineq_strong_second} and using that $\sigma_1 \geq \sigma_2 \geq \ldots$  gives 
\[
\sigma_j \lesssim e^{c_0 N/j -\tilde{c} N^{\frac{\mu}{1+\mu}}}, \qquad N \geq 1.
\]
The right hand side is minimal when $N \sim j^{1+\mu}$, and this choice gives $\sigma_j \lesssim e^{-c j^{\mu}}$ for some $c > 0$ as required. Finally, if $\sigma_j \gtrsim e^{-cj^{\mu}}$, then a Riemann sum argument gives $(\sigma_1 \cdots \sigma_k)^{1/k} \gtrsim e^{-\frac{c}{\mu+1} \frac{(k+1)^{\mu+1}}{k}} \gtrsim e^{-\tilde{c} k^{\mu}}$. By \eqref{carl_ineq_strong_second} one has $e_N \gtrsim \sup_{k \geq 1} e^{-c_0 N/k} e^{-\tilde{c} k^{\mu}}$, and choosing $k \sim N^{\frac{1}{1+\mu}}$ gives $e_N \gtrsim e^{-\tilde{c} N^{\frac{\mu}{1+\mu}}}$.
\end{proof}

\subsection{Embeddings between Sobolev or Gevrey spaces}

Recall from the beginning of this subsection that we are interested in entropy numbers of inclusions $i: Y_1 \to Y$, where $Y_1$ is a subspace of a Banach space $Y$. The case of Sobolev type spaces, at least on domains in $\mR^n$, is well understood \cite{ET08}. The estimates on manifolds are similar, and we give a proof for completeness.

\begin{theorem} \label{thm_sobolev_inclusion}
Let $M$ be a compact smooth $n$-manifold with or without smooth boundary, and let $s_1, s_2 \in \mR$ with $s_1 > s_2$. The embedding $i: H^{s_1}(M) \to H^{s_2}(M)$ satisfies 
\begin{gather*}
\sigma_k(i) \sim k^{-(s_1-s_2)/n}, \\
e_k(i) \sim k^{-(s_1-s_2)/n}.
\end{gather*}
The same bounds hold for the embedding $i: h^{s_1}_{n,X,\varphi} \to h^{s_2}_{n,X,\varphi}$ (see Definition \ref{def_hs_sequence}).
\end{theorem}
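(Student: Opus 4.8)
The plan is to reduce everything to the singular value asymptotics $\sigma_k(i) \sim k^{-(s_1-s_2)/n}$; the entropy bounds $e_k(i) \sim k^{-(s_1-s_2)/n}$ then follow at once from Lemma \ref{lemma_singular_entropy_relation} (both $\sigma_k \lesssim k^{-s} \Leftrightarrow e_k \lesssim k^{-s}$ and $\sigma_k \gtrsim k^{-s} \Rightarrow e_k \gtrsim k^{-s}$). First I would handle the sequence space embedding $i: h^{s_1}_{n,X,\varphi} \to h^{s_2}_{n,X,\varphi}$: the correspondence $u \leftrightarrow (j^{s_\ell/n}(u,\varphi_j))_{j\geq 1}$ identifies $h^{s_\ell}$ isometrically with $\ell^2$, and under these identifications $i$ becomes the diagonal operator $D$ on $\ell^2$ with $De_j = j^{-(s_1-s_2)/n}e_j$. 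Since $(j^{-(s_1-s_2)/n})_{j \geq 1}$ is already non-increasing, $\sigma_k(i) = \sigma_k(D) = k^{-(s_1-s_2)/n}$ exactly. For a closed manifold $(M,g)$, Proposition \ref{prop_sobolev_sequence_space} says that with $X = L^2(M)$ and $\varphi$ an $L^2$-orthonormal eigenbasis of $-\Delta_g$, the spaces $H^{s_\ell}(M)$ and $h^{s_\ell}_{n,X,\varphi}$ coincide with equivalent norms; hence the identity maps $\iota_\ell : H^{s_\ell}(M) \to h^{s_\ell}$ are bounded isomorphisms with bounded inverses and $i_h = \iota_2 \circ i_H \circ \iota_1^{-1}$, so by Proposition \ref{prop_singular_value_decomposition}(e) the singular values of $i_H$ and $i_h$ agree up to constants and $\sigma_k(i_H) \sim k^{-(s_1-s_2)/n}$. (Alternatively one may note $i_H$ has, up to equivalent norms, the same singular values as the elliptic operator $(1-\Delta_g)^{(s_2-s_1)/2} \in \Psi^{-(s_1-s_2)}_{\mathrm{cl}}(M)$ on $L^2(M)$, so that Theorem \ref{thm_weyl3} applies directly.)

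Now let $M$ be compact with smooth boundary and fix a closed $n$-manifold $\tilde M \supset M$, e.g.\ the double of $M$. For the \emph{upper bound} I would factor $i_M = R \circ i_{\tilde M} \circ E$, where $E : H^{s_1}(M) \to H^{s_1}(\tilde M)$ is a bounded (Seeley/Stein-type) extension operator, valid for all $s \in \mR$ (cf.\ \cite[Sec.~4.4, vol.~I]{Taylor}), and $R$ is the bounded restriction; multiplicativity of singular values and entropy numbers (Proposition \ref{prop_singular_value_decomposition}(d),(e) and Lemma \ref{lem:prop_entropy}(ii)) together with the closed case give $\sigma_k(i_M) \leq \norm{R}\norm{E}\,\sigma_k(i_{\tilde M}) \lesssim k^{-(s_1-s_2)/n}$. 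For the \emph{lower bound} I would push the problem into the interior: fix $\chi \in C^\infty_c(M^{\mathrm{int}})$ with $\chi \not\equiv 0$, and observe that $m_\chi : H^{s_1}(\tilde M) \to H^{s_1}(M)$, $f \mapsto (\chi f)|_M$, is bounded, while restriction identifies $\{w \in H^{s_2}(\tilde M)\,;\, \supp w \subset \supp\chi\}$ with a subspace of $H^{s_2}(M)$ with equivalent norms (extension by zero being bounded for functions supported in a fixed compact subset of $M^{\mathrm{int}}$). Hence, by Proposition \ref{prop_singular_value_decomposition}(e), $\sigma_k(i_M) \gtrsim \sigma_k(i_M \circ m_\chi) \sim \sigma_k(m_\chi : H^{s_1}(\tilde M) \to H^{s_2}(\tilde M))$, and the latter operator has, up to equivalent norms, the same singular values as $(1-\Delta_{\tilde M})^{s_2/2} m_\chi (1-\Delta_{\tilde M})^{-s_1/2} \in \Psi^{-(s_1-s_2)}_{\mathrm{cl}}(\tilde M)$, whose principal symbol is $\chi(x)$ times an elliptic factor and hence non-vanishing at some $(x_0,\xi_0)$. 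Theorem \ref{thm_weyl3} then yields $\sigma_k \gtrsim k^{-(s_1-s_2)/n}$, which finishes the proof after one more appeal to Lemma \ref{lemma_singular_entropy_relation}.

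The main obstacle is the bookkeeping for non-integer and negative Sobolev indices in the boundary case: the existence of the extension operator $E$ on $H^s(M)$ for all $s \in \mR$, the matching of the various definitions of $H^s(M)$ on manifolds with boundary, and the fact that restriction and extension-by-zero act as isometries up to constants for functions supported in a fixed compact subset of $M^{\mathrm{int}}$. A more hands-on alternative to the $\Psi$DO input in the lower bound (avoiding Theorem \ref{thm_weyl3}) is to exhibit, in a coordinate ball $L \Subset M^{\mathrm{int}}$, roughly $N \sim 2^{mn}$ disjointly supported rescaled bumps $\psi(2^m x - v)$ whose span is a subspace of $H^{s_1}(M)$ on which $\norm{\cdot}_{H^{s_2}}/\norm{\cdot}_{H^{s_1}} \gtrsim N^{-(s_1-s_2)/n}$, so that the Courant min-max principle forces $\sigma_N(i_M) \gtrsim N^{-(s_1-s_2)/n}$; this route needs a short almost-orthogonality lemma for the $H^s$ norms of such single-scale families (valid for all real $s$), for which one may also invoke \cite{ET08}.
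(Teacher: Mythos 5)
Your proposal is correct and follows essentially the same route as the paper: reduce entropy numbers to singular values via Lemma \ref{lemma_singular_entropy_relation}, treat the closed/sequence-space case as a diagonal operator in the eigenbasis, get the boundary-case upper bound by factoring through extension and restriction, and get the lower bound by cutting off to the interior and applying the Weyl law of Theorem \ref{thm_weyl3} to a noncharacteristic classical $\Psi$DO of order $-(s_1-s_2)$. The only cosmetic difference is that you conjugate $m_\chi$ by Bessel potentials directly, whereas the paper computes $A^*A = J^{-2s_1}\chi J^{2s_2}\chi$ (with two cutoff factors) before invoking the same Weyl law.
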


\begin{proof}
By Lemma \ref{lemma_singular_entropy_relation}, it is enough to prove the statement about singular values. Assume first that $M$ has no boundary, and consider the norm $\norm{f}_{H^s} = \norm{J^s f}_{L^2}$ where $$J^s f = \sum_{j=1}^{\infty} j^{\frac{s}{n}} (f,\varphi_j)_{L^2} \varphi_j$$ as in Definition \ref{def_hs_sequence}. Note that 
\[
(i^* i(f), h)_{H^{s_1}} = (f, h)_{H^{s_2}} = (J^{-2(s_1-s_2)} f, h)_{H^{s_1}}.
\]
Thus $i^* i = J^{-2(s_1-s_2)}$ on $H^{s_1}(M)$, and using the isometries $J^{\pm s_1}$ we have 
\begin{align*}
\sigma_k(i)^2 &= \sigma_k(i^* i) = \sigma_k(J^{s_1} \circ i^* i \circ J^{-s_1}) = \sigma_k( J^{-2(s_1-s_2)}: L^2(M) \to L^2(M) ) \\
 &= k^{-\frac{2(s_1-s_2)}{n}}.
\end{align*}
In the last step we used that $J^{-2(s_1-s_2)}: L^2(M) \to L^2(M)$ is a diagonal operator.

Next assume that $M$ is compact with smooth boundary. Let $N$ be a closed manifold containing $M$, let $E: H^{s_1}(M) \to H^{s_1}(N)$ be a bounded extension operator, and let $R: H^{s_2}(N) \to H^{s_2}(M), Ru = u|_M$ be the restriction operator. Then $i = R \circ \iota \circ E$, where $\iota$ is the inclusion $H^{s_1}(N) \to H^{s_2}(N)$. Consequently we have the upper bound 
\[
\sigma_k(i) \leq \norm{R} \sigma_k(\iota) \norm{E} \lesssim  k^{-(s_1-s_2)/n}.
\]
For the lower bound on $\sigma_k(i)$, we choose a compact set $K \subset M^{\mathrm{int}}$ and a cutoff function $\chi \in C^{\infty}(N)$ with $\mathrm{supp}(\chi) \subset K$ and $\chi \neq 0$ somewhere. Consider the operator 
\[
A: H^{s_1}(N) \to H^{s_2}(N), \ \ A = E_0 \circ i \circ R \circ m_{\chi}
\]
where $E_0$ denotes extension by zero, and $m_{\chi}$ denotes multiplication by $\chi$. Then 
\[
\sigma_k(A) \leq \norm{E_0} \sigma_k(i) \norm{R \circ m_{\chi}} \lesssim \sigma_k(i).
\]
It is enough to prove a lower bound for $\sigma_k(A)$. We compute $A^* A$ as 
\[
(A^* A f, h)_{H^{s_1}} = (\chi f, \chi h)_{H^{s_2}} = (\chi J^{2s_2} \chi f, h)_{L^2} = (J^{-2s_1} \chi J^{2s_2} \chi f, h)_{H^{s_1}}
\]
where we now choose $J^s = (1-\Delta_g)^{s/2}$. Thus $A^* A f = J^{-2s_1} \chi J^{2s_2} \chi f$ on $H^{s_1}(N)$. Using the isometries $J^{\pm s_1}$ yields  
\[
\sigma_k(A)^2 = \sigma_k(A^* A: H^{s_1} \to H^{s_1}) = \sigma_k(J^{-s_1} \chi J^{2s_2} \chi J^{-s_1}: L^2 \to L^2).
\]
Now $J^{-s_1} \chi J^{2s_2} \chi J^{-s_1}$ is a classical pseudodifferential operator of order $-2(s_1-s_2)$ on $N$ and it has nonvanishing principal symbol at points where $\chi \neq 0$. Thus by Theorem \ref{thm_weyl3} its singular values on $L^2(N)$ satisfy $\sigma_k \gtrsim k^{-\frac{2(s_1-s_2)}{n}}$. This concludes the proof.
\end{proof}

By using similar localization arguments as in Theorem \ref{thm_sobolev_inclusion}, one obtains the following localized bounds which will be used in Section \ref{sec:micro_abstract}.

\begin{proposition}
\label{prop:entropy_localized}
Let $N$ be a smooth $n$-dimensional manifold. Let $Q\subset N$ be compact and let $t\in \R$, $m>0$. Then for $H^{t+m}_Q(N) :=\{f\in H^{t+m}(N): \ \supp(f)\subset Q\}$ 
and for the embedding $i: H^{t+m}_Q(N) \to H^t_Q(N)$ we have 
\begin{align*}
e_k(i) \sim k^{-m/n}.
\end{align*}
\end{proposition}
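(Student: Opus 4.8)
The plan is to reduce Proposition \ref{prop:entropy_localized} to Theorem \ref{thm_sobolev_inclusion} by the same localization trick already used in the boundary case of that theorem, but now carried out entirely on a fixed closed manifold. First I would choose, using compactness of $Q$ and $Q \subset N$, a slightly larger compact set $Q \subset Q' \subset N^{\mathrm{int}}$ and a closed smooth $n$-manifold $M_1$ containing an open neighbourhood of $Q'$ in $N$; by Definition \ref{def:loc_Hs} the spaces $H^s_Q(N)$ are identified with $H^s_Q(M_1) = \{f \in H^s(M_1) \,;\, \supp(f) \subset Q\}$, so it suffices to prove $e_k(i) \sim k^{-m/n}$ for $i : H^{t+m}_Q(M_1) \to H^t_Q(M_1)$, where $M_1$ is now closed. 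By Lemma \ref{lemma_singular_entropy_relation} it is enough to prove the corresponding singular value bounds $\sigma_k(i) \sim k^{-m/n}$.

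For the upper bound, I would factor $i$ through the (global) inclusion $\iota : H^{t+m}(M_1) \to H^t(M_1)$: writing $j_{\mathrm{in}}$ for the inclusion $H^{t+m}_Q(M_1) \hookrightarrow H^{t+m}(M_1)$ (which is bounded, indeed an isometry onto its image) and $P$ for \emph{any} bounded map $H^t(M_1) \to H^t_Q(M_1)$ restricting to the identity on the subspace — for instance multiplication $m_\chi$ by a cutoff $\chi \in C^\infty_c(M_1)$ with $\chi \equiv 1$ on a neighbourhood of $Q$, which is bounded on $H^t(M_1)$ — we get $i = P \circ \iota \circ j_{\mathrm{in}}$ and hence $\sigma_k(i) \leq \norm{P} \sigma_k(\iota) \norm{j_{\mathrm{in}}} \lesssim k^{-m/n}$ by Theorem \ref{thm_sobolev_inclusion}. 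Here one must be slightly careful that $t$ may be negative and $Q$ merely compact, so $m_\chi$ should be interpreted as a $\Psi$DO of order $0$ acting on $H^t(M_1)$ and mapping into functions supported in $\supp(\chi)$; choosing $\supp(\chi)$ to still lie in $M_1^{\mathrm{int}}$ is harmless. This is the main technical point: making sense of ``cut off and land in the subspace'' when $t<0$, which is why I insisted on passing to a closed $M_1$ and using pseudodifferential calculus rather than extension/restriction operators.

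For the lower bound I would imitate the argument in Theorem \ref{thm_sobolev_inclusion} directly inside $M_1$. Pick $\psi \in C^\infty_c(M_1^{\mathrm{int}})$ with $\supp(\psi) \subset Q^{\mathrm{int}}$ (shrinking $Q$ to $Q'$ at the outset guarantees such $\psi$ exists provided $Q$ has nonempty interior; if $Q$ has empty interior one replaces $Q$ by a slightly fattened set, which only makes the space larger and the entropy numbers larger, and the upper bound already handled the fattened set — alternatively one notes the statement is interesting precisely when $Q$ has interior). Consider $A := i \circ m_\psi : H^{t+m}(M_1) \to H^t(M_1)$, noting that $m_\psi$ maps $H^{t+m}(M_1)$ into $H^{t+m}_Q(M_1)$ so $A$ really does factor through $i$ and $\sigma_k(A) \leq \norm{m_\psi} \sigma_k(i) \lesssim \sigma_k(i)$. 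Computing as in Theorem \ref{thm_sobolev_inclusion}, with $J^s = (1-\Delta_g)^{s/2}$ on $M_1$, one has $A^* A = J^{-2(t+m)} \psi J^{2t} \psi$ up to the isometries $J^{\pm(t+m)}$, and $J^{-(t+m)} \psi J^{2t} \psi J^{-(t+m)}$ is a classical $\Psi$DO of order $-2m$ on the closed manifold $M_1$ with nonvanishing principal symbol wherever $\psi \neq 0$. Theorem \ref{thm_weyl3} then gives $\sigma_k(A^* A) \gtrsim k^{-2m/n}$, hence $\sigma_k(i) \gtrsim \sigma_k(A) \gtrsim k^{-m/n}$. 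Combining the two bounds and invoking Lemma \ref{lemma_singular_entropy_relation} once more yields $e_k(i) \sim k^{-m/n}$, and by Definition \ref{def:loc_Hs} this is independent of the auxiliary choice of $M_1$, completing the proof. The only genuine obstacle is the bookkeeping for negative $t$ and for $Q$ with possibly empty interior; everything else is a transcription of the boundary case of Theorem \ref{thm_sobolev_inclusion}.
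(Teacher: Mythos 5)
Your proposal is essentially the argument the paper intends: Proposition \ref{prop:entropy_localized} is stated with only the remark that it follows by ``similar localization arguments as in Theorem \ref{thm_sobolev_inclusion}'', and your reduction to a closed manifold $M_1$ via Definition \ref{def:loc_Hs}, the factorization through the global embedding for the upper bound, and the cutoff-plus-Weyl-law computation (Theorem \ref{thm_weyl3}, converted via Lemma \ref{lemma_singular_entropy_relation}) for the lower bound is precisely that localization argument.

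Two details in your write-up should be repaired, though neither affects the substance. First, $P=m_\chi$ is not a valid retraction: for general $u\in H^t(M_1)$ the function $\chi u$ is supported in $\supp(\chi)$, which is strictly larger than $Q$, so $m_\chi$ does not map $H^t(M_1)$ into $H^t_Q(M_1)$ and the factorization $i=P\circ\iota\circ j_{\mathrm{in}}$ fails as written. The fix is immediate: either take $P$ to be the orthogonal projection onto the closed subspace $H^t_Q(M_1)$ (it is the identity on that subspace and has norm one), or drop $P$ altogether — since $H^t_Q(M_1)\hookrightarrow H^t(M_1)$ is an isometry onto a closed subspace, enlarging the target to $H^t(M_1)$ changes neither singular values nor entropy numbers, so $\sigma_k(i)=\sigma_k(\iota\circ j_{\mathrm{in}})\le \sigma_k(\iota)\lesssim k^{-m/n}$ directly from Theorem \ref{thm_sobolev_inclusion}. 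Second, the remark that one may ``fatten'' $Q$ when it has empty interior does not rescue the lower bound: enlarging $Q$ enlarges the domain space and hence can only increase the entropy numbers, so a lower bound for the fattened set says nothing about the original one. Indeed, for $Q$ with empty interior the lower bound can genuinely fail (e.g.\ $Q$ a single point with $t+m\ge -n/2$ gives the zero space), so the proposition is to be read with $Q$ having nonempty interior; this is harmless, since in its application (Theorem \ref{thm_microlocal_smoothing}) only the upper bound is invoked for a general compact set, and your lower-bound argument with $\supp(\psi)\subset Q^{\mathrm{int}}$ is exactly right in the intended case.
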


For later purposes we note that similar estimates also hold under weaker assumptions on the domain (no high smoothness necessary), see \cite[Theorem 23.2]{Triebel_fractals}.

\begin{proposition} \label{prop_ek_nonsmooth}
Let $\Omega$ be a bounded Lipschitz domain in $\mR^n$. If $s_1 > s_2$, then the embedding $i: H^{s_1}(\Omega) \to H^{s_2}(\Omega)$ has entropy numbers satisfying 
\[
e_k(i) \sim k^{-\frac{s_1-s_2}{n}}, \qquad k \geq 1.
\]
\end{proposition}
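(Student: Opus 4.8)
The plan is to reduce the claim to the smooth case already settled in Theorem \ref{thm_sobolev_inclusion} and Proposition \ref{prop:entropy_localized}, using an extension operator for the upper bound and an interior localization for the lower bound; alternatively one may simply invoke \cite[Theorem 23.2]{Triebel_fractals}. Throughout write $m := s_1 - s_2 > 0$, and fix a closed smooth $n$-manifold $N$ (e.g.\ a large flat torus) that contains an isometric copy of a bounded open set $W$ with $\ol{\Omega} \subset W$.

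\textbf{Upper bound.} Here I would use the existence of a bounded linear extension operator $E \colon H^{s_1}(\Omega) \to H^{s_1}(\mR^n)$ on a bounded Lipschitz domain (Stein's extension together with interpolation when $s_1 \geq 0$, or Rychkov's universal extension operator for the full range $s_1 \in \mR$ with the conventions of \cite{McLean}). Pick $\psi \in C_c^{\infty}(W)$ with $\psi \equiv 1$ on a neighbourhood of $\ol{\Omega}$, put $Q := \supp(\psi) \subset N$, and note that $i$ factors as
\[ H^{s_1}(\Omega) \xrightarrow{\ m_{\psi}\circ E\ } H^{s_1}_Q(N) \xrightarrow{\ j\ } H^{s_2}_Q(N) \xrightarrow{\ R\ } H^{s_2}(\Omega), \]
where $m_{\psi}$ is multiplication by $\psi$, $j$ is the embedding of Proposition \ref{prop:entropy_localized}, and $R$ is restriction to $\Omega$; all maps except $j$ are bounded with norms independent of $k$ (multiplication by a fixed $C^{\infty}_c$ function is bounded on $H^s(\mR^n)$ for every $s$). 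By Lemma \ref{lem:prop_entropy}(ii) and Proposition \ref{prop:entropy_localized},
\[ e_k(i) \leq \norm{R}\, \norm{m_{\psi}\circ E}\, e_k(j) \lesssim k^{-m/n}. \]

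\textbf{Lower bound.} Choose a ball $B$ with $\ol{B} \subset \Omega^{\mathrm{int}}$, a smaller ball $B'' \Subset B$, and $\chi \in C_c^{\infty}(B)$ with $\chi \equiv 1$ on $\ol{B''}$, and regard $\ol{B''} \subset N$. For $f \in H^{s_1}_{\ol{B''}}(N)$, extension by zero gives $E_0 f \in H^{s_1}(\Omega)$ (bounded, since $\supp(f)$ lies in the interior of $\Omega$), one has $i(E_0 f) = E_0 f$ in $H^{s_2}(\Omega)$, and then the map $P$ sending $g \mapsto$ (extension by zero of $\chi g$) returns $\chi f = f \in H^{s_2}_{\ol{B''}}(N)$; for every $s_1,s_2 \in \mR$ the operators $E_0$ and $P$ are bounded with norms independent of $k$, because the supports remain in $\Omega^{\mathrm{int}}$ and $\chi$ is smooth and compactly supported. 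Thus the embedding $j' \colon H^{s_1}_{\ol{B''}}(N) \to H^{s_2}_{\ol{B''}}(N)$ equals $P \circ i \circ E_0$, so Lemma \ref{lem:prop_entropy}(ii) gives $e_k(j') \leq \norm{P}\,\norm{E_0}\, e_k(i)$, and Proposition \ref{prop:entropy_localized} yields $k^{-m/n} \sim e_k(j') \lesssim e_k(i)$. Combining the two bounds proves $e_k(i) \sim k^{-m/n}$.

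\textbf{Main obstacle.} The only non-formal ingredient is the availability of a bounded extension operator $H^{s_1}(\Omega) \to H^{s_1}(\mR^n)$ on a Lipschitz domain for the relevant range of $s_1$ (together with the compatibility, for negative smoothness, of the chosen definition of $H^{s}(\Omega)$ with such an extension); once this is granted, everything else is a routine factorization argument built on the monotonicity and submultiplicativity of entropy numbers in Lemma \ref{lem:prop_entropy}. For the values of $s_1, s_2$ that occur in later sections one can in fact avoid the sharpest extension theorems and work with Stein's extension and interpolation, or simply cite \cite[Theorem 23.2]{Triebel_fractals}.
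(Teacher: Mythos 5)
Your argument is correct, but it takes a different route from the paper: the paper does not prove Proposition \ref{prop_ek_nonsmooth} at all, it simply cites \cite[Theorem 23.2]{Triebel_fractals} (which treats entropy numbers of embeddings on rough domains directly). Your proof instead reduces the Lipschitz case to the smooth results already established in the paper (Theorem \ref{thm_sobolev_inclusion} and Proposition \ref{prop:entropy_localized}) by the standard two-sided factorization: extension plus cutoff for the upper bound, and an interior localization $j' = P \circ i \circ E_0$ for the lower bound, together with the submultiplicativity of entropy numbers from Lemma \ref{lem:prop_entropy}(ii). Both steps check out: multiplication by a fixed $C^\infty_c$ function is bounded on $H^s(\mR^n)$ for all $s$, the map $P$ (cutoff by $\chi$ supported in $B \Subset \Omega$ followed by zero extension) is well defined on $H^{s_2}(\Omega)$ independently of the chosen extension and bounded, and compactly supported distributions in $\Omega^{\mathrm{int}}$ embed boundedly into $H^{s_1}(\Omega)$ via restriction, so the composition really is the embedding $H^{s_1}_{\ol{B''}}(N) \to H^{s_2}_{\ol{B''}}(N)$ whose entropy numbers are $\sim k^{-m/n}$. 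What the comparison buys: the paper's citation is shorter and covers all real smoothness parameters in one stroke, while your argument is self-contained relative to the paper's smooth-manifold machinery \emph{except} for the one ingredient you correctly single out, namely a bounded extension operator $H^{s_1}(\Omega) \to H^{s_1}(\mR^n)$ on a Lipschitz domain for the full range $s_1 \in \mR$ (Rychkov's universal extension operator, compatible with the restriction-type definition of $H^s(\Omega)$ as in \cite{McLean}); for negative smoothness this is itself a nontrivial result of roughly the same depth as the theorem of \cite{Triebel_fractals} that the paper invokes, so your route trades one black box for another, but it has the advantage of making transparent how the rough-domain statement follows from the closed-manifold Weyl-type estimates used elsewhere in the paper.
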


Next we consider spaces of real-analytic or more generally Gevrey functions. Let $(M,g)$ be a compact connected smooth $n$-manifold without boundary, and recall the spaces $A^{\sigma,\rho}(M)$ of Gevrey functions introduced in \eqref{eq:Asr} and below in Section \ref{sec:Gev_def}. These Hilbert spaces have the following entropy properties.

\begin{theorem} \label{thm_analytic_inclusion}
Let $(M,g)$ be a closed smooth $n$-manifold, let $1 \leq \sigma < \infty$, $\rho > 0$ and $s \in \mR$. The embedding $i: A^{\sigma,\rho}(M) \to H^s(M)$ satisfies for some $\tilde{\rho} > 0$ 
\begin{align*}
\sigma_k(i) &\sim k^{s/n} \exp(-\rho (k-1)^{\frac{1}{n \sigma}}), \\
e_k(i) &\lesssim \exp(-\tilde{\rho} k^{\frac{1}{n \sigma + 1}}).
\end{align*}
The same bounds hold for the embedding $i: a^{\sigma,\rho}_{n,X,\varphi} \to h^s_{n,X,\varphi}$ (see Definitions \ref{def_hs_sequence} and \ref{def_asigmarho_sequence}).
\end{theorem}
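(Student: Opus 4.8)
The plan is to reduce everything to the abstract sequence-space model $a^{\sigma,\rho}_{n,X,\varphi} \hookrightarrow h^s_{n,X,\varphi}$, exactly as in the proof of Theorem \ref{thm_sobolev_inclusion}. Indeed, by Proposition \ref{prop_sobolev_sequence_space} and the definition \eqref{eq:Asr}, on a closed smooth $n$-manifold $(M,g)$ the embedding $i: A^{\sigma,\rho}(M) \to H^s(M)$ is isomorphic (via the isometry given by Laplace--Beltrami eigenfunction coefficients, using the Weyl law $\lambda_j \sim j^{2/n}$ from Theorem \ref{thm_weyl1} to absorb the constants into $\tilde\rho$) to the sequence-space embedding $i: a^{\sigma,\rho}_{n,X,\varphi} \to h^s_{n,X,\varphi}$; since isometries preserve both singular values and entropy numbers (Proposition \ref{prop_singular_value_decomposition}(e), Lemma \ref{lem:prop_entropy}), it suffices to treat the sequence-space case.

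For the singular values I would compute $i^* i$ directly. With the weight $j \mapsto e^{\rho j^{1/(n\sigma)}}$ for the $a^{\sigma,\rho}$ norm and $j \mapsto j^{s/n}$ for the $h^s$ norm (indexing $j \geq 0$ as in Definition \ref{def_asigmarho_sequence}), the operator $i^* i$ is diagonal in the basis $\varphi$, and a calculation like the one in Theorem \ref{thm_sobolev_inclusion} gives $\sigma_k(i)^2 = \sigma_k(i^*i) = $ the $k$-th largest diagonal entry, which is $(k-1)^{2s/n} e^{-2\rho (k-1)^{1/(n\sigma)}}$ after reindexing. Taking square roots yields $\sigma_k(i) \sim k^{s/n}\exp(-\rho(k-1)^{1/(n\sigma)})$; the polynomial prefactor $k^{s/n}$ is harmless since the exponential dominates, and one should be a little careful about the sign of $s$ (for $s<0$ the embedding is still compact with this asymptotic, and one works with completions as in Definition \ref{def_hs_sequence}).

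The entropy number bound then follows from the singular value bound by the quantitative comparison in Lemma \ref{lemma_singular_entropy_relation}: the estimate $\sigma_k(i) \lesssim e^{-c k^{1/(n\sigma)}}$ (with $\mu = 1/(n\sigma)$, after swallowing the polynomial factor into a slightly smaller constant $c$) implies $e_k(i) \lesssim e^{-\tilde c\, k^{\mu/(1+\mu)}}$, and $\mu/(1+\mu) = (1/(n\sigma))/(1 + 1/(n\sigma)) = 1/(n\sigma + 1)$, which is exactly the claimed exponent. I expect the main obstacle to be purely bookkeeping: matching the precise weights and the $j\geq 0$ versus $j \geq 1$ indexing conventions so that the $(k-1)^{1/(n\sigma)}$ appears rather than $k^{1/(n\sigma)}$, and tracking how the Weyl-law constants and the choice of $\sigma$ versus $\rho$ interact (as flagged in the remark after Definition \ref{defi:Gev_loc}); the actual analytic content is entirely contained in the diagonal singular value computation plus the already-established Lemma \ref{lemma_singular_entropy_relation}. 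One should also remark that only an upper bound on $e_k(i)$ is asserted (no matching lower bound, unlike the Sobolev case), so the proof need not produce a sharp entropy lower bound — the one-sided estimate from Lemma \ref{lemma_singular_entropy_relation} suffices.
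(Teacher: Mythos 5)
Your proposal is correct and follows essentially the same route as the paper's proof: reduce via Proposition \ref{prop_sobolev_sequence_space} and \eqref{eq:Asr} to the diagonal sequence-space embedding, read off the singular values from the diagonal entries of $\iota^*\iota$, and deduce the entropy bound from Lemma \ref{lemma_singular_entropy_relation} with $\mu = 1/(n\sigma)$. The indexing points you flag (e.g.\ $k^{s/n}$ versus $(k-1)^{s/n}$ and the eventual monotonicity of the diagonal entries when $s>0$) are harmless and are handled in the paper by the qualifier ``for $k$ large''.
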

\begin{proof}
By Lemma \ref{lemma_singular_entropy_relation}, it is enough to prove the statement about singular values. Since $M$ is closed, $H^s(M)$ is isomorphic to the space $h^s$ of sequences $x = (x_0, x_1, \ldots)$ with norm $\norm{x}_{h^s} = ( \sum_{j=0}^{\infty} (1+j)^{\frac{2s}{n}} \abs{x_j}^2 )^{1/2}$ (see Proposition \ref{prop_sobolev_sequence_space}), and $A^{\sigma,\rho}(M)$ is isomorphic to the space 
\[
a^{\sigma,\rho} = \{ x = (x_0, x_1, \ldots) \in \ell^2 \,;\, \norm{x}_{a^{\sigma,\rho}} := (\sum_{j=0}^{\infty} e^{2\rho j^{\frac{1}{n \sigma}}} \abs{x_j}^2)^{1/2} < \infty \}.
\]
Then $\sigma_k(i) \sim \sigma_k(\iota)$ where $\iota$ is the inclusion $a^{\sigma,\rho} \to h^s$. We compute 
\[
\sum_{j=0}^{\infty} e^{2\rho j^{\frac{1}{n \sigma}}} (\iota^* \iota(x))_j \ol{y}_j = (\iota^* \iota(x), y)_{a^{\sigma,\rho}} = (\iota(x), \iota(y))_{h^s} = \sum_{j=0}^{\infty} (1+j)^{2s/n} x_j \ol{y}_j.
\]
Thus $\iota^* \iota$ is the diagonal operator with $(\iota^* \iota(x))_j = (1+j)^{2s/n} e^{-2\rho j^{\frac{1}{n \sigma}}} x_j$ for $j \geq 0$, which shows that $\sigma_k(\iota) = k^{s/n}e^{-\rho (k-1)^{\frac{1}{n \sigma}}}$ for $k$ large.
\end{proof}

The above results immediately yield decay rates for singular values of smoothing operators. Further results of this type may be found in \cite{BS77}.

\begin{theorem}
\label{thm:eigenval_bds}
Let $M$ and $N$ be compact $C^{\infty}$ manifolds, having dimensions $n_M$ and $n_N$.
Let $A: H^{s_2}(N) \to H^{s_1}(M)$ be a compact linear operator with singular values $\sigma_j(A)$.
\begin{enumerate}
\item[(i)] 
If $A$ extends as a bounded operator $A_1: H^{s_2-m_2}(N) \to H^{s_1}(M)$ such that $$A_1(H^{s_2-m_2}(N)) \subset H^{s_1+m_1}(M)$$ where $m_1, m_2 \geq 0$ with $m_1+m_2 > 0$, then 
\[
\sigma_j(A) \lesssim j^{-m_1/n_M-m_2/n_N}.
\]
In particular, if $M$ is a closed $n$-manifold and $B \in \Psi^{-s}(M)$, where $B$ is considered as a compact operator $H^t(M) \to H^t(M)$ for some $t \in \mR$, then 
\[
\sigma_j(B) \lesssim j^{-s/n}.
\]
\item[(ii)] 
If $(M,g)$ is a closed analytic $n$-manifold and if $A(H^{s_2}(N)) \subset A^{\sigma,\rho}(M)$, then for some $c > 0$ 
\[
\sigma_j(A) \lesssim \exp(-cj^{\frac{1}{n \sigma}}).
\]
In particular if $B \in \Psi^{-\infty}_{\sigma}(M)$, i.e.\ $B$ is Gevrey $G^{\sigma}$-smoothing on $M$, and $B$ is considered as an operator $L^2(M) \to L^2(M)$, then 
\[
\sigma_j(B) \lesssim \exp(-cj^{\frac{1}{n \sigma}}).
\]
\end{enumerate}
\end{theorem}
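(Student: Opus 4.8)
The plan is to write $A$ as a composition of a bounded operator into a ``smoother'' target space together with one or two compact Sobolev (respectively Gevrey) embeddings, and then combine the multiplicativity properties of singular values from Proposition \ref{prop_singular_value_decomposition} with the embedding estimates of Theorems \ref{thm_sobolev_inclusion} and \ref{thm_analytic_inclusion}. The only non-algebraic input is the upgrade of the given \emph{range} conditions to genuine boundedness, which is supplied by the closed graph theorem.

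For part (i), I would first note that $A_1 \colon H^{s_2-m_2}(N) \to H^{s_1+m_1}(M)$ is bounded: it has a closed graph, since convergence in $H^{s_1+m_1}(M)$ implies convergence in $H^{s_1}(M)$ while $A_1 \colon H^{s_2-m_2}(N) \to H^{s_1}(M)$ is already bounded, and both spaces are complete. Writing $j_N \colon H^{s_2}(N) \to H^{s_2-m_2}(N)$ and $j_M \colon H^{s_1+m_1}(M) \to H^{s_1}(M)$ for the natural inclusions, we have $A = j_M \circ A_1 \circ j_N$. When $m_1, m_2 > 0$, both $j_M$ and $j_N$ are compact with $\sigma_j(j_M) \sim j^{-m_1/n_M}$ and $\sigma_j(j_N) \sim j^{-m_2/n_N}$ by Theorem \ref{thm_sobolev_inclusion}, so by Proposition \ref{prop_singular_value_decomposition}(d),(e),
\[
\sigma_{j+k-1}(A) \le \sigma_j(j_M \circ A_1)\,\sigma_k(j_N) \le \|A_1\|\, \sigma_j(j_M)\, \sigma_k(j_N) \lesssim j^{-m_1/n_M} k^{-m_2/n_N},
\]
and taking $j = k = \lceil N/2 \rceil$ (so that $j+k-1 \ge N$) yields $\sigma_N(A) \lesssim N^{-m_1/n_M - m_2/n_N}$. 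If $m_2 = 0$ (hence $m_1 > 0$) then $j_N$ is the identity and $\sigma_j(A) \le \sigma_j(j_M)\|A_1\| \lesssim j^{-m_1/n_M}$ directly from Proposition \ref{prop_singular_value_decomposition}(e); the case $m_1 = 0$ is symmetric. The $\Psi$DO statement then follows by taking $N = M$, $s_1 = s_2 = t$, $m_1 = s$, $m_2 = 0$, and using that $B \in \Psi^{-s}(M)$ maps $H^t(M)$ boundedly into $H^{t+s}(M)$.

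For part (ii), I would argue identically, now using that $A \colon H^{s_2}(N) \to A^{\sigma,\rho}(M)$ has a closed graph — because the inclusion $A^{\sigma,\rho}(M) \hookrightarrow H^{s_1}(M)$ is continuous (Theorem \ref{thm_analytic_inclusion}) and $A \colon H^{s_2}(N) \to H^{s_1}(M)$ is bounded — hence is bounded. Factoring $A = i \circ \tilde A$ with $i \colon A^{\sigma,\rho}(M) \to H^{s_1}(M)$ the compact inclusion, Proposition \ref{prop_singular_value_decomposition}(e) gives $\sigma_j(A) \le \sigma_j(i) \|\tilde A\|$, and Theorem \ref{thm_analytic_inclusion} bounds $\sigma_j(i) \sim j^{s_1/n} \exp(-\rho (j-1)^{1/(n\sigma)}) \lesssim \exp(-c j^{1/(n\sigma)})$ for any $0 < c < \rho$, the polynomial prefactor being absorbed. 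For the final assertion one uses that a $G^{\sigma}$-smoothing operator $B$ on $M$ maps $L^2(M)$ into a \emph{single} space $A^{\sigma,\rho}(M)$ for some $\rho > 0$ (from the Gevrey bounds on its Schwartz kernel; see Appendix \ref{sec:Gevrey}), and then applies the above with $N = M$, $s_1 = s_2 = 0$.

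The proof is essentially ``soft'', and the only points requiring care are the two applications of the closed graph theorem (which hinge on completeness of the target spaces and on continuity of the relevant embeddings into the weaker topology) and, for the Gevrey corollary, the identification of the image of a $G^{\sigma}$-smoothing operator with one fixed Hilbert space $A^{\sigma,\rho}(M)$ rather than merely with the union $\bigcup_{\rho > 0} A^{\sigma,\rho}(M) = G^{\sigma}(M)$ (cf.\ Lemma \ref{lemma_arho_comega}). I expect the latter to be the main obstacle, and it is handled via the kernel estimates of Appendix \ref{sec:Gevrey}.
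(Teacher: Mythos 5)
Your proposal is correct and follows essentially the same route as the paper: the same closed graph upgrade of the range conditions to boundedness, the same factorization $A = j_M \circ A_1 \circ j_N$ (resp.\ $A = i \circ \tilde A$) combined with Proposition \ref{prop_singular_value_decomposition}(d),(e) and the embedding estimates of Theorems \ref{thm_sobolev_inclusion} and \ref{thm_analytic_inclusion}, and the same identification of the image of a $G^{\sigma}$-smoothing operator with a single space $A^{\sigma,\rho}(M)$ via kernel bounds and Lemma \ref{lemma_arho_comega}. Your explicit treatment of the degenerate cases $m_1=0$ or $m_2=0$ is a minor tidying of the same argument, not a different approach.
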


We refer to Appendix \ref{sec:analytic} for definitions and results on Gevrey smoothing operators.

\begin{proof}[Proof of Theorem \ref{thm:eigenval_bds}]
(i) We write $A = i_1 \circ \tilde{A} \circ i_2$, where $\tilde{A}: H^{s_2-m_2}(N) \to H^{s_1+m_1}(M)$ is bounded (this holds by the closed graph theorem since $A_1(H^{s_2-m_2}(N)) \subset H^{s_1+m_1}(M)$) and $i_1: H^{s_1+m_1}(M) \to H^{s_1}(M)$, $i_2: H^{s_2}(N) \to H^{s_2-m_2}(N)$ are the natural inclusions. Write $j + 1 = 2 \ell + r$ where $\ell \geq 1$ and $r \in \{ 0, 1 \}$. Then 
\begin{align*}
\sigma_j(A) = \sigma_{2\ell+r-1}(i_1 \circ \tilde{A} \circ i_2) \leq \sigma_{\ell}(i_1) \sigma_{\ell+r}(\tilde{A} \circ i_2) \leq \sigma_{\ell}(i_1) \norm{\tilde{A}} \sigma_{\ell}(i_2).
\end{align*}
Next we use that 
\begin{gather*}
\sigma_{\ell}(i_1) \lesssim \ell^{-m_1/n_M}, \\
\sigma_{\ell}(i_2) \lesssim \ell^{-m_2/n_N}.
\end{gather*}
Thus, since $\ell \geq j/2$, it follows that 
\[
\sigma_j(A) \lesssim \ell^{-m_1/n_M-m_2/n_N} \lesssim j^{-m_1/n_M-m_2/n_N}.
\]
The result for $B$ follows immediately.

(ii) Write $A = i \circ \tilde{A}$ where $\tilde{A}: H^{s_2}(N) \to A^{\sigma,\rho}(M)$ is bounded (by the closed graph theorem since $A(H^{s_2}(N)) \subset A^{\sigma,\rho}(M)$) and $i: A^{\sigma,\rho}(M) \to H^{s_1}(M)$ is the natural inclusion. Then 
\[
\sigma_j(A) \leq \sigma_j(i) \norm{\tilde{A}} \lesssim \norm{\tilde{A}} e^{-cj^{\frac{1}{n\sigma}}}.
\]
Now if $B$ is $G^{\sigma}$ smoothing on $M$, then $B$ has a $G^{\sigma}$ integral kernel $K_B(x,y)$ so that 
\[
Bu(x) = \int_M K_B(x,y) u(y) \,dV_g(y).
\]
The Hilbert-Schmidt bound gives 
\[
\norm{(-\Delta_g)^k Bu}_{L^2(M)} \leq \norm{(-\Delta_{g,x})^k K_B}_{L^2(M \times M)} \norm{u}_{L^2(M)}.
\]
Since $K_B$ is a $G^{\sigma}$ function, there are $C, R > 0$ so that 
\[
\norm{(-\Delta_g)^k Bu}_{L^2(M)} \leq C R^{2k} (2k)^{2k\sigma} \norm{u}_{L^2(M)}.
\]
Lemma \ref{lemma_arho_comega}(c) implies that for $\rho = c_0 R^{-1/\sigma}$, one has 
\[
\norm{Bu}_{A^{\sigma,\rho}(M)} \leq c_1 C \norm{u}_{L^2(M)}.
\]
Thus $B$ induces a bounded map $L^2(M) \to A^{\sigma,\rho}(M)$, and the result follows from the first part of (ii).
\end{proof}

\subsection{Embeddings between spaces of operators}
\label{sec:ops}

Finally, in order to deal with inverse problems where the measurement is a Dirichlet-to-Neumann type operator, we discuss entropy bounds between spaces of operators. In this case the range of the forward map lies in the Banach space $B(H^s(M),H^{-s}(M))$ of bounded linear operators from a Sobolev space $H^s(M)$ to its dual $H^{-s}(M)$, where $M$ is a closed $n$-manifold.

We first consider the case $s=0$ and prove the following entropy bound when embedding smoothing operators of order $m$ into the space of bounded operators on $L^2$.

\begin{theorem} \label{thm_entropy_bounded_operators_initial}
  Let $(M,g)$ be a closed smooth $n$-manifold. If $m>0$, then the embedding
$ i: B(H^{-m}, L^2) \cap B(L^2, H^m) \to B(L^2,L^2)$ satisfies
\begin{equation} \label{b_initial_entropy_bound}
e_k(i) \lesssim k^{-\frac{m}{2n}+ \delta}
\end{equation}
for any $\delta > 0$.
\end{theorem}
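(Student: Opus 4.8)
The plan is to reduce the operator-space entropy bound to a function-space entropy bound via a kernel representation, exploiting that a smoothing operator of order $m$ on a closed manifold has a kernel with Sobolev regularity of order $\sim m$ in each variable. Concretely, suppose $A \in B(H^{-m},L^2) \cap B(L^2,H^m)$ with norm $\leq 1$ in both spaces. The key observation is that such an $A$ has a Schwartz kernel $K_A \in H^{m/2,m/2}(M \times M)$ in an appropriate anisotropic (or simply isotropic after losing a tiny bit) sense: mapping $L^2 \to H^m$ gives $m$ derivatives in the $x$-variable, mapping $H^{-m} \to L^2$ gives (dually) $m$ derivatives in the $y$-variable, and interpolating/splitting these symmetrically yields $K_A$ lying in a space with roughly $m/2$ derivatives in each of $x$ and $y$, hence in $H^{s}(M\times M)$ for $s$ slightly less than $m/2 + m/2 = m$ — actually the relevant gain for the $L^2 \to L^2$ (i.e. Hilbert–Schmidt) target is that $K_A \in H^{m/2 - \delta'}$ jointly, or more precisely $A$ factors through a Hilbert–Schmidt class with the kernel in a Sobolev ball. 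The Hilbert–Schmidt norm of $A$ is exactly $\|K_A\|_{L^2(M\times M)}$, and $\|A\|_{B(L^2,L^2)} \leq \|A\|_{HS} = \|K_A\|_{L^2(M\times M)}$.

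First I would make precise the claim that the unit ball of $B(H^{-m},L^2)\cap B(L^2,H^m)$ maps (via $A \mapsto K_A$) into a bounded subset of $H^{\sigma}(M \times M)$ for every $\sigma < m/2$. This is where a small loss $\delta$ enters: one wants $m/2$ derivatives jointly but the clean statement holds only for $\sigma = m/2 - \delta$. The mechanism: writing $(1-\Delta_{g,x})^{a/2}(1-\Delta_{g,y})^{b/2} K_A$ is the kernel of $J^a \circ A \circ J^b$ (with $J^b$ acting as a $\Psi$DO in $y$, transposed), and choosing $a = b$ close to $m/2$ keeps this composition bounded $L^2 \to L^2$, hence its kernel is in $L^2(M\times M)$; this gives $K_A \in H^{a,b}$ with $a=b \lesssim m/2$, which embeds into $H^{2a - \delta'}(M\times M)$... — one must be slightly careful, the honest statement is $K_A \in H^{a}_x H^{a}_y \hookrightarrow H^{a}(M\times M)$ with $a$ up to but not including $m/2$ ... giving joint regularity $m/2 - \delta$. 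The composition $i$ then factors as
\[
B(H^{-m},L^2)\cap B(L^2,H^m) \xrightarrow{\ A \mapsto K_A\ } H^{\sigma}(M\times M) \xrightarrow{\ j\ } L^2(M\times M) \xrightarrow{\ K \mapsto \mathrm{Op}(K)\ } B(L^2,L^2),
\]
where the first and last maps are bounded (the last because Hilbert–Schmidt norm dominates operator norm) and $j$ is the compact Sobolev embedding. By Lemma \ref{lem:prop_entropy}(ii) we get $e_k(i) \lesssim e_k(j)$, and by Theorem \ref{thm_sobolev_inclusion} applied on the $2n$-dimensional manifold $M \times M$, $e_k(j) \sim k^{-\sigma/(2n)} = k^{-(m/2-\delta)/(2n)}$, which after renaming constants gives $e_k(i) \lesssim k^{-\frac{m}{2n} + \delta}$ for any $\delta > 0$.

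The main obstacle I anticipate is making the kernel-regularity step both rigorous and sharp enough to produce exactly the exponent $\frac{m}{2n} - \delta$ rather than something worse like $\frac{m}{4n}$. The subtlety is that "bounded $L^2 \to H^m$ and $H^{-m}\to L^2$" gives control only of $J^m A$ and $A J^m$ separately, not of the balanced composition $J^{m/2} A J^{m/2}$; interpolation is needed to distribute the derivatives symmetrically, and one must check that $J^{m/2} A J^{m/2}$ is indeed bounded on $L^2$ under the stated hypotheses (this is where a genuine loss $\delta$ may be unavoidable, since it requires an interpolation inequality that is not quite an equality). A clean way to organize this is to use complex interpolation between the two endpoint mapping properties to obtain, for each $\theta \in (0,1)$, boundedness of $J^{\theta m} A J^{(1-\theta)m} : L^2 \to L^2$ with norm $\lesssim 1$; taking $\theta = 1/2$ balances the derivatives. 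Then $K_A \in H^{m/2}_x H^{m/2}_y$ with uniform bounds, and the mixed-norm-to-isotropic Sobolev embedding $H^{m/2}_x H^{m/2}_y \hookrightarrow H^{s}(M \times M)$ for $s < m/2$ absorbs the $\delta$. The rest is a routine application of the entropy-number multiplicativity in Lemma \ref{lem:prop_entropy} and the known Weyl-type entropy bound in Theorem \ref{thm_sobolev_inclusion}.
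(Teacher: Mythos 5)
Your central step fails. From boundedness of $J^{m/2} A J^{m/2}$ on $L^2$ (which does follow by Stein/complex interpolation, since $AJ^m$ and $J^m A$ are bounded on $L^2$) you conclude that the Schwartz kernel $K_A$ lies in $H^{m/2}_x H^{m/2}_y$, i.e.\ that $J^{m/2} A J^{m/2}$ is Hilbert-Schmidt. But bounded is not Hilbert-Schmidt: a bounded operator on $L^2(M)$ has in general only a distributional kernel. Concretely, take $A = J^{-m} = (1-\Delta_g)^{-m/2}$, which has norm $1$ in both $B(H^{-m},L^2)$ and $B(L^2,H^m)$. Its kernel is $\sum_j (1+\lambda_j)^{-m/2}\varphi_j(x)\varphi_j(y)$, so $\|K_A\|_{L^2(M\times M)}^2 = \sum_j (1+\lambda_j)^{-m} \sim \sum_j j^{-2m/n}$, which diverges for $m \leq n/2$; thus for small $m$ the map $A \mapsto K_A$ does not even send the unit ball into $L^2(M\times M)$, let alone into a positive-order Sobolev ball. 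More generally, for this $A$ one has $K_A \in H^{\sigma}(M\times M)$ only for $\sigma < m - n/2$: converting operator bounds into kernel (Hilbert-Schmidt) bounds necessarily costs $n/2$ derivatives, because one must sum $\sum_j j^{-2(m-r)/n}$. This is precisely why the paper's first step assumes $m > n/2$ and only reaches joint kernel regularity $r < m - n/2$, and why a second argument is needed at all.

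Two further problems. First, even if your claim $K_A \in H^{\sigma}(M\times M)$ with $\sigma \approx m/2$ were true, Theorem \ref{thm_sobolev_inclusion} on the $2n$-dimensional manifold $M \times M$ gives $e_k \sim k^{-\sigma/(2n)} \approx k^{-\frac{m}{4n}}$, not $k^{-\frac{m}{2n}+\delta}$; your final ``renaming constants'' silently doubles the exponent, and the worry you flagged about landing at $\frac{m}{4n}$ is exactly where your route ends up. Second, and decisively, no symmetric-interpolation device can remove the $n/2$ loss inherent in passing to kernels. The paper recovers the full exponent $\frac{m}{2n}-\delta$ by a different mechanism: it splits $i(T) = P_N T P_N + (T - P_N T P_N)$ using spectral projections $P_N$ onto the first $N$ eigenfunctions, estimates the remainder in operator norm by $N^{-m/n}$ times the norm of $T$ in $B(H^{-m},L^2)\cap B(L^2,H^m)$, observes that $P_N T P_N$ is smoothing of arbitrarily high order $s$ at the price of a norm factor $N^{\frac{s-m}{n}}$, applies the lossy Hilbert-Schmidt step at that high order, and then optimizes $N \sim k^{\frac{r}{2s}}$ and lets $s \to \infty$. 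Some device of this kind, trading norm growth against arbitrary extra smoothing, is indispensable and is absent from your proposal.
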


In the proof we will work with Hilbert spaces instead of the Banach space $B(X, Y)$. This amounts to replacing $B(X,Y)$ by the space $HS(X,Y)$ of Hilbert-Schmidt operators (i.e.\ replacing the Schatten class $S_{\infty}$ by $S_2$). We first recall some definitions. Let $X$ and $Y$ be separable Hilbert spaces. The space $HS(X,Y)$ consists of those compact linear operators $T: X \to Y$ for which the quantity
\begin{equation} \label{hsxy_equivalent_norms}
\norm{T}_{HS(X,Y)}^2 = \sum_{j=1}^{\infty} \sigma_j(T)^2 = \mathrm{tr}(T^* T) = \sum_{j=1}^{\infty} \norm{T\varphi_j}_Y^2 = \sum_{j,k=1}^{\infty} \abs{(T\varphi_j, \psi_k)_Y}^2
\end{equation}
is finite and independent of the choice of orthonormal bases $(\varphi_j)$ and $(\psi_k)$ of $X$ and $Y$, respectively. This is a Hilbert space with inner product 
\[
(S, T)_{HS(X,Y)} = \tr(T^* S) = \sum_{j=1}^{\infty} (S \varphi_j, T \varphi_j)_Y.
\]
Recall also that when $X = Y = L^2(M)$, any $T \in HS(L^2, L^2)$ has a Schwartz kernel $K_T \in L^2(M \times M)$ and 
\begin{equation} \label{hs_ltwo_kernel}
\norm{T}_{HS(L^2, L^2)} = \norm{K_T}_{L^2(M \times M)}.
\end{equation}

\begin{proof}[Proof of Theorem \ref{thm_entropy_bounded_operators_initial}]
We argue in two steps, first proving an initial (suboptimal) entropy bound when $m > n/2$, and then improving on this in the second step by using a decomposition of operators into smooth and small parts. In the proof we fix an orthonormal basis $(\varphi_j)_{j=1}^{\infty}$ of $L^2(M)$ consisting of eigenfunctions of $-\Delta_g$, and use the Sobolev norm $\norm{f}_{H^s(M)} = \norm{J^s f}_{L^2(M)}$ where $J^s f = \sum_{j=1}^{\infty} j^{\frac{s}{n}} (f, \varphi_j) \varphi_j$. We will also write $\norm{T}_* = \norm{T}_{B(H^{-m},L^2)} + \norm{T}_{B(L^2, H^m)}$.

\emph{Step 1: Initial entropy bound.}
Assuming $m > n/2$, we will prove that whenever $0 \leq r < m-n/2$ one has 
\begin{equation} \label{ek_initial_mntwo}
e_k(i: B(H^{-m}, L^2) \cap B(L^2, H^m) \to B(L^2, L^2)) \lesssim k^{-\frac{r}{2n}}.
\end{equation}

We first claim that there is a continuous embedding 
\begin{equation} \label{b_first_estimate}
B(H^{-m}, L^2) \cap B(L^2, H^m) \subset HS(H^{-r}, L^2) \cap HS(L^2, H^r).
\end{equation}
In fact, by \eqref{hsxy_equivalent_norms} and interpolation we have 
\begin{align*}
\norm{T}_{HS(H^{-r}, L^2)}^2 &= \sum \norm{T J^r \varphi_j}_{L^2}^2 \leq \sum \norm{T}_*^2 \norm{J^r \varphi_j}_{H^{-m}}^2 \leq \norm{T}_{*}^2 \sum j^{-\frac{2(m-r)}{n}}, \\
\norm{T}_{HS(L^2, H^r)}^2 &= \sum \norm{T \varphi_j}_{H^r}^2 \leq \sum \norm{T \varphi_j}_{L^2}^{\frac{2(m-r)}{m}} \norm{T \varphi_j}_{H^m}^{\frac{2r}{m}} \leq \norm{T}_{*}^2 \sum j^{-\frac{2(m-r)}{n}}.
\end{align*}
Since $m-r > n/2$, this proves \eqref{b_first_estimate}.

The next step is to show that the embedding $\iota: HS(H^{-r}, L^2) \cap HS(L^2, H^r) \rightarrow HS(L^2, L^2)$ satisfies 
\[
e_k(\iota)    \lesssim k^{-\frac{r}{2n} }  .
\]
Using \eqref{hsxy_equivalent_norms} and \eqref{hs_ltwo_kernel}, it is easy to check that
\begin{align*}
\norm{T}_{HS(H^{-r}, L^2) \cap HS(L^2, H^r))}
& =  \norm{ TJ^r }_{HS(L^2, L^2)} + \norm{J^{r} T }_{HS(L^2, L^2)} \\
&  = \norm{  J_y^r K_T }_{L^2(M \times M)} + \norm{J_x^{r}  K_T}_{L^2(M \times M)}.
\end{align*}
Since $\norm{K_T}_{H^{r}} \leq \norm{ J_y^{r} K_T}_{L^2}+\norm{ J_x^{r} K_T}_{L^2} $, by Theorem \ref{thm_sobolev_inclusion}
one has 
\[
e_k(\iota) \leq e_k(i: H^{r}(M \times M) \to L^2(M \times M)) \lesssim k^{-\frac{r}{2n}}.
\]

We have proved that one has continuous embeddings 
\begin{align}
\label{eq:z0}
B(H^{-m}, L^2) \cap B(L^2, H^m) \subset HS(H^{-r}, L^2) \cap HS(L^2, H^r) \subset HS(L^2, L^2) \subset B(L^2, L^2)
\end{align}
where the middle embedding has entropy numbers  $e_k \lesssim k^{-\frac{r}{2n}}$. This implies \eqref{ek_initial_mntwo}.

\emph{Step 2: Improvement and derivation of the claimed entropy bound.}
Now we assume $m > 0$ and improve the bound \eqref{ek_initial_mntwo} to the claimed bound $e_k(i) \lesssim k^{-\frac{m}{2n}+\delta}$. 
  
Our strategy is to fix $N \geq 1$ and split the map $i$ as 
\[
i(T) = \alpha_N(T) + \beta_N(T)
\]
where, using the projection $P_N f = \sum_{j=1}^N (f, \varphi_j)_{L^2} \varphi_j$, one has 
\[
\alpha_N(T) = P_N T P_N, \qquad \beta_N(T) = T - P_N T P_N.
\]
  
Note that for any $s \geq t$ 
\[
\norm{P_N f}_{H^s} \leq N^{\frac{s-t}{n}} \norm{f}_{H^t}, \qquad \norm{(I - P_N) f}_{H^t} \leq N^{-\frac{s-t}{n}} \norm{f}_{H^s}.
\]
These estimates imply that 
\[
\norm{\beta_N(T) f}_{L^2} \leq \norm{(I - P_N) T f}_{L^2} + \norm{P_N T(I - P_N) f}_{L^2} \leq N^{-\frac{m}{n}} \norm{T}_{*} \norm{f}_{L^2}.
\]
On the other hand, for any $s > \max(m, n/2+1)$ one has 
\begin{align*}
\norm{\alpha_N(T) f}_{H^s} &\leq N^{\frac{s-m}{n}} \norm{T P_N f}_{H^m} \leq N^{\frac{s-m}{n}} \norm{T}_* \norm{f}_{L^2}, \\
\norm{\alpha_N(T) f}_{L^2} &\leq \norm{T}_* \norm{P_N f}_{H^{-m}} \leq N^{\frac{s-m}{n}} \norm{T}_* \norm{f}_{H^{-s}}.
\end{align*}
Thus $\alpha_N$ maps $B(H^{-m}, L^2) \cap B(L^2, H^m)$ to $B(H^{-s},L^2) \cap B(L^2, H^s)$ with norm $\leq N^{\frac{s-m}{n}}$. Combining this with \eqref{ek_initial_mntwo} using the choice $r = s - n/2 - 1/2$, we have 
\[
e_k(\alpha_N: B(H^{-m}, L^2) \cap B(L^2, H^m) \to B(L^2, L^2)) \lesssim N^{\frac{s-m}{n}} k^{-\frac{r}{2n}}.
\]
On the other hand, since $e_1(\beta_N) \leq \norm{\beta_N}$ one has 
\begin{align*}
e_1(\beta_N: B(H^{-m}, L^2) \cap B(L^2, H^m) \to B(L^2, L^2)) \leq N^{-\frac{m}{n}}.
\end{align*}
It follows from the above estimates and Lemma \ref{lem:prop_entropy}(iii) that 
\[
e_k(i) \leq e_k(\alpha_N) + e_1(\beta_N) \lesssim N^{\frac{s-m}{n}} k^{-\frac{r}{2n}} + N^{-\frac{m}{n}}.
\]
Choosing $N \sim k^{\frac{r}{2s}}$ yields that $e_k(i) \lesssim k^{-\frac{m}{2n} \frac{r}{s}} = k^{-\frac{m}{2n} (1-\frac{n+1}{2s})}$. This proves the bound $e_k(i) \lesssim k^{-\frac{m}{2n}+\delta}$ for any $\delta > 0$ by taking $s$ large.
\end{proof}

We next give a more general version of Theorem \ref{thm_entropy_bounded_operators_initial}. It is related to embedding the space of operators which are smoothing of order $m$ (resp.\ Gevrey smoothing), and whose adjoints are also smoothing, into $B(H^s, H^{-s})$. Recall that any $T \in B(H^s, H^{-s})$ has a formal adjoint $T' \in B(H^s, H^{-s})$ defined by 
\[
(T'u, v) = (u, Tv), \qquad u, v \in H^s,
\]
using the standard distributional (or $L^2$) pairing.

\begin{theorem} \label{thm_entropy_bounded_operators}
Let $M$ be a closed smooth $n$-manifold, let $s \in \mR$, and let $H^t = H^t(M)$. 
\begin{enumerate}
\item[(a)] 
Let $m > 0$, and define 
\[
Z^m(H^s,H^{-s}) := \{ T \in B(H^s, H^{-s}) \,;\, T(H^s) \subset H^{-s+m} \text{ and } T'(H^s) \subset H^{-s+m} \}.
\]
Then $Z^m(H^s,H^{-s})$ is a Banach space with norm 
\[
T \mapsto \max(\sup_{\norm{u}_{H^s}=1} \norm{Tu}_{H^{-s+m}}, \sup_{\norm{u}_{H^s}=1} \norm{T'u}_{H^{-s+m}}).
\]
If $m > 0$, the embedding $i: Z^m(H^s,H^{-s}) \to B(H^s, H^{-s})$ satisfies for any $\delta > 0$ 
\[
e_k(i) \lesssim k^{-\frac{m}{2n} + \delta}.
\]
\item[(b)] 
Let $\rho > 0$ and $1 \leq \sigma < \infty$, and define 
\[
W^{\sigma,\rho}(H^s, H^{-s}) := \{ T \in B(H^s, H^{-s}) \,;\, T(H^s) \subset A^{\sigma,\rho} \text{ and } T'(H^s) \subset A^{\sigma,\rho} \}.
\]
Then $W^{\sigma,\rho}(H^s, H^{-s})$ is a Banach space with norm 
\[
T \mapsto \max(\sup_{\norm{u}_{H^s}=1} \norm{Tu}_{A^{\sigma,\rho}}, \sup_{\norm{u}_{H^s}=1} \norm{T'u}_{A^{\sigma,\rho}}).
\]
For some $c > 0$, the embedding $i: W^{\sigma,\rho}(H^s, H^{-s}) \to B(H^s, H^{-s})$ satisfies 
\[
e_k(i) \lesssim e^{-c k^{\frac{1}{2n\sigma+1}}}.
\]
\end{enumerate}
\end{theorem}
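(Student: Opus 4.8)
The plan is to reduce both parts to the case $s=0$ — which for part (a) is exactly Theorem \ref{thm_entropy_bounded_operators_initial}, and for part (b) will require a Gevrey analogue of its Step~1 — by conjugating with the Bessel potentials $J^{\pm s}=(1-\Delta_g)^{\pm s/2}$. Fix an orthonormal basis $(\varphi_j)_{j\geq 0}$ of $L^2(M)$ of eigenfunctions of $-\Delta_g$ and use the norms $\|f\|_{H^t}=\|J^tf\|_{L^2}$, so that $J^t\colon H^t(M)\to L^2(M)$ is an isometric isomorphism and $J^t$ is formally self-adjoint. Given $T\in B(H^s,H^{-s})$, set $\tilde T:=J^{-s}TJ^{-s}\colon L^2\to L^2$. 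A direct computation shows that $T\mapsto\tilde T$ is an isometric isomorphism $B(H^s,H^{-s})\to B(L^2,L^2)$ with inverse $\tilde S\mapsto J^s\tilde SJ^s$, and — using that $J^{-s}$ is formally self-adjoint — that the distributional adjoint $T'$ corresponds under this map to the transpose $\tilde T'$ of $\tilde T$ on $L^2$; equivalently, the matrix of $T$ in the natural orthonormal bases $(J^{-s}\varphi_j)_j$ of $H^s$ and $(J^s\varphi_j)_j$ of $H^{-s}$ coincides with the matrix $(t_{jk})$ of $\tilde T$ in $(\varphi_j)_j$. The Banach space assertions for $Z^m$ and $W^{\sigma,\rho}$, and the fact that the displayed expressions are norms, will then follow from the corresponding (routine) statements for the image spaces together with the closed graph theorem.

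For part (a): since $J^{-s}$ maps $H^t(M)$ isomorphically onto $H^{t+s}(M)$, the conditions $T(H^s)\subset H^{-s+m}$ and $T'(H^s)\subset H^{-s+m}$ translate \emph{exactly} into $\tilde T(L^2)\subset H^m$ and $\tilde T'(L^2)\subset H^m$; by the closed graph theorem and duality this means $\tilde T\in B(H^{-m},L^2)\cap B(L^2,H^m)$, with equivalent norms. Thus $T\mapsto\tilde T$ is an isomorphism of $Z^m(H^s,H^{-s})$ onto the target space of Theorem \ref{thm_entropy_bounded_operators_initial}, and by Lemma \ref{lem:prop_entropy}(ii) the entropy numbers of $i\colon Z^m(H^s,H^{-s})\to B(H^s,H^{-s})$ are comparable to those of the $s=0$ embedding, giving $e_k(i)\lesssim k^{-\frac{m}{2n}+\delta}$ for every $\delta>0$.

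For part (b): the same reduction applies, but now $J^{\pm s}$ does not preserve $A^{\sigma,\rho}(M)$ exactly — it multiplies the $j$-th eigencoefficient by $(1+\lambda_j)^{\pm s/2}\sim j^{\pm s/n}$, a polynomial factor, hence absorbed by an arbitrarily small change of $\rho$: for $0<\rho_1<\rho<\rho_2$ one has continuous inclusions $A^{\sigma,\rho_2}(M)\hookrightarrow J^{-s}\big(A^{\sigma,\rho}(M)\big)\hookrightarrow A^{\sigma,\rho_1}(M)$. Consequently $T\mapsto\tilde T$ identifies $W^{\sigma,\rho}(H^s,H^{-s})$ with a subspace of $B(L^2,L^2)$ continuously contained in $W_0^{(\rho_1)}:=\{\tilde T\in B(L^2,L^2)\colon \tilde T(L^2)\subset A^{\sigma,\rho_1}(M),\ \tilde T'(L^2)\subset A^{\sigma,\rho_1}(M)\}$, so it suffices to bound $e_k\big(W_0^{(\rho_1)}\hookrightarrow B(L^2,L^2)\big)$ for a fixed $\rho_1>0$. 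For this I would mimic Step~1 of the proof of Theorem \ref{thm_entropy_bounded_operators_initial} and pass to Hilbert--Schmidt operators: for $\tilde T\in W_0^{(\rho_1)}$ with matrix $(t_{jk})$ and $B:=\|\tilde T\|_{W_0^{(\rho_1)}}$, applying the bounded maps $\tilde T,\tilde T'\colon L^2\to A^{\sigma,\rho_1}$ to the basis vectors $\varphi_k$ and using Definition \ref{def_asigmarho_sequence} gives $\sum_j e^{2\rho_1 j^{1/(n\sigma)}}|t_{jk}|^2\leq B^2$ for all $k$ and $\sum_k e^{2\rho_1 k^{1/(n\sigma)}}|t_{jk}|^2\leq B^2$ for all $j$; in particular $|t_{jk}|\leq B\,e^{-\frac{\rho_1}{2}\left(j^{1/(n\sigma)}+k^{1/(n\sigma)}\right)}$. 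Since $M\times M$ is a closed $2n$-manifold with Laplace eigenfunctions $\varphi_j\otimes\varphi_k$ and eigenvalues $\lambda_j+\lambda_k$, Weyl's law (Theorem \ref{thm_weyl1}) shows that the index of $\varphi_j\otimes\varphi_k$ is $\sim j^2+k^2$, hence $(\text{index})^{1/(2n\sigma)}\sim j^{1/(n\sigma)}+k^{1/(n\sigma)}$; choosing $\rho''>0$ small enough (depending only on $n,\sigma,\rho_1$) it follows that the Schwartz kernel $K_{\tilde T}=\sum_{j,k}t_{jk}\,\varphi_j\otimes\varphi_k$ satisfies $\|K_{\tilde T}\|_{A^{\sigma,\rho''}(M\times M)}\lesssim B$. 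Therefore $W_0^{(\rho_1)}$ embeds continuously into $\{\tilde T\in HS(L^2,L^2)\colon K_{\tilde T}\in A^{\sigma,\rho''}(M\times M)\}$, which by \eqref{hs_ltwo_kernel} is, inside $HS(L^2,L^2)=L^2(M\times M)$, precisely the inclusion $A^{\sigma,\rho''}(M\times M)\hookrightarrow L^2(M\times M)$; composing with the norm-nonincreasing inclusion $HS(L^2,L^2)\hookrightarrow B(L^2,L^2)$ and applying Theorem \ref{thm_analytic_inclusion} on the $2n$-manifold $M\times M$ (with $s=0$) yields $e_k\big(W_0^{(\rho_1)}\hookrightarrow B(L^2,L^2)\big)\lesssim e^{-ck^{1/(2n\sigma+1)}}$. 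Undoing the reduction via Lemma \ref{lem:prop_entropy}(ii) proves part (b).

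The routine parts are the closed-graph and duality bookkeeping and the verification that $T\mapsto\tilde T$ intertwines $T'$ with $\tilde T'$. The one genuinely new computation is the Hilbert--Schmidt step in part (b): the crux is converting the $L^2\to A^{\sigma,\rho_1}$ mapping bounds for $\tilde T$ and $\tilde T'$ into a Gevrey kernel bound on $M\times M$, which requires tracking the dimensional constants in the Weyl ordering of eigenvalues on $M\times M$ and forces a loss from $\rho_1$ down to some smaller $\rho''$. Crucially this loss affects only the multiplicative constant, not the exponent $\frac{1}{2n\sigma+1}$ — so, in contrast to Theorem \ref{thm_entropy_bounded_operators_initial}, where a second bootstrap step was needed to recover the $n/2$-loss in the smoothing order, no further iteration is required here, because the analytic-smoothing entropy rate of Theorem \ref{thm_analytic_inclusion} is insensitive to the value of the Gevrey parameter.
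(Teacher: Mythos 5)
Your proposal is correct. Part (a) is essentially the paper's own argument: the paper factors the inclusion as $Z^m(H^s,H^{-s}) \to B(H^{-m},L^2)\cap B(L^2,H^m) \to B(L^2,L^2) \to B(H^s,H^{-s})$ via $T \mapsto J^{-s}TJ^{-s}$, which is exactly your conjugation reduction to Theorem \ref{thm_entropy_bounded_operators_initial}.

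Part (b), however, takes a genuinely different route. The paper never passes to $s=0$; instead it uses duality together with Stein--Weiss interpolation to show $W^{\sigma,\rho}(H^s,H^{-s}) \subset B(A^{\sigma,-\rho/3}, A^{\sigma,\rho/3})$ (with $A^{\sigma,-r}$ a space of ultradistributions), then runs the chain \eqref{w_embeddings} and estimates the middle Hilbert--Schmidt embedding by the abstract Lemma \ref{lemma_hs_embedding}, converting singular values to entropy numbers via Lemma \ref{lemma_singular_entropy_relation}. You instead conjugate by $J^{\pm s}$ at the cost of shrinking $\rho$ (legitimate, since $J^{\pm s}$ only introduces polynomial factors in the eigencoefficients), read off the matrix decay $\abs{t_{jk}} \leq B e^{-\frac{\rho_1}{2}(j^{1/(n\sigma)}+k^{1/(n\sigma)})}$ directly from applying $\tilde T,\tilde T'$ to basis vectors, and then identify the Schwartz kernel as an element of $A^{\sigma,\rho''}(M\times M)$ so that Theorem \ref{thm_analytic_inclusion} on the closed $2n$-manifold $M\times M$ delivers the exponent $\frac{1}{2n\sigma+1}$; this avoids both the interpolation step and the ultradistribution spaces, at the price of the Weyl-index bookkeeping on the product manifold. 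That bookkeeping is the only place to be careful: the enumeration of eigenfunctions of $M\times M$ need not list the products $\varphi_j\otimes\varphi_k$ in the order you want, and degenerate eigenspaces allow a basis different from the product one; but since all indices attached to a fixed eigenvalue $\lambda$ are comparable to $\lambda^n$, this only changes the admissible $\rho''$ by a constant factor, which your ``choose $\rho''$ small depending on $n,\sigma,\rho_1$'' already absorbs. (You could even bypass the product Laplacian entirely by invoking the sequence-space form $a^{\sigma,\rho}_{2n,X,\varphi}$ of Theorem \ref{thm_analytic_inclusion} with $X=L^2(M\times M)$ and the product basis enumerated by increasing $j^2+k^2$, or by quoting Lemma \ref{lemma_hs_embedding} directly, which is in effect what the paper does.) Both arguments yield the same exponent, and your observation that no second bootstrap is needed in the Gevrey case is consistent with the paper's proof.
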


\begin{remark}
In Theorem \ref{thm_entropy_bounded_operators} one needs to consider operators that are smoothing and also their adjoints are smoothing. In fact, if one does not consider the adjoints, the result fails since the embedding 
\[
B(H^s, H^{-s+m}) \to B(H^s, H^{-s})
\]
is not even compact. Then by Lemma \ref{lem:prop_entropy} its entropy numbers cannot go to zero. To see this, let $(\varphi_j)$ and $(\psi_k)$ be orthonormal bases of $H^s$ and $H^{-s}$, respectively, so that $H^{-s+m}$ has equivalent norm 
\[
\norm{u}_{H^{-s+m}} = ( \sum_{k=1}^{\infty} k^{2m} \abs{(u, \psi_k)_{H^{-s}}}^2 )^{1/2}.
\]
Let $T_l \in B(H^s, H^{-s+m})$ with $T_l(u) = (u,\varphi_l)_{H^s} \psi_1$. Then $\norm{T_l}_{H^s \to H^{-s+m}} = 1$, so $(T_l)$ is a bounded sequence in $B(H^s, H^{-s+m})$. But if some subsequence $(T_{l_j})$ converges in $B(H^s,H^{-s})$, the limit must be $0$ since one has $T_{l_j}(u) \to 0$ as $j \to \infty$ for any $u \in H^s$. This contradicts the fact that $\norm{T_l}_{H^s \to H^{-s}} = 1$.
\end{remark}

We begin by proving the first part of Theorem \ref{thm_entropy_bounded_operators}(a) which we reduce to the estimates from Theorem \ref{thm_entropy_bounded_operators_initial}.

\begin{proof}[Proof of Theorem \ref{thm_entropy_bounded_operators}(a)]
It follows from the closed graph theorem that 
\[
Z^m(H^s,H^{-s}) \subset B(H^s, H^{-s+m}).
\]
For any $T \in Z^m(H^s,H^{-s})$ the closed graph theorem also gives that $T'$ is bounded as a map from $H^s \to H^{-s+m}$, hence by duality $T$ is bounded as a map from $H^{s-m} \to H^{-s}$. Thus we also have 
\[
Z^m(H^s,H^{-s})\subset B(H^{s-m}, H^{-s}).
\]
It follows that $Z^m(H^s,H^{-s})$ is a Banach space with the given norm and
\[
Z^m(H^s,H^{-s}) \subset B(H^{-s}, H^{-s+m})\cap B(H^{s-m}, H^{-s}).
\]

The proof of the estimate in Theorem \ref{thm_entropy_bounded_operators}(a) is now a direct consequence of the estimate from Theorem \ref{thm_entropy_bounded_operators_initial}. Indeed, we simply factor the mapping as
\begin{gather*}
Z^m(H^s,H^{-s}) \rightarrow B(L^2,H^{m})\cap B(H^{-m},L^2)  \rightarrow B(L^2, L^2) \rightarrow B(H^s, H^{-s}), \\ 
T \mapsto J^{-s} T J^{-s} \mapsto J^{-s} T J^{-s} \mapsto T,
\end{gather*}
where as above $J^{\alpha}$ is an isometry $H^s \to H^{s-\alpha}$. By virtue of the result of Theorem \ref{thm_entropy_bounded_operators_initial} we have that the middle mapping has entropy numbers satisfying $e_k \lesssim k^{- \frac{m}{2n} + \delta}$ for any $\delta>0$. Together with the properties of entropy numbers of compositions (Lemma \ref{lem:prop_entropy}(ii)) this concludes the proof for (a).
\end{proof}

In order to deduce the remaining part (b) of Theorem \ref{thm_entropy_bounded_operators}, we first provide the following abstract lemma which gives bounds for the singular values (hence also entropy numbers) of embeddings between spaces of Hilbert-Schmidt operators. It will also be used later in our study of instability of the Calder\'on problem in low regularity.

\begin{lemma} \label{lemma_hs_embedding}
Let $X$ and $Z$ be separable Hilbert spaces with orthonormal bases $(\varphi_j)$ and $(\psi_k)$, respectively. Define subspaces $X_1 \subset X$ and $Y \subset Z$ by the norms 
\[
\norm{x}_{X_1} = ( \sum_{j=1}^{\infty} \alpha_j^2 \abs{(x, \varphi_j)_X}^2 )^{1/2}, \qquad \norm{y}_{Y} = ( \sum_{k=1}^{\infty} \beta_k^2 \abs{(y, \psi_k)_Z}^2 )^{1/2}
\]
where $0 < \alpha_1 \leq \alpha_2 \leq \ldots$ and $0 < \beta_1 \leq \beta_2 \leq \ldots$. The embedding 
\[
\iota: HS(X,Y) \to HS(X_1, Z)
\]
satisfies for any $M, N \geq 1$ 
\[
\sigma_{MN+1}(\iota) \leq \max(\frac{1}{\alpha_1 \beta_{N+1}}, \frac{1}{\alpha_{M+1} \beta_1}).
\]
\end{lemma}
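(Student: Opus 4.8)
The statement concerns singular values of the embedding $\iota: HS(X,Y) \to HS(X_1,Z)$, where $X_1$ is a ``weighted'' subspace of $X$ with weights $\alpha_j$ and $Y$ is a ``weighted'' subspace of $Z$ with weights $\beta_k$. The first thing I would do is rewrite the embedding in terms of diagonal operators on sequence spaces: Hilbert–Schmidt operators $T: X \to Y$ are parametrized by their matrix entries $t_{kj} = (T\varphi_j, \psi_k)_Z$, with $\norm{T}_{HS(X,Y)}^2 = \sum_{k,j} \beta_k^2 \abs{t_{kj}}^2$ (since $(\psi_k/\beta_k)$ is an orthonormal basis of $Y$), and $\norm{T}_{HS(X_1,Z)}^2 = \sum_{k,j} \alpha_j^{-2}\abs{t_{kj}}^2$ (since $(\varphi_j/\alpha_j)$ is an orthonormal basis of $X_1$). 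So $\iota$ is unitarily equivalent to the diagonal operator on $\ell^2$ (indexed by pairs $(k,j)$) with diagonal entries $d_{kj} = \frac{1}{\alpha_j \beta_k}$. Its singular values are therefore the nonincreasing rearrangement of the numbers $\left\{ \frac{1}{\alpha_j\beta_k} \right\}_{j,k \geq 1}$.

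**The counting step.** Given this reduction, the task becomes: show that at most $MN$ of the numbers $\frac{1}{\alpha_j\beta_k}$ exceed $\max\!\left(\frac{1}{\alpha_1\beta_{N+1}}, \frac{1}{\alpha_{M+1}\beta_1}\right)$. Set $\theta := \max\!\left(\frac{1}{\alpha_1\beta_{N+1}}, \frac{1}{\alpha_{M+1}\beta_1}\right)$. Suppose $\frac{1}{\alpha_j\beta_k} > \theta$. Then in particular $\frac{1}{\alpha_j\beta_k} > \frac{1}{\alpha_1\beta_{N+1}}$, i.e. $\alpha_j\beta_k < \alpha_1\beta_{N+1}$; since $\alpha_j \geq \alpha_1$ this forces $\beta_k < \beta_{N+1}$, hence (using monotonicity of the $\beta$'s) $k \leq N$. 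Symmetrically, $\frac{1}{\alpha_j\beta_k} > \frac{1}{\alpha_{M+1}\beta_1}$ gives $\alpha_j < \alpha_{M+1}$, hence $j \leq M$. So the set of index pairs $(j,k)$ with $\frac{1}{\alpha_j\beta_k} > \theta$ is contained in $\{1,\dots,M\}\times\{1,\dots,N\}$, which has $MN$ elements. Therefore the $(MN+1)$st largest value in the rearrangement is $\leq \theta$, i.e. $\sigma_{MN+1}(\iota) \leq \theta$, which is exactly the claim.

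**Assembling it.** So the proof is: (1) identify $HS(X,Y)$ and $HS(X_1,Z)$ with weighted $\ell^2$ spaces of matrices via the given orthonormal bases, making $\iota$ unitarily equivalent to a diagonal operator $D$ with entries $1/(\alpha_j\beta_k)$; (2) recall that singular values are invariant under unitary equivalence (Proposition \ref{prop_singular_value_decomposition}(e)) and that for a diagonal operator the singular values are the decreasing rearrangement of the absolute diagonal entries; (3) run the elementary counting argument above to bound how many entries exceed $\theta$. I would also remark at the outset that the $\beta_k$ being \emph{bounded below} by $\beta_1 > 0$ (and similarly $\alpha_j \geq \alpha_1 > 0$) is what makes $\iota$ bounded in the first place and makes $D$ well-defined, and — if one wants — that $D$ is compact exactly when $\alpha_j\beta_k \to \infty$, though this is not needed for the stated inequality.

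**Main obstacle.** There is essentially no analytic obstacle here; the ``hard part'' is purely bookkeeping — keeping straight which weight sits on which factor (the $\alpha$'s appear with a \emph{negative} power because we pass from $X$ to the \emph{smaller} space $X_1$, while the $\beta$'s appear positively because we pass from $Y$ to the \emph{larger} space $Z$), and being careful that the unitary identifications use the orthonormal bases $(\varphi_j/\alpha_j)$ of $X_1$ and $(\psi_k/\beta_k)$ of $Y$ rather than the original bases. Once the matrix-coefficient picture is set up correctly, the counting estimate is a two-line argument, and the flexibility in choosing $M, N$ (which is what makes the lemma useful for deriving both polynomial and Gevrey-type decay downstream) comes for free.
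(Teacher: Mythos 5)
Your proof is correct and is essentially the paper's argument in a different packaging: the paper applies the minimax principle with the explicit $MN$-dimensional subspace of operators whose matrix entries are supported in the $M\times N$ block, which is exactly your diagonal-operator/rearrangement picture, since both rest on identifying the $HS$ norms via the orthonormal bases $(\alpha_j^{-1}\varphi_j)$ of $X_1$ and $(\beta_k^{-1}\psi_k)$ of $Y$ and on the observation that $\frac{1}{\alpha_j\beta_k} \leq \max\bigl(\frac{1}{\alpha_1\beta_{N+1}}, \frac{1}{\alpha_{M+1}\beta_1}\bigr)$ whenever $j \geq M+1$ or $k \geq N+1$. The only caveat, shared with the paper, is that speaking of $\sigma_{MN+1}(\iota)$ tacitly presumes compactness (or an interpretation of singular values via the minimax/approximation-number characterization), and your counting argument delivers the bound in that formulation directly.
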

\begin{proof}
By the minimax principle 
\[
\sigma_{MN+1}^2(\iota) = \lambda_{MN+1}(\iota^* \iota) = \min_{E} \max_{T \perp E, \norm{T}_{HS(X,Y)}=1} \norm{\iota(T)}_{HS(X_1,Z)}^2,
\]
where the minimum is over subspaces $E \subset HS(X,Y)$ with $\dim(E) = MN$. We choose 
\[
E = \{ T \in HS(X,Y) \,;\, (T\varphi_j, \psi_k)_Y = 0 \text{ for } j \geq M+1 \text{ or } k \geq N+1 \}.
\]
Note that $(\alpha_j^{-1} \varphi_j)$ and $(\beta_k^{-1} \psi_k)$ are orthonormal bases of $X_1$ and $Y$, respectively. Thus for any $T \perp E$ one has 
\[
\norm{\iota(T)}_{HS(X_1,Z)}^2 = \sum_{j \geq M+1 \text{ or } k \geq N+1} \alpha_j^{-2} \abs{(T \varphi_j, \psi_k)_Z}^2.
\]
Now $(y,\psi_k)_Y = \beta_k^2 (y,\psi_k)_Z$ for any $y \in Y$, so for $T \in E$ 
\[
\norm{\iota(T)}_{HS(X_1,Z)}^2 = \sum_{j \geq M+1 \text{ or } k \geq N+1} \alpha_j^{-2} \beta_k^{-2} \abs{(T \varphi_j, \beta_k^{-1} \psi_k)_Y}^2.
\]
The last quantity is $\leq \max((\alpha_1 \beta_{N+1})^{-2}, (\alpha_{M+1} \beta_1)^{-2})$ if $\norm{T}_{HS(X,Y)} = 1$.
\end{proof}

\begin{proof}[Proof of Theorem \ref{thm_entropy_bounded_operators}(b)]
First note that for $r > 0$, the space $A^{\sigma,-r}$ defined as the completion of $L^2(M)$ with respect to the $A^{\sigma,-r}$ norm is a Hilbert space that contains all $H^t$ spaces (it can be considered as a space of ultradistributions on $M$). If $T \in W^{\sigma,\rho}(H^s, H^{-s})$ and $u \in H^s$, we have for any $\eps > 0$ 
\[
\norm{Tu}_{A^{\sigma,\rho}} \lesssim \norm{u}_{H^s} \lesssim \norm{u}_{A^{\sigma,\eps}}.
\]
Since $T'$ is bounded as a map from $H^s \to A^{\sigma,\rho}$ we have by duality 
\[
\norm{Tu}_{A^{\sigma,-\eps}} \lesssim \norm{Tu}_{H^{-s}} \lesssim \norm{u}_{A^{\sigma,-\rho}}.
\]
Choosing $\eps = \rho/3$ and using the Stein-Weiss interpolation theorem, we have 
\[
\norm{Tu}_{A^{\sigma,\rho/3}} \lesssim \norm{u}_{A^{\sigma,-\rho/3}}.
\]
This shows that $W^{\sigma,\rho}(H^s, H^{-s}) \subset B(A^{\sigma,-\rho/3}, A^{\sigma,\rho/3})$. As in the proof of Theorem \ref{thm_analytic_inclusion}, it is easy to check that the embedding $A^{\sigma,\rho} \subset A^{\sigma,\rho-\eps}$ has singular values $\sigma_k \lesssim e^{-\eps j^{\frac{1}{n\sigma}}}$.
Thus as in \eqref{eq:z0}, we have a sequence of continuous embeddings 
\begin{equation} \label{w_embeddings}
W^{\sigma,\rho}(H^s, H^{-s}) \subset HS(A^{\sigma,-\rho/4}, A^{\sigma,\rho/4}) \subset HS(A^{\sigma,-\rho/8}, A^{\sigma,\rho/8}) \subset B(H^s, H^{-s}).
\end{equation}
Lemma \ref{lemma_hs_embedding} with $\alpha_j = \beta_j = e^{\frac{\rho}{8} j^{\frac{1}{n\sigma}}}$ shows that the middle embedding has singular values $\sigma_{N^2+1} \lesssim e^{-\frac{\rho}{8} (N+1)^{\frac{1}{n\sigma}}}$. Thus by Lemma \ref{lemma_singular_entropy_relation} the middle embedding has entropy numbers $e_k \lesssim e^{-c k^{\frac{1}{2n\sigma+1}}}$. This proves the bound for $e_k(i)$.
\end{proof}

\begin{remark} \label{remark_entropy_multiplier}
In inverse problems where the measurement is a Dirichlet-to-Neumann type operator, the forward operator actually maps to a subspace of $B(H^s, H^{-s})$ consisting of pseudodifferential operators. In this remark we show that in a related case one can improve the exponents in Theorems \ref{thm_entropy_bounded_operators_initial} and \ref{thm_entropy_bounded_operators} and give a sharp decay rate. 

Let $(M,g)$ be a closed smooth $n$-manifold and consider the space $\Psi^{-m}_{F}$ of Fourier multipliers of order $-m$, consisting of operators $T = T_a$ where $(a_j)_{j=1}^{\infty}$ is a sequence satisfying $\abs{a_j} \lesssim j^{-m}$, and $T_a$ is defined by 
\[
T_a f = \sum_{j=1}^{\infty} a_j (f, \varphi_j) \varphi_j.
\]
Here $\lambda_1 \leq \lambda_2 \leq \ldots $ are the eigenvalues of $-\Delta_g$, and $(\varphi_j)$ is an orthonormal basis of $L^2(M)$ consisting of eigenfunctions. We think of $\Psi^{-m}_{F}$ as a subspace of $B(H^{-m/2}, H^{m/2})$. Using the Sobolev norm $\norm{f}_{H^s}^2 = \sum_{j=1}^{\infty} j^{2s/n} \abs{(f,\varphi_j)}^2$, it is easy to see that 
\begin{align*}
\norm{T_a}_{L^2 \to L^2} &= \sup_{j \geq 1} \,\abs{a_j}, \\
\norm{T_a}_{H^{-m/2} \to H^{m/2}} &= \sup_{j \geq 1} \,j^{m/n} \abs{a_j}.
\end{align*}
It follows that the embedding $i: \Psi^{-m}_F \to B(L^2, L^2)$ may be identified with the embedding $w^{m,\infty} \to \ell^{\infty}$, where $\norm{(a_j)}_{w^{m,\infty}} = \sup_{j \geq 1} \,j^{m/n} \abs{a_j}$. Writing $i = A \circ J^m$ where $J^m: (a_j) \mapsto (j^{m/n} a_j)$ is an isometry between $w^{m,\infty}$ and $\ell^{\infty}$, the entropy numbers of $i$ are the same as the entropy numbers of 
\[
A: \ell^{\infty} \to \ell^{\infty}, \ A((a_j)) = (j^{-m/n} a_j).
\]
This is a diagonal operator whose entropy numbers are estimated in \cite[Proposition 1.3.2]{CS90}. As in the proof of Lemma \ref{lemma_singular_entropy_relation} it follows that $e_k(A) \sim k^{-m/n}$, and thus also $e_k(i: \Psi^{-m}_F \to B(L^2, L^2)) \sim k^{-m/n}$.
\end{remark}

\section{Instability in the presence of global smoothing}
\label{sec:examples1}

In this section we give some examples of instability in inverse problems based on global smoothing properties when the coefficients are smooth or real-analytic. We will be quite brief, since in Section \ref{sec:instab_low_reg} stronger results will be given for low regularity coefficients.

We first combine Lemma \ref{lemma_entropy_typical} with Theorems \ref{thm_sobolev_inclusion} and \ref{thm_analytic_inclusion} and state a result for the case where the forward map acts between Sobolev spaces. Part (a) shows that if the forward map is $C^{\infty}$ smoothing then the inverse problem cannot be H\"older stable. Part (b) shows that a Gevrey/analytic smoothing forward map leads to at best logarithmic stability.

\begin{theorem} \label{thm_smoothing_instability_sobolev}
Let $M$ and $N$ be compact smooth manifolds with or without smooth boundary, let $s, t \in \mR$, and let $\delta > 0$. Let $K$ be a closed ball in $H^{s+\delta}(M)$, and let $F$ be a map $K \to H^t(N)$. Suppose that $\omega$ is a modulus of continuity such that 
\[
\norm{f_1-f_2}_{H^s(M)} \leq \omega(\norm{F(f_1) - F(f_2)}_{H^t(N)}), \qquad f_1, f_2 \in K.
\]
\begin{enumerate}
\item[(a)] 
  If $F$ maps $K$ into a bounded set of $H^{t+m}(N)$ where $m>0$, then $\omega(t) \gtrsim t^{\frac{\delta \dim(N)}{m \dim(M)}}$. In particular, if $F$ maps $K$ into a bounded set of $H^{t+m}(N)$ for any $m > 0$, then for any $\alpha \in (0,1)$ one has $\omega(t) \gtrsim t^{\alpha}$ for $t$ small.
\item[(b)] 
If $N$ is closed and $F$ maps $K$ into a bounded set of $A^{\sigma,\rho}(N)$ for some $1 \leq \sigma < \infty$ and $\rho > 0$, then $\omega(t) \gtrsim \abs{\log t}^{-\frac{\delta(\sigma \dim(N) +1)}{\dim(M)}}$ for $t$ small.
\end{enumerate}
\end{theorem}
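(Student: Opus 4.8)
The plan is to obtain both parts as direct consequences of Lemma~\ref{lemma_entropy_typical}, into which one feeds the entropy number estimates of Theorems~\ref{thm_sobolev_inclusion} and~\ref{thm_analytic_inclusion}; essentially the only work is to match the parameters. Write $n_M = \dim(M)$ and $n_N = \dim(N)$. In part~(a) I would invoke Lemma~\ref{lemma_entropy_typical} with $X = H^s(M)$, $X_1 = H^{s+\delta}(M)$, $Y = H^t(N)$ and $Y_1 = H^{t+m}(N)$; in part~(b) with the same $X, X_1, Y$ but $Y_1 = A^{\sigma,\rho}(N)$ (which is defined because $N$ is closed). In every case the inclusions $i\colon X_1\to X$ and $j\colon Y_1\to Y$ are compact, since Theorems~\ref{thm_sobolev_inclusion} and~\ref{thm_analytic_inclusion} show their entropy numbers tend to zero (Lemma~\ref{lem:prop_entropy}(iv)). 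One minor technical point: Lemma~\ref{lemma_entropy_typical} is phrased for $K$ a ball $\{\norm{f}_{X_1}\le r\}$, whereas here $K$ is only assumed closed and bounded in $H^{s+\delta}(M)$; since in all intended applications $K$ \emph{is} such a ball I would just note this, and in general one first replaces $K$ by a ball it contains and translates it to be centered at the origin (a translation changes neither side of the stability inequality, nor the fact that $F(K)$ is bounded in $Y_1$).

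For part~(a), Theorem~\ref{thm_sobolev_inclusion} gives $e_k(i)\sim k^{-\delta/n_M}$ and $e_k(j)\sim k^{-m/n_N}$, so Lemma~\ref{lemma_entropy_typical}(a) applies with its exponent ``$m$'' equal to $\delta/n_M$ and its exponent ``$s$'' equal to $m/n_N$; it outputs $\omega(t)\gtrsim t^{(\delta/n_M)/(m/n_N)} = t^{\delta n_N/(m\,n_M)}$ for small $t$, which is the asserted bound. For the ``in particular'' statement, if $F(K)$ is bounded in $H^{t+m}(N)$ for every $m>0$ then for each fixed $m$ one gets $\omega(t)\gtrsim t^{\delta n_N/(m n_M)}$; given $\alpha\in(0,1)$ one chooses $m$ so large that $\delta n_N/(m n_M)<\alpha$ and uses $t^{\beta}\ge t^{\alpha}$ for $0<t<1$, $\beta\le\alpha$, to deduce $\omega(t)\gtrsim t^{\alpha}$ for small $t$; in particular $\omega$ is not a H\"older modulus.

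For part~(b), Theorem~\ref{thm_analytic_inclusion} gives $e_k(j)\lesssim\exp(-\tilde\rho\,k^{1/(n_N\sigma+1)})$ for some $\tilde\rho>0$, while $e_k(i)\sim k^{-\delta/n_M}$ as before. Hence Lemma~\ref{lemma_entropy_typical}(b) applies with ``$m$'' equal to $\delta/n_M$ and ``$\alpha$'' equal to $1/(n_N\sigma+1)$, giving $\omega(t)\gtrsim\abs{\log t}^{-(\delta/n_M)(n_N\sigma+1)} = \abs{\log t}^{-\delta(\sigma n_N+1)/n_M}$ for small $t$. I do not expect a genuinely hard step here: the analysis is entirely packaged in Theorems~\ref{thm_sobolev_inclusion}--\ref{thm_analytic_inclusion} and in the pigeonhole comparison of Lemma~\ref{lemma_entropy_typical}; what needs care is only the bookkeeping (keeping the several ``$m$'s'' and the dimensions $n_M, n_N$ straight when matching parameters) and the minor reduction concerning the shape of $K$.
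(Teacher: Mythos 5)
Your proposal is correct and is essentially the paper's own argument: the paper states this theorem as a direct combination of Lemma \ref{lemma_entropy_typical} with the entropy bounds of Theorems \ref{thm_sobolev_inclusion} and \ref{thm_analytic_inclusion}, with precisely the parameter matching (taking ``$m$'' $=\delta/\dim(M)$ and ``$s$'' $=m/\dim(N)$, resp.\ ``$\alpha$'' $=1/(\sigma\dim(N)+1)$) that you carry out. Your side remark about the shape of $K$ matches the intended reading of the statement (as in Lemma \ref{lemma_entropy_typical}, the capacity lower bound requires $K$ to contain a translate of a ball in $H^{s+\delta}(M)$, which is the case in all the paper's applications and is likewise left implicit there).
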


Next we state an analogous result where the range of the forward map is in the space $B(H^s, H^{-s})$ as in the case of Dirichlet-to-Neumann type operators. This follows immediately from Lemma \ref{lemma_entropy_typical} and Theorem \ref{thm_entropy_bounded_operators} (we also use the notation $Z^m(H^s,H^{-s})$ and $W^{\sigma,\rho}(H^s, H^{-s})$ from that theorem).

\begin{theorem} \label{thm_smoothing_instability_dnmap}
Let $M$ and $N$ be compact smooth manifolds so that $N$ has no boundary, let $r, s \in \mR$, and let $\delta > 0$. Let $K$ be  closed ball in $H^{r+\delta}(M)$, and let $F$ be a map $K \to B(H^s(N), H^{-s}(N))$. Suppose that $\omega$ is a modulus of continuity such that 
\[
\norm{f_1-f_2}_{H^r} \leq \omega(\norm{F(f_1) - F(f_2)}_{H^s \to H^{-s}}), \qquad f_1, f_2 \in K.
\]
\begin{enumerate}
\item[(a)] 
If $F$ maps $K$ into a bounded set of $Z^m(H^s,H^{-s})$ for any $m > 0$, then for any $\alpha \in (0,1)$ one has $\omega(t) \gtrsim t^{\alpha}$ for $t$ small.
\item[(b)] 
If $F$ maps $K$ into a bounded set of $W^{\sigma,\rho}(H^s, H^{-s})$ for some $1 \leq \sigma < \infty$ and $\rho > 0$, then $\omega(t) \gtrsim \abs{\log t}^{-\frac{\delta(2 \sigma \dim(N) +1)}{\dim(M)}}$ for $t$ small.
\end{enumerate}
\end{theorem}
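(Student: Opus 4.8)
The plan is to read Theorem~\ref{thm_smoothing_instability_dnmap} off directly from the abstract entropy scheme of Lemma~\ref{lemma_entropy_typical}, using Theorem~\ref{thm_sobolev_inclusion} for the entropy numbers of the Sobolev embedding on the domain side and Theorem~\ref{thm_entropy_bounded_operators} for the entropy numbers of the embedding of smoothing operator spaces on the target side.

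First I would fix the Banach space setup: put $X = H^r(M)$, $X_1 = H^{r+\delta}(M)$, $Y = B(H^s(N),H^{-s}(N))$, and $Y_1 = Z^m(H^s,H^{-s})$ in case (a), resp.\ $Y_1 = W^{\sigma,\rho}(H^s,H^{-s})$ in case (b). Since $M$ is a compact manifold with or without smooth boundary, the inclusion $i\colon X_1 \to X$ is compact and Theorem~\ref{thm_sobolev_inclusion} gives $e_k(i) \sim k^{-\delta/\dim(M)}$. Since $N$ is closed, Theorem~\ref{thm_entropy_bounded_operators}(a) gives that the inclusion $j\colon Y_1 \to Y$ is compact with $e_k(j) \lesssim k^{-\frac{m}{2\dim(N)}+\delta'}$ for every $\delta'>0$ in case (a), and Theorem~\ref{thm_entropy_bounded_operators}(b) gives $e_k(j) \lesssim e^{-c k^{1/(2\sigma\dim(N)+1)}}$ for some $c>0$ in case (b). The hypotheses that $K$ is bounded in $H^{r+\delta}(M)$ and $F(K)$ is bounded in $Y_1$, together with the assumed stability inequality for $x_1,x_2\in K$, place us exactly in the setting of Lemma~\ref{lemma_entropy_typical} (with $r$ there the radius of a ball in $X_1$ contained in $K$, and $R$ the $Y_1$-bound of $F(K)$).

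For part (a) I would apply Lemma~\ref{lemma_entropy_typical}(a) with the roles $m_{\mathrm{lem}} = \delta/\dim(M)$ and $s_{\mathrm{lem}} = \frac{m}{2\dim(N)} - \delta'$, obtaining $\omega(t) \gtrsim t^{\,m_{\mathrm{lem}}/s_{\mathrm{lem}}}$ for $t$ small. Given $\alpha \in (0,1)$, I would then use the hypothesis that $F$ maps $K$ into a bounded subset of $Z^m(H^s,H^{-s})$ for \emph{every} $m>0$: choosing $m$ large and $\delta'$ small makes $m_{\mathrm{lem}}/s_{\mathrm{lem}} < \alpha$, and since $t^{\,m_{\mathrm{lem}}/s_{\mathrm{lem}}} \ge t^{\alpha}$ for $0<t\le 1$ this yields $\omega(t) \gtrsim t^{\alpha}$ for $t$ small (with a constant depending on $\alpha$). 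For part (b) I would apply Lemma~\ref{lemma_entropy_typical}(b) with $m_{\mathrm{lem}} = \delta/\dim(M)$ and $\alpha_{\mathrm{lem}} = \frac{1}{2\sigma\dim(N)+1}$, which gives $\omega(t) \gtrsim \abs{\log t}^{-m_{\mathrm{lem}}/\alpha_{\mathrm{lem}}} = \abs{\log t}^{-\frac{\delta(2\sigma\dim(N)+1)}{\dim(M)}}$, exactly as stated.

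Since every ingredient is quoted from earlier results, there is no substantive new estimate to prove; the only points requiring a little care are (i) ensuring that $K$ is large enough for the capacity half of Lemma~\ref{lemma_entropy_typical} to apply --- in all applications $K$ is itself a closed ball of $H^{r+\delta}(M)$, and it suffices that $K$ contain such a ball --- and (ii) the bookkeeping in part (a), where the constant in the H\"older-type lower bound degenerates as the smoothing order $m\to\infty$, which is precisely why the conclusion is phrased as $\omega(t)\gtrsim t^{\alpha}$ for each fixed $\alpha<1$ rather than with a single uniform exponent. I do not expect any genuine obstacle beyond this bookkeeping.
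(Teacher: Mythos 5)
Your proposal is correct and is exactly the paper's argument: the paper proves this theorem simply by citing Lemma \ref{lemma_entropy_typical} together with Theorem \ref{thm_entropy_bounded_operators} (with the Sobolev entropy bounds of Theorem \ref{thm_sobolev_inclusion} on the domain side), which is precisely the combination you carry out. Your exponent bookkeeping, the limiting argument in $m$ for part (a), and the remark that $K$ should contain an $H^{r+\delta}$-ball for the capacity half are exactly the details the paper leaves implicit.
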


\subsection{Unique continuation}

Let $M$ be a compact $n$-manifold with smooth boundary, and let $P$ be an elliptic second order operator on $M$ having the form 
\[
P = \Delta_g u + Bu + cu
\]
where $g$ is a smooth Riemannian metric on $M$, $B$ is a smooth vector field on $M$ and $c \in C^{\infty}(M)$. As in Section \ref{subseq_nd}, for any $u \in H^1(M)$ solving $Pu = 0$ in $M$ there is a normal derivative $\p_{\nu} u|_{\p M}$ defined weakly as an element of $H^{-1/2}(\p M)$. If $u$ is smooth, one has in local coordinates $\p_{\nu} u = g^{jk} \p_j u \nu_k|_{\p M}$ where $\nu$ is the unit outer conormal to $\p M$.

The unique continuation principle states that if $\Gamma$ is a nonempty open subset of $\p M$, then any $u \in H^1(M)$ solving $Pu = 0$ in $M$ and satisfying $u|_{\Gamma} = \p_{\nu} u|_{\Gamma} = 0$ must be identically zero. This can be made quantitative, and one has (conditional) logarithmic stability \cite{AlessandriniRondiRossetVessella}. The following result shows that logarithmic stability is optimal, at least when the underlying structures are real-analytic.

\begin{theorem}
Let $M$ and $P$ be as above, and let $\Gamma$ be a nonempty open subset of $\p M$ so that $\p M \setminus \ol{\Gamma} \neq \emptyset$. Let $\delta > 0$, and suppose that $\omega$ is a modulus of continuity so that 
\begin{equation} \label{ucp_smooth_inequality_first}
\norm{u}_{H^1(M)} \leq \omega(\norm{u}_{H^{1/2}(\Gamma)} + \norm{\p_{\nu} u}_{H^{-1/2}(\Gamma)})
\end{equation}
whenever $Pu = 0$ and $\norm{u}_{H^{1+\delta}(M)} \leq 1$.Then $\omega(t) \gtrsim t^{\alpha}$ for any $\alpha \in (0,1)$ when $t$ is small. Moreover, if $M$, the coefficients of $P$ and $\p M$ are real-analytic, then $\omega(t) \gtrsim \abs{\log t}^{-\mu}$ for $t$ small whenever $\mu > \frac{\delta n}{n-1}$.
\end{theorem}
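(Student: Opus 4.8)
The plan is to reduce the statement to the abstract entropy scheme of Lemma~\ref{lemma_entropy_typical}, taking as ``unknown'' the Dirichlet data of a solution on a piece of the boundary disjoint from $\Gamma$, and testing the hypothesised inequality \eqref{ucp_smooth_inequality_first} only on such solutions. So I would first fix a nonempty open set $\Gamma'\subset\p M\setminus\ol\Gamma$ with $\ol{\Gamma'}\cap\ol\Gamma=\emptyset$ (possible since $\p M\setminus\ol\Gamma$ is nonempty and open) and an open set $W$ with $\ol\Gamma\subset W$ and $\ol W\cap\ol{\Gamma'}=\emptyset$. For simplicity I would assume that $0$ is not a Dirichlet eigenvalue of $P$ on $M$; the general case only requires replacing the space of admissible Dirichlet data by a finite-codimension subspace, which does not affect the asymptotic entropy bounds below. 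Set $X_1:=\{h\in H^{1/2+\delta}(\p M):\supp h\subset\ol{\Gamma'}\}$; for $h\in X_1$ let $u_h\in H^1(M)$ be the solution of $Pu=0$ in $M$ with $u|_{\p M}=h$; fix a cutoff $\chi$ on $\p M$ (smooth, resp.\ Gevrey-$\sigma$ with $\sigma>1$ in the analytic case) supported in $W$ and equal to $1$ near $\ol\Gamma$; and define the forward map $F(h):=\chi\cdot(\p_\nu u_h)|_{\p M}$.

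Next I would record two elementary facts about this set-up. (i) By the trace theorem, well-posedness of the Dirichlet problem, and elliptic boundary regularity (using that $P$ and $\p M$ are smooth), one has $\norm{h}_{H^{1/2}(\p M)}\sim\norm{u_h}_{H^1(M)}$ and $\norm{h}_{H^{1/2+\delta}(\p M)}\sim\norm{u_h}_{H^{1+\delta}(M)}$, so the a priori condition $\norm{u_h}_{H^{1+\delta}(M)}\le1$ is equivalent to a bound on $\norm{h}_{X_1}$. (ii) Since $h=u_h|_{\p M}$ is supported in $\ol{\Gamma'}$ and $\ol{\Gamma'}\cap\ol\Gamma=\emptyset$, one has $u_h|_\Gamma=0$; therefore, for $h_1,h_2$ in a sufficiently small ball of $X_1$, applying \eqref{ucp_smooth_inequality_first} to $u:=u_{h_1-h_2}$, using $\chi\equiv1$ on $\Gamma$, the monotonicity of $\omega$, and (i), I get
\[
\norm{h_1-h_2}_{H^{1/2}(\p M)}\lesssim\omega\bigl(\norm{(\p_\nu u)|_\Gamma}_{H^{-1/2}(\Gamma)}\bigr)\le\omega\bigl(\norm{F(h_1)-F(h_2)}_{H^{-1/2}(\p M)}\bigr)
\]
(a harmless multiplicative constant on $\omega$ may be absorbed, as all conclusions are of the form $\gtrsim$). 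Moreover, by Proposition~\ref{prop:entropy_localized} the inclusion $i\colon X_1\hookrightarrow X:=H^{1/2}(\p M)$ satisfies $e_k(i)\sim k^{-\delta/(n-1)}$.

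The main step is the smoothing property of $F$. Since the Dirichlet data of $u_h$ vanishes near $\ol\Gamma$ and $P$, $g$, $B$, $c$ and $\p M$ are smooth there, interior and boundary elliptic regularity show that $u_h$, hence also $\p_\nu u_h$, is $C^\infty$ in a fixed neighbourhood of $\ol\Gamma$ in $M$, with $\norm{\p_\nu u_h}_{C^k}\lesssim_k\norm{h}_{H^{1/2}(\p M)}$ for every $k$; multiplying by $\chi$ and using that $\p M$ is a closed $(n-1)$-manifold, $F$ maps bounded subsets of $X_1$ into a bounded subset of $H^t(\p M)$ for every $t$. If in addition $M$, the coefficients of $P$, and $\p M$ are real-analytic, then the Morrey--Nirenberg analytic regularity theorem up to the analytic boundary (the Dirichlet data vanishing near $\ol\Gamma$) gives $\norm{\nabla^k\p_\nu u_h}_{L^\infty}\lesssim C^{k+1}k!\,\norm{h}_{H^{1/2}(\p M)}$ near $\ol\Gamma$; multiplying by the Gevrey-$\sigma$ cutoff $\chi$ and invoking Lemma~\ref{lem:spatial_cutoff} together with Lemma~\ref{lemma_arho_comega} then shows that $F$ maps bounded subsets of $X_1$ into a bounded subset of $A^{\sigma,\rho}(\p M)$ for some $\rho>0$ and any fixed $\sigma>1$.

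Finally I would apply Lemma~\ref{lemma_entropy_typical} with $Y:=H^{-1/2}(\p M)$. In the smooth case the inclusion $j\colon Y_1:=H^t(\p M)\hookrightarrow Y$ has $e_k(j)\lesssim k^{-s}$ for every $s>0$ (Theorem~\ref{thm_sobolev_inclusion}), and part~(a) yields $\omega(t)\gtrsim t^{\delta/((n-1)s)}$ for every $s$, i.e.\ $\omega(t)\gtrsim t^\alpha$ for every $\alpha\in(0,1)$. In the analytic case Theorem~\ref{thm_analytic_inclusion} gives $e_k(j\colon A^{\sigma,\rho}(\p M)\hookrightarrow Y)\lesssim\exp\bigl(-\tilde\rho\,k^{1/((n-1)\sigma+1)}\bigr)$, and part~(b) yields $\omega(t)\gtrsim\abs{\log t}^{-m/\alpha}$ with $m=\delta/(n-1)$, $\alpha=1/((n-1)\sigma+1)$, i.e.\ $\omega(t)\gtrsim\abs{\log t}^{-(\delta\sigma+\frac{\delta}{n-1})}$; since $\sigma>1$ is arbitrary, letting $\sigma\downarrow1$ gives $\omega(t)\gtrsim\abs{\log t}^{-\mu}$ for every $\mu>\frac{\delta n}{n-1}$. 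The hard part will be the analytic smoothing step: converting the interior analytic estimates for $u_h$ into membership in the Hilbert space $A^{\sigma,\rho}$ on the closed manifold $\p M$ forces the use of a compactly supported cutoff, hence $\sigma>1$ — which is exactly why the final bound carries the strict inequality $\mu>\frac{\delta n}{n-1}$ rather than equality.
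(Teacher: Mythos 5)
Your proposal is correct and follows essentially the same route as the paper's proof: reduce to the linear operator $h \mapsto \chi\,\p_\nu u_h$ acting on Dirichlet data supported away from $\ol{\Gamma}$, establish $C^\infty$ (resp.\ uniform analytic/Gevrey-$\sigma$) smoothing of this operator near $\ol{\Gamma}$, and feed this into the entropy framework of Lemma \ref{lemma_entropy_typical}, letting $\sigma \downarrow 1$ to reach every $\mu > \frac{\delta n}{n-1}$. The only differences are bookkeeping: you work with supported Sobolev spaces and Proposition \ref{prop:entropy_localized}, while the paper uses an extension operator from a smooth subdomain $\Sigma \subset \p M$ and invokes Theorem \ref{thm_smoothing_instability_sobolev} (itself Lemma \ref{lemma_entropy_typical} plus Theorems \ref{thm_sobolev_inclusion} and \ref{thm_analytic_inclusion}).
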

\begin{proof}
We rewrite \eqref{ucp_smooth_inequality_first} in a form where a smoothing operator appears. We assume that $0$ is not a Dirichlet eigenvalue for $P$ in $M$, i.e.\ for any $f \in H^{1/2}(\p M)$ there is a unique solution $u = Sf \in H^1(M)$ of $Pu = 0$ in $M$ with $u|_{\p M} = f$. (Otherwise one can argue with $f$ replaced by $f-Qf$, where $Q$ is a projection to a finite dimensional space.) Elliptic regularity gives that $\norm{u}_{H^{1+t}(M)} \sim \norm{f}_{H^{1/2+t}(\p M)}$ for any $t \geq 0$. Thus \eqref{ucp_smooth_inequality_first} implies that for some $r_0 > 0$ one has 
\begin{equation} \label{ucp_smooth_inequality_second}
\norm{f}_{H^{1/2}(\p M)} \lesssim \omega(\norm{f}_{H^{1/2}(\Gamma)} + \norm{\p_{\nu} Sf}_{H^{-1/2}(\Gamma)}), \qquad \norm{f}_{H^{1/2+\delta}(\p M)} \leq r_0.
\end{equation}

We wish to get rid of the $\norm{f}_{H^{1/2}(\Gamma)}$ term on the right. This can be done by restricting to functions $f$ that vanish near $\Gamma$. Using the condition $\p M \setminus \ol{\Gamma} \neq \emptyset$, there is a neighborhood $\Gamma_1$ of $\ol{\Gamma}$ in $\p M$ and a compact domain $\Sigma \subset \p M$ with smooth boundary so that $\Sigma \cap \ol{\Gamma}_1 = \emptyset$. Let $E$ be a bounded Sobolev extension operator from $H^t(\Sigma)$ to $H^t(\p M)$, chosen so that $Eh|_{\Gamma_1} = 0$. Applying \eqref{ucp_smooth_inequality_second} to $f = Eh$, it follows that for some $r > 0$ 
\[
\norm{h}_{H^{1/2}(\Sigma)} \lesssim \omega(\norm{Ah}_{H^{-1/2}(\p M)}), \qquad \norm{h}_{H^{1/2+\delta}(\Sigma)} \leq r/2,
\]
where $A$ is the linear operator 
\[
A: H^{1/2}(\Sigma) \to H^{-1/2}(\p M), \ Ah = \chi \p_{\nu} S E h
\]
and $\chi \in C^{\infty}(\p M)$ satisfies $\chi=1$ near $\ol{\Gamma}$ and $\supp(\chi) \subset \Gamma_1$.

Now $u = SEh$ satisfies $Pu = 0$ in $M$ and $u|_{\Gamma_1} = 0$. By elliptic regularity it follows that $u$ is smooth near $\Gamma_1$, showing that $A$ maps into $H^m(\p M)$ for any $m$ (continuously, by the closed graph theorem). Theorem \ref{thm_smoothing_instability_sobolev}(a) shows that $\omega(t)$ cannot be a H\"older modulus of continuity.

Suppose now that all the structures are real-analytic. Since $u = SEh$ satisfies $Pu = 0$ in $M$ and $u|_{\Gamma_1} = 0$, elliptic regularity gives that $u$ must be real-analytic near $\Gamma_1$. Moreover, since $\norm{u}_{L^2(M)}$ is uniformly bounded, we have uniform bounds in the Cauchy estimates for $u$ by \cite[Theorem 1.3 in Chapter 8 of vol. III]{LionsMagenes}. It follows that there are uniform bounds in the Cauchy estimates for $\p_{\nu} u|_{\p M}$ in any compact subset of $\Gamma_1$. Now fix $\sigma > 1$ and choose $\chi \in C^{\infty}_c(\Gamma_1) \cap G^{\sigma}(\p M)$ so that $\chi = 1$ near $\ol{\Gamma}$. Then $A$ maps $H^{1/2}(\Sigma)$ to $A^{\sigma,\rho}(\p M)$ for some fixed $\rho > 0$ (continuously, by the closed graph theorem). It follows from Theorem \ref{thm_smoothing_instability_sobolev}(b)
that $\omega(t) \gtrsim \abs{\log t}^{-\frac{\delta(\sigma(n-1)+1}{n-1}}$. Since this is true for any $\sigma > 1$, the result follows.
\end{proof}

\subsection{Linearized Calder\'on problem}

Let $(M,g)$ be a compact $n$-manifold with smooth boundary. We consider the Dirichlet problem 
\[
(-\Delta_g + q) u = 0 \text{ in $M$}, \qquad u|_{\p M} = f,
\]
where $q \in L^{\infty}(M)$ (lower regularity coefficients will be considered in Section \ref{sec:instab_low_reg}). Assuming that $0$ is not a Dirichlet eigenvalue, for any $f \in H^{1/2}(\p M)$ there is a unique weak solution $u \in H^1(M)$. Consider the DN map 
\[
\Lambda_q: H^{1/2}(\p M) \to H^{-1/2}(\p M), \  f \mapsto \Lambda_q f:= \p_{\nu} u|_{\p M},
\]
where the normal derivative $\p_{\nu} u|_{\p M}$ is defined in the weak sense as in Section \ref{subseq_nd}.

The following standard result computes the Fr\'echet derivative of the map 
\[
\Lambda: L^{\infty}(M) \to B(H^{1/2}(\p M), H^{-1/2}(\p M)).
\]
We give the proof for completeness.

\begin{lemma} \label{lem:lin_Cal}
Let $q \in L^{\infty}(M)$ be such that $0$ is not a Dirichlet eigenvalue of $-\Delta_g+q$ in $M$, let $P_q: H^{1/2}(\partial M) \to H^1(M)$ be the solution operator for the Dirichlet problem 
\[
(-\Delta_g +q)P_q f = 0 \text{ in $M$}, \qquad P_q f|_{\p M} = f,
\]
and let $G_q: H^{-1}(M) \to H^1_0(M)$ be the Green operator with vanishing Dirichlet boundary values, 
\[
(-\Delta_g +q)G_q F = F \text{ in $M$}, \qquad G_q F|_{\p M} = 0.
\]
Then the linearized DN map $A_q = (D\Lambda)_q$ is the operator 
\begin{gather*}
A_q: L^{\infty}(M) \rightarrow B(H^{1/2}(\p M), H^{-1/2}(\p M)), \\
A_q(h) f = \p_{\nu} G_q(-h P_q f)|_{\partial M}.
\end{gather*}
\end{lemma}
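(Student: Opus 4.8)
The plan is to linearize the Dirichlet problem directly. I would fix $q\in L^\infty(M)$ with $0$ not a Dirichlet eigenvalue and consider, for a perturbation $h\in L^\infty(M)$ small enough that $0$ remains a non-eigenvalue of $-\Delta_g + q + h$, the solution $u_h = P_{q+h}f$. Setting $w_h := u_h - u_0$ where $u_0 = P_q f$, a subtraction of the two equations $(-\Delta_g + q + h)u_h = 0$ and $(-\Delta_g + q)u_0 = 0$ gives $(-\Delta_g + q)w_h = -h u_h$ in $M$ with $w_h|_{\partial M} = 0$, so $w_h = G_q(-h u_h)$. The first step is thus purely algebraic: rewrite $w_h = G_q(-h u_0) + G_q(-h w_h)$, i.e.\ $(\id + G_q(h\,\cdot))w_h = G_q(-h u_0)$, and observe that $\|G_q(h\,\cdot)\|_{H^1_0 \to H^1_0} \lesssim \|h\|_{L^\infty}$, so for $\|h\|_{L^\infty}$ small the operator $\id + G_q(h\,\cdot)$ is invertible on $H^1_0(M)$ by Neumann series. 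Hence $\|w_h\|_{H^1(M)} \lesssim \|h\|_{L^\infty}\|u_0\|_{H^1} \lesssim \|h\|_{L^\infty}\|f\|_{H^{1/2}(\partial M)}$, and moreover $w_h = G_q(-h u_0) + O(\|h\|_{L^\infty}^2)$ in $H^1(M)$, uniformly in $f$ on bounded sets.

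Next I would pass to the DN maps. By definition of the weak normal derivative (Section \ref{subseq_nd}), the map $v \mapsto \partial_\nu v|_{\partial M}$ is bounded from $\{v\in H^1(M): (-\Delta_g+q)v \in \text{(dual pairing sense)}\}$ into $H^{-1/2}(\partial M)$; applied to $w_h$, which solves $(-\Delta_g+q)w_h = -h u_h$ with $-h u_h \in L^2(M) \subset H^{-1}(M)$, this gives
\[
\Lambda_{q+h}f - \Lambda_q f = \partial_\nu w_h|_{\partial M} = \partial_\nu G_q(-h u_0)|_{\partial M} + \partial_\nu G_q(-h w_h)|_{\partial M}.
\]
The first term on the right is exactly $A_q(h)f$ with $A_q(h)f = \partial_\nu G_q(-h P_q f)|_{\partial M}$, and one checks it is bounded $H^{1/2}(\partial M)\to H^{-1/2}(\partial M)$ depending linearly and continuously on $h\in L^\infty(M)$. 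The remainder $\partial_\nu G_q(-h w_h)|_{\partial M}$ is estimated using the continuity of $v\mapsto \partial_\nu v|_{\partial M}$ together with $\|h w_h\|_{L^2} \lesssim \|h\|_{L^\infty}\|w_h\|_{H^1} \lesssim \|h\|_{L^\infty}^2 \|f\|_{H^{1/2}}$; this shows $\|\Lambda_{q+h} - \Lambda_q - A_q(h)\|_{H^{1/2}\to H^{-1/2}} = O(\|h\|_{L^\infty}^2)$, which is the Fréchet differentiability claim.

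I expect the main technical point to be the bookkeeping of normal-derivative estimates: one must verify that the weak normal derivative $v\mapsto \partial_\nu v|_{\partial M}$ really is bounded into $H^{-1/2}(\partial M)$ with a norm controlled by $\|v\|_{H^1(M)}$ plus the (dual-Sobolev) norm of $(-\Delta_g + q)v$, applied with $(-\Delta_g+q)v = -hu_h \in L^2(M)$, and that all constants are uniform as $\|h\|_{L^\infty}\to 0$. This is standard but is the one place where care with the variational definition \eqref{eq:bilinear} matters. Everything else — the Neumann-series invertibility of $\id + G_q(h\,\cdot)$, the identification of the leading term with $A_q(h)$, and the $O(\|h\|^2)$ bound on the remainder — is routine given the mapping properties of $P_q$ and $G_q$ stated in the lemma. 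Linearity of $h\mapsto A_q(h)$ is immediate from the formula, so $A_q = (D\Lambda)_q$ as claimed.
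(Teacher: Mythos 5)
Your proposal is correct and follows essentially the same route as the paper: the same decomposition $w = G_q(-hP_qf) + G_q(-hw)$, the same smallness/absorption argument (your Neumann series is just a reformulation of the paper's $\norm{G_q(-hw)}_{H^1}\leq \tfrac12\norm{w}_{H^1}$), and the same identification of the remainder $\p_\nu G_q(-hw)|_{\p M}$ as $O(\norm{h}_{L^\infty}^2\norm{f}_{H^{1/2}})$ via the weak normal derivative bound.
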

\begin{proof}
If $\norm{h}_{L^{\infty}}$ is small then $\Lambda_{q+h}$ is well defined. Given $f \in H^{1/2}(\p M)$ one has 
\[
\Lambda_{q+h} f - \Lambda_q f = \p_{\nu}(P_{q+h} f - P_q f)|_{\p M}.
\]
The function $w := P_{q+h} f - P_q f \in H^1_0(M)$ solves 
\[
(-\Delta_g + q)w = -hw -h P_q f \mbox{ in } M, \ w = 0 \mbox{ on } \partial M,
\]
which implies that 
\[
w = G_q(-h P_q f) + G_q(-hw).
\]
If $\norm{h}_{L^{\infty}}$ is chosen small enough, one has $\norm{G_q(-hw)}_{H^1} \leq \frac{1}{2} \norm{w}_{H^1}$ and hence 
\[
\norm{w}_{H^1} \lesssim \norm{h}_{L^{\infty}} \norm{f}_{H^{1/2}}.
\]
It follows that 
\[
(\Lambda_{q+h} - \Lambda_q - A_q(h))f = \p_{\nu}(w - G_q(-h P_q f))|_{\p M} = \p_{\nu}(G_q(-hw))|_{\p M}.
\]
The $H^{-1/2}(\p M)$ norm of the last quantity is $\lesssim \norm{h}_{L^{\infty}} \norm{w}_{H^1} \lesssim \norm{h}_{L^{\infty}}^2 \norm{f}_{H^{1/2}}$. This proves that the Fr\'echet derivative of $q \mapsto \Lambda_q$ at $q$ is $A_q$.
\end{proof}

We note that if $q$ is smooth and if $h$ vanishes near $\p M$, then $A_q(h)$ is a smoothing operator. This implies strong instability properties for the linearized Calder\'on problem where one would like to determine $h$ from the knowledge of $A_q(h)$.

\begin{theorem}
Let $M' \Subset M^{\mathrm{int}}$, let $s > n/2$, and let $\delta > 0$. Consider $A_q$ as an operator $H^s_0(M') \to B(H^{1/2}(\p M), H^{-1/2}(\p M))$ and suppose that the linearized Calder\'on problem has the stability estimate 
\[
\norm{h}_{H^s} \leq \omega(\norm{A_q(h)}_{B(H^{1/2},H^{-1/2})}), \qquad \norm{h}_{H^{s+\delta}} \leq 1.
\]
If $q \in C^{\infty}(M)$, then $\omega(t)$ cannot be a H\"older modulus of continuity. Moreover, if $M$, $g$, $\p M$ and $q$ are real-analytic, then $\omega(t) \gtrsim \abs{\log t}^{- \frac{\delta(2n-1)}{n}}$ for $t$ small.
\end{theorem}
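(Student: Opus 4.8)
The plan is to exploit the fact that, for $h$ supported away from $\p M$ and smooth (resp.\ analytic) background coefficients, the linearized DN map $A_q(h)$ is smoothing (resp.\ analytic smoothing), and then to invoke the abstract instability result in Theorem \ref{thm_smoothing_instability_dnmap}. By Lemma \ref{lem:lin_Cal} we have $A_q(h)f = \p_\nu G_q(-hP_qf)|_{\p M}$. Since $h$ is supported in $M' \Subset M^{\mathrm{int}}$, for each $f$ the function $w_h^f := G_q(-hP_qf) \in H^1_0(M)$ solves the \emph{homogeneous} equation $(-\Delta_g+q)w_h^f = 0$ in $M \setminus \overline{M'}$ together with $w_h^f|_{\p M} = 0$. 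Fix a collar neighbourhood $U$ of $\p M$ in $M$ with $\overline U \cap \overline{M'} = \emptyset$, and a smaller collar $U' \Subset U$ still containing $\p M$. First I would record the a priori bound: since $s > n/2$ one has $H^s \hookrightarrow L^\infty$, so $\norm{hP_qf}_{H^{-1}(M)} \le \norm{hP_qf}_{L^2(M)} \lesssim \norm{h}_{L^\infty}\norm{P_qf}_{L^2} \lesssim \norm{h}_{H^s}\norm{f}_{H^{1/2}}$, whence $\norm{w_h^f}_{H^1(M)} \lesssim \norm{h}_{H^s}\norm{f}_{H^{1/2}}$. I would also use that $A_q(h)$ is symmetric: testing the weak definition of the normal derivative (Section \ref{subseq_nd}) against $P_qf_2$ and using that $P_qf_1$ solves the homogeneous equation gives the bilinear identity $\langle A_q(h)f_1,f_2\rangle_{\p M} = \int_M h\,(P_qf_1)(P_qf_2)$, which is symmetric in $f_1,f_2$; hence $A_q(h)' = A_q(h)$ and it suffices to bound $A_q(h)$ itself when checking membership in $Z^m$ and $W^{\sigma,\rho}$.

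In the smooth case, interior-plus-boundary elliptic regularity for the homogeneous equation on the collar, with zero Dirichlet data on $\p M$, upgrades the $H^1$ bound: for every $m$ one has $\norm{w_h^f}_{H^{m}(U')} \lesssim_m \norm{w_h^f}_{H^1(M)} \lesssim \norm{h}_{H^s}\norm{f}_{H^{1/2}}$, and taking the trace of the normal derivative gives $\norm{A_q(h)f}_{H^{m}(\p M)} \lesssim_m \norm{h}_{H^s}\norm{f}_{H^{1/2}}$ for every $m$. Together with $A_q(h)'=A_q(h)$ this shows that $A_q$ maps $\{\,\norm{h}_{H^{s+\delta}}\le 1\,\}\subset H^{s+\delta}_0(M')$ into a bounded subset of $Z^{m}(H^{1/2}(\p M),H^{-1/2}(\p M))$ for every $m>0$. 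Theorem \ref{thm_smoothing_instability_dnmap}(a) (with $\dim M = n$, $\dim\p M = n-1$) then yields $\omega(t)\gtrsim t^\alpha$ for every $\alpha\in(0,1)$ and small $t$, so $\omega$ cannot be a H\"older modulus of continuity.

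In the real-analytic case, the same $w_h^f$ solves an elliptic equation with real-analytic coefficients in $M\setminus\overline{M'}$ and vanishes on the analytic hypersurface $\p M$; by analytic elliptic regularity up to the boundary (the quantitative Cauchy estimates of \cite[Theorem 1.3 in Chapter 8 of vol. III]{LionsMagenes}, used exactly as in the unique continuation theorem above), there are constants $C_0,\rho>0$ depending only on $M,g,q$ and the collar such that $\norm{\nabla^k w_h^f}_{L^\infty(U')} \le C_0^{k+1}k!\,\norm{w_h^f}_{L^2(M)}$ for all $k$. Restricting to $\p M$ and using the analytic metric, $A_q(h)f = \p_\nu w_h^f|_{\p M}$ satisfies uniform Cauchy estimates on the closed manifold $\p M$, so by Lemma \ref{lemma_arho_comega} it lies in $A^{1,\rho}(\p M)$ with $\norm{A_q(h)f}_{A^{1,\rho}(\p M)} \lesssim \norm{w_h^f}_{L^2(M)} \lesssim \norm{h}_{H^s}\norm{f}_{H^{1/2}}$; by symmetry the same holds for $A_q(h)'$. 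Hence $A_q$ maps $\{\,\norm{h}_{H^{s+\delta}}\le 1\,\}$ into a bounded subset of $W^{1,\rho}(H^{1/2}(\p M),H^{-1/2}(\p M))$, and Theorem \ref{thm_smoothing_instability_dnmap}(b) with $\sigma=1$, $\dim\p M = n-1$, $\dim M = n$ gives $\omega(t)\gtrsim\abs{\log t}^{-\frac{\delta(2(n-1)+1)}{n}} = \abs{\log t}^{-\frac{\delta(2n-1)}{n}}$ for $t$ small.

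The main obstacle is establishing the two boundary regularity estimates \emph{uniformly} over $f$ in the unit ball of $H^{1/2}(\p M)$ and $h$ in the unit ball of $H^{s+\delta}_0(M')$, i.e.\ converting the crude $H^1(M)$ bound on $w_h^f$ into $C^\infty$ (resp.\ analytic, with uniform Cauchy constants) control near $\p M$. In the smooth case this is routine elliptic regularity on a region where the equation is homogeneous; in the analytic case it requires the quantitative analytic up-to-the-boundary estimate, exactly as invoked in the unique continuation argument. The remaining bookkeeping — the a priori bound via $H^s\hookrightarrow L^\infty$ and the symmetry of $A_q(h)$ handling the transpose in the definitions of $Z^m$ and $W^{\sigma,\rho}$ — is straightforward.
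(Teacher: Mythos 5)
Your proposal is correct and follows essentially the same route as the paper's proof: smoothness (resp.\ analyticity, via the Lions--Magenes Cauchy estimates) of $w_h^f=G_q(-hP_qf)$ near $\p M$, self-adjointness of $A_q(h)$ to handle the transpose, and then Theorem \ref{thm_smoothing_instability_dnmap}(a)/(b) with $\sigma=1$, $\dim(\p M)=n-1$. You merely make explicit the a priori $H^1$ bound, the symmetric bilinear identity, and the uniform collar estimates that the paper leaves implicit, and your exponent bookkeeping $\frac{\delta(2(n-1)+1)}{n}=\frac{\delta(2n-1)}{n}$ matches the statement.
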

\begin{proof}
Since $h$ vanishes in $M \setminus M'$, the function $u = G_q(-h P_q f)$ solves $(-\Delta_g+q)u = 0$ in $M \setminus M'$ with $u|_{\p M} = 0$. If $q \in C^{\infty}(M)$ it follows that $A_q(h)$ maps $H^{1/2}(\p M)$ boundedly to $H^m(\p M)$ for any $m \geq 0$. By Theorem \ref{thm_smoothing_instability_dnmap}(a) $\omega(t)$ cannot be a H\"older modulus of continuity. Similarly, if all the structures are real-analytic then $u$ is real-analytic near $\p M$ with $\norm{u}_{H^1} \lesssim \norm{h}_{L^{\infty}} \norm{f}_{H^{1/2}}$, so $u$ satisfies uniform Cauchy estimates by \cite[Theorem 1.3 in Chapter 8 of vol. III]{LionsMagenes}. Consequently $A_q(h)$ maps $H^s_0(M')$ boundedly into $W^{1,\rho}(H^{\frac{1}{2}}, H^{- \frac{1}{2}})$ for some $\rho > 0$ (this uses that $A_q(h)$ is formally self-adjoint). By Theorem \ref{thm_smoothing_instability_dnmap}(b) $\omega(t)$ is at best logarithmic with the given exponent.
\end{proof}

\subsection{Calder\'on problem}

We next turn to the instability of the classical Calder\'on problem in smooth and analytic settings (again, results in a low regularity framework will be presented in the next section) and thus prove Theorem \ref{thm:Cald1}.

\begin{proof}[Proof of Theorem \ref{thm:Cald1}]
We seek to apply Theorem \ref{thm_smoothing_instability_dnmap} to the map 
\begin{align*}
F: K \rightarrow B(H^{\frac{1}{2}}(\partial M), H^{-\frac{1}{2}}(\partial M)), \ q \mapsto \Gamma(q):= \Lambda_{q}-\Lambda_{0},
\end{align*}
where $K = \{q \in H^{s}(M): \ \supp(q) \subset M', \ \|q\|_{H^{s+\delta}(M)} \leq C \mbox{ and } \|q\|_{L^{\infty}(M)}\leq \lambda_1/2\}$.
Now by elliptic regularity and the assumption that $\|q\|_{L^{\infty}(M)} \leq \lambda_1/2$, for any $q \in K$ the map $F$ indeed maps into $B(H^{\frac{1}{2}}(\partial M), H^{-\frac{1}{2}}(\partial M))$ (uniformly in $q \in K$) as for $f\in H^{\frac{1}{2}}(\partial \Omega)$ we have that $\Gamma(q)f = \p_{\nu} u_f - \p_{\nu} u_f^0 \in H^{-\frac{1}{2}}(\partial M)$, where $u_f, u_f^0$ are solutions to 
\begin{align*}
-\D_g u + V u &= 0 \mbox{ in } M,\\
u & = f \mbox{ on } \partial M,
\end{align*}
with $V= q$ for $u_f$ and $V=0$ for $u_f^0$. Further, we claim that $F \in Z^m(H^{\frac{1}{2}}, H^{-\frac{1}{2}})$ for any $m>0$. Indeed, this follows from the fact that $v:=u_f-u_0$ satisfies the equation
\begin{align*}
-\D_g v + q v &= -q u_0 \mbox{ in } M,\\
v & = 0 \mbox{ on } \partial M.
\end{align*}
The fact that $\supp(q) \subset M'$ and elliptic regularity give that $\partial_{\nu} v \in C^{\infty}(\partial M)$ (uniformly in $q$ as $\|q\|_{L^{\infty}(M)}\leq \lambda_1/2$). In particular, $\Gamma(q) (H^{\frac{1}{2}}(\partial M)) \subset H^{-\frac{1}{2}+m}(\partial M)$ for any $m >0$ with uniform bounds in $q$. Since $\Gamma(q)$ is a self-adjoint operator, this implies that $\Gamma(q)\in Z^m(H^{\frac{1}{2}},H^{- \frac{1}{2}})$ for each $m>0$. Invoking Theorem \ref{thm_smoothing_instability_dnmap}(a) then implies the impossibility of H{\"o}lder estimates.

If moreover, $M,g$ and $\partial M$ are real analytic, then by \cite[Theorem 1.2 in Chapter 8 of vol. III]{LionsMagenes} $\partial_{\nu} v$ is analytic  (with uniform bounds in $q$) and hence $\Gamma(q) \subset W^{1,\rho}(H^{\frac{1}{2}}, H^{-\frac{1}{2}})$ for some $\rho >0$ for which we again use the self-adjointness of $\Gamma(q)$. The logarithmic lower bound then follows from Theorem \ref{thm_smoothing_instability_dnmap}(b).
\end{proof}

\section{Instability at low regularity}
\label{sec:instab_low_reg}

In the sequel we seek to provide a further instability mechanism, related to direct singular value or entropy bounds, showing that a strong regularity improvement (as in our analyticity arguments from Section \ref{sec:abstract}) is not the only mechanism leading to exponential instability. The main, common mechanism of all our results should rather be regarded as a ``compressing mechanism''. In order to prove this, we estimate the associated singular values and exploit a balance between gaining some decay from regularity and loosing some control through growing constants. We illustrate this argument by applying it to a number of model problems including the backward heat equation with low regularity space-time dependent coefficients (Section \ref{sec:back_heat}), the unique continuation property (Section \ref{sec:UCP}) and the Calder\'on problem (thus proving Theorem \ref{thm:thm_Calderon} in Section \ref{sec:Calderon}). In all these settings our arguments imply that in spite of the low regularity of the coefficients of the problem, one can prove the same instability results for
the associated inverse problems. This gives a complete answer to the question (Q3) from the introduction for the discussed model problems. Most of these instability results are (possibly up to the precise exponents) sharp.

\subsection{Abstract setup for linear inverse problems}

We begin with instability results for linear inverse problems. The following variant of Lemma \ref{lemma_entropy_typical} shows that it is sufficient to find some way of proving decay for the singular values of the forward operator. We only state a version related to exponential instability, which will be sufficient below.

\begin{theorem} \label{thm_lowreg_instability_sobolev}
Let $A: X \to Y$ be a compact injective linear operator between separable Hilbert spaces with $X$ infinite dimensional. Let $X_1 \subset X$ be a closed subspace so that $i: X_1 \to X$ is compact with $e_k(i) \gtrsim k^{-m}$ for some $m > 0$. Let $K = \{ u \in X \,;\, \norm{u}_{X_1} \leq r \}$ for some $r > 0$. Assume that the singular values of $A: X \to Y$ satisfy for some $\rho, \mu > 0$ 
\[
\sigma_k(A) \lesssim e^{-\rho k^{\mu}}, \qquad k \geq 1.
\]
Then there is $c > 0$ with the following property: for any $\eps > 0$ small enough there is $u = u_{\eps}$ such that
\begin{equation} \label{instab_restatement}
\|u\|_{X} \geq \eps, \quad \|u\|_{X_1} \leq r, \quad \|Au\|_{Y} \leq \exp(-c \eps^{-\frac{\mu}{m(\mu+1)}}).
\end{equation}
In particular, if one has the stability property 
\[
\norm{u}_X \leq \omega(\norm{Au}_Y), \qquad u\in K,
\]
then necessarily $\omega(t) \gtrsim \abs{\log\,t}^{-\frac{m(\mu+1)}{\mu}}$ for $t$ small. 
\end{theorem}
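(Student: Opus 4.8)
The plan is to combine the singular value decay of $A$ with the entropy lower bound on the inclusion $i: X_1 \to X$, using the general pigeonhole principle of Theorem \ref{thm:Mandache} (more precisely the packaged form in Lemma \ref{lemma_entropy_typical}). First I would translate the assumption $\sigma_k(A) \lesssim e^{-\rho k^{\mu}}$ into an entropy number bound for $A$: by Lemma \ref{lemma_singular_entropy_relation} this gives $e_k(A) \lesssim e^{-\tilde{c} k^{\frac{\mu}{1+\mu}}}$ for some $\tilde{c} > 0$. On the other side, since $e_k(i) \gtrsim k^{-m}$, the relation $\frac12 e_k \leq c_k \leq e_k$ from \eqref{entropy_capacity_relation} shows $c_k(i) \gtrsim k^{-m}$, so the ball $K$ in $X_1$ contains, for each $k$, a $\eps$-discrete subset of cardinality $> 2^{k-1}$ with $\eps \sim r k^{-m}$.

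Next I would run the capacity-vs-entropy count. As in the proof of Lemma \ref{lemma_entropy_typical}(b), the $\eps$-discrete sets in $K$ with $\eps = c_0 r k^{-m}$ and cardinality $> 2^{k-1}$ yield a capacity function one may take as $g(\eps) = e^{c^{1/m}\eps^{-1/m}}$ with $c = c_0 r/2$ (up to constants); its inverse is $g^{-1}(\eta) = c|\log \eta|^{-m}$. The entropy bound $e_k(A) \lesssim e^{-\tilde c k^{\alpha}}$ with $\alpha = \frac{\mu}{1+\mu}$ gives, exactly as in Lemma \ref{lemma_entropy_typical}(b), a covering function $f(\delta) = e^{C|\log \delta|^{1/\alpha}}$ for $A(K)$. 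Feeding these into Theorem \ref{thm:Mandache}(a) produces, for each small $\eps > 0$, two points in $K$ at distance $\geq \eps$ in $X$ whose $A$-images are at distance $\leq 2f^{-1}(g(\eps))$; subtracting them (using linearity of $A$) gives the single element $u = u_\eps$ with $\|u\|_X \geq \eps$, $\|u\|_{X_1} \leq r$ (the difference of two elements of $K$ lies in $2K$, so after adjusting constants one stays in $K$), and $\|Au\|_Y \leq 2f^{-1}(g(\eps))$. A direct computation with $f(\delta) = e^{C|\log\delta|^{1/\alpha}}$ and $g(\eps) = e^{c^{1/m}\eps^{-1/m}}$ shows $f^{-1}(g(\eps)) = \exp(-C_1 (\log g(\eps))^{\alpha}) = \exp(-C_1 c^{\alpha/m} \eps^{-\alpha/m})$, and since $\frac{\alpha}{m} = \frac{\mu}{m(\mu+1)}$ this is exactly $\exp(-c\,\eps^{-\frac{\mu}{m(\mu+1)}})$ for a suitable $c > 0$, establishing \eqref{instab_restatement}.

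Finally, to obtain the modulus of continuity statement: if $\|u\|_X \leq \omega(\|Au\|_Y)$ for all $u \in K$, then applying this to $u = u_\eps$ and using monotonicity of $\omega$ gives $\eps \leq \omega(\exp(-c\eps^{-\frac{\mu}{m(\mu+1)}}))$. Setting $t = \exp(-c\eps^{-\frac{\mu}{m(\mu+1)}})$, i.e. $\eps = (c/|\log t|)^{\frac{m(\mu+1)}{\mu}} \sim |\log t|^{-\frac{m(\mu+1)}{\mu}}$, this reads $\omega(t) \gtrsim |\log t|^{-\frac{m(\mu+1)}{\mu}}$ for small $t$, which is the claim. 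I expect the only genuinely delicate point to be bookkeeping the constants through the two exponential inversions (especially tracking that $\alpha = \mu/(1+\mu)$ from Lemma \ref{lemma_singular_entropy_relation} combines with the $k^{-m}$ capacity rate to give precisely the stated exponent $\frac{\mu}{m(\mu+1)}$), and making sure the "difference of two points of $K$ lies in $K$" step is handled by an innocuous rescaling of $r$ and $\eps$; everything else is a direct invocation of the already-proved Theorem \ref{thm:Mandache}, Lemma \ref{lemma_singular_entropy_relation} and \eqref{entropy_capacity_relation}.
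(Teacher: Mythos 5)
Your proposal is correct, and it reaches the stated exponents, but it takes a slightly different route than the paper. The paper does not pass through $e_k(A)$ at all: it takes a singular value basis $(\varphi_j)$ of $X$ with $A\varphi_j=\sigma_j\psi_j$, observes that $(Au,\psi_j)=\sigma_j(u,\varphi_j)$ so that $A$ maps the $X$-ball (hence $K$) into a bounded subset of the Gevrey-type sequence space $Y_1=a^{1/\mu,\rho}$ built on $(\psi_j)$, invokes Theorem \ref{thm_analytic_inclusion} to get $e_k(Y_1\hookrightarrow Y)\lesssim e^{-ck^{\mu/(\mu+1)}}$, and then applies Lemma \ref{lemma_entropy_typical}(b) directly, converting the modulus statement into the existence statement \eqref{instab_restatement} via Remark \ref{rmk:equiv_Mandache}. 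You instead convert $\sigma_k(A)\lesssim e^{-\rho k^{\mu}}$ into $e_k(A)\lesssim e^{-\tilde c k^{\mu/(\mu+1)}}$ by Lemma \ref{lemma_singular_entropy_relation} and run the capacity/entropy count of Theorem \ref{thm:Mandache} on $A(K)$ itself, producing the element $u_{\eps}$ by subtracting the two points furnished by the pigeonhole and using linearity (with a factor-2 rescaling to keep $\norm{u}_{X_1}\leq r$), which nicely bypasses Remark \ref{rmk:equiv_Mandache}. The loss from $\mu$ to $\mu/(\mu+1)$ is the same in both arguments, since Carl's inequality (your Lemma \ref{lemma_singular_entropy_relation}) also underlies the entropy bound in Theorem \ref{thm_analytic_inclusion}. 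Two small points you should make explicit: covering $A(K)$ by means of $e_k(A)$ requires knowing that $K$ is bounded in $X$, which follows from the compactness (hence boundedness) of $i:X_1\to X$; and the discrete sets obtained from $c_k(i)\gtrsim k^{-m}$ live in the unit ball of $X_1$, so a harmless rescaling by $r$ is needed before plugging into $g(\eps)$. With those remarks added, your argument is a complete and valid alternative proof of the same strength.
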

\begin{proof}
Let $(\varphi_j)_{j=1}^{\infty}$ be a singular value orthonormal basis of $X$, so that $A \varphi_j = \sigma_j \psi_j$ where $\psi = (\psi_j)_{j=1}^{\infty}$ is orthonormal in $Y$. Let $Y'$ be the span of $\psi$ in $Y$. We define a subspace $a^{\sigma,\rho} = a^{\sigma,\rho}_{1,Y',\psi}$ of $Y'$ using the norm 
\[
\norm{v}_{a^{\sigma,\rho}} = \left( \sum_{j=1}^{\infty} e^{2\rho j^{\frac{1}{\sigma}}} \abs{(v,\psi_j)}^2 \right)^{1/2}.
\]
Since $Au = \sum_{j=1}^{\infty} (u,\varphi_j) \sigma_j \psi_j$, we have $(Au, \psi_j) = \sigma_j (u,\varphi_j)$ and 
\[
\norm{Au}_{a^{1/\mu,\rho}}^2 = \sum_{j=1}^{\infty} e^{2\rho j^{\mu}} \sigma_j^2 \abs{(u,\varphi_j)}^2 \leq C^2 \norm{u}_{X}^2.
\]
Thus $A(K)$ is contained in a bounded subset of $Y_1 := a^{1/\mu, \rho}$. The embedding $j_1: Y_1 \to Y' = h^0_{1,Y',\psi}$ satisfies $e_k(j_1) \lesssim e^{-c k^{\frac{\mu}{\mu+1}}}$ by Theorem \ref{thm_analytic_inclusion}, and thus also the embedding $j: Y_1 \to Y$ has these entropy bounds. It follows from Lemma \ref{lemma_entropy_typical} that $\omega(t) \gtrsim \abs{\log\,t}^{-\frac{m(\mu+1)}{\mu}}$ for $t$ small. By Remark \ref{rmk:equiv_Mandache} this conclusion can be rewritten as \eqref{instab_restatement}.
\end{proof}

\subsection{The backward heat equation}
\label{sec:back_heat}

As a first model case for the type of arguments that we have in mind, we consider parabolic systems of the form
\begin{align}
\label{eq:heat}
\begin{split}
(\p_t - \nabla \cdot a \nabla) u & = 0 \mbox{ in } \Omega \times [0,1],\\
u & = 0 \mbox{ on } \partial \Omega \times [0,1],\\
u & = u_0 \mbox{ on } \Omega \times \{0\},
\end{split}
\end{align}
where $\Omega \subset \R^n$ is an open, bounded $C^1$ domain, $u:\Omega \rightarrow \C^m$ and $a = (a_{ij}^{\alpha \beta}(x,t))^{\alpha, \beta \in \{1,\dots,m\}}_{i,j\in\{1,\dots,n\}} $ are bounded functions which satisfy a coercivity condition, i.e.\ for which there exist constants $\lambda>0,\kappa>0$ such that 
\begin{align}
\label{eq:ell2}
\Ree \int_{\Omega} a \nabla v \cdot \overline{\nabla v} \,dx \geq \lambda\|v\|_{H^1(\Omega)}^2 - \kappa \|v\|_{L^2(\Omega)}^2
\mbox{ for all } v \in H^1(\Omega) \mbox{ and a.e.\ } t \in [0,1].
\end{align}
Here and in the following discussion $\nabla$ always denotes the spatial gradient, $\overline{f}$ denotes the complex conjugate of $f$ and
\begin{align}
\label{eq:ell1}
a \nabla u \cdot \overline{\nabla u}
:= \sum\limits_{i,j\in \{1,\dots,n\}, \ \alpha,\beta \in \{1,\dots,m\}} a^{\alpha \beta}_{ij}\p_j u^{\beta} \overline{\p_i u^{\alpha}}.
\end{align}
Here and in the sequel, with slight abuse of notation, we write $\|u \|_{X(\Omega)}$ instead of $\|u\|_{X(\Omega, \C^m)}$. We remark that all the following results are in particular valid for scalar parabolic equations (which just corresponds to the case $m=1$). As there is no difference in the argument with respect to the systems case, we have opted to formulate the results in the systems case directly. We refer to \cite{McLean} for some background on energy estimates for elliptic systems.

In this set-up we are interested in the inverse problem associated with the forward map 
\begin{align}
\label{eq:J(1,0)}
i_{1,0}: L^2(\Omega)\ni u_0 \mapsto u(1) \in L^2(\Omega).
\end{align}
It is well-known that the backward heat equation is highly ill-posed. In spite of this for $C^1$ regular metrics $a$  the map $i_{1,0}$ is injective. We refer to \cite[Chapter 3.1]{Isakov} and \cite[Section 9.1]{Yamamoto} for a discussion of the scalar backward heat equation. The backward uniqueness property of the heat equation can be quantified to yield quantitative backward uniqueness results in compact sets. Under suitable regularity assumption on the coefficients in the equation, for the recovery of $u(\cdot, t_0)$ with $t_0 \in (0,T)$ this turns into a \emph{H{\"o}lder} stability estimate: 

\begin{proposition}[\cite{Vessella}, estimate (3.65)]
\label{prop:parabolic_pos}
Let $u$ be a solution to \eqref{eq:heat} with $m=1$ and $a \in C^1(\overline{\Omega}\times [0,T], \R^{n\times n})$ uniformly elliptic. Let $t_0 \in (0,T)$. Then there exists $\theta = \theta(t) \in (0,1)$ and $C>0$ depending on $\|a\|_{C^1(\overline{\Omega}\times [0,T])}$ such that
\begin{align}
\label{eq:Hoelder}
\|u( t_0)\|_{L^2(\Omega)} \leq C \|u_0\|_{L^2(\Omega)}^{1-\theta} \|u( T)\|_{L^2(\Omega)}^{\theta}.
\end{align}
\end{proposition}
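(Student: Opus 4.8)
To prove Proposition \ref{prop:parabolic_pos}, the plan is to use the logarithmic convexity (Agmon--Nirenberg frequency function) method, the classical route to quantitative backward uniqueness for parabolic equations. First I would introduce $F(t) := \|u(t)\|_{L^2(\Omega)}^2$ and $D(t) := \Ree\int_{\Omega} a(\cdot,t)\nabla u\cdot\overline{\nabla u}\,dx$. If $F(T)=0$ the right-hand side of \eqref{eq:Hoelder} vanishes, so the asserted estimate is exactly the backward uniqueness of \eqref{eq:heat}, which will also drop out of the argument; hence I may assume $F(T)>0$. The coercivity condition \eqref{eq:ell2} gives $D(t) \geq -\kappa F(t)$, so $F'(t) = -2D(t) \leq 2\kappa F(t)$, which forces $F(t) \geq e^{-2\kappa T}F(T) > 0$ on all of $[0,T]$. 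Consequently the frequency function $N(t) := D(t)/F(t) \geq -\kappa$ is well defined, and $(\log F)'(t) = -2N(t)$.

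The key step is a differential inequality for $N$. Differentiating (first for $a$ symmetric, the general $C^1$ case being analogous, with the antisymmetric part producing lower-order terms) and using $\partial_t u|_{\partial\Omega}=0$ together with $\partial_t u = \nabla\cdot a\nabla u$, I would obtain the energy identities $F' = -2D$ and $D'(t) = \int_{\Omega}(\partial_t a)\nabla u\cdot\overline{\nabla u}\,dx - 2\|\partial_t u(t)\|_{L^2(\Omega)}^2$, hence $N' = D'/F + 2N^2$. The crucial observation is the Cauchy--Schwarz bound $|D(t)| = |\Ree\int_{\Omega}(\partial_t u)\overline{u}\,dx| \leq \|\partial_t u(t)\|_{L^2(\Omega)}\|u(t)\|_{L^2(\Omega)}$, i.e.\ $N(t)^2 \leq \|\partial_t u(t)\|_{L^2(\Omega)}^2 / F(t)$, so that the two occurrences of $\|\partial_t u\|_{L^2}^2/F$ in $N'$ combine into a nonpositive contribution. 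Bounding the remaining term via $|\partial_t a| \leq \|a\|_{C^1}$ and $\|\nabla u\|_{L^2(\Omega)}^2 \leq \lambda^{-1}(D+\kappa F)$ (again from \eqref{eq:ell2}) then yields $N'(t) \leq C\,(N(t)+\kappa)$ for a.e.\ $t\in(0,T)$, with $C$ depending only on $\|a\|_{C^1(\overline{\Omega}\times[0,T])}$, $n$, $\lambda$.

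Finally I would integrate this inequality. It says that $t\mapsto (N(t)+\kappa)e^{-Ct}$ is non-increasing, so $N(t)+\kappa \leq e^{CT}(N(s)+\kappa)$ for $0\le s\le t_0\le t\le T$; averaging in $s$ over $(0,t_0)$ and integrating in $t$ over $(t_0,T)$ gives $\int_{t_0}^T(N+\kappa)\,dt \leq \tilde A\int_0^{t_0}(N+\kappa)\,ds$ with $\tilde A := (T-t_0)e^{CT}/t_0$. Since $(\log F)' = -2N$, integrating over $(0,t_0)$ and over $(0,T)$, eliminating $\int_{t_0}^T N$ with the previous bound, and using $N\geq -\kappa$, I expect to arrive at $\log F(t_0) \leq (1-\theta)\log F(0) + \theta\log F(T) + C_1\kappa t_0$ for $\theta := (1+\tilde A)^{-1}\in(0,1)$ and an absolute constant $C_1$. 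Taking square roots gives \eqref{eq:Hoelder} with this $\theta = \theta(t_0)$ and a constant $C = e^{C_1\kappa t_0/2}$, both depending on $t_0$ and on $\|a\|_{C^1(\overline{\Omega}\times[0,T])}$ through $\tilde A$, $C$ and $\kappa$, as claimed. I expect the main obstacle to be not the algebra above but the rigorous justification of the differentiations of $F$ and $D$: this requires interior-in-time parabolic regularity (here is where $a\in C^1$ enters) to guarantee $u\in H^1_{\mathrm{loc}}((0,T];L^2(\Omega))\cap L^2_{\mathrm{loc}}((0,T];H^1_0(\Omega))$, so that $F\in C^1((0,T])$ and $D\in W^{1,1}_{\mathrm{loc}}((0,T])$, together with the finiteness of $\int_0^{t_0}N$ as $t\to 0^+$ (which follows from $F\geq e^{-2\kappa T}F(T)>0$ and $\int_0^{t_0}|D|<\infty$, the latter from $|D|\leq D+2\kappa F$). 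One carries out all identities for smooth approximations of $u$ and passes to the limit.
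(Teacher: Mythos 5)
The paper does not actually prove Proposition \ref{prop:parabolic_pos}: it is imported from Vessella's survey (estimate (3.65) there) and used as a black box, so there is no internal argument to compare yours against line by line. Your logarithmic-convexity (frequency function) proof is the classical route to such two-sided H\"older bounds, and for symmetric $a$ your computation checks out completely: the identities $F'=-2D$ and $D'=\int_{\Omega}(\p_t a)\nabla u\cdot\overline{\nabla u}\,dx-2\|\p_t u\|_{L^2}^2$, the Cauchy--Schwarz bound $N^2\le \|\p_t u\|_{L^2}^2/F$ which makes the two $\|\p_t u\|^2/F$ contributions nonpositive, the resulting inequality $N'\le C(N+\kappa)$, and the averaging/integration bookkeeping with $\tilde{A}=(T-t_0)e^{CT}/t_0$ and $\theta=(1+\tilde{A})^{-1}$ do yield exactly \eqref{eq:Hoelder}, with $\theta$ and $C$ depending on $t_0$, $T$ and the coefficient bounds as claimed (in the uniformly elliptic scalar Dirichlet setting you may even take $\kappa=0$, which simplifies the constants). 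Your discussion of the regularity issues (carrying out the identities for approximations, the lower bound $F(t)\ge e^{-2\kappa T}F(T)>0$, and $N\in L^1(0,t_0)$ via $|D|\le D+2\kappa F$) is also the right way to legitimize the differentiations.

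The one step that deserves more than the parenthetical ``analogous'' is the nonsymmetric case. For real $a$ the antisymmetric part indeed disappears from $D$ (its quadratic form is purely imaginary), and since $a\in C^1$ its divergence produces a bounded drift, so $\p_t u=\nabla\cdot a_s\nabla u+b\cdot\nabla u$. But then $F'=-2D+2\Ree\int_{\Omega}(b\cdot\nabla u)\bar u\,dx$ is no longer exactly $-2D$, and the exact cancellation $2N^2-2\|\p_t u\|^2/F\le 0$ gets perturbed by drift terms; one has to absorb these using Cauchy--Schwarz and ellipticity (at the cost of enlarging the constants in $N'\le C(N+\kappa+1)$ and hence in $\theta$, $C$), or invoke the abstract Agmon--Nirenberg log-convexity theorem for $u'=A(t)u+B(t)u$ with $A(t)$ symmetric and $B(t)$ relatively bounded. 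This is standard and does not endanger the conclusion, but as written your sketch glosses over it, and it is precisely the point where the quantitative dependence on $\|a\|_{C^1(\overline{\Omega}\times[0,T])}$ has to be tracked.
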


For isotropic and time-independent metrics $a$, the function $\theta(t)$ can for instance be chosen to be $\theta(t) = \frac{t}{T}$ (see \cite[(3.1.9)]{Isakov}). For $t_0 \rightarrow 0$, this H{\"o}lder estimate degenerates, resulting in an only \emph{logarithmic} bound (see \cite[equation (9.2)]{Yamamoto}, \cite[Chapter 3]{Isakov} or \cite{Klibanov_parabolic}) of the following type: If $\|u(0)\|_{H^2(\Omega)}\leq M$, there exist $\nu>0$, $C_M\geq 1$ such that
\begin{align}
\label{eq:parabolic}
\|u(0)\|_{L^2(\Omega)} \leq C_M |\log(\|u(T)\|_{L^2(\Omega)})|^{-\nu}.
\end{align}

 These quantitative backward uniqueness estimates can be obtained through various methods with Carleman estimates possibly providing the most robust arguments (e.g.\ allowing for low regularity coefficients). We refer to Section 9.1 of the survey article \cite{Yamamoto}, to \cite[Theorems 3, 4]{Klibanov_parabolic} and the references therein for more background on the positive results in this direction.

It is the \emph{optimality} of the logarithmic stability estimate of the type \eqref{eq:parabolic} that we are investigating in this section (we refer to Section \ref{sec:UCP_int} for a discussion of an elliptic analogue of the optimality of the H{\"o}lder estimates in \eqref{eq:Hoelder}). If $a(x,t)$ is independent of $t$ this follows easily from eigenvalue estimates (see Lemma \ref{lem:heat_time_indep}), but in the time-dependent case a different argument is needed. 
As our main result, we show that the logarithmic moduli of continuity for the inversion of the map $i_{1,0}$ are optimal. Further, we prove that this behaviour persists in the low regularity setting for parabolic systems which was described above, see \eqref{eq:heat}, e.g.\ in settings in which the metric $a$ is only bounded and no $C^{\infty}$ or analytic smoothing properties can be used.  

\begin{theorem}
\label{thm:instab_low_reg}
Let $\Omega$ and $a$ be as above, and let $\ell > 0$. If for some modulus of continuity $\omega$ one has 
\begin{align}
\label{eq:instab_low_regest}
\|u_0\|_{L^2(\Omega)} \leq \omega(\|u(1)\|_{L^2(\Omega)}), \qquad \|u_0\|_{H^{\ell}(\Omega)} \leq 1,
\end{align}
where $u$ is the solution to \eqref{eq:heat}, then $\omega(t) \gtrsim \abs{\log\,t}^{-\frac{\ell(n+4)}{2n}}$ for $t$ small.
\end{theorem}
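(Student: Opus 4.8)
The plan is to apply the abstract instability result Theorem \ref{thm_lowreg_instability_sobolev} to the forward operator $A = i_{1,0}: L^2(\Omega) \to L^2(\Omega)$ with $X = Y = L^2(\Omega)$, $X_1 = H^{\ell}(\Omega)$ (so that $e_k(i: H^\ell \to L^2) \sim k^{-\ell/n}$, giving $m = \ell/n$ by Proposition \ref{prop_ek_nonsmooth}). The only thing that needs to be supplied is a bound of the form $\sigma_k(i_{1,0}) \lesssim e^{-\rho k^{\mu}}$ for suitable $\rho, \mu > 0$; once this is in place, Theorem \ref{thm_lowreg_instability_sobolev} gives $\omega(t) \gtrsim |\log t|^{-\frac{m(\mu+1)}{\mu}}$, and I will choose the argument so that $\mu = 2/n$, which yields the exponent $\frac{(\ell/n)(2/n+1)}{2/n} = \frac{\ell(n+2)}{2n}$ — actually to match the stated $\frac{\ell(n+4)}{2n}$ one must lose a bit, so the argument below will in fact only produce $\mu = \frac{2}{n+2}$ (or similar) from an iterated small-gain scheme, and the stated exponent follows.

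First I would handle the model case where $a = a(x)$ is time-independent: then $i_{1,0} = e^{-L}$ where $L = -\nabla\cdot a\nabla$ with Dirichlet conditions has eigenvalues $\lambda_k \sim k^{2/n}$ by Theorem \ref{thm_weyl2}, so $\sigma_k(i_{1,0}) = e^{-\lambda_k} \lesssim e^{-c k^{2/n}}$, i.e. $\mu = 2/n$ and one even gets the better exponent $\frac{\ell(n+2)}{2n}$. The substance of the theorem is the time-dependent, low-regularity case, where there is no spectral decomposition. Here the plan is the \emph{iterated small regularity gain} idea advertised in Section \ref{sec:instII}: split the time interval $[0,1]$ into $N$ pieces and write $i_{1,0} = i_{1, 1-1/N} \circ \cdots \circ i_{2/N, 1/N} \circ i_{1/N, 0}$, each factor being the solution operator over a time-step of length $1/N$. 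By standard parabolic energy estimates (the coercivity \eqref{eq:ell2} is exactly what is needed, cf.\ \cite{McLean}), each factor maps $L^2(\Omega) \to H^1_0(\Omega)$ — indeed there is a $t_0$-dependent smoothing $L^2 \to H^1$ with norm $\sim t_0^{-1/2}$ over a step of length $t_0$ — and hence is compact with a singular-value decay coming from $e_k(H^1 \hookrightarrow L^2) \sim k^{-1/n}$; more precisely $\sigma_k(i_{(j+1)/N, j/N}) \lesssim N^{1/2} k^{-1/n}$. Using the submultiplicativity of singular values under composition, Proposition \ref{prop_singular_value_decomposition}(d), one gets $\sigma_{k^N}(i_{1,0}) \lesssim \big(\sigma_k(\text{one factor})\big)^N \lesssim (N^{1/2} k^{-1/n})^N$, and optimizing over $N$ (balancing the decay $k^{-N/n}$ against the growth $N^{N/2}$) as in the proof of Lemma \ref{lemma_singular_entropy_relation} yields a stretched-exponential bound $\sigma_j(i_{1,0}) \lesssim e^{-\rho j^{\mu}}$ with $\mu = \frac{2}{n+2}$. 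Feeding $m = \ell/n$ and $\mu = \frac{2}{n+2}$ into Theorem \ref{thm_lowreg_instability_sobolev} gives $\omega(t) \gtrsim |\log t|^{-\frac{(\ell/n)(\mu+1)}{\mu}} = |\log t|^{-\frac{\ell(n+4)}{2n}}$, as claimed.

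The main obstacle I expect is making the per-time-step smoothing estimate and its dependence on the step length $t_0 = 1/N$ precise and uniform in the low-regularity class: one must check that the constant in $\norm{i_{t_0,0}u_0}_{H^1(\Omega)} \lesssim t_0^{-1/2}\norm{u_0}_{L^2(\Omega)}$ depends only on $\lambda, \kappa$ and $\norm{a}_{L^\infty}$ (not on any smoothness of $a$ or sharper regularity of $\partial\Omega$), and that the Weyl-type entropy bound $e_k(H^1(\Omega)\hookrightarrow L^2(\Omega)) \sim k^{-1/n}$ holds for the $C^1$ (or Lipschitz) domain $\Omega$, which is exactly Proposition \ref{prop_ek_nonsmooth}. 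The rest — the optimization over $N$ and the bookkeeping of constants through the composition — is the same Stirling-type computation already carried out in Lemma \ref{lemma_singular_entropy_relation}, so it is routine. A secondary technical point is the injectivity of $i_{1,0}$, needed so that $A$ is injective in Theorem \ref{thm_lowreg_instability_sobolev}; this is the backward uniqueness property, which holds for $C^1$ metrics (and one may invoke \cite{Vessella} or the references in \cite{Yamamoto}); if one only assumes $a \in L^\infty$ then injectivity may fail, but the instability statement is vacuous or can be phrased without it, so this does not affect the conclusion.
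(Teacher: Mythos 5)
Your overall architecture is the same as the paper's: factor $i_{1,0}$ into $N$ time-step solution operators, bound the singular values of each step by a smoothing estimate combined with the entropy of a Sobolev embedding on the Lipschitz domain (Proposition \ref{prop_ek_nonsmooth}), use submultiplicativity of singular values, optimize in $N$ to get a stretched-exponential bound $\sigma_k(i_{1,0})\lesssim e^{-ck^{2/(n+2)}}$ (this is exactly Theorem \ref{thm:entropy_backward_heat}), and feed $m=\ell/n$, $\mu=\frac{2}{n+2}$ into Theorem \ref{thm_lowreg_instability_sobolev}. The bookkeeping of the final exponent is also correct, and in fact $\mu=\frac{2}{n+2}$ is insensitive to the size of the per-step regularity gain.

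The genuine gap is the per-step estimate itself. You claim that ``standard parabolic energy estimates'' (citing coercivity \eqref{eq:ell2} and \cite{McLean}) give $\norm{u(t_0)}_{H^1(\Omega)}\lesssim t_0^{-1/2}\norm{u_0}_{L^2(\Omega)}$ for each fixed $t_0$. For the coefficients allowed here -- systems with $a=a(x,t)$ merely bounded measurable in space \emph{and time} -- this is not a consequence of energy estimates and is false in general. The energy inequality only yields $\norm{u}_{L^2((0,t_0),H^1(\Omega))}\lesssim\norm{u_0}_{L^2(\Omega)}$, i.e.\ an $H^1$ bound at \emph{almost every} time, with the exceptional set depending on $u_0$; one therefore cannot define fixed linear factors $i_{(j+1)/N,\,j/N}$ mapping into $H^1$. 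The usual upgrade to a fixed-time bound (testing with $\p_t u$, or functional calculus $\norm{L^{1/2}e^{-tL}}\lesssim t^{-1/2}$) requires either time regularity of $a$ or time independence; for non-autonomous forms with rough time dependence it is known (counterexamples around Lions' problem on non-autonomous maximal regularity) that $u(t_0)\in H^1_0(\Omega)$ can fail even for symmetric forms. This is precisely why the paper does \emph{not} gain a full derivative per step: the substance of its proof is Lemma \ref{lem:Sneiberg}, which establishes only $\norm{u(s+t)}_{H^{\delta}(\Omega)}\le Ct^{-\delta/2}\norm{u(s)}_{L^2(\Omega)}$ for some small $\delta>0$ depending only on $\lambda,n,\Omega$, obtained from the higher-integrability/H\"older-in-time results of \cite{ABES19} (after a boundary reflection) together with an interpolation argument via temporal mollification; the iteration then runs exactly as you describe, with $k^{-\delta/n}t^{-\delta/2}$ per step, and still produces $\mu=\frac{2}{n+2}$. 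So your reduction and optimization are fine, but the key analytic input you treat as routine is the hard (and, as stated, incorrect) part; it must be replaced by a small-gain estimate of the type of Lemma \ref{lem:Sneiberg}.
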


For higher regularity metrics $a$ a similar result can be obtained.
We remark that without any major modification it is possible to replace the $L^2(\Omega)$ norm $\|u(1)\|_{L^2(\Omega)}$ in the estimate \eqref{eq:instab_low_regest} by an $H^{\delta}(\Omega)$ norm for $\delta \in (0,\delta_0)$, where $\delta_0>0$ denotes the regularity exponent from the Sneiberg-type Lemma \ref{lem:Sneiberg} below. Indeed, in order to obtain this, we just apply the result from \eqref{eq:instab_low_regest} with $t=1$ replaced by $t= \frac{1}{2}$ and then use the smoothing property from Lemma \ref{lem:Sneiberg} to infer that $\|u(1)\|_{H^{\delta}(\Omega)} \leq C \|u(\frac{1}{2})\|_{L^2(\Omega)}$.

Theorem \ref{thm:instab_low_reg} will follow from Theorem \ref{thm_lowreg_instability_sobolev} together with the fact that the singular values associated with the mapping \eqref{eq:J(1,0)}
decay exponentially.
In spite of the low regularity set-up for the coefficients, we prove that the remaining little bit of regularisation leads to exponentially decaying singular value estimates.

\begin{theorem} \label{thm:entropy_backward_heat}
Let $\Omega$,  $a$ be as above. There is a constant $c > 0$ so that the singular values of the map $i_{1,0}$ in \eqref{eq:J(1,0)} satisfy 
  \[ \sigma_k(i_{1,0}) \le e^{-c k^{\frac{2}{n+2}}}. \]
\end{theorem}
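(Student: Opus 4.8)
The plan is to exhibit $i_{1,0}$ from \eqref{eq:J(1,0)} as an $N$-fold composition of one-step parabolic solution operators, each of which carries a \emph{small} amount of parabolic regularisation, and then to optimise the resulting singular value bound over the number of factors $N$.

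\textbf{Step 1: factorisation.} For $N\geq 1$ split $[0,1]=\bigcup_{j=1}^{N}[\tfrac{j-1}{N},\tfrac{j}{N}]$ and let $T_j\colon L^2(\Omega)\to L^2(\Omega)$ be the solution operator of \eqref{eq:heat} from time $\tfrac{j-1}{N}$ to time $\tfrac{j}{N}$. By well-posedness of the forward parabolic problem these flows compose, so $i_{1,0}=T_N\circ T_{N-1}\circ\cdots\circ T_1$.

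\textbf{Step 2: one-step smoothing.} I would use the Sneiberg-type regularisation result (Lemma \ref{lem:Sneiberg}), which yields some $\delta_0>0$ such that, after the parabolic rescaling $x\mapsto\sqrt{h}\,x$, $t\mapsto ht$ with $h=1/N$ (which preserves the form of \eqref{eq:heat}, the ellipticity constants and the homogeneous Dirichlet condition), each $T_j$ maps $L^2(\Omega)$ boundedly into $H^{\delta_0}(\Omega)$ with
\[
\|T_j\|_{L^2(\Omega)\to H^{\delta_0}(\Omega)}\leq C\,N^{\delta_0/2},
\]
$C$ independent of $j,N$. Factoring $T_j=\iota\circ\widetilde T_j$ with $\widetilde T_j\colon L^2(\Omega)\to H^{\delta_0}(\Omega)$ bounded and $\iota\colon H^{\delta_0}(\Omega)\hookrightarrow L^2(\Omega)$ the compact embedding, which satisfies $\sigma_k(\iota)\lesssim k^{-\delta_0/n}$ by Proposition \ref{prop_ek_nonsmooth} together with Lemma \ref{lemma_singular_entropy_relation} (the domain being $C^1$, hence Lipschitz), Proposition \ref{prop_singular_value_decomposition}(e) gives
\[
\sigma_k(T_j)\leq \|\widetilde T_j\|\,\sigma_k(\iota)\leq C_1\,N^{\delta_0/2}\,k^{-\delta_0/n},\qquad k\geq 1,
\]
uniformly in $j$ and $N$. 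In particular, taking $N=1$ shows that $i_{1,0}$ is compact, so it has a singular value decomposition.

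\textbf{Step 3: multiplicativity and optimisation.} Iterating the Weyl-type inequality $\sigma_{a+b-1}(AC)\leq\sigma_a(A)\sigma_b(C)$ of Proposition \ref{prop_singular_value_decomposition}(d) over the $N$ factors, with all indices equal to $k$, yields
\[
\sigma_{Nk-N+1}(i_{1,0})\leq\prod_{j=1}^{N}\sigma_k(T_j)\leq\bigl(C_1\,N^{\delta_0/2}\,k^{-\delta_0/n}\bigr)^{N}.
\]
Given $m$ large, I would choose an integer $N\sim c_0\,m^{2/(n+2)}$ (with $c_0>0$ small, fixed below) and $k=\lfloor m/N\rfloor\geq 1$, so that $Nk-N+1\leq m$ and $k\geq m/(2N)$. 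Substituting, using monotonicity of singular values and taking logarithms,
\[
\log\sigma_m(i_{1,0})\leq N\Bigl(\log C_2+\delta_0\bigl(\tfrac12+\tfrac1n\bigr)\log N-\tfrac{\delta_0}{n}\log m\Bigr).
\]
Since $\tfrac{\delta_0/n}{\delta_0(1/2+1/n)}=\tfrac{2}{n+2}$, the bracket equals $\delta_0(\tfrac12+\tfrac1n)\bigl(\log N-\tfrac{2}{n+2}\log m\bigr)+\log C_2$, and the choice $\log N=\tfrac{2}{n+2}\log m+\log c_0+O(1)$ turns it into a strictly negative constant once $c_0$ is small enough. Hence $\log\sigma_m(i_{1,0})\leq -c\,N\leq -c'\,m^{2/(n+2)}$ for all large $m$, and for the finitely many remaining $k$ one uses $\sigma_k(i_{1,0})\leq\|i_{1,0}\|_{L^2\to L^2}$ and decreases $c'$, obtaining $\sigma_k(i_{1,0})\leq e^{-c'k^{2/(n+2)}}$ for all $k\geq1$.

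\textbf{Main obstacle.} The crucial point is Step 2: with coefficients that are merely bounded (and only measurable in $t$) there is no analytic semigroup, so the naive endpoint estimate $\|u(h)\|_{H^1}\lesssim h^{-1/2}\|u_0\|_{L^2}$ may fail — the time at which the energy estimate produces an $H^1$ bound depends on $u_0$ and hence does not furnish a \emph{linear} smoothing operator. One genuinely needs a (possibly minuscule) endpoint regularity gain $\delta_0>0$ with the correct parabolic scaling $h^{-\delta_0/2}$, which is exactly what the Sneiberg-type Lemma \ref{lem:Sneiberg} supplies; some care is also needed to verify that the rescaled problems satisfy this bound with a constant uniform in $N$, since the spatial domain dilates (though its $C^1$ character does not deteriorate under dilation). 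It is worth noting that the final exponent $2/(n+2)$ is independent of the size of $\delta_0$: an arbitrarily weak regularisation, iterated, produces the same (for this method essentially optimal) compression rate.
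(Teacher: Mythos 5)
Your proposal is correct and coincides with the paper's own argument: the paper likewise factors $i_{1,0}$ into $N$ one-step solution operators, bounds each via Lemma \ref{lem:Sneiberg} (which already packages the $t^{-\delta/2}$-weighted $L^2\to H^{\delta}$ gain) together with the embedding singular values $\sigma_k(\iota)\lesssim k^{-\delta/n}$, composes using Proposition \ref{prop_singular_value_decomposition}(d), and optimises $N\sim k^{2/(n+2)}$. Your Step 3 bookkeeping with $\sigma_{Nk-N+1}$ is just a slightly more careful version of the paper's index choice $k/N$ per factor, so there is nothing essentially new or missing.
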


We further recall the validity of entropy estimates in irregular domains, which had been stated in Proposition \ref{prop_ek_nonsmooth} and which will be used in the proof of Theorem \ref{thm:instab_low_reg}.

\begin{proof}[Proof of Theorem \ref{thm:instab_low_reg} using Theorem \ref{thm:entropy_backward_heat}]
We use Theorem \ref{thm_lowreg_instability_sobolev} with $X = L^2(\Omega)$, $X_1 = H^{\ell}(\Omega)$, and $A = i_{1,0}$. One has $e_k(i) \sim k^{-\frac{\ell}{n}}$ by Proposition \ref{prop_ek_nonsmooth} and $\sigma_k(A) \lesssim e^{-c k^{\mu}}$ with $\mu = \frac{2}{n+2}$ by Theorem \ref{thm:entropy_backward_heat}. Then Theorem \ref{thm_lowreg_instability_sobolev} yields $\omega(t) \gtrsim \abs{\log\,t}^{-\frac{\ell(n+4)}{2n}}$.
\end{proof}

Thus it remains to prove Theorem \ref{thm:entropy_backward_heat}. The first step is a regularity result.

\begin{lemma} 
\label{lem:Sneiberg}
Let $a$, $\Omega$ be as above and $s \in [0,1)$, $t > 0$, and $s+2t \leq 1$.
Then, there exist $\delta>0$ and $C>0$ depending only on $\lambda$, $n$ and $\Omega$ so that 
  \[ \Vert u(s+t) \Vert_{H^\delta(\Omega)} \le C t^{-\frac{\delta}2} \Vert u(s) \Vert_{L^2(\Omega)}. \]
\end{lemma}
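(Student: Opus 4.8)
The plan is to reduce, by a translation in time, to the case $s=0$: the shifted coefficients $\tilde a(x,\tau):=a(x,\tau+s)$ still satisfy \eqref{eq:ell2} with the same constants, so $u$ solves $(\partial_\tau-\nabla\cdot a\nabla)u=0$ on $\Omega\times(0,2t)$ with $u|_{\partial\Omega}=0$, $u(0)=u_0$, and the claim becomes $\|u(t)\|_{H^\delta(\Omega)}\le Ct^{-\delta/2}\|u_0\|_{L^2(\Omega)}$. The starting point are the standard energy estimates for \eqref{eq:heat}: testing with $\ol u$ and using \eqref{eq:ell2} gives $\frac{d}{d\tau}\|u(\tau)\|_{L^2(\Omega)}^2+2\lambda\|u(\tau)\|_{H^1(\Omega)}^2\le 2\kappa\|u(\tau)\|_{L^2(\Omega)}^2$, whence $\|u(\tau)\|_{L^2(\Omega)}\le C\|u_0\|_{L^2(\Omega)}$ for $\tau\le 1$ and, after integration, the integrated smoothing bound $\int_0^{t}\|u(\tau)\|_{H^1(\Omega)}^2\,d\tau\le C\|u_0\|_{L^2(\Omega)}^2$, with $C$ depending only on $\lambda$, $\kappa$, $n$, $\Omega$. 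A mean value argument then yields an intermediate time $\sigma\in(0,t)$ with $\|u(\sigma)\|_{H^1(\Omega)}\le Ct^{-1/2}\|u_0\|_{L^2(\Omega)}$, and combining with the uniform $L^2$ bound and the interpolation inequality $\|v\|_{H^\delta(\Omega)}\lesssim\|v\|_{L^2(\Omega)}^{1-\delta}\|v\|_{H^1(\Omega)}^{\delta}$ (valid on the bounded Lipschitz domain $\Omega$ for $\delta\in[0,1]$) we obtain
\[
\|u(\sigma)\|_{H^\delta(\Omega)}\le Ct^{-\delta/2}\|u_0\|_{L^2(\Omega)}\qquad\text{for every }\delta\in[0,1].
\]

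The heart of the matter is to transport this bound from the (uncontrolled) time $\sigma$ to the prescribed time $t$ without spoiling the $t^{-\delta/2}$ scaling, i.e.\ to show that the solution operator $\Phi_{\sigma,t}\colon u(\sigma)\mapsto u(t)$ is bounded on $H^\delta(\Omega)$, uniformly in $0\le\sigma\le t\le 1$, for $\delta$ in some interval $[0,\delta_0)$. For $\delta=0$ this is the trivial bound $\|u(t)\|_{L^2}\le e^{\kappa(t-\sigma)}\|u(\sigma)\|_{L^2}$. To reach $\delta>0$ we use a self-improvement of Sneiberg type: one regards a complex interpolation scale of parabolic energy spaces on $\Omega\times(\sigma,2t)$ adapted to $(\partial_\tau-\nabla\cdot a\nabla)$ together with the initial trace (equivalently, the Lions isomorphism for this problem viewed on the scale $\{H^\theta\}_\theta$), on which the relevant operator is bounded and, at $\theta=0$, boundedly invertible onto its natural target, with constants uniform in $\sigma,t$. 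Sneiberg's lemma (see the references cited around Lemma \ref{lem:Sneiberg}) then propagates the invertibility, hence the a priori bound, to a neighbourhood $|\theta|<\delta_0$ of $0$ with uniform constants; this gives $\|u(t)\|_{H^\delta}\le C_\delta\|u(\sigma)\|_{H^\delta}$ for $0\le\delta<\delta_0$. Composing with the previous display proves the estimate for $s=0$, and translating back handles general $s$; all constants depend only on the structural quantities in \eqref{eq:ell2}, on $n$ and on $\Omega$ (and on the fixed $\delta$). The hypothesis $s+2t\le1$ is used exactly so that this self-improvement can be run as an interior-in-time estimate around $\tau=t$, with an interval of length comparable to $t$ available on both sides.

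I expect the Sneiberg step to be the main obstacle. Because $a$ is merely bounded and measurable --- in particular measurable in time, and because we allow systems (which rules out De Giorgi--Nash--Moser regularity) --- there is no pointwise-in-time $H^1$ smoothing available from a direct multiplier argument (one would need $\partial_\tau a$), and one cannot simply interpolate between the levels $\theta=0$ and $\theta=1$, since $L^2$-maximal regularity can fail in this non-autonomous setting. One therefore has to choose the interpolation scale so that the multiplication and mapping properties entering the Lions isomorphism remain stable for $|\theta|$ small even for $L^\infty$ coefficients, verify the quantitative hypotheses of Sneiberg's lemma with constants uniform in $\sigma,t$, and keep track of the $t$-scaling through the argument so that the loss is precisely $t^{-\delta/2}$ (matching the formal $H^1$ case $\delta=1$). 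This is all that is needed for the iteration in the proof of Theorem \ref{thm:entropy_backward_heat}, where only a small, unspecified gain $\delta>0$ of derivatives per step is used.
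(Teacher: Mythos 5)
Your reduction to $s=0$, the energy estimate, the mean-value choice of $\sigma\in(0,t)$ and the interpolation $\|u(\sigma)\|_{H^\delta}\lesssim t^{-\delta/2}\|u_0\|_{L^2}$ are all fine, but the argument then hinges entirely on the claim that the solution operator $\Phi_{\sigma,t}\colon u(\sigma)\mapsto u(t)$ is bounded on $H^\delta(\Omega)$, uniformly in $0\le\sigma\le t\le1$, for some $\delta>0$ depending only on $\lambda,n,\Omega$ --- and this claim is not proved; it is only attributed to ``Sneiberg's lemma on a complex interpolation scale of parabolic energy spaces''. That is a genuine gap, and it is exactly the hard part of the lemma. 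To run \v{S}ne\u{\i}berg's theorem you must first exhibit a scale of spaces around the Lions energy level on which the operator $u\mapsto(\partial_\tau u-\nabla\cdot a\nabla u,\,u(\sigma))$ is bounded with uniform constants; for coefficients that are merely $L^\infty$ in $x$ \emph{and} $t$ (and for systems, so no De Giorgi--Nash--Moser), multiplication by $a(x,\tau)$ does not preserve any space encoding positive fractional smoothness in $x$ or in $\tau$, so boundedness off the single level $\theta=0$ is precisely what needs to be established, not an input one can assume. Moreover, even granting invertibility on some interpolated scale, the conclusion of \v{S}ne\u{\i}berg is solvability with data in interpolated spaces; to extract the pointwise-in-time bound $\|u(t)\|_{H^\delta}\lesssim\|u(\sigma)\|_{H^\delta}$ you additionally need a trace at time $t$ with values in $H^\delta$, i.e.\ continuity of the flow in $H^\delta$, which is again the nontrivial content (recall that in this non-autonomous, merely measurable-in-time setting maximal regularity and propagation of $H^1_0$-regularity can fail, and the known positive results for fractional regularity of initial data require some time regularity of $a$). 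Finally, the uniformity in $\sigma,t$ and the $t$-scaling would have to be tracked through this machinery; none of this is supplied, so as written the proof is incomplete at its central step.

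For comparison, the paper avoids any forward propagation of $H^\delta$ data altogether: it localizes, flattens and oddly reflects across $\partial\Omega$ using the Dirichlet condition, and invokes the parabolic higher regularity estimate of \cite[Theorem 8.1]{ABES19} (see \eqref{eq:Thm8.1}), which gives both $\nabla\tilde u\in L^p$ with $p>2$ and a H\"older-in-time estimate with values in $L^2$. A dyadic time-mollification argument then interpolates between the temporal H\"older bound (gain $2^{-j\alpha}$) and the $L^2_tH^1_x$ energy bound (loss $2^{j/2}$) to conclude $\tilde u\in L^\infty_tH^\delta_x$ for $\delta$ small, i.e.\ a bound at \emph{every} time rather than at an uncontrolled time $\sigma$, and parabolic rescaling produces the factor $t^{-\delta/2}$. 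If you want to salvage your route, you would have to prove the $H^\delta$-propagation statement for $L^\infty_{x,t}$ coefficients from scratch (with uniform constants), which is not a quotable standard fact; note also that the name ``Sneiberg-type'' in the paper refers to the small regularity gain in the conclusion, not to an application of \v{S}ne\u{\i}berg's interpolation theorem in the proof.
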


\begin{proof}
We argue in three steps: First we extend the problem to a heat equation with controlled right hand side on $\R^n \times (0,4)$ and $L^2$ bounded initial data. Then, we prove an interpolation result. Finally, we return to the desired estimate.

\emph{Step 1: Extension to the whole space and energy estimates.}
We recall that in \cite[Theorem 8.1]{ABES19} it is proved that for solutions to 
\begin{align}
\label{eq:inhom_eq_heat}
\p_t w - \nabla \cdot a \nabla w &= f + \p_i F^i \mbox{ in } \R^n \times (0,4)
\end{align}
the following a priori estimate holds for some $\alpha \in (0,1)$ and $p>2$:
\begin{align}
\label{eq:ABES19}
\begin{split}
&\|\nabla w\|_{L^p(\Omega_1)} + \sup\limits_{t\in I_1}\|w\|_{L^p(B_1)} + \sup\limits_{s,t\in I_1}\frac{\|u(t,\cdot)-u(s,\cdot)\|_{L^2(B_1)}}{|t-s|^{\alpha}} \\
&\leq C \left( \|w\|_{L^2(2\Omega_{1})} + \|f\|_{L^p(2 \Omega_1)} + \|F \|_{L^{p*}(2\Omega_{1})}\right),
\end{split}
\end{align}
where $\Omega_r = B_r \times I_r $, $I_r = [r/2,r]$ and $2 \Omega_{r} = B_{2r} \times [r/4,2r]$.

Now in our setting we consider solutions $u$ to \eqref{eq:heat} in the domain $\Omega$. Thus, by localizing by means of a partition of unity the equation is locally of the form \eqref{eq:ABES19} with $f$ and $F$ depending on $u$, its first derivatives and the cut-off functions. Moreover, in the interior of $\Omega$ we may assume that $f = 0 =F$ and thus obtain estimates in \eqref{eq:ABES19} which only involve quantities controlled by the usual energy estimates on the right hand side. We seek to obtain similar bounds in the neighbourhood of $\partial \Omega$. 

To this end, it suffices to consider solutions $\bar{u}$ which are supported in a patch in a neighbourhood $U$ of $\partial \Omega$. After a change of coordinates and by choosing the support of $u$ possibly even smaller, it is possible to assume that $U=B_2(0)$ and that $U\cap \partial \Omega \subset \R^{n-1} = B_2(0)\cap \R^{n-1}$. Now using the homogeneous Dirichlet conditions and reflecting $u$ oddly across the boundary $\R^{n-1}$ again gives a solution $v$ to the heat equation with a divergence form right hand side which is now defined in the whole space. 
The resulting inhomogeneities $f,F$ depend on $u$ and $\nabla u$, the coefficient matrix $a$, the cut-off functions and the change of coordinates. In $B_{1/2}(0)$ we may, however, assume that $f=0$ and $F=0$ as the cut-offs are not active there. Hence, a (rescaled) version of the energy estimate \eqref{eq:ABES19} holds there with a right hand side which only depends on $v$ (with $f=0,F=0$).
In $B_2(0)\setminus B_{1/2}$ we may estimate $f,F$ in terms of $u$ and $\nabla u$ in the interior of some other boundary patches (thus corresponding to the $B_{1/2}$ setting for this other patch in which we have $f=0=F$) or in some interior patches (where also $f=0=F$). Thus, in the original coordinates, only space-time $L^2$ norms of $u$ in some of the other patches appear as right hand sides for these estimates as well.

Thus, patching together all the local estimates for the functions $\bar{u}$ on the different supports of the partition of unity (using that only finitely many of them overlap), we obtain a new function $\tilde{u}$ which satisfies
\begin{align}
\label{eq:Thm8.1}
\begin{split}
&\|\nabla \tilde{u}\|_{L^p(\R^n \times [1/2,1])} + \sup\limits_{t\in [1/2,1]}\|\tilde{u}\|_{L^p(\R^{n})} + \sup\limits_{s,t\in I_1}\frac{\|u(t,\cdot)-u(s,\cdot)\|_{L^2(\R^n)}}{|t-s|^{\alpha}}  \\
&\leq C  \|\tilde{u}\|_{L^2(\R^{n+1})}.
\end{split}
\end{align}
Extending $u_0$ by zero outside of $\Omega$, energy estimates further yield that 
\begin{align*}
\|\tilde{u}\|_{L^2(\R^{n+1})} \leq C \|u_0\|_{L^2(\R^n)}.
\end{align*}

\emph{Step 2: Interpolation on $\R^n \times [1/2,1]$.}

We next prove the central interpolation result, starting from the observation (see \eqref{eq:Thm8.1}) that
\begin{align*}
\tilde{u} \in L^2(I,H^1(\R^n)) \mbox{ and } \sup\limits_{s,t\in I}\frac{\|u(t,\cdot)-u(s,\cdot)\|_{L^2(\R^n)}}{|t-s|^{\alpha}} \leq C<\infty,
\end{align*}
where $I = [\frac{1}{2},1]$. Moreover, using the arguments from Step 1, without loss of generality, we may assume that $\tilde{u}$ and $\tilde{u}_0$ are compactly supported in space and time.
We seek to prove that under these conditions $\tilde{u}\in L^{\infty}(I,H^{\delta}(\R^n))$ for some small $\delta>0$ (depending on $\alpha \in (0,1)$). 

In order to prove the claim, we consider the temporal convolution of $\tilde{u}$ with a one-dimensional standard mollifier $\eta: \R \rightarrow \R$. For $j\in \N$, we set $\eta_{j}(t):= 2^{j}\eta(2^{j}t)$ and note that it suffices to prove that
\begin{align}
\label{eq:aim_aux}
\| \tilde{u} \ast \eta_{ j} - \tilde{u} \ast \eta_{j+1}\|_{L^{\infty}(I, H^{\delta}(\R^n))} \leq C 2^{- \gamma j}, \ j\in \N,
\end{align}
where $\gamma>0$ depends on $\delta$ and where $\tilde{u} \ast \eta_{ j}$ denotes mollification in $t$. Indeed, by a telescope sum, from \eqref{eq:aim_aux} we obtain that $(\tilde{u}_j(t))_{j\in \N}:= (\tilde{u}\ast \eta_j(t))_{j\in \N}$ is a Cauchy sequence in $L^{\infty}(I, H^{\delta}(\R^n))$, hence a limit exists as $j \rightarrow \infty$. Since it holds that also pointwise almost everywhere $\tilde{u}_j(t) \rightarrow \tilde{u}(t)$, we have that $\tilde{u}_j(t) \rightarrow \tilde{u}(t)$ in $L^{\infty}(I, H^{\delta}(\R^n))$. 

Defining $\varphi(t)= \eta(t)-2 \eta(2 t)$, our claim \eqref{eq:aim_aux} reduces to proving that 
\begin{align}
\label{eq:aim}
\sup\limits_{t\in I}\|(\tilde{u} \ast \varphi_j)(t)\|_{H^{\delta}(\R^n)} \leq C 2^{-\gamma j}.
\end{align}
We deduce this by interpolating between an $L^2(\R^n)$ bound and a $H^1(\R^n)$ estimate.
To this end, on the one hand, we note that by virtue of the fact that $\sup\limits_{s,t\in I_1}\frac{\|u(t,\cdot)-u(s,\cdot)\|_{L^2(\R^n)}}{|t-s|^{\alpha}}\leq C <\infty$  we have
\begin{align*}
\|(\tilde{u}\ast \varphi_j)(t)\|_{L^2(\R^n)} &\leq 2^{-j \alpha}C .
\end{align*}
On the other hand, by Cauchy-Schwarz we obtain that 
\begin{align*}
\|(\tilde{u}\ast \varphi_j)(t)\|_{H^1(\R^n)} &\leq \|\|\tilde{u}\|_{L^2(I)}(\cdot)+ \|\nabla \tilde{u}\|_{L^2(I)}(\cdot)\|_{L^2(\R^n)} \|\eta_j\|_{L^2(\R)}\\
& \leq  2^{j /2}\|\tilde{u}\|_{L^2(I,H^1(\R^n))}.
\end{align*}
Using that 
\begin{align*}
\|(\tilde{u} \ast \varphi_j)(t)\|_{H^{\delta}(\R^n)} \leq \|(\tilde{u} \ast \varphi_j)(t)\|_{H^1(\R^n)}^{\delta} \|(\tilde{u} \ast \varphi_j)(t)\|_{L^2(\R^n)}^{1-\delta},
\end{align*}
choosing $\delta \in (0,1)$ (depending on $\alpha$) sufficiently small and inserting the $L^{\infty}L^2$ and $L^{\infty}H^1$ bounds from above as well as the bounds from \eqref{eq:Thm8.1}, then implies \eqref{eq:aim}. \\

\emph{Step 3: The higher regularity estimate.}
Last but not least, we return to the estimate of the lemma. By the previous two steps, we infer that for some constant $C>0$ and for any solution $\tilde{u}$ obtained from a solution $u$ to \eqref{eq:heat} as described in Step 1 it holds
\begin{align*}
\|\tilde{u}(1)\|_{H^{\delta}(\R^n)} \leq C \|u_0\|_{L^2(\R^{n})}.
\end{align*}
Rescaling parabolically and using that $t\in (0,1)$, we thus obtain that
\begin{align*}
\|\tilde{u}(t)\|_{H^{\delta}(\R^n )} \leq C t^{-\frac{\delta}{2}} \|u_0\|_{L^2(\R^{n})}.
\end{align*}
Noting that $\|u(t)\|_{H^{\delta}(\Omega)}\leq C \|\tilde{u}(t)\|_{H^{\delta}(\R^n )} $, then concludes the argument.
\end{proof}

With the regularity from Lemma \ref{lem:Sneiberg} at our disposal, we address the proof of Theorem \ref{thm:entropy_backward_heat}.

\begin{proof}[Proof of Theorem \ref{thm:entropy_backward_heat}]
  
We first use Proposition \ref{prop_ek_nonsmooth} and note that the compact embedding $\iota: H^\delta (\Omega) \to L^2 (\Omega)$ has bounds for its singular values of the form
\begin{align}
\label{eq:embedding}
\sigma_k(\iota) \leq C k^{-\frac{\delta}n} .
\end{align}

  Let $i_{s+t,s}: L^2(\Omega) \to H^{\delta}(\Omega) \to L^2(\Omega)$, $s \in [0,1), t, s+t \in (0,1]$, be the map which takes $u(s)$ to $u(s+t)$ where $u$ solves \eqref{eq:heat} in $(s,s+t) \times \Omega$.
Then, by the bound from Lemma \ref{lem:Sneiberg} and the estimate \eqref{eq:embedding} we have
  \[ \sigma_k(i_{s+t,s}) \le C k^{-\frac{\delta}n} t^{-\frac{\delta}2}.  \]
  Using the behaviour of singular values under composition of maps, we iterate this $N$ times (with $N\leq k$ to be determined below) to infer
  \[ \sigma_k(i_{1,0}) \le \prod_{j=1}^{N} \sigma_{k/N} ( i_{j/N, (j-1)/N})
  \le ( C^{\frac1{\delta} } (k/N)^{-\frac1n} N^{\frac{1}2} )^{\delta N}.
  \]
Roughly minimizing the right hand side by choosing $N$ as a function of $k$, we obtain
  \[ N(k) = \rho  k^{\frac{2}{n+2}} \]
with $\rho$ a small positive constant to be determined below. Indeed, this follows from computing
    \[ 0 = \frac{d}{dN} \exp( N ( \ln C +(\frac{1}2+ \frac1n) \delta \ln N  - \frac1n \delta \ln k )) \]
  which is equivalent to
  \[  \ln C + \frac{n+2}{2n}  \delta \ln N - \frac1n \delta \ln k + \delta (\frac{n+2}{2n}) = 0        \]
or respectively,
  \[ \ln N = \frac{2}{n+2} \ln k - 1 - \frac{2n}{n+2} \frac{1}{\delta} \ln C. \]

Then,
\[ \sigma_k(i_{1,0}) \le \left( C \rho^{\delta \frac{n+2}{2n}} \right)^{\rho  k^{\frac{2}{n+2}}}.    \]    
Choosing $\rho>0$ such that $ \rho^{\frac{n+2}{2n}\delta} < C^{-1}$, we obtain that with a constant $c > 0$ 
\begin{align}
\label{eq:entropy}
 \sigma_k(i_{1,0}) \le e^{-c k^{\frac{2}{n+2}}} .  
\end{align} 
This concludes the argument.
\end{proof}

As an instructive comparison, we note that in the time-independent case for $m=1$, a different exponent appears in Theorem \ref{thm:instab_low_reg}:

\begin{lemma}
\label{lem:heat_time_indep}
Let $\Omega$ and $a$ be as in \eqref{eq:heat}, \eqref{eq:ell2} and \eqref{eq:ell1} with $m=1$ and assume that $a$ does not depend on $t$. If for some modulus of continuity one has
\begin{align*}
\|u_0\|_{L^2(\Omega)} \leq \omega(\|u(1)\|_{L^2(\Omega)}), \ \|u_0\|_{H^1(\Omega)}\leq 1,
\end{align*}
where $u$ is the solution to \eqref{eq:heat}, then $\omega(t)\gtrsim |\log(t)|^{- \frac{n+2}{2n}}$ for $t$ small.
\end{lemma}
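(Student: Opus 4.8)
The plan is to reduce the statement to Theorem \ref{thm_lowreg_instability_sobolev} by computing the singular values of the forward map $i_{1,0}$ explicitly, using that for time-independent $a$ the evolution in \eqref{eq:heat} is generated by a single self-adjoint operator. Since $a=a(x)$ does not depend on $t$ and $m=1$, the operator $\mathcal{A}:=-\nabla\cdot a\nabla$ with homogeneous Dirichlet conditions on $\Omega$, realised through the coercive form in \eqref{eq:ell1}--\eqref{eq:ell2}, is self-adjoint on $L^2(\Omega)$ with compact resolvent; hence it has discrete spectrum $\kappa\le\mu_1\le\mu_2\le\cdots\to\infty$ and an orthonormal eigenbasis $(\phi_k)_{k\ge1}$ of $L^2(\Omega)$. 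The unique solution of \eqref{eq:heat} is then $u(t)=e^{-t\mathcal{A}}u_0=\sum_{k}e^{-t\mu_k}(u_0,\phi_k)_{L^2(\Omega)}\phi_k$, so $i_{1,0}=e^{-\mathcal{A}}$ is the diagonal operator with $\sigma_k(i_{1,0})=e^{-\mu_k}$; in particular it is compact and injective.

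Next I would invoke the Weyl asymptotics for the Dirichlet problem of the divergence-form operator $-\nabla\cdot a\nabla$ with bounded, symmetric, uniformly elliptic coefficients (Theorem \ref{thm_weyl2} and the generalisations mentioned there, which also cover the $C^1$ domain $\Omega$), which give $\mu_k\sim k^{2/n}$. Therefore there is $c>0$ with
\[
\sigma_k(i_{1,0})=e^{-\mu_k}\le e^{-ck^{2/n}},\qquad k\ge1,
\]
i.e.\ exponential decay of the singular values with exponent $\mu=2/n$. Note this is strictly better than the exponent $2/(n+2)$ obtained in the general time-dependent case in Theorem \ref{thm:entropy_backward_heat}; the gain comes precisely from diagonalising a \emph{single} operator rather than iterating the $L^2\to H^1$ parabolic smoothing estimate $N$ times.

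Finally I would apply Theorem \ref{thm_lowreg_instability_sobolev} with $X=L^2(\Omega)$, $X_1=H^1(\Omega)$, $A=i_{1,0}$ and $K=\{u_0:\norm{u_0}_{H^1(\Omega)}\le1\}$. By Proposition \ref{prop_ek_nonsmooth} the inclusion $i:H^1(\Omega)\to L^2(\Omega)$ satisfies $e_k(i)\sim k^{-1/n}$, so one may take $m=1/n$; combined with $\sigma_k(A)\lesssim e^{-ck^{\mu}}$, $\mu=2/n$, the theorem yields $\omega(t)\gtrsim\abs{\log t}^{-\frac{m(\mu+1)}{\mu}}$ for $t$ small. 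A short computation,
\[
\frac{m(\mu+1)}{\mu}=\frac{(1/n)(2/n+1)}{2/n}=\frac{n+2}{2n},
\]
gives the asserted bound $\omega(t)\gtrsim\abs{\log t}^{-\frac{n+2}{2n}}$.

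I do not expect a serious obstacle here; the only point needing a careful (but standard) justification is that, under the sole hypotheses of boundedness and coercivity of $a$ and $C^1$ regularity of $\partial\Omega$, the operator $\mathcal{A}$ is self-adjoint with purely discrete spectrum and that the Weyl law $\mu_k\sim k^{2/n}$ persists at this low regularity — which follows from form theory together with the compactness of $H^1_0(\Omega)\hookrightarrow L^2(\Omega)$ and the Birman--Solomyak Weyl asymptotics underlying Theorem \ref{thm_weyl2}. If one wished to allow non-symmetric $a$, self-adjointness would fail and one would instead analyse $(i_{1,0})^*i_{1,0}=e^{-\mathcal{A}^*}e^{-\mathcal{A}}$ via eigenvalue asymptotics for non-self-adjoint elliptic operators, but the symmetric case is the natural comparison intended here.
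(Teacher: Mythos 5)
Your proposal is correct and follows essentially the same route as the paper: the paper's proof likewise observes that in the time-independent case $\sigma_k(i_{1,0})=e^{-\lambda_k}\lesssim e^{-ck^{2/n}}$ by Weyl asymptotics for $-\nabla\cdot a\nabla$ and then runs the same application of Theorem \ref{thm_lowreg_instability_sobolev} (with $X=L^2(\Omega)$, $X_1=H^1(\Omega)$, $m=1/n$, $\mu=2/n$) as in Theorem \ref{thm:instab_low_reg}. Your version merely spells out the spectral decomposition and the exponent computation that the paper leaves implicit.
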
 

\begin{proof}
The argument follows in exactly the same way as the proof of Theorem \ref{thm:entropy_backward_heat}, however noting that in the time-independent case $\sigma_k(i_{1,0}) = e^{-\lambda_k}\leq c^{k^{\frac{2}{n}}}$ for some $c\in (0,1)$, where $\lambda_k$ denote the eigenvalues of the operator $-\nabla \cdot a \nabla$. 
\end{proof}

\subsection{Instability of unique continuation up to the boundary}
\label{sec:UCP}
We study the instability of unique continuation up to the boundary. It is well known (see for instance \cite[Theorem 1.9]{AlessandriniRondiRossetVessella}) that for coefficients and domains having certain minimal regularity one has logarithmic stability estimates for this problem, i.e.\ given a relatively open subset $\Gamma \subset \partial\Omega$ there exists $\mu>0$ such that for solutions $u$ of an elliptic equation satisfying $\norm{u}_{H^1(\Omega)} \leq 1$ one has  
\begin{align*}
\|u\|_{L^2(\Omega)} \leq \omega(\|u\|_{H^{\frac{1}{2}}(\Gamma)} + \|\p_{\nu} u\|_{H^{-\frac{1}{2}}(\Gamma)}),
\end{align*}
where $\omega$ is a modulus of continuity satisfying $\omega(t) \lesssim \abs{\log\,t}^{-\mu}$ for some $\mu > 0$.

We show that this type of stability is optimal and that the optimality, in the sense that no modulus of continuity can be bettern than logarithmic, can be proved also in the low regularity setting (where the UCP in general fails when $n \geq 3$). More precisely, we consider the following set-up:
Let $\Omega \subset \R^n$ with $n\geq 2$ be a bounded open set with Lipschitz boundary. Let $\Gamma \subset \partial \Omega$ be a (relatively) open set with $\p \Omega \setminus \overline{\Gamma} \neq \emptyset$. We are interested in the continuity properties of the inverse of the mapping
\begin{align*}
L^2(\Omega) \ni u \mapsto  (u|_{\Gamma}, \p_{\nu}^L u|_{\Gamma}) \in H^{\frac{1}{2}}(\Gamma) \times H^{-\frac{1}{2}}(\Gamma),
\end{align*}
where $u$ is a solution to the equation
\begin{align}
\label{eq:ell_eq}
Lu := -\p_i a^{ij} \p_j u + b_j \p_j u + c u & = 0 \text{ in $\Omega$},
\end{align}
with $a^{ij} \in L^{\infty}(\Omega, \R^{n\times n})$ bounded, symmetric and uniformly elliptic, i.e. $0<\lambda |\xi|^2 \leq \xi_i a^{ij } \xi_j \leq \lambda^{-1} |\xi|^2 < \infty$ for some $\lambda \in (0,1]$, $b_j \in L^n(\Omega, \C)$, $c \in L^{n/2}(\Omega, \C)$. The normal derivative $\p_{\nu}^L u|_{\p \Omega}$ is defined as an element of $H^{-1/2}(\p \Omega)$ as in Section \ref{subseq_nd}.

We show that even in the presence of low regularity coefficients, so that no $C^{\infty}$ or analytic smoothing is available, one can prove that no stability estimate can be better than logarithmic:

\begin{theorem}
\label{thm:UCP_instab}
Let $\Omega $, $\Gamma \subset \partial \Omega$,L be as above. There is $c > 0$ so that for $\eps > 0$ small there exists a solution $u$ to \eqref{eq:ell_eq} such that 
\begin{align*}
\|u\|_{H^1(\Omega)} \geq \eps, \qquad \|u|_{\p \Omega}\|_{H^{1}(\p \Omega)} \leq 1, \qquad \|u\|_{H^{\frac{1}{2}}(\Gamma)} + \|\p_{\nu}^L u\|_{H^{-\frac{1}{2}}(\Gamma)} \leq e^{-c \eps^{-\frac{2(n-1)}{n+1}}}.
\end{align*}
Moreover, if $\p \Omega$ is $C^{1,1}$, $a^{ij} \in W^{1,\infty}$, $b^j \in L^{\infty}$, $c \in L^n$, then there is $c > 0$ so that for $\eps > 0$ small there exists a solution $u$ to \eqref{eq:ell_eq} such that 
\begin{align*}
\|u\|_{L^{2}(\Omega)} \geq \eps, \qquad \|u\|_{H^{1}(\Omega)} \leq 1, \qquad \|u\|_{H^{\frac{1}{2}}(\Gamma)} + \|\p_{\nu}^L u\|_{H^{-\frac{1}{2}}(\Gamma)} \leq e^{-c \eps^{-\frac{n-1}{n}}}.
\end{align*}
Thus logarithmic stability is optimal for these unique continuation problems in the sense that no modulus of continuity can be better than logarithmic.
\end{theorem}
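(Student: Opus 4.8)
The plan is to apply the low‑regularity machinery of Theorem~\ref{thm_lowreg_instability_sobolev}: reduce the Cauchy‑data map on $\Gamma$ to a compact operator $A$ between Sobolev spaces on a piece of $\p\Omega$, establish exponential decay of $\sigma_k(A)$ by iterating a small quantitative regularity gain in the spirit of the passage from Lemma~\ref{lem:Sneiberg} to Theorem~\ref{thm:entropy_backward_heat}, and then combine that rate with the entropy lower bounds of Theorem~\ref{thm_sobolev_inclusion}. For the reduction I would argue as in the smooth unique continuation result of Section~\ref{sec:examples1}. We may assume $0$ is not a Dirichlet eigenvalue of $L$ in $\Omega$ (otherwise subtract a finite‑rank projection) and write $u=Sf$ for the Dirichlet solution operator $S$. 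Using $\p\Omega\setminus\overline\Gamma\neq\emptyset$, fix a neighbourhood $\Gamma_1$ of $\overline\Gamma$ in $\p\Omega$, a compact Lipschitz patch $\Sigma\subset\p\Omega$ with $\overline\Sigma\cap\overline{\Gamma_1}=\emptyset$, a bounded extension operator $E:H^t(\Sigma)\to H^t(\p\Omega)$ with $Eh|_{\Gamma_1}=0$, and a cutoff $\chi\in C^\infty_c(\Gamma_1)$ with $\chi=1$ near $\overline\Gamma$. Restricting the hypothesized stability inequality to $f=Eh$---so that $u=SEh$ vanishes on $\Gamma_1\supset\Gamma$ and the $H^{1/2}(\Gamma)$‑term disappears---reduces matters to the compact linear operator
\[
A:H^{1/2}(\Sigma)\to H^{-1/2}(\p\Omega),\qquad Ah=\chi\,\p_{\nu}^{L}(SEh),
\]
with $\p_{\nu}^{L}$ in the weak sense of \eqref{eq:bilinear}. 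Since $SEh|_{\Sigma}=h$, the energy estimate and elliptic transposition (valid for Lipschitz, resp.\ $C^{1,1}$, domains) give $\|SEh\|_{H^1(\Omega)}\sim\|h\|_{H^{1/2}(\Sigma)}$, $\|SEh|_{\p\Omega}\|_{H^1(\p\Omega)}\sim\|h\|_{H^1(\Sigma)}$, and in the higher‑regularity case also $\|SEh\|_{L^2(\Omega)}\sim\|h\|_{H^{-1/2}(\Sigma)}$. If unique continuation from $\Gamma$ fails there is no finite modulus and the statement is vacuous, so we may take $A$ injective.

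The heart of the matter is the estimate $\sigma_k(A)\lesssim e^{-\rho k^{\mu}}$ with $\mu=\tfrac1n$ when the coefficients are merely bounded and $\mu=\tfrac1{n-1}$ when $a^{jk}\in W^{1,\infty}$. I would foliate a collar $\{\,x\in\Omega:\dist(x,\Gamma)<d\,\}$ by the level surfaces $\Gamma_t=\{\dist(\cdot,\Gamma)=t\}$, observe that $u=SEh$ solves $Lu=0$ there with vanishing Dirichlet data on the corresponding boundary part, and for $0<\tau<t<d$ consider the map $R_{t,t-\tau}:u|_{\Gamma_t}\mapsto u|_{\Gamma_{t-\tau}}$. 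A quantitative interior estimate on the slab $\{t-\tau<\dist(\cdot,\Gamma)<t\}$---a Meyers $W^{1,p}$‑type estimate when $a^{jk}\in L^\infty$, an $H^2$‑type estimate when $a^{jk}\in W^{1,\infty}$---together with the trace theorem yields, for a fixed gain $\delta>0$, a bound of the form $\|u|_{\Gamma_{t-\tau}}\|_{H^{1/2+\delta}(\Gamma_{t-\tau})}\le C\tau^{-\delta}\|u|_{\Gamma_t}\|_{H^{1/2}(\Gamma_t)}$. Composing $N$ such steps of width $\tau=d/N$ by Proposition~\ref{prop_singular_value_decomposition}(d), inserting the bound $\sigma_k\sim k^{-\delta/(n-1)}$ for $H^{1/2+\delta}(\Gamma_s)\hookrightarrow H^{1/2}(\Gamma_s)$ on the $(n-1)$‑dimensional surfaces (Theorem~\ref{thm_sobolev_inclusion}), and prepending an initial estimate bounding $\|\p_{\nu}^{L}u\|_{H^{-1/2}(\Gamma)}$ by $\|u\|_{H^1}$ on a thin collar via \eqref{eq:bilinear} and a Caccioppoli inequality, one arrives at $\sigma_k(A)\lesssim\bigl(C^{1/\delta}(k/N)^{-1/(n-1)}N\bigr)^{\delta N}$; optimizing $N\sim\rho k^{1/n}$ exactly as in the proof of Theorem~\ref{thm:entropy_backward_heat} gives the claimed decay, the larger gain available in the $W^{1,\infty}$ case pushing the optimal power of $k$ up to $k^{1/(n-1)}$.

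It then remains to invoke Theorem~\ref{thm_lowreg_instability_sobolev}: in the first case with $X$ the solution space carrying the $H^1(\Omega)$ norm and $X_1$ the one carrying the $H^1(\p\Omega)$‑trace norm, which via the reduction are isomorphic to $H^{1/2}(\Sigma)$ and $H^1(\Sigma)$, so that $e_k(X_1\hookrightarrow X)\gtrsim k^{-m}$ with $m=\tfrac1{2(n-1)}$ by Theorem~\ref{thm_sobolev_inclusion}; and in the higher‑regularity case with $X=L^2(\Omega)$‑solutions and $X_1=H^1(\Omega)$‑solutions, isomorphic to $H^{-1/2}(\Sigma)$ and $H^{1/2}(\Sigma)$, giving $m=\tfrac1{n-1}$. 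Theorem~\ref{thm_lowreg_instability_sobolev} then produces, for each small $\eps$, a solution $u$ with the three asserted bounds, the exponent in the exponential being $\mu/(m(\mu+1))$, which computes to $\tfrac{2(n-1)}{n+1}$, resp.\ $\tfrac{n-1}{n}$; equivalently $\omega(t)\gtrsim|\log t|^{-m(\mu+1)/\mu}$, so no modulus of continuity can beat the logarithm. The hard part will be the exponential singular value bound---extracting \emph{exponential} rather than merely polynomial decay of $\sigma_k(A)$ in the absence of any $C^\infty$ or analytic smoothing, and, if one insists on the sharp exponents above, balancing the fixed regularity gained per step against the blow‑up $\tau^{-\delta}$ as the slabs shrink so that the iteration closes. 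For the qualitative statement that logarithmic stability is optimal, any $\mu>0$ suffices, so even a crude form of the iteration would do.
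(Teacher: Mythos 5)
Your first half is essentially the paper's argument, and it works. The paper performs the reduction by simply restricting to boundary data $g \in H^{1/2}_{\ol{\Gamma}_1}(\p \Omega)$ supported away from $\Gamma$ and studying $T: g \mapsto \p_{\nu}^L u|_{\Gamma}$, rather than through the composition $\chi\,\p_{\nu}^L S E$, but this is cosmetic. The foliation of a neighbourhood of $\Gamma$ by nested Lipschitz domains, the per-step bound $\sigma_k(i_{s,r}) \le C|s-r|^{-\delta} k^{-\delta/(n-1)}$ obtained from Meyers' estimate (Lemma \ref{lemma_meyers}) plus the trace theorem and the embedding $H^{1/2+\delta}(\p \Omega_r) \hookrightarrow H^{1/2}(\p \Omega_r)$, the composition and the optimization $N \sim k^{1/n}$ giving $\sigma_k(T) \le e^{-ck^{1/n}}$, and the application of Theorem \ref{thm_lowreg_instability_sobolev} with $m = \tfrac{1}{2(n-1)}$ all match the paper, and your exponent $\tfrac{2(n-1)}{n+1}$ is the correct one. (Incidentally, injectivity is a non-issue: the theorem is an existence statement, and if unique continuation from $\Gamma$ fails, a nonzero solution with vanishing Cauchy data on $\Gamma$ already gives the conclusion after normalization, so no "vacuousness" argument is needed.)

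The second half, however, has a genuine gap. First, the claimed upgrade of the singular value decay to $e^{-\rho k^{1/(n-1)}}$ when $a^{ij} \in W^{1,\infty}$ does not follow from the proposed iteration: whatever the per-step regularity gain $\delta$ (Meyers' $\delta = \tfrac n2 - \tfrac np$, or a full derivative from $H^2$ estimates), the constant blows up like $\tau^{-\delta}$ on a slab of width $\tau$, so in the composed bound $\bigl(C\tau^{-\delta}(k/N)^{-\delta/(n-1)}\bigr)^{N}$ the gain $\delta$ enters only as a common power and cancels from the balance; the optimization forces $N \lesssim k^{1/n}$ and hence $\mu = 1/n$ no matter how large $\delta$ is (indeed the paper uses the same Meyers-based iteration and the same decay $e^{-ck^{1/n}}$ in both parts). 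Second, with the decay that is actually available, your route through Theorem \ref{thm_lowreg_instability_sobolev} with $m = \tfrac{1}{n-1}$ only yields the exponent $\tfrac{\mu}{m(\mu+1)} = \tfrac{n-1}{n+1}$, not the asserted $\tfrac{n-1}{n}$; the extra factor $\mu+1$ is the loss incurred inside that theorem when singular value decay is converted into entropy numbers. The paper obtains $\tfrac{n-1}{n}$ with the same $\sigma_k(T) \le e^{-ck^{1/n}}$ by bypassing this loss: it runs the direct finite-dimensional argument of Proposition \ref{lemma_ref_basis}, choosing $g$ in the span of the first $N$ Dirichlet eigenfunctions of the Laplacian on the bi-Lipschitz flattened patch $\Gamma_1$, whose Weyl asymptotics (Theorem \ref{thm_weyl2}) give $\|g\|_{H^{1/2}(\p\Omega)} \le N^{\frac{1}{n-1}}\|g\|_{H^{-1/2}(\p\Omega)}$ on that span; the additional regularity ($C^{1,1}$ boundary, $W^{1,\infty}$ principal part) is used only for the duality estimate $\|g\|_{H^{-1/2}(\p\Omega)} \lesssim \|u\|_{L^2(\Omega)}$ (which you did record), and the choice $N \sim \eps^{-(n-1)}$ then gives $\|Tg\|_{H^{-1/2}(\Gamma)} \le e^{-cN^{1/n}} = e^{-c'\eps^{-\frac{n-1}{n}}}$. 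So to reach the stated second exponent you must either replace the entropy-based Theorem \ref{thm_lowreg_instability_sobolev} by such a direct construction, or prove a genuinely faster singular value decay by other means (e.g.\ analyticity); the slab iteration alone cannot supply it.
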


\begin{remark}
\label{rmk:instab_low_reg}
Note that for $n\geq 3$ the case of low regularity metrics ($a^{ij}\in C^{0,\alpha}$ with $\alpha \in (0,1)$) is of particular interest, as the unique continuation property fails in this low regularity regime in general. 
\end{remark}

\begin{remark}
\label{rmk:regularity}
We point out that in the first instability estimate in the low regularity setting in Theorem \ref{thm:UCP_instab} the condition that $\|u|_{\partial \Omega}\|_{H^1(\partial \Omega)} \leq 1$ may seem rather restrictive given the low regularity assumptions. In this low regularity context, a condition of the form $\|u|_{\partial \Omega}\|_{H^{\frac{1}{2}}(\partial \Omega)} \leq 1$ or -- since we deal with compactness conditions -- an assumption of the form $\|u|_{\partial \Omega}\|_{H^{\frac{1}{2}+\delta}(\partial \Omega)} \leq 1$, for $\delta>0$, may appear most natural. We stress that since Theorem \ref{thm:UCP_instab} is an \emph{existence result} and since the inclusions $H^1(\partial \Omega) \subset H^{\frac{1}{2}+\delta}(\partial \Omega) \subset H^{\frac{1}{2}}(\partial \Omega)$ for $\delta \in (0,1)$ hold, for the constructed functions $u$ these possibly ``more natural'' bounds are indeed satisfied in our context. 
\end{remark}

In order to obtain the desired result, we argue similarly as in the previous section exploiting higher integrability results. We only give the proof when $n \geq 3$ using the estimate of Meyers \cite{M63} for the necessary small regularity gain. For $n=2$ the proof is the same, except that one uses the Gehring lemma instead.

\begin{lemma} \label{lemma_meyers}
Let $\Omega, L$ be as above. There are $C > 0$ and $p > 2$ so that whenever $u \in H^1(\Omega)$ solves $Lu = 0$ in $\Omega$ and $B(x_0,3r) \subset \Omega$, then 
\[
\norm{\nabla u}_{L^p(B(x_0,r))} \leq C r^{-(\frac{n}{2}-\frac{n}{p})} ( \norm{\nabla u}_{L^2(B(x_0,2r))} + \norm{u}_{L^{2n/(n-2)}(B(x_0,2r))} ).
\]
Moreover, if $x_0 \in \p \Omega$ and $u = 0$ on $\p \Omega \cap B(x_0,3r)$, then 
\[
\norm{\nabla u}_{L^p(B(x_0,r) \cap \Omega)} \leq C r^{-(\frac{n}{2}-\frac{n}{p})} ( \norm{\nabla u}_{L^2(B(x_0,2r) \cap \Omega)} + \norm{u}_{L^{2n/(n-2)}(B(x_0,2r) \cap \Omega)} ).
\]
\end{lemma}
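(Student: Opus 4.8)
The plan is to derive both estimates from a Caccioppoli-type inequality combined with the Sobolev--Poincar\'e inequality — which together produce a reverse H\"older inequality for $\nabla u$ — and then to invoke Gehring's lemma to self-improve the integrability exponent. This is the classical argument of Meyers \cite{M63}; the only points requiring care are the scaling-critical lower order terms $b_j \in L^n$, $c \in L^{n/2}$ and, in the second case, the Lipschitz regularity of $\partial\Omega$.

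\textbf{Interior estimate.} First I would fix a ball $B(x_0,2r) \subset \Omega$ and a cutoff $\eta \in C_c^\infty(B(x_0,2r))$ with $\eta \equiv 1$ on $B(x_0,r)$ and $\abs{\nabla\eta}\lesssim 1/r$. Testing the weak formulation of $Lu = 0$ against $\eta^2(u-\bar u)$, with $\bar u = \fint_{B(x_0,2r)} u\,dx$, and using the ellipticity of $(a^{ij})$ on the leading term together with Young's inequality and the H\"older and Sobolev inequalities on the terms involving $b_j$ and $c$, one obtains a Caccioppoli inequality roughly of the form
\[
\int_{B(x_0,r)} \abs{\nabla u}^2\,dx \le \frac{C}{r^2}\int_{B(x_0,2r)}\abs{u-\bar u}^2\,dx + C\,\eps_{x_0,r}\int_{B(x_0,2r)}\bigl(\abs{\nabla u}^2 + \abs{u}^{2n/(n-2)}\bigr)\,dx,
\]
where $\eps_{x_0,r} \to 0$ as $r \to 0$ uniformly in $x_0$ by absolute continuity of the integral — this is precisely where the scaling-critical hypotheses are used, the relevant quantities being the small norms $\norm{b_j}_{L^n(B(x_0,2r))}$ and $\norm{c}_{L^{n/2}(B(x_0,2r))}$. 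Combining this with the Sobolev--Poincar\'e inequality $\norm{u-\bar u}_{L^2(B(x_0,2r))} \lesssim r\,\norm{\nabla u}_{L^{2_*}(B(x_0,2r))}$, where $2_* = \tfrac{2n}{n+2}$, gives a reverse H\"older inequality
\[
\Bigl(\fint_{B(x_0,r)}\abs{\nabla u}^2\,dx\Bigr)^{1/2} \le C\Bigl(\fint_{B(x_0,2r)}\abs{\nabla u}^{2_*}\,dx\Bigr)^{1/2_*} + C\Bigl(\fint_{B(x_0,2r)}\abs{G}^2\,dx\Bigr)^{1/2},
\]
valid on all sub-balls of a fixed ball, where $G$ collects the lower order terms and satisfies $\norm{G}_{L^2(B(x_0,2r))} \lesssim \norm{\nabla u}_{L^2(B(x_0,2r))} + \norm{u}_{L^{2n/(n-2)}(B(x_0,2r))}$. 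Gehring's lemma then produces an exponent $p > 2$, depending only on $n$, $\lambda$ and the coefficient norms, for which $\nabla u \in L^p_{\mathrm{loc}}$ with the corresponding averaged estimate; rewriting the averages as genuine $L^p$ and $L^2$ norms introduces exactly the homogeneity factor $r^{-(n/2-n/p)}$, and estimating the $G$-contribution by H\"older and Sobolev yields the first claimed inequality. (For $n=2$ one uses the Gehring lemma directly in place of Meyers' estimate, following the same scheme.)

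\textbf{Boundary estimate.} For $x_0 \in \partial\Omega$ with $u = 0$ on $\partial\Omega \cap B(x_0,3r)$ I would first flatten $\partial\Omega$ near $x_0$ by a bi-Lipschitz change of coordinates; since $\partial\Omega$ is Lipschitz, the transformed equation is again of the same type, with bounded measurable uniformly elliptic leading part and lower order coefficients in $L^n$ and $L^{n/2}$ whose norms are controlled by the Lipschitz character of $\Omega$, and $u$ now vanishes on the flat part of the boundary. One then repeats the argument on half-balls: the boundary Caccioppoli inequality uses the test function $\eta^2 u$ (no mean subtraction is needed), and in place of Sobolev--Poincar\'e one uses the Poincar\'e--Sobolev inequality for $H^1$ functions vanishing on a portion of the boundary of positive measure. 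The up-to-the-boundary version of Gehring's lemma on half-balls then gives the higher integrability, and undoing the change of variables produces the second estimate, with the constant depending additionally on the Lipschitz character of $\Omega$.

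\textbf{Main obstacle.} I expect the delicate step to be the Caccioppoli inequality with the scaling-critical lower order terms: one must verify that the $L^n$ bound on $b_j$ and the $L^{n/2}$ bound on $c$ enter with factors that either are small on small balls (so that the corresponding term can be absorbed into the left-hand side) or can be relegated to the right-hand side $G$, so that the reverse H\"older structure required by Gehring's lemma is genuinely preserved and the resulting exponent $p$ does not degenerate. The flattening of a Lipschitz boundary, the bookkeeping of the nested radii, and the passage from averaged to unweighted norms are otherwise routine.
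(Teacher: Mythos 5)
Your proof is correct in outline, but it takes a different route from the paper on both halves of the statement. For the interior estimate the paper does not re-derive anything: it simply invokes Meyers' theorem (\cite[Theorem 2]{M63}), treating the lower order terms as data, whereas you unpack the standard modern proof of that result (Caccioppoli plus Sobolev--Poincar\'e giving a reverse H\"older inequality, then Gehring's lemma) -- which is essentially the content of the cited theorem, and is consistent with the paper's own remark that for $n=2$ one argues via Gehring directly. For the boundary estimate the routes genuinely diverge: the paper flattens $\partial\Omega$ by a bi-Lipschitz map and then \emph{reflects} the solution and the coefficients oddly/evenly across the flattened boundary, reducing the half-ball situation to the already-known interior estimate; you instead stay on half-balls and rerun the argument there, using the boundary Caccioppoli inequality with test function $\eta^2 u$, the Poincar\'e--Sobolev inequality for functions vanishing on a boundary portion, and a boundary version of Gehring's lemma. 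The reflection trick buys brevity and avoids any boundary variant of Gehring, at the price of checking that the reflected coefficients again form an elliptic divergence-form operator with $b_j\in L^n$, $c\in L^{n/2}$; your half-ball argument is self-contained and makes explicit where the scaling-critical norms enter, at the price of redoing the Gehring step up to the boundary. One point worth making explicit in your ``main obstacle'' discussion: since $\lVert b\rVert_{L^n(B_{2r})}$ and $\lVert c\rVert_{L^{n/2}(B_{2r})}$ are scale-invariant, the smallness $\eps_{x_0,r}\to 0$ as $r\to 0$ does not by itself give a constant uniform over all admissible radii $r$; the clean fix is to split $b=b_1+b_2$, $c=c_1+c_2$ with the critical parts small in $L^n$, $L^{n/2}$ (absorbed into the left-hand side) and the remainders bounded (relegated to $G$, which then lies in a better space than $L^2$ so that Gehring actually yields a gain). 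With that adjustment your argument goes through and reproduces the lemma, with $C$ and $p$ depending on the coefficients as the statement allows.
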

\begin{proof}
The first estimate follows from \cite[Theorem 2]{M63}. The second estimate reduces to the first one after flattening the boundary near $x_0$ by a bi-Lipschitz map and reflecting the solution and coefficients to the other side in a suitable way.
\end{proof}

\begin{figure}
\includegraphics[scale=0.8]{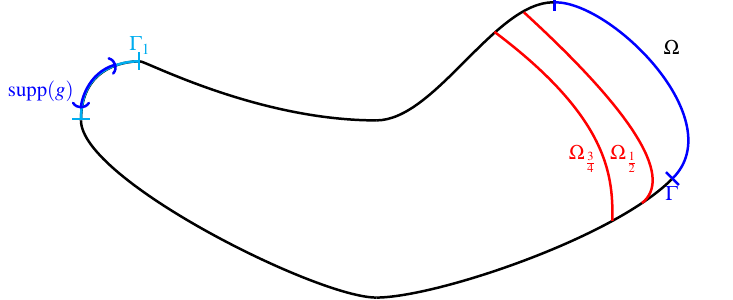}
\caption{A schematic illustration of the ``foliation'' of $\Gamma$ by the domains $\Omega_r$.}
\label{fig:bild1}
\end{figure}

\begin{proof}[Proof of Theorem \ref{thm:UCP_instab}]
Let $\Gamma$ be as above, and choose an open set $\Gamma_1 \subset \p \Omega$ such that $\ol{\Gamma} \cap \ol{\Gamma}_1 = \emptyset$. Further, by possibly making $\Gamma_1$ smaller, without loss of generality, we may assume that by a bi-Lipschitz change of variables the boundary piece $\Gamma_1$ can be flattened to become a ball in $\mR^{n-1}$. We assume the solvability of the Dirichlet problem. If this is not the case, we argue by working in the respective quotient spaces.
We consider solutions $u$ of
\begin{align}
\label{eq:div_form}
\begin{split}
L u & = 0 \mbox{ in } \Omega,\\
u & = g \mbox{ on } \partial \Omega,
\end{split}
\end{align}
where $g \in H^{1/2}_{\ol{\Gamma}_1}(\p \Omega) = \{ f \in H^{1/2}(\p \Omega) \,;\, \supp(f) \subset \ol{\Gamma}_1 \}$. Note that for such boundary values $g$ one has $u|_{\Gamma} = 0$, and the instability of unique continuation can be studied by analyzing the operator 
\[
T: H^{1/2}_{\ol{\Gamma}_1}(\p \Omega) \to H^{-1/2}(\Gamma), \ \ g \mapsto \p_{\nu}^L u|_{\Gamma}.
\] 
We consider a ``foliation'' (see Figure \ref{fig:bild1}) of $\Gamma$ by Lipschitz domains $\Omega_r \subset \Omega$ with $r\in [0,1]$ such that
\begin{itemize}
\item $\Omega_1 = \Omega$,
\item $\Omega_0 = \tilde{\Omega}$, where $\tilde{\Omega} \subset \Omega$ is a bounded, very thin Lipschitz domain of thickness $\delta_0>0$ (depending on $\|b_j\|_{L^{n}(\Omega)}, \|c\|_{L^{\frac{n}{2}}(\Omega)}$ and $\Omega$) such that $\Gamma \subset \partial \tilde{\Omega}$,
\item $\Omega_s \subset \Omega_r$ if $r>s$,
\item $\dist(\Omega_s,\Omega_0)\leq \delta_0$ for $s\in (0,1)$ and for some small, fixed constant $\delta_0>0$ depending on $\|b_j\|_{L^{n}(\Omega)}, \|c\|_{L^{\frac{n}{2}}(\Omega)}$ and $\Omega$,
\item $\Gamma \subset \partial \Omega_r$ for all $r\in [0,1]$, 
\item for all $r\in (0,1)$ we have $\Gamma_1 \cap \partial \Omega_r = \emptyset$ and $\partial \Omega_r \cap \partial \Omega_s \subset \partial \Omega$ (and thus $u= 0$ on $\partial \Omega_r \cap \partial \Omega_s$),
\item for $s\neq r$ and $s,r\in (0,1)$ we assume that $\dist(\partial^0 \Omega_s, \partial^0 \Omega_r) =\frac{\delta_0 |r-s|}{10}$ where $\partial^0 \Omega_t$ for $t\in \{s,r\}$ denotes the part of $\partial \Omega_t$ which is not contained in $\partial \Omega$.  
\end{itemize}
The domains $\Omega_r$, $r\in (0,1)$, are chosen to be uniform in the sense that for $r\in (0,1)$ it holds that the sets $\partial \Omega_r \cap \Omega$ are uniform bi-Lipschitz deformations of a suitable neighbourhood of $\Gamma$.
Here the parameter $\delta_0>0$ is chosen such that the Dirichlet problem is always solvable in the domains $\Omega_r$ with $r\in (0,1)$ (for $s=1$ this solvability is either assumed or one works with associated quotient spaces). If the domains are sufficiently thin, solvability of the Dirichlet problem is guaranteed since the constants in the Poincar\'e and Sobolev inequalities become very small and allow one to absorb all lower order terms in an existence proof (through Lax-Milgram, for instance). Since the constant $\delta_0>0$ is fixed throughout the argument, we do not track its dependence in the sequel.

For $s>r$ we then consider the maps
\begin{align*}
i_{s,r}: H^{\frac{1}{2}}(\partial \Omega_s) \rightarrow H^{\frac{1}{2}}(\partial \Omega_r), \ u|_{\partial \Omega_s} \mapsto u|_{\partial \Omega_r},
\end{align*}
where $u$ solves $Lu = 0$ in $\Omega_s$.

\bigskip

We wish to estimate the singular values of $i_{s,r}$. Using Lemma \ref{lemma_meyers} together with a suitable cover of $\ol{\Omega}_r$ by small balls, Sobolev embedding, and the trace theorem, there are $C > 0$ and $p > 2$ so that 
  \begin{align*}
\|u\|_{W^{1,p}(\Omega_r)} \leq \frac{C}{|r-s|^{\delta}} \|u\|_{H^{1/2}(\p \Omega_s)},
\end{align*}
with $\delta= \frac{n}{2}-\frac{n}{p}$. We combine this with a trace estimate and the Sobolev embedding
  \[ W^{1,p}(\Omega_r) \to B^{1-\frac1p}_{pp}(\partial \Omega_r) \hookrightarrow
  H^{\frac{1}{2}+\delta} (\partial \Omega_r). \]
The trace estimate is given e.g.\ in \cite[Theorem 3.1]{JerisonKenig} or \cite[Chapter VI]{Stein},
and the Sobolev embedding follows from the corresponding embedding in $\mR^{n-1}$ after flattening the boundary locally by bi-Lipschitz maps. 
Thus, as by \cite[Theorem 20.6]{Triebel_fractals} the compact embedding $H^{\frac{1}{2}+\delta} (\partial \Omega_r) \hookrightarrow H^{\frac{1}{2}}(\partial \Omega_r)$ has singular values estimated by
\begin{align*}
\sigma_k(id_{H^{\frac{1}{2}+\delta}(\partial \Omega_r) \rightarrow H^{\frac{1}{2}}(\partial \Omega_r)}) \leq C k^{-\frac{\delta}{n-1}},
\end{align*}
we obtain for the singular values of the mapping $i_{s,r}$ the estimate
\begin{align*}
\sigma_k(i_{s,r}) \leq \frac{C}{|s-r|^{\delta}} k^{-\frac{\delta}{n-1}}.
\end{align*}
The constant $C>0$ can be chosen to be uniform over $r, s \in [0,1]$ with $r < s$ by the uniform choice of the domain geometry.

We now concatenate the mappings $i_{s,r}$ and consider the map
\begin{align*}
T:= T_0 \circ i_{\frac{1}{N}, 0}\circ i_{\frac{2}{N},\frac{1}{N}} \circ \cdots \circ i_{1-\frac{1}{N}, 1- \frac{2}{N}} \circ i_{1,1-\frac{1}{N}},
\end{align*}
where $T_0$ is the bounded map 
\[
T_0: H^{1/2}(\p \Omega_0) \to H^{-1/2}(\Gamma), \ \ u|_{\p \Omega_0} \mapsto \p_{\nu}^L u|_{\Gamma}.
\]
Arguing as in the proof of Theorem \ref{thm:entropy_backward_heat}, using the properties of the singular values under concatenation of mappings, we obtain that
\begin{align} \label{sigmakt_estimate}
\sigma_k(T) \leq C^N \left(N^{\frac{n}{n-1}} k^{-\frac{1}{n-1}} \right)^{\delta N},
\end{align}
which is roughly optimized at $N(k) = \rho k^{\frac{1}{n}}$. With $\rho>0$ chosen appropriately depending on $C, \delta, n$, this yields
\begin{align*}
\sigma_k(T) \leq e^{-c k^{\frac{1}{n}}},
\end{align*}
for some constant $c > 0$ depending only on $C, \delta, n$.

Now applying Theorem \ref{thm_lowreg_instability_sobolev} to $T$ with $X = H^{1/2}_{\ol{\Gamma}_1}(\p \Omega)$ and $X_1 = H^{1}_{\ol{\Gamma}_1}(\p \Omega)$, so that $e_k(i: X_1 \to X) \sim k^{-\frac{1}{2(n-1)}}$ by \cite[Theorem 20.6]{Triebel_fractals}, Theorem \ref{thm_lowreg_instability_sobolev} implies that for $\eps > 0$ small there is a solution $u \in H^1(\Omega)$ with 
\[
\norm{u|_{\p \Omega}}_{H^{1/2}(\p \Omega)} \geq \eps, \quad \norm{u|_{\p \Omega}}_{H^{1}(\p \Omega)} \leq 1, \quad \norm{u}_{H^{1/2}(\Gamma)} + \norm{\p_{\nu}^L u}_{H^{-1/2}(\Gamma)} \leq e^{-c \eps^{-\frac{2(n-1)}{n+1}}}.
\]
Using the trace estimate $\norm{u|_{\p \Omega}}_{H^{1/2}(\p \Omega)}  \leq C \norm{u}_{H^1(\Omega)}$ yields the first estimate in the theorem.

We now show that assuming a bit more regularity, one can change the spaces for $u$ and prove the second estimate in the theorem. We will use the method in Proposition \ref{lemma_ref_basis}. Let $-\Delta_{\Gamma_1}$ be the Laplacian on $\Gamma_1 \subset \p \Omega$, and let $(\varphi_j)_{j=1}^{\infty} \subset H^1_0(\Gamma_1)$ be an orthonormal basis of $L^2(\Gamma_1)$ consisting of Dirichlet eigenfunctions of $-\Delta_{\Gamma_1}$. We recall that $\Gamma_1$ was chosen such that there is a bi-Lipschitz change of coordinates flattening it so that $\Gamma_1$ becomes a ball in $\mR^{n-1}$. Under this change of coordinates $-\D_{\Gamma_1}$ becomes an elliptic operator with $L^{\infty}$ coefficients. Hence, by an application of Theorem \ref{thm_weyl2}, the corresponding eigenvalues satisfy the Weyl asymptotics $\lambda_j \sim j^{\frac{2}{n-1}}$.
For $\abs{s} \leq 1$, we will use the equivalent Sobolev norms 
\[
\norm{\sum a_j \varphi_j}_{H^s(\p \Omega)} = \left( \sum_{j=1}^{\infty} j^{\frac{2s}{n-1}} \abs{a_j}^2 \right)^{1/2}.
\]
Consider the finite dimensional space (with $N$ to be determined later) 
\[
W = \left\{ \sum_{j=1}^N a_j \varphi_j \,;\, a_j \in \mC \right\}.
\]
For any $f = \sum_{j=1}^N a_j \varphi_j \in W$, we have 
\begin{equation} \label{g_sobolev_change}
\norm{f}_{H^{1/2}(\p \Omega)}^2 = \sum_{j=1}^N j^{\frac{1}{n-1}} \abs{a_j}^2 \leq N^{\frac{2}{n-1}} \norm{f}_{H^{-1/2}(\p \Omega)}^2.
\end{equation}
By \eqref{f_w_property} and \eqref{sigmakt_estimate}, there exists $g \in W \setminus \{0\}$ so that 
\[
\norm{Tg}_{H^{-1/2}(\Gamma)} \leq \sigma_N(T) \norm{g}_{H^{1/2}(\p \Omega)} \leq e^{-c N^{\frac{1}{n}}} \norm{g}_{H^{1/2}(\p \Omega)}.
\]

Let $u \in H^1(\Omega)$ solve $Lu = 0$ in $\Omega$ with $u|_{\p \Omega} = g$. We multiply $u$ by a constant so that $\norm{u}_{L^2(\Omega)} = \eps$. Now $\norm{u}_{H^1(\Omega)} \leq C \norm{g}_{H^{1/2}(\p \Omega)}$, and by a duality argument (using the additional regularity of $\p \Omega$ and the coefficients) one also has $\norm{g}_{H^{-1/2}(\p \Omega)} \leq C \norm{u}_{L^2(\Omega)}$. Combining these facts with \eqref{g_sobolev_change}, we obtain that 
\[
\norm{u}_{H^1(\Omega)} \leq C N^{\frac{1}{n-1}} \norm{g}_{H^{-1/2}(\p \Omega)} \leq C' N^{\frac{1}{n-1}} \eps.
\]
Now choose $N = N(\eps) = \rho \eps^{-(n-1)}$ for suitable small $\rho > 0$, so that $C' N^{\frac{1}{n-1}} \eps = 1$. Then 
\[
\norm{u}_{L^2(\Omega)} = \eps, \quad \norm{u}_{H^1(\Omega)} \leq 1, \quad \norm{Tg}_{H^{-1/2}(\Gamma)} \leq e^{-c N^{\frac{1}{n}}} \norm{g}_{H^{1/2}(\p \Omega)} \leq C e^{-c' \eps^{-\frac{n-1}{n}}}
\]
by the trace estimate $\norm{g}_{H^{1/2}(\p \Omega)} \leq C \norm{u}_{H^1(\Omega)} \leq C$. This proves the second estimate in the theorem.
\end{proof}

\subsection{Exponential cost in approximate controllability for the heat equation}
\label{sec:control}

As an example of how our results can also be applied in the context of problems from control theory, we rederive lower bounds on the cost of approximate controllability for the variable coefficient heat equation at low regularity. For constant coefficients in the principal symbol and $L^{\infty}$ potentials this was first treated by \cite{FernandezCaraZuazua} and later revisited in \cite{Phung} where the authors proved \emph{upper} bounds on the cost of controllability. While proving upper bounds requires ``hard'' arguments such as Carleman estimates, we use our strategy from the previous sections to obtain ``soft'' arguments for the \emph{lower} bounds on the cost of controllability -- even at rather low regularities for the coefficients and with essentially optimal dependences (modulo the exponents). To this end, for $\omega \Subset \Omega $ and $\Omega \subset \R^n$ open, bounded $C^{1,\alpha}$, $\alpha \in (0,1)$, domains, consider the equation
\begin{align}
\label{eq:heat_control}
\begin{split}
\p_t u - \p_i a^{ij}\p_j u + b^1_j \p_j u + \p_j (b^2_j u) + cu & = f \chi_{\omega} \mbox{ in } \Omega \times (0,T),\\
u & = 0 \mbox{ on } \partial \Omega \times (0,T),\\
u(\cdot, 0) &= u_0 \mbox{ in } \Omega,
\end{split}
\end{align}
where 
\begin{itemize}
\item[(i)] $\chi_{\omega}$ denotes the characteristic function of $\omega$, 
\item[(ii)] $f\in L^2(\omega \times (0,T))$, $u_0 \in H^{1}_0(\Omega)$, 
\item[(iii)] $a^{ij} \in (C^{0,1}_x \cap C^{0,\frac{1}{2}}_t)(\Omega \times (0,T))$ is a uniformly elliptic matrix with $\lambda |\xi|^2 \leq a^{ij} \xi_i \xi_j \leq \lambda^{-1} |\xi|^2$ for some $\lambda \in (0,1)$, 
\item[(iv)] $b_j^1 , b_j^2 \in C^0((0,T), L^{n+2}(\Omega)) $, $c\in C^0((0,T), L^{\frac{n}{2}}(\Omega ))$ with $$\|b_j^{(\ell)}\|_{C^0((0,T), L^{n+2}(\Omega))} + \|c\|_{C^0( (0,T), L^{\frac{n}{2}}(\Omega ))} \leq \mu \ll 1$$ for $\ell \in \{1,2\}$. If $n=2$ we further assume that $c \in C^0( (0,T), L^{p}(\Omega ))$ for some $p>1$.
\end{itemize}
Under these conditions the Cauchy problem \eqref{eq:heat_control} is well-posed. Here and in the sequel it is always interpreted in its weak form, i.e. $u$ is a solution to \eqref{eq:heat_control} if the following identity holds for all $v\in C^1((0,1), H^1(\Omega))$
\begin{align*}
&-(u,\p_t v)_{L^2(\Omega \times (0,T))}
+ (a^{ij}\p_i u, \p_j v)_{L^2(\Omega \times (0,T))} + (b_j^1 \p_j u, v)_{L^2(\Omega \times (0,T))} \\
& \qquad - (b_j^2 u, \p_j v)_{L^2(\Omega \times (0,T))} + (cu, v)_{L^2(\Omega \times (0,T))}\\
&  = (f \chi_{\omega},v)_{L^2(\Omega \times (0,T))} - (u(T),v(T))_{L^2(\Omega)}+(u(0),v(0))_{L^2(\Omega)}.
\end{align*}
The coefficients are chosen sufficiently regular so that the adjoint equation satisfies the unique continuation principle (see \cite{KochTataru} for weaker conditions guaranteeing this). By standard (duality) arguments from control theory one thus obtains that the equation \eqref{eq:heat_control} is approximately controllable, i.e. for every function $u_d \in H^1_0(\Omega)$ and each $\eps>0$ there exists a control $f \in L^2(\Omega \times (0,T))$ such that the associated solution $u$ of \eqref{eq:heat_control} satisfies
\begin{align*}
\|u(\cdot, T)- u_d\|_{L^2(\Omega)} \leq \eps.
\end{align*}
A more precise question then deals with the \emph{cost of control} which estimates the size of the norm of $f$: As the image at time $T>0$ of $H^1_0(\Omega)$ under the evolution of the equation \eqref{eq:heat_control} is in general \emph{not} the whole space $H^1_0(\Omega)$, this cost has to diverge as $\eps \rightarrow 0$ in general. The \emph{cost of control} provides bounds on this in terms of the size of $\eps$. For the case that $a^{ij}=\delta^{ij}$ and $b_j^1 = b_j^2 =0$ it was proved in \cite{FernandezCaraZuazua, Phung} that there exists $f \in L^2(\Omega \times (0,T))$ such that
\begin{align}
\label{eq:Phung}
\|f\|_{L^2(\Omega \times (0,T))} \leq c\exp\left( C \frac{\|u_d\|_{H^1_0(\Omega)}}{\eps}\right) \|u_d\|_{L^2(\Omega)}.
\end{align} 
Using the methods introduced in the previous sections, we complement \eqref{eq:Phung} with lower bounds (in different function spaces). More precisely, we show that also in the case of low regularity coefficients as in \eqref{eq:heat_control} the $\eps$-dependence in the cost of controllability must be exponential:

\begin{theorem}
\label{thm:controllability_heat_cost}
Let $\omega, \Omega$ and $L$ be as above and let $\delta \in (0, \min\{\frac{1}{2},\delta_0\})$, where $\delta_0>0$ denotes the exponent from Lemma \ref{lem:Sneiberg}. Assume that for any $\eps>0$ and any $u_d \in H^{\delta}(\Omega)$ with $\|u_d\|_{H^{\delta}(\Omega)}=1$ there exists a solution $u$ to \eqref{eq:heat_control} such that
\begin{align*}
\|u(\cdot, T)-u_d\|_{L^2(\Omega)} \leq \eps,
\end{align*}
and
\begin{align*}
\|f\|_{L^2(\Omega\times (0,T))} \leq M(\eps) \|u_d\|_{L^2(\Omega)}.
\end{align*}
Then there is a constant $C>0$ such that $M(\eps)\geq \exp(C \eps^{-\frac{1}{\delta}\frac{n}{n+2}})$.
\end{theorem}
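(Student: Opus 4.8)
The plan is to combine a soft duality argument for the cost of approximate controllability with the singular value/entropy machinery of Section \ref{sec:instab_low_reg}. Write $R_T : L^2(\omega\times(0,T))\to L^2(\Omega)$, $f\mapsto u_f(\cdot,T)$, for the input--to--state map of \eqref{eq:heat_control} with zero initial data, and let $R_T^* : L^2(\Omega)\to L^2(\omega\times(0,T))$ be its adjoint; a standard integration by parts shows $R_T^*\psi = \phi|_{\omega\times(0,T)}$, where $\phi$ solves the adjoint parabolic equation on $\Omega\times(0,T)$ with $\phi(\cdot,T)=\psi$ and homogeneous Dirichlet data. In the reversed time $\tau = T-t$ this is a \emph{forward} parabolic problem with data $\psi$ at $\tau=0$, so for $t$ away from $T$ the state $\phi(\cdot,t)$ is smooth, while for $t$ near $T$ it is close to $\psi$ but (after restricting to data supported away from $\omega$) small on $\omega$ by Gaussian off-diagonal bounds. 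This is exactly the situation to which Theorem \ref{thm_lowreg_instability_sobolev} applies once we have exponential decay of $\sigma_k(R_T^*)$.

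\textbf{Step 1: duality.} For any target $u_d$, any admissible control $f$ with $\|R_T f - u_d\|_{L^2(\Omega)}\le\eps$, and any $\psi\in L^2(\Omega)$ one has $\langle u_d,\psi\rangle = \langle u_d - R_T f,\psi\rangle + \langle f,R_T^*\psi\rangle \le \eps\|\psi\|_{L^2} + \|f\|_{L^2(\omega\times(0,T))}\|R_T^*\psi\|_{L^2(\omega\times(0,T))}$. Taking the infimum over such $f$ and applying the hypothesis with $u_d = v/\|v\|_{H^\delta(\Omega)}$, $v := (1-\Delta_D)^{-2\delta}\psi$ (since $\delta<1/2$ one has $H^\delta(\Omega)=H^\delta_0(\Omega)$, so $v\in H^\delta(\Omega)$, $\langle u_d,\psi\rangle\gtrsim\|\psi\|_{H^{-\delta}}$ and $\|u_d\|_{L^2}\lesssim\|\psi\|_{H^{-2\delta}}/\|\psi\|_{H^{-\delta}}\le 1$), we obtain, for every $\psi\in H^{-\delta}(\Omega)$,
\[
M(\eps)\ \gtrsim\ \frac{\|\psi\|_{H^{-\delta}(\Omega)} - \eps\|\psi\|_{L^2(\Omega)}}{\|R_T^*\psi\|_{L^2(\omega\times(0,T))}}.
\]
Hence it suffices to produce, for each small $\eps>0$, a $\psi$ with $\|\psi\|_{H^{-\delta}}\ge 2\eps$, $\|\psi\|_{L^2}\le 1$ and $\|R_T^*\psi\|_{L^2(\omega\times(0,T))}\le\exp(-C\eps^{-\frac1\delta\frac{n}{n+2}})$.

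\textbf{Step 2: singular value decay, and conclusion.} Fix a compact $Q\Subset\Omega$ with $\overline Q\cap\overline\omega=\emptyset$ and restrict $R_T^*$ to $H^{-\delta}_Q(\Omega)$; this restriction is injective by the unique continuation property of the adjoint equation (guaranteed by the regularity assumptions on the coefficients). The key claim is $\sigma_k\bigl(R_T^* : H^{-\delta}_Q(\Omega)\to L^2(\omega\times(0,T))\bigr)\le e^{-ck^{1/(n+1)}}$. To see this, split $R_T^* = A_\theta + B_\theta$ with $A_\theta\psi = R_T^*\psi|_{\omega\times(0,T-\theta)}$ and $B_\theta\psi = R_T^*\psi|_{\omega\times(T-\theta,T)}$ for a small $\theta\in(0,T)$. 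The piece $A_\theta$ factors (up to a bounded restriction) through the adjoint evolution over time $\theta$, and by Lemma \ref{lem:Sneiberg} (to gain the first $\delta$ derivatives, here using $\delta<\delta_0$) together with the iterated argument of Theorem \ref{thm:entropy_backward_heat} applied on a time interval of length $\theta$ --- both of which extend to the subcritical lower order terms present here --- one gets $\sigma_k(A_\theta)\lesssim\theta^{-\delta/2}e^{-c\theta^{\frac{n}{n+2}}k^{\frac{2}{n+2}}}$. The piece $B_\theta$ involves only times $t\in(T-\theta,T)$, where $\mathrm{dist}(\mathrm{supp}\,\psi,\omega)\ge d_0>0$; the Gaussian/Aronson bound for the heat kernel and its first spatial derivatives (the coefficients being $C^{0,1}_x$) gives $\|B_\theta\|\lesssim\theta^{-N}e^{-cd_0^2/\theta}$. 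By Proposition \ref{prop_singular_value_decomposition}(c), $\sigma_k(R_T^*)\le\sigma_k(A_\theta)+\|B_\theta\|$, and choosing $\theta$ to balance the two exponents (i.e. $\theta^{\frac{n}{n+2}}k^{\frac{2}{n+2}}\sim\theta^{-1}$, so $\theta\sim k^{-1/(n+1)}$) yields the claim with $\mu := \tfrac1{n+1}$. Now apply Theorem \ref{thm_lowreg_instability_sobolev} with $X=H^{-\delta}_Q(\Omega)$, $X_1=L^2_Q(\Omega)$ (so $e_k(i)\sim k^{-\delta/n}$ by Proposition \ref{prop_ek_nonsmooth} and its localized version, giving $m=\delta/n$), $Y=L^2(\omega\times(0,T))$, $A=R_T^*$, $r=1$: for every small $\eps'>0$ there is $\psi_{\eps'}$ with $\|\psi_{\eps'}\|_{H^{-\delta}}\ge\eps'$, $\|\psi_{\eps'}\|_{L^2}\le 1$ and $\|R_T^*\psi_{\eps'}\|_{L^2(\omega\times(0,T))}\le\exp\bigl(-c(\eps')^{-\frac{\mu}{m(\mu+1)}}\bigr)=\exp\bigl(-c(\eps')^{-\frac{n}{\delta(n+2)}}\bigr)$. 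Taking $\eps'=2\eps$ and inserting $\psi_{\eps'}$ into the inequality of Step 1 gives $M(\eps)\gtrsim\eps\,\exp\bigl(c(2\eps)^{-\frac{n}{\delta(n+2)}}\bigr)\ge\exp\bigl(C\eps^{-\frac1\delta\frac{n}{n+2}}\bigr)$ for $\eps$ small, as claimed.

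\textbf{Main obstacle.} The delicate point is the singular value bound in Step 2: one must run the parabolic regularization estimate of Lemma \ref{lem:Sneiberg}, iterated as in Theorem \ref{thm:entropy_backward_heat}, over a \emph{short} time $\theta\sim k^{-1/(n+1)}$ and track the $\theta$-dependence of all constants, and simultaneously exploit the Gaussian off-diagonal decay coming from $\mathrm{dist}(Q,\omega)>0$ to control the short-time part near $t=T$ --- all while using only the low regularity of the coefficients (and verifying that the lower order terms $b^{1},b^{2}\in C^0(L^{n+2})$, $c\in C^0(L^{n/2})$ are indeed absorbable in the Meyers/energy estimates underlying Lemma \ref{lem:Sneiberg}). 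It is precisely the balance $\theta\sim k^{-1/(n+1)}$ that produces $\mu=\frac1{n+1}$ and hence the stated exponent $\frac1\delta\frac{n}{n+2}$; the duality of Step 1 and the invocation of Theorem \ref{thm_lowreg_instability_sobolev} are then routine.
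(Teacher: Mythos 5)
Your proposal is correct and follows essentially the same route as the paper: the paper proves Proposition \ref{prop:UCP_heat_unstable} by splitting the adjoint observation map $\varphi_T\mapsto\varphi|_{\omega\times(0,T)}$ into a near-final-time piece controlled by the Gaussian bounds of Lemma \ref{lem:Gaussian_ests} and an away-from-final-time piece controlled by the iterated small regularity gain of Lemma \ref{lem:Sneiberg} and Theorem \ref{thm:entropy_backward_heat}, optimizes $t\sim k^{-1/(n+1)}$ to get $\sigma_k\lesssim e^{-ck^{1/(n+1)}}$, invokes Theorem \ref{thm_lowreg_instability_sobolev} with $m=\delta/n$, and then concludes by exactly the duality identity you set up in Step 1. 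Your restriction of the dual data to a compact set $Q$ with $\overline{Q}\cap\overline{\omega}=\emptyset$ (so that the support hypothesis of Lemma \ref{lem:Gaussian_ests} is honestly available when bounding the near-final-time piece) is a minor refinement of the same argument rather than a different method.
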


The instability of approximate controllability will be deduced as a consequence of the instability of an associated unique continuation problem. We thus next formulate this result:

\begin{proposition}
\label{prop:UCP_heat_unstable}
Let $\Omega, \omega$ be as above and let $L':= -\p_t - \p_i a^{ij}\p_j - \p_j b_j^1 - b_j^2 \p_j + c$ be the (formal) adjoint operator associated with the operator in \eqref{eq:heat_control}. Let further $\delta \in (0, \min\{\frac{1}{2},\delta_0\})$, where $\delta_0>0$ denotes the exponent from Lemma \ref{lem:Sneiberg}. For $\varphi_T\in L^2(\Omega)$ consider the equation
\begin{align}
\label{eq:heat_adjoint_control}
\begin{split}
L' \varphi &= 0 \mbox{ in } \Omega \times (0,T),\\
 \varphi & = 0 \mbox{ on } \partial \Omega \times (0,T),\\
\varphi(\cdot, T) &= \varphi_T \mbox{ in } \Omega.
\end{split}
\end{align}
Let $\delta_0>0$ be the constant from Lemma \ref{lem:Sneiberg}.
Then, for each $\eps>0$ there exists a function $\varphi_T^{\eps} \in L^2(\Omega)$ and a solution $\varphi^{\eps}$ to \eqref{eq:heat_adjoint_control} with data $\varphi_T^{\eps}$ such that for any $\delta \in (0,\delta_0)$
\begin{align}
\label{eq:UCP_heat}
\|\varphi_T^{\eps}\|_{L^{2}(\Omega)} = 1, \ \|\varphi_T^{\eps}\|_{H^{-\delta}(\Omega)} = \eps, \ \|\varphi^{\eps}\|_{L^2(\omega \times (0,T))} \leq C \exp(-C\eps^{-\frac{1}{\delta} \frac{n}{n+2}}).
\end{align}
\end{proposition}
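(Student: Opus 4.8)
The plan is to realise the instability of unique continuation for $L'$ as the instability of a single compact injective linear operator and then feed it into Theorem \ref{thm_lowreg_instability_sobolev}. Since $L'$ is backward parabolic --- solved from the terminal datum $\varphi_T$ at $t=T$ --- its solutions gain regularity at earlier times, and if $\varphi_T$ is supported away from $\omega$ they are moreover small on $\omega$ for $t$ close to $T$. Accordingly I would fix an auxiliary open set $\omega\Subset\omega'\Subset\Omega$, put $d:=\dist(\omega,\Omega\setminus\omega')>0$, and restrict attention to terminal data in the closed subspace $Z:=\{f\in L^2(\Omega)\,;\,\supp(f)\subset\overline{\Omega\setminus\omega'}\}$, which is harmless since \eqref{eq:UCP_heat} is an existence statement. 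The operator to study is the solution map
\[
R:Z\to L^2(\omega\times(0,T)),\qquad \varphi_T\mapsto\varphi|_{\omega\times(0,T)},
\]
which is injective because the equation satisfies the parabolic unique continuation property by the standing hypotheses. Writing $X$ for the closure of $Z$ in $H^{-\delta}(\Omega)$ and $X_1$ for $Z$ with the $L^2(\Omega)$ norm, the inclusion $i:X_1\to X$ is compact with $e_k(i)\gtrsim k^{-\delta/n}$ by the localised entropy estimates of Proposition \ref{prop:entropy_localized} (and its Lipschitz-domain analogue, cf.\ Proposition \ref{prop_ek_nonsmooth}). Hence, once we establish $\sigma_k(R)\lesssim e^{-ck^{1/(n+1)}}$, an application of Theorem \ref{thm_lowreg_instability_sobolev} with $m=\delta/n$ and $\mu=\tfrac1{n+1}$ --- after renormalising the inequalities $\|\varphi_T\|_{L^2}\le1$, $\|\varphi_T\|_{H^{-\delta}}\ge\eps$ into the equalities of \eqref{eq:UCP_heat} --- yields exactly the family in \eqref{eq:UCP_heat}, the exponent being $\mu/(m(\mu+1))=\tfrac1\delta\tfrac{n}{n+2}$.

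The heart of the proof is the singular value bound $\sigma_k(R)\lesssim e^{-ck^{1/(n+1)}}$. I would split the observation cylinder at a time level $\tau_0\in(0,T)$. On $\omega\times(0,T-\tau_0)$ the map $\varphi_T\mapsto\varphi|_{\omega\times(0,T-\tau_0)}$ factors through $\varphi_T\mapsto\varphi(\cdot,T-\tau_0)\in L^2(\Omega)$ (the backward evolution over time $\tau_0$) followed by a bounded solution operator $L^2(\Omega)\to L^2(\omega\times(0,T-\tau_0))$; up to time reversal the first factor is the map of Theorem \ref{thm:entropy_backward_heat}, and running the iteration of the regularity gain of Lemma \ref{lem:Sneiberg} over an interval of length $\tau_0$ (re-optimising the number of steps) gives it singular values $\lesssim C(\tau_0^{-1})\,e^{-c\,\tau_0^{n/(n+2)}k^{2/(n+2)}}$ with $C$ a power. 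On $\omega\times(T-\tau_0,T)$, where $\varphi_T$ lies at spatial distance $\ge d$ from $\omega$, the Gaussian upper bounds for the fundamental solution of a uniformly parabolic equation with bounded measurable (time-dependent) coefficients give $\|\varphi(\cdot,t)\|_{L^2(\omega)}\lesssim e^{-cd^2/(T-t)}\|\varphi_T\|_{L^2}$, hence $\|\varphi|_{\omega\times(T-\tau_0,T)}\|_{L^2}\lesssim\tau_0^{1/2}e^{-cd^2/\tau_0}\|\varphi_T\|_{L^2}$. Adding the two pieces and choosing $\tau_0=\tau_0(k)\sim k^{-1/(n+1)}$ so as to balance the two exponential rates yields $\sigma_k(R)\lesssim e^{-ck^{1/(n+1)}}$. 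This is the same ``iterate a small regularity gain'' mechanism used in Theorems \ref{thm:entropy_backward_heat} and \ref{thm:UCP_instab}.

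The main obstacle is exactly this last estimate, specifically the behaviour near the terminal time: there $\varphi$ coincides with the only $H^{-\delta}$-regular datum $\varphi_T$, parabolic smoothing has not yet acted, and a naive decomposition into a smoothing part plus a small-\emph{norm} remainder over $(T-\tau_0,T)$ produces merely polynomial decay, because such a remainder shrinks only like $\tau_0^{(1-\delta)/2}$ while $\tau_0$ can be sent to $0$ no faster than polynomially in $k$ before the smoothing constant $c\,\tau_0^{n/(n+2)}$ degrades. Restricting the terminal datum to be supported away from $\omega$ is what cures this: it upgrades the remainder near $t=T$ from small-norm to exponentially small through the off-diagonal heat kernel bounds, while leaving the entropy lower bound $e_k(i)\gtrsim k^{-\delta/n}$ that powers the instability untouched. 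What then has to be checked with care is that the $\tau_0$-dependent constants arising from the Lemma \ref{lem:Sneiberg} iteration are genuinely polynomial in $\tau_0^{-1}$, so that the balancing of $\tau_0=\tau_0(k)$ produces a true exponential rate, and that the localised entropy bounds are available on the Lipschitz set $\Omega\setminus\omega'$. Finally, the control-theoretic Theorem \ref{thm:controllability_heat_cost} is recovered from Proposition \ref{prop:UCP_heat_unstable} by the standard duality between cost of approximate controllability and quantitative observability.
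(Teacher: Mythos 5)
Your proposal follows essentially the same route as the paper's proof: split the observation cylinder at a time level $\tau_0=\tau_0(k)$, make the terminal layer $\omega\times(T-\tau_0,T)$ exponentially small via the Gaussian bounds of Lemma \ref{lem:Gaussian_ests}, bound the contribution of $(0,T-\tau_0)$ by the rescaled backward-heat estimate $\sigma_k\lesssim e^{-c\,\tau_0^{n/(n+2)}k^{2/(n+2)}}$ obtained by re-optimising the Lemma \ref{lem:Sneiberg} iteration of Theorem \ref{thm:entropy_backward_heat}, balance the two rates with $\tau_0\sim k^{-1/(n+1)}$ to get $\sigma_k\lesssim e^{-ck^{1/(n+1)}}$, and feed this into Theorem \ref{thm_lowreg_instability_sobolev} with $m=\delta/n$, $\mu=\tfrac1{n+1}$; your exponent $\tfrac{n}{\delta(n+2)}$ is exactly the one in \eqref{eq:UCP_heat}, and your optimization matches the paper's Step 3.

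Two deviations are worth recording. First, you restrict the terminal data to the subspace $Z$ of functions supported in $\overline{\Omega\setminus\omega'}$ so that the support-separation hypothesis of Lemma \ref{lem:Gaussian_ests} is directly verifiable on the terminal layer; the paper instead keeps arbitrary $\varphi_T$ and handles that layer in its Step 2 by combining the Gaussian bound (after time reversal) with the duality estimate of its Step 1, namely $\|\varphi(T-s)\|_{L^2(\Omega)}\le C s^{-\delta/2}\|\varphi_T\|_{H^{-\delta}(\Omega)}$, which is the dual form of Lemma \ref{lem:Sneiberg}. Your restriction is legitimate since the proposition is an existence statement and, as you correctly note, the localized entropy lower bound $e_k\gtrsim k^{-\delta/n}$ (Propositions \ref{prop:entropy_localized} and \ref{prop_ek_nonsmooth}) survives it. Second, and this is the one point you must patch: the singular values you actually estimate for $R$ are taken with respect to the $L^2$ norm of $\varphi_T$ (both the backward-heat factor and your Gaussian bound are stated against $\|\varphi_T\|_{L^2}$), whereas Theorem \ref{thm_lowreg_instability_sobolev}, invoked with $X$ the $H^{-\delta}$-closure of $Z$, asks for $\sigma_k(A:X\to Y)$, and as written you have not even shown that $R$ extends boundedly to $X$. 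This is fixable with the tools at hand: either insert the paper's Step 1 duality estimate to pass from $\|\varphi_T\|_{H^{-\delta}}$ to $\|\varphi(T-\tau_0/2)\|_{L^2}$ at the cost of a factor $\tau_0^{-\delta/2}$ that the exponentials absorb (this is precisely how the paper measures everything from the $H^{-\delta}$ norm), or observe that the proof of Theorem \ref{thm_lowreg_instability_sobolev} — equivalently Lemma \ref{lemma_entropy_typical}(b) together with Remark \ref{rmk:equiv_Mandache} — only needs $R$ to map the $X_1$-ball into a bounded subset of an exponentially compressed space, which your $L^2$-based bound already provides with $\alpha=\tfrac{1}{n+2}$ and hence yields the same modulus $|\log t|^{-\delta\frac{n+2}{n}}$. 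With that sentence added, your argument is the paper's argument.
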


Before turning to the proof of Proposition \ref{prop:UCP_heat_unstable}, as an auxiliary tool, we provide Davies-Gaffney type Gaussian estimates for solutions to the heat equation:

\begin{lemma}
\label{lem:Gaussian_ests}
Let $\Omega, \omega$ be as above and
let $u$ be a solution to \eqref{eq:heat_control} with initial data $u_0$, where $L$ satisfies the conditions from above. Assume that $\supp(u_0)\cap \overline{\omega} = \emptyset$. Let $d:= \dist(\supp(u_0),\omega)>0$. Then, there exist constants $C,c>0$ such that
\begin{align*}
\|u(t)\|_{L^2(\omega )} \leq C e^{- c\frac{d^2}{t}} \|u_0\|_{L^2(\Omega)}.
\end{align*}
\end{lemma}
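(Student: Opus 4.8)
The plan is to establish the Davies--Gaffney (finite speed of propagation in the $L^2$ sense) estimate by the standard Davies exponential perturbation argument, adapted to accommodate the low-regularity lower-order coefficients. First I would reduce to the case without lower-order terms in spirit: fix a smooth cutoff-type weight and test the weak formulation against a suitably exponentially weighted version of the solution. Concretely, let $\psi$ be a bounded Lipschitz function on $\Omega$ with $|\nabla \psi| \le 1$ (to be taken close to $\min(\dist(x,\supp u_0), d)/$something, truncated so that $\psi$ is bounded), and for a parameter $\alpha > 0$ consider $e^{2\alpha\psi} u$ as a test function. Since the control $f$ is supported in $\omega$ and $\supp(u_0) \cap \overline{\omega} = \emptyset$, on the relevant region the equation is homogeneous. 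Differentiating $\tfrac12 \tfrac{d}{dt}\int_\Omega e^{2\alpha\psi}|u|^2\,dx$ and using ellipticity $\Ree \int a\nabla u\cdot\overline{\nabla u} \ge \lambda\|\nabla u\|^2 - \kappa\|u\|^2$ together with the bound $|\nabla\psi|\le 1$, one obtains a Gronwall-type differential inequality
\[
\frac{d}{dt}\int_\Omega e^{2\alpha\psi}|u|^2\,dx \le \Big(C\alpha^2 + C\big(\|b^1\|_{L^{n+2}} + \|b^2\|_{L^{n+2}} + \|c\|_{L^{n/2}}\big)^2 + \kappa\Big)\int_\Omega e^{2\alpha\psi}|u|^2\,dx,
\]
where the lower-order terms are absorbed using H\"older and Sobolev embedding (exactly as in the well-posedness theory for \eqref{eq:heat_control}), at the cost of a constant independent of $\alpha$. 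Integrating from $0$ to $t$ gives $\int_\Omega e^{2\alpha\psi}|u(t)|^2 \le e^{(C\alpha^2 + C')t}\int_\Omega e^{2\alpha\psi}|u_0|^2$.

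Next I would exploit the support condition. Choose $\psi$ so that $\psi \le 0$ on $\supp(u_0)$ (so $e^{2\alpha\psi} \le 1$ there, giving $\int_\Omega e^{2\alpha\psi}|u_0|^2 \le \|u_0\|_{L^2(\Omega)}^2$) and $\psi \ge d$ on $\omega$ (possible with $|\nabla\psi|\le 1$ since $\dist(\supp u_0,\omega) = d$, by taking $\psi(x) = \min(\dist(x,\supp u_0), d)$, which is $1$-Lipschitz, bounded, and zero on $\supp u_0$). Then
\[
e^{2\alpha d}\|u(t)\|_{L^2(\omega)}^2 \le \int_\Omega e^{2\alpha\psi}|u(t)|^2\,dx \le e^{(C\alpha^2 + C')t}\|u_0\|_{L^2(\Omega)}^2,
\]
so $\|u(t)\|_{L^2(\omega)} \le e^{C\alpha^2 t/2 + C't/2 - \alpha d}\|u_0\|_{L^2(\Omega)}$. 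Optimizing in $\alpha$ (taking $\alpha = d/(Ct)$, valid for $t$ in the bounded interval $(0,T)$) yields the exponent $-\,c d^2/t + C''$, i.e.\ $\|u(t)\|_{L^2(\omega)} \le C e^{-c d^2/t}\|u_0\|_{L^2(\Omega)}$ after absorbing the $e^{C''t}$ factor (bounded since $t \le T$) into the constant $C$. This is precisely the claimed estimate.

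The main obstacle is the rigorous justification of using $e^{2\alpha\psi}u$ as a test function when $\psi$ is merely Lipschitz and the coefficients are of low regularity: one needs $u$ to have enough regularity (it lies in $L^2((0,T),H^1_0(\Omega)) \cap C([0,T],L^2(\Omega))$ with $\p_t u \in L^2((0,T),H^{-1})$ by the weak well-posedness theory) so that the energy identity holds, and then mollify $\psi$ (replacing it by $\psi_\epsilon = \psi * \eta_\epsilon$, still uniformly Lipschitz with gradient bound $1+o(1)$, and smooth) to make the computation legitimate, passing to the limit $\epsilon\to 0$ at the end. A secondary technical point is the absorption of the lower-order terms $b^1_j\p_j u$, $\p_j(b^2_j u)$, $cu$ into the energy inequality: here one uses $|(b^1_j\p_j u, e^{2\alpha\psi}u)| \lesssim \|b^1\|_{L^{n+2}}\|\nabla u\|_{L^2}\|e^{\alpha\psi}u\|_{L^{2^*}}$ and interpolation/Sobolev embedding $\|e^{\alpha\psi}u\|_{L^{2^*}} \lesssim \|\nabla(e^{\alpha\psi}u)\|_{L^2} \le \alpha\|e^{\alpha\psi}u\|_{L^2} + \|e^{\alpha\psi}\nabla u\|_{L^2}$, so the $\|\nabla u\|^2$ part is absorbed into the ellipticity term (using the smallness $\mu\ll 1$ is convenient but not essential here since we only need a finite constant), and the remaining terms contribute to the $C\alpha^2 + C'$ coefficient; this is entirely parallel to the standard energy estimates already invoked for \eqref{eq:heat_control}, so I would cite those rather than reprove them. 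Note this argument is essentially independent of the spatial regularity of $a^{ij}$ (only boundedness and ellipticity are used), which is why it works in the stated low-regularity setting.
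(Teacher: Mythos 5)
Your proposal is correct and follows essentially the same route as the paper: a Davies-type exponentially weighted energy estimate with a Lipschitz weight (lower-order terms absorbed via H\"older/Sobolev as in the well-posedness theory), Gronwall, and then optimization of the weight amplitude $\alpha\sim d/t$, which is just a reparameterization of the paper's choice $\psi=L$ on $\omega$, $\|\nabla\psi\|_{L^\infty}\le L/d$, $L=\tilde C d^2/t$. The only caveat is your justification for dropping the source $f\chi_\omega$ (the weighted integral does see $\omega$); in effect the lemma is applied with $f\equiv 0$ (the homogeneous adjoint equation), exactly as the paper implicitly does.
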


In the proof of Proposition \ref{prop:UCP_heat_unstable}, we will reverse time and will apply this to solutions of \eqref{eq:heat_adjoint_control}.

\begin{proof}[Proof of Lemma \ref{lem:Gaussian_ests}]
The proof follows from exponentially weighted estimates.
Let $\psi: \Omega \rightarrow \R$ be a Lipschitz function which is still to be determined below. Then,
\begin{eqnarray*}
&&\frac{d}{dt}\left( \frac{1}{2} \int\limits_{\Omega} e^{2\psi} u^2 dx \right)
= \int\limits_{\Omega} e^{2\psi} u \p_t u dx
\\
&&= - \int\limits_{\Omega} \p_i (e^{2\psi} u) a^{ij} \p_j u dx
- \int\limits_{\Omega} b_j^1 \p_j u e^{2\psi} u dx
 + \int\limits_{\Omega} b_j^2 u \p_j (e^{2 \psi} u) dx 
+ \int\limits_{\Omega} c u^2 e^{2 \psi} dx
\\
&&= -\int\limits_{\Omega} \p_i(e^{\psi} u) a^{ij} \p_j (e^{\psi} u) dx  + \int\limits_{\Omega} \p_i \psi a^{ij}\p_j \psi (e^{\psi} u)^2 dx \\
&& \quad - \int\limits_{\Omega} b_j^1 \p_j (u e^{\psi}) (e^{\psi} u) dx + \int\limits_{\Omega} b_j^1 (u e^{\psi}) (\p_j \psi) (e^{\psi} u) dx\\
&& \quad + \int\limits_{\Omega} b_j^2 (u e^{\psi}) \p_j (e^{\psi} u) dx + \int\limits_{\Omega} b_j^2 (e^{\psi} u) \p_j \psi \p_j (e^{\psi} u) dx + \int\limits_{\Omega} c (e^{\psi}u)^2 dx.
\end{eqnarray*}
Using the ellipticity of $a^{ij}$, Poincar\'e's inequality and Sobolev embedding, this leads to the differential inequality
\begin{align*}
\frac{d}{dt} \left( \frac{1}{2} \| e^{\psi} u\|_{L^2(\Omega)}^2 \right)
&\leq - \lambda \|\nabla (e^{\psi} u)\|_{L^2(\Omega)}^2
 + \lambda^{-1} \|e^{\psi} u \nabla \psi \|_{L^2(\Omega)}^2\\
 & \quad 
+ (\|b_j^1\|_{L^{n+2}(\Omega)} + \|b_j^2\|_{L^{n+2}(\Omega)})(\epsilon \|\nabla (e^{\psi}u)\|_{L^2(\Omega)}^2 + C \epsilon^{-1}\|u e^{\psi}\|_{L^2(\Omega)}^2)\\
& \quad + \|c\|_{L^{\frac{n}{2}}(\Omega)} \|\nabla( e^{\psi}u)\|_{L^{2}(\Omega)}.
\end{align*}
Choosing $0<\epsilon = \frac{\lambda}{10}$ and using the smallness conditions on the norms of the potentials imposed in condition (iv) above, we hence arrive at 
\begin{align*}
\frac{d}{dt} \frac{1}{2} \| e^{\psi} u\|_{L^2(\Omega)}^2
&\leq - \frac{\lambda}{10} \|\nabla (e^{\psi} u)\|_{L^2(\Omega)}^2
 + C\lambda^{-1}( 1+ \|\nabla \psi\|_{L^{\infty}(\Omega)}^2)\|e^{\psi} u  \|_{L^2(\Omega)}^2,
\end{align*}
where $C>0$ depends on the norms of the coefficients.
Invoking Gronwall's lemma, we then obtain that for some constant $C>0$ (depending on $\lambda$ and the coefficient bounds)
\begin{align}
\label{eq:Gauss1}
\|e^{\psi} u\|_{L^2(\Omega)} \leq e^{C (\|\nabla \psi\|_{L^{\infty}(\Omega)}^2 +1)t} \|e^{\psi} u_0\|_{L^2(\Omega)}.
\end{align}
Next, we fix $\psi$ to be Lipschitz continuous in such a way that $\psi = L $ in $\omega $, $\psi = 0$ on $\supp(u_0)=0$ and such that $\|\nabla \psi\|_{L^{\infty}(\Omega)} \leq \frac{L}{d}$. As a consequence, from \eqref{eq:Gauss1} and the bounds on $\psi$ we obtain
\begin{align}
\label{eq:Gauss2}
\| u\|_{L^2(\omega)} \leq e^{C \frac{L^2}{d^2} t - L} \| u_0\|_{L^2(\Omega)}.
\end{align}
Optimizing this in the choice of $L>0$ yields $L = \tilde{C} \frac{d^2}{t}$ which concludes the argument.
\end{proof}

With Lemma \ref{lem:Gaussian_ests} in hand and recalling the regularity estimate \eqref{lem:Sneiberg}, we proceed to the proof of Proposition \ref{prop:UCP_heat_unstable}:

\begin{proof}[Proof of Proposition \ref{prop:UCP_heat_unstable}]
\emph{Step 1: A regularity estimate.} We first note that by duality the estimate in \eqref{lem:Sneiberg} yields that for any $t\in (0,T)$ and any $\delta \in (0,\delta_0)$ (where $\delta_0>0$ denotes the constant from Lemma \ref{lem:Sneiberg})
\begin{align*}
\|\varphi(T-t)\|_{L^2(\Omega)} \leq C t^{-\delta/2} \|\varphi_T\|_{H^{-\delta}(\Omega)}.
\end{align*}
Indeed, given $s\in (0,T)$, we consider a solution $\varphi$ of \eqref{eq:heat_adjoint_control} and a solution $u$ to \eqref{eq:heat_control}, the latter with initial data at time $T-s$. Then, by definition of solutions, we obtain that
\begin{align*}
(u_{T-s}, \varphi(T-s))_{L^2(\Omega)} = (u(T), \varphi_T)_{L^2(\Omega)}.
\end{align*}
As a consequence,
\begin{align*}
\|\varphi(T-s)\|_{L^2(\Omega)} 
&\leq \sup\limits_{\|u_{T-s}\|_{L^2(\Omega)} = 1} (u_{T-s}, \varphi(T-s))_{L^2(\Omega)}
= \sup\limits_{\|u_{T-s}\|_{L^2(\Omega)} = 1} (u(T), \varphi_{T})_{L^2(\Omega)}\\
& \leq \sup\limits_{\|u_{T-s}\|_{L^2(\Omega)} = 1} \|u(T)\|_{H^{\delta}(\Omega)} \|\varphi_T\|_{H^{-\delta}(\Omega)}\\
& \leq \sup\limits_{\|u_{T-s}\|_{L^2(\Omega)} = 1} C s^{-\delta/2} \|u_{T-s}\|_{L^2(\Omega)} \|\varphi_T\|_{H^{-\delta}(\Omega)}
\leq C s^{-\delta/2} \|\varphi_T\|_{H^{-\delta}(\Omega)}.
\end{align*}
Here we used that $\delta \in (0,\frac{1}{2})$ and the Sneiberg type result of Lemma \ref{lem:Sneiberg}.\\

\emph{Step 2: Gaussian bounds.}
Next, we claim that for any $t\in (0,T)$ there exists a constant $\tilde{C}>0$ such that
\begin{align}
\label{eq:bound_Gauss_aux}
\|\varphi\|_{L^2(\omega \times (T-t,T))} \leq \tilde{C} e^{- \frac{\tilde{C}}{t}} \|\varphi_{T}\|_{H^{-\delta}(\Omega)}.
\end{align}
Indeed, by the Gaussian bounds from Lemma \ref{lem:Gaussian_ests} (together with a reversal of time) and by Step 1, for any $s\in (0,T)$ we have that
\begin{align*}
\|\varphi(T-s)\|_{L^2(\omega)} \leq C e^{- \frac{C}{s}} \|\varphi(T-s/2)\|_{L^2(\Omega)} \leq C e^{-\frac{C}{s}} s^{-\delta/2} \|\varphi_T\|_{H^{-\delta}(\Omega)}.
\end{align*}
Squaring and integrating this from $s\in (0,t)$, we obtain
the desired bound \eqref{eq:bound_Gauss_aux}.\\

\emph{Step 3: Conclusion.}
We next consider the two maps
\begin{align*}
&i_1(t): H^{-\delta}(\Omega ) \ni \varphi_T \mapsto \varphi|_{\omega \times (T-t,T)} \in L^2(\omega \times (T-t,T)),\\
&i_2(t):  H^{-\delta}(\Omega ) \ni \varphi_T \mapsto \varphi|_{\omega \times (0,T-t)} \in L^2(\omega \times (0,T-t)).
\end{align*}
Now, on the one hand, by Lemma \ref{lem:Gaussian_ests} and by Theorem \ref{thm:entropy_backward_heat} (with a time-step larger or equal to $t$ and not a time step of the order one) and an integration argument as in Step 2, we obtain
\begin{align}
\label{eq:i2}
\sigma_k(i_2(t)) \leq e^{-c k^{\frac{2}{n+2}} t^{\frac{n}{n+2}}}.
\end{align}
On the other hand, Step 2 implies that 
\begin{align}
\label{eq:i1}
\sigma_1(i_1(t)) \leq e^{-\frac{C}{t}}.
\end{align}
Since the the map $i: L^{2}(\Omega) \ni \varphi_T \mapsto \varphi|_{\omega \times (0,T)} \in L^2(\omega \times (0,T))$ can we written as $i = i_2(t) + i_1(t)$, for the singular values of $i$ we infer that
\begin{align*}
\sigma_k(i) \leq \sigma_1(i_1(t)) + \sigma_k(i_2(t))
\leq Ce^{-\frac{C}{t}} + e^{-c k^{\frac{2}{n+2}} t^{\frac{n}{n+2}}}.
\end{align*}
Optimizing this in $t$, we consider $t= \tilde{c} k^{- \frac{1}{n+1}}$ for some $\tilde{c}>0$ and hence arrive at
\begin{align*}
\sigma_k(i) \leq e^{-c k^{\frac{1}{n+1}}}.
\end{align*}
Invoking Theorem \ref{thm_lowreg_instability_sobolev} with $X=L^2(\Omega)$, $X_1 = H^{-\delta}(\Omega)$ we thus deduce that
\begin{align*}
\omega(t) \geq C |\log(t)|^{- \delta \frac{n+2}{n}}.
\end{align*}
This concludes the proof.
\end{proof}

With Proposition \ref{prop:UCP_heat_unstable} in hand, the proof of Theorem \ref{thm:controllability_heat_cost} reduces to a duality argument.

\begin{proof}[Proof of Theorem \ref{thm:controllability_heat_cost}]
We prove that if there was a cost of control that was better than exponential, then also the associated unique continuation estimate \eqref{eq:UCP_heat} would have to be better than logarithmic (which by \eqref{eq:UCP_heat} cannot be the case). The following duality argument is analogous to the strategy outlined in \cite{Phung}:
Considering a solution $u$ to \eqref{eq:heat_control} and a solution $\varphi$ to \eqref{eq:heat_adjoint_control}, integrating by parts and using the Dirichlet boundary conditions, we obtain  \eqref{eq:heat_adjoint_control}
\begin{align*}
\int\limits_{0}^T \int\limits_{\omega} f \varphi dx dt
= \int\limits_{\Omega} u(x, T) \varphi_T dx
= \int\limits_{\Omega} (u(x,T) - u_d(x)) \varphi_T(x) dx + \int\limits_{\Omega} u_d(x) \varphi_T(x)dx.
\end{align*}
Rearranging and assuming the approximation bounds stated in Theorem \ref{thm:controllability_heat_cost} then yields
\begin{align*}
\left| \int\limits_{\Omega} u_d \varphi_T dx \right|
&\leq \|u(\cdot, T)-u_d\|_{L^2(\Omega)} \|\varphi_T\|_{L^2(\Omega)} + \|f\|_{L^2(\omega \times (0,T))} \|\varphi\|_{L^2(\omega \times (0,T))}\\
&\leq \eps \|u_d\|_{H^{\delta}(\Omega)} \|\varphi_T\|_{L^2(\Omega)}+ M(\eps) \|u_d\|_{L^2(\Omega)}\|\varphi\|_{L^2(\omega \times (0,T))}.
\end{align*}
By the definition of the $H^{-\delta}(\Omega)$ norm through duality (where we use that $\delta \in (0,1/2)$), we hence infer that 
\begin{align}
\label{eq:UCP_new}
\|\varphi_T\|_{H^{-\delta}(\Omega)} \leq \eps\|\varphi_T\|_{L^{2}(\Omega)}  + M(\eps)\|\varphi\|_{L^2(\omega \times (0,T))}.
\end{align}
Inserting the functions $\varphi^{\tilde{\eps}}$ with $\tilde{\eps}=2\eps$ from Proposition \ref{prop:UCP_heat_unstable} into \eqref{eq:UCP_new} implies 
\begin{align}
\label{eq:UCP_new1}
2 \eps \leq \eps + M(2 \eps)C \exp(-C (2\eps)^{-\frac{1}{\delta}\frac{n}{n+2}}).
\end{align}
As a consequence, $M(2 \eps)$ has to be of the order of $\exp(C(2\eps)^{-\frac{1}{\delta}\frac{n}{n+2}})$ for some $C>0$.
\end{proof}

\subsection{Instability of the Calder\'on problem at low regularity}
\label{sec:Calderon}

In this section, we show that similarly as for the stability estimates in unique continuation, also for the stability properties of the Calder\'on problem, one cannot hope for a gain due to the presence of low regularity metrics and potentials (thus proving Theorem \ref{thm:thm_Calderon} and answering question (Q3) from the introduction for the example of the Calder\'on problem). 

Let $\Omega \subset \mR^n$ be a bounded open set with Lipschitz boundary. As in Section \ref{subseq_nd} we consider the operator $L = - \p_j a^{jk} \p_k + b_j \p_j + \p_j c_j + q_0$ with $(a^{jk}) \in L^{\infty}(\Omega, \R^{n\times n})$ uniformly elliptic, $b_j, c_j \in L^{n}(\Omega)$ and $q_0 \in L^{\frac{n}{2}}(\Omega)$. If $u \in H^1(\Omega)$ is a weak solution of $Lu = 0$, we recall that the normal derivative $\p_{\nu}^L u|_{\p \Omega}$ is defined weakly as an element of $H^{-1/2}(\p \Omega)$ by \eqref{eq:bilinear}. If $L$ is such that zero is not a Dirichlet eigenvalue, we may thus define the Dirichlet-to-Neumann operator
\begin{align}
\label{eq:DtN_Calderon}
\Lambda_{L}: H^{\frac{1}{2}}(\partial \Omega) \rightarrow H^{-\frac{1}{2}}(\partial \Omega), \ f\mapsto \p_{\nu}^L u|_{\partial \Omega},
\end{align}
 where $u$ is a solution to $L u = 0$ in $\Omega$ with $u|_{\partial \Omega} = f$.
 
In the following, we seek to prove that for the Dirichlet-to-Neumann operator the instability result from Theorem \ref{thm:thm_Calderon} holds. In this context, the main novelty of the associated estimates with respect to the instability results from Mandache \cite{Mandache} and Di Cristo-Rondi \cite{DiCristoRondi} is that our mechanism is very robust in that we can treat
\begin{itemize} 
\item potentials $q_0+q_1, q_0+ q_2$ which are \emph{not necessarily compactly supported},
\item coefficients of \emph{scaling critical low regularity}, 
\item and consider the Calder\'on problem in \emph{relatively rough and general domains}.
\end{itemize}

The proof of Theorem \ref{thm:thm_Calderon} will be based on the following abstract result. Recall that $Y'$ is the set of bounded linear functionals on $Y$, and that for $Y$ Hilbert any $A \in B(Y, Y')$ has a formal adjoint $A' \in B(Y, Y')$.

\begin{theorem} \label{thm_lowreg_calderon_abstract}
Let $F: X \to B(Y,Y')$ be a continuous map, where $X$ is a Banach space and $Y$ is a separable Hilbert space. Let $X_1 \subset X$ be a closed subspace so that $i: X_1 \to X$ is compact with $e_k(i) \gtrsim k^{-m}$ for some $m > 0$. Let $K = \{ u \in X \,;\, \norm{u}_{X_1} \leq r \}$ for some $r > 0$. Assume that there is an orthonormal basis $(\varphi_j)_{j=1}^{\infty}$ of $Y$ and constants $C, \rho, \mu > 0$ so that $F(u)$ and $F(u)'$ satisfy 
\begin{equation} \label{fu_decay}
\norm{F(u) \varphi_k}_{Y'}, \ \norm{F(u)' \varphi_k}_{Y'} \leq C e^{-\rho k^{\mu}}, \qquad k \geq 1,
\end{equation}
uniformly over $u \in K$. Then there is $c > 0$ with the following property: for any $\eps > 0$ small enough there are $u_1, u_2$ such that
\begin{equation} \label{instab_restatement1}
\|u_1-u_2\|_{X} \geq \eps, \quad \|u_j\|_{X_1} \leq r, \quad \|F(u_1)-F(u_2)\|_{B(Y,Y')} \leq \exp(-c \eps^{-\frac{\mu}{m(\mu+2)}}).
\end{equation}
In particular, if one has the stability property 
\[
\norm{u_1-u_2}_X \leq \omega(\norm{F(u_1)- F(u_2)}_{B(Y,Y')}), \qquad u_1, u_2 \in K,
\]
then necessarily $\omega(t) \gtrsim \abs{\log\,t}^{-\frac{m(\mu+2)}{\mu}}$ for $t$ small.
\end{theorem}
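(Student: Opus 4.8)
The plan is to run the Mandache comparison of Theorem \ref{thm:Mandache}: I will use the decay hypothesis \eqref{fu_decay} to cover $F(K)$ by relatively few $\delta$-balls in $B(Y,Y')$, and the lower entropy bound $e_k(i) \gtrsim k^{-m}$ to produce many $\eps$-separated points in $K$; the pigeonhole comparison then gives both the modulus bound and, via Remark \ref{rmk:equiv_Mandache} and the continuity of $F$, the existence statement \eqref{instab_restatement1}. The whole argument is modelled on the proofs of Lemma \ref{lemma_entropy_typical} and Theorem \ref{thm_lowreg_instability_sobolev}, with the embedding into a ``Gevrey-type'' operator space of Theorem \ref{thm_entropy_bounded_operators}(b) replaced by an explicit head/tail covering.

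First I would pass to matrix entries. Set $a_{jk}(u) := \langle F(u)\varphi_j, \varphi_k\rangle$, with $\langle\cdot,\cdot\rangle$ the duality pairing of $Y'$ with $Y$; since $(\varphi_k)$ is an orthonormal basis of $Y$ one has $\norm{\psi}_{Y'}^2 = \sum_k \abs{\langle\psi,\varphi_k\rangle}^2$. The bound on $\norm{F(u)\varphi_j}_{Y'}$ gives $\sum_k \abs{a_{jk}(u)}^2 \leq C^2 e^{-2\rho j^\mu}$, and, writing the matrix of $F(u)'$ as the transpose of that of $F(u)$, the bound on $\norm{F(u)'\varphi_k}_{Y'}$ gives $\sum_j \abs{a_{jk}(u)}^2 \leq C^2 e^{-2\rho k^\mu}$; hence
\[
\abs{a_{jk}(u)} \leq C e^{-\rho\max(j,k)^\mu} \qquad \text{uniformly over } u \in K .
\]
(As in the remark following Theorem \ref{thm_entropy_bounded_operators}, decay in a single index would not even make $F(K)$ relatively compact in $B(Y,Y')$; this is why both halves of \eqref{fu_decay} are needed.) Let $P_N$ be the orthogonal projection of $Y$ onto $\mathrm{span}(\varphi_1,\dots,\varphi_N)$, with dual $P_N^*:Y'\to Y'$, and split $F(u) = P_N^* F(u) P_N + R_N(u)$. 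The tail has entries $a_{jk}(u)$ with $\max(j,k)>N$, so for $N$ large the Hilbert--Schmidt (hence operator) bound
\[
\norm{R_N(u)}_{B(Y,Y')}^2 \leq \sum_{\max(j,k)>N} \abs{a_{jk}(u)}^2 \leq C^2 \sum_{m>N}(2m-1)e^{-2\rho m^\mu} \leq C' e^{-\rho' N^\mu}
\]
holds for some $\rho'>0$, uniformly in $u\in K$, while the head $P_N^* F(u) P_N$ lies in the $2N^2$-real-dimensional space of operators with matrix supported in $\{1,\dots,N\}^2$ and has Hilbert--Schmidt norm $\leq \big(\sum_{j,k} C^2 e^{-2\rho\max(j,k)^\mu}\big)^{1/2} =: C_0 < \infty$, a bound independent of $N$. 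Given small $\delta>0$, choosing $N=N(\delta)$ with $C' e^{-\rho' N^\mu}\leq(\delta/2)^2$, so that $N \lesssim \abs{\log\delta}^{1/\mu}$, and covering the radius-$C_0$ ball of that finite-dimensional space by $\leq (6C_0/\delta)^{2N^2}$ Hilbert--Schmidt balls of radius $\delta/2$, I obtain a $\delta$-net $Y_\delta$ for $F(K)$ in $B(Y,Y')$ with
\[
\# Y_\delta \leq (6C_0/\delta)^{2N^2} \leq \exp\big(C_1 \abs{\log\delta}^{\frac{\mu+2}{\mu}}\big) =: f(\delta), \qquad \delta \text{ small}.
\]

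On the discrete side, $e_k(i)\gtrsim k^{-m}$ and \eqref{entropy_capacity_relation} give $c_k(i)\gtrsim k^{-m}$, so exactly as in the proof of Lemma \ref{lemma_entropy_typical}(a) the set $K=\{\norm{\cdot}_{X_1}\leq r\}$ contains, for each small $\eps$, an $\eps$-discrete set with more than $g(\eps):=\exp(c_2\eps^{-1/m})$ elements, where $g^{-1}(\eta)=c_2^m(\log\eta)^{-m}$. Applying Theorem \ref{thm:Mandache} with these $f$ and $g$, part (b) gives, under the assumed stability on $K$,
\[
\omega(t) \geq g^{-1}(f(t/2)) = c_2^m\big(\log f(t/2)\big)^{-m} \geq c_2^m\big(C_1\abs{\log(t/2)}^{\frac{\mu+2}{\mu}}\big)^{-m} \gtrsim \abs{\log t}^{-\frac{m(\mu+2)}{\mu}}
\]
for $t$ small, while part (a) together with Remark \ref{rmk:equiv_Mandache} produces, for each small $\eps$, points $u_1,u_2\in K$ with $\norm{u_1-u_2}_X\geq\eps$ and, since $f^{-1}(\eta)=\exp(-(C_1^{-1}\log\eta)^{\mu/(\mu+2)})$,
\[
\norm{F(u_1)-F(u_2)}_{B(Y,Y')} \leq 2 f^{-1}(g(\eps)) = 2\exp\big(-c_3\eps^{-\frac{\mu}{m(\mu+2)}}\big) \leq \exp\big(-c\,\eps^{-\frac{\mu}{m(\mu+2)}}\big),
\]
which is \eqref{instab_restatement1}.

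The step I expect to be the main obstacle is the bookkeeping in the covering argument: one must balance the tail decay, which forces $N\sim\abs{\log\delta}^{1/\mu}$, against the $2N^2$-dimensional volume count, and check that the product $N^2\abs{\log\delta}$ produces exactly the exponent $\frac{\mu+2}{\mu}$ in $f$, which then propagates to the claimed exponents $\frac{m(\mu+2)}{\mu}$ and $\frac{\mu}{m(\mu+2)}$. A secondary technical point, best isolated beforehand, is the passage from \eqref{fu_decay} to the two-index entry bound, which amounts to identifying the matrices of $F(u)$ and of its formal adjoint $F(u)'$ in the basis $(\varphi_j)$ and checking that $Y'$ may be treated as $Y$ for the Hilbert--Schmidt norm computations.
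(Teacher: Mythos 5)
Your argument is correct, and its skeleton (reduce \eqref{fu_decay} to the two-index matrix bound $\abs{a_{jk}(u)} \leq C e^{-\rho \max(j,k)^{\mu}}$, then run the entropy/capacity pigeonhole of Theorem \ref{thm:Mandache} against the $\eps$-discrete sets coming from $e_k(i) \gtrsim k^{-m}$) is the same as in the paper. Where you genuinely diverge is in how the image $F(K)$ is shown to be compressed: the paper introduces the exponentially weighted space $\tilde{Y}$ with norm $\norm{f}_{\tilde Y}^2 = \sum_j e^{-(\rho/2)j^{\mu}} \abs{(f,\varphi_j)}^2$, checks that $F(K)$ is bounded in $HS(\tilde Y, \tilde Y')$, and then imports the singular value bound for the embedding $HS(\tilde Y,\tilde Y') \to HS(Y,Y')$ from Lemma \ref{lemma_hs_embedding} together with the Carl-inequality conversion of Lemma \ref{lemma_singular_entropy_relation}, feeding the resulting entropy decay $e_k \lesssim e^{-ck^{\mu/(\mu+2)}}$ into Lemma \ref{lemma_entropy_typical}; you instead truncate directly, $F(u) = P_N^* F(u) P_N + R_N(u)$, bound the tail by the Hilbert--Schmidt estimate $\norm{R_N(u)} \lesssim e^{-\rho' N^{\mu}/2}$, and count a $\delta/2$-net of the $2N^2$-real-dimensional head by a volume argument, which with $N \sim \abs{\log\delta}^{1/\mu}$ gives $f(\delta) = \exp(C\abs{\log\delta}^{(\mu+2)/\mu})$ — exactly the covering function the paper's route produces, so the exponents $\frac{m(\mu+2)}{\mu}$ and $\frac{\mu}{m(\mu+2)}$ come out identically. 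Your route is more elementary and self-contained (it bypasses Lemma \ref{lemma_hs_embedding} and the singular-value/entropy dictionary, and it makes transparent that the exponent $(\mu+2)/\mu$ is the balance between the tail constraint $N\sim\abs{\log\delta}^{1/\mu}$ and the $N^2\abs{\log\delta}$ volume count); the paper's route buys reuse of already established abstract lemmas, which is convenient since the same machinery is used elsewhere (e.g.\ in Theorem \ref{thm_entropy_bounded_operators}). Your preliminary reduction is also sound: the identification $\norm{\psi}_{Y'}^2 = \sum_k \abs{\psi(\varphi_k)}^2$ is just Riesz representation, and the relation $\langle F(u)'\varphi_k,\varphi_j\rangle = \langle F(u)\varphi_j,\varphi_k\rangle$ from the definition of the formal adjoint gives precisely the column bound you use, so both halves of \eqref{fu_decay} enter exactly as needed.
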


Let us comment on this abstract result which will be the main ingredient in our discussion of the instability properties of the Calder\'on problem at low regularity. The main point is that there is a fixed orthonormal basis $(\varphi_j)$ that gives the decay \eqref{fu_decay} uniformly over $u \in K$ (cf. also \cite{DiCristoRondi}). As a sufficient condition for \eqref{fu_decay}, it is enough to find a linear operator $A: Y \to Z$ which dominates $F(u)$ and $F(u)'$ in the sense that 
\[
\norm{F(u)f}_{Y'}, \ \norm{F(u)'f}_{Y'} \leq C \norm{Af}_Z
\]
uniformly over $u \in K$, and that the singular values of $A$ satisfy $\sigma_k(A) \leq C e^{-\rho k^{\mu}}$ (one can then take $(\varphi_j)$ as a singular value basis of $A$). In our discussion of the Calder\'on problem below, we will follow this strategy by considering $F(q)= \Lambda_{L+q} - \Lambda_L$ and by taking $A$ to be the operator which maps $u|_{\p \Omega}$ to $u|_{\p \Omega'}$ where $u$ solves $L u = 0$ in $\Omega$ and $\Omega' \Subset  \Omega$. We already saw in the proof of Theorem \ref{thm:UCP_instab} that the singular values of $A$ decay exponentially.

\begin{proof}[Proof of Theorem \ref{thm_lowreg_calderon_abstract}]
Define the norm 
\[
\norm{f}_{\tilde{Y}} = \left( \sum_{j=1}^{\infty} e^{-(\rho/2) j^{\mu}} \abs{(f, \varphi_j)_Y}^2 \right)^{1/2}
\]
and let $\tilde{Y}$ be the completion of $Y$ with respect to this norm. We claim that for any $u \in K$, the map $F(u)$ extends as a bounded operator $\tilde{Y} \to \tilde{Y}'$. In fact, write $a_{jk} = a_{jk}(u) = (F(u) \varphi_k)(\varphi_j)$, so that 
\begin{equation} \label{ajk_decay}
\abs{a_{jk}} \leq C e^{-\rho (\max(j,k))^{\mu}}
\end{equation}
by \eqref{fu_decay}. For any $f, g \in Y$ we have $F(u)f \in Y'$ and 
\begin{align}
\abs{(F(u)f)(g)} &\leq \abs{\sum_{j,k} a_{jk} (f, \varphi_k) (g, \varphi_j)} \leq \left( \sum_{j,k} \abs{a_{jk}}^2 e^{(\rho/2)(j^{\mu} + k^{\mu})} \right)^{1/2} \norm{f}_{\tilde{Y}} \norm{g}_{\tilde{Y}} \notag \\
 &\leq C \norm{f}_{\tilde{Y}} \norm{g}_{\tilde{Y}}. \label{fufg_estimate}
\end{align}
Let $\tilde{\varphi}_j = e^{(\rho/4) j^{\mu}} \varphi_j$, so that $(\tilde{\varphi}_j)$ is an orthonormal basis of $\tilde{Y}$. Then as in \eqref{fufg_estimate} 
\[
\norm{F(u)\tilde{\varphi}_l}_{\tilde{Y}'} \leq \sup_{\norm{g}_{\tilde{Y}} = 1} \abs{\sum_j a_{jl} e^{(\rho/4) l^{\mu}}(g, \varphi_j)} \leq \left( \sum_{j} \abs{a_{jl}}^2 e^{(\rho/2)(j^{\mu} + l^{\mu})} \right)^{1/2} \leq C e^{-(\rho/4) l^{\mu}}
\]
where we used \eqref{ajk_decay}. It follows that 
\[
\norm{F(u)}_{HS(\tilde{Y}, \tilde{Y}')}^2 = \sum_{l=1}^{\infty} \norm{F(u) \tilde{\varphi}_l}_{\tilde{Y}'}^2 \leq C < \infty.
\]
The constants are uniform over $u \in K$.

Now $F(K)$ is contained in a bounded subset of $HS(\tilde{Y}, \tilde{Y}')$, and the embedding $$\iota: HS(\tilde{Y}, \tilde{Y}') \to HS(Y, Y')$$ has singular values satisfying $\sigma_k(\iota) \lesssim C e^{-c k^{\mu/2}}$ by Lemma \ref{lemma_hs_embedding} (we can take $\alpha_j = \beta_j = e^{(\rho/4) j^{\mu}}$). Lemma \ref{lemma_singular_entropy_relation} gives that $e_k(\iota) \lesssim e^{-c k^{\frac{\mu}{2+\mu}}}$. Now the theorem follows from Lemma \ref{lemma_entropy_typical}.
\end{proof}

\begin{proof}[Proof of Theorem \ref{thm:thm_Calderon}]
Since $L$ has discrete spectrum and $0$ is not a Dirichlet eigenvalue of $L$ in $\Omega$, there is $\eps_0 > 0$ so that $0$ is not a Dirichlet eigenvalue of $L + q$ in $\Omega$ whenever $\norm{q}_{L^{n/2}(\Omega)} \leq \eps_0$. Now choose a bounded smooth domain $\Omega' \subset \subset \Omega$, and let 
\[
K := \{ q \in W^{\delta,n/2}_0(\Omega') \,;\, \norm{q}_{W^{\delta,n/2}(\Omega')} \leq \eps_0 \}.
\]
Identifying $q$ with its zero extension to $\Omega$, we have that $K$ is a compact subset of $X := L^{n/2}(\Omega)$.

Write $Y := H^{1/2}(\p \Omega)$, and define the map 
\[
F: X \to B(Y, Y'), \ \ F(q) = \Lambda_{L+q}-\Lambda_L.
\]
In fact $F$ is well defined near the set where $\norm{q}_{L^{n/2}(\Omega)} \leq \eps_0$ which contains $K$, and this is enough for our purposes. The embedding $i: W^{\delta,n/2}_0(\Omega') \to L^{n/2}(\Omega)$ has entropy numbers $e_k(i) \gtrsim k^{-\delta/n}$ by \cite[Theorem 1 in Section 3.3.3]{ET08} (note that the functions $f$ in that theorem are compactly supported). Given $q \in K$ and $f \in Y$, let $u, u_0 \in H^1(\Omega)$ solve $(L + q) u = L u_0 = 0$ in $\Omega$ with $u|_{\p \Omega} = u_0|_{\p \Omega} = f$. Writing $w := u-u_0$, we have 
\[
F(q) f = \p_{\nu}^{L} w|_{\p \Omega}
\]
where $w \in H^1(\Omega)$ solves 
\[
(L+q) w = -q u_0 \text{ in $\Omega$}, \qquad w|_{\p \Omega} = 0.
\]
Elliptic regularity and the fact that $q$ is supported in $\ol{\Omega}'$ give that 

\begin{align*}
\norm{\p_{\nu}^L w}_{H^{-1/2}(\p \Omega)} &\lesssim \norm{w}_{H^1(\Omega)} \lesssim \norm{qu_0}_{H^{-1}(\Omega)} \lesssim \norm{q}_{L^{n/2}(\Omega')} \norm{u_0}_{H^1(\Omega')} \\
 &\lesssim \norm{u_0|_{\p \Omega'}}_{H^{1/2}(\p \Omega')}.
\end{align*}
Writing $Z := H^{1/2}(\p \Omega')$, we thus have 
\[
\norm{F(q)f}_{Y'} \leq C \norm{Af}_{Z}
\]
uniformly over $q \in K$, where $A: Y \to Z$ maps $u_0|_{\p \Omega}$ to $u_0|_{\p \Omega'}$. 
The constant is uniform over $q \in K$. By the argument in Theorem \ref{thm:UCP_instab} there is $c > 0$ so that 
\[
\sigma_k(A) \leq e^{-c k^{1/n}}, \qquad k \geq 1.
\]
Choosing $(\varphi_j)$ to be a singular value basis for $A$ and using that $F(q)$ is self-adjoint, we see that \eqref{fu_decay} holds with $\mu = 1/n$. The result now follows from Theorem \ref{thm_lowreg_calderon_abstract}.
\end{proof}

\begin{remark}[Possible extensions]
\label{rmk:gen_Cald}
Finally, we comment on possible extensions of this result: 
\begin{itemize}
\item \emph{Cauchy data.} Instead of working with the Dirichlet-to-Neumann map a perhaps more natural setting is that of working with Cauchy data
\begin{align*}
\mathcal{C}_q = \left\{ \left( u|_{\partial \Omega}, \p_{\nu}^L u|_{\partial \Omega} \right) \in H^{\frac{1}{2}}(\partial \Omega) \times H^{-\frac{1}{2}}(\partial \Omega): \ u \mbox{ is a solution to } Lu = 0 \mbox{ in } \Omega \right\},
\end{align*}
endowed with the distance
\begin{align*}
\dist(\mathcal{C}_{q_1}, \mathcal{C}_{q_2})
&= \max\left\{ \max\limits_{(f,g)\in \mathcal{C}_{q_1}} \min\limits_{(\tilde{f},\tilde{g})\in \mathcal{C}_{q_2}} \frac{\|(f,g)-(\tilde{f},\tilde{g})\|_{H^{\frac{1}{2}}(\partial \Omega) \times H^{-\frac{1}{2}}(\partial \Omega)}}{\|(f,g)\|_{H^{\frac{1}{2}}(\partial \Omega) \times H^{-\frac{1}{2}}(\partial \Omega)}}, \right.\\
& \quad \left. \max\limits_{(f,g)\in \mathcal{C}_{q_2}} \min\limits_{(\tilde{f},\tilde{g})\in \mathcal{C}_{q_1}} \frac{\|(f,g)-(\tilde{f},\tilde{g})\|_{H^{\frac{1}{2}}(\partial \Omega) \times H^{-\frac{1}{2}}(\partial \Omega)}}{\|(f,g)\|_{H^{\frac{1}{2}}(\partial \Omega) \times H^{-\frac{1}{2}}(\partial \Omega)}} \right\} ,
\end{align*}
where $\|(f,g)\|_{H^{\frac{1}{2}}(\partial \Omega) \times H^{-\frac{1}{2}}(\partial \Omega)} = \|f\|_{H^{\frac{1}{2}}(\partial \Omega)} + \|g\|_{H^{-\frac{1}{2}}(\partial \Omega)}$. In order to avoid technicalities we did not formulate the Calder\'on problem in this setting above. Similar arguments as above could however be used also in the framework involving Cauchy data (dealing additionally with finite dimensional subspaces).
\item \emph{Potentials in lower regularity classes.}
We emphasize that our Schr{\"o}dinger operators also include the setting of potentials $q_0 \in W^{-1,n}(\Omega)$. Indeed, any such potential can be represented in the form $q_0 = F_0 + \p_i F_i$ with $F_0 \in L^{\frac{n}{2}}(\Omega)$ and $F_i \in L^{n}(\Omega)$ for $i\in \{1,\dots,n\}$. In order to prove an instability result for the Calder\'on problem in these spaces, one would have to vary not only the potentials $q_0$ but also the drift terms $b_j, c_j$. Such an instability result would follow using our strategy in a similar way as the result from Theorem \ref{thm:thm_Calderon}.
\end{itemize}
\end{remark}

\section{Instability in the presence of microlocal smoothing}
\label{sec:micro_smooth_1}

In the previous sections we considered forward operators that have \emph{global} smoothing properties, in the sense that they either erase all singularities of any function that they are applied to, or at least provide a certain (possibly small) amount of global regularization. In this section, we will now consider forward operators that are only \emph{microlocally} smoothing at certain points. If $A$ is linear and microlocally smoothing at $(x_0, \xi_0)$, then no singularity of $u$ at $(x_0,\xi_0)$ can be recovered from the knowledge of $Au$. It is known (see e.g.\ \cite[Theorem 4.4]{MSU15}) that in such cases a stability estimate of the type 
\begin{equation} \label{u_hs_general}
\norm{u}_{H^{s_1}} \leq C(\norm{Au}_{H^{s_2}} + \norm{u}_{H^{s_3}})
\end{equation}
cannot hold for any choices of $s_1, s_2, s_3 \in \mR$ for which $s_1 > s_3$. This situation is relevant for many inverse problems, including limited data X-ray tomography \cite{Quinto, Quinto1} or the study of geodesic X-ray transforms \cite{MSU15}.

Let $A$ be linear and microlocally smoothing at $(x_0, \xi_0)$. Then the fact that an estimate like \eqref{u_hs_general} cannot be valid can be proved by testing \eqref{u_hs_general} with wave packets, or ``coherent states", $u$ that concentrate in a conic neighborhood of $(x_0, \xi_0)$ in phase space (for such functions $Au$ is very small because of microlocal smoothing). See e.g.\ \cite[Proposition 5]{SU09} for an argument of this type. From the point of view of the entropy/capacity approach in Section \ref{sec:abstract} this means that not all of $A(K)$ is contained in a compressed space, but there is a large enough set $K_1 \subset K$ so that $A(K_1)$ lies in a compressed space. The set $K_1$ contains functions that concentrate in a conic neighborhood of $(x_0, \xi_0)$, and there are relatively large $\eps$-discrete sets contained in $K_1$ (in fact such sets could be constructed directly from wave packets using almost orthogonality). Replacing $K$ by $K_1$ in the analysis in Section \ref{sec:abstract} then leads to instability.

We can use the remarks in the preceding paragraph to give a more general instability argument that also applies to nonlinear inverse problems. The idea is that finding $\eps$-discrete sets in $K_1$ can be reduced to obtaining lower bounds for entropy/capacity numbers of microlocal cutoffs to a conic neighborhood of $(x_0, \xi_0)$. This in turn reduces to a standard Weyl law for pseudodifferential operators (Theorem \ref{thm_weyl3}).

\begin{theorem}
\label{thm:Weyl_microlocal}
Let $(M,g)$ be a closed smooth $n$-manifold and let $P \in \Psi^0_{\mathrm{cl}}(M)$ be noncharacteristic (i.e.\ have nonvanishing principal symbol) at some $(x_0, \xi_0) \in T^* M \setminus \{0\}$.  For any $s_1 > s_2$, the compact operator $P: H^{s_1}(M) \to H^{s_2}(M)$ satisfies 
\[
\sigma_k(P), \ e_k(P) \sim k^{-\frac{s_1-s_2}{n}}.
\]
\end{theorem}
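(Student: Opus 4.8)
\textbf{Proof plan for Theorem \ref{thm:Weyl_microlocal}.}

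The plan is to reduce both the singular value and entropy number estimates for $P : H^{s_1}(M) \to H^{s_2}(M)$ to the elliptic $\Psi$DO Weyl law of Theorem \ref{thm_weyl3}. By Lemma \ref{lemma_singular_entropy_relation} it is enough to prove $\sigma_k(P) \sim k^{-(s_1-s_2)/n}$, since this decay rate transfers to entropy numbers in both directions. I would first use the Bessel isometries $J^t = (1-\Delta_g)^{t/2}$ to pass to an operator on $L^2$: writing $P = J^{-s_2}\circ (J^{s_2} P J^{-s_1}) \circ J^{s_1}$ and noting that $J^{s_1} : H^{s_1}\to L^2$ and $J^{-s_2}: L^2 \to H^{s_2}$ are isometries, Proposition \ref{prop_singular_value_decomposition}(e) gives $\sigma_k(P) = \sigma_k(Q)$ where $Q := J^{s_2} P J^{-s_1} \in \Psi^{s_2-s_1}_{\mathrm{cl}}(M)$ is a classical $\Psi$DO of order $-(s_1-s_2) < 0$ acting on $L^2(M)$. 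Moreover $Q$ has principal symbol $\abs{\xi}_g^{s_2-s_1}$ times the principal symbol of $P$ (up to the usual symbol calculus), so $Q$ is still noncharacteristic at $(x_0,\xi_0)$.

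The upper bound $\sigma_k(P) \lesssim k^{-(s_1-s_2)/n}$ is then immediate: $Q \in \Psi^{-m}_{\mathrm{cl}}(M)$ with $m = s_1 - s_2 > 0$, so by Theorem \ref{thm:eigenval_bds}(i) (the statement for $B \in \Psi^{-s}(M)$) one has $\sigma_k(Q) \lesssim k^{-m/n}$. Alternatively one factors $Q = i \circ \tilde{Q}$ through $H^m(M) \hookrightarrow L^2(M)$ and uses Theorem \ref{thm_sobolev_inclusion}; either way no noncharacteristic hypothesis is needed for the upper bound.

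For the lower bound $\sigma_k(P) \gtrsim k^{-(s_1-s_2)/n}$ I would imitate the localization argument from the proof of Theorem \ref{thm_sobolev_inclusion}. Since $Q$ is noncharacteristic at $(x_0,\xi_0)$, choose a classical $\Psi$DO $B \in \Psi^{0}_{\mathrm{cl}}(M)$ whose principal symbol is supported in a small conic neighborhood of $(x_0,\xi_0)$ on which $Q$ is elliptic, and which is noncharacteristic at $(x_0,\xi_0)$. One can then construct a microlocal parametrix: a classical $\Psi$DO $E \in \Psi^{m}_{\mathrm{cl}}(M)$ with $E Q B = B + R$ for some smoothing $R \in \Psi^{-\infty}(M)$. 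Consider the auxiliary operator $A_0 := B^* B^* Q^* Q B \in \Psi^{-2m}_{\mathrm{cl}}(M)$ — more precisely I would look at $Q B$ and compute $(QB)^*(QB) = B^* Q^* Q B \in \Psi^{-2m}_{\mathrm{cl}}(M)$, which is noncharacteristic at $(x_0,\xi_0)$ since there $B$ is noncharacteristic and $Q$ is elliptic. Hence by Theorem \ref{thm_weyl3} its singular values on $L^2(M)$ satisfy $\sigma_k((QB)^*(QB)) \sim k^{-2m/n}$, so $\sigma_k(QB) \sim k^{-m/n}$. Finally $\sigma_k(QB) \leq \sigma_k(Q)\norm{B}$ by Proposition \ref{prop_singular_value_decomposition}(e), which yields $\sigma_k(Q) \gtrsim k^{-m/n}$ and therefore $\sigma_k(P) \gtrsim k^{-(s_1-s_2)/n}$. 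Combining with the upper bound and Lemma \ref{lemma_singular_entropy_relation} completes the proof.

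The main obstacle is the lower bound: one needs to make sure that the microlocally cut-off operator $QB$ really does retain singular values of the full order $k^{-m/n}$, i.e.\ that the noncharacteristic hypothesis at the single point $(x_0,\xi_0)$ is enough. This is exactly what Theorem \ref{thm_weyl3} provides — it only requires nonvanishing principal symbol at \emph{some} point of $T^*M\setminus\{0\}$ — so the real content is the symbol calculus bookkeeping: verifying that $QB$, or rather $(QB)^*(QB) \in \Psi^{-2m}_{\mathrm{cl}}(M)$, has principal symbol nonvanishing at $(x_0,\xi_0)$, which follows from $\sigma_0(Q)(x_0,\xi_0)\neq 0$ (as $Q$ inherits noncharacteristicity from $P$ via multiplication by the elliptic factor $\abs{\xi}_g^{s_2-s_1}$) together with choosing $B$ noncharacteristic at $(x_0,\xi_0)$. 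No parametrix construction is even strictly necessary once one invokes Theorem \ref{thm_weyl3} directly on $(QB)^*(QB)$.
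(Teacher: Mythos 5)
Your argument is correct and follows essentially the same route as the paper: reduce to singular values via Lemma \ref{lemma_singular_entropy_relation}, conjugate by the Bessel potentials to obtain $Q = J^{s_2} P J^{-s_1} \in \Psi^{-(s_1-s_2)}_{\mathrm{cl}}(M)$, still noncharacteristic at $(x_0,\xi_0)$, and invoke Theorem \ref{thm_weyl3} on $L^2(M)$. The only difference is that your lower-bound detour through the microlocal cutoff $B$ and $(QB)^*(QB)$ is superfluous, since Theorem \ref{thm_weyl3} applied directly to $Q$ already gives the two-sided bound $\sigma_k(Q) \sim k^{-(s_1-s_2)/n}$ --- which is precisely the paper's proof, and which you acknowledge in your closing remark.
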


\begin{proof}
By Lemma \ref{lemma_singular_entropy_relation} it is enough to prove the claim for $\sigma_k(P)$. We seek to reduce the claim to Theorem \ref{thm_weyl3}. To this end, let $J^{\alpha} = (1-\Delta_g)^{\alpha/2}$ be the Bessel potential which induces an isomorphism $H^s(M) \to H^{s-\alpha}(M)$ for any $s$, write $m = s_1-s_2 > 0$, and let $A = J^{s_2} P J^{-s_1} \in \Psi^{-m}_{\mathrm{cl}}(M)$. Then the theorem follows if we can show that any $A \in \Psi^{-m}_{\mathrm{cl}}(M)$ which is noncharacteristic at some $(x_0, \xi_0)$, considered as an operator $A: L^2(M) \to L^2(M)$, has singular values satisfying 
\[
\sigma_k(A) \sim k^{-m/n}.
\]
This, however, follows from Theorem \ref{thm_weyl3}.
\end{proof}

Combining the Weyl law of Theorem \ref{thm:Weyl_microlocal} with Theorem \ref{thm:Mandache} and Lemma \ref{lemma_entropy_typical} we obtain the following abstract instability result. We will later apply it to the Radon transform in $\mR^2$, and hence we will allow noncompact manifolds. For this we recall the localized Sobolev spaces $H^{s}_L(M)$ from Definition \ref{def:loc_Hs}.

\begin{theorem} \label{thm_microlocal_smoothing}
Let $M, N$ be smooth manifolds, let $L \subset M$ and $Q \subset N$ be compact, let $s, t \in \mR$, and let $\delta > 0$. Let $K_r = \{ f \in H^{s+\delta}_L(M) \,;\, \norm{f}_{H^{s+\delta}} \leq r \}$, and let $F$ be a map $K_r \to H^t_Q(N)$. Suppose that $\omega$ is a modulus of continuity such that 
\[
\norm{f_1-f_2}_{H^s(M)} \leq \omega(\norm{F(f_1) - F(f_2)}_{H^t(N)}), \qquad f_1, f_2 \in K_r.
\]
\begin{enumerate}
\item[(a)] 
If there is $P \in \Psi^0_{\mathrm{cl}}(M)$, noncharacteristic at some point of $T^* L^{\mathrm{int}} \setminus \{0\}$, and $r_0 > 0$ so that $P(K_{r_0}) \subset K_r$ and $F \circ P$ maps $K_{r_0}$ into a bounded set of $H^{t+m}_Q(N)$ for any $m > 0$, then for any $\alpha \in (0,1)$ one has $\omega(t) \gtrsim t^{\alpha}$ for $t$ small.
\item[(b)] 
If there is $P \in \Psi^0_{\mathrm{cl}}(M)$, noncharacteristic at some point of $T^* L^{\mathrm{int}}  \setminus \{0\}$, and $r_0 > 0$ so that $P(K_{r_0}) \subset K_r$ and $F \circ P$ maps $K_{r_0}$ into a bounded set of $A^{\sigma,\rho}_Q(N)$ for some $1 \leq \sigma < \infty$ and $\rho > 0$, then $\omega(t) \gtrsim \abs{\log t}^{-\frac{\delta(\sigma \dim(N) +1)}{\dim(M)}}$ for $t$ small.
\end{enumerate}
\end{theorem}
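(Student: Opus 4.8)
\textbf{Proof plan for Theorem \ref{thm_microlocal_smoothing}.}
The plan is to reduce both statements to the abstract instability scheme of Theorem \ref{thm:Mandache} (equivalently, to Lemma \ref{lemma_entropy_typical}) by exhibiting two ingredients: a large $\eps$-discrete set inside the part of $K_r$ that survives microlocal smoothing, and a $\delta$-net for the image. First I would set up the microlocal cutoff. Given $P \in \Psi^0_{\mathrm{cl}}(M)$ noncharacteristic at some $(x_0,\xi_0) \in T^* L^{\mathrm{int}} \setminus \{0\}$, I may multiply $P$ by a cutoff supported near $x_0$ so as to assume, without loss of generality, that the Schwartz kernel of $P$ is supported in $L^{\mathrm{int}} \times L^{\mathrm{int}}$; then $P$ maps $H^{s+\delta}(M)$ into $H^{s+\delta}_L(M)$, and after rescaling its norm we may assume $\|P\|_{H^{s+\delta}\to H^{s+\delta}} \le 1$, so that $P$ restricts to a map $K_{r_0} \to K_{r_0} \subset K_r$ for a suitable $r_0 \le r$ (this is the hypothesis $P(K_{r_0}) \subset K_r$, which I take as given, but I want $P$ to land in the right support class). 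By Theorem \ref{thm:Weyl_microlocal}, $P: H^{s+\delta}_L(M) \to H^s_L(M)$ has capacity numbers $c_k(P) \sim e_k(P) \sim k^{-\delta/\dim(M)}$, using \eqref{entropy_capacity_relation}. Hence for each small $\eps > 0$, choosing $k$ with $\eps \sim r_0 k^{-\delta/\dim(M)}$, there is a set $\{f_1,\dots,f_N\} \subset \{ \|f\|_{H^{s+\delta}} \le r_0 \}$ with $N > 2^{k-1} \sim \exp(c\,\eps^{-\dim(M)/\delta})$ such that the points $g_i := P f_i$ lie in $K_{r_0} \subset K_r$ and satisfy $\|g_i - g_j\|_{H^s(M)} \ge 2\eps$ for $i \ne j$. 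This yields the function $g(\eps) := \exp(c\,\eps^{-\dim(M)/\delta})$ in Theorem \ref{thm:Mandache}(ii), with inverse $g^{-1}(\eta) = c'(\log\eta)^{-\delta/\dim(M)}$.

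Next I would produce the $\delta$-net for the image. The key observation is that we only need to cover $F(\{g_i\}_{i}) \subset F(P(K_{r_0}))$, not all of $F(K_r)$. By hypothesis, in case (a) the composite $F \circ P$ maps $K_{r_0}$ into a bounded set of $H^{t+m}_Q(N)$ for every $m > 0$, and in case (b) into a bounded set of $A^{\sigma,\rho}_Q(N)$. Since $Q$ is compact I may work on a closed manifold $N_1$ containing a neighbourhood of $Q$ (Definitions \ref{def:loc_Hs} and \ref{defi:Gev_loc}), so that the inclusions $H^{t+m}_Q(N_1) \hookrightarrow H^t_Q(N_1)$ and $A^{\sigma,\rho}_Q(N_1) \hookrightarrow H^t_Q(N_1)$ have entropy numbers controlled by Theorem \ref{thm_sobolev_inclusion} (giving $e_k \lesssim k^{-m/\dim(N)}$) and Theorem \ref{thm_analytic_inclusion} (giving $e_k \lesssim \exp(-\tilde\rho k^{1/(\dim(N)\sigma+1)})$), respectively. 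Translating these entropy bounds into $\delta$-net cardinalities as in the proof of Lemma \ref{lemma_entropy_typical} gives, in case (a), $f(\delta) = \exp(C\delta^{-\dim(N)/m})$, and in case (b), $f(\delta) = \exp(C\,|\log\delta|^{\dim(N)\sigma+1})$.

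Now I would feed these $f$ and $g$ into Theorem \ref{thm:Mandache}(b), which gives $\omega(t) \ge g^{-1}(f(t/2))$ for $t$ small. In case (a): $g^{-1}(f(t/2)) = c'(C(t/2)^{-\dim(N)/m})^{-\delta/\dim(M)} \gtrsim t^{\frac{\delta\dim(N)}{m\dim(M)}}$; letting $m \to \infty$ (the hypothesis holds for every $m$) makes the exponent arbitrarily small, so $\omega(t) \gtrsim t^\alpha$ for every $\alpha \in (0,1)$. In case (b): $g^{-1}(f(t/2)) = c'(C|\log(t/2)|^{\dim(N)\sigma+1})^{-\delta/\dim(M)} \gtrsim |\log t|^{-\frac{\delta(\sigma\dim(N)+1)}{\dim(M)}}$, which is the claimed bound. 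The technical care needed is essentially bookkeeping: ensuring the stability hypothesis $\|f_1-f_2\|_{H^s} \le \omega(\|F(f_1)-F(f_2)\|_{H^t})$ is applied only to the pairs $g_i, g_j \in K_r$ (which it is, since $g_i \in K_{r_0} \subset K_r$), and tracking the localization onto closed manifolds so the Weyl-type theorems apply. The main obstacle I anticipate is purely a matter of careful setup rather than a substantial new idea: one must be slightly attentive in arranging that the microlocal cutoff $P$ both preserves the support condition defining $H^{s+\delta}_L(M)$ and is noncharacteristic at an \emph{interior} point of $L$ (so that its kernel can be taken compactly supported in $L^{\mathrm{int}}$ without destroying the ellipticity that powers Theorem \ref{thm:Weyl_microlocal}); once that is in place, both parts follow by directly quoting Theorems \ref{thm:Mandache}, \ref{thm:Weyl_microlocal}, \ref{thm_sobolev_inclusion}, \ref{thm_analytic_inclusion} and the computations already carried out in the proof of Lemma \ref{lemma_entropy_typical}.
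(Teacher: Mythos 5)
Your overall scheme is the one the paper uses: a capacity lower bound for the microlocal cutoff $P$ coming from the Weyl law (Theorem \ref{thm:Weyl_microlocal} via \eqref{entropy_capacity_relation}), an entropy upper bound for the localized embeddings $H^{t+m}_Q \hookrightarrow H^t_Q$ resp.\ $A^{\sigma,\rho}_Q \hookrightarrow H^t_Q$, and the pigeonhole argument of Theorem \ref{thm:Mandache} applied with $K$ replaced by $P(K_{r_0})$; the exponent bookkeeping in both cases is correct. However, the opening reduction is not legitimate and creates a genuine gap. You propose to ``assume without loss of generality'' that the Schwartz kernel of $P$ is supported in $L^{\mathrm{int}}\times L^{\mathrm{int}}$, which amounts to replacing the given $P$ by $\chi_1 P\chi_2$ with a cutoff also on the \emph{output} side. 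But the hypotheses of the theorem are statements about the specific operator $P$: namely $P(K_{r_0})\subset K_r$ and, crucially, that $F\circ P$ maps $K_{r_0}$ into a bounded subset of $H^{t+m}_Q(N)$ (resp.\ $A^{\sigma,\rho}_Q(N)$). Since $F$ is an arbitrary, possibly nonlinear map about which nothing else is known, no smoothing information transfers to $F\circ(\chi_1 P\chi_2)$: the points $g_i=\chi_1 P(\chi_2 f_i)$ produced by your discrete-set construction are in general \emph{not} elements of $P(K_{r_0})$ (the left cutoff $\chi_1\in C^\infty_c(L^{\mathrm{int}})$ cannot equal $1$ on all of $L$, so it genuinely alters the images), and hence your key claim $F(\{g_i\})\subset F(P(K_{r_0}))$, which is what feeds the $\delta$-net step, fails for the modified operator while the hypothesis says nothing about the modified one.

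The repair is exactly what the paper does, and it requires no left cutoff at all: put $\tilde P=P\chi$ with $\chi\in C^\infty_c(L^{\mathrm{int}})$, $\chi=1$ near the noncharacteristic base point, and choose $r_1$ so that $\|\chi u\|_{H^{s+\delta}}\le r_0$ whenever $\|u\|_{H^{s+\delta}}\le r_1$. Then $\tilde P u=P(\chi u)\in P(K_{r_0})$, so every discrete point you produce really lies in $P(K_{r_0})$ and the hypotheses on $F\circ P$ apply verbatim; the output support condition you wanted from the kernel assumption comes instead for free from $P(K_{r_0})\subset K_r$ together with linearity of $P$ (any $H^{s+\delta}_L$ input is mapped to a function supported in $L$), which is what allows $\tilde P$ to be regarded as a classical $\Psi$DO on a closed manifold $M_1\supset L$, still noncharacteristic at the interior point, so that Theorem \ref{thm:Weyl_microlocal} yields $c_k(\tilde P)\gtrsim k^{-\delta/\dim(M)}$. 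With this substitution the rest of your argument goes through as written.
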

\begin{proof}
We first give the proof under the assumption that $L = M$, so that $M$ is compact. By Theorem \ref{thm:Weyl_microlocal} and \eqref{entropy_capacity_relation}, the capacity numbers of $P: H^{s+\delta}(M) \to H^s(M)$ satisfy $c_k(P) \gtrsim k^{-\delta/n_M}$. Arguing as in the proof of Lemma \ref{lemma_entropy_typical}, we see that for $\eps > 0$ small there is an $\eps$-discrete set in $(P(K_{r_0}), \norm{\,\cdot\,}_{H^s})$ having $> g(\eps)$ elements, where 
\[
g(\eps) = e^{c \eps^{-n_M/\delta}}.
\]
On the other hand, since $F \circ P$ maps $K_{r_0}$ into a bounded set of $H^{t+m}_Q(N)$ and since $j: H^{t+m}_Q(N) \to H^t_Q(N)$ has entropy numbers satisfying $e_k(j) \lesssim j^{-m/n_N}$, we see that for $\delta > 0$ small there is a $\delta$-net covering $F(P(K_{r_0}))$ with $\leq f(\delta) := e^{C \delta^{-n_N/m}}$ elements. Part (a) now follows from Theorem \ref{thm:Mandache} where $K$ is replaced by $P(K_{r_0})$. Part (b) follows similarly.

We now consider the case where $L \neq M$. Then there is a smooth compact subdomain $D$ of $M$ so that $L \subset D^{\mathrm{int}}$. Let $M_1$ be a closed manifold containing $D$, so that we may identify functions in $H^s_L(M)$ with functions in $H^s_L(M_1)$. By assumption there is $(x,\xi) \in T^* L^{\mathrm{int}} \setminus \{0\}$ so that $P$ is noncharacteristic at $(x,\xi)$. Let $\tilde{P} = P \chi$ where $\chi \in C^{\infty}_c(L^{\mathrm{int}})$ satisfies $\chi = 1$ near $x$. Also choose $r_1$ so that $\norm{\chi u}_{H^{s+\delta}(M_1)} \leq r_0$ when $\norm{u}_{H^{s+\delta}(M_1)} \leq r_1$. 

By the assumption $P(K_{r_0}) \subset K_r$ and linearity of $P$, we see that $\tilde{P}$ maps functions in $H^{s+\delta}_L(M)$ to functions supported in $L$. Hence we may consider $\tilde{P}$ as a $\Psi$DO on $M_1$. By Theorem \ref{thm:Weyl_microlocal} and \eqref{entropy_capacity_relation}, the capacity numbers of $\tilde{P}: H^{s+\delta}(M_1) \to H^s(M_1)$ satisfy $c_k(\tilde{P}) \gtrsim k^{-\delta/n_M}$. Then there is an $\eps$-discrete set in $(\tilde{P}(\{ u \in H^{s+\delta}(M_1) \,;\, \norm{u}_{H^{s+\delta}} \leq r_1 \}), \norm{\,\cdot\,}_{H^s})$ having $> g(\eps)= e^{c \eps^{-n_M/\delta}}$ elements. Since $\tilde{P}u = P(\chi u)$ and $\chi$ is supported in $L$, this yields an $\eps$-discrete set in $(P(K_{r_0}), \norm{\,\cdot\,}_{H^s})$ having $> g(\eps)$ elements. The rest of the proof proceeds as before.
\end{proof}

Our discussion of microlocal smoothing effects continues as follows. In Section \ref{sec:micro_abstract} we begin by deducing instability results for general linear operators under the assumption of microlocal $C^{\infty}$ or Gevrey smoothing. In the next Section \ref{sec:applicationsI} we then discuss explicit inverse problems with $C^{\infty}$ and real-analytic (or Gevrey) microlocal smoothing. Here we show that using our abstract results from the first part of this section one can actually prove that there is super-polynomial or exponential instability in these applications.

\subsection{Instability for microlocally smoothing linear operators}
\label{sec:micro_abstract}

We now discuss the microlocal smoothing properties of general linear operators based on wave front sets of their Schwartz kernels. Let $X$ and $Y$ be open sets in Euclidean spaces. Recall from \cite[Section 8.2]{Hoermander} that for a linear operator $\mathcal{K}$ from $C_c^{\infty}(Y)$ to $\mathcal{D}'(X)$ having Schwartz kernel $K \in \mathcal{D}'(X \times Y)$, we have a unique way of defining $\mathcal{K} u$ for any $u\in \mathcal{E}'(Y)$ with $WF(u)\cap WF'(K)_Y = \emptyset$ (see \cite[Theorem 8.2.13]{Hoermander}). In this case
\begin{align*}
WF(\mathcal{K} u) \subset WF(K)_X \cup WF'(K)\circ WF(u).
\end{align*} 
Here we have used the notations 
\begin{align*}
WF'(K)&=\{(x,y,\xi,\eta): \ (x,y,\xi,-\eta)\in WF(K)\},\\
WF(K)_X&= \{(x,\xi): \ (x,y,\xi,0)\in WF(K)\mbox{ for some } y \in Y\},\\
WF'(K)_Y&= \{(y,\eta): \ (x,y,0,-\eta)\in WF(K) \mbox{ for some } x \in X\}.
\end{align*}
Moreover, let $\mathcal{K}_2$ be a linear operator with kernel $K_2 \in \mathcal{D}'(Y\times Z)$ so that the projection $\mathrm{supp}(K_2) \ni (y,z) \mapsto z$ is proper (then we say that $\mathcal{K}_2$ \emph{preserves compact supports}), and let $\mathcal{K}_1$ be an operator with kernel $K_1 \in \mathcal{D}'(X\times Y)$ such that $WF'(K_1)_Y \cap WF(K_2)_Y = \emptyset$. Then the composition $\mathcal{K}_1\circ \mathcal{K}_2$ is defined as a map from $C_c^{\infty}(Z)$ to $\mathcal{D}'(X)$ (see \cite[Theorem 8.2.14]{Hoermander}). Its wave front set satisfies
\begin{align}
WF'(K) &\subset WF'(K_1)\circ WF'(K_2) \cup(WF(K_1)_X \times Z \times \{0\}) \notag \\
& \quad \cup(X \times \{0\}\times WF'(K_2)_{Z}). \label{wf_composition_formula}
\end{align}
The same statements are valid on smooth manifolds if one works with half densities (see \cite[Section 2.5]{Hormander_FIO1}), or on smooth Riemannian manifolds if we use the volume form to define Schwartz kernels and spaces of distributions.

We first discuss the absence of H{\"o}lder stability bounds in the presence of $C^{\infty}$ microlocal smoothing.

\begin{theorem} \label{thm_microlocal_smoothing_general_v2}
Let $M_1, M_2$ be smooth manifolds, and let $A: C^{\infty}_c(M_1) \to \mathcal{D}'(M_2)$ be a continuous linear operator which preserves compact supports. Suppose that $A$ is microlocally smoothing at $(y_0,\eta_0) \in T^* M_1 \setminus \{0\}$, in the sense that $$WF(K_A)_{M_2} = \emptyset$$ and $(x,y,\xi,\eta) \notin WF'(K_A)$ for  $(x,\xi) \in T^* M_2$ and for $(y,\eta) \in V$, where $V \subset T^{\ast} M_1$ is some conic neighbourhood of $(y_0,\eta_0)$. Let also $L_1 \subset M_1$ be compact with $y_0 \in L_1^{\mathrm{int}}$. If for some $s_1, s_2 \in \mR$ and $\delta, r > 0$ one has the stability estimate 
\begin{equation} \label{microlocal_smoothing_holder_claim_1}
\norm{u}_{H^{s_1}(M_1)} \leq \omega(\norm{Au}_{H^{s_2}(M_2)}), \qquad \norm{u}_{H^{s_1+\delta}_{L_1}(M_1)} \leq r,
\end{equation}
then for any $\alpha \in (0,1)$ one has $\omega(t) \gtrsim t^{\alpha}$ for $t$ small.
\end{theorem}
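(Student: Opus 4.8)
The plan is to deduce Theorem \ref{thm_microlocal_smoothing_general_v2} from the abstract microlocal instability result Theorem \ref{thm_microlocal_smoothing}(a). The key point is to manufacture, out of the microlocal smoothing hypothesis at $(y_0,\eta_0)$, a classical pseudodifferential operator $P \in \Psi^0_{\mathrm{cl}}(M_1)$ which is noncharacteristic at some point of $T^* L_1^{\mathrm{int}} \setminus \{0\}$, which maps a small ball $K_{r_0}$ into the ball $K_r$ appearing in the hypothesis \eqref{microlocal_smoothing_holder_claim_1}, and for which the composition $A \circ P$ is globally $C^\infty$-smoothing (i.e.\ maps $K_{r_0}$ into a bounded subset of $H^{t+m}_Q(M_2)$ for every $m>0$, after choosing a suitable compact $Q \subset M_2$). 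Once such a $P$ is in hand, Theorem \ref{thm_microlocal_smoothing}(a) applied with $F = A$, $s = s_1$, $t = s_2$, and the given $\delta$ immediately yields $\omega(t) \gtrsim t^\alpha$ for any $\alpha \in (0,1)$.

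First I would construct $P$. Choose a local coordinate chart near $y_0$ and pick a symbol $p \in S^0_{\mathrm{cl}}$ which is a product $p(y,\eta) = \chi(y)\psi(\eta/|\eta|)$ (times a smooth cutoff to $|\eta|\geq 1$), where $\chi \in C^\infty_c(L_1^{\mathrm{int}})$ equals $1$ near $y_0$ and is supported in a small neighbourhood, and $\psi$ is a cutoff on the sphere supported near $\eta_0/|\eta_0|$; then $P = \mathrm{Op}(p)$ is properly supported (after a harmless modification), has essential support / wave front set of its kernel contained in the region $\{(y,\eta) \in V\} \cup \Delta$-type diagonal sets lying over the support of $\chi$, and is noncharacteristic at $(y_0,\eta_0)$, which lies in $T^*L_1^{\mathrm{int}}\setminus\{0\}$. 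Since $\norm{P}_{H^{s_1+\delta}\to H^{s_1+\delta}} =: C_P < \infty$ (as $P$ has order $0$) and $P$ maps functions supported near $y_0$ to functions supported in $L_1$, we may set $r_0 = r/C_P$ so that $P(K_{r_0}) \subset K_r$; here I use that composing with $P$ does not enlarge supports beyond $L_1$, so $P(K_{r_0}) \subset H^{s_1+\delta}_{L_1}(M_1)$.

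Next I would check the crucial smoothing property of $A \circ P$. The Schwartz kernel of $A\circ P$ is $K_{A\circ P}$, and by the wave front set composition formula \eqref{wf_composition_formula} we have
\[
WF'(K_{A\circ P}) \subset WF'(K_A)\circ WF'(K_P) \cup \big(WF(K_A)_{M_2}\times M_1 \times\{0\}\big) \cup \big(M_2 \times\{0\}\times WF'(K_P)_{M_1}\big).
\]
By hypothesis $WF(K_A)_{M_2} = \emptyset$, so the middle term vanishes. The term $M_2 \times\{0\}\times WF'(K_P)_{M_1}$ is empty because $P$ is a $\Psi$DO (its kernel has no $(x,y,0,\eta)$ wave front directions). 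For the first term: any $(x,y,\xi,\eta) \in WF'(K_P)$ has $(y,\eta)$ in a small conic neighbourhood of $(y_0,\eta_0)$, hence in $V$; but by the microlocal smoothing hypothesis $(x,y,\xi,\eta)\notin WF'(K_A)$ for such $(y,\eta)$, so the composition $WF'(K_A)\circ WF'(K_P)$ is empty. Therefore $WF'(K_{A\circ P}) = \emptyset$, and since also $WF(K_{A\circ P})_{M_2} \subset WF(K_A)_{M_2} = \emptyset$ (using that $P$ preserves compact supports so $A\circ P$ is a well-defined continuous map $C^\infty_c(M_1)\to \mathcal{D}'(M_2)$ that preserves compact supports), we conclude $WF(K_{A\circ P}) = \emptyset$, i.e.\ $K_{A\circ P} \in C^\infty(M_2\times M_1)$. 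Consequently $A\circ P$ is a smoothing operator: it maps $\mathcal{E}'(M_1)$ continuously into $C^\infty(M_2)$, and because $P$ preserves compact supports there is a fixed compact $Q\subset M_2$ containing the image supports for inputs supported in $L_1$. By the closed graph theorem $A\circ P$ maps $H^{s_1+\delta}_{L_1}(M_1)$ boundedly into $H^{t+m}_Q(M_2)$ for every $m>0$, so it maps the bounded set $K_{r_0}$ into a bounded set of $H^{t+m}_Q(M_2)$.

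Finally, with $F = A$, the data $M = M_1$, $N = M_2$, $L = L_1$, the compact $Q$, exponents $s = s_1$, $t = s_2$, and this $P$ and $r_0$, all the hypotheses of Theorem \ref{thm_microlocal_smoothing}(a) are verified, and we obtain $\omega(t) \gtrsim t^\alpha$ for $t$ small, for every $\alpha \in (0,1)$. The main obstacle I anticipate is the bookkeeping around supports and properness: one must be careful that $P$ can be taken properly supported without destroying the noncharacteristic property at $(y_0,\eta_0)$, that the image supports of $A\circ P$ on inputs supported in $L_1$ genuinely lie in one fixed compact $Q$ (this uses that $A$ preserves compact supports, i.e.\ the second projection of $\mathrm{supp}(K_A)$ is proper), and that the abstract theorem's requirement $P(K_{r_0})\subset K_r$ is met with the correct normalization of $r_0$ via the operator norm of $P$ on $H^{s_1+\delta}$. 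The wave front set computation itself, while the conceptual heart of the argument, is then a routine application of \eqref{wf_composition_formula} once the geometry of $WF'(K_P)$ is pinned down.
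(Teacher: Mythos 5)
Your proposal is correct and follows essentially the same route as the paper's proof: one builds a microlocal cutoff $P \in \Psi^0_{\mathrm{cl}}(M_1)$ by quantizing $\chi(y)\psi(\eta)$ with essential support in $V$ and nonvanishing symbol at $(y_0,\eta_0)$, shows via the composition formula \eqref{wf_composition_formula} together with $WF(K_A)_{M_2}=\emptyset$ and $WF'(K_P)_{M_1}=\emptyset$ that $K_{AP}$ is smooth, and then invokes Theorem \ref{thm_microlocal_smoothing}(a). The support/properness bookkeeping you flag is handled the same way in the paper (outputs supported in a fixed compact $L_2$ since $A$ preserves compact supports, and $P(K_{r_0})\subset K_r$ by the spatial factor $\chi$ and boundedness of $P$ on $H^{s_1+\delta}$), so there is no gap.
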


\begin{proof}[Proof of Theorem \ref{thm_microlocal_smoothing_general_v2}]
Since $A$ preserves compact supports, there is a compact $L_2 \subset M_2$ so that $A$ maps functions supported in $L_1$ to distributions supported in $L_2$. We seek to apply Theorem \ref{thm_microlocal_smoothing} with 
\begin{align*}
K_r = \{ u \in H^{s_1+\delta}_{L_1}(M_1); \ \|u\|_{H^{s_1+\delta}(M_1)}\leq r\}
\end{align*}
and a pseudodifferential operator $P \in \Psi^0_{\mathrm{cl}}(M_1)$ with the property that $P$ has nonvanishing principal symbol at $(y_0, \eta_0)$, $P$ is microlocally smoothing outside of $V$, and $P(K_{r_0}) \subset K_r$ for suitable $r_0 > 0$. In fact, $P$ can be constructed by left quantizing the local coordinate symbol $\chi(y) \psi(\eta)$ where $\chi$ is a cutoff supported near $y_0$ with $\chi(y_0) \neq 0$, and $\psi$ is a Fourier cutoff supported in $V$ with $\psi(\eta_0) = 1$ and $\psi$ is homogeneous of degree $0$ for $\abs{\eta} \geq 1$.

It is enough to show that the operator $AP: C^{\infty}_c(M_1) \to \mathcal{D}'(M_2)$ has a $C^{\infty}$ integral kernel. For if this is true, then for each $N \geq 0$ there is $R=R_N>0$ such that
\begin{align} \label{eq:map1}
AP(K_{r_0})\subset \{ v \in H^{s_2 + N}_{L_2}(M_2): \ \|v\|_{H^{s_2 + N}(M_2)}\leq R\},
\end{align}
using that $\|u\|_{H^{s_1+\delta}_{L_1}(M_1)}\leq r_0$ for $u \in K_{r_0}$. Then Theorem \ref{thm_microlocal_smoothing}(a) implies that $\omega(t) \gtrsim t^{\alpha}$ for all $\alpha \in (0,1)$. This proves the claimed result.

To show that $AP$ has smooth integral kernel, we note that $WF(K_A)_{M_2} = \emptyset$ by assumption and $WF'(K_P)_{M_1} = \emptyset$, since $P$ is a $\Psi$DO and hence $WF'(K_P)$ is contained in the diagonal $\{ (y,y,\eta,\eta) \,;\, \eta \neq 0 \}$. Then \eqref{wf_composition_formula} gives 
\begin{align}
\label{eq:WF_set_comp}
WF'(K_{AP}) \subset WF'(K_A) \circ WF'(K_P).
\end{align}
But $WF'(K_P) \subset \{ (y,y,\eta,\eta) \,;\, (y,\eta) \in V \}$ since $P$ is microlocally smoothing away from $V$, and by assumption $WF'(K_A)$ does not have elements of the form $(x,y,\xi,\eta)$ with $(y,\eta) \in V$. Thus $WF'(K_{AP}) = \emptyset$ and $AP$ is indeed smoothing.
\end{proof}

\begin{example}
As a special case of Theorem \ref{thm_microlocal_smoothing_general_v2}, let $A \in \Psi^m(M)$ be a pseudodifferential operator on a closed manifold $M$. One says that $A$ is microlocally smoothing at $(y_0, \eta_0) \in T^* M \setminus \{0\}$ if $(y_0,y_0,\eta_0,\eta_0) \notin WF'(K_A)$, or equivalently if the full symbol of $A$ in some local coordinates is of order $-\infty$ in some conic neighborhood of $(y_0,\eta_0)$ (see \cite[Proposition 18.1.26]{Hoermander}). In such a case $A$ satisfies the conditions of the previous theorem.
\end{example}

We next seek to extend Theorem \ref{thm_microlocal_smoothing_general_v2} to the setting of Gevrey regularizing operators. To this end, we refer to Section \ref{sec:analytic} for the calculus of Gevrey pseudodifferential operators and the definition of the Gevrey wave front set $WF_{G,\sigma}(u)$.

In the setting of Gevrey (and analytically) regularizing operators, we will deduce that an inverse problem can at best have logarithmic stability.

\begin{theorem} \label{thm_microlocal_smoothing_general_analytic}
Let $M_1, M_2$ be $G^{\sigma}$ manifolds with Gevrey regularity index $\sigma>1$, and let $A: C^{\infty}_c(M_1) \to \mathcal{D}'(M_2)$ be a continuous linear operator which preserves compact supports. Assume that $A$ is microlocally $G^{\sigma}$ smoothing at $(y_0,\eta_0) \in T^* M_1 \setminus \{0\}$, in the sense that $$WF_{G,\sigma}(K_A)_{M_2} = \emptyset$$ and $(x,y,\xi,\eta) \notin WF'_{G,\sigma}(K_A)$ for  $(x,\xi) \in T^* M_2$ and for $(y,\eta) \in V \subset T^{\ast} M_1$ where $V$ is a conic neighbourhood of $(y_0,\eta_0)$. Let also $L_1 \subset M_1$ be compact with $y_0 \in L_1^{\mathrm{int}}$. If for some $s_1, s_2 \in \mR$ and $\delta, r > 0$ one has the stability estimate 
\begin{equation} \label{microlocal_smoothing_log_claim}
\norm{u}_{H^{s_1}(M_1)} \leq \omega(\norm{Au}_{H^{s_2}(M_2)}), \qquad \norm{u}_{H^{s_1+\delta}_{L_1}(M_1)} \leq r,
\end{equation}
then necessarily $\omega(t) \gtrsim \abs{\log t}^{-\frac{\delta(\sigma \dim(M_2) +1)}{\dim(M_1)}}$ for $t$ small.
\end{theorem}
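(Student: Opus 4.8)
The plan is to follow exactly the same strategy as in the proof of Theorem \ref{thm_microlocal_smoothing_general_v2}, but replacing the $C^\infty$ pseudodifferential cutoff by a Gevrey $G^\sigma$ pseudodifferential cutoff and replacing the conclusion ``$AP$ is $C^\infty$-smoothing'' by ``$AP$ is $G^\sigma$-smoothing''. First I would fix a compact $L_2 \subset M_2$ such that $A$ maps functions supported in $L_1$ to distributions supported in $L_2$ (this uses that $A$ preserves compact supports). Then I would construct a Gevrey pseudodifferential operator $P \in \Psi^0_{G,\sigma}(M_1)$ (see Appendix \ref{sec:Gevrey}/\ref{sec:analytic}) by left quantizing a local-coordinate symbol of the form $\chi(y)\psi(\eta)$, where $\chi \in G^\sigma_c$ is a spatial cutoff supported near $y_0$ with $\chi(y_0) \neq 0$, and $\psi$ is a $G^\sigma$ Fourier cutoff that is homogeneous of degree $0$ for $|\eta| \geq 1$, supported in the conic neighbourhood $V$, with $\psi(\eta_0) = 1$. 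Such symbols exist for $\sigma > 1$ since $G^\sigma_c \neq \emptyset$; this is precisely why the hypothesis $\sigma > 1$ appears. By construction $P$ has nonvanishing principal symbol at $(y_0,\eta_0)$, preserves compact supports (mapping functions supported in $L_1$ to functions supported in $L_1$, after shrinking the support of $\chi$ if necessary), and its Gevrey wave front set $WF'_{G,\sigma}(K_P)$ is contained in $\{(y,y,\eta,\eta) \,;\, (y,\eta) \in V\}$.

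Next I would analyse the composition $AP$. Since $P$ is a $G^\sigma$ pseudodifferential operator, $WF'_{G,\sigma}(K_P)_{M_1} = \emptyset$, and by hypothesis $WF_{G,\sigma}(K_A)_{M_2} = \emptyset$. The Gevrey analogue of the wave front set composition formula \eqref{wf_composition_formula} (valid for $G^\sigma$ wave front sets, see Appendix \ref{sec:analytic}) then gives $WF'_{G,\sigma}(K_{AP}) \subset WF'_{G,\sigma}(K_A) \circ WF'_{G,\sigma}(K_P)$. Since every element of $WF'_{G,\sigma}(K_P)$ has the form $(y,y,\eta,\eta)$ with $(y,\eta) \in V$, and since by the microlocal $G^\sigma$-smoothing hypothesis $WF'_{G,\sigma}(K_A)$ contains no element of the form $(x,y,\xi,\eta)$ with $(y,\eta) \in V$, the composition is empty: $WF'_{G,\sigma}(K_{AP}) = \emptyset$. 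Hence $AP: C^\infty_c(M_1) \to \mathcal{D}'(M_2)$ has a $G^\sigma$ integral kernel $K_{AP}(x,y)$ supported in $L_2 \times L_1$. By the Hilbert–Schmidt-type argument used in the proof of Theorem \ref{thm:eigenval_bds}(ii) — estimating $\|(-\Delta)^k (AP) u\|_{L^2}$ by $\|(-\Delta_x)^k K_{AP}\|_{L^2(M_2 \times M_2)} \|u\|_{L^2}$ and using the Gevrey bounds on $K_{AP}$ together with Lemma \ref{lemma_arho_comega} — one obtains that $AP$ maps into $A^{\sigma,\rho}_{L_2}(M_2)$ for some $\rho > 0$, and more precisely that $A\circ P$ maps the ball $K_{r_0} = \{u \in H^{s_1+\delta}_{L_1}(M_1) \,;\, \|u\|_{H^{s_1+\delta}} \leq r_0\}$ into a bounded subset of $A^{\sigma,\rho}_{L_2}(M_2)$ (adjusting $r_0$ so that $P(K_{r_0}) \subset K_r$, which is possible since $\|P\|_{H^{s_1+\delta}\to H^{s_1+\delta}} < \infty$).

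Finally I would invoke Theorem \ref{thm_microlocal_smoothing}(b) with $M = M_1$, $N = M_2$, $L = L_1$, $Q = L_2$, $s = s_1$, $t = s_2$, the pseudodifferential operator $P$ just constructed (which is noncharacteristic at $(y_0,\eta_0) \in T^* L_1^{\mathrm{int}} \setminus \{0\}$ since $y_0 \in L_1^{\mathrm{int}}$), and the forward map $F = A$. The hypothesis $P(K_{r_0}) \subset K_r$ and the fact that $F \circ P = A \circ P$ maps $K_{r_0}$ into a bounded set of $A^{\sigma,\rho}_{L_2}(M_2)$ are exactly what was verified above. Theorem \ref{thm_microlocal_smoothing}(b) then yields $\omega(t) \gtrsim |\log t|^{-\frac{\delta(\sigma \dim(M_2) + 1)}{\dim(M_1)}}$ for $t$ small, which is the claim. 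The main obstacle, and the only genuinely new ingredient compared to Theorem \ref{thm_microlocal_smoothing_general_v2}, is making the Gevrey microlocal calculus rigorous: one needs the existence of compactly supported $G^\sigma$ symbols (hence $\sigma > 1$), a well-defined notion of $G^\sigma$ wave front set of a Schwartz kernel, and the composition estimate for $WF'_{G,\sigma}$ — all of which are collected in Appendix \ref{sec:analytic}/\ref{sec:Gevrey} — together with the quantitative passage from ``$G^\sigma$ kernel'' to ``bounded map into $A^{\sigma,\rho}$'' via the Cauchy-type estimates of Lemma \ref{lemma_arho_comega}.
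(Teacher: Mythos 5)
Your proposal is correct and follows essentially the same route as the paper's proof: quantize a $G^{\sigma}$ cutoff symbol $\chi(y)\psi(\eta)$ to obtain $P$, use the Gevrey version of the wave front composition formula to conclude $WF'_{G,\sigma}(K_{AP})=\emptyset$, pass from the $G^{\sigma}$ kernel to a bounded map into $A^{\sigma,\rho}_{L_2}(M_2)$ as in Theorem \ref{thm:eigenval_bds}(ii), and apply Theorem \ref{thm_microlocal_smoothing}(b). No gaps to report.
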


\begin{proof}[Proof of Theorem \ref{thm_microlocal_smoothing_general_analytic}]
We argue similarly as in the proof of Theorem \ref{thm_microlocal_smoothing_general_v2} again relying on the arguments from Theorem \ref{thm_microlocal_smoothing}. As above, we set 
\begin{align*}
K_r = \{f \in H^{s_1+\delta}_{L_1}(M_1); \ \|f\|_{H^{s_1+\delta}(M_1)}\leq r\}
\end{align*}
and fix a Gevrey pseudodifferential operator $P \in \Psi^0_{\mathrm{cl}}(M_1)$ with the property that $P$ has nonvanishing principal symbol at $(y_0, \eta_0)$ and is $G^{\sigma}$ microlocally smoothing outside of $V$, and $P(K_{r_0}) \subset K_r$ for suitable $r_0 > 0$. More precisely, $P$ can be constructed by quantizing the symbol $\chi(y) \psi(\eta)$ where $\chi$ is a $G^{\sigma}$ cutoff function supported near $y_0$, and $\psi(\eta)$ is a $G^{\sigma}$ Fourier cutoff near $\eta_0$ as in \cite[Proposition 3.4.4]{Rodino_book}.

We seek to prove that $AP$ is $G^{\sigma}$ regularizing which implies the condition from  Theorem \ref{thm_microlocal_smoothing}(b).  
In order to infer this, we note that \eqref{wf_composition_formula} remains true in the $G^{\sigma}$ setting (see \cite[Section 8.5]{Hoermander}). This implies that \eqref{eq:WF_set_comp} determines the Gevrey wave front set of $K_{AP}$, i.e.\ that
\begin{align*}
WF_{G,\sigma}'(K_{AP}) \subset WF_{G,\sigma}'(K_A) \circ WF_{G,\sigma}'(K_P).
\end{align*}
The assumed regularity of $K_A$ and the construction of $P$ entail that as in the proof of Theorem \ref{thm_microlocal_smoothing_general_v2} we also infer that $WF'_{G,s}(K_{AP}) = \emptyset$. Finally, we observe that this yields the desired smoothing result 
\begin{align}
\label{eq:map2}
AP(K_{r_0}) \subset \{y\in A^{\sigma,\rho}_{L_2}: \ \|y\|_{A^{\sigma,\rho}}\leq R\} \mbox{ for some } \rho, R>0,
\end{align}
since elements in the set $K_{r_0}$ are uniformly bounded and the kernel $K_{AP}$ satisfies uniform $G^{\sigma}$ bounds which combined yield \eqref{eq:map2} as in the proof of Theorem \ref{thm:eigenval_bds}. Hence, Theorem \ref{thm_microlocal_smoothing}(b) can be invoked and it implies the claimed estimate.
\end{proof}

\subsection{Applications}
\label{sec:applicationsI}

In this section we discuss a number microlocal instability results. These include variants of the Radon and geodesic X-ray transforms and applications of these, for instance, in the inverse transport problem.

\subsubsection{The Radon transform with limited data}

As a first example of the microlocal regularizing argument from the previous section, we consider the two-dimensional Radon transform. To this end, we introduce the following coordinates: Let $\theta=\theta(\varphi)=(\cos(\varphi),\sin(\varphi))$, $\theta^{\perp}=\theta^{\perp}(\varphi)=(-\sin(\varphi),\cos(\varphi))$ with $\varphi \in \T$, where $\T=[0,2\pi]$ with $0,2\pi$ being identified. We consider a line perpendicular to $\theta=\theta(\varphi)$ and with distance $s \in \R$ to the origin: 
\begin{align*}
L(\varphi, s):=\{x\in \R^2: x\cdot \theta(\varphi)=s\}.
\end{align*}
For $f\in C^{\infty}_c(\R^2)$ we then define the Radon transform as 
\begin{align}
\label{eq:Radon}
R(f)(\varphi,s):= \int\limits_{x\in L(\varphi,s)}f(x) \,dx = \int\limits_{-\infty}^{\infty} f(s \theta + t \theta^{\perp}) \,dt.
\end{align}
The operator $R$ extends by duality to $\mathcal{E}'(\mR^2)$. By the definition of $R$ we obtain that $R(f)(\varphi + \pi, -s) = R(f)(\varphi,s)$ since the symmetry $S: (\varphi,s)\mapsto (\varphi + \pi,-s)$ is also present in the definition of $L(\varphi,s)$.

It is well known that the \emph{full data} Radon transform is invertible in suitable function spaces. For example, if $L \subset \mR^2$ is a compact set, $s \in \mR$, and $H^s_L(\mR^2)$ is the space of those $f \in H^s(\mR^2)$ with $\mathrm{supp}(f) \subset L$, then one has \cite[Theorem II.5.1]{Natterer_book}
\[
\norm{f}_{H^s(\mR^2)} \lesssim \norm{Rf}_{H^{s+1/2}(\T \times \mR)} \lesssim \norm{f}_{H^s(\mR^2)}, \qquad f \in H^s_L(\mR^2).
\]
However, it is also known that the \emph{limited data} Radon transform may have severe instabilities.
Here we are interested in deducing (exponential) instability results for the limited data Radon transform from our microlocal regularity arguments.

We consider the Radon transform acting on $f \in H^s_L(\mR^2)$, where $L$ is the closed unit ball in $\mR^2$. Then clearly $Rf(\varphi,s) = 0$ for $\abs{s} > 1$. We consider the situation in which we only measure the Radon transform on the lines $L(\varphi,s)\in \Lambda$, where $\Lambda \subset \T \times (-1,1)$ is an open set with $S(\Lambda) = \Lambda$ and such that $(\T \times (-1,1)) \setminus \ol{\Lambda}$ is open and non-empty. This means that we do not have measurements over some fixed open set of lines.

In order to understand the microlocal regularizing properties of the Radon transform, we recall the following result describing the singularities of the Schwartz kernel of the Radon transform (see \cite{Quinto1}, Theorem 2.8 and \cite{Quinto}, Theorem 3.1):

\begin{lemma}
\label{lem:prop_singularities_Radon}
Let $R$ be the Radon transform defined in \eqref{eq:Radon} above. 
Then, $R$ has the integral kernel
\begin{align*}
K_R(\varphi,s,x):= \frac{1}{2 \pi} \int\limits_{-\infty}^{\infty} e^{i (s- \theta \cdot x) \sigma} \,d\sigma,
\end{align*}
where the integral is understood as an oscillatory integral.
As a consequence, for any $\mu>1$ we have that
\begin{align}
\label{eq:WFRadon} 
WF(K_R),\,WF_{G,\mu}(K_R) \subset \{(\varphi, s, x,-(\theta^{\perp}\cdot x) \sigma, \sigma, - \theta \sigma ) \,;\, \ \sigma\neq 0, \ s = \theta\cdot x\}.
\end{align}
\end{lemma}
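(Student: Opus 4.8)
The plan is to compute the Schwartz kernel of $R$ and then read off its wave front set (both the $C^\infty$ and the Gevrey versions) directly from the oscillatory integral representation. First I would verify the kernel formula: by the definition \eqref{eq:Radon} and duality, for $f \in C^\infty_c(\mR^2)$ one has $R(f)(\varphi,s) = \int_{\mR^2} \delta(s - \theta(\varphi)\cdot x) f(x)\,dx$, and writing the one-dimensional delta as $\delta(\tau) = \frac{1}{2\pi}\int_{\mR} e^{i\tau\sigma}\,d\sigma$ gives
\[
K_R(\varphi,s,x) = \frac{1}{2\pi}\int_{-\infty}^{\infty} e^{i(s-\theta(\varphi)\cdot x)\sigma}\,d\sigma,
\]
interpreted as an oscillatory integral. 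This exhibits $K_R$ as a Lagrangian (conormal) distribution associated with the phase $\Phi(\varphi,s,x,\sigma) = (s - \theta(\varphi)\cdot x)\sigma$, with the single one-dimensional fiber variable $\sigma$.

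Next I would apply the standard stationary-phase/wave-front calculus for oscillatory integrals (see \cite[Section 8.2 and Theorem 8.1.9]{Hoermander}, and the Gevrey analogue in \cite[Section 8.4--8.5]{Hoermander} or \cite{Rodino_book}). The wave front set of such a distribution is contained in the set of $(\varphi,s,x; \,d_{(\varphi,s,x)}\Phi)$ with $(\varphi,s,x,\sigma)$ in the critical set $\{d_\sigma\Phi = 0\}$, i.e. $s = \theta(\varphi)\cdot x$, and $\sigma \neq 0$. Computing the differentials: $\partial_s\Phi = \sigma$, $\partial_x\Phi = -\theta(\varphi)\sigma$, and $\partial_\varphi\Phi = -(\partial_\varphi\theta(\varphi)\cdot x)\sigma = -(\theta^\perp(\varphi)\cdot x)\sigma$ since $\partial_\varphi\theta = \theta^\perp$. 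This yields exactly
\[
WF(K_R) \subset \bigl\{(\varphi,s,x,\,-(\theta^\perp\cdot x)\sigma,\,\sigma,\,-\theta\sigma) \,;\, \sigma \neq 0,\ s = \theta\cdot x\bigr\},
\]
and the identical inclusion holds for $WF_{G,\mu}(K_R)$ with $\mu > 1$ because the phase $\Phi$ is a polynomial (hence real-analytic, a fortiori $G^\mu$) and the Gevrey oscillatory-integral calculus gives the same characteristic set; the only subtlety is that one must use a $G^\mu$ cutoff in the $\sigma$-integral to regularize, which is available precisely because $\mu > 1$ (so that $G^\mu_c \neq \emptyset$).

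I do not expect a serious obstacle here: the computation is the textbook conormal-distribution calculation, and both stated inclusions follow from applying the $C^\infty$ and the Gevrey versions of the same lemma on wave front sets of oscillatory integrals. The one point requiring a little care is the Gevrey statement: one should cite the appropriate result (e.g. \cite[Chapter 8.4]{Hoermander} together with the Gevrey pseudodifferential/oscillatory-integral machinery in Appendix~\ref{sec:Gevrey}) to justify that a polynomial phase produces a $WF_{G,\mu}$ contained in its characteristic variety, and to note that the restriction $\mu > 1$ is exactly what makes $G^\mu$ cutoffs available. Finally, one remarks that the symmetry $S(\varphi,s) = (\varphi+\pi,-s)$ is respected by this set, consistent with $R(f)(\varphi+\pi,-s) = R(f)(\varphi,s)$.
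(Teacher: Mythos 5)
Your proposal is correct and follows essentially the same route as the paper: exhibit $K_R$ as an oscillatory integral with phase $(s-\theta(\varphi)\cdot x)\sigma$ (the paper gets this via the Fourier slice identity $Rf=\F_s^{-1}\F_s(Rf)$, you via the delta-function representation, which is the same computation) and then bound $WF$ and $WF_{G,\mu}$ by the standard wave-front calculus for oscillatory integrals with analytic phase and amplitude, which is exactly the content of Proposition \ref{prop:analytic_FIO} that the paper invokes. Your differential computation $\p_\varphi\Phi=-(\theta^\perp\cdot x)\sigma$, $\p_s\Phi=\sigma$, $\p_x\Phi=-\theta\sigma$ on the critical set $s=\theta\cdot x$ matches \eqref{eq:WFRadon}, so there is nothing to add.
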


\begin{proof}
The claim follows from the fact that by the Fourier slice theorem for $f\in \mathcal{S}(\R^2)$ it holds
\begin{align*}
R(f)(\varphi,s)= \F_s^{-1} \F_s(f)(\varphi,s)
= \frac{1}{2 \pi} \int\limits_{-\infty}^{\infty} \int\limits_{\R^2} e^{i(s-\theta \cdot x) \sigma} f(x) \,dx \,d\sigma,
\end{align*}
where $\F_s$ denotes the Fourier transform in the $s$-variable,
see \cite[equation (A.7)]{Quinto1}. Interpreted as an oscillatory integral, this proves the identity for the kernel. The expression for the wave front set now follows from the usual decay estimates for oscillatory integrals in the case of the $C^{\infty}$ wave front set and from similar considerations for the case of the Gevrey wave front set, see for instance Lemma \ref{prop:analytic_FIO} in the appendix. 
\end{proof}

With these properties in hand, we can formulate our main result on the optimality of logarithmic stability estimates for the limited data Radon transform.

\begin{proposition}
\label{prop:LDRT}
Let $\Lambda \subset \T \times (-1,1)$ be an open set with $S(\Lambda) = \Lambda$ and $(\T \times (-1,1)) \setminus \ol{\Lambda}$ nonempty. If there exists a modulus of continuity $\omega$ such that for some $s_1, s_2 \in \R$, $\delta>0$ one has the stability estimate 
\begin{align*}
\|f\|_{H^{s_1}(\R^2)} \leq  \omega(\|Rf\|_{H^{s_2}(\Lambda)}), \qquad \norm{f}_{H^{s_1+\delta}_L(\mR^2)} \leq 1,
\end{align*}
then for any $\mu>1$ there exists some constant $C_{\mu}>0$ such that 
\begin{align*}
\omega(t)\geq C_{\mu} |\log(t)|^{-\frac{\delta(2\mu+1)}{2}}, \qquad \text{$t$ small}.
\end{align*}
\end{proposition}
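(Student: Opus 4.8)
The plan is to apply the abstract microlocal instability result Theorem~\ref{thm_microlocal_smoothing_general_analytic} with $M_1 = \mR^2$, $M_2 = \T \times \mR$, $A = R$ the Radon transform, $s_1, s_2$ as given, and $L_1 = L$ the closed unit ball. The statement of Theorem~\ref{thm_microlocal_smoothing_general_analytic} requires the forward operator $R$ to be microlocally $G^\sigma$ smoothing at some $(y_0,\eta_0) \in T^*\mR^2 \setminus \{0\}$, relative to the restriction of $R$ to the line set $\Lambda$. So the core of the proof is to identify a direction $(y_0,\eta_0)$ at which the restricted transform $R|_\Lambda$ does not detect singularities, and then to quote the theorem with $\sigma = \mu > 1$ to obtain the logarithmic lower bound $\omega(t) \gtrsim |\log t|^{-\frac{\delta(\mu \dim(M_2)+1)}{\dim(M_1)}} = |\log t|^{-\frac{\delta(2\mu+1)}{2}}$, since $\dim(M_2) = 2$ and $\dim(M_1) = 2$.

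\textbf{Key steps.} First I would formalize the forward operator: since we only measure $Rf$ on $\Lambda$, the relevant operator is $A = \chi_\Lambda \circ R$, more precisely $A = m_\psi \circ R$ where $m_\psi$ is multiplication by a $G^\mu$ cutoff $\psi$ supported in $\Lambda$ with $\psi = 1$ on a slightly smaller open subset; stability on $H^{s_2}(\Lambda)$ dominates stability against this cut-off operator, so it suffices to prove instability for $A$. Second, I would use Lemma~\ref{lem:prop_singularities_Radon}: the wave front set of $K_R$ (and its Gevrey-$\mu$ analogue) lies in the conormal set $\{(\varphi,s,x,-(\theta^\perp\cdot x)\sigma,\sigma,-\theta\sigma) : \sigma \neq 0,\ s = \theta\cdot x\}$. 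Third, and this is the heart of the matter, I would pick a line $L(\varphi_*,s_*)$ that is \emph{not} measured, i.e.\ $(\varphi_*,s_*) \in (\T\times(-1,1))\setminus\overline\Lambda$, intersecting the open unit ball, and pick a point $y_0 \in L(\varphi_*,s_*) \cap B_1(0)$ together with the conormal direction $\eta_0 = \theta(\varphi_*)$ (up to sign). The microlocal geometry of $R$ (Helgason–Quinto support/singularity correspondence) says that a singularity of $f$ at $(y_0,\eta_0)$ can only be seen by $Rf$ on the single line $L(\varphi_*,s_*)$ (and its $S$-image); since that line is outside $\overline\Lambda$, the cut-off $\psi$ vanishes in a neighborhood of it, so $m_\psi \circ R$ is $G^\mu$ microlocally smoothing at $(y_0,\eta_0)$. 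Concretely, I would verify that for $(x,\xi)$ in $T^*(\T\times\mR)$ and $(y,\eta)$ in a small conic neighborhood $V$ of $(y_0,\eta_0)$, no element $(x,y,\xi,\eta)$ lies in $WF'_{G,\mu}(K_{m_\psi R})$ — this follows from the composition formula \eqref{wf_composition_formula} (valid in the Gevrey category), the explicit conormal form of $WF_{G,\mu}(K_R)$, and the fact that $\psi$ is $G^\mu$ and vanishes near $(\varphi_*,s_*)$. Fourth, $R$ preserves compact supports (indeed $R$ maps $\mathcal E'(\mR^2)$ to distributions supported in a bounded $s$-region when restricted to compactly supported inputs), so the hypothesis of the theorem is met. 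Then Theorem~\ref{thm_microlocal_smoothing_general_analytic} gives the claimed bound with $\sigma = \mu$; letting $\mu > 1$ be arbitrary completes the proof.

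\textbf{Main obstacle.} The delicate point is Step three: checking that $m_\psi \circ R$ is genuinely Gevrey-$\mu$ microlocally smoothing at $(y_0,\eta_0)$ in a \emph{full conic neighborhood} $V$, not just at the single covector. This requires knowing that the map $(y,\eta) \mapsto$ (the line whose conormal at $y$ is $\eta$) is continuous, so that for $(y,\eta)$ close to $(y_0,\eta_0)$ the associated line stays close to $L(\varphi_*,s_*)$ and hence outside $\operatorname{supp}\psi$, together with the fact that the Gevrey wave front calculus (from Section~\ref{sec:analytic}, using that $\mu > 1$ so $G^\mu_c \neq \emptyset$ and partitions of unity exist) behaves correctly under composition with a $G^\mu$ FIO/$\Psi$DO cutoff. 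One also needs $WF_{G,\mu}(K_R)_{M_2} = \emptyset$, which is immediate from \eqref{eq:WFRadon} since every covector there has nonzero $x$-component $-\theta\sigma \neq 0$. Everything else — applying the abstract theorem, matching exponents, the reduction to the cut-off operator — is routine bookkeeping.
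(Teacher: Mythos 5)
Your approach is the same as the paper's: cut off the Radon transform by a Gevrey-$\mu$ function of the line variables, use the conormal description of $WF_{G,\mu}(K_R)$ from Lemma \ref{lem:prop_singularities_Radon} to show that the cut-off operator is microlocally $G^{\mu}$ smoothing at a covector $(y_0,\eta_0)$ conormal to an unmeasured line through the open unit ball, check $WF_{G,\mu}(K_A)_{\T\times\mR}=\emptyset$ and compact support preservation, and then apply Theorem \ref{thm_microlocal_smoothing_general_analytic} with $\sigma=\mu$, $\dim(M_1)=\dim(M_2)=2$ to get the exponent $\frac{\delta(2\mu+1)}{2}$. All of that, including the need to vary $(y_0,\eta_0)$ in a full conic neighbourhood and the role of the symmetry $S$, matches the paper's proof.

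The one step that fails as written is your definition of the cutoff: you take $\psi$ \emph{supported in} $\Lambda$ with $\psi=1$ only on a slightly smaller set, and claim that ``stability on $H^{s_2}(\Lambda)$ dominates stability against this cut-off operator.'' The domination goes the wrong way. To feed the hypothesis of Theorem \ref{thm_microlocal_smoothing_general_analytic} you must pass from the assumed estimate $\|f\|_{H^{s_1}}\leq\omega(\|Rf\|_{H^{s_2}(\Lambda)})$ to an estimate of the form $\|f\|_{H^{s_1}}\leq\omega(C\|Af\|_{H^{s_2}(\T\times\mR)})$, and by monotonicity of $\omega$ this requires $\|Rf\|_{H^{s_2}(\Lambda)}\leq C\|Af\|_{H^{s_2}(\T\times\mR)}$. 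With your $\psi$ one only has the reverse bound $\|\psi Rf\|_{H^{s_2}}\lesssim\|Rf\|_{H^{s_2}(\Lambda)}$, since $Af$ and $Rf$ differ on $\Lambda\setminus\{\psi=1\}$, so the reduction does not transfer the stability estimate to $A$. The fix is exactly the paper's choice: since $(\T\times(-1,1))\setminus\ol{\Lambda}$ is open, nonempty and (by $S(\Lambda)=\Lambda$) $S$-invariant, pick an open $S$-symmetric set $U$ with $\ol{U}\cap\ol{\Lambda}=\emptyset$ containing your unmeasured point $(\varphi_*,s_*)$, and take $\chi_{\Lambda}\in G^{\mu}$ with $\chi_{\Lambda}=1$ on a neighbourhood of $\ol{\Lambda}$ and $\chi_{\Lambda}=0$ near $\ol{U}$. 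Then $Af=Rf$ near $\Lambda$, so $\|Rf\|_{H^{s_2}(\Lambda)}\leq\|Af\|_{H^{s_2}(\T\times\mR)}$, while $\supp(\chi_{\Lambda})$ still avoids both $(\varphi_*,s_*)$ and its $S$-image, so your microlocal smoothing computation at $(y_0,\eta_0)$ goes through unchanged and the rest of your argument is correct.
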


\begin{remark}
\label{rmk:Gevrey}
We remark that the result of Proposition \ref{prop:LDRT} includes the limited angle problem or the exterior problem as special cases (see for instance Chapter VI in \cite{Natterer_book}).
\end{remark}

\begin{proof}
By the assumption on $\Lambda$, there exists an open set $U$ in $\T \times (-1,1)$ with $\ol{U} \cap \ol{\Lambda} = \emptyset$ and $S(U) = U$. Fix $\mu > 1$ and let $\chi_{\Lambda}$ be a $G^{\mu}$ cutoff function in $\T \times \mR$ so that $\chi_{\Lambda} = 1$ near $\ol{\Lambda}$ and $\chi_{\Lambda} = 0$ near $\ol{U}$. We wish to use Theorem \ref{thm_microlocal_smoothing_general_analytic}. Define 
\[
A = \chi_{\Lambda} R: C^{\infty}_c(\mR^2) \to C^{\infty}_c(\T \times \mR).
\]
Note that $A$ preserves compact supports. The stability condition in the proposition implies that 
\[
\norm{f}_{H^{s_1}(\mR^2)} \leq \omega(\norm{A f}_{H^{s_2}(\T \times \mR)}), \qquad \norm{f}_{H^{s_1+\delta}_L} \leq 1.
\]

We now analyze the wave front set of $K_A$. Since $\chi_{\Lambda} \in G^{\mu}$, \eqref{eq:WFRadon} gives that 
\begin{align}
WF_{G,\mu}'(K_A) &\subset \{ (\varphi,s,x,\hat{\varphi},\hat{s},\xi) \in WF_{G,\mu}'(K_R) \,;\, (\varphi,s) \in \supp(\chi_{\Lambda}) \} \notag \\
 &\subset  \{(\varphi, s, x,-(\theta^{\perp}\cdot x) \sigma, \sigma, \theta \sigma ) \,;\, \ \sigma\neq 0, \ s = \theta\cdot x, \ (\varphi, s) \in \supp(\chi_{\Lambda}) \}. \label{wfgmu_radon}
\end{align}
Since $\theta \sigma \neq 0$ in the last expression, we always have $WF_{G,\mu}(K_A)_{\T \times \mR} = \emptyset$.

 Now fix some $(\varphi_0, s_0) \in U$, so that lines near $L(\varphi_0, s_0)$ are not in the data. Let $x_0 \in L(\varphi_0, s_0)$ and $\xi_0 = \theta(\varphi_0)$. We claim that $A$ is microlocally $G^{\mu}$ smoothing near $(x_0, \xi_0)$. For if 
 \[
 (\varphi,s,x_0,\hat{\varphi}, \hat{s}, \xi_0) \in WF_{G,\mu}'(K_A),
 \]
 then \eqref{wfgmu_radon} implies that $\theta(\varphi) \sigma = \xi_0$ and $s = \theta(\varphi) \cdot x_0$, which yields $(\varphi,s) = (\varphi_0, s_0)$ or $(\varphi,s) = S(\varphi_0, s_0)$. However, we must also have $(\varphi, s) \in \supp(\chi_{\Lambda})$, which is impossible since $\chi_{\Lambda} = 0$ near $\ol{U} = \ol{S(U)}$. Since $U$ is open, we may vary $(x_0,\xi_0)$ slightly in the previous argument. This proves that $A$ is microlocally $G^{\mu}$ smoothing near $(x_0, \xi_0)$. The result now follows from Theorem \ref{thm_microlocal_smoothing_general_analytic}.
\end{proof}

\subsubsection{Superpolynomial instability of the geodesic X-ray transform in the presence of conjugate points}
\label{sec:Xray}

As another example of an instability arising from microlocal smoothing, we discuss the instability properties of the geodesic X-ray transform on two-dimensional Riemannian surfaces $(M,g)$. Throughout this section, we assume that $(M,g)$ is a compact Riemannian manifold with smooth boundary which is \emph{non-trapping}, i.e.\ all geodesics starting within $M$ meet $\partial M$ in finite time in both directions, and that its boundary $\partial M$ is \emph{strictly convex}. While it is known that for \emph{simple} manifolds, i.e.\ for manifolds which are non-trapping, have strictly convex boundary and no conjugate points, the X-ray transform is injective and Lipschitz stable in suitable Sobolev norms \cite{SU04, AS20}, in our discussion we allow for the presence of conjugate points in $(M,g)$. 

Following the article \cite{MSU15}, the set-up here is the following: $M$ is two-dimensional, $\gamma_0$ is a fixed directed maximal geodesic in $M$, and $f $ is a function supported away from the endpoints of $\gamma_0$. In this setting, the article \cite{MSU15} investigates which parts of the wave front set $WF(f)$ of $f$ can be recovered from knowing the geodesic X-ray transform 
\begin{align*}
I f (x,v):= \int_0^{\tau(x,v)} f(\pi_1 \varphi_s(x,v)) ds.
\end{align*}
Here, for $\nu_+(x)$ being the outward normal vector at $x\in \partial M$, we consider   
\begin{align*}
(x,v) \in \partial_+ SM:=\{(x,v)\in SM: \ x \in \partial M, \ \langle v, \nu_+(x) \rangle < 0 \},
\end{align*}
where $SM = \{ (x,v) \in TM \,;\, \abs{v}_g = 1 \}$ is the unit sphere bundle, $\tau \in C^{\infty}(\partial_+ SM)$ denotes the boundary hitting time, and $\pi_1 \varphi_s(x,v)$ denotes the projection onto the first component of the geodesic flow $\varphi_s$ onto $(M,g)$. In order to focus on a neighbourhood of $\gamma_0$ only, we assume that $(x,v)$ is such that the associated geodesics are close enough to $\gamma_0$, and we identify $\gamma_0$ with $(x_0,v_0)\in \p_+ SM$. 

Below, we will use several basic properties of Fourier integral operators (FIOs) which may be found in \cite[Chapter 25]{Hoermander}. A first key observation (see \cite[Proposition 3.1 and Theorem 3.2]{MSU15}) on the two-dimensional geodesic X-ray transform is the following result:

\begin{itemize}
\item[(i)] $I$, restricted to functions supported near $\gamma_0$ and evaluated for geodesics near $\gamma_0$, is an FIO of order $-\frac{1}{2}$ whose canonical relation $C$ can be explicitly computed and which is \emph{locally} the graph of a canonical map $\mathcal{C}$ which is locally a diffeomorphism (see \cite[Theorem 3.2]{MSU15}). The map $\mathcal{C}$ becomes a \emph{global} diffeomorphism if no conjugate points are present. The absence of conjugate points is known as the Bolker condition.
\end{itemize}

The microlocal smoothing arguments from \cite{MSU15} exploit the violation of the Bolker condition, i.e.\ hold if conjugate points are present.  To summarize the results from \cite{MSU15} let us assume that $x_1,x_2 \in M^{\mathrm{int}}$ are conjugate points on the geodesic $\gamma_0$ from above with no further conjugate points in between them. Let $v_1, v_2$ be the tangent vectors to $\gamma_0$ at the points $x_1, x_2 \in M$ and $\xi_j = (R_{-\pi/2} v_j)^{\flat}$ correspond to their rotations by $-90^{\circ}$, respectively. Let $V_1$ and $V_2$ be small neighbourhoods in $T^* M$ of $(x_1, \xi_1)$ and $(x_2,-\xi_2)$. Then, the following assertion may be found in \cite[Theorem 4.2]{MSU15} (note that our $V_1$ and $V_2$ are $V^1_+$ and $V^2_-$ in \cite{MSU15}):
\begin{itemize}
\item[(ii)] If $V_1$, $V_2$ are sufficiently small and $\mathcal{C}_j = \mathcal{C}|_{V_j}$, the restrictions of the canonical relation $C$ of $I$ to $V_1, V_2$ are canonical graphs $C|_{V_1} = \{ (\mathcal{C}_1(x,\xi), x,\xi) \,;\, (x,\xi) \in V_1 \}$ and $C|_{V_2} =  \{ (\mathcal{C}_2(x,\xi), x,\xi) \,;\, (x,\xi) \in V_2 \}$ with $\mathcal{C}_1(V_1) = \mathcal{C}_2(V_2)$. The map $\mathcal{C}_2^{-1} \circ \mathcal{C}_1$ is a diffeomorphism $V_1 \to V_2$. 
\end{itemize} 
Further, in view of an application of Theorem \ref{thm_microlocal_smoothing_general_v2} we let $L \subset M^{\mathrm{int}}$ be a compact set containing the $x$-projection of $V_1 \cup V_2$.

We can now formally describe the microlocal smoothing argument of \cite{MSU15} based on cancellation of singularities. Suppose that $f_1 \in C^{\infty}(M)$ is supported near $x_1$ and has wave front set in $V_1$. We wish to find a function $f_2 \in C^{\infty}(M)$, supported near $x_2$ with wave front set in $V_2$, so that 
\[
I(f_1 + f_2) \in C^{\infty}.
\]
Let $I_j$ denote $I$ acting on functions whose wave front set is contained in $V_j$. Since the canonical relation of $I_j$ is the graph of $\mathcal{C}_j$ and since $I_j$ is an elliptic FIO of order $-1/2$, see \cite[proof of Theorem 4.3]{MSU15}, $I_j$ has a parametrix $I_j^{-1}$ (an FIO of order $1/2$ whose canonical relation is the graph of $\mathcal{C}_j^{-1}$) so that $I_j \circ I_j^{-1} = \mathrm{Id} + R_j$ where $R_j$ is microlocally smoothing near $\mathcal{C}_j(x_j, \xi_j)$. Since $I(f_1 + f_2) = I_1 f_1 + I_2 f_2$, we may just define 
\[
f_2 := -I_2^{-1} I_1 f_1
\]
and then $I(f_1 + f_2) \in C^{\infty}$. In the last step we used that $\mathcal{C}_1(V_1) = \mathcal{C}_2(V_2)$, which makes it possible to apply $I_2^{-1}$ to $I_1 f_1$.

We will next combine the idea above with our microlocal smoothing arguments to show that in the presence of conjugate points, the two-dimensional geodesic X-ray transform cannot be H\"older stable with respect to any Sobolev norms (even if it were injective). The fact that Lipschitz stability is not possible was already stated in \cite{MSU15}. It is likely that if $(M,g)$ is analytic/Gevrey, then the stability could not be better than logarithmic in this setting. However, this would require some facts about analytic/Gevrey FIOs which may not be easily available in the literature, and we will not consider this further.

\begin{theorem}
\label{thm:geodesic}
Let $(M,g)$ be a compact two-dimensional non-trapping Riemannian manifold with interior conjugate points and strictly convex boundary $\partial M$. Let $I$ be as above. Assume that for some $s_1, s_2 \in \R$ and $\delta>0$ and for some modulus of continuity $\omega$ it holds
\begin{align*}
 \| f \|_{H^{s_1}(M)} \leq \omega(\|I f\|_{H^{s_2}(\partial_+ SM)})
\end{align*}
for all $f$ with $\|f\|_{H^{s_1 +\delta}(M)}\leq 1$.
Then, for any $\alpha \in (0,1]$ there exists a constant $C=C_{\alpha}>0$ such that $\omega(t)\geq C t^{\alpha}$ for $t$ small. 
\end{theorem}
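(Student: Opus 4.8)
\textbf{Proof plan for Theorem \ref{thm:geodesic}.}
The plan is to deduce this from the abstract microlocal-smoothing result Theorem \ref{thm_microlocal_smoothing_general_v2}, exactly mirroring the argument used for the limited data Radon transform in Proposition \ref{prop:LDRT}, but now with the "smoothing away from a conic neighborhood" coming from the cancellation of singularities at conjugate points rather than from a geometric cutoff. The operator to which I would apply Theorem \ref{thm_microlocal_smoothing_general_v2} is not $I$ itself but a modification of it that is microlocally smoothing on a small conic neighborhood $V_2$ of $(x_2, -\xi_2)$, while still being dominated by (a composition built from) $I$ so that a stability estimate for $I$ forces one for the modified operator.

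First I would set up the geometry: fix the conjugate pair $x_1, x_2$ on $\gamma_0$, the covectors $\xi_1, \xi_2$, and the neighborhoods $V_1, V_2 \subset T^*M$ as in item (ii) above, chosen small enough that $\mathcal{C}_1(V_1) = \mathcal{C}_2(V_2)$ and $\mathcal{C}_2^{-1}\circ\mathcal{C}_1: V_1 \to V_2$ is a diffeomorphism. Let $I_j$ be $I$ composed with a microlocal cutoff to $V_j$; then $I_j$ is an elliptic FIO of order $-1/2$ with canonical graph $\mathcal{C}_j$, hence has a parametrix $I_j^{-1}$ (an FIO of order $1/2$ with canonical graph $\mathcal{C}_j^{-1}$) with $I_j \circ I_j^{-1} = \mathrm{Id} + R_j$, $R_j$ smoothing microlocally near $\mathcal{C}_j(V_j)$. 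I would then consider the composition $A := I_1^{-1} \circ I$, acting on functions microlocally supported in $V_2$. The key point is that for $f$ with $WF(f) \subset V_2$ we have $I f = I_2 f$ (the part of $I$ seeing $V_2$), and since $\mathcal{C}_1(V_1) = \mathcal{C}_2(V_2)$ the composition $I_1^{-1} \circ I_2$ is a \emph{pseudodifferential} operator on $V_2$ of order $0$ (the canonical relations compose to the graph of $\mathcal{C}_1^{-1}\circ\mathcal{C}_2$, a diffeomorphism of $T^*M$ which, restricted to the relevant piece, is the identity up to the diffeomorphism identification — more precisely $I_1^{-1}\circ I$ restricted to the $V_2$-microlocal wavefront is elliptic there). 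What I actually want, though, is an operator that is \emph{smoothing} on $V_2$: so instead I take $A := I_1^{-1} \circ I - (\text{elliptic }\Psi\text{DO corrector})$...

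On reflection the cleaner route, and the one I would actually carry out, is the following. Apply Theorem \ref{thm_microlocal_smoothing_general_v2} directly with $M_1 = M$, $M_2 = \partial_+ SM$, and the operator $\tilde{A} := (\mathrm{Id} + \text{cutoff})\,I$ arranged so that $\tilde A$ is microlocally $C^\infty$-smoothing at $(x_2,-\xi_2)$: concretely, let $f_2 = -I_2^{-1}I_1 f_1$ be the cancellation construction described in the text, which shows that the map $f_1 \mapsto I(f_1 + f_2(f_1)) = I(P_1 f_1) + I(P_2 f_2(f_1))$ lands in $C^\infty$, where $P_j$ is the microlocal cutoff to $V_j$. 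Then the operator $B: f_1 \mapsto f_1 + f_2(f_1) = (\mathrm{Id} - I_2^{-1}I_1)P_1 f_1 + (\text{lower order})$ is a $\Psi$DO of order $0$ that is elliptic at $(x_1,\xi_1)$, maps into functions with wavefront in $V_1 \cup V_2$, and satisfies $I \circ B \in C^\infty$ on the relevant microlocal region. So $A := I \circ B$ is a continuous linear operator, preserves compact supports (after arranging all cutoffs to have compact support, using that $L \Subset M^{\mathrm{int}}$), and is microlocally $C^\infty$-smoothing at $(x_1,\xi_1) \in T^*L^{\mathrm{int}}\setminus\{0\}$ — i.e. $WF(K_A)_{M_2} = \emptyset$ and $WF'(K_A)$ avoids covectors $(x,y,\xi,\eta)$ with $(y,\eta)$ in a conic neighborhood of $(x_1,\xi_1)$. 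A stability estimate $\|f\|_{H^{s_1}} \le \omega(\|If\|_{H^{s_2}})$ for all $\|f\|_{H^{s_1+\delta}_L}\le 1$ then yields, upon substituting $f = Bg$ and using that $B$ is elliptic of order $0$ at $(x_1,\xi_1)$ (so that $\|g\|_{H^{s_1}}$ near $(x_1,\xi_1)$ is controlled by $\|Bg\|_{H^{s_1}}$ plus a smoothing term, which can be absorbed via the standard $\Psi$DO a priori estimate for elliptic operators), a stability estimate of the form $\|g\|_{H^{s_1}} \le \omega(\|Ag\|_{H^{s_2}})$ on an appropriate compact ball of $H^{s_1+\delta}_L$. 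Theorem \ref{thm_microlocal_smoothing_general_v2} then forces $\omega(t)\gtrsim t^\alpha$ for every $\alpha \in (0,1)$.

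The step I expect to be the main obstacle is the bookkeeping that turns "$I\circ B$ is smoothing on a microlocal region near $(x_1,\xi_1)$" into the honest hypotheses of Theorem \ref{thm_microlocal_smoothing_general_v2}, namely that $WF(K_A)_{M_2}=\emptyset$ and $WF'(K_A)$ contains no $(x,y,\xi,\eta)$ with $(y,\eta) \in V$ for a genuine conic \emph{neighborhood} $V$ of $(x_1,\xi_1)$. This requires: (a) choosing all the microlocal cutoffs $P_1, P_2$ and the parametrices $I_j^{-1}$ so that the cancellation $I \circ B \in C^\infty$ holds not just at the single covector $(x_1,\xi_1)$ but uniformly on an open conic neighborhood — this is fine because $\mathcal{C}_2^{-1}\circ\mathcal{C}_1$ is a diffeomorphism of open sets $V_1 \to V_2$, so one simply works with $V_1$ small but open; (b) the composition-of-wavefront-sets calculus of \cite[Theorems 8.2.13, 8.2.14]{Hoermander} (or the FIO calculus of \cite[Chapter 25]{Hoermander}) applied to the three-fold composition, keeping track of the "bad" pieces $WF(K_I)_{M_2} \times M \times \{0\}$ and $M_2 \times \{0\} \times WF'(K_B)_M$ in formula \eqref{wf_composition_formula} and checking they contribute nothing near the relevant set; and (c) the reduction of a stability estimate for $I$ to one for $A = I\circ B$, which needs the elliptic $\Psi$DO estimate $\|g\|_{H^{s_1}(\text{mic. near }(x_1,\xi_1))} \lesssim \|Bg\|_{H^{s_1}} + \|g\|_{H^{s_1-N}}$ together with an interpolation/absorption argument to handle the $H^{s_1-N}$ error using the a priori bound $\|g\|_{H^{s_1+\delta}_L}\le r$ — this last point is routine once one notes that the instability conclusion only concerns behavior on a compact set and $t$ small, so lower-order terms are harmless. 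None of this uses analyticity, which is why only the $C^\infty$ (Hölder) conclusion is claimed, consistent with the remark preceding the theorem.
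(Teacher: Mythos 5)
Your overall strategy -- compose $I$ with the cancellation operator built from \cite{MSU15}, observe that the composition has smooth image, and feed this into the entropy/capacity machinery -- is the same as the paper's, but there is a genuine gap at the point you lean on most heavily. The operator $B = (\mathrm{Id} - I_2^{-1}I_1)P_1$ (the paper's $P = P_1 - P_2 I_2^{-1}I_1 P_1$) is \emph{not} a pseudodifferential operator elliptic at $(x_1,\xi_1)$: the second summand is an FIO whose canonical relation is the graph of $\mathcal{C}_2^{-1}\circ\mathcal{C}_1$, which maps a neighbourhood of $(x_1,\xi_1)$ to a neighbourhood of the \emph{different} point $(x_2,-\xi_2)$, so $B$ is a sum of operators with distinct canonical relations and ``elliptic at $(x_1,\xi_1)$ in the $\Psi$DO sense'' does not apply to it. Consequently the ``standard $\Psi$DO a priori estimate'' you invoke in step (c) is not available for $B$ as such. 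The paper resolves exactly this difficulty by computing $P^*P$ modulo smoothing: the cross terms vanish microlocally because $V_1$ and $V_2$ are disjoint, and by the FIO composition calculus $P_1^*(I_2^{-1}I_1)^*P_2^*P_2 I_2^{-1}I_1 P_1$ is a $\Psi$DO of order $0$, so $P^*P$ is an order-zero $\Psi$DO with nonvanishing principal symbol at $(x_1,\xi_1)$; this is what yields, via the Weyl law of Theorem \ref{thm:Weyl_microlocal} (adapted to $H^s_L$ spaces), the lower bound on the singular values of $P:H^{s_1+\delta}_L\to H^{s_1}_L$ and hence the capacity of $P(K)$. That computation is the essential new ingredient here compared with Proposition \ref{prop:LDRT}, and your proposal does not supply a substitute for it.

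The second, related problem is structural: your reduction to Theorem \ref{thm_microlocal_smoothing_general_v2} via the substitution $f = Bg$ cannot produce the hypothesis of that theorem, namely a stability estimate $\norm{g}_{H^{s_1}} \le \omega(\norm{Ag}_{H^{s_2}})$ for \emph{all} $g$ in a ball of $H^{s_1+\delta}_L$. From the assumed estimate for $I$ you only get $\norm{Bg}_{H^{s_1}} \le \omega(\norm{I(Bg)}_{H^{s_2}})$, and $\norm{Bg}_{H^{s_1}}$ does not control $\norm{g}_{H^{s_1}}$ (take $g$ with wave front set away from $V_1$: then $Bg$ is essentially smooth and small while $\norm{g}_{H^{s_1}}$ is of order one), so at best you obtain a microlocalized control, which is not what Theorem \ref{thm_microlocal_smoothing_general_v2} asks for; moreover absorbing the residual $\norm{g}_{H^{s_1-N}}$ term inside a general (possibly very weak) modulus $\omega$ is not a routine step. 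The correct organization -- and the one the paper uses -- is not to manufacture a stability estimate for $I\circ B$ at all, but to run Theorem \ref{thm:Mandache}/Lemma \ref{lemma_entropy_typical} directly with $K$ replaced by $P(K)$: the hypothesis on $I$ is only ever applied to pairs $f_1,f_2 \in P(K)$ (which do lie in the $H^{s_1+\delta}_L$ ball after adjusting radii and cutoffs), the $\delta$-net bound comes from $I(P(K))\subset C^{\infty}(\p_+ SM)$, and the $\eps$-discrete set comes from the singular value lower bound for $P$ obtained through $P^*P$. If you repair your argument along these lines -- replacing the false ellipticity claim for $B$ by the $P^*P$ computation and replacing the appeal to Theorem \ref{thm_microlocal_smoothing_general_v2} by the pigeonhole argument on $P(K)$ -- you recover the paper's proof.
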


\begin{proof}
We will follow the scheme from \cite{MSU15} described above. In deducing the instability property, we do not directly invoke Theorem \ref{thm_microlocal_smoothing_general_v2}, but instead follow its main ideas in a slightly modified functional setting. In the framework laid out in the proof of Theorem \ref{thm_microlocal_smoothing} and Lemma \ref{lemma_entropy_typical} we consider 
\begin{align*}
I: P(K) \rightarrow Y,
\end{align*}
with 
\begin{align}
\label{eq:func_set}
\begin{split}
&X = H^{s_1}_{L}(M),\ X_1:= H^{s_1 +  \delta}_{L}(M), \
 K = \{ f \in H^{s_1+ \delta}_{L}(M) \,;\, \norm{f}_{H^{s_1+\delta}} \leq 1 \},\\
&Y = H^{s_2}(\partial_+ SM),\ Y_1:= H^{s_2 + N}(\partial_+ SM) \mbox{ with } N \in \N \mbox{ arbitrary but fixed}, \\
 &P = P_1 - P_2 I_2^{-1} I_1 P_1.
\end{split}
\end{align}
Here $P_j = \chi_j(x)Op(\chi_j(x) \psi_j(\xi))\chi_j(x) \in \Psi^0_{\mathrm{cl}}(M^{\mathrm{int}})$ is a microlocal cutoff to $V_j$ obtained by quantizing the symbol $\chi_j(x) \psi_j(\xi)$ and multiplying with $\chi_j(x)$ from the right and left. Here $\chi_j$ is a smooth function supported near $x_j$ with $\chi_j = 1$ near $x_j$, and $\psi_j$ is homogeneous of degree $0$ for $\abs{\xi} \geq 1$ with $\psi_j(\xi_j) = 1$. Moreover, $I_j$ is the microlocalized version $I$ as described after (ii) above. We also note that functions in $P(K)$ are supported in $L$, and hence functions in $I(P(K))$ are compactly supported in $(\p_+ SM)^{\mathrm{int}}$, see e.g.\ \cite{PSU20+} for this simple fact which uses strict convexity of $\p M$. We will prove below that $I$ indeed maps $P(K)$ to $C^{\infty}_c((\p_+ SM)^{\mathrm{int}})$ and hence to $Y$.

The main difference in the functional setting with respect to the proof of Theorem \ref{thm_microlocal_smoothing} is that $P$ is not a $\Psi$DO, but rather an FIO of order zero. However, modulo smoothing operators, one has 
\[
P^* P = P_1^* P_1 + P_1^* (I_2^{-1} I_1)^* P_2^* P_2 I_2^{-1} I_1 P_1.
\]
Since $I_2^{-1} I_1$ is an FIO associated with the canonical graph of $\mathcal{C}_2^{-1} \circ \mathcal{C}_1$, its adjoint is associated with the graph of $\mathcal{C}_1^{-1} \circ \mathcal{C}_2$. Here we use that $\mathcal{C}_2^{-1} \circ \mathcal{C}_1$ is a diffeomorphism $V_1 \to V_2$. Thus by the composition calculus of FIOs, $P^* P$ is $\Psi$DO of order zero, and one can check that its principal symbol is nonvanishing at $(x_1, \xi_1)$. Now if $M$ were a closed manifold, we could use Theorem \ref{thm:Weyl_microlocal} to show that $P$ as an operator $H^{s_1+\delta}(M) \to H^{s_1}(M)$ would have singular value bounds $\sigma_k(P) \sim k^{-\frac{\delta}{2}}$. Since $M$ has a boundary, we consider $P$ as an operator $H^{s_1+\delta}_L(M) \to H^{s_1}_L(M)$ instead, and argue as in the end of proof of Theorem \ref{thm_microlocal_smoothing} to obtain the analogous capacity estimates.

In order to conclude the desired instability result, it remains to check the conditions stated in Theorem \ref{thm_microlocal_smoothing} in our functional setting from \eqref{eq:func_set}. The result then follows by invoking Lemma \ref{lemma_entropy_typical} and Theorem \ref{thm:Mandache}.
However, by construction of $P$ and the characterization of the wave front sets in \cite{MSU15} (see our discussion after (ii) above, where we may choose $f_1 = P_1 f$ for $f \in K$) we infer that $I(P(K)) \subset C^{\infty}(\p_+ SM)$ and hence  $I(P(K)) \subset \{y \in Y_1: \|y\|_{Y_1}\leq R\}$ for some $R=R(N)>0$. This concludes the proof.
\end{proof}

We remark that the conditions on the manifold are not void. For both analytic and numerical examples of such manifolds, we refer to \cite[Section 6]{MSU15}. Partial results in higher dimensional settings are proved in \cite{HU18}.

\subsubsection{Exponential instability in quantitative unique continuation for the wave equation}

\label{sec:wave_ucp}

As a further application of the microlocal analytic (or Gevrey) smoothing mechanism to instability, we discuss the exponential instability of the unique continuation property for the wave equation. To this end, consider $(M,g)$ an analytic, closed $n$-dimensional manifold. Let $\Omega \subset M$ with $\Omega \neq \emptyset$ be open and such that $M \setminus \overline{\Omega} \neq \emptyset$ is also open. We are interested in the quantitative unique continuation properties for the wave equation
\begin{align}
\label{eq:wave}
\begin{split}
\p_t^2 u - \D_g u & = 0 \mbox{ in } M \times (0,T),\\
(u, \p_t u) &= (u_0, u_1) \mbox{ on } M \times \{0\},
\end{split}
\end{align}
on $(M,g)$ with observation data in $\Omega$. Due to the finite speed of propagation of the wave equation, we assume that the observation time $T$ always satisfies
\begin{align*}
T > 2 \max\{\dist(x,\Omega), \ x \in M\}.
\end{align*}
By duality to unique continuation, this guarantees that approximate observability holds \cite{Tataru_wave, LaurentLeautaud}.
In order to show the strength of our Gevrey microlocal instability argument, we assume that there exists a unit speed geodesic $\gamma(t)$ in $(M,g)$ which never meets $\ol{\Omega}$ for $0 \leq t \leq T$. This is a situation in which the geometric control condition of \cite{BardosLebeauRauch} is not satisfied. 

Under this condition, we reprove the fact that the stability estimates which were deduced in \cite{LaurentLeautaud} and which are logarithmic are indeed optimal (similar observations were already made in earlier works starting with Lebeau \cite{Lebeau}; by discussing this example, it is not our main purpose to present a new result, but to show the versatility of the instability ideas from the previous sections):

\begin{theorem}
\label{thm:wave_instab_UCP}
Let $(M,g)$ and $\Omega$ be as above. Assume that there exists some modulus of continuity $\omega$ satisfying \eqref{eq:modulus_cond} such that for all $(u_0, u_1) \in H^1(M) \times L^2(M)$ and $u\in L^2((0,T), H^1(M))$ solving \eqref{eq:wave} it holds
\begin{align*}
\|(u_0,u_1)\|_{L^2(M)\times H^{-1}(M)}
\leq \omega\left( \frac{\|u\|_{L^2((0,T),H^1(\Omega))}}{\|(u_0,u_1)\|_{H^1(M)\times L^2(M)}} \right) \|(u_0,u_1)\|_{H^1(M)\times L^2(M)}.
\end{align*}
Then for any $\sigma>1$ there is $C_{\sigma} >0$ such that $\omega(t) \geq C_\sigma |\log(t)|^{- \frac{\sigma(n+1)+1}{n}}$ for $t$ small.
\end{theorem}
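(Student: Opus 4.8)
The strategy is to cast the quantitative unique continuation estimate as a stability estimate for a linear forward operator and then invoke the microlocal Gevrey smoothing machinery of Theorem~\ref{thm_microlocal_smoothing_general_analytic}. Concretely, I would fix an analytic nontrapping geodesic $\gamma(t)$, $0 \le t \le T$, avoiding $\ol{\Omega}$, and identify a point $(x_0,\xi_0) \in T^*M \setminus \{0\}$ lying on the lifted bicharacteristic of $\gamma$ at time $t=0$. The forward operator will be the map $A: (u_0,u_1) \mapsto \chi_\Omega u$, where $u$ solves the wave equation \eqref{eq:wave} and $\chi_\Omega$ is a cutoff supported in $\Omega$ (more precisely, $\chi$ supported in a set whose closure misses the projection of $\gamma$). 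By Egorov/propagation of singularities for the analytic wave operator on an analytic manifold, the Gevrey wave front set of $u$ at time $t$ is obtained by flowing $WF_{G,\sigma}(u_0,u_1)$ along the (analytic) bicharacteristic flow; hence a singularity of $(u_0,u_1)$ microlocalized near $(x_0,\xi_0)$ propagates along $\gamma$, stays away from $\Omega \times (0,T)$, and so $A$ is microlocally $G^\sigma$-smoothing at $(x_0,\xi_0)$. This is the content that feeds into Theorem~\ref{thm_microlocal_smoothing_general_analytic}.

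\textbf{Key steps, in order.} (1) Reduce to a clean form: rewrite the hypothesis, using the linearity of the solution map and homogeneity of $\omega$ (via the concavity-at-zero condition \eqref{eq:modulus_cond}), as a stability estimate of type (a),
\[
\|(u_0,u_1)\|_{L^2 \times H^{-1}} \le \omega\bigl(\|A(u_0,u_1)\|_{L^2((0,T),H^1(\Omega))}\bigr)
\]
whenever $\|(u_0,u_1)\|_{H^1 \times L^2} \le 1$. (2) Set up the right Sobolev scales: the data space is $X = L^2(M)\times H^{-1}(M)$ with $X_1 = H^1(M)\times L^2(M)$, and the target is a fixed Sobolev space of $u|_{\Omega\times(0,T)}$; here the embedding $i:X_1 \to X$ has entropy numbers $e_k(i) \sim k^{-1/n}$ by Theorem~\ref{thm_sobolev_inclusion} (so $m = 1/n$, with $\delta$-gap equal to $1$). (3) Verify that $A$ preserves compact supports and is microlocally $G^\sigma$-smoothing at $(x_0,\xi_0)$: this is where one invokes the analytic propagation of singularities (the wave operator $\p_t^2-\Delta_g$ is an analytic differential operator, so one can use analytic/Gevrey microlocal regularity results along the lines of Hörmander, Vol.~I, Chapter~8, together with the analyticity of the bicharacteristic flow). (4) Apply Theorem~\ref{thm_microlocal_smoothing_general_analytic} with $\dim M_1 = n$ (the data live on $M$, a single time slice) and $\dim M_2 = n+1$ (the observation $u$ on $\Omega \times (0,T)$), and $\delta = 1$: this yields $\omega(t) \gtrsim |\log t|^{-\frac{1\cdot(\sigma(n+1)+1)}{n}}$ for $t$ small, which is exactly the claimed bound. (5) Let $\sigma \to 1^+$ is not needed here since the statement already has a $\sigma > 1$; one just records the bound for each $\sigma > 1$.

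\textbf{Main obstacle.} The delicate point is step (3): making the microlocal $G^\sigma$-smoothing of $A$ rigorous. One must be careful that the forward operator sends a time-$0$ distribution to a space-time distribution, so its Schwartz kernel lives on $(\Omega\times(0,T)) \times M$, and the relevant $WF'_{G,\sigma}(K_A)$ must be shown to avoid the cone over $(x_0,\xi_0)$. This requires the analytic (or Gevrey) wave front set version of propagation of singularities for $\p_t^2 - \Delta_g$ — namely that $WF_{G,\sigma}(u)$ is invariant under the Hamilton flow of the (analytic) principal symbol $\tau^2 - |\xi|_g^2$ — applied in the form that a singularity initially concentrated near $(x_0,\xi_0)$ travels precisely along the lift of $\gamma$. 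Since $(M,g)$ is assumed analytic and $\gamma$ never meets $\ol{\Omega}$ on $[0,T]$, the propagated singularity stays out of the observation region, giving the smoothing. The finite speed of propagation (guaranteed by $T > 2\max_x \dist(x,\Omega)$) ensures the geodesic segment we use genuinely exists and the construction is non-vacuous. Once this microlocal input is in place, the rest is a direct citation of Theorem~\ref{thm_microlocal_smoothing_general_analytic} and arithmetic with the exponents.
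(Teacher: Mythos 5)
Your overall skeleton (reduction to form (a) via \eqref{eq:modulus_cond}, the choice of Sobolev scales with gap $\delta=1$, the application of an abstract microlocal Gevrey smoothing theorem with $\dim M_1=n$, $\dim M_2=n+1$, and the exponent arithmetic) matches the paper, but step (3) contains a genuine gap that invalidates the argument as written. You take the forward operator to be $A:(u_0,u_1)\mapsto \chi_\Omega u$ for the \emph{full} wave equation and claim that a singularity of the Cauchy data microlocalized near $(x_0,\xi_0)$ propagates along the single ray $\gamma$ and hence avoids $\Omega\times(0,T)$. This is false for general data: the characteristic variety of $\p_t^2-\Delta_g$ has two sheets $\tau=\pm|\xi|_g$, so a singularity of $(u_0,u_1)$ at $(x_0,\xi_0)$ generically produces space-time singularities along \emph{two} null bicharacteristics, projecting to the geodesic through $(x_0,\xi_0^\sharp)$ traversed in both senses. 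The hypothesis only provides one ray avoiding $\ol{\Omega}$ on $[0,T]$; the oppositely propagating branch may (and, because $T>2\max_x\dist(x,\Omega)$, typically does) enter $\Omega\times(0,T)$. Consequently $WF'_{G,\sigma}(K_A)$ does contain points lying over $(x_0,\xi_0)$ with base point in $\Omega\times(0,T)$, so $A$ is \emph{not} microlocally $G^\sigma$-smoothing at $(x_0,\xi_0)$, and Theorem \ref{thm_microlocal_smoothing_general_analytic} cannot be applied to it.

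The missing idea, which is exactly what the paper's proof supplies, is to kill the backward-propagating branch before microlocalizing: factorize $\p_t^2-\Delta_g=(\p_t-iB)(\p_t+iB)$ with $B=(-\Delta_g)^{1/2}$ and restrict to solutions $w$ of the half-wave equation $(\p_t+iB)w=0$, i.e.\ to Cauchy data of the special form $(u_0,-iBu_0)$. These are admissible in the hypothesis (they solve \eqref{eq:wave}), and since $\|(u_0,-iBu_0)\|_{L^2\times H^{-1}}\sim\|u_0\|_{L^2}$ and $\|(u_0,-iBu_0)\|_{H^1\times L^2}\sim\|u_0\|_{H^1}$, the stability estimate collapses to a scalar estimate $\|u_0\|_{L^2}\leq\omega(\|w\|_{L^2((0,T),H^1(\Omega))})$ for $\|u_0\|_{H^1}\leq 1$. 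For the half-wave evolution the $G^\sigma$ singularities propagate only forward along the flow, so after inserting a Gevrey microlocal cutoff $P$ near $(x_0,\xi_0)$ (supported in a conic neighborhood all of whose geodesics avoid $\ol{\Omega}$ on a slightly enlarged time interval) and Gevrey space-time cutoffs, the composed map $u_0\mapsto \psi_1\psi_2\,(\text{half-wave solution of }Pu_0)$ has $G^\sigma$ kernel and maps boundedly into an $A^{\sigma,\rho}_Q$ space; one then concludes by Theorem \ref{thm_microlocal_smoothing}(b), giving the exponent $\frac{\sigma(n+1)+1}{n}$ you computed. Without this factorization (or some equivalent device isolating one propagation direction), your construction of the microlocally smoothing operator fails, and this is the step you would need to repair.
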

\begin{proof}
For any $(y, \eta) \in T^* M \setminus \{0\}$, denote by $\gamma_{y,\eta}$ the geodesic in $(M,g)$ with $\gamma_{y,\eta}(0) = y$ and $\dot{\gamma}(y,\eta) = \eta^{\sharp}$. By assumption, there exists $(y_0, \eta_0) \in T^* M$ with $\abs{\eta_0}_g = 1$ so that the geodesic $\gamma_{y_0,\eta_0}(t)$ never meets $\ol{\Omega}$ for $0 \leq t \leq T$. By compactness, there is a positive distance between the sets $\gamma_{y_0,\eta_0}([-\eps,T+\eps])$ and $\ol{\Omega}$ for some $\eps > 0$. Thus there is a small conic neighborhood $C$ of $(y_0,\eta_0)$ so that no geodesic $\gamma_{y,\eta}$ meets $\ol{\Omega}$ for $-\eps \leq t \leq T+\eps$ when $(y,\eta) \in C$ and $\abs{\eta}_g = 1$.

In general, a singularity at $(y,\eta)$ at time $t=0$ for a solution of the wave equation will propagate in two opposite directions. In order to prove our statement we need to focus on forward propagating singularities. To this end, we factorize the wave operator as 
\[
\p_t^2 - \Delta_g = (\p_t - iB)(\p_t + iB)
\]
where $B = (-\Delta_g)^{1/2}$ is an elliptic $\Psi$DO of order one. If $(\p_t + iB) w = 0$ with $w(0) = u_0$, then $w$ also solves the wave equation with $(w(0), \p_t w(0)) = (u_0, -iBu_0)$. For such solutions one has (by the discussion around \eqref{eq:modulus_cond})  
\[
\norm{u_0}_{L^2(M)} \leq \omega(\|w\|_{L^2((0,T),H^1(\Omega))}), \qquad \norm{u_0}_{H^1(M)} \leq 1.
\]
Moreover, the $G^{\sigma}$ singularities of solutions of $(\p_t + iB) w = 0$ propagate along forward bicharacteristics in the following sense (see e.g.\ \cite[Theorem 1 in Lecture VI]{Mizohata}): one has $(y, \eta) \notin WF_{G,\sigma}(u_0)$ if and only if $\gamma_{y,\eta}(t) \notin WF_{G,\sigma}(w(\,\cdot\,,t))$.

We wish to use Theorem \ref{thm_microlocal_smoothing} in this setting. Fix a Gevrey pseudodifferential operator $P \in \Psi^0_{\mathrm{cl}}(M)$ obtained by quantizing the symbol $\chi(y) \psi(\eta)$ where $\chi$ is a $G^{\sigma}$ cutoff function supported near $y_0$, and $\psi(\eta)$ is a $G^{\sigma}$ Fourier cutoff near $\eta_0$ as in \cite[Proposition 3.4.4]{Rodino_book}. We may assume that $P$ is $G^{\sigma}$ smoothing away from $C$.

We next define an operator 
\[
A: L^2(M) \to L^2(\mR,H^1(M)), \ \ Au_0(x,t) = \psi_1(t) \psi_2(x) w(x,t)
\]
where $\psi_1$ is a $G^{\sigma}$ cutoff function supported in $(-\eps,T+\eps)$ with $\psi_1(t) = 1$ near $[0, T]$, and $\psi_2$ is a $G^{\sigma}$ cutoff function in $M$ with $\psi_2 = 1$ near $\ol{\Omega}$ so that the geodesics $\gamma_{y,\eta}$ for $(y,\eta) \in C$ and $\abs{\eta}_g = 1$ never meet $\supp(\psi_2)$ for $-\eps \leq t \leq T+\eps$. It follows that we have 
\[
\norm{u_0}_{L^2(M)} \leq \omega(\|Au_0\|_{L^2(\mR,H^1(M))}), \qquad \norm{u_0}_{H^1(M)} \leq 1.
\]

Now let $u_0 = Pv$ with $v \in L^2(M)$, and let $w$ solve $(\p_t + iB) w = 0$ with $w(0) = u_0 = Pv$ as above. By $G^{\sigma}$ propagation of singularities and the fact that $Pv$ is $G^{\sigma}$ away from $C$ when $v \in L^2(M)$, it follows that $w(\,\cdot\,,t)$ is a $G^{\sigma}$ function near $\supp(\psi_2)$ for any $t \in [-\eps,T+\eps]$. Since $\p_t w = -iBw$ is also $G^{\sigma}$ near $\supp(\psi_2)$ for any fixed $t$, it follows from \cite[end of Lecture IV]{Mizohata} and finite speed of propagation that the solution $w(x,t)$ of the wave equation is $G^{\sigma}$ for $(x,t)$ near $\supp(\psi_2) \times \supp(\psi_1)$. This proves that $A(Pv)$ is $G^{\sigma}$ in $M \times \mR$ whenever $v \in L^2(M)$. This remains true for any $v$ in the dual of $G^{\sigma}(M)$ (Gevrey ultradistributions), since $P$ is a Gevrey $\Psi$DO and since propagation of singularities is valid in this setting \cite{CZ}. This implies as in the proof of Theorem \ref{thm:eigenval_bds}(ii) that the map $v \mapsto A(Pv)$ has $G^{\sigma}$ integral kernel and hence is bounded $H^1(M) \to A^{\sigma,\rho}_Q(M \times \mR)$ for some $\rho > 0$ and for $Q = M \times [-\eps,T+\eps]$. The result now follows from Theorem \ref{thm_microlocal_smoothing}(b).
\end{proof}

\begin{remark}
\label{rmk:dual_control}
By a duality argument as outlined in Section \ref{sec:control}, we also obtain that exponential cost of control is optimal in the associated steering problem for the wave equation.
\end{remark}

As a corollary of Theorem \ref{thm:wave_instab_UCP} we obtain the logarithmic instability of unique continuation in space-like directions for the wave equation. This had already been noted earlier in \cite{John} in special geometries by a separations of variables argument and an analysis of the asymptotic behaviour of Bessel functions. More precisely, the result in \cite{John} states that if $0 < \rho < 1$ and $k \geq 1$, and if for some modulus of continuity $\omega$ one has the stability estimate 
\[
\norm{u}_{L^{\infty}(B_1 \times \mR)} \leq \omega(\norm{u}_{L^{\infty}(B_{\rho} \times \mR)})
\]
for any solution of $(\p_t^2 - \Delta) u = 0$ in $\mR^2 \times \mR$ with $\norm{u}_{W^{k,\infty}(\mR^2 \times \mR)} \leq 1$, then $\omega(t) \gtrsim \abs{\log t}^{-k}$ for $t$ small. Since rays of geometric optics that are tangent to $\p B_1 \times \mR$ never enter $\ol{B}_{\rho} \times \mR$, instability results of this kind with $L^2$ based norms can be obtained from Theorem \ref{thm:wave_instab_UCP} together with energy conservation and finite propagation speed.

\begin{remark} \label{rmk:alternative}
We remark that alternatively, at lower coefficient regularity, an analogue of \cite{John} could also have been obtaind by a separation of variables argument in combination with a ``blow-up'' argument as in Appendix \ref{sec:Carl}. This would then lead to the construction of solutions to Helmholtz-like equations which are perturbations of Bessel functions. Their asymptotic behaviour was analysed in \cite{John} to infer the instability in the constant coefficient case.
\end{remark}

\subsubsection{An inverse problem for the transport equation}

Up to now we discussed microlocal instability for linear inverse problems. As a direct example in which the instability of the geodesic X-ray transform translates into instability of a nonlinear inverse problem, we discuss an inverse problem for the transport equation.

Let $(M,g)$ be a compact, non-trapping Riemannian manifold with smooth strictly convex boundary, and consider a potential $q \in C^{\infty}(M)$. Let $SM = \{ (x,v) \in TM \,;\, |v| = 1 \}$ be the unit sphere bundle, let $X$ be the geodesic vector field on $SM$, and consider the boundary value problem 
\[
Xu + qu = 0 \text{ in $SM$}, \qquad u|_{\partial_+(SM)} = h
\]
where $\partial_{\pm}(SM) = \{ (x,v) \in \partial(SM) \,;\, \mp \langle v, \nu \rangle \geq 0 \}$ denote the incoming and outgoing boundary of $SM$ ($\nu$ is the unit outer normal to $\partial M$). On a fixed geodesic the equation $Xu + qu=0$ is an ODE $\frac{d}{dt} (u(\varphi_t)) + q(\varphi_t) u(\varphi_t) = 0$ where $\varphi_t$ denotes the geodesic flow on $SM$. The solution is 
\[
u(\varphi_t(x,v)) = \exp \left[ -\int_0^t q(\varphi_s(x,v)) \,ds \right] h(x,v), \qquad (x,v) \in \partial_+(SM).
\]
Consider the boundary map 
\[
\Lambda_q: C^{\infty}_c(\partial_+(SM)) \to C^{\infty}(\partial_-(SM)), \ \ \Lambda_q h = u|_{\partial_-(SM)}.
\]
This map takes the value of $u$ on the incoming boundary to the value of $u$ in the outgoing boundary, and is an analogue of the Dirichlet-to-Neumann map in the setting of transport equations. The map $\Lambda_q$ is equivalent information with the scattering data for the potential $q$ (see e.g.\ \cite{PSU_GAFA} where $q$ corresponds to the Higgs field $\Phi$).

We have the explicit expression 
\begin{align}
\label{eq:sol}
\Lambda_q h(\alpha(x,v)) = \exp(-Iq(x,v)) h(x,v), \qquad (x,v) \in \partial_+(SM),
\end{align}
where $Iq$ is the geodesic X-ray transform of $q$, and $\alpha: \partial(SM) \to \partial(SM)$ is the scattering relation defined so that $\alpha^2 = \mathrm{Id}$ and $\alpha$ is a $C^{\infty}$ diffeomorphism which is an isometric map with respect to natural weighted $L^2$ spaces on $\partial_{\pm}(SM)$, see \cite{PestovUhlmann}. In particular, the explicit expression shows that $\Lambda_q$ is a FIO of order $0$ with canonical relation given by the scattering relation $\alpha$. It also follows that 
\[
\| \Lambda_{q_1} - \Lambda_{q_2} \|_{L^2 \to L^2} \sim \| \exp(-Iq_1) - \exp(-Iq_2) \|_{L^{\infty}(\partial_+(SM))}.
\]
This shows that, under a priori bounds on $q_j$, one has 
\begin{align}
\label{eq:rep_transport}
\| \Lambda_{q_1} - \Lambda_{q_2} \|_{L^2 \to L^2} \sim \| I(q_1 - q_2) \|_{L^{\infty}(\partial_+(SM))}.
\end{align}
Thus the (in)stability properties of the inverse problem of recovering $q$ from $\Lambda_q$ are the same as those for the geodesic X-ray transform. 

\begin{theorem}
\label{thm:transport}
Let $(M,g)$ be compact and non-trapping with strictly convex boundary and let $q$ be as above. Then the following instability results hold:
\begin{itemize}
\item 
If $(M,g)$ is not simple, i.e.\ if conjugate points are present, but has injective X-ray transform, there is no H\"older stability in Sobolev norms, i.e.\ for no choice of $s_1, s_2, s_3$ with $s_3>s_2$ and $\alpha\in (0,1]$ do we have
\begin{align*}
\|q_1-q_2\|_{H^{s_1}(M)} \leq C\|\Lambda_{q_1}-\Lambda_{q_2}\|_{H^{s_2}(\partial_+(SM)) \rightarrow H^{s_2}(\partial_+(SM))}^{\alpha}
\end{align*}
with $q_1, q_2 \in K:=\{q\in L^2(M): \ \|q\|_{H^{s_3}(M)} \leq 1 \}$.
\item If $(M,g)$ does not have injective X-ray transform, then both uniqueness and H\"older stability fail.
\end{itemize}
\end{theorem}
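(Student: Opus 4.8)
The plan is to read off both statements from the explicit formula \eqref{eq:sol} for $\Lambda_q$ together with the instability of the geodesic X-ray transform already obtained in Theorem \ref{thm:geodesic}. Recall that since $\partial M$ is strictly convex and $(M,g)$ is non-trapping, the scattering relation $\alpha$ restricts to a smooth diffeomorphism $\partial_+(SM)\to\partial_-(SM)$, so that pull-back $\alpha^{\ast}$ is an isomorphism of every Sobolev space with norm independent of the potential. Pulling \eqref{eq:sol} back by $\alpha$ shows that $\alpha^{\ast}\bigl(\Lambda_{q_1}-\Lambda_{q_2}\bigr)$ is exactly multiplication by $m:=\exp(-Iq_1)-\exp(-Iq_2)$ on $\partial_+(SM)$, hence
\[
\bigl\|\Lambda_{q_1}-\Lambda_{q_2}\bigr\|_{H^{s_2}(\partial_+(SM))\to H^{s_2}(\partial_+(SM))}\sim\|M_m\|_{H^{s_2}\to H^{s_2}}.
\]
By the multiplication estimates for Sobolev spaces there is $t=t(s_2,\dim M)>0$ with $\|M_m\|_{H^{s_2}\to H^{s_2}}\lesssim\|m\|_{H^t}$, and — after enlarging the a priori exponent $s_3$, which only shrinks the admissible class $K$, so that $s_3>s_1$ and $s_3\ge|s_2|+\dim M$ — the bounds $\|q_j\|_{H^{s_3}}\le1$ keep $\exp(-Iq_j)$ in a bounded subset of $H^t$, so smoothness of $z\mapsto e^{-z}$ gives $\|m\|_{H^t}\lesssim\|I(q_1-q_2)\|_{H^t(\partial_+(SM))}$. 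Restricting further to potentials supported in a fixed compact $L\Subset M^{\mathrm{int}}$ (again only shrinking $K$), the transform $I(q_1-q_2)$ is smooth and compactly supported away from the glancing region, so these Sobolev norms are well behaved. In summary, for $q_1,q_2\in K$ supported in $L$,
\[
\bigl\|\Lambda_{q_1}-\Lambda_{q_2}\bigr\|_{H^{s_2}(\partial_+(SM))\to H^{s_2}(\partial_+(SM))}\lesssim\|I(q_1-q_2)\|_{H^t(\partial_+(SM))}.
\]

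\emph{First bullet.} Suppose, for contradiction, that the H\"older estimate of the theorem holds with some exponents $s_1,s_2,s_3$ and $\alpha\in(0,1]$; by the reduction above we may assume $s_3>s_1$ and $s_3$ large. Taking $q_2=0$ and letting $q_1=f$ range over smooth functions supported in $L$ with $\|f\|_{H^{s_3}}\le1$, chaining the H\"older estimate with the displayed bound yields $\|f\|_{H^{s_1}(M)}\le C\,\|If\|_{H^t(\partial_+(SM))}^{\alpha}$. This is precisely a H\"older stability estimate for the geodesic X-ray transform $I$ with a priori control in $H^{s_1+\delta}$, $\delta:=s_3-s_1>0$. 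Since $(M,g)$ is non-trapping with strictly convex boundary and, being non-simple, has interior conjugate points, Theorem \ref{thm:geodesic} applies and rules out any such estimate, a contradiction. Hence no H\"older stability in Sobolev norms is possible for the recovery of $q$ from $\Lambda_q$.

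\emph{Second bullet.} If $I$ is not injective, pick $0\not\equiv f\in C^{\infty}_c(M^{\mathrm{int}})$ with $If=0$; rescaling we may assume $\|f\|_{H^{s_3}}\le1$, so $f\in K$. By \eqref{eq:sol} one has $\Lambda_{f}=\Lambda_{0}$ while $f\ne0$, so uniqueness already fails; consequently no estimate of the form $\|q_1-q_2\|_{H^{s_1}}\le\omega(\cdots)$ with $\omega(0)=0$ — in particular no H\"older estimate — can hold.

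\emph{Main obstacle.} The only genuinely analytic step is the uniform upper bound in the first paragraph, i.e.\ controlling $\|\Lambda_{q_1}-\Lambda_{q_2}\|_{H^{s_2}\to H^{s_2}}$ by a Sobolev norm of $I(q_1-q_2)$ over the whole a priori class; this is where the multiplication estimate forces $s_3$ to be large, and where restricting the supports of the potentials to $M^{\mathrm{int}}$ is convenient to avoid glancing issues for $I$. Once this is in place, the first bullet is a direct transcription of Theorem \ref{thm:geodesic} — whose own proof already supplies counterexamples supported in $M^{\mathrm{int}}$ — and the second bullet is immediate.
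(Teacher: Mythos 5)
Your proposal is correct and follows essentially the same route as the paper: you use the representation $\Lambda_q h(\alpha(x,v)) = \exp(-Iq(x,v))h(x,v)$ to dominate $\|\Lambda_{q_1}-\Lambda_{q_2}\|_{H^{s_2}\to H^{s_2}}$ by a Sobolev norm of $I(q_1-q_2)$ uniformly over the a priori class (after enlarging $s_3$, which only shrinks $K$), thereby converting the assumed H\"older estimate into a H\"older stability estimate for the geodesic X-ray transform that contradicts Theorem \ref{thm:geodesic}, and you dispose of the non-injective case exactly as the paper does. The only differences are technical and harmless: you pass through a multiplier-norm bound and smoothness of $z\mapsto e^{-z}$ (with an interior support restriction, justified by noting that the counterexamples in the proof of Theorem \ref{thm:geodesic} are supported in a fixed compact subset of $M^{\mathrm{int}}$) where the paper expands the exponential in a series.
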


\begin{proof}
We only prove the first claim, as by virtue of the representation \eqref{eq:sol} the lack of injectivity of the X-ray transform also implies the lack of injectivity (and hence stability) for the DtN map.

Assuming that we had H{\"o}lder stability in spite of working on a the non-simple manifold would amount to the existence of $s_1,s_2,s_3 \in \R$ with $s_3>s_2$ and $\alpha \in (0,1]$ such that
\begin{align}
\label{eq:assume_stab}
\|q_1 -q_2\|_{H^{s_1}(M)}
\leq C \|\Lambda_{q_1}-\Lambda_{q_2}\|_{H^{s_2}(\partial_+(SM)) \rightarrow H^{s_2}(\partial_+(SM))}^{\alpha}
\end{align}
for $q_1,q_2 \in K:=\{q\in L^2(M): \ \|q\|_{H^{s_3}(M)} \leq 1 \}$. Without loss of generality, we may further assume that $s_3\gg 1$ is as large as necessary (since, if a stability estimate were to hold for a small $s_3$, it would also hold for a large value of $s_3$).

Now, using the representation formula \eqref{eq:sol}, we show that a stability estimate of the form \eqref{eq:assume_stab} would imply a H{\"o}lder stability estimate for the geodesic X-ray transform in the presence of conjugate points. This, however would contradict Theorem \ref{thm:geodesic}.

For $s_3>0$ sufficiently large, by an expansion of the exponential function, we obtain
\begin{align*}
&\|[\exp(-Iq_1)-\exp(-I q_2)]h\|_{H^{s_2}(\partial_+ SM)}\\
&\leq \sum\limits_{k=1}^{\infty} \frac{C(\|q_1\|_{H^{s_3}(M)}, \|q_2\|_{H^{s_3}(M)})}{(k-1)!} \|(Iq_1 - I q_2) h\|_{H^{s_2}(\partial_+ SM)}\\
& \leq C(\|q_1\|_{H^{s_3}(M)}, \|q_2\|_{H^{s_3}(M)})\|(Iq_1 - I q_2) h\|_{H^{s_2}(\partial_+ SM)}.
\end{align*}
Since 
\begin{align*}
& \|\Lambda_{q_1}-\Lambda_{q_2}\|_{H^{s_2}(\partial_+(SM)) \rightarrow H^{s_2}(\partial_+(SM))}\\
& = C_{\alpha}\sup\limits_{\|h\|_{H^{s_2}(\partial_+ SM)}=1} 
 \|[\exp(-Iq_1)-\exp(-I q_2)]h\|_{H^{s_2}(\partial_+ SM)},
\end{align*}
\eqref{eq:assume_stab} would thus imply the H{\"o}lder stability of the geodesic X ray transform in the presence of conjugate points, a contradiction.
\end{proof}

\begin{remark}
\label{rmk:stable_simple}
We note that if $(M,g)$ is simple, then the geodesic X-ray transform is Lipschitz stable in suitable Sobolev spaces. Using \eqref{eq:rep_transport} together with Sobolev embedding, then also implies Lipschitz stability of the inverse transport problem.
\end{remark}

\begin{appendix}

\section{Instability for linear inverse problems}
\label{sec:abstr}

In this appendix we complement the results from the main text by giving several instability results for \emph{linear} inverse problems based on direct arguments using singular value decay or abstract smoothing properties. Compared to our strategy in the main part of the text, the final results will be similar, though in some cases slightly sharper, to those obtained by using the entropy/capacity estimates in Section \ref{sec:abstract} (which also apply to nonlinear inverse problems).

Let $A: X \to Y$ be a bounded linear operator between two separable Hilbert spaces. We consider the basic inverse problem for $A$: given $g \in \mathrm{Ran}(A)$, find $f \in X$ with 
\[
Af = g.
\]
In the linear inverse problems that we are interested in, the following conditions will hold:
\begin{itemize}
\item 
$A$ is injective. This means that the inverse problem for $A$ has a unique solution $f \in X$ for any $g \in \mathrm{Ran}(A)$.
\item 
$X$ is infinite dimensional. This rules out trivial cases, and since $A$ is injective it implies that $\mathrm{Ran}(A)$ (and hence also $Y$) is infinite dimensional.
\item 
$A$ is compact. Then $A$ has a singular value decomposition (see Proposition \ref{prop_singular_value_decomposition}) with singular values $(\sigma_j)_{j=1}^{\infty}$ converging to zero as $j \to \infty$, a singular value orthonormal basis $(\varphi_j)$ of $X$, and an orthonormal set $(\psi_j)$ in $Y$ so that $A\varphi_j = \sigma_j \psi_j$ and $A^* \psi_j = \sigma_j \varphi_j$.
\end{itemize}

The fact that the singular values converge to zero implies that the inverse problem for $A$ is ill-posed (not Lipschitz stable) in $X$, i.e.\ there is no constant $C > 0$ such that 
\begin{equation*}
\norm{f}_{X} \leq C \norm{Af}_{Y}, \qquad f \in X.
\end{equation*}
This follows just by taking $f = \varphi_j$ and letting $j \to \infty$.

In many cases, one expects that rapid decay of singular values of $A: X \to Y$ implies strong ill-posedness for the corresponding inverse problem. However, the choice of the compact set $K$ also plays an important role. We remark that for \emph{instability results}, one often does not need to know the decay of the full sequence of singular values, but only the decay on a suitable subspace. This will be exploited in the instability mechanism based on microlocal smoothing, see Section \ref{sec:micro_smooth_1}. In contrast, for stability mechanisms lower bounds are needed for all singular values.

We will next describe three approaches to instability for linear inverse problems, related to different choices of the compact set $K$ and to smoothing properties/singular value decay of $A$. The first two approaches consider compact sets $K$ given in terms of some orthonormal basis $\varphi = (\varphi_j)$ of $X$ and an increasing sequence $\kappa = (\kappa_j)$ with $0 < \kappa_1 \leq \kappa_2 \leq \ldots \to \infty$. We define the set 
\begin{align}
\label{eq:ex_K}
K = K_{\kappa, \varphi} = \{ f \in X \ ;\  \norm{f}_{\kappa} := \left( \sum_{j=1}^{\infty} \kappa_j^2 |(f, \varphi_j) |^2 \right)^{1/2} \leq 1 \}.
\end{align}
This set is compact in $X$, since the inclusion map $j: (X, \norm{\,\cdot\,}_{\kappa}) \to X$ is easily shown to be approximated by finite rank operators. Moreover, this setup is general in the sense that any compact subset of $X$ is contained in some $K_{\kappa,\varphi}$, as proved in the next lemma:

\begin{lemma}
\label{lem:compact_sets}
Let $H$ be a separable Hilbert space, let $(e_j)_{j\in \N}$ be a fixed (but else arbitrary) orthonormal basis  of $H$, and let $K \subset H$. Then, $K$ is compact if and only if K is closed and there exists an increasing, positive sequence $(\kappa_j)_{j\in \N}$ with $\kappa_j \rightarrow \infty$ such that for any $u \in K$ 
\begin{align}
\label{eq:finite}
\sum\limits_{j=1}^{\infty}|(u,e_j)|^2 \kappa_j^2 \leq 1.
\end{align} 
\end{lemma}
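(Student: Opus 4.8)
\textbf{Proof strategy for Lemma \ref{lem:compact_sets}.}

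The plan is to prove the two directions separately, with the easy direction first. Suppose such a sequence $(\kappa_j)$ exists. I would show directly that $K$ is totally bounded (hence, being closed in a complete space, compact): given $\varepsilon > 0$, pick $N$ so large that $\kappa_j^{-2} < \varepsilon^2$ for all $j > N$; then for any $u \in K$ the tail $\sum_{j > N} |(u,e_j)|^2 \leq \kappa_{N+1}^{-2} \sum_{j>N} \kappa_j^2 |(u,e_j)|^2 \leq \kappa_{N+1}^{-2} < \varepsilon^2$, so $K$ lies within $\varepsilon$ of its projection onto $\mathrm{span}(e_1,\dots,e_N)$, which is a bounded subset of a finite-dimensional space and therefore totally bounded. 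Combining a finite $\varepsilon$-net of that projection with the tail estimate gives a finite $2\varepsilon$-net for $K$.

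For the converse, assume $K$ is compact. Then $K$ is closed and bounded, so after rescaling we may assume $\sum_j |(u,e_j)|^2 \leq 1$ for all $u \in K$; the task is to produce the weights $\kappa_j \to \infty$. The key point is that compactness forces the tails to be \emph{uniformly} small: for each $m \in \mN$, compactness (total boundedness) gives that $\sup_{u \in K} \sum_{j > n} |(u,e_j)|^2 \to 0$ as $n \to \infty$, since otherwise one could extract a sequence in $K$ with no Cauchy subsequence. Hence we may choose a strictly increasing sequence of integers $0 = n_0 < n_1 < n_2 < \cdots$ such that
\[
\sup_{u \in K} \ \sum_{j > n_m} |(u,e_j)|^2 \leq 4^{-m}, \qquad m \geq 0.
\]
Now define $\kappa_j = 2^{m/2}$ for $n_{m-1} < j \leq n_m$ (and, say, $\kappa_j = 1$ for $j \leq n_0$ if $n_0 \geq 1$; with $n_0 = 0$ this case is empty). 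Then $(\kappa_j)$ is nondecreasing and $\kappa_j \to \infty$. For any $u \in K$, grouping the sum over the blocks $(n_{m-1}, n_m]$ and using that on the $m$-th block $\kappa_j^2 = 2^m$ together with the tail bound $\sum_{j > n_{m-1}} |(u,e_j)|^2 \leq 4^{-(m-1)}$,
\[
\sum_{j=1}^{\infty} \kappa_j^2 |(u,e_j)|^2 = \sum_{m=1}^{\infty} 2^m \sum_{n_{m-1} < j \leq n_m} |(u,e_j)|^2 \leq \sum_{m=1}^{\infty} 2^m \cdot 4^{-(m-1)} = 4 \sum_{m=1}^{\infty} 2^{-m} = 4.
\]
A further rescaling of the $\kappa_j$ by the constant factor $1/2$ (which does not affect monotonicity or divergence) yields the bound $\leq 1$ as in \eqref{eq:finite}.

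The main obstacle — really the only nontrivial point — is establishing the uniform tail decay $\sup_{u\in K}\sum_{j>n}|(u,e_j)|^2 \to 0$ from compactness; everything else is bookkeeping with geometric series. I would justify this by contradiction: if it failed, there would be $\delta > 0$, integers $n_1 < n_2 < \cdots$, and $u_k \in K$ with $\sum_{n_k < j \leq n_{k+1}} |(u_k, e_j)|^2 \geq \delta/2$ (refining so the mass actually sits in disjoint blocks, using that each individual tail $\sum_{j>n}|(u,e_j)|^2 \to 0$ for fixed $u$); any two such $u_k, u_\ell$ with $k \neq \ell$ would then be separated by a fixed distance in $H$ (their "spikes" live on disjoint coordinate blocks), contradicting the existence of a convergent subsequence guaranteed by compactness. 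This is the standard characterization of precompact subsets of $\ell^2$ and is the heart of the argument.
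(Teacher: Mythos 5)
Your proof is correct, and it differs from the paper's in how the two implications are handled. For the direction where the weights $(\kappa_j)$ are given, you prove total boundedness directly (the uniform tail bound $\sum_{j>N}|(u,e_j)|^2 \leq \kappa_{N+1}^{-2}$ plus a finite net for the bounded projection onto $\mathrm{span}(e_1,\dots,e_N)$) and conclude compactness from closedness and completeness; the paper instead proves sequential compactness, extracting a coordinatewise convergent subsequence by a Cantor diagonal argument and then showing it is Cauchy using the same uniform tail estimate coming from \eqref{eq:finite}. For the direction where $K$ is assumed compact, the paper quotes the equi-small-tails characterization of compact subsets of a separable Hilbert space (citing Melrose) as a black box and then builds $\kappa_j$ blockwise from the convergent series $\sum_k k^{-2}$, whereas you prove the uniform tail decay $\sup_{u\in K}\sum_{j>n}|(u,e_j)|^2\to 0$ yourself and then run an essentially identical blockwise construction, with $4^{-m}$ tails and weights $2^{m/2}$ in place of the paper's $k^{-4}$ and $k$; both yield a nondecreasing weight sequence, which (as the paper remarks) can be made strictly increasing by interpolating within blocks. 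The one step to tighten is your justification of the uniform tail decay: with the constants as written (block mass $\geq \delta/2$, leaked tail $<\delta/2$) the ``disjoint spikes'' estimate $\|u_k-u_\ell\| \geq \|P_{B_\ell}u_\ell\|-\|P_{B_\ell}u_k\|$ only gives a lower bound of $0$; either sharpen the refinement (say block mass $\geq 3\delta/4$ and leaked tail $\leq \delta/4$), or argue more directly that a convergent subsequence $u_{k_i}\to u\in K$ would force the tails of $u_{k_i}$ beyond $n_{k_i}$ to vanish, contradicting the assumed lower bound. Either repair is one line, so this is a matter of constants rather than a gap in the idea. What your route buys is a fully self-contained proof; what the paper's buys is brevity by outsourcing the tail characterization and a sequential argument that avoids speaking of nets altogether.
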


\begin{remark}
\label{rmk:compact_caveat}
Although the characterization of compact sets obtained in Lemma \ref{lem:compact_sets} is very useful if the singular value bases $\{\varphi_j\}$, $\{\psi_j\}$ associated with an operator $A$ are known, in general, there is the obvious caveat, that these bases are not explicitly known (and a computation may not be feasible). Hence, the set $K_{\kappa,\varphi}$ may not have an easy characterization in terms of standard function spaces in general. 
\end{remark}

\begin{remark}
The condition \eqref{eq:ex_K} can also be viewed as a source type condition. Indeed, this corresponds to a source condition $f = \varphi(A^* A) v$ with $\| v \|_{X} \leq 1$, with the choice $\varphi(\sigma_j^2) = \kappa_j^{-1}$, where the numbers $\sigma_j$ correspond to the singular values of the operator $A$. We, for instance, refer to \cite{EHN} for Hölder source type conditions, to \cite{H00} for logarithmic ones and to \cite{MP03} for general ones. In many places, in this subsection, it would be possible to formulate the results in terms of such source conditions.
\end{remark}

\begin{proof}[Proof of Lemma \ref{lem:compact_sets}]
We first assume that $K$ is compact. Standard functional analysis arguments then in particular entail that $K$ is closed and bounded. It thus suffices to prove the existence of an increasing sequence $\{\kappa_j\}_{j\in \N}$ such that all elements $u\in K$ obey the summability condition \eqref{eq:finite}. However, compact sets in separable Hilbert spaces can be characterized by having equi-small tails (see \cite[Proposition 25, Section 12, Chapter 3]{Melrose}), i.e.\ for each $\eps>0$ there exists $N\in \N$ such that for any $u\in K$
\begin{align*}
\sum\limits_{j\geq N}|(u,e_j)|^2\leq \eps^2.
\end{align*}
From this we can however construct the desired sequence $\{\kappa_j\}_{j\in \N}$ (or rather one possible choice of this sequence). Indeed, considering a sequence $\eps_k:= \frac{1}{k^2}$, there exists a corresponding sequence $N_k \in \N$, $N_{k+1}> N_{k}$, such that for all $u\in K$
\begin{align*}
\sum\limits_{j=N_k}^{\infty}|(u, e_j)|^2 < \eps_k^2= \frac{1}{k^4}.
\end{align*}
In particular, we have that
\begin{align*}
\sum\limits_{k=1}^{\infty} \sum\limits_{j=N_k}^{N_{k+1}} k^2 |(u,e_j)|^2 \leq \sum\limits_{k=1}^{\infty} k^{-2} \leq C_1.
\end{align*}
Hence defining 
\begin{align*}
\kappa_j^2:=
\left\{ 
\begin{array}{ll}
& 1/C_1 \mbox{ if } j \in \{1,\dots, N_2\},\\
& l^2/C_1 \mbox{ if } j \in \{N_l, \dots, N_{l+1}\} \mbox{ for } l \geq 2,
\end{array}
\right.
\end{align*}
then yields a possible choice for the increasing sequence $\kappa_j$. We remark that the sequence $\kappa_j$ can also be chosen to be strictly increasing by gradually growing from $l$ to $l+1$ in the interval $j\in\{N_l, \dots, N_{l+1}\}$.

Next we argue that closedness and the growth condition \eqref{eq:finite} characterize compactness. To this end, we argue similarly as in the proof of \cite[Proposition 25]{Melrose}: By boundedness (which follows from \eqref{eq:finite}) and closedness of $K$, it suffices to show that any sequence $\{u_m \}_{m\in \N} \subset K$ has a convergent subsequence. However, by boundedness of the sequence $\{u_m\}_{m\in \N}$ we have that for any $k\in \N$ the sequence $(u_m, e_k)$ is bounded. Hence, for each $k$ there exists a subsequence $\{u_{m_k}\}_{m_k\in\N}$ such that $(u_{m_k}, e_k)$ is convergent. Moreover, we may assume that for each $k\in \N$ it holds $\{u_{m_k}\}_{m_k\in \N} \subset \{u_{m_{k-1}}\}_{m_{k-1}\in \N}$. Using a Cantor diagonal argument and setting $v_m:= u_{m_m}$ then yields a sequence such that $(v_m, e_k)$ converges for each $k\in \N$. We claim that $\{v_m\}_{m\in \N}$ is the desired convergent subsequence of $\{u_m\}_{m\in \N}$. Indeed, let $n,l \in\N$ to be fixed later. Then,
\begin{align*}
\|v_n - v_{n+l}\|^2 \leq \sum\limits_{k\leq N} |(v_n-v_{n+l}, e_k)|^2 + 2 \sum\limits_{k>N} |(v_n, e_k)|^2 + 2 \sum\limits_{k>N} |(v_{n+l}, e_k)|^2.
\end{align*}
By assumption, the condition \eqref{eq:finite} holds. In particular, we have that
$$\sum\limits_{k>N} |(v_n, e_k)|^2 + \sum\limits_{k>N} |(v_{n+l}, e_k)|^2 \leq 2 \kappa_N^{-2}\,.
$$
 Now, let $\eps>0$ be arbitary. Choosing $N\in \N$ so large that $\kappa_N^{-2}\leq \eps^2/2$ and choosing $n\in \N$ so large that for $k \in \{1, \dots, N\}$ it holds that $|(v_n-v_{n+l}, e_k)|\leq \frac{\eps^2}{2N}$ (which is possible by the convergence of $(v_n, e_k)$ for all $k\in \N$), we infer that
\begin{align*}
\|v_n - v_{n+l}\|^2 \leq \sum\limits_{k\leq N} |(v_n-v_{n+l}, e_k)|^2 + 4\kappa_N^{-2}
\leq \eps^2.
\end{align*}
This proves the desired convergence of the subsequence under consideration.
\end{proof}

\subsection{Compact sets described by a singular value basis}

In cases where $K = K_{\kappa,\varphi}$ and where $\varphi = (\varphi_j)$ happens to be a singular value basis for the operator $A$, the next result completely characterizes the stability properties of the inverse problem (for typical moduli of continuity) in terms of the sequences $(\sigma_j)$ and $(\kappa_j)$.

\begin{proposition} 
\label{lemma_holder_stability_abstract}
Let $A$, $\sigma_j$, $\varphi_j$ be as in Proposition \ref{prop_singular_value_decomposition}, and let $K = K_{\kappa,\varphi}$ for some sequence $(\kappa_j)$ with $0 < \kappa_1 \leq \kappa_2 \leq \ldots \to \infty$ as in \eqref{eq:ex_K}. Let also $\eta(t)$ be a strictly increasing concave function so that $\eta(t)/t$ is nonincreasing.

Then the inverse problem for $A$ in $K$ is $\eta$-stable in the sense that 
\begin{equation} \label{linear_eta_stable}
\norm{f}_X^2 \leq \eta(\norm{Af}_Y^2), \qquad f \in K,
\end{equation}
if and only if 
\begin{equation} \label{kappai_sigmai_relation}
\kappa_j^{-2} \leq \eta((\sigma_j/\kappa_j)^2), \qquad j \geq 1.
\end{equation}
\end{proposition}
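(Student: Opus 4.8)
The plan is to exploit that $K=K_{\kappa,\varphi}$ and that $(\varphi_j)$ is a singular value basis for $A$, so that both $\|f\|_X^2$ and $\|Af\|_Y^2$ are simultaneously diagonalized in the $\varphi_j$ coordinates. Writing $f=\sum_j c_j\varphi_j$ one has $\|f\|_X^2=\sum_j |c_j|^2$, $\|Af\|_Y^2=\sum_j \sigma_j^2|c_j|^2$, and the constraint $f\in K$ is $\sum_j \kappa_j^2|c_j|^2\le 1$. Thus the estimate \eqref{linear_eta_stable} becomes the purely scalar assertion that
\[
\sum_j |c_j|^2 \le \eta\Bigl(\sum_j \sigma_j^2 |c_j|^2\Bigr)\quad\text{whenever}\quad \sum_j \kappa_j^2|c_j|^2\le 1.
\]
First I would prove the ``only if'' direction by testing with the single sequence $c=(c_j)$ supported at index $j$: the largest admissible value is $|c_j|^2=\kappa_j^{-2}$, which gives $\|f\|_X^2=\kappa_j^{-2}$ and $\|Af\|_Y^2=(\sigma_j/\kappa_j)^2$, so \eqref{linear_eta_stable} forces exactly \eqref{kappai_sigmai_relation}.

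For the ``if'' direction, assume \eqref{kappai_sigmai_relation} holds and let $f\in K$. Set $a_j=\kappa_j^2|c_j|^2\ge 0$, so $\sum_j a_j\le 1$, and note $\sum_j|c_j|^2=\sum_j \kappa_j^{-2}a_j$ while $\sum_j\sigma_j^2|c_j|^2=\sum_j (\sigma_j/\kappa_j)^2 a_j$. Normalizing, write $t_j:=(\sigma_j/\kappa_j)^2$ and $b_j:=\kappa_j^{-2}$; by \eqref{kappai_sigmai_relation} we have $b_j\le\eta(t_j)$ for every $j$. The goal is then to show $\sum_j b_j a_j\le \eta\bigl(\sum_j t_j a_j\bigr)$ for every finitely-supported (or $\ell^1$) sequence $(a_j)$ with $\sum_j a_j\le 1$. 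I would handle the case $\sum_j a_j=1$ first and then reduce the general case to it (by adding mass to a single index where $t_j$ is smallest, using monotonicity of $\eta$; or simply by scaling $a_j$ and using that $\eta(t)/t$ nonincreasing, i.e. $\lambda\le1\Rightarrow \eta(\lambda s)\ge\lambda\eta(s)$). When $\sum_j a_j=1$, I estimate
\[
\sum_j b_j a_j \le \sum_j \eta(t_j)\, a_j \le \eta\Bigl(\sum_j t_j a_j\Bigr),
\]
where the first inequality is \eqref{kappai_sigmai_relation} and the second is Jensen's inequality applied to the concave function $\eta$ with the probability weights $(a_j)$. This closes the argument.

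The main technical obstacle is the reduction of the constraint $\sum_j a_j\le 1$ (an inequality) to the equality case $\sum_j a_j = 1$ needed for Jensen, and making sure $\eta$ is genuinely usable there: $\eta$ is only assumed strictly increasing and concave with $\eta(t)/t$ nonincreasing, so I should be slightly careful about the value $\eta(0)$ and about whether $\eta$ is defined/finite on all of $[0,\infty)$. The inequality $\eta(\lambda s)\ge \lambda\eta(s)$ for $0\le\lambda\le 1$, which follows from $\eta(t)/t$ nonincreasing together with $\eta(0)\ge 0$ (itself a consequence of concavity and $\eta$ increasing, in the normalization $\eta(0)=0$ that is implicit for a modulus-type function), is exactly what lets me pass from $\sum a_j\le1$ to the normalized case: if $S:=\sum_j a_j\in(0,1]$, apply the equality case to $a_j/S$ to get $\sum_j b_j a_j \le S\,\eta\bigl(S^{-1}\sum_j t_j a_j\bigr)\le \eta\bigl(\sum_j t_j a_j\bigr)$, and the case $S=0$ is trivial. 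A secondary point is justifying the interchange of infinite sums with $\eta$ and with the supremum, which follows from monotone convergence since all quantities are nonnegative and $\eta$ is continuous and monotone. Everything else is routine bookkeeping once the spectral diagonalization is set up.
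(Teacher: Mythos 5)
Your proof is correct and takes essentially the same route as the paper: testing the single basis vectors $f=\kappa_j^{-1}\varphi_j$ for necessity, and for sufficiency diagonalizing in the singular value basis, applying Jensen's inequality with the weights $\kappa_j^2|c_j|^2$, and handling $\sum_j\kappa_j^2|c_j|^2<1$ by the scaling argument based on $\eta(t)/t$ being nonincreasing. The only cosmetic difference is that you apply Jensen directly to the concave $\eta$, whereas the paper applies it to the convex inverse $\eta^{-1}$; the two formulations are equivalent.
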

\begin{proof}
If \eqref{linear_eta_stable} holds, then choosing $f = \kappa_j^{-1} \varphi_j \in K$ yields \eqref{kappai_sigmai_relation}. Conversely, assume that \eqref{kappai_sigmai_relation} holds for some strictly increasing concave function $\eta$. Then $\eta^{-1}$ is convex. Let $f = \sum a_j \varphi_j \in K$ where $\sum \kappa_j^2 a_j^2 = 1$, and define $c_j = \kappa_j^2 a_j^2$. Since $\sum c_j = 1$, Jensen's inequality and \eqref{kappai_sigmai_relation} yield 
\[
\eta^{-1}(\norm{f}_X^2) = \eta^{-1}(\sum a_j^2) = \eta^{-1}(\sum c_j \kappa_j^{-2}) \leq \sum c_j \eta^{-1}(\kappa_j^{-2}) \leq \sum \sigma_j^2 a_j^2 = \norm{Af}_Y^2.
\]
Thus \eqref{linear_eta_stable} holds for any $f \in K$ with $\sum \kappa_j^2 a_j^2 = 1$. If instead $\sum \kappa_j^2 a_j^2 = \lambda^2 \leq 1$, then \eqref{linear_eta_stable} holds for $f/\lambda$, so 
\[
\norm{f}_X^2 \leq \frac{\eta(\norm{Af}_Y^2/\lambda^2)}{1/\lambda^2} \leq \eta(\norm{Af}_Y^2)
\]
using that $\eta(t)/t$ is nonincreasing.
\end{proof}

\begin{example} \label{example_holder_abstract}
We can draw some immediate conclusions:
\begin{itemize}
\item 
The inverse problem for $A$ is $\alpha$-H\"older stable in $K$ (i.e.\ $\eta(t) = C t^{\alpha}$) if and only if $\kappa_j^{1-\alpha} \sigma_j^{\alpha} \geq c > 0$ for $j \geq 1$. In particular, no matter what the singular values $\sigma_j$ of $A$ are, if one chooses $\kappa_j = \sigma_j^{-s}$ for some $s > 0$ then the inverse problem for $A$ in $K$ will be H\"older stable. This is the case even if the singular values of $A$ are exponentially decaying, however then the set $K$ may be restrictively small (similar to a space of real-analytic functions). 
\item 
As a second possible scenario, we consider the case in which the singular values of $A$ satisfy $\sigma_j \leq e^{-cj^{\mu}}$, i.e.\ they are exponentially decaying. Further we choose \emph{$\kappa_j = j^{s}$} for some $s > 0$ (if $(\varphi_j)$ by chance also corresponds to an eigenbasis of some elliptic operator, as in Proposition \ref{prop_sobolev_sequence_space}, then this implies that $K$ is a bounded set in a Sobolev type space). In this case, Proposition \ref{lemma_holder_stability_abstract} implies that the inverse problem for $A$ in $K$ has at best a logarithmic modulus of continuity. Moreover, if one also has lower bounds $\sigma_j \geq e^{-c_1 j^{\mu}}$ for the singular values, the proposition implies that the inverse problem is indeed logarithmically stable.
\end{itemize}
\end{example}

The advantage of using a singular value basis to describe $K$ is the fact that the action of $A$ on an element $f = \sum a_j \varphi_j$ can be easily computed. However, the drawback of this approach is that the relation of the norm used in the definition of $K$ and ``standard norms" such as Sobolev norms is not obvious in general. For example, if $X=L^2(M)$ for a compact $n$-manifold $M$, then a more natural choice would be to consider $K = \{ f \in H^s(M) \,;\, \norm{f}_{H^s(M)} \leq 1 \}$ for some $s > 0$, which would correspond to \eqref{eq:ex_K} with $\kappa_j \sim j^{s/n}$ but where the basis $(\varphi_j)$ would consist of the eigenfunctions of the Laplacian on $M$. However, the action of $A$ is much harder to compute on a general basis $(\varphi_j)$ than for a singular value basis.

Analogously to Proposition \ref{prop_sobolev_sequence_space}, a special case where the singular value basis can be used to describe standard Sobolev spaces, so that Proposition \ref{lemma_holder_stability_abstract} applies, is the case where $A$ is an elliptic operator (see \cite[Section 7.10 in vol.\ II]{Taylor}). 

It may also be possible to apply Proposition \ref{lemma_holder_stability_abstract} in cases where there is very high symmetry. One such situation, known as ``Slepian's miracle", was discovered by  Landau, Pollak, Slepian, see \cite{SP61} for more information and references on this. However, we will not discuss these points further, since the other two approaches described below will give more general instability results.

\subsection{Compact sets described by a reference basis}

If one is only interested in an instability result, i.e.\ in showing that the inequality 
\[
\norm{f}_X \leq \omega(\norm{Af}_Y), \qquad f \in K,
\]
can only hold for certain moduli of continuity $\omega$, it is enough to exhibit elements $f \in K$ so that $\norm{Af}_Y$ is relatively small but $\norm{f}_X$ is not too small. The following result shows that this is possible whenever $K$ is of the form \eqref{eq:ex_K} where $(\varphi_j)$ can be any fixed orthonormal basis (for instance a basis used for defining Sobolev type spaces).

\begin{proposition} \label{lemma_ref_basis}
Let $\varphi = (\varphi_j)_{j=1}^{\infty}$ be a fixed orthonormal basis of $X$, and let $K = K_{\kappa,\varphi}$ as in \eqref{eq:ex_K} for some sequence $\kappa$ with $0 < \kappa_1 \leq \kappa_2 \leq \ldots$ and $\kappa_j \to \infty$. Define 
\[
N(\eps) = \sup \{ j \geq 1 \,;\, \kappa_j \leq 1/\eps \}.
\]
Let $A: X \to Y$ be a compact injective operator with singular values $\sigma_1 \geq \sigma_2 \geq \ldots$. If 
\[
\norm{f}_X \leq \omega(\norm{Af}_Y), \qquad f \in K,
\]
then 
\[
\omega(t) \geq g^{-1}(t), \qquad \text{$t$ small},
\]
where $g(\eps) := \eps \sigma_{N(\eps)}$ is a strictly decreasing function.
\end{proposition}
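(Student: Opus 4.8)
\textbf{Proof plan for Proposition \ref{lemma_ref_basis}.}

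The plan is to produce, for each small $\eps > 0$, an explicit element $f = f_\eps \in K$ for which $\norm{f}_X \geq \eps$ while $\norm{Af}_Y \leq \eps\, \sigma_{N(\eps)} = g(\eps)$; feeding this into the assumed stability inequality then forces $\omega(g(\eps)) \geq \eps$, i.e.\ $\omega(t) \geq g^{-1}(t)$ for $t$ small, which is the claim. The construction is a linear-algebra/pigeonhole argument on a finite-dimensional subspace, in the spirit of Proposition \ref{prop_singular_value_decomposition}(c) and the minimax characterisation of singular values.

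First I would set $N = N(\eps)$ and consider the $N$-dimensional subspace $W = \operatorname{span}\{\varphi_1, \dots, \varphi_N\}$. Note that by definition of $N(\eps)$ one has $\kappa_j \leq 1/\eps$ for $1 \leq j \leq N$, hence for any $f = \sum_{j=1}^N a_j \varphi_j \in W$ one has $\norm{f}_\kappa^2 = \sum_{j=1}^N \kappa_j^2 |a_j|^2 \leq \eps^{-2} \sum_{j=1}^N |a_j|^2 = \eps^{-2}\norm{f}_X^2$; thus $f \in K$ whenever $\norm{f}_X \leq \eps$. Next, by the minimax principle (cf.\ Proposition \ref{prop_singular_value_decomposition}) applied to the restriction $A|_W$, since $\dim(W) = N$ there exists a unit vector $f_0 \in W$ with $\norm{Af_0}_Y \leq \sigma_N(A)$: indeed the minimax formula $\sigma_N(A) = \min_{\dim S \leq N-1} \max_{u \perp S,\ \norm{u}=1} \norm{Au}_Y$ shows that $\max_{u \in W,\ \norm{u}=1}\norm{Au}_Y \geq \sigma_N(A)$ is false only if we pick the wrong subspace — more precisely I would instead argue that the operator $P_W A^* A P_W$ on the $N$-dimensional space $W$ has smallest eigenvalue $\leq \lambda_N(A^*A) = \sigma_N(A)^2$ by monotonicity of eigenvalues under compression (interlacing), giving a unit vector $f_0 \in W$ with $\norm{Af_0}_Y^2 = (A^*A f_0, f_0)_X \leq \sigma_N(A)^2$. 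Then set $f_\eps = \eps f_0$, so that $\norm{f_\eps}_X = \eps$ (in particular $f_\eps \in K$ by the first step), and $\norm{Af_\eps}_Y = \eps \norm{Af_0}_Y \leq \eps\, \sigma_N(A) = g(\eps)$.

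Plugging $f_\eps$ into the hypothesis gives $\eps = \norm{f_\eps}_X \leq \omega(\norm{Af_\eps}_Y) \leq \omega(g(\eps))$, using that $\omega$ is increasing. Since $\kappa_j \to \infty$ we have $N(\eps) \to \infty$ and $\sigma_{N(\eps)} \to 0$ as $\eps \to 0$, so $g(\eps) = \eps\,\sigma_{N(\eps)}$ is strictly positive, tends to $0$, and I would check it is strictly decreasing (this needs $\eps \mapsto \eps \sigma_{N(\eps)}$ to be decreasing; if the stated monotonicity of $g$ is not literally automatic from the setup one can replace $g$ by the infimum $\tilde g(\eps) = \inf_{0 < \eps' \leq \eps} \eps' \sigma_{N(\eps')}$, which is decreasing, satisfies $\tilde g \leq g$, and still tends to $0$, so that $\omega(g(\eps)) \geq \omega(\tilde g(\eps))$ loses nothing). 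Given strict monotonicity, $g$ is invertible near $0$; writing $t = g(\eps)$, i.e.\ $\eps = g^{-1}(t)$, the inequality $\eps \leq \omega(g(\eps))$ becomes $\omega(t) \geq g^{-1}(t)$ for $t$ small, as desired.

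\textbf{Main obstacle.} The only genuinely delicate point is the existence of the small-image unit vector $f_0 \in W$: one must be careful that the minimax/interlacing argument is applied correctly to the compression of $A^*A$ to the $N$-dimensional subspace $W = \operatorname{span}\{\varphi_1,\dots,\varphi_N\}$ (which is \emph{not} in general spanned by singular vectors of $A$), concluding that its bottom eigenvalue is $\leq \sigma_N(A)^2$. The bookkeeping around whether $g$ as literally defined is strictly decreasing (versus needing the monotone envelope $\tilde g$) is a minor secondary nuisance but is handled by the remark above. Everything else — the containment $f_\eps \in K$ and the final inversion — is routine.
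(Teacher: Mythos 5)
Your proposal is correct and follows essentially the same route as the paper: the paper proves the key step by noting $Af = AP_Wf$ for $f \in W = \mathrm{span}\{\varphi_1,\dots,\varphi_{N(\eps)}\}$, that $AP_W$ has rank $\leq N$, and that $\sigma_N(AP_W) \leq \sigma_N(A)$, then takes the corresponding singular vector — which is exactly your compression-of-$A^*A$ argument in different words — and the remaining steps (membership in $K$, scaling to $\norm{f}_X = \eps$, inverting $g$) are identical. One small remark: since $N(\eps)$ is nonincreasing in $\eps$, the function $g(\eps) = \eps\,\sigma_{N(\eps)}$ is in fact strictly \emph{increasing} (the paper's ``strictly decreasing'' is a slip), so the invertibility you need is automatic and your fallback envelope $\tilde g$ is unnecessary — which is just as well, since as written $\tilde g$ would be identically the infimum of $g$ near $0$, i.e.\ $0$, and the inequality $\omega(g(\eps)) \geq \omega(\tilde g(\eps))$ points the wrong way for the argument.
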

\begin{proof}
We first observe that if $W$ is any finite dimensional subspace of $X$, with dimension $\dim(W) = N$, then 
\begin{equation} \label{f_w_property}
\text{there is $f \in W \setminus \{ 0 \}$ with $\norm{Af}_Y \leq \sigma_N \norm{f}_X$}.
\end{equation}
To prove this, let $P_W$ be the orthogonal projection to $W$ and note that $Af = AP_{W} f$ for any $f \in W$. The operator $AP_W$ has at most $N$ nonzero singular values $\mu_1 \geq \ldots \geq \mu_N$, and they satisfy 
\[
\mu_j = \sigma_j(A P_W) \leq \sigma_j(A) \sigma_1(P_W) \leq \sigma_j(A).
\]
Thus, choosing $f$ to be a singular vector of $AP_W$ corresponding to $\mu_N$ yields \eqref{f_w_property}.

Next we want to find a finite dimensional subspace $W$ of $X$, with as large dimension as possible, so that 
\begin{equation} \label{f_kappa_bound}
\norm{f}_{\kappa} \leq \frac{1}{\eps} \norm{f}_X, \qquad f \in W.
\end{equation}
This is easily arranged by choosing $W = \{ \sum_{j=1}^N a_j \varphi_j \}$ where $N = N(\eps)$, since then 
\[
\norm{f}_{\kappa}^2 = \sum_{j=1}^N \kappa_j^2 \abs{(f,\varphi_j)}^2 \leq \frac{1}{\eps^2} \sum_{j=1}^N \abs{(f,\varphi_j)}^2 = \frac{1}{\eps^2} \norm{f}_X^2, \qquad f \in W.
\]

Now let $f$ be the vector in \eqref{f_w_property}, scaled so that it satisfies $\norm{f}_{X} = \eps$. Then \eqref{f_kappa_bound} gives that $\norm{f}_{\kappa} \leq 1$, i.e. $f \in K$, and \eqref{f_w_property} gives that $\norm{Af}_Y \leq \sigma_{N(\eps)} \norm{f}_X = \sigma_{N(\eps)} \eps$. The $\omega$-stability of the inverse problem gives that 
\[
\eps \leq \omega(\eps \sigma_{N(\eps)}),
\]
which proves the result since $g(\eps) = \eps \sigma_{N(\eps)}$ is strictly decreasing and hence has an inverse function.
\end{proof}

\begin{example} \label{example_ref_basis}
In order to use Lemma \ref{lemma_ref_basis}, one needs to know both upper bounds for $\kappa_j$ and a decay rate for the singular values $\sigma_j$. Here are a few examples:
\begin{itemize}
\item 
If $\kappa_j \sim j^s$ for some $s > 0$ (Sobolev type spaces) and $\sigma_j \lesssim j^{-m}$, then $N(\eps) \sim \eps^{-1/s}$ and $g(\eps) \lesssim \eps^{\frac{s+m}{s}}$. If the inverse problem for $A$ in $K$ is $\omega$-stable, Lemma \ref{lemma_ref_basis} yields that $\omega(t) \geq c t^{\frac{s}{s+m}}$ and thus $\alpha$-H\"older stability is only possible for $\alpha \leq \frac{s}{s+m}$.
\item 
If $\kappa_j \sim j^s$ for some $s > 0$ but the singular values of $A$ are exponentially decaying, i.e.\ $\sigma_j \lesssim e^{-cj^{\mu}}$ for some $c, \mu > 0$, then $N(\eps) \sim \eps^{-1/s}$ and 
\[
g(\eps) = \eps \sigma_{N(\eps)} \lesssim \eps e^{-c \eps^{-\mu/s}} \lesssim e^{-c_1 \eps^{-\mu/s}}
\]
for any $c_1 < c$. It follows that $\omega(t) \geq c \abs{\log\,t}^{-s/\mu}$, i.e.\ logaritmic stability is best possible in this setting.
\end{itemize}
\end{example}

The above examples confirm the expectation that rapidly decaying singular values imply strong illposedness, at least when the compact set $K$ is related to a standard Sobolev space.

\subsection{Compact sets described by an interpolation property}

Finally, we show instability results in cases where $A$ satisfies an abstract smoothing property, and $K$ is a bounded set in a space satisfying certain abstract interpolation estimates. The point is that in this setup, no information about the decay rate of the singular values of $A$ is needed.

\begin{proposition} \label{prop_abstract_instability_triple}
Let $V \subset E \subset H$ be infinite dimensional separable Hilbert spaces such that the inclusion $i: V \to H$ is compact, and one has the abstract interpolation inequality 
\[
\norm{u}_E \leq \norm{u}_H \, g(\norm{u}_V/\norm{u}_H), \qquad u \in V \setminus \{0\},
\]
for some $C^1$ function $g:[1,\infty) \to [0,\infty)$ with $g' > 0$. Let $A: H \to Y$ be bounded where $Y$ is a Banach space, and assume that $A$ is smoothing in the sense that there is a bounded operator $\bar{A}: V' \to Y$ extending $A$ so that 
\[
\bar{A}(\iota(u)) = A(u), \qquad u \in H,
\]
where $\iota: H \to V'$ is the natural inclusion with $\iota(u)(v) = (u, i(v))_H$ for $u \in H$ and $v \in V$.

Under these assumptions, if $A$ satisfies the stability inequality 
\[
\norm{u}_H \leq \omega(\norm{Au}_Y), \qquad \norm{u}_E \leq 1,
\]
for some modulus of continuity $\omega$, then there is a sequence $t_j \to 0$ with 
\[
\omega(t_j) \geq \frac{1}{g(1/f^{-1}(t_j))}
\]
where $f(\sigma) := \norm{\bar{A}} \sigma/g(1/\sigma)$.
\end{proposition}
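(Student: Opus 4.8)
\textbf{Proof plan for Proposition \ref{prop_abstract_instability_triple}.}

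The plan is to use the smoothing property of $A$ (encoded in the extension $\bar{A}: V' \to Y$) together with the compactness of $i: V \to H$ to produce, for a sequence of small parameters, test elements $u$ that lie in the unit ball of $E$ but have small $\|Au\|_Y$ relative to $\|u\|_H$; feeding these into the stability inequality then forces a lower bound on $\omega$. First I would exploit the fact that $i: V \to H$ is compact, hence so is its (Hilbert space) adjoint $i^*: H \to V$, and so is $\iota: H \to V'$ (being essentially $i^*$ composed with the Riesz isomorphism). Since $\iota$ is a compact injective operator between infinite-dimensional separable Hilbert spaces, it has a singular value decomposition with singular values $\tau_1 \ge \tau_2 \ge \cdots \to 0$ and singular value basis $(\varphi_j)$ of $H$; concretely $\|\iota(\varphi_j)\|_{V'} = \tau_j$ while $\|\varphi_j\|_H = 1$. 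The relation $\|u\|_{V'}^2 = \sum_j \tau_j^2 |(u,\varphi_j)_H|^2$ shows that $\varphi_j$ is ``large in $H$ but small in $V'$'' as $j \to \infty$, which is exactly the kind of element needed.

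Next I would estimate $\|A\varphi_j\|_Y$. Using the smoothing hypothesis, $A\varphi_j = \bar{A}(\iota(\varphi_j))$, so $\|A\varphi_j\|_Y \le \|\bar{A}\| \,\|\iota(\varphi_j)\|_{V'} = \|\bar{A}\| \tau_j =: \sigma_j$, where I should note $\sigma_j \to 0$. Now I need to control $\|\varphi_j\|_E$. Applying the abstract interpolation inequality to $u = \varphi_j$ (with $\|\varphi_j\|_H = 1$), and letting $s_j := \|\varphi_j\|_V$, which is finite since $\varphi_j \in V$ (indeed $\iota$ maps into $V'$ via $i^*$, and one can check $\varphi_j$ lies in $V$ because $i^*i$ has the $\varphi_j$ as eigenvectors — I will want to be slightly careful here, possibly replacing $\varphi_j$ by a normalized finite truncation or by eigenvectors of $i^*i: V\to V$), I get $\|\varphi_j\|_E \le g(s_j)$. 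Rescaling $w_j := \varphi_j / g(s_j)$ gives $\|w_j\|_E \le 1$, $\|w_j\|_H = 1/g(s_j)$, and $\|A w_j\|_Y \le \sigma_j / g(s_j)$. Now I plug $w_j$ into the stability inequality: $1/g(s_j) = \|w_j\|_H \le \omega(\|A w_j\|_Y) \le \omega(\sigma_j/g(s_j))$. Setting $\sigma = s_j$ — here I use the monotonicity built into $f(\sigma) = \|\bar{A}\|\sigma/g(1/\sigma)$; note $g$ is evaluated at arguments $\ge 1$, so I should arrange $s_j \ge 1$, which holds since $\|\varphi_j\|_V \ge \|\varphi_j\|_H = 1$ by $V \hookrightarrow H$ norm-wise after the standard normalization (or absorb a constant) — one identifies $\sigma_j / g(s_j)$ with an expression of the form $f(\text{something})$ and reads off $t_j := \sigma_j/g(s_j) \to 0$, so that $s_j = 1/f^{-1}(t_j)$ up to the bookkeeping, yielding $\omega(t_j) \ge 1/g(1/f^{-1}(t_j))$.

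The main obstacle I expect is the bookkeeping in matching the quantities to the exact form $f(\sigma) = \|\bar{A}\|\sigma/g(1/\sigma)$ and verifying that $f$ is invertible on the relevant range (one needs $g' > 0$ and the behavior of $\sigma \mapsto \sigma/g(1/\sigma)$ to be monotone, or at least to extract a subsequence on which things are monotone — which is why the statement only claims a sequence $t_j \to 0$ rather than all small $t$). A secondary technical point is ensuring the singular vectors $\varphi_j$ genuinely belong to $V$ with finite $V$-norm and that $\|\varphi_j\|_V \to \infty$ (so that $t_j \to 0$); this follows from compactness of $i: V\to H$, since if $\|\varphi_j\|_V$ stayed bounded then $(\varphi_j)$ would have a subsequence converging in $H$, contradicting orthonormality. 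Once these monotonicity and membership issues are settled, the chain of inequalities above is routine.
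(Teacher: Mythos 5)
Your proposal is correct and follows essentially the same route as the paper: take the singular value decomposition of the compact embedding (your SVD of $\iota: H \to V'$ produces exactly the paper's vectors $\psi_j = i(\varphi_j)/\sigma_j$ with $\norm{\psi_j}_H = 1$, $\norm{\psi_j}_V = 1/\sigma_j$, $\norm{\iota(\psi_j)}_{V'} \leq \sigma_j$), apply the interpolation inequality to these, rescale by $g$ to land in the unit ball of $E$, use $\norm{Au}_Y \leq \norm{\bar{A}}\norm{\iota(u)}_{V'}$, and feed the result into the stability inequality with $t_j := f(\sigma_j)$. The two points you flag as "bookkeeping" are settled by one-line computations that close the argument exactly as in the paper: the singular vectors satisfy $\norm{\psi_j}_V = 1/\sigma_j$ precisely (not just $\to \infty$), which is what makes $s_j = 1/f^{-1}(t_j)$ an identity, and $f'(\sigma) = \norm{\bar{A}}\bigl(\tfrac{1}{g(1/\sigma)} + \tfrac{g'(1/\sigma)}{\sigma g(1/\sigma)^2}\bigr) > 0$, so $f$ is invertible without passing to a subsequence.
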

\begin{proof} 
Since $i: V \to H$ is compact and injective, there are orthonormal sets $(\varphi_j) \subset V$ and $(\psi_j) \subset H$ such that 
\[
i(\varphi_j) = \sigma_j \psi_j, \qquad i^*(\psi_j) = \sigma_j \varphi_j,
\]
where the singular values $(\sigma_j)$ of $i$ satisfy $\sigma_1 \geq \sigma_2 \geq \ldots > 0$ and $\sigma_j \to 0$. The interpolation inequality implies 
\[
\norm{\psi_j}_E \leq g(\norm{\psi_j}_V) = g(\frac{1}{\sigma_j} \norm{\varphi_j}_V) = g(\frac{1}{\sigma_j}).
\]
We define 
\[
u_j := \frac{\psi_j}{g(1/\sigma_j)}.
\]
Then $\norm{u_j}_E \leq 1$.

Applying the stability inequality to $u_j$ and writing $C = \norm{\bar{A}}$, we get 
\[
\norm{u_j}_H \leq \omega(\norm{\bar{A}(\iota(u_j))}_Y) \leq \omega(C \norm{\iota(u_j)}_{V'}).
\]
Now $\norm{u_j}_H = \frac{1}{g(1/\sigma_j)}$ and 
\[
\norm{\iota(u_j)}_{V'} = \sup_{\norm{v}_V=1} (u_j, i(v))_H = \sup_{\norm{v}_V=1} \frac{1}{g(1/\sigma_j)} (i^*(\psi_j), v)_V \leq \frac{\sigma_j}{g(1/\sigma_j)}.
\]
Thus we have 
\[
\frac{1}{g(1/\sigma_j)} \leq \omega(C \sigma_j/g(1/\sigma_j)).
\]
Writing $t_j := f(\sigma_j)$, it follows that 
\[
\omega(t_j) \geq \frac{1}{g(1/f^{-1}(t_j))}
\]
provided that $f^{-1}$ is well defined. But the derivative of $f$ is 
\[
f'(\sigma) = C \left( \frac{1}{g(1/\sigma)} + \frac{g'(1/\sigma)}{\sigma g(1/\sigma)^2} \right) > 0,
\]
so $f$ is strictly increasing and $f^{-1}$ is well defined.
\end{proof}

\begin{example} \label{ex:interpol_triple}
To illustrate the proposition, we have the following results: 
\begin{itemize}
\item 
Let $V = X^{s+m}, E = X^{s+\delta}, H = X^s$ where $X^t$ are Sobolev type spaces satisfying the interpolation inequality 
\[
\norm{u}_{X^{s+\delta}} \leq \norm{u}_{X^s}^{1-\delta/m} \norm{u}_{X^{s+m}}^{\delta/m}
\]
such that $X^{s+m} \subset X^s$ is compact. Then one can take $g(t) = t^{\delta/m}$ and $f(\sigma) = \sigma^{1+\delta/m}$. The conclusion of Proposition \ref{prop_abstract_instability_triple} is that 
\[
\omega(t_j) \geq t_j^{\frac{\delta}{m+\delta}}, \qquad t_j \to 0.
\]
The fact that $A$ extends as a map $V' \to Y$, where the dual of $V = X^{s+m}$ is taken with respect to $H = X^s$, can be interpreted so that $A$ is smoothing of order $m$. In particular, if $A$ is smoothing of infinite order, no H\"older stability bound is possible.
\item 
Let $H = \ell^2 = \{ x = (x_1, x_2, \ldots) \}$, let $E$ be the Sobolev space $\norm{x}_E = (\sum j^{2s} x_j^2)^{1/2}$, and let $V$ be the real analytic type space 
\[
\norm{x}_V = ( \sum e^{2\beta j} x_j^2)^{1/2}
\]
where $s, \beta > 0$. The inclusion $V \subset H$ is compact. If $\sum x_j^2 = 1$, applying Jensen's inequality with convex function $h(t) = \exp(2 \beta t^{1/(2s)})$, which has inverse $h^{-1}(y) = ( \frac{1}{2\beta} \log(y) )^{2s}$, implies that 
\[
h(\sum j^{2s} x_j^2) \leq \sum x_j^2 h(j^{2s})
\]
and therefore 
\[
\norm{x}_E^2 \leq h^{-1}( \sum x_j^2 h(j^{2s}) ) = h^{-1}( \sum x_j^2 e^{2\beta j} ), \qquad \sum x_j^2 = 1.
\]
By homogeneity, this shows that 
\[
\norm{x}_E \leq \norm{x}_H ( \frac{1}{\beta} \log(\norm{x}_V/\norm{x}_H) )^{s}.
\]
Thus one can choose $g(t) = ( \frac{1}{\beta} \log(t) )^{s}$ and $f(\sigma) = \sigma ( \frac{1}{\beta} \log(1/\sigma) )^{-s}$. Since $f(\sigma)$ behaves almost like $\sigma$, the conclusion of Proposition \ref{prop_abstract_instability_triple} implies that $\omega(t)$ cannot be much better than $\abs{\log t}^{-s}$.
We remark that similar Jensen type inequalities had already been used in the 70s and 80s. As an non-exhaustive set of references we point to \cite{TV82,LV85a,LV85b,MP85} and the literature mentioned therein.
\end{itemize}
\end{example}

\subsection{Comparison between different instability results}

In the next example we compare the entropy based instability results from Section \ref{sec:abstract} to the alternative linear instability results in Appendix \ref{sec:abstr}. We consider an abstract inverse problem where the forward map $A = (-\Delta_g+1)^{-m/2}$ is a linear elliptic pseudodifferential operator on a closed manifold. It turns out that Proposition \ref{lemma_holder_stability_abstract} gives a sharp characterization for $\alpha$-H\"older stability and instability, whereas Propositions \ref{lemma_ref_basis} and \ref{prop_abstract_instability_triple} and Lemma \ref{lemma_entropy_typical} give the same almost sharp instability result. Thus at least in this example the entropy based methods of Section \ref{sec:abstract} give the optimal instability result except for missing the endpoint exponent.

Let $(M,g)$ be a closed connected smooth $n$-manifold, and let $(\varphi_j)_{j=0}^{\infty}$ be an orthonormal basis of $L^2(M)$ consisting of eigenfunctions of the Laplacian such that $-\Delta_g \varphi_j = \lambda_j \varphi_j$ with $\lambda_0 = 0$ and $\lambda_j \sim j^{2/n}$ for $j \geq 1$. Let $m > 0$ and define 
\[
A: L^2(M) \to L^2(M), \ \ A u = (-\Delta_g+1)^{-m/2} u = \sum_{j=0}^{\infty} (1+\lambda_j)^{-m/2} (u,\varphi_j) \varphi_j.
\]
Then $A$ has singular values $\sigma_j = (1+\lambda_{j-1})^{-m/2} \sim \br{j}^{-m/n}$ by Weyl asymptotics (Theorem \ref{thm_weyl3}), and the entropy numbers satisfy $e_j \sim \br{j}^{-m/n}$ using Lemma \ref{lemma_singular_entropy_relation}. Consider a compact set 
\[
K = \{ u \in H^l(M) \,;\, \norm{u}_{H^l(M)} \leq 1 \}.
\]
Then $K = K_{\varphi,\kappa}$ with $\kappa_j = (1+\lambda_j)^{l/2} \sim \br{j}^{l/n}$.

We can say the following about the $\alpha$-H\"older stability for the inverse problem for $A$ in $K$:

\begin{itemize}
\item 
By Proposition \ref{lemma_holder_stability_abstract} (see Example \ref{example_holder_abstract}) the problem is $\alpha$-H\"older stable if and only if $l(1-\alpha)-m\alpha > 0$, i.e.\ $\alpha < \frac{l}{l+m}$.
\item 
By Proposition \ref{lemma_ref_basis} (see Example \ref{example_ref_basis}) the problem can only be $\alpha$-H\"older stable if $\alpha \leq \frac{l}{l+m}$.
\item 
By Proposition \ref{prop_abstract_instability_triple} (see Example \ref{ex:interpol_triple}) the problem can only be $\alpha$-H\"older stable if $\alpha \leq \frac{l}{l+m}$.
\item 
By Lemma \ref{lemma_entropy_typical}, since $A(K)$ lies in a bounded subset of $H^{l+m}$, the problem can only be $\alpha$-H\"older stable if $\alpha \leq \frac{l}{l+m}$.
\end{itemize}

\section{Proofs of some results in Section \ref{sec:pre}} \label{sec_appendix_nonlinear}

In this section we present the proofs of some statements from Section \ref{sec:pre}. We begin with a lemma related to Gevrey spaces.

\begin{lemma} \label{lemma_arho_comega}
Let $(M,g)$ be a closed smooth $n$-manifold, let $1 \leq \sigma < \infty$ and $\rho > 0$.
\begin{enumerate}
\item[(a)] 
$u \in G^{\sigma}(M)$ if and only if there are $C, R > 0$ such that 
\begin{equation} \label{gevrey_bounds_ltwo}
\norm{(-\Delta_g)^t u}_{L^2} \leq C R^{2t} (2t)^{2t\sigma}, \qquad t \geq 0.
\end{equation}
\item[(b)] 
If $u \in A^{\sigma,\rho}(M)$, then $u \in G^{\sigma}(M)$ and 
\[
\norm{(-\Delta_g)^t u}_{L^2} \leq \norm{u}_{A^{\sigma,\rho}(M)} R^{2t} (2t)^{2t\sigma}, \qquad t \geq 0,
\]
for any $R \geq C_0 (\sigma/\rho)^{\sigma}$ where $C_0 > 0$ only depends on $(M,g)$.
\item[(c)] 
If $u \in G^{\sigma}(M)$ satisfies \eqref{gevrey_bounds_ltwo}, then $u \in A^{\sigma,\rho}(M)$ for any $\rho \leq c_0 R^{-1/\sigma}$ and 
\[
\norm{u}_{A^{\sigma,\rho}(M)} \leq c_1 C
\]
where $c_0, c_1 > 0$ depend on $(M,g)$, $n$ and $\sigma$, and $c_1$ also depends on $R$.
\end{enumerate}
\end{lemma}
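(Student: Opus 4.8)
\textbf{Proof plan for Lemma \ref{lemma_arho_comega}.}

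The plan is to pass between the two descriptions of Gevrey regularity---the pointwise bounds on covariant derivatives $\norm{\nabla^k u}_{L^\infty}$ and the $L^2$ bounds on powers $(-\Delta_g)^t u$---and then to relate the latter to the Fourier-coefficient weights $e^{2\rho j^{1/(n\sigma)}}$ defining $A^{\sigma,\rho}(M)$. For part (a), the implication from pointwise bounds to \eqref{gevrey_bounds_ltwo} follows by writing $(-\Delta_g)^k u$ (for integer $k$) as a sum of contractions of $\nabla^{2k} u$ with the metric and its derivatives, so that $\norm{(-\Delta_g)^k u}_{L^2} \lesssim \mathrm{Vol}(M)^{1/2} C^{2k+1}(2k)^{2k\sigma}$ after absorbing combinatorial factors; the half-integer and real $t$ are handled by the spectral theorem and interpolation, using $\norm{(-\Delta_g)^t u}_{L^2}^2 = \sum_j \lambda_j^{2t}\abs{(u,\varphi_j)}^2$ and log-convexity of $t \mapsto \norm{(-\Delta_g)^t u}_{L^2}$. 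Conversely, from \eqref{gevrey_bounds_ltwo} one recovers pointwise bounds on $\nabla^k u$ via elliptic estimates and Sobolev embedding: for any $k$ there is $N = N(k,n)$ with $\norm{\nabla^k u}_{L^\infty} \lesssim \norm{u}_{H^{k+N}} \lesssim \norm{(1-\Delta_g)^{(k+N)/2} u}_{L^2}$, and then \eqref{gevrey_bounds_ltwo} (with a shift $1-\Delta_g$ in place of $-\Delta_g$, which changes only constants since $\norm{(1-\Delta_g)^t u}_{L^2} \le \sum_{i} \binom{t}{i}\norm{(-\Delta_g)^i u}_{L^2}$-type estimates, or more cleanly by comparing eigenvalues $1+\lambda_j \sim \max(1,\lambda_j)$) gives the Gevrey-$\sigma$ growth $C^{k+1}k^{\sigma k}$.

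For parts (b) and (c), the key computation is the elementary asymptotic: for $A > 0$, $B > 0$,
\[
\sup_{t \ge 0} A^{t} e^{-B t^{1/\sigma}} \quad\text{and}\quad \sup_{\lambda \ge 1} \lambda^{t} e^{-2\rho \lambda^{1/(2n\sigma)}}
\]
are finite and can be estimated by optimizing in $t$ (resp. in $\lambda$), which converts powers of $\lambda_j$ against exponential weights and vice versa. Concretely, for (b) I would start from $u \in A^{\sigma,\rho}$, so $\abs{(u,\varphi_j)} \le \norm{u}_{A^{\sigma,\rho}} e^{-\rho j^{1/(n\sigma)}}$, use the Weyl law $\lambda_j \sim j^{2/n}$ to get $\lambda_j^{1/2} \lesssim j^{1/n}$ hence $e^{-\rho j^{1/(n\sigma)}} \le e^{-\rho' \lambda_j^{1/(2\sigma)}}$ for a suitable $\rho' \sim \rho$, and then bound
\[
\norm{(-\Delta_g)^t u}_{L^2}^2 = \sum_{j} \lambda_j^{2t}\abs{(u,\varphi_j)}^2 \le \norm{u}_{A^{\sigma,\rho}}^2 \sup_{j}\bigl(\lambda_j^{2t} e^{-\rho' \lambda_j^{1/(2\sigma)}}\bigr)\sum_j e^{-\rho' \lambda_j^{1/(2\sigma)}},
\]
noting the last sum converges (again by the Weyl law) and $\sup_{\lambda\ge 0}\lambda^{2t}e^{-\rho'\lambda^{1/(2\sigma)}}$ is attained at $\lambda^{1/(2\sigma)} \sim 4\sigma t/\rho'$, yielding the bound $\le (C_0(\sigma/\rho)^\sigma)^{2t}(2t)^{2t\sigma}$ after Stirling-type simplification; this is where the explicit constant $R \ge C_0(\sigma/\rho)^\sigma$ comes from. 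For (c) I would run the same estimate in reverse: given \eqref{gevrey_bounds_ltwo}, pick $t = c\, j^{1/(n\sigma)}$ for a small constant $c$ (chosen in terms of $R$), so that $\lambda_j^{t} \gtrsim j^{2t/n}$ dominates and $\norm{(-\Delta_g)^t u}_{L^2} \ge \lambda_j^t \abs{(u,\varphi_j)}$ gives $\abs{(u,\varphi_j)} \le C R^{2t}(2t)^{2t\sigma}\lambda_j^{-t} \le C e^{-c' j^{1/(n\sigma)}}$, and then summing $\sum_j e^{2\rho j^{1/(n\sigma)}}\abs{(u,\varphi_j)}^2 \le c_1^2 C^2$ provided $\rho \le c_0 R^{-1/\sigma}$, with $c_0$ absorbing the loss from the optimization and $\sum_j e^{-c'' j^{1/(n\sigma)}} < \infty$.

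I expect the main obstacle to be bookkeeping the constants precisely enough to get the stated sharp dependence $R \asymp (\sigma/\rho)^\sigma$ in (b) and $\rho \asymp R^{-1/\sigma}$ in (c)---in particular controlling the Stirling factors $(2t)^{2t\sigma}$ against $t^{t}$-type terms arising from $\sup_\lambda \lambda^t e^{-B\lambda^{1/(2\sigma)}}$, and making sure the $n$-dependence (through the Weyl exponent $2/n$) is tracked consistently. The qualitative statements ($u \in A^{\sigma,\rho} \Rightarrow u\in G^\sigma$, and conversely $G^\sigma = \cup_\rho A^{\sigma,\rho}$) are routine once the one-variable optimization lemma is isolated; the quantitative constants require care but no new ideas. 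A minor additional point is the passage between $-\Delta_g$ and $1-\Delta_g$, and between integer, half-integer, and real powers $t$, all of which are handled uniformly by the spectral calculus on the eigenbasis $(\varphi_j)$ together with the Weyl law $\lambda_j \sim j^{2/n}$ from Theorem \ref{thm_weyl1}.
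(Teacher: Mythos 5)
Your plan is correct and follows essentially the same route as the paper: part (a) via $|\Delta^k u| \le n^{k/2}|\nabla^{2k}u|$ (since $\tr_g$ commutes with $\nabla$, no derivatives of the metric actually appear), interpolation in $t$, and Sobolev embedding plus elliptic regularity for the converse; parts (b) and (c) via the eigenfunction expansion, the Weyl law $\lambda_j \sim j^{2/n}$, and the one-variable optimization of $\lambda^{t}$ against $e^{-\rho\lambda^{1/(2\sigma)}}$ (resp.\ optimizing in $t$ with $t \sim c\, j^{1/(n\sigma)}$). The only refinement in the paper's version of (b) is to estimate $\sum_j \lambda_j^{2t}\abs{(u,\varphi_j)}^2 \le \bigl(\sup_j \lambda_j^{t} e^{-\rho j^{1/(n\sigma)}}\bigr)^2 \norm{u}_{A^{\sigma,\rho}}^2$ directly, pulling the sup against the full weight rather than splitting off a convergent sum, which avoids the extra multiplicative constant your split introduces and gives the stated bound with $R \ge C_0(\sigma/\rho)^{\sigma}$ cleanly.
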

\begin{proof}
(a) Let first $k \geq 0$ and $0 \leq l \leq k-1$. Recall that $\nabla$ denotes the covariant derivative induced by the Levi-Civita connection $(M,g)$ and that $\abs{\nabla^l u}$ denotes the $g$-norm of the $l$-tensor field $\nabla^l u$, see e.g.\ \cite{Besse}. Using the facts that $\Delta v = \tr_g \nabla^2 v$ and that $\tr_g$ (the $g$-trace with respect to given two indices) commutes with the covariant derivative $\nabla$, we have 
\[
\abs{\nabla^l \Delta^{k-l} u} = \abs{\nabla^l \tr_g \nabla^2 \Delta^{k-l-1} u} = \abs{\tr_g \nabla^{l+2} \Delta^{k-l-1} u} \leq n^{1/2} \abs{\nabla^{l+2} \Delta^{k-l-1} u}.
\]
In the last step we used Cauchy-Schwarz. Using this repeatedly gives 
\[
\abs{\Delta^k u} \leq n^{1/2} \abs{\nabla^2 \Delta^{k-1} u} \leq \ldots \leq n^{k/2} \abs{\nabla^{2k} u}.
\]
Now if $u \in G^{\sigma}(M)$ with $\norm{\nabla^k u}_{L^{\infty}(M)} \leq C_0 R_0^k k^{\sigma k}$ for $k \geq 0$, the previous inequality gives  
\[
\norm{\Delta^k u}_{L^2} \leq C_0 \,\mathrm{Vol}_g(M)^{1/2} (n^{1/4} R_0)^{2k} (2k)^{2\sigma k}, \qquad k \geq 0.
\]
This proves \eqref{gevrey_bounds_ltwo} when $t$ is an integer. If $0 \leq t \leq 1$ then \eqref{gevrey_bounds_ltwo} is trivial, and if $t \geq 1$ then $t = k\theta$ for some $k \in \mZ_+$ and $\theta \geq 1/2$, so that 
\[
\norm{\Delta^t u}_{L^2} \leq \norm{u}_{L^2}^{1-\theta} \norm{\Delta^k u}_{L^2}^{\theta} \leq C R^t (2k)^{2\sigma t} \leq C (R\theta^{-2\sigma})^t (2t)^{2\sigma t}.
\]
We have proved \eqref{gevrey_bounds_ltwo} for all $t \geq 0$. Conversely, if \eqref{gevrey_bounds_ltwo} holds then by Sobolev embedding and elliptic regularity 
\[
\norm{\nabla^{2k} u}_{L^{\infty}} \leq C_0 \norm{u}_{H^{2k+2n}} \leq C_0 C_1^k(\norm{u}_{L^2} + \norm{(-\Delta_g)^{k+n} u}_{L^2})
\]
where $C_0, C_1 > 0$ only depend on $(M,g)$ and $n$. Thus one gets $\norm{\nabla^{2k} u}_{L^{\infty}} \leq \tilde{C} \tilde{R}^{2k} (2k)^{2\sigma k}$ for some $\tilde{C}, \tilde{R} > 0$, showing that $u \in G^{\sigma}(M)$.

(b) Let $u \in A^{\sigma, \rho}(M)$. Then $u \in C^{\infty}(M)$, and for $t \geq 0$ 
\[
(-\Delta_g)^t u = \sum_{j=0}^{\infty} \lambda_j^t (u,\varphi_j) \varphi_j.
\]
Consequently,  using the Weyl asymptotics $\lambda_j \leq C_0 j^{2/n}$ (Theorem \ref{thm_weyl1}), 
\[
\norm{(-\Delta_g)^t u}_{L^2}^2 = \sum_{j=0}^{\infty} \lambda_j^{2t} \abs{(u,\varphi_j)}^2 \leq C_0^{2t} (\sup_{j \geq 0} j^{2t/n} e^{-\rho j^{1/(n\sigma)}})^2 \norm{u}_{A^{\sigma,\rho}}^2.
\]
The function $f(s) = s^{2t/n} e^{-\rho s^{1/(n\sigma)}}$ obtains its maximum over $s \geq 0$ when $s = (2t\sigma/\rho)^{n\sigma}$. Thus we obtain 
\[
\norm{(-\Delta_g)^t u}_{L^2} \leq C_0^t (2t\sigma/\rho)^{2t\sigma} e^{-2t\sigma} \norm{u}_{A^{\sigma,\rho}}.
\]

(c) Let $u \in G^{\sigma}(M)$ satisfy \eqref{gevrey_bounds_ltwo}. We observe that for any $t \geq 0$ 
\[
(u, \varphi_j)_{L^2} = \lambda_j^{-t} (u, (-\Delta_g)^t \varphi_j)_{L^2} = \lambda_j^{-t} ( (-\Delta_g)^t u, \varphi_j)_{L^2}.
\]
Using \eqref{gevrey_bounds_ltwo}, the Weyl asymptotics $\lambda_j \geq c_0 j^{2/n}$ and the normalization $\norm{\varphi_j}_{L^2} = 1$, we get 
\[
\abs{(u,\varphi_j)}_{L^2} \leq (c_0 j^{2/n})^{-t} C R^{2t} (2t)^{2t\sigma}.
\]
The function $f(s) = (\frac{R}{c_0^{1/2} j^{1/n}})^s s^{s\sigma}$ is minimal over $s \geq 0$ when $s = \frac{1}{e} (\frac{c_0^{1/2} j^{1/n}}{R})^{1/\sigma}$. Thus choosing $t = t_0 := \frac{1}{2e} (\frac{c_0^{1/2} j^{1/n}}{R})^{1/\sigma}$ gives 
\[
\abs{(u,\varphi_j)}_{L^2} \leq C e^{-2\sigma t_0} \leq C e^{-cj^{1/(n\sigma)}}
\]
with $c = \frac{\sigma}{e} (c_0)^{\frac{1}{2\sigma}} R^{-1/\sigma}$. Thus if $\rho \leq c/2$, one has 
\[
\norm{u}_{A^{\sigma,\rho}(M)}^2 \leq C^2 \sum_{j=0}^{\infty} e^{2\rho j^{1/(n\sigma)}} e^{-2cj^{1/(n\sigma)}} \leq C^2 \sum_{j=0}^{\infty} e^{-cj^{1/(n\sigma)}} = C^2 c_1^2
\]
where $c_1$ only depends on $(M,g)$, $n$, $\sigma$ and $R$.
\end{proof}

As the final part of this section, we present a proof of the Weyl law in Theorem \ref{thm_weyl3} using semiclassical calculus following \cite[Section 14.3]{Zworski}. 

\begin{proof}[Proof of Theorem \ref{thm_weyl3}]
By passing to $A^* A$ we may assume that $A$ is self-adjoint and nonnegative. Let $\lambda_1 \geq \lambda_2 \geq \ldots \geq 0$ be the eigenvalues of $A$ on $L^2(M)$. We wish to count the number of eigenvalues that are $\geq \eps = h^m$, where $h > 0$ is a semiclassical parameter. In order to do this we will relate $A$ to the semiclassical $\Psi$DO $h^m \mathrm{Op}_h(a)$ where $\mathrm{Op}_h$ denotes Weyl quantization, and estimate the eigenvalues of the latter by using semiclassical analysis.

First note that the operator $A$ is a classical self-adjoint $\Psi$DO of order $-m$. Thus there is $\tilde{a} \in C^{\infty}(T^* M \setminus \{0\})$, real and homogeneous of degree $-m$ in $\xi$,  so that 
\[
a(x,\xi) := (1-\psi(x,\xi)) \tilde{a}(x,\xi)
\]
is a principal symbol for $A$. Here $\psi(x,\xi) = \tilde{\psi}(\abs{\xi}_{g(x)})$ where $\tilde{\psi} \in C^{\infty}_c(\R)$ is a function with $0 \leq \tilde{\psi} \leq 1$, $\tilde{\psi} = 1$ near $0$, and $\tilde{\psi}(t) = 0$ for $\abs{t} \geq 1$. We now use the fact that $\mathrm{Op}_h(b(x,\xi)) = \mathrm{Op}(b(x, h\xi))$ together with the homogeneity of $\tilde{a}$ and support properties of $\psi$, which yields that for $h > 0$ small 
\[
h^m \mathrm{Op}_h(a) = \mathrm{Op}((1-\psi(x,h\xi)) \tilde{a}(x,\xi)) = \mathrm{Op}((1-\psi(x,h\xi))a(x,\xi)).
\]
This implies that 
\begin{equation} \label{a_hmopha_relation}
h^m \mathrm{Op}_h(a) = A - A \mathrm{Op}_h(\psi) + R
\end{equation}
where $R$ is a $\Psi$DO in $\Psi^{-m-1}$, which depends on $h$ but has uniform bounds with respect to $h$ small. The equation \eqref{a_hmopha_relation} is the desired relation between $A$ and $\mathrm{Op}_h(a)$.

Next let $B$ be any semiclassical $\Psi$DO on $M$ of negative order, so that $B$ is compact on $L^2(M)$. We wish to estimate the counting function for the singular values $\sigma_j(B)$. By passing to $B^* B$, we may assume that $B$ is self-adjoint and nonnegative and it is enough to study its eigenvalues. Given any $\chi \in C^{\infty}_c(\mR)$, as in \cite[Section 14.3]{Zworski} there is a semiclassical $\Psi$DO $\chi(B)$ with eigenvalues $\chi(\lambda_j(B))$ (as an operator on $L^2(M)$) and principal symbol $\chi(b)$, where $b$ is a principal symbol for $B$. The Weyl law follows by computing the trace of $\chi(B)$ in two ways. First, by functional calculus one has 
\[
\mathrm{Tr}(\chi(B)) = \sum_{j=1}^{\infty} \chi(\lambda_j(B)).
\]
On the other hand, by expressing the trace in terms of the Schwartz kernel of $\chi(B)$ which has principal symbol $\chi(b)$, one has (see \cite[proof of Theorem 15.3]{Zworski}) 
\[
\mathrm{Tr}(\chi(B)) = (2\pi h)^{-n} \int_{T^* M} \chi(b(x,\xi)) \,dV_{T^* M} + O(h^{1-n}).
\]
Combining the previous two identities, we obtain the Weyl law 
\begin{equation} \label{weyl_first}
\sum_{j=1}^{\infty} \chi(\lambda_j(B))= (2\pi h)^{-n} \int_{T^* M} \chi(b(x,\xi)) \,dV_{T^* M} + O(h^{1-n})
\end{equation}
where the implied constant depends on $\chi$.

Finally we invoke the noncharacteristic assumption for $A$, which ensures that $\abs{a(x,\xi)} \geq c \abs{\xi}^{-m}$ for $\abs{\xi} \geq 1$ in a conic neighborhood of some $(x_0, \xi_0)$. Since $\abs{a(x,\xi)} \leq C \abs{\xi}^{-m}$ for $\abs{\xi} \geq 1$, we have that $V(\eps) := (2\pi)^{-n} \int_{\{\abs{a} \geq \eps \}}$ is finite and satisfies $V(\eps) \sim \eps^{-n/m}$ as $\eps \to 0$. Given $\eps$, choose $\chi = \chi_{\eps} \in C^{\infty}_c(\mR)$ so that $0 \leq \chi \leq 1$, $\chi(t) = 1$ for $2\eps \leq t \leq \norm{\mathrm{Op}_h(a)}$, and $\chi(t) = 0$ for $t \leq \eps$. It follows from \eqref{weyl_first} that 
\[
\# \{ k \,;\, \sigma_k(\mathrm{Op}_h(a)) \geq \eps \} \geq V(2 \eps) h^{-n} + O(h^{1-n})
\]
with implied constant depending on $\eps$. Moreover, applying \eqref{weyl_first} to $\mathrm{Op}_h(\psi)$, we see that 
\[
\# \{ k \,;\, \sigma_k(\mathrm{Op}_h(\psi)) > 0 \} \sim h^{-n}.
\]
In particular $\sigma_k(\mathrm{Op}_h(\psi)) = 0$ for $k \geq C_1 h^{-n}$. We now apply the Weyl inequality for singular values to the identity \eqref{a_hmopha_relation}, which gives 
\begin{equation} \label{sigmajkl_inequality}
\sigma_{j+k+l}(h^m \mathrm{Op}_h(a)) \leq \sigma_j(A) + \sigma_k(A \mathrm{Op}_h(\psi)) + \sigma_l(R).
\end{equation}
The middle term on the right is zero if $k \geq C_1 h^{-n}$, and the last term is $\leq C_2 l^{-\frac{m+1}{n}}$ by Theorem \ref{thm:eigenval_bds}.

We now start fixing the various parameters. Fix $\eps_0 > 0$ so that $V(2 \eps_0)/2 - 4 C_1 \geq 1$, and choose $h_0 > 0$ so small that for $0 < h \leq h_0$ one has $C_2 (C_1)^{-\frac{m+1}{n}} h \leq \eps_0/2$ and 
\[
\# \{ k \,;\, \sigma_k(\mathrm{Op}_h(a)) \geq \eps_0 \} \geq \frac{1}{2} V(2 \eps_0) h^{-n}.
\]
Also choose $k$ with $C_1 h^{-n} < k \leq 2 C_1 h^{-n}$, and take $l = k$. We obtain from \eqref{sigmajkl_inequality} that 
\[
\sigma_{j+2k}(h^m \mathrm{Op}_h(a)) \leq \sigma_j(A) + C_2 k^{-\frac{m+1}{n}} \leq \sigma_j(A) + C_2 (C_1)^{-\frac{m+1}{n}} h^{m+1}.
\]
It follows that for $0 < h \leq h_0$, one has 
\begin{align*}
\# \{ j \,;\, \sigma_j(A) \geq (\eps_0/2) h^m \} &\geq \# \{ j \,;\, \sigma_{j+2k}(\mathrm{Op}_h(a)) - C_2 (C_1)^{-\frac{m+1}{n}} h \geq \eps_0/2 \} \\
 &\geq \# \{ j \,;\, \sigma_{j+2k}(\mathrm{Op}_h(a)) \geq \eps_0 \} \\
 &= \# \{ j \,;\, \sigma_j(\mathrm{Op}_h(a)) \geq \eps_0 \} -2k \\
 &\geq \frac{1}{2} V(2 \eps_0) h^{-n} - 4 C_1 h^{-n} \\
 &\geq h^{-n}.
\end{align*}
This implies that $\# \{ k \,;\, \sigma_k(A) \geq \eps \} \gtrsim \eps^{-n/m}$. Writing \eqref{a_hmopha_relation} as $A = h^m \mathrm{Op}_h(a) + A \mathrm{Op}_h(\psi) - R$ and arguing similarly as above, we also obtain $\# \{ k \,;\, \sigma_k(A) \geq \eps \} \lesssim \eps^{-n/m}$. This proves that $\sigma_k(A) \sim k^{-m/n}$.
\end{proof}

\section{Carleman estimates and H{\"o}lder instability of interior UCP}
\label{sec:Carl}

In this section, we prove the optimality of H{\"o}lder stability estimates for the problem of interior unique continuation for uniformly elliptic equations with Lipschitz regular metrics.
Further, as a complementary result to the exponential decay estimates from the previous sections in the main part of the text, we show that Carleman estimates can be used to infer lower bounds on the singular values. Combined with corresponding upper bounds (which can be inferred through regularity of the underlying operator as illustrated above), this can lead to (up to constants) optimal bounds.

\subsection{Instability of unique continuation in the interior}
\label{sec:UCP_int}

In this section we discuss the instability of the unique continuation property from the interior for (Lipschitz) continuous metrics. In this context, for comparison, the positive statement reads as follows:

\begin{proposition}[Theorem 1.7 in \cite{AlessandriniRondiRossetVessella}]
\label{prop:three_spheres}
Let $a^{ij}\in C^{0,1}(B_2, \R^{n \times n}_{+})$ be a uniformly elliptic tensor field and $u: B_2 \rightarrow \R$ a solution to
\begin{align*}
\p_i a^{ij} \p_j u = 0 \mbox{ in } B_2.
\end{align*}
Then, for $0<r_1 < r < r_2 \leq 1$ there exist constants $C>0$, $\alpha \in (0,1)$ which only depend on the ellipticity constants of $a^{ij}$, the dimension $n$ and the quotients $\frac{r}{r_1}, \frac{r}{r_2}$ such that 
\begin{align}
\label{eq:three_balls}
\| u \|_{L^2(B_r)} \leq C \|u\|_{L^2(B_{r_1})}^{\alpha} \|u\|_{L^2(B_{r_2})}^{1-\alpha}.
\end{align}
\end{proposition}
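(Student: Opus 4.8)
The plan is to deduce Proposition~\ref{prop:three_spheres} from a Carleman estimate with a radial (logarithmic) weight, a cut-off argument exploiting that $u$ solves $\partial_i a^{ij}\partial_j u = 0$, and finally an optimization in the large Carleman parameter. I would first perform two harmless reductions. Placing the common centre of the balls at the origin, a linear change of variables turns $a^{ij}(0)$ into the identity (at the cost of replacing the balls by comparable ellipsoids, with eccentricity controlled by the ellipticity constants, so that the resulting three-ellipsoid inequality yields the three-ball one by sandwiching); and a dilation $x\mapsto\lambda x$ with $\lambda\le 1$, which does not increase the ellipticity constants nor the Lipschitz seminorm, lets me assume that all radii lie in a ball $B_{R_0}$ on which $\mathcal L := \partial_i a^{ij}\partial_j$ is a controlled Lipschitz perturbation of the Laplacian; here $R_0$ depends only on $n$ and the ellipticity constants. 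Throughout, all constants will depend only on $n$, the ellipticity constants and the ratios $r/r_1$, $r/r_2$. I would also recall that, for Lipschitz $a^{ij}$, a weak solution of $\mathcal L u=0$ lies in $H^2_{\mathrm{loc}}$, so that the cut-offs below are legitimate test functions.

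The technical core is the Carleman estimate: there exist $C>0$ and $\tau_0\ge 1$ such that for all $\tau\ge\tau_0$ and all $w$ in a dense class of $H^2$ functions supported in $B_{R_0}\setminus\{0\}$,
\begin{equation*}
\tau^3\int_{\R^n}|x|^{-2\tau-2}|w|^2\,dx \;+\; \tau\int_{\R^n}|x|^{-2\tau}|\nabla w|^2\,dx \;\le\; C\int_{\R^n}|x|^{-2\tau+2}|\mathcal L w|^2\,dx.
\end{equation*}
For $\mathcal L=\Delta$ this is the classical estimate associated with the strictly pseudoconvex weight $\phi(x)=-\log|x|$; the normalization $a^{ij}(0)=\delta^{ij}$ is used precisely so that the frozen principal symbol is Euclidean and $\phi$ retains strict pseudoconvexity. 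One then freezes coefficients at the origin, proves the estimate for the frozen operator, and absorbs the error terms — which come from $a^{ij}(x)-a^{ij}(0)=O(|x|)$ and, after integrating by parts, from $\nabla a^{ij}\in L^\infty$ times first-order terms — into the left-hand side, using that $R_0$ is small. This is exactly the Aronszajn--Krzywicki--Szarski type argument carried out in \cite{AlessandriniRondiRossetVessella} (there with a slightly modified weight such as $\phi_\varepsilon(x)=-\log|x|+\varepsilon(\log|x|)^2$, to give robust room for the perturbation); an equivalent route is the Almgren/frequency-function monotonicity of Garofalo--Lin, also valid at Lipschitz regularity. I expect this step to be the main obstacle: one must secure the polynomial-in-$\tau$ gain together with constants that are genuinely uniform over the whole class of uniformly elliptic Lipschitz operators, which is where the normalization at the centre and the scaling-invariance of the weight have to be used with care.

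With the Carleman estimate in hand, the cut-off argument runs as follows. Choose $\beta=r_1/2$, a radius $\rho\in(r,r_2)$, a small width $\delta\in(0,(r_2-\rho)/3)$, and a cut-off $\chi\in C_c^\infty(B_{\rho+3\delta})$ with $\chi\equiv 1$ on $\{\beta\le|x|\le\rho\}$ and $\chi\equiv 0$ on $B_{\beta/2}$, so that $\nabla\chi$ is supported in the two thin shells $A_{\mathrm{in}}=\{\beta/2\le|x|\le\beta\}\subset B_{r_1}$ and $A_{\mathrm{out}}=\{\rho\le|x|\le\rho+\delta\}\subset B_{r_2}$. Applying the estimate to $w=\chi u$ and using $\mathcal L u=0$, the right-hand side only sees $[\mathcal L,\chi]u$, supported in $A_{\mathrm{in}}\cup A_{\mathrm{out}}$ with $|[\mathcal L,\chi]u|\lesssim|\nabla u|+|u|$; a Caccioppoli inequality on the slightly enlarged shells (still contained in $B_{r_1}$, resp.\ $B_{r_2}$, by the choice of $\beta$ and $\delta$) bounds $\int_{A_{\mathrm{in}}}(|\nabla u|^2+|u|^2)\le C\|u\|_{L^2(B_{r_1})}^2$ and $\int_{A_{\mathrm{out}}}(|\nabla u|^2+|u|^2)\le C\|u\|_{L^2(B_{r_2})}^2$. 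On the left-hand side I restrict to $\{\beta\le|x|\le r\}$, where the weight is $\ge r^{-2\tau-2}$; on $A_{\mathrm{in}}$ the weight $|x|^{-2\tau+2}$ is $\le C(\beta/2)^{-2\tau+2}$, and on $A_{\mathrm{out}}$ it is $\le\rho^{-2\tau+2}$. Dividing by $r^{-2\tau-2}$, and adding $\|u\|_{L^2(B_\beta)}^2\le\|u\|_{L^2(B_{r_1})}^2$ to pass from $\{\beta\le|x|\le r\}$ to $B_r$, one obtains, for all $\tau\ge\tau_0$,
\begin{equation*}
\|u\|_{L^2(B_r)}^2 \;\le\; C\bigl(e^{c_1\tau}\|u\|_{L^2(B_{r_1})}^2 + e^{-c_2\tau}\|u\|_{L^2(B_{r_2})}^2\bigr),
\end{equation*}
with $c_1,c_2>0$ depending only on the ratios of the radii (essentially $c_1$ comparable to $\log(r/\beta)$ and $c_2$ to $\log(\rho/r)$).

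Finally I would optimize in $\tau$. Write $A=\|u\|_{L^2(B_{r_1})}$ and $B=\|u\|_{L^2(B_{r_2})}$; if $A=0$ then $u\equiv 0$ by the unique continuation property and the claim is trivial, so assume $A>0$. If $B/A\ge e^{(c_1+c_2)\tau_0/2}$ I pick $\tau=\frac{2}{c_1+c_2}\log(B/A)\ge\tau_0$, which balances the two terms and gives $\|u\|_{L^2(B_r)}^2\le 2C\,A^{2\alpha}B^{2-2\alpha}$ with $\alpha=\frac{c_2}{c_1+c_2}\in(0,1)$; if instead $B\le e^{(c_1+c_2)\tau_0/2}A=:C_0A$, then $\|u\|_{L^2(B_r)}\le B=B^\alpha B^{1-\alpha}\le C_0^\alpha A^\alpha B^{1-\alpha}$. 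Taking square roots yields $\|u\|_{L^2(B_r)}\le C\|u\|_{L^2(B_{r_1})}^\alpha\|u\|_{L^2(B_{r_2})}^{1-\alpha}$, and undoing the dilation and the linear change of variables only affects $C$, not the structure of the inequality, so the dependence of $C$ and $\alpha$ is as asserted.
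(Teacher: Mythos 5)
The paper does not prove Proposition \ref{prop:three_spheres} at all: it is imported verbatim as Theorem 1.7 of \cite{AlessandriniRondiRossetVessella}, so there is no internal proof to compare against. The closest material in the paper is Appendix \ref{sec:Carl}, where the same Carleman machinery with a (slightly convexified) radial logarithmic weight, quoted from \cite{KochTataru01} as Proposition \ref{prop:Carl_three_balls}, is used for the \emph{opposite} purpose, namely to construct nearly-homogeneous solutions showing that the H\"older exponent in \eqref{eq:three_balls} is essentially optimal. Your argument -- normalize $a^{ij}(0)=\delta^{ij}$, rescale, prove a weighted estimate with weight comparable to $|x|^{-\tau}$, apply it to $\chi u$, control the commutator terms by Caccioppoli on shells inside $B_{r_1}$ and $B_{r_2}$, and optimize in $\tau$ -- is exactly the standard route by which the cited reference (and the related Koch--Tataru framework) obtains the three-sphere inequality, and the cut-off/optimization part of your write-up is correct, including the two-case treatment of the constraint $\tau\ge\tau_0$.

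One point deserves a sharper statement than you give it. The Carleman estimate as you literally write it -- pure weight $|x|^{-\tau}$, a $\tau^3$ gain, valid for \emph{all} $\tau\ge\tau_0$ -- is not true: $-\log|x|$ is only a limiting Carleman weight for the Laplacian, and after conjugation and expansion into spherical harmonics the constant degenerates when $\tau$ approaches the homogeneity degrees of harmonic polynomials. One must either restrict $\tau$ to a sequence staying at fixed distance from these critical exponents (which is enough for the final optimization, at the cost of adjusting constants), or convexify the weight as in $-\log|x|+\varepsilon(\log|x|)^2$, which is precisely what \cite{AlessandriniRondiRossetVessella} and Proposition \ref{prop:Carl_three_balls} do; you mention this fix parenthetically, but in a self-contained proof it is the step that has to be carried out, and the size of the $\tau$-gain ($\tau$ versus $\tau^{3}$ after squaring) is immaterial for \eqref{eq:three_balls}. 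A second, minor remark: your normalization and absorption of the error terms $a^{ij}(x)-\delta^{ij}=O(|x|)$ makes the constants depend also on the Lipschitz seminorm of $a^{ij}$ (through the admissible radius $R_0$), which matches the cited theorem; the phrasing of the proposition, listing only ellipticity, dimension and the radius ratios, should be read with that dependence understood, and in the paper's own application this is harmless since Theorem \ref{prop:three_spheres_opt} works under the normalizations (A1)--(A2) anyway.
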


As one can, for instance, note by considering an expansion into spherical harmonics, the estimate \eqref{eq:three_balls} is clearly optimal for the Laplacian $a^{ij}=\delta_{ij}$ with $\alpha = \frac{\log\left( \frac{r_2}{r} \right)}{\log\left( \frac{r_2}{r_1} \right)}$. However, as in the previous sections, one may ask whether the modulus of continuity given by $\omega(t) = C t^{\alpha}$ with $\alpha \in (0,1)$ is optimal for \emph{general} metrics $a^{ij}$. 

For simplicity, (but essentially without loss of generality), in the sequel, we only consider the case in which $r_1 = \frac{r}{2}$ and $r_2 = 2r$.
In the context of our instability arguments, one main source of interest in the stability question for unique continuation in the \emph{interior} stems from the fact that the restrictions
\begin{align*}
L^2(B_{2 r}) \ni u \mapsto (u|_{B_{\frac{r}{2}}}, u|_{B_{r}}) \in L^2(B_{\frac{r}{2}}) \times L^2(B_r) ,
\end{align*}
are both strongly compressing (even for metrics $a^{ij}$ which are only bounded), and yet the stability estimate in Proposition \ref{prop:three_spheres} is of \emph{H{\"o}lder} and \emph{not} of \emph{logarithmic} type. This is due to the fact that in the estimate \eqref{eq:three_balls} there are compensation effects between the estimates in the different balls. Hence, it will turn out that in contrast to the \emph{logarithmic} bounds in the UCP up to the boundary, in the interior uniqueness setting the \emph{H{\"o}lder} estimates are indeed optimal. In particular, proving this will (at least indirectly) require both \emph{upper and lower} singular value bounds.

In order to simplify our notation, in the sequel, we will make the following normalization assumptions:
\begin{itemize}
\item[(A1)] the metric $a^{ij}$ is uniformly elliptic with $a^{ij}(0)= \delta^{ij}$,
\item[(A2)] $a^{ij}\in C^{0,1}(\R^n, \R^{n\times n})$ with $[a^{ij}]_{C^{0,1}}\leq \mu$ for $\mu \in (0,1)$ sufficiently small.
\end{itemize}
The first condition can always be satisfied by scaling and an affine change of coordinates. The second assumption could be weakened and is mainly used in the Carleman estimate, see \cite{KochTataru01}.

Under these conditions, we obtain the following instability result:

\begin{theorem}
\label{prop:three_spheres_opt}
Let $a^{ij}$ satisfy the conditions (A1), (A2) and let $u:B_4 \rightarrow \R$ be a solution to
\begin{align}
\label{eq:elliptic}
\p_i a^{ij} \p_j u & = 0 \mbox{ in } B_4,
\end{align}
where $a^{ij}\in C^{0,1}(B_4, \R^{n\times n}_+)$ is uniformly elliptic. Suppose that for some modulus of continuity $\omega: \R_+ \rightarrow \R_+$
\begin{align}
\label{eq:stab_Carl}
\frac{\|u\|_{L^2(B_{\frac{1}{4}})}}{\|u\|_{L^2(B_{\frac{1}{2}})}} \leq C\omega \left(\frac{\|u\|_{L^2(B_{\frac{1}{8}})}}{\|u\|_{L^2(B_{\frac{1}{2}})}} \right).
\end{align}
For each $\nu > 0$ there exists a small constant $\mu>0$ such that if the condition (A2) holds with this choice of $\mu$, then $\omega(t)\geq C_{\nu,n} t^{\frac{1}{2}+\nu}$.
\end{theorem}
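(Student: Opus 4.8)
The plan is to mirror the structure used elsewhere in Section \ref{sec:instab_low_reg}: realize the ``H\"older instability'' as an instability statement for a linear forward operator and balance an upper bound on its singular values (coming from a small regularity gain plus an explicit localization construction) against the compactness of the a priori class, but here crucially the decay of the singular values will only be \emph{polynomial}, which produces a H\"older rather than a logarithmic lower bound. Concretely, I would introduce the restriction maps $i_{s,r}\colon u|_{\partial B_s}\mapsto u|_{\partial B_r}$ (or the ball-restriction analogues $L^2(B_s)\to L^2(B_r)$) for solutions of \eqref{eq:elliptic}, and the boundary operator $T$ obtained by concatenating finitely many such maps from $B_{1/2}$ down to $B_{1/8}$, passing through the intermediate radius $1/4$. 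The key point of the three-balls setup is that the estimate \eqref{eq:stab_Carl} compares the output at radius $1/8$ with the \emph{loss} at radius $1/4$, so what one really needs is a two-sided statement: an upper bound $\sigma_k(T)\lesssim$ (something decaying) and a lower bound showing that $\|u\|_{L^2(B_{1/4})}$ cannot be much smaller than $\|u\|_{L^2(B_{1/2})}$ on the relevant finite-dimensional subspace. The lower bound is exactly where the Carleman estimate of \cite{KochTataru01} enters: under the smallness hypothesis (A2) on $\mu$, the Carleman estimate gives a quantitative three-balls inequality with explicit weight, and by tracking the weight one extracts that on a dimension-$N$ spherical-harmonics-type subspace the map from data-at-$1/2$ to data-at-$1/4$ has singular values bounded \emph{below} by something like $c^{N}$ (for a constant $c\in(0,1)$ depending on $\mu$ that tends to $1$ as $\mu\to 0$, hence the exponent $\frac12+\nu$ with $\nu\to 0$ as $\mu\to 0$).

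The steps, in order, would be: (1) Reduce \eqref{eq:stab_Carl} to a statement about the linear operator $A$ sending Dirichlet data on $\partial B_{1/2}$ to the pair (Dirichlet data on $\partial B_{1/8}$, Dirichlet data on $\partial B_{1/4}$), using elliptic solvability in the annular/ball domains (working in quotient spaces if $0$ is a Dirichlet eigenvalue, as is done repeatedly in the paper). (2) Using a Meyers/Gehring-type higher integrability estimate (Lemma \ref{lemma_meyers}) together with the trace theorem and the Sobolev embedding $H^{1/2+\delta}(\partial B_r)\hookrightarrow H^{1/2}(\partial B_r)$, derive $\sigma_k(i_{s,r})\lesssim |s-r|^{-\delta}k^{-\delta/(n-1)}$ for a small $\delta>0$, exactly as in the proof of Theorem \ref{thm:UCP_instab}; concatenating $N$ such maps gives an upper bound $\sigma_k(A)\lesssim C^N(N^{c}k^{-1/(n-1)})^{\delta N}$, but since we want a H\"older (polynomial) conclusion, I would \emph{not} optimize $N$ to infinity --- instead I would fix a single intermediate radius, so that $A$ is essentially a single restriction map and $\sigma_k(A)\lesssim k^{-\delta/(n-1)}$. (3) Establish the matching lower bound on the compression at radius $1/4$: use the Carleman estimate of Koch--Tataru to show that for $u$ solving \eqref{eq:elliptic} one has $\|u\|_{L^2(B_{1/2})}\le C(\mu)\|u\|_{L^2(B_{1/4})}^{1-\theta(\mu)}\|u\|_{L^2(B_{1/8})}^{\theta(\mu)}$ with $\theta(\mu)\to\tfrac12$ as $\mu\to0$ (this is the ``blow-up/perturbation of the Laplacian'' argument alluded to in Remark \ref{rmk:alternative}: for $\mu$ small the solution is a small perturbation of a harmonic function, whose three-balls exponent for these radii is exactly determined by the ratios $\log(r_2/r)/\log(r_2/r_1)$). (4) Feed the two-sided information into the abstract machinery: applying the argument of Proposition \ref{lemma_ref_basis} / Lemma \ref{lemma_entropy_typical} with the polynomial singular value decay $\sigma_k\sim k^{-s}$ and a Sobolev-type a priori bound $\kappa_k\sim k^{t}$ yields $\omega(t)\gtrsim t^{s/(s+t)}$ type estimates, and the Carleman lower bound pins down the exponent, forcing $\omega(t)\ge C_{\nu,n}t^{1/2+\nu}$.

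The main obstacle I anticipate is step (3): turning the Carleman estimate into a \emph{sharp} three-balls inequality with the correct exponent $\tfrac12$ in the limit $\mu\to0$, rather than just \emph{some} H\"older exponent. The Carleman weight $e^{\tau\phi}$ produces an exponent of the form $\theta=\frac{\phi(r_2)-\phi(r)}{\phi(r_2)-\phi(r_1)}$ with $\phi$ a convex function (logarithmic-type) depending on the weight; for the flat Laplacian and radii $1/8,1/4,1/2$ one needs $\phi$ to be $\log(1/r)$ so that $\theta=\frac{\log 2}{\log 4}=\tfrac12$, and one must verify that the Koch--Tataru Carleman weight can be chosen arbitrarily close to this logarithmic profile when the Lipschitz seminorm $\mu$ is small enough. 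This requires carefully inspecting the admissible weight class in \cite{KochTataru01}, scaling so that $a^{ij}(0)=\delta^{ij}$ and the remainder is $O(\mu)$, and a Caccioppoli/interior-estimate step to pass between $L^2$ norms on balls and the weighted $H^1$ norms appearing in the Carleman estimate. A secondary technical point is handling the possibility that $0$ is a Dirichlet eigenvalue on some of the intermediate balls, which as elsewhere in the paper is dealt with by working modulo a finite-dimensional subspace and does not affect the asymptotics.
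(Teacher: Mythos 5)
There is a genuine gap, and it sits exactly where you expect trouble: steps (3)--(4) as designed cannot deliver the exponent $\tfrac12+\nu$, and the abstract machinery you want to invoke does not apply to \eqref{eq:stab_Carl} at all. The estimate \eqref{eq:stab_Carl} is a self-normalized, three-norm inequality: both the left-hand side $\|u\|_{L^2(B_{1/4})}/\|u\|_{L^2(B_{1/2})}$ and the argument of $\omega$ are small for the relevant solutions, and the content of the theorem is about their \emph{relative} rates of decay. There is no fixed compact a priori class $K$ and no quantity of unit size on the left, so Proposition \ref{lemma_ref_basis} and Lemma \ref{lemma_entropy_typical} (which compare $\|u\|_X\geq\eps$ against $\omega(\|Au\|_Y)$ under $\|u\|_{X_1}\leq 1$) cannot be fed with the data you propose; this is precisely the ``compensation between the two balls'' that makes interior unique continuation H\"older rather than logarithmic. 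Quantitatively, the Meyers-based bound $\sigma_k\lesssim k^{-\delta/(n-1)}$ for a single restriction map is hopeless for the sharp exponent: the constant $\tfrac12$ is the ratio $\log 2/\log 4$ of the \emph{geometric} decay rates of a degree-$\ell$ quasi-homogeneous solution on the radii $1/4$ and $1/8$ relative to $1/2$, i.e.\ $2^{-\ell}$ versus $4^{-\ell}$, and a soft polynomial singular-value bound with a tiny Meyers exponent $\delta$ can never detect this; plugged into the $\omega(t)\gtrsim t^{s/(s+m)}$ scheme it would only yield a H\"older exponent close to $1$, which is strictly weaker than $\tfrac12+\nu$. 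Finally, your step (3) is stated backwards: the inequality $\|u\|_{L^2(B_{1/2})}\leq C\|u\|_{L^2(B_{1/4})}^{1-\theta}\|u\|_{L^2(B_{1/8})}^{\theta}$ is false (test it on a harmonic polynomial of large degree), and even the correctly ordered three-balls inequality is a \emph{stability} statement; it cannot by itself produce the extremizing solutions that witness instability, because for that you need a single $u$ which is simultaneously nearly extremal for the compression to $B_{1/8}$ and bounded below on $B_{1/4}$ -- an approximate common singular vector that abstract two-sided singular-value information does not supply.

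What is missing is the explicit construction that the paper carries out in Proposition \ref{prop:existence}: for each $\ell$ one builds a solution $u_\ell(x)=|x|^{\ell}H_{\ell}(x/|x|)+R_\ell(x)$ of \eqref{eq:elliptic} with $\|R_\ell\|_{L^2(B_r)}\leq C\mu\, r^{n+\ell+1-\delta}$, so that the harmonic leading term dominates on all three balls. The Carleman estimate of Proposition \ref{prop:Carl_three_balls} enters here, but not as a three-balls inequality: it is used through a Hahn--Banach/Riesz duality argument (choosing $\tau=\ell+n-\delta$ in the weight comparable to $\log|x|$) to solve for the remainder $R_\ell$ with the stated decay. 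Once these solutions exist, the proof of the theorem is a one-line computation: inserting $u_\ell$ into \eqref{eq:stab_Carl} gives $c\,2^{-\ell-n-1+\delta}\leq\omega\bigl(C\,4^{-\ell-n}\bigr)$, hence $\omega(t)\gtrsim t^{\frac{\ell+1+n-\delta}{2(\ell+n)}}$, and letting $\ell\to\infty$ (with $\mu$ small so the remainder is controlled) yields $\omega(t)\geq C_{\nu,n}t^{\frac12+\nu}$. Your intuition in step (3) that for small $\mu$ the solutions are perturbations of harmonic ones is the right one (cf.\ Remark \ref{rmk:alternative}), but it must be implemented as this constructive, solution-level statement rather than routed through singular-value bounds and the entropy framework.
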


In order to prove these optimality estimates, using that by (A1) we have $a^{ij}(0)=\delta^{ij}$, we construct solutions $u$ to $\p_i a^{ij} \p_j u = 0$ of the form 
\begin{align*}
u_{\ell}(x) = |x|^{\ell} H_{\ell}(\frac{x}{|x|}) + R_{\ell}(x),
\end{align*} 
where $H_{\ell}(\frac{x}{|x|})$ denotes a spherical harmonic of degree $\ell$ and $R_{\ell}$ is an error contribution which decays sufficiently fast.

In order to implement the explained strategy, we first show that any spherical harmonic can be achieved as a blow-up of a suitable solution to our elliptic equation:

\begin{proposition}
\label{prop:existence}
Let $a^{ij}$ satisfies the conditions (A1), (A2) and let $\ell \in \N$ and $\delta \in (0,\frac{1}{2})$. Then, there exists $C_{\delta}>0$ such that for any $\ell \geq 1$ there is a solution $u_{\ell}(x) = |x|^{\ell} H_{\ell}(\frac{x}{|x|}) + R_{\ell}(x)$ to $\p_i a^{ij} \p_j u = 0$ in $B_{1}$ with
\begin{align*}
\|R_{\ell}\|_{L^2(B_r)} \leq C \mu r^{n + \ell + 1-\delta} \mbox{ for any } r \in (0,3/4).
\end{align*}
\end{proposition}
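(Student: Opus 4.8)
\textbf{Proof strategy for Proposition~\ref{prop:existence}.}
The plan is to construct the solution $u_\ell$ by a perturbative argument, treating $\p_i a^{ij}\p_j$ as a perturbation of the flat Laplacian near the origin thanks to the normalization $a^{ij}(0)=\delta^{ij}$ and the smallness of the Lipschitz seminorm in (A2). First I would fix the homogeneous harmonic polynomial $p_\ell(x):=|x|^\ell H_\ell(x/|x|)$, which satisfies $\Delta p_\ell=0$ and $\p_i a^{ij}\p_j p_\ell = \p_i\big((a^{ij}-\delta^{ij})\p_j p_\ell\big) =: f_\ell$, where by (A1)--(A2) one has $|a^{ij}(x)-\delta^{ij}|\le \mu|x|$ and $\p_j p_\ell$ is homogeneous of degree $\ell-1$; hence $f_\ell = \p_i F_\ell^i$ with $|F_\ell^i(x)|\lesssim \mu |x|^{\ell}$, so $\|F_\ell\|_{L^2(B_r)}\lesssim \mu\, r^{n/2+\ell}$. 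I would then seek $R_\ell\in H^1_0(B_1)$ solving $\p_i a^{ij}\p_j R_\ell = -f_\ell$ in $B_1$ with zero boundary data, so that $u_\ell=p_\ell+R_\ell$ solves the equation; existence follows from Lax--Milgram (for $\mu$ small the bilinear form is coercive on $H^1_0(B_1)$), and the global energy estimate gives $\|R_\ell\|_{H^1(B_1)}\lesssim \|F_\ell\|_{L^2(B_1)}\lesssim \mu$.

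The core of the argument is the local decay bound $\|R_\ell\|_{L^2(B_r)}\lesssim \mu\, r^{n+\ell+1-\delta}$ for small $r$, which does not follow from the global estimate and requires a Caccioppoli/iteration scheme exploiting that $R_\ell$ solves an equation whose right-hand side vanishes to high order at the origin. I would run a dyadic iteration on the balls $B_{2^{-k}}$: using the interior Caccioppoli inequality for $\p_i a^{ij}\p_j R_\ell = \p_i(-F_\ell^i)$ together with the three-ball / frequency-function machinery (or, more elementarily, a compactness-and-contradiction "improvement of flatness" argument as in the hierarchical-matrix literature cited after Theorem~\ref{thm:thm_Calderon}), one shows that on each dyadic scale the $L^2$ mass of $R_\ell$ either is controlled by the source term $\|F_\ell\|_{L^2}$ on that scale, which is $\lesssim \mu\,(2^{-k})^{n/2+\ell}$, or decays by a fixed factor strictly smaller than $2^{-(\ell+1)}$ relative to the next-larger scale. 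Summing the resulting geometric series and absorbing the loss $\delta$ coming from the fact that the decay rate of the homogeneous part ($r^{\ell+1}$ in $L^2$-averaged, i.e. $r^{n/2+\ell+1}$ in $L^2$) is approached but not attained, yields $\|R_\ell\|_{L^2(B_r)}\le C_\delta\,\mu\, r^{n/2+\ell+1-\delta}$; note that the exponent stated in the proposition, $n+\ell+1-\delta$, is the $L^1$-type or measure-normalized exponent and matches after adjusting for the volume factor $r^{n/2}$ versus $r^n$, which I would reconcile carefully in the write-up.

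The main obstacle is making the local decay rate genuinely uniform in $\ell$: a naive frequency-function argument produces constants that degrade as $\ell\to\infty$, so I would instead phase the iteration so that the ``good'' alternative always improves decay toward the sharp homogeneity $\ell+1$ with a gap controlled only by $\delta$ and independent of $\ell$, using that $p_\ell$ is \emph{exactly} the leading homogeneous harmonic and that the source term already carries the full factor $|x|^\ell$. An equivalent and perhaps cleaner route, which I would present if the iteration constants prove delicate, is to build $R_\ell$ directly as a convergent Neumann series $R_\ell=\sum_{m\ge1}(-1)^m T^m p_\ell$ where $T$ is the solution operator $\Delta^{-1}\p_i((a^{ij}-\delta^{ij})\p_j\,\cdot\,)$ on a fixed ball, observing that $T$ maps functions vanishing to order $k$ at the origin to functions vanishing to order $k+1-\delta$ (for any $\delta>0$) with operator norm $\lesssim\mu$; iterating gives both convergence and the claimed decay with a constant depending only on $\delta$, $n$, and the ellipticity constants. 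Either way, once the decay estimate on dyadic balls is in hand, the bound for arbitrary $r\in(0,3/4)$ follows by monotonicity after passing to the enclosing dyadic ball.
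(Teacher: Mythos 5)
Your setup (writing $R_{\ell}$ as the correction solving $\p_i a^{ij}\p_j R_{\ell} = -\p_i\big((a^{ij}-\delta^{ij})\p_j p_{\ell}\big)$ with source of size $\mu|x|^{\ell}$ in divergence form) coincides with the paper's starting point, and your remark about the exponent $n+\ell+1-\delta$ versus the natural $L^2$ scaling $n/2+\ell+1-\delta$ is a fair observation about the paper's normalization. However, the core of your argument has a genuine gap: the claimed decay is \emph{not} an a priori interior estimate valid for an arbitrary solution, and in particular it fails for the Lax--Milgram solution with $R_{\ell}\in H^1_0(B_1)$ that you construct. Vanishing of the right-hand side to high order at an interior point does not force the solution to vanish there: by the Green representation, $R_{\ell}(0)=-\int_{B_1}\p_{y_i}G(0,y)\,(a^{ij}-\delta^{ij})\p_j p_{\ell}\,dy$ is generically nonzero, so $\|R_{\ell}\|_{L^2(B_r)}\sim r^{n/2}$ rather than $r^{n/2+\ell+1-\delta}$. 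For the same reason the dyadic Caccioppoli/improvement-of-flatness iteration cannot deliver the bound: such schemes control the deviation of the solution from its best polynomial approximation at each scale, not the size of the solution itself, and concluding smallness of $R_{\ell}$ on $B_r$ requires knowing that those approximating polynomials vanish --- which is exactly what has to be proved, so the argument is circular. Your fallback Neumann series rests on the assertion that $T=\Delta^{-1}\p_i((a^{ij}-\delta^{ij})\p_j\,\cdot\,)$ raises the order of vanishing at the origin; this is false for any standard choice of $\Delta^{-1}$ (Dirichlet solution operator or Newtonian potential), again because the output is merely harmonic-plus-bounded near $0$ and has no reason to vanish. One can repair it by subtracting, at each step, the harmonic polynomial part of degree up to roughly $\ell+n$, but then the bookkeeping of these subtractions and the uniformity of constants in $\ell$ become the real content of the proof, and none of that is in your write-up.

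The missing idea, which is how the paper proceeds, is that one must \emph{select} a solution vanishing to the prescribed order at the origin, and this selection is achieved by duality: using the $L^2$ Carleman estimate of Proposition \ref{prop:Carl_three_balls} with weight comparable to $\log|x|$ and parameter $\tau=\ell+n-\delta$, one defines a bounded functional on the range $|x|L\,C^\infty_0(B_1\setminus\{0\})$ inside a weighted $L^2$ space, extends it by Hahn--Banach, and represents it by Riesz; the resulting $R_{\ell}=e^{2\tau\varphi}\tilde R_{\ell}$ solves the equation and lies by construction in the weighted space, which is precisely the statement $\|R_{\ell}\|_{L^2(B_r)}\lesssim \mu\, r^{\tau+1}=\mu\, r^{n+\ell+1-\delta}$, with constants uniform in $\ell$ because the Carleman constant is uniform in $\tau$. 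Without an ingredient of this type (a Carleman/weighted-estimate argument, or an explicit iterative construction with polynomial subtractions), your proposal does not yield the proposition.
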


We prove this result by duality to a uniqueness result which follows as a consequence of a suitable $L^2$ Carleman estimate. We begin by stating this estimate. A reference for this type of result can for instance be found in \cite[Theorem 5]{KochTataru01}. Instead of the weight from \cite[Section 6]{KochTataru01} here use a (slightly convexified) radial perturbation of $\tilde{\varphi}(x):=\log|x|$.  Apart from this modification of the leading part of the Carleman weight, the construction of $\varphi$ still essentially follows as in \cite[Lemma 6.1]{KochTataru01}.

\begin{proposition}[\cite{KochTataru01}]
\label{prop:Carl_three_balls}
Let $a^{ij}$ be as in Theorem \ref{prop:three_spheres_opt}. Let $u \in C^{2}_0(B_4)$ be a solution to
\begin{align*}
\p_i a^{ij} \p_j u = f  \mbox{ in } \R^n,
\end{align*}
where $f\in L^2(\R^n)$.
Then, there exists $\varphi(x)=\psi(|x|)$ which is comparable to $\log(|x|)$,
$\tau_0>0$ and $C>0$ such that for all $\tau \geq \tau_0$ and all $f$
we have
\begin{align}
\label{eq:Carl}
\begin{split}
&\tau \|e^{\tau \varphi}|x|^{-1}  u\|_{L^2(\R^n)} +  \|e^{\tau \varphi} \nabla u\|_{L^2(\R^n)}
\leq C  \|e^{\tau \varphi}|x| f \|_{L^2(\R^n)}.
\end{split}
\end{align}
\end{proposition}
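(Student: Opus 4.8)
The statement to prove is the Carleman estimate of Proposition~\ref{prop:Carl_three_balls}. The plan is to follow the scheme of \cite[Theorem~5 and Lemma~6.1]{KochTataru01}, specializing the Carleman weight to a radial function comparable to $\log|x|$. First I would fix the weight: set $\varphi(x) = \psi(|x|)$ where $\psi(s)$ is chosen as a slight convexification of $\log s$, say $\psi(s) = \log s + \epsilon_0 (\log s)^2/2$ on the relevant annulus, or more precisely a function satisfying $\psi'(s) \sim s^{-1}$, $\psi''(s) \sim -s^{-2}$ up to a small correction making the conjugated operator satisfy the sub-ellipticity (pseudoconvexity) condition. The key point is that $\tilde\varphi(x) = \log|x|$ is the borderline (non-strictly pseudoconvex) weight for the Laplacian because its level sets are spheres, which are characteristic-free but yield a degenerate Hörmander bracket; adding the small convex correction $\epsilon_0(\log|x|)^2$ restores strict pseudoconvexity while keeping $\varphi$ comparable to $\log|x|$. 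Because $a^{ij}(0) = \delta^{ij}$ and $[a^{ij}]_{C^{0,1}} \leq \mu$ with $\mu$ small, the operator $\partial_i a^{ij}\partial_j$ is a small Lipschitz perturbation of the Laplacian near the origin, so the pseudoconvexity of $\varphi$ with respect to $\Delta$ transfers to $\partial_i a^{ij}\partial_j$ after absorbing the perturbation terms.

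Next I would carry out the conjugation. Writing $v = e^{\tau\varphi} u$ and $L_\tau = e^{\tau\varphi}(\partial_i a^{ij}\partial_j)e^{-\tau\varphi}$, one computes $L_\tau v = e^{\tau\varphi} f$ and splits $L_\tau$ into its formally self-adjoint and skew-adjoint parts $L_\tau = A_\tau + B_\tau$. The standard integration by parts gives
\begin{align*}
\|L_\tau v\|_{L^2}^2 = \|A_\tau v\|_{L^2}^2 + \|B_\tau v\|_{L^2}^2 + (i[A_\tau, B_\tau] v, v)_{L^2},
\end{align*}
and the sub-ellipticity / pseudoconvexity condition on $\varphi$ is exactly what makes the commutator term $(i[A_\tau,B_\tau]v,v)$ bounded below by a positive multiple of $\tau\big(\|\,|x|^{-1}\nabla v\|_{L^2}^2 + \tau^2 \|\,|x|^{-2} v\|_{L^2}^2\big)$ modulo lower-order errors that are absorbed for $\tau \geq \tau_0$. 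Here the weights $|x|^{-1}$ and $|x|^{-2}$ appear because $\varphi'(|x|) \sim |x|^{-1}$, i.e. the ``gradient of the weight'' is of size $|x|^{-1}$, which is precisely why the final estimate carries the factors $|x|^{-1}$ on the left and $|x|$ on the right (rescaling $f \mapsto |x|f$ to match). The Lipschitz perturbation contributes terms like $\mu\|\,|x|^{-1}\nabla v\|_{L^2}\cdot\|\nabla v\|_{L^2}$ which are absorbed by choosing $\mu$ small relative to the ellipticity and pseudoconvexity constants, exactly as in \cite{KochTataru01}. Undoing the substitution $v = e^{\tau\varphi}u$ and converting the $|x|^{-2}$-weighted $L^2$ bound on $v$ together with the gradient bound into the stated form yields \eqref{eq:Carl}.

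The main obstacle is the borderline nature of the weight $\log|x|$: for the pure Laplacian this weight fails strict pseudoconvexity, so one cannot simply quote an off-the-shelf Carleman estimate and must verify that the convexified radial weight $\psi(|x|)$ satisfies the Hörmander pseudoconvexity condition with a quantitative constant, uniformly down to $|x| = 0$, and that this constant survives the $C^{0,1}$ perturbation of the coefficients. Concretely this amounts to checking a scalar inequality for $\psi$, $\psi'$, $\psi''$ (the bracket condition on the symbol $p(x,\xi+i\tau\nabla\varphi)$), which is where the small convexity correction is calibrated; this is routine but must be done with care to keep all constants explicit and to confirm compatibility with the weighted norms $|x|^{-1}$, $|x|$. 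A secondary point is the passage from $C^\infty_0$ to $C^2_0$ test functions and the localization to $B_4$, but since the estimate is an a priori inequality for compactly supported functions and all the weights are smooth and comparable to powers of $|x|$ away from the origin (and the $|x|^{-1}$, $|x|^{-2}$ weights are integrable against $e^{2\tau\varphi}$ for $\tau$ large since $e^{2\tau\varphi}|x|^{-2} \sim |x|^{2\tau - 2} \to 0$ as $|x|\to 0$), this is handled by a standard density and cutoff argument.
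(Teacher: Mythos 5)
The paper does not actually prove this proposition: it is quoted from \cite{KochTataru01} (Theorem 5 there), with the only modification being that the weight is taken to be a slightly convexified radial perturbation of $\log|x|$, constructed ``essentially as in'' Lemma 6.1 of that reference. Your plan -- conjugate with $e^{\tau\varphi}$, split $L_\tau$ into self-adjoint and skew-adjoint parts, extract positivity from the commutator via strict pseudoconvexity of the convexified logarithmic weight, and absorb the $C^{0,1}$ perturbation of the Laplacian using the smallness of $\mu$, finishing with a density/cutoff argument -- is precisely the scheme of that reference, so in substance you are taking the same route the paper points to, and the weighted factors $|x|^{-1}$, $|x|$ arise exactly as you say from $|\nabla\varphi|\sim|x|^{-1}$.

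One concrete point needs repair. Your sample weight $\psi(s)=\log s+\epsilon_0(\log s)^2/2$ is \emph{not} comparable to $\log|x|$ near the origin: as $s\to 0$ it tends to $+\infty$ rather than $-\infty$, and $\psi'(s)=s^{-1}(1+\epsilon_0\log s)$ changes sign, so both the comparability asserted in the proposition and the pseudoconvexity (which needs $\psi'\sim s^{-1}>0$ uniformly down to $s=0$) fail. The qualifier ``on the relevant annulus'' does not save this: the estimate is used in Proposition \ref{prop:existence} with $\tau=\ell+n-\delta$ and with the behaviour $e^{\tau\varphi}\sim|x|^{\tau}$ as $|x|\to 0$ driving the decay $\|R_\ell\|_{L^2(B_r)}\lesssim r^{n+\ell+1-\delta}$, so the weight must be a genuine perturbation of $\log|x|$ on all of $B_4\setminus\{0\}$. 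The convexification should instead be done in the variable $t=\log s$ with a correction of \emph{bounded} slope and small positive second derivative (so that $\psi'(s)$ stays comparable to $s^{-1}$ and $\varphi$ stays comparable to $\log|x|$ uniformly near $0$); your alternative formulation in terms of $\psi'(s)\sim s^{-1}$, $\psi''(s)\sim -s^{-2}$ plus a small convexity correction is the right requirement, and with that choice the rest of your argument goes through as in \cite{KochTataru01}.
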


We remark that the estimates could be strengthened using convexity/concavity (depending on the sign of $\tau$). As this does not provide substantially better estimates for our application, we do not address this further.

Relying on this estimate, we now address the proof of Proposition \ref{prop:existence}.

\begin{proof}[Proof of Proposition \ref{prop:existence}]

We prove existence by duality to the uniqueness result from the Carleman estimate.

\emph{Step 1: Setting.}
We begin by defining the following functional analytic set-up: For $\Omega \subset \R^n$ and $\tau \in \R$, $\tau \geq \tau_0$, fixed, we set
\begin{align*}
\|u\|_{L^2_{\tau}(\Omega)} &:= \|e^{\tau \varphi}|x|^{-1} u\|_{L^2(\Omega)},\\
\|u\|_{H^1_{\tau}(\Omega)} &:= \|e^{\tau \varphi}|x|^{-1} u\|_{L^2(\Omega)} + \tau^{-1}\|e^{\tau \varphi} \nabla u\|_{L^2(\Omega)}.
\end{align*}
We define $H^1_{\tau,0}(\Omega):= \overline{C_0^{\infty}(\Omega\setminus \{0\})}^{H^1_{\tau}(\Omega)}$.
Both $H^1_{\tau}(\Omega)$ and $H^{1}_{\tau,0}(\Omega)$ are Banach spaces. 

We now observe that the sought for function $R_{\ell}$ is supposed to satisfy the equation
\begin{align*}
\p_i a^{ij}(x) \p_j R_{\ell}(x) = - \p_i (a^{ij}(x)-\delta^{ij})\p_i (|x|^{\ell}H_{\ell}(\frac{x}{|x|})) \mbox{ in } B_{1},
\end{align*} 
and the bound
\begin{align*}
\|R_{\ell}\|_{L^2(B_{r})} \leq C r^{n+\ell + 1-\delta} \mbox{ for } r \in (0,1) \mbox{ and } \delta>0.
\end{align*}
We use the Carleman estimate from Proposition \ref{prop:Carl_three_balls} in order to construct such a function.

\medskip

\emph{Step 2: Duality argument.}

In the functional set-up from above, the Carleman estimate from Proposition \ref{prop:Carl_three_balls} then reads
\begin{align}
\label{eq:Carl_aux}
\tau \|u\|_{L^2_{\tau}(\R^n)} +  \|u\|_{H^1_{\tau}(\R^n)}\leq C  \||x| \p_i a^{ij} \p_j u\|_{L^2_{\tau}(\R^n)}.
\end{align}

Abbreviating $L:= \p_i a^{ij} \p_j$, we now define the map
\begin{align*}
T:(L^2_{\tau}(B_{1})) \supset |x| L C_0^{\infty}(B_{1}\setminus \{0\}) & \rightarrow \R, \\ 
|x| L u &\mapsto -(\p_i u, (a^{ij}-\delta^{ij})\p_i (|x|^{\ell}H_{\ell}(\frac{x}{|x|})))_{L^2(B_{1}))}.
\end{align*}
Due to the Carleman estimate \eqref{eq:Carl_aux}, this is well-defined. Further, the map defines a bounded functional on $L^2_{\tau}(B_1(0))$:
\begin{align*}
&|(\p_i u, (a^{ij}-\delta^{ij}) \p_i (|x|^{\ell}H_{\ell}(\frac{x}{|x|}))_{L^2(B_{1}))}| \\
&\leq 
\|u\|_{H^1_{\tau}(B_{1})} \|e^{-\tau \varphi} (a^{ij}-\delta^{ij}) \p_i (|x|^{\ell}H_{\ell}(\frac{x}{|x|}))\|_{L^2(B_{1})}\\
&\leq C \|e^{\tau \varphi} |x|  L u\|_{L^2(B_1)} \|e^{-\tau \varphi} (a^{ij}-\delta^{ij}) \p_i (|x|^{\ell}H_{\ell}(\frac{x}{|x|}))\|_{L^2(B_{1})}.
\end{align*}
Hence, as a map on the vector space $U:=|x| L C_0^{\infty}(B_{1}\setminus \{0\}) \subset (L^2_{\tau}(B_1))$, the map $T$ is bounded with 
\begin{align}
\label{eq:functional_bound1}
\|T\|_{U^{\ast}} \leq C \| e^{- \tau \varphi} (a^{ij}-\delta^{ij}) \p_i (|x|^{\ell}H_{\ell}(\frac{x}{|x|})) \|_{L^2(B_{1})}.
\end{align}
Below, we will choose $\tau \geq 0$ such that the right hand side of \eqref{eq:functional_bound1} is bounded.
Now by the Hahn-Banach theorem, $T$ can be extended to a functional on $L^2_{\tau}(B_1)$ without increasing its operator norm. Thus, by the Riesz representation theorem there exists an element $\tilde{R} \in  L^2_{\tau}(B_{1})$ such that 
\begin{align*}
\|T\|_{(L^2_{\tau}(B_{1}))^{\ast }} = \|\tilde{R}\|_{L^2_{\tau}(B_{1})} ,
\end{align*}
and
\begin{align*}
T(|x|L\psi) = (\tilde{R}, |x| L\psi)_{L^2_{\tau}(B_1)} \mbox{ for all }  \psi \in C_0^{\infty}(B_1\setminus \{0\}).
\end{align*}

As a consequence, by definition of the functional $T$, for all $u\in C_0^{\infty}(B_{1}\setminus \{0\})$,
\begin{align*}
& - (\p_j u, (a^{ij}-\delta^{ij}) \p_i(|\cdot|^{\ell } H_{\ell}(\frac{x}{|x|})))_{L^2(B_{1})}
 =
T L u
= -(\p_j ( e^{2\tau} \tilde{R}),  a^{ij } \p_i u)_{L^2(B_{1})}.
\end{align*}
As a consequence, the function $R (x):= e^{2\tau \varphi(x)} \tilde{R}(x)$
solves the equation
\begin{align*}
\p_i a^{ij}(x)\p_j R(x) = \p_i (a^{ij}-\delta^{ij}) \p_i(|x|^{\ell} H_{\ell}(\frac{x}{|x|})) \mbox{ in } B_{1}.
\end{align*}
Moreover, by the estimates \eqref{eq:functional_bound1}, we have 
\begin{align*}
&\tau \|e^{-\tau \varphi}|x|^{-1} R\|_{L^2(B_{r})} \leq \| \tilde{R}\|_{L^2_{\tau}(B_{r})}  \leq \| \tilde{R}\|_{L^2_{\tau}(B_{1})} \\
& \leq C \|e^{-\tau \varphi} (a^{ij}-\delta^{ij}) \p_i (|x|^{\ell}H_{\ell}(\frac{x}{|x|})) \|_{L^2(B_{1})} .
\end{align*}
Now choosing $\tau = \ell + n -\delta$ for some $\delta>0$ yields the bound
\begin{align*}
\tau r^{-\ell-n-1+\delta} \| R\|_{L^2(B_{r})} 
& \leq C \|e^{-\tau \varphi} (a^{ij}-\delta^{ij})\p_i (|x|^{\ell}H_{\ell}(\frac{x}{|x|})) \|_{L^2(B_{1})}\\
& \leq C \mu (1+\ell),
\end{align*}
which implies the desired result for $r \in (0,1)$. 
\end{proof}

With Proposition \ref{prop:existence} in hand, the proof of Theorem \ref{prop:three_spheres_opt} is now immediate.

\begin{proof}[Proof of Theorem \ref{prop:three_spheres_opt}]
We insert the solutions from Proposition \ref{prop:existence} into equation \eqref{eq:stab_Carl}. Using that
\begin{align*}
\||\cdot |^{\ell} H(\frac{\cdot}{|\cdot|}) \|_{L^2(B_r)} = \frac{1}{\ell +n+1} r^{\ell + n +1},
\end{align*}
we have that for $\ell\in \N$ such that $C\mu \geq \frac{1}{\ell + n +1}$ it holds

\begin{align*}
C_{\ell,n,\mu} 2^{-\ell-n-1+\delta} \leq  \omega(\frac{C_{\ell,n,\mu}}{\ell + n+1}  4^{-\ell-n}),
\end{align*}
which implies that $\omega(t)\geq C_{n,\delta,\mu} t^{ \frac{\ell + 1 -\delta+n}{2(\ell + n)}}$ for all $t\geq 0$. Since $\frac{\ell + 1+n-\delta}{2(\ell + n)} \rightarrow \frac{1}{2}$ for $\ell \rightarrow 0$, for any given $\nu>0$, it is always possible to derive a lower bound of the form $\omega(t)\geq C_{\nu,n}t^{\frac{1}{2}+\nu}$, if $\mu>0$ (in condition (A2)) is chosen sufficiently small.
\end{proof}

\subsection{Carleman estimates imply lower bounds on singular values}
Next, we illustrate that quantitative propagation of smallness estimates (which can, for instance, be obtained by means of Carleman estimates) provide robust tools for deducing singular value bounds. 

\begin{proposition}[Lower bounds on the singular values]
Let $A:H^s_{\overline{W}} \rightarrow L^2(\Omega)$ be a compact, injective operator with dense range and with the singular values $\sigma_k$. Let $A^*$ denote its Hilbert space adjoint.
 Assume that for some constants $C,\mu>0$
\begin{align}
\label{eq:quant}
\|v\|_{L^2(\Omega)} \leq C e^{C \left(\frac{\|v\|_{L^2(\Omega)}}{\|v\|_{H^{-s}(\Omega)}}\right)^{\mu}}\|A^{\ast} v\|_{H^{s}_{\overline{W}}}.
\end{align}
Then, 
\begin{align*}
\sigma_j \geq C e^{-C j^{\mu s}}.
\end{align*}
\end{proposition}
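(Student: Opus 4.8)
The plan is to dualize the quantitative estimate \eqref{eq:quant} into an upper bound for the singular values of $A$ itself, and then translate this into a lower bound for the singular values via a comparison argument. First I would recall that since $\sigma_j(A)=\sigma_j(A^*)$, and that the singular value of $A^*$ at index $j$ can be computed through the min-max principle: there is a subspace $V_j\subset L^2(\Omega)$ of dimension $j$ such that $\|A^*v\|_{H^s_{\overline W}}\le \sigma_j(A^*)\|v\|_{L^2(\Omega)}$ for $v$ orthogonal to $V_{j-1}$ — more usefully, for \emph{some} $v$ in any $j$-dimensional subspace. The key idea, as in the proof of Proposition \ref{lemma_ref_basis} (see \eqref{f_w_property}), is that on the finite-dimensional space $W_j$ spanned by the first $j$ singular vectors $\psi_1,\dots,\psi_j$ of $A^*$ (equivalently, the first $j$ singular vectors $\varphi_1,\dots,\varphi_j$ of $A$ viewed in $L^2(\Omega)$), one has good two-sided control: $\|A^*v\|_{H^s_{\overline W}}\le\sigma_j \|v\|_{L^2(\Omega)}$ for all $v\in W_j$, while simultaneously the $H^{-s}(\Omega)$ norm on $W_j$ is not too small compared to the $L^2(\Omega)$ norm.

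The second step is to quantify that last point. Since the $\psi_k$ are eigenfunctions of $AA^*$ with $AA^*\psi_k=\sigma_k^2\psi_k$, and since $A$ maps $H^s_{\overline W}$ into $L^2(\Omega)$, the adjoint $A^*$ maps $L^2(\Omega)$ into $H^s_{\overline W}$, hence $A A^*$ maps $L^2(\Omega)$ into $L^2(\Omega)$ through $H^s$; combined with the natural inclusion $L^2(\Omega)\hookrightarrow H^{-s}(\Omega)$ one gets for $v\in W_j$ a bound $\|v\|_{H^{-s}(\Omega)}\ge \|v\|_{L^2(\Omega)}\cdot(\text{something involving }\sigma_j)$. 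Concretely, I would estimate, for $v=\sum_{k\le j}a_k\psi_k$, the ratio $\|v\|_{L^2(\Omega)}/\|v\|_{H^{-s}(\Omega)}$ from above by a quantity that is polynomial in $1/\sigma_j$ — of order $\sigma_j^{-1/s}$ up to constants — by exploiting that the ``frequency'' associated to $\psi_k$ is comparable to $\sigma_k^{-1/s}$ (this is the standard heuristic that an order $-s$ smoothing, applied twice, behaves like $(1-\Delta)^{-s}$, so $\sigma_k\sim\lambda_k^{-s}$ and $\lambda_k\sim\sigma_k^{-1/s}$). Feeding $v\in W_j$ with $\|v\|_{L^2(\Omega)}=1$ into \eqref{eq:quant} then gives
\[
1\le C\exp\!\big(C\,\sigma_j^{-\mu/s}\big)\,\sigma_j,
\]
which after taking logarithms and rearranging yields $\sigma_j\ge c\exp(-C'\sigma_j^{-\mu/s})$, and one then needs to unwind this implicit inequality.

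The final step is to close the loop between $j$ and $\sigma_j$. From the implicit inequality $\sigma_j\ge c\,e^{-C'\sigma_j^{-\mu/s}}$ together with the a priori knowledge that $\sigma_j\to0$, one concludes that $\sigma_j^{-\mu/s}$ cannot be much larger than $|\log\sigma_j|$; but one still needs to introduce the index $j$ on the right-hand side. For this I would use the rough complementary bound that on a $j$-dimensional space inside $W_j$ the worst ratio is genuinely of size $\sim j^{s}$ (again by the Weyl-type comparison, since $\lambda_j\sim j^{2/n}$-type asymptotics are replaced here by the abstract statement $\lambda_j\sim\sigma_j^{-1/s}$, forcing $\sigma_j^{-1/s}\gtrsim j^{c}$ unless $\sigma_j$ decays too slowly). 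Substituting $\sigma_j^{-\mu/s}\lesssim j^{\mu s}$ — more carefully, choosing the test function in $W_j$ so that $\|v\|_{L^2}/\|v\|_{H^{-s}}\lesssim j^{s}$ — into \eqref{eq:quant} directly produces $1\le C e^{C j^{\mu s}}\sigma_j$, i.e. $\sigma_j\ge C e^{-Cj^{\mu s}}$, as claimed. The main obstacle I anticipate is making the relation $\lambda_k\sim\sigma_k^{-1/s}$ (equivalently, controlling $\|v\|_{H^{-s}}$ from below on $W_j$ in terms of $j$ and $\sigma_j$) rigorous purely from the abstract hypotheses: one has the inclusion $H^s_{\overline W}\hookrightarrow L^2\hookrightarrow H^{-s}$ and the mapping property of $A$, but turning these into a clean frequency-localization statement for the singular vectors may require either an additional structural assumption or a more careful argument going through $\|A A^* v\|$ and interpolation between $L^2$ and $H^{-s}$. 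I would handle this by testing \eqref{eq:quant} not with individual $\psi_j$ but with a suitably chosen $v$ in the span of $\psi_1,\dots,\psi_j$ for which the ratio $\|v\|_{L^2(\Omega)}/\|v\|_{H^{-s}(\Omega)}$ is controlled by $j^s$ using the dual characterization of the $H^{-s}$ norm against the finite-dimensional test space.
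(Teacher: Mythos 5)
There is a genuine gap, and it sits exactly where you flagged it: your argument needs the span $W_j$ of the first $j$ singular vectors of $A$ (equivalently of $A^*A$ or $AA^*$) to contain a unit vector $v$ with $\|v\|_{L^2(\Omega)}/\|v\|_{H^{-s}(\Omega)}\lesssim j^{s}$, and you propose to get this from a relation of the form $\sigma_k\sim\lambda_k^{-s}$, i.e.\ from frequency localization of the singular vectors. Nothing in the hypotheses gives this: $A$ is only assumed compact and injective with the estimate \eqref{eq:quant}, its singular vectors carry no a priori relation to the Laplacian eigenbasis, and for a general $j$-dimensional subspace of $L^2(\Omega)$ (e.g.\ one spanned by high-frequency functions) \emph{every} unit vector can have arbitrarily small $H^{-s}$ norm, so the ``dual characterization against the finite-dimensional test space'' cannot rescue the bound. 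Worse, the relation $\sigma_k\sim\lambda_k^{-s}$ is a two-sided \emph{polynomial} bound on the singular values, i.e.\ much stronger than the exponential lower bound you are trying to prove — the step is circular — and it is false in the intended applications, where the $\sigma_k$ do decay exponentially (that is precisely why only an exponential lower bound is claimed). A small additional slip: on the span of the first $j$ singular vectors one has $\|A^*v\|\geq\sigma_j\|v\|$, not $\leq$; the ``$\leq$ for some $v$ in any $j$-dimensional subspace'' statement (as in \eqref{f_w_property}) is the instability direction and is not what is needed here.

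The fix, which is the paper's route, is to swap which object is unknown: instead of trying to control the $H^{-s}$ norm on the unknown subspace $W_j$, take the \emph{a priori known} subspace $V_j=\spa\{\varphi_1,\dots,\varphi_j\}$ spanned by the first $j$ Dirichlet eigenfunctions of the Laplacian on $\Omega$, where the Weyl law gives $\|\psi\|_{L^2(\Omega)}/\|\psi\|_{H^{-s}(\Omega)}\lesssim j^{s}$ for every $\psi\in V_j$. The max--min characterization of the eigenvalues of $AA^*$ then gives
\[
\sigma_j^2\;\geq\;\inf_{\psi\in V_j,\ \|\psi\|_{L^2(\Omega)}=1}(\psi,AA^*\psi)_{L^2(\Omega)}
=\inf_{\psi\in V_j,\ \|\psi\|_{L^2(\Omega)}=1}\|A^*\psi\|_{H^s_{\overline{W}}}^2,
\]
and applying \eqref{eq:quant} to each such $\psi$ yields $\|A^*\psi\|_{H^s_{\overline{W}}}\geq C^{-1}e^{-C(\|\psi\|_{L^2}/\|\psi\|_{H^{-s}})^{\mu}}\geq c\,e^{-Cj^{s\mu}}$, which is the claim. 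So your overall scheme (max--min plus feeding a well-chosen finite-dimensional subspace into \eqref{eq:quant}) is the right one, but the test subspace must be a spectrally explicit one, not the singular-vector subspace of $A$; with that substitution the implicit inequality $\sigma_j\geq c\,e^{-C'\sigma_j^{-\mu/s}}$ and the unwinding step in your third paragraph become unnecessary.
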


\begin{proof}
Let $\{\varphi_j\}_{j\in \N}$ denote the Dirichlet eigenfunctions of the Laplacian on $\Omega$; note that they form a complete orthonormal system in $L^2(\Omega)$. Then,
\begin{align}
\label{eq:Lapl}
\frac{\|\varphi_j\|_{L^2(\Omega)}}{\|\varphi_j\|_{H^{-s}(\Omega)}} \sim j^{s}.
\end{align}
Using a max-min principle for self-adjoint operators with lower bound, 
we have that 
\begin{align*}
\sigma_j^2 &= \sup\limits_{\psi_1,\dots, \psi_j} \inf \limits_{\psi \in \spa\{\psi_1,\dots, \psi_j \}, \|\psi\|_{L^2(\Omega)}=1} (\psi, AA^{\ast} \psi)_{L^2(\Omega)}\\
&\geq  \inf\limits_{\psi \in \spa\{\varphi_1,\dots, \varphi_j \}, \|\psi\|_{L^2(\Omega)}=1} (\psi, AA^{\ast} \psi)_{L^2(\Omega)}.
\end{align*}
Here the functions $\varphi_j$ denote the eigenfunctions of the Dirichlet Laplacian. Thus, using the quantitative unique continuation result \eqref{eq:quant} together with \eqref{eq:Lapl}, we obtain
\begin{align*}
\sigma_j^2 
&\geq  \inf\limits_{\psi \in \spa\{\varphi_1,\dots, \varphi_j \}, \|\psi\|_{L^2(\Omega)}=1} (\psi, AA^{\ast} \psi)_{L^2(\Omega)}\\
&= \inf\limits_{\psi \in \spa\{\varphi_1,\dots, \varphi_j \}, \|\psi\|_{L^2(\Omega)}=1} (A^{\ast}\psi, A^{\ast} \psi)_{H^s_{\overline{W}}}\\
&\geq \inf\limits_{\psi \in \spa\{\varphi_1,\dots, \varphi_j \}, \|\psi\|_{L^2(\Omega)}=1} \|\psi\|_{L^2(\Omega)} 
e^{-C \left( \frac{\|\psi\|_{L^2(\Omega)}}{\|\psi\|_{H^{-s}(\Omega)}} \right)^{\mu}}\\
&\geq C e^{-C j^{s\mu}}.
\end{align*}
This concludes the proof.
\end{proof}

\section{Gevrey wave front sets and oscillatory integrals}
\label{sec:Gevrey}

In this section, we recall the basic definitions of Gevrey wave front sets, Gevrey pseudodifferential operators and of the asscoiated oscillatory integrals. More detailed information on these results (including the proofs) can be found in \cite{Rodino_book}. The results from this section are mainly used in Section \ref{sec:micro_smooth_1} on the microlocal smoothing properties.

\subsection{Gevrey wave front sets and smoothing operators}
\label{sec:analytic}

We will use the definitions from Section \ref{sec:Gev_def}. Let $U \subset \R^n$ be open. It is possible to endow the space $G^{\sigma}(U)$ with a Fr{\'e}chet topology. More precisely, for $f\in G^{\sigma}(U)$ and a sequence $(f_k)_{k=1}^{\infty}$ we have $f_k \rightarrow f$ if for every $K\subset U$ compact, there exists a sequence $\eps_k \rightarrow 0$, $\eps_k>0$, such that for some $C_K>0$
\begin{align*}
\sup\limits_{x\in K}|\partial^{\alpha}(f_k - f)(x)| \leq \eps_k C_K^{|\alpha|+1}|\alpha|^{\sigma |\alpha|}, \ \mbox{ for } \alpha \in (\N \cup \{0\})^n.
\end{align*}
If $\sigma > 1$ then $G^{\sigma}_c(U)$ has a natural inductive limit topology, see \cite[Section 1.4]{Rodino_book}.

As the dual object of $G^{\sigma}(U)$, for $\sigma\geq 1$ we define $\mathcal{E}'_{\sigma}(U)=(G^{\sigma}(U))'$. For $\sigma>1$, we further set $\mathcal{D}'_{\sigma}(U)=(G^{\sigma}_c(U))'$. Elements in $\mathcal{D}'_{\sigma}(U)$ are called Gevrey ultradistributions.

Now it is possible to give meaning to the $G^{\sigma} $ wave front set:

\begin{definition}
\label{defi:WF}
Let $\sigma > 1$, $u \in \mathcal{D}_{\sigma}'(U)$ and $(x_0,\xi_0)\in \R^n \times (\R^n\setminus \{0\})$. Then, $(x_0,\xi_0)$ is not in $WF_{G,\sigma}(u)$ if there exists $\varphi \in G^{\sigma}_c(U)$ with $\varphi(x_0) \neq 0$ and an open cone $\mathcal{C} \subset \R^n \setminus \{0\}$ with $\xi_0 \in \mathcal{C}$ and a constant $C>0$ such that
\begin{align*}
|\F(\varphi u)(\xi)| \leq C(CN)^N \langle \xi \rangle^{-N/\sigma} \mbox{ for all } \xi \in \mathcal{C}, \ N \in \N\cup \{0\}.
\end{align*} 
\end{definition}

With this in hand, we recall the notion of $G^{\sigma}$ symbols and their associated pseudodifferential operators (but will mainly rely on consideration for wave front sets in our arguments). The $G^{\sigma}$ symbols can be thought of as the analogue of classical $C^{\infty}$ symbols, satisfying additional quantitative estimates.

\begin{definition}[Definition 3.3.1 in \cite{Rodino_book}]
Let $U \subset \R^n$ be open and $\sigma \geq 1$. We define the symbol class $S^{m}_{\sigma}(U)$ to be the set of all $p(x,\xi)\in C^{\infty}(U \times \mR^n)$ such that for all $K \subset U$ compact there exist constants $C_K,B>0$ such that
\begin{align*}
|\partial^{\alpha}_x \partial^{\beta}_{\xi} p(x,\xi)| \leq C_K^{|\alpha|+|\beta|+1} |\alpha|^{\sigma |\alpha|} |\beta|^{ |\beta|} \langle \xi \rangle^{m-|\beta|}, \ \alpha, \beta \in (\N\cup \{0\})^n \mbox{ and } \langle \xi \rangle \geq B|\beta|^{\sigma}.
\end{align*}
\end{definition}

Given a symbol $p(x,\xi)\in S^m_{\sigma}(U)$, we define an associated pseudodifferential operator $p(x,D)$: For $u\in C_c^{\infty}(U)$ we set
\begin{align*}
p(x,D) u(x):= \int\limits_{\R^n} \int\limits_{\R^n} e^{i \xi \cdot (x-y)} p(x,\xi )u(y) d \xi  dy,
\end{align*}
where the integral is interpreted as an oscillatory integral.
We denote the set of these pseudodifferential operators by $\Psi^m_{\sigma}(U)$ and define $\Psi^{-\infty}_{\sigma}(U):= \bigcap\limits_{m \in \R } \Psi^{m}_{\sigma}(U)$. The operators in the class $\Psi^{-\infty}_{\sigma}(U)$ are $\sigma$-smoothing in the sense that they extend as maps from $\mathcal{E}'_{\sigma}(U)$ to $G^{\sigma}(U)$ (see \cite[Proposition 3.2.11]{Rodino_book}).
We remark that as in the $C^{\infty}$ setting the Schwartz kernel theorem is available which can be used to investigate linear operators and their regularity properties. In particular, $\sigma$-smoothing operators correspond to operators with kernels in $G^{\sigma}(U \times U)$ (see \cite[Chapter 1.5]{Rodino_book}).

Pseudodifferential operators in the class $\Psi^m_{\sigma}(U)$ are continuous as maps between $G_c^{\sigma}(U)$ and $G^{\sigma}(U)$ (see Theorem 3.2.3 in \cite{Rodino_book}) and also between the usual Sobolev classes (see for instance Theorem 3.1 in \cite{HuaRodino}), and they satisfy the usual composition and adjoint formulas (see for instance Theorems 3.4.12 and 3.4.13 in \cite{Rodino_book}).

\subsection{Oscillatory integrals and their wave front sets}
\label{sec:Gev_osc}

As a useful result for our applications, we observe the following estimate for the (analytic or Gevrey) wave front set of oscillatory integrals. We also refer to \cite{CZ} for a thorough discussion of oscillatory integrals in the Gevrey setting.

\begin{proposition}
\label{prop:analytic_FIO}
Let $k,n \in \N$ and let $a(x,\theta)\in S^m(\R^n, \R^k)$.
Let 
\begin{align*}
I_{\phi}(x) = \int\limits_{\R^k} e^{i\phi(x,\theta)}a(x,\theta)d\theta
\end{align*}
be an oscillatory integral with an analytic in ($x,\theta$), one-homogeneous (in $\theta$) phase $\phi$ and an analytic amplitude $a$ in $(x,\theta)$. Then,
\begin{align*}
WF_A(I_{\phi}) \subset \{(x,\phi_x'(x,\theta)): \ (x,\theta)\in \Omega \times \R^k \mbox{ and } \phi'_{\theta}(x,\theta)=0\}=:\mathcal{C}_{\phi},
\end{align*}
where $\Omega \subset \R^n$ is an open set. 
An analogous result holds true if analyticity is replaced by $\sigma$-Gevrey regularity, $\sigma>1$.
\end{proposition}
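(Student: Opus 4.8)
The plan is to establish that $(x_0,\xi_0)\notin WF_A(I_\phi)$ (resp.\ $\notin WF_{G,\sigma}(I_\phi)$) whenever $(x_0,\xi_0)\notin\mathcal C_\phi$, by the classical non‑stationary phase / iterated integration by parts argument adapted to the analytic and Gevrey categories; for $\sigma>1$ one may alternatively reduce to the treatment in \cite{Rodino_book,CZ}. First I would fix a small open conic neighbourhood $\Gamma\ni\xi_0$ and an analytic (resp.\ $G^{\sigma}$) cutoff $\chi$ with $\chi(x_0)\neq0$ supported in a small ball $B$ about $x_0$, and reduce the claim to the bound
\[
\bigl|\widehat{\chi I_\phi}(\xi)\bigr|=\Bigl|\iint e^{i\Psi(x,\theta,\xi)}\,\chi(x)a(x,\theta)\,dx\,d\theta\Bigr|\le C e^{-c\langle\xi\rangle}\quad(\text{resp. } Ce^{-c\langle\xi\rangle^{1/\sigma}}),\qquad\xi\in\Gamma,
\]
where $\Psi(x,\theta,\xi)=\phi(x,\theta)-x\cdot\xi$; this is the Fourier characterisation of $WF_A$ (resp.\ Definition~\ref{defi:WF}). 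The double integral is a priori oscillatory and would be regularised by inserting $\rho(\varepsilon\theta)$ with $\rho\in C_c^{\infty}$, $\rho(0)=1$, the factor being removed after the integrations by parts below.

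The geometric input is a lower bound for $\nabla\Psi$ off $\mathcal C_\phi$. Using that $(x_0,\xi_0)\notin\mathcal C_\phi$, the homogeneity of $\phi$ in $\theta$ (so $\nabla_\theta\phi$ is homogeneous of degree $0$ and $\nabla_x\phi$ of degree $1$), and — as is implicit in the notion of a phase function — the non‑vanishing of $\nabla_{x,\theta}\phi$ on $\supp a$, I would show that after shrinking $B$ and $\Gamma$ there is $c>0$ with
\[
|\nabla_x\Psi(x,\theta,\xi)|^2+|\theta|^2|\nabla_\theta\Psi(x,\theta,\xi)|^2\;\ge\;c\,(|\theta|+|\xi|)^2,\qquad x\in B,\ \theta\in\R^k,\ \xi\in\Gamma.
\]
For $|\theta|$ large one uses ellipticity of $\phi$; for $|\theta|$ bounded, $\nabla_\theta\phi=0$ forces $|\nabla_x\phi-\xi|\gtrsim|\xi|$ since $(x_0,\xi_0)\notin\mathcal C_\phi$. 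This is precisely the region where $\chi a$ is supported.

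Next I would introduce the transport operator
\[
L\;=\;\frac{1}{i\bigl(|\nabla_x\Psi|^2+|\theta|^2|\nabla_\theta\Psi|^2\bigr)}\Bigl(\overline{\nabla_x\Psi}\cdot\nabla_x\;+\;|\theta|^2\,\overline{\nabla_\theta\Psi}\cdot\nabla_\theta\Bigr),
\]
which satisfies $Le^{i\Psi}=e^{i\Psi}$, integrate by parts $N$ times to obtain $\widehat{\chi I_\phi}(\xi)=\iint e^{i\Psi}\,({}^tL)^N(\chi a)\,dx\,d\theta$, and estimate $({}^tL)^N(\chi a)$. Since the analytic (resp.\ $G^{\sigma}$) functions form an algebra stable under division by a factor bounded below and under the substitutions occurring in $L$, a standard induction on $N$ — using the lower bound above to control the denominators and Cauchy (resp.\ Gevrey) estimates for $\phi$, $a$ and $\chi$ — yields $|({}^tL)^N(\chi a)|\le C^{N+1}(N!)^{\sigma}(|\theta|+|\xi|)^{-N}\langle\theta\rangle^{m}$ uniformly for $x\in B$, $\xi\in\Gamma$ (with $\sigma=1$ in the analytic case). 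Taking $N$ larger than $m+k$ makes the $\theta$‑integral converge at the cost of a factor $\langle\xi\rangle^{|m|+k+1}$, and optimising $N\sim\delta\langle\xi\rangle$ (resp.\ $N\sim\delta\langle\xi\rangle^{1/\sigma}$) gives the exponential decay, hence the assertion.

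I expect the main obstacle to be the book‑keeping in the iterated estimate for $({}^tL)^N(\chi a)$: each application of ${}^tL$ differentiates $\Psi$ and the denominator, producing a proliferating sum of terms, and one must check that the number of terms and the sizes of their coefficients combine into the clean bound $C^{N}(N!)^{\sigma}$ with constants uniform in $x\in B$ and $\xi\in\Gamma$. This is handled by the usual device of verifying that ${}^tL$ maps a scale of Banach spaces of analytic/Gevrey functions — with radii of convergence depending on $|\theta|+|\xi|$ — into itself with controlled norm growth; the homogeneity of $\phi$ is what makes both the radii and the denominator lower bound scale correctly in $|\theta|$ and $|\xi|$. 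The oscillatory‑integral regularisation and the case $\sigma>1$ are routine, and if desired can simply be cited from \cite{Rodino_book,CZ}.
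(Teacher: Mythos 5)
Your overall strategy (non-stationary phase off $\mathcal{C}_\phi$ plus iterated integration by parts with factorial bookkeeping) is the same as the paper's, and for the Gevrey case $\sigma>1$ your reduction and estimates are essentially sound — there your single weighted operator $L$ with the lower bound $|\nabla_x\Psi|^2+|\theta|^2|\nabla_\theta\Psi|^2\gtrsim(|\theta|+|\xi|)^2$ replaces the paper's dyadic decomposition in $\theta$ with a rescaled phase and a directional derivative $\nu\cdot\nabla_{x,\theta}$, which is a legitimate variant. However, there is a genuine gap in the analytic case, which is the case the statement is actually about ($WF_A$): you "fix an analytic cutoff $\chi$ with $\chi(x_0)\neq 0$ supported in a small ball" and reduce to a single exponential-decay bound $|\widehat{\chi I_\phi}(\xi)|\le Ce^{-c\langle\xi\rangle}$ on a cone. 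No such cutoff exists: $G^1_c(U)=\emptyset$ (the paper points this out explicitly), and with any non-analytic compactly supported $\chi$ the exponential decay cannot hold (already for $I_\phi\equiv 1$ one only gets the decay dictated by the regularity class of $\chi$), so this is not a valid characterization of $WF_A$. The same defect propagates into your key iterated estimate: the claimed bound $|({}^tL)^N(\chi a)|\le C^{N+1}N!\,(|\theta|+|\xi|)^{-N}\langle\theta\rangle^m$ "with $\sigma=1$" requires Cauchy estimates for \emph{all} derivatives of $\chi$, which again forces $\chi$ analytic.

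The standard repair — and what the paper does — is to replace the single cutoff by the Andersson/Treves sequences of ``analytic up to order $N$'' cutoffs $\chi_{x_0,N}$ of Lemma \ref{lem:spatial_cutoff}, satisfying $|D^{\alpha}\chi_{x_0,N}|\le (C N/d)^{|\alpha|}$ for $|\alpha|\le N$, and to use the corresponding characterization of $WF_A$: one proves, for every $N$, the bound
\begin{align*}
\Bigl|\iint e^{i(\phi(x,\theta)-x\cdot\xi)}\chi_{x_0,N}(x)\,a(x,\theta)\,d\theta\,dx\Bigr|\le (CN)^N(1+|\xi|)^{-N},\qquad \xi\in\Gamma,
\end{align*}
performing exactly $N$ integrations by parts with an $N$-dependent cutoff, rather than a single exponential estimate obtained by optimizing over $N$ with a fixed cutoff. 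With this modification your argument goes through (and for $\sigma>1$ your use of a genuine $G^{\sigma}_c$ cutoff together with Definition \ref{defi:WF}, optimizing $N\sim\delta\langle\xi\rangle^{1/\sigma}$, is fine as written). The remaining points you flag — the regularization $\rho(\varepsilon\theta)$, the uniform lower bound for $\nabla\Psi$ in the transition region $|\theta|\sim|\xi|$ via homogeneity and compactness, and the combinatorics of $({}^tL)^N$ — are indeed routine once the cutoff issue is handled.
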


This result is standard in the smooth setting, but becomes particularly useful for our purposes in the context of the Gevrey microlocal smoothing properties of Fourier integral operators such as for instance the Radon transform. 

In the setting of the spaces $G^{\sigma}$ with $\sigma>1$ non-trivial, there exist smooth cut-off functions, e.g. $f(x)= \exp(-|x|^{-\frac{1}{\sigma-1}})$. For $\sigma=1$ only families of nearly analytic cut-off functions can be obtained:

\begin{lemma}[Lemma 1.1 in Chapter 5 in \cite{Treves}]
\label{lem:spatial_cutoff}
There exists a constant $C_{\ast}>0$ depending only on the space dimension $n$ such that for all $U \subset \R^n$ open, for all $d>0$ and $N>0$ there exists a family $\psi_N \in C^{\infty}(\R^n)$ with the property that
\begin{itemize}
\item $0\leq \psi_N \leq 1$,
\item $\psi_N = 1$ in $U$, and $\psi_N(x)=0$ if $\dist(x,U)>d$,
\item  $|D^{\alpha} \psi_N| \leq \left( \frac{C_{\ast} N}{d} \right)^{|\alpha|}$ for all $\alpha \in \Z^n$, $|\alpha | \leq N$.
\end{itemize}
\end{lemma}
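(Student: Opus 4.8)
\textbf{Proof plan for the nearly-analytic cutoff family (Lemma \ref{lem:spatial_cutoff}).}

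The plan is to build $\psi_N$ by convolving the indicator function of a $d/2$-neighbourhood of $U$ with an $N$-fold convolution product of scaled bump functions, so that each derivative lands on at most $N$ factors and produces at most $N$ factors of a constant over $d$. Concretely, I would fix a single radial $\phi \in C^\infty_c(B_1(0))$ with $\phi \ge 0$ and $\int_{\R^n} \phi = 1$, and for a scale parameter $s>0$ set $\phi_s(x) = s^{-n}\phi(x/s)$. Given $N$ and $d$, choose $s := d/(2N)$, let $U_1 := \{x : \dist(x,U) < d/2\}$, and define
\begin{align*}
\psi_N := \chi_{U_1} \ast \underbrace{\phi_s \ast \cdots \ast \phi_s}_{N \text{ times}}.
\end{align*}
Since each $\phi_s$ is supported in $B_s(0)$, the $N$-fold product $\Phi := \phi_s \ast \cdots \ast \phi_s$ is supported in $B_{Ns}(0) = B_{d/2}(0)$ and satisfies $\int \Phi = 1$. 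Thus $\psi_N$ is smooth, $0 \le \psi_N \le 1$, it equals $1$ on the set of points whose distance to $U_1^c$ exceeds $d/2$ (which contains $U$, since $\dist(U, U_1^c) \ge d/2$), and it vanishes outside $\{x : \dist(x,U_1) < d/2\} \subset \{x : \dist(x,U) < d\}$. This establishes the first two bullet points.

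For the derivative bounds, the key observation is how to distribute the derivatives across the $N$ mollifying factors. For a multi-index $\alpha$ with $|\alpha| \le N$, write $\partial^\alpha = \partial^{\alpha_1}\cdots\partial^{\alpha_{|\alpha|}}$ as a product of $|\alpha|$ first-order derivatives, and move each first-order derivative onto a distinct factor $\phi_s$ in the convolution $\Phi$ (there are $|\alpha| \le N$ of them, so distinct factors suffice):
\begin{align*}
\partial^\alpha \psi_N = \chi_{U_1} \ast \big( (\partial^{e_{i_1}}\phi_s) \ast \cdots \ast (\partial^{e_{i_{|\alpha|}}}\phi_s) \ast \phi_s \ast \cdots \ast \phi_s \big).
\end{align*}
Now $\|\partial^{e_i}\phi_s\|_{L^1(\R^n)} = s^{-1}\|\partial^{e_i}\phi\|_{L^1(\R^n)} \le s^{-1} C_0$ where $C_0 := \max_i \|\partial^{e_i}\phi\|_{L^1}$ depends only on $\phi$ (hence on $n$), while $\|\phi_s\|_{L^1} = 1$ and $\|\chi_{U_1}\|_{L^\infty} \le 1$. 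Applying Young's inequality for convolutions ($\|f \ast g\|_{L^\infty} \le \|f\|_{L^\infty}\|g\|_{L^1}$ repeatedly) gives
\begin{align*}
\|\partial^\alpha \psi_N\|_{L^\infty(\R^n)} \le \|\chi_{U_1}\|_{L^\infty} \prod_{k=1}^{|\alpha|} \|\partial^{e_{i_k}}\phi_s\|_{L^1} \le (C_0 s^{-1})^{|\alpha|} = \Big(\frac{2 C_0 N}{d}\Big)^{|\alpha|}.
\end{align*}
Setting $C_\ast := 2C_0$ (which depends only on $n$ through the choice of $\phi$) yields $|D^\alpha \psi_N| \le (C_\ast N / d)^{|\alpha|}$ for all $|\alpha| \le N$, which is the third bullet.

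The only point requiring a little care — and the one I would single out as the main technical obstacle — is the bookkeeping that guarantees $\psi_N \equiv 1$ on $U$ and $\supp \psi_N \subset \{\dist(\cdot,U) \le d\}$ simultaneously with the support radius of $\Phi$ being exactly $d/2$; this forces the specific choice $s = d/(2N)$ and the intermediate neighbourhood $U_1$ of radius $d/2$, and one must check that convolving $\chi_{U_1}$ (rather than $\chi_U$) with a probability density supported in $B_{d/2}$ indeed produces a function that is identically $1$ on $U$. This is immediate once one notes that $\psi_N(x) = \int \Phi(y)\,\chi_{U_1}(x-y)\,dy = 1$ whenever $B_{d/2}(x) \subset U_1$, and $B_{d/2}(x) \subset U_1$ holds for every $x \in U$ by definition of $U_1$. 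Everything else is routine convolution estimates, so no further obstacle arises.
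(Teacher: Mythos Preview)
The paper does not give its own proof of this lemma; it merely cites Lemma 1.1 in Chapter 5 of Tr\`eves and then uses the result as a black box in the proof of Proposition \ref{prop:analytic_FIO}. Your argument is correct and is in fact the standard construction found in the cited reference (and in H\"ormander, Theorem 1.4.2): one convolves the indicator of a $d/2$-fattening of $U$ with an $N$-fold self-convolution of a scaled mollifier, distributing at most one derivative per factor. So there is nothing to compare against in the paper itself, and your proposal reproduces the classical proof faithfully.
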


With Lemma \ref{lem:spatial_cutoff} in hand, we provide the proof of Proposition \ref{prop:analytic_FIO}:

\begin{proof}[Proof of Proposition \ref{prop:analytic_FIO}]
The proof follows as, for example, in \cite[Theorem 0.5.1]{Sogge} where all cut-offs are replaced by the ones from Lemma \ref{lem:spatial_cutoff}. As we used this result in Section \ref{sec:micro_smooth_1}, we present the details of the argument (but only consider the analytic case since the Gevrey case closely mirrors the smooth setting with smooth cut-off functions being replaced by Gevrey cut-off functions).

The well-definedness of the oscillatory integral follows from the standard arguments and does not use analyticity. Hence, we only discuss the estimates for the wave front set. To this end, we seek to prove that
\begin{align}
\label{eq:ana_WF_set}
\begin{split}
|I(\xi)| 
&:= \left|\int\int e^{i(\phi(x,\theta)- x\cdot \xi)} \chi_{x_0,N}(x) a(x,\theta) d\theta dx \right| \\
&\leq (C N)^N (1+|\xi|)^{-N}
\end{split}
\end{align}
for every $N\in \N$ and $(x,\xi) \in \R^n \times \R^k$ such that $(x,\xi) \notin \mathcal{C}_{\phi}$. Here $\chi_{x_0,N}$ denotes a cut-off function as in Lemma \ref{lem:spatial_cutoff} which localizes around a point $x_0 \in \R^n$. In order to infer \eqref{eq:ana_WF_set}, we localize further by using a partition of unity consisting of the cut-off functions $\chi_{N,j}(\theta) = \chi_{N,1}(2^{-j}|\theta|)$ with $\supp(\chi_{N,1}(x)) \subset \{2^{-1}\leq |\theta| \leq 2\}$ if $j\geq 1$, and $j\in \N$ and $\supp(\chi_{N,0}) \subset B_{4}$. Here the functions $\chi_{N,1}$ are as in Lemma \ref{lem:spatial_cutoff} (which ensures that also the functions $\chi_{N,j}$ satisfy suitable Cauchy bounds for derivatives up to order $N$).
Then, 
\begin{align*}
|I(\xi)|
& \leq \sum\limits_{j=0}^{\infty} \left| \int\int e^{i(\phi(x,\theta)-x\cdot \xi)} \chi_{x_0,N}(x)\chi_{N,j}(\theta)a(x,\theta) d\theta dx \right|.
\end{align*}
Hence, in order to infer \eqref{eq:ana_WF_set}, after changing coordinates, it suffices to bound
\begin{align}
\label{eq:stationary_phase_est}
\begin{split}
&\left| \int\int e^{i(2^j\phi(x,\theta)-x\cdot \xi)} \chi_{x_0,N}(x)\chi_{N,1}(\theta) a(x, 2^j\theta) d\theta dx \right|\\
&\leq (CN)^N (2^{j} + |\xi|)^{-N+m} \mbox{ for all } N \in \N.
\end{split}
\end{align}
To this end, we use the method of stationary phase with the large parameter $2^j +|\xi|$ in combination with the  analyticity of $\phi$ and $a$: As in the non-analytic case, we observe that the phase function 
\begin{align*}
\Phi(x,\theta):= \frac{2^j\phi(x,\theta)-x \cdot \xi}{2^{j} + |\xi|}
\end{align*}
has the property that $|\nabla_{x,\theta}\Phi|\geq c >0$ if $(x,\theta) \notin \mathcal{C}_{\phi}$. In particular, if $\chi_{x_0,N}$ localizes to a sufficiently small neighbourhood of $x_0$, there is a direction $\nu \in S^{n+k-1}$ such that $|\nu\cdot \nabla_{x,\theta}\Phi | \geq c >0$ in $\supp(\chi_{x_0,N})$. As a consequence, we apply the method of stationary phase. After a change of coordinates we may assume that $\nu$ either points into the $x_1$ or the $\theta_1$ direction. Using the analyticity of $a$, $\Phi$, $\phi$ (and the fact that $a$ is a symbol of order $m$), we obtain the estimates
\begin{align}
\label{eq:analytic_1}
\begin{split}
|\nabla^{\alpha}_{x,\theta} a(x,2^j\theta)|, |\nabla^{\alpha}_{x,\theta} \Phi(x,\theta)|,
|\nabla^{\alpha}_{x,\theta}\Phi(x,\theta)| \leq (C|\alpha|)^{|\alpha|}(1+2^j|\theta|)^m \mbox{ for any } \alpha \in \N^n.
\end{split}
\end{align}
Moreover, the estimates for the cut-off functions from Lemma \ref{lem:spatial_cutoff} yield
\begin{align}
\label{eq:analytic_2}
|\nabla^{\alpha}_{\theta} \chi_{N,1}(\theta)| \leq \left(CN \right)^{N} \mbox{ for all } \alpha \in \N^n \mbox{ and }  |\alpha| \leq N.
\end{align} 
Combining \eqref{eq:analytic_1}, \eqref{eq:analytic_2} and setting $L(x,D) := \frac{1}{i (2^j+|\xi|) (\nu\cdot \nabla_{x,\theta} \Phi) } (\nu \cdot \nabla_{x,\theta} )$, we hence arrive at 
\begin{align}
\label{eq:stationary_phase_est_a}
\begin{split}
&\left| \int\int e^{i 2^j(\phi(x,\theta)-x\cdot \xi)} (L(x,D)^{\ast})^{N}(\chi_{x_0,N}(x) \chi_{N,1}(\theta) a(x, 2^j\theta)) d\theta dx \right|\\
&\leq (CN)^N (2^j + |\xi|)^{-N+m} \mbox{ for all } N \in \N.
\end{split}
\end{align}
Summing up \eqref{eq:stationary_phase_est_a} over $j\in \N$  implies the bound \eqref{eq:ana_WF_set}.
\end{proof}

\end{appendix}

\section*{Remarks on the revised version}
This is a revised version of the article ``On instability mechanisms for inverse problems'' Ars Inveniendi Analytica (2021), Paper No. 7, 93 pp by the same authors. In this revised version we have corrected several typographical errors, reworded the assumptions in Theorem 1.2 to be parallel to \cite{Mandache}, added some small additional explanations in its proof and have corrected the assumption on the closed sets in Theorems 4.1 and 4.2. to exclude singletons.


\bibliographystyle{alpha}

\begin{thebibliography}{CNYY09}
\bibitem[Al88]{Alessandrini}
G.\ Alessandrini, \textit{{Stable determination of conductivity by boundary measurements}}, Appl. Anal. {\bf 27} (1988), 153--172.


\bibitem[Al07]{A07}
G.\ Alessandrini.
\newblock Open issues of stability for the inverse conductivity problem.
\newblock {\em Journal of Inverse and Ill-posed Problems}, 15(5):451--460, 2007.

\bibitem[AK12]{Alessandrini_Kim}
G.\ Alessandrini, K.\ Kim, \textit{{Single-logarithmic stability for the Calder{\'o}n problem with local data}}, Journal of Inverse and Ill-Posed Problems 20.4 (2012): 389-400.

\bibitem[ARRV09]{AlessandriniRondiRossetVessella} G.\ Alessandrini, L.\ Rondi, E.\ Rosset, S.\ Vessella, \textit{{The stability for the Cauchy problem for elliptic equations}}, Inverse Problems {\bf 25} (2009), 123004.

\bibitem[AV05]{AlessandriniVessella}
G.\ Alessandrini, S.\ Vessella, Lipschitz stability for the inverse conductivity problem, Adv. Appl. Math., Vol. 35 (2005), 207-241.


\bibitem[AKK+04]{AKKLT}
M.\ Anderson, A.\ Katsuda, Y.\ Kurylev, M.\ Lassas, M.\ Taylor, \textit{{Boundary regularity for the Ricci equation, geometric convergence, and Gel'fand's inverse boundary problem}}, Inventiones Math. 158 (2004), 261--32.

\bibitem[AB18]{AnderssonBoman}
J.\ Andersson, J.\ Boman, Stability estimates for the local Radon transform, Inverse Problems 34 (2018), 034004.

\bibitem[AS20]{AS20}
Y. \ M. \ Assylbekov, P.\ Stefanov, \textit{{Sharp stability estimate for the geodesic ray transform}}, Inverse problems {\bf 36}(2), 2020.

\bibitem[ABES19]{ABES19}
P.\ Auscher, S.\ Bortz, M.\ Egert, O.\ Saari.
\newblock {\em On regularity of weak solutions to linear parabolic systems with
  measurable coefficients}, 
\newblock J. Math. Pures Appl.,
  121:216--243, 2019.
  
  

\bibitem[BNR18]{BNR}
L. Bandara, M. Nursultanov, J. Rowlett, \textit{{Eigenvalue asymptotics for weighted Laplace equations on rough Riemannian manifolds with boundary}}, arXiv:1811.08217.s  
  
  
\bibitem[BLR92]{BardosLebeauRauch}
C. \ Bardos, G. \ Lebeau, J.\ Rauch,
\textit{{Sharp sufficient conditions for the observation, control, and stabilization of waves from the boundary}}, SIAM J. Control Optim. 30(5), pp. 1024-1065, 1992.  
  
  
  
\bibitem[BH03]{BH03}
M.\ Bebendorf, W.\ Hackbusch.
\newblock Existence of $\mathcal{H}$-matrix approximants to the inverse
  {F}{E}-matrix of elliptic operators with {$L^{\infty}$}-coefficients.
\newblock {\em Numerische Mathematik}, 95(1):1--28, 2003. 

\bibitem[BD11]{BellassouedDosSantos}
M.\ Bellassoued, D.\ Dos Santos Ferreira, \textit{{Stability estimates for the anisotropic wave equation from the Dirichlet-to-Neumann map}}, Inverse Probl. Imaging {\bf 5} (2011), 745--773.


\bibitem[BDFS16]{BDHFS16}
E. \ Beretta, M. \ V. \ de Hoop, F. \ Faucher, O.\ Scherzer, 
\textit{{Inverse boundary value problem for the Helmholtz equation: quantitative conditional Lipschitz stability estimates.}} 
SIAM Journal on Mathematical Analysis 48.6 (2016): 3962-3983.



\bibitem[Be87]{Besse}
A.\ L.\ Besse, Einstein manifolds. Springer, 1987.



\bibitem[BS72]{BirmanSolomyak_nonsmooth}
M. Sh. Birman, M. Z. Solomyak, \textit{{Spectral asymptotics of nonsmooth elliptic operators. {I},
              {II}}}, Trudy Moskov. Mat. Ob\v{s}\v{c}. 27 (1972), 3--52; ibid. 28 (1973), 3--34.

\bibitem[BS77a]{BS77}
M. Sh. Birman, M. Z. Solomyak. \textit{{Estimates of singular numbers of integral operators}}, Russian Mathematical Surveys 32.1 (1977): 15.

\bibitem[BS77b]{BS_psdo1}
M. Sh. Birman, M. Z. Solomyak. \textit{{Asymptotic behavior of the spectrum of pseudodifferential operators with anisotropically homogeneous symbols}} (Russian), Vestnik Leningrad. Univ. 1977, no. 13 Mat. Meh. Astronom. vyp. 3, 13--21, 169. 

\bibitem[BS79]{BS_psdo2}
M. Sh. Birman, M. Z. Solomyak. \textit{{Asymptotic behavior of the spectrum of pseudodifferential operators with anisotropically homogeneous symbols. II}} (Russian),Vestnik Leningrad. Univ. Mat. Mekh. Astronom. 1979, vyp. 3, 5--10, 121. 


 \bibitem[BGKP19]{BGKP19}
H.\ Bolcskei, P.\ Grohs, G.\ Kutyniok, P.\ Petersen, \emph{Optimal approximation with sparsely connected deep neural networks.} SIAM Journal on Mathematics of Data Science, 1(1), 8-45, 2019.

\bibitem[BKL17]{BKL}
R.\ Bosi, Y.\ Kurylev, M.\ Lassas, \textit{{Reconstruction and stability in Gel'fand's inverse interior spectral problem}}, arXiv:1702.07937.


\bibitem[Ca80]{Calderon}
A.\ Calder{\'o}n,
\textit{{On an inverse boundary value problem}},
Seminar on Numerical Analysis and its Applications to Continuum Physics (Rio de Janeiro, 1980), pp.
65--73, Soc. Brasil. Maat., Rio de Janeiro, 1980.

\bibitem[CS90]{CS90}
B. Carl, I. Stephani,
\textit{{Entropy, compactness and the approximation of operators: Operator theoretical methods in the local theory of Banach spaces}}, 1990.

\bibitem[CDR16]{CDR}
P.\ Caro, D.\ Dos Santos Ferreira, A.\ Ruiz, Stability estimates for the Calder\'on problem with partial data, J. Diff. Eq. 260 (2016), no. 3, 2457--2489.

\bibitem[CS14]{CaroSalo}
P.\ Caro, M.\ Salo, Stability of the Calder\'on problem in admissible geometries.
Inverse Probl. Imaging 8 (2014), no. 4, 939-957.

\bibitem[CZ90]{CZ}
L. \ Cattabriga, L.\ Zanghirati,
\textit{{Fourier integral operators of infinite order on Gevrey spaces. Applications to the Cauchy problem for certain hyperbolic operators}}, Kyoto Daigaku Rigakubu Sugaku Kyoshitsu, 1990.



\bibitem[Ch90]{Chanillo}
S.\ Chanillo, \textit{{A problem in electrical prospection and a $n$-dimensional Borg-Levinson theorem}}, Proc. AMS {\bf 108} (1990), 761--767.

\bibitem[DR03]{DiCristoRondi}
M.\ Di Cristo and L.\ Rondi. Examples of exponential instability for inverse inclusion and scattering problems. Inverse Problems, 19(3):685, 2003.

\bibitem[ET08]{ET08}
D. \ E.\ Edmunds, H.\ Triebel. \textit{{Function spaces, entropy numbers, differential operators}}. Vol. 120. Cambridge University Press, 2008.

\bibitem[EHN96]{EHN}
H. Engl, M. Hanke, A. Neubauer, Regularization of inverse problems. Kluwer Academic Publishers Group, 1996.

\bibitem[EKN89]{EKN89}
H. Engl, K. Kunisch, A. Neubauer, Convergence rates for Tikhonov regularisation of non-linear ill-posed problems. Inverse problems 5(4), p.523, 1989.



\bibitem[EZ18]{EZ18}
B.\ Engquist and H.\ Zhao.
\newblock Approximate separability of the {G}reen's function of the {H}elmholtz
  equation in the high frequency limit.
\newblock {\em Communications on Pure and Applied Mathematics},
  71(11):2220--2274, 2018.


\bibitem[Fe83]{Fefferman}
C. L. \ Fefferman, \textit{{The uncertainty principle}}, Bulletin of the American Mathematical Society 9(2), pp. 129--206, 1983.

\bibitem[FCZ00]{FernandezCaraZuazua}
E. \ Fern{\'a}ndez-Cara, \ E.\ Zuazua, \textit{{The cost of approximate controllability for heat equations: the linear case}}, Advances in Differential equations 5(4-6), pp. 465-514, 2000.

\bibitem[GN21]{GN21}
E. \ Gin\'e, R. \ Nickl. \emph{Mathematical foundations of infinite-dimensional statistical models.} Cambridge University Press, 2021.

\bibitem[GRSU20]{GRSU}
T. \ Ghosh, A.\ R{\"u}land, M.\ Salo, G. \ Uhlmann, \textit{{Uniqueness and reconstruction for the fractional Caldern problem with a single measurement }},
J. Funct. Anal. 279(1), 108505, 2020.


\bibitem[Gr87]{Gramtchev}
T. V. Gramchev.
\newblock The stationary phase method in {G}evrey classes and {F}ourier integral operators on ultradistributions.
\newblock {\em Banach Center Publications}, 19(1):101--112, 1987.


\bibitem[Ha15]{Haberman_conductivity}
B.\ Haberman, \textit{{Uniqueness in Calder\'on's problem for conductivities with unbounded gradient}}, Comm. Math. Phys. {\bf 340} (2015), 639--659.

\bibitem[Ha17]{Haberman_magnetic}
B.\ Haberman, \textit{{Unique determination of a magnetic Schr\"odinger operator with unbounded magnetic potential from boundary data}}, Int. Math. Res. Not. 2018.4 (2018): 1080-1128.

\bibitem[Ha23]{Hadamard}
J.\ Hadamard, Lectures on Cauchy's problem in linear partial differential equations, Yale University Press, 1923.



\bibitem[Ho00]{H00}
T.\ Hohage, \textit{Regularization of exponentially ill-posed problems}, Numerical functional analysis and optimization 21 (3-4), pp. 439-464, 2000.




\bibitem[HU18]{HU18}
S.\ Holman, G.\ Uhlmann,
\newblock On the microlocal analysis of the geodesic X-ray transform with conjugate points.
\newblock {\em Journal of Differential Geometry}, 108(3):459--494, 2018.

\bibitem[H{\"o}71]{Hormander_FIO1}
L. \ H\"ormander, \textit{Fourier integral operators. I},
Acta mathematica, vol. 127 (1), 1971.

\bibitem[H{\"o}85]{Hoermander}
L. \ H\"ormander, \textit{The analysis of linear partial differential operators}, vols.\ I-IV. Springer-Verlag, 1983--1985.

\bibitem[HI04]{HI04}
T. \ Hrycak, V. \ Isakov, \textit{{Increased stability in the continuation of solutions to the Helmholtz equation}}, Inverse Problems 20(3), pp.697, 2004.

\bibitem[HR01]{HuaRodino}
C.\ Hua and L.\ Rodino.
\newblock Paradifferential calculus in {G}evrey classes.
\newblock {\em Journal of Mathematics of Kyoto University}, 41(1):1--31, 2001.



\bibitem[Is11]{Isaev}
M.\ I.\ Isaev, \textit{Exponential instability in the Gel'fand inverse problem on the energy intervals}, Journal of Inverse and Ill-posed Problems 19.3 (2011): 453-472.

\bibitem[Is13a]{IsaevI}
M. \ I.\ Isaev, \textit{Instabilities in the Gel'fand inverse problem at high energies}, Applicable Analysis 92.11 (2013): 2262-2274.

\bibitem[Is13b]{IsaevII}
M. \ I.\ Isaev, \textit{Exponential instability in the inverse scattering problem on the energy interval}, Functional Analysis and Its Applications 47.3 (2013): 187-194.

\bibitem[Is90]{Isakov}
V.\ Isakov.
\newblock {\em Inverse source problems}.
\newblock Number~34. American Mathematical Soc., 1990.

\bibitem[JK95]{JerisonKenig}
D.\ Jerison, C.\ E.\ Kenig, The inhomogeneous Dirichlet problem on Lipschitz domains, J. Funct. Anal. 130 (1995), 161--219.

\bibitem[Jo60]{John}
F. \ John, \textit{Continuous dependence on data for solutions of partial differential equations with a prescribed bound},
Communications on pure and applied mathematics 13.4 (1960): 551-585

\bibitem[Ka15]{Karol}
A. I. Karol', \textit{{Asymptotic behavior of the spectrum of pseudodifferential operators of variable order}}, J. Math. Sci. (N.Y.) 207 (2015), no. 2, Problems in mathematical analysis. No. 78 (Russian), 236--248.

\bibitem[Kl06]{Klibanov_parabolic}
M.\ V.\ Klibanov.
\newblock Estimates of initial conditions of parabolic equations and
  inequalities via lateral {C}auchy data.
\newblock {\em Inverse problems}, 22(2):495, 2006.


\bibitem[KT01]{KochTataru01}
H. \ Koch, D. Tataru, \textit{{Carleman estimates and unique continuation for second-order elliptic equations with nonsmooth coefficients}}, Comm. Pure Appl. Math. 54(3), pp.339-360, 2001.

\bibitem[KT09]{KochTataru}
H. \ Koch, D. Tataru, \textit{{Carleman estimates and unique continuation for second order parabolic equations with nonsmooth coefficients}}, Comm. PDE 34(4), pp.305-366, 2009.


\bibitem[KT59]{KolmogorovTikhomirov}
A.N.\ Kolmogorov, V.M.\ Tikhomirov, \textit{{$\eps$-entropy and $\eps$-capacity in functional spaces}}, Usp. Mat. Nauk {\bf 14} (1959), 3--86 (in Russian) (Engl. Transl. 1961 Am. Math. Soc. Transl. 17, 277--364).

\bibitem[K{\"o}13]{K13}
H. \ K{\"o}nig, \textit{{Eigenvalue distribution of compact operators}}, Vol. 16, Birkh{\"a}user, 2013.


\bibitem[LL18]{LaurentLeautaud}
C. \ Laurent, \ M. \ L{\'e}autaud, \textit{{Quantitative unique continuation for operators with partially analytic coefficients. Applications to approximate control for waves}}, JEMS 2018.

\bibitem[LN91]{LavineNachman}
R.~Lavine, A.~Nachman, announced in A.\ Nachman, \textit{Inverse scattering
at fixed energy}, Proceedings of the Xth Congress on Mathematical Physics, L. Schm\"udgen (Ed.), Leipzig, Germany, 1991, 434--441, Springer-Verlag. 


\bibitem[Le92]{Lebeau}
G. \ Lebeau, \textit{{Contr{\^o}le analytique I: esstimations a priori}}, Duke Math. J. 68(1), pp.1-30, 1992. 

\bibitem[LV85a]{LV85a}
H. \ A.\ Levine, S.\ Vessella, 
\textit{{Stabilization and regularization for solutions of some ill-posed problem for the wave equation}}, Math. Methods Appl. Sci 7 (1985), no.2, 202-209.


\bibitem[LV85b]{LV85b}
H. \ A.\ Levine, S.\ Vessella, 
\textit{{Stabilization and regularization for solutions of some ill-posed problems of elliptic and parabolic type}}, Rend. Circ. Mat. Palermo (2) 34 (1985), no.1, 141-160.

\bibitem[LM12a]{LionsMagenes}
J.\ L. Lions, E. Magenes. \textit{Non-homogeneous boundary value problems and applications, Vol. 1}, Springer Science and Business Media, 2012.

\bibitem[LM12b]{LionsMagenesII}
J.\ L. Lions, E. Magenes. \textit{Non-homogeneous boundary value problems and applications, Vol. 2}, Springer Science and Business Media, 2012.


\bibitem[MP85]{MP85}
R. \ Magnanini, G. \ Papi,
\textit{{An inverse problem for the Helmholtz equation}}, Inverse Problems 1 (1985), no. 4, 357-370.


\bibitem[Ma01]{Mandache}
N.\ Mandache, \textit{{Exponential instability in an inverse problem for the Schr\"odinger equation}}, Inverse Problems, 17(5):1435, 2001.

\bibitem[MP03]{MP03}
P. \ Math\'e, S.\ V.\ Pereverzev, \textit{{Geometry of linear ill-posed problems in variable Hilbert scales.}}, Inverse problems 19.3 (2003): 789.


\bibitem[Mc00]{McLean}
W. \ McLean, \textit{{Strongly elliptic systems and boundary integral equations}}, Cambridge University Press, 2000.




\bibitem[Me]{Melrose}
R.\ Melrose, \textit{{Functional Analysis}}, lecture notes, found at \url{https://math.mit.edu/~rbm/18-102-S14/FunctAnal.pdf}.

\bibitem[Me63]{M63}
N. G. Meyers,
\newblock An {$ L^{p} $}-estimate for the gradient of solutions of second order
  elliptic divergence equations.
\newblock {\em Annali della Scuola Normale Superiore di Pisa-Classe di
  Scienze}, 17(3):189--206, 1963.

\bibitem[Mi14]{Mizohata} 
S.\ Mizohata, \textit{On the Cauchy problem}, Vol. 3. Academic Press, 2014.

\bibitem[Mo14]{Montalto}
C.\ Montalto, Stable determination of a simple metric, a covector field and a potential from the hyperbolic Dirichlet-to-Neumann map, Comm. PDE 39(1):120--145, 2014.



\bibitem[MSU15]{MSU15}
F.\ Monard, P.\ Stefanov, G.\ Uhlmann, \textit{{The geodesic ray transform on Riemannian surfaces with conjugate points}}, Comm. Math. Phys. {\bf 337} (2015), no. 3, 1491--1513.



\bibitem[Na88]{Nachman}
A.\ Nachman,
\textit{{Reconstructions from boundary measurements}},
Ann. of Math. {\bf 128} (1988), 531--576.

\bibitem[Na01]{Natterer_book}
F.\ Natterer, The mathematics of computerized tomography. SIAM Classics in Applied Mathematics 32, 2001.

\bibitem[No88]{Novikov}
R.\ G.\ Novikov, \textit{{Multidimensional inverse spectral problem for the equation {$\Delta \psi +(v (x)+u (x)) \psi= 0$}}}, Functional Analysis and Its Applications 22(4), pp.263-272, 1988.


\bibitem[PSU12]{PSU_GAFA} G.P.\ Paternain, M.\ Salo, G.\ Uhlmann, {\it The attenuated ray transform for connections and Higgs fields},  Geom. Funct. Anal. {\bf 22} (2012), 1460--1489.

\bibitem[PSU20+]{PSU20+} G.P.\ Paternain, M.\ Salo, G.\ Uhlmann, {\it Geometric inverse problems in 2D}, textbook in preparation.

\bibitem[PU05]{PestovUhlmann} L.\ Pestov, G.\ Uhlmann,
{\it Two dimensional compact simple Riemannian manifolds are
boundary distance rigid},
Ann. of Math. {\bf 161} (2005), 1089--1106.

\bibitem[Ph04]{Phung}
K.-D.\ Phung, 
\textit{{Note on the cost of the approximate controllability for the heat equation with potential}}
J. Math. Anal. Appl. {\bf 295} (2004), 527--538.



\bibitem[Qu93]{Quinto}
E.\ T.\ Quinto, \textit{{Singularities of the X-ray transform and limited data tomography in $\R^{2}$ and $\R^{3}$}},
SIAM Journal on Mathematical Analysis
{\bf 24} (1993), no.5, 1215-1225.

\bibitem[Qu06]{Quinto1}
E.\ T.\ Quinto, \textit{{An introduction to X-ray tomography and Radon transforms}},
Proceedings of symposia in Applied Mathematics
{\bf 63} (2006).


\bibitem[Ro93]{Rodino_book}
L.\ Rodino.
\newblock {\em Linear partial differential operators in Gevrey spaces}.
\newblock World Scientific, 1993.

\bibitem[Ro06]{R06}
L.\ Rondi.
\newblock A remark on a paper by Alessandrini and Vessella.
\newblock {\em Advances in Applied Mathematics}, 36(1):67--69, 2006.


\bibitem[RS18]{RulandSalo_instability}
A.\ R{\"u}land, M.\ Salo, \textit{{Exponential instability in the fractional Calder\'on problem}}, Inverse Problems 34(4), p. 045003, 2018.


\bibitem[SV93]{SV93}
E. \ Scalas, G.\ A.\ Viano \textit{{$\epsilon$-entropy and $\epsilon$-capacity in the theory of ill-posed problems}}, Inverse Problems 9 (1993) 545-550.

\bibitem[Sh87]{Shubin}
M. \ A. \ Shubin, \textit{{Pseudodifferential operators and spectral theory}}, Vol. 200. No. 1. Berlin: Springer-Verlag, 1987.

\bibitem[SP61]{SP61}
D.\ Slepian, H. \ O. \ Pollak, \textit{{Prolate spheroidal wave functions, Fourier analysis and uncertainty I}}, Bell Labs Technical Journal {\bf 40} (1961), no. 1, 43--63.

\bibitem[So17]{Sogge}
C. \ D. \ Sogge, \textit{{Fourier integrals in classical analysis}}, Vol. 210, Cambridge University Press, 2017. 


\bibitem[SU04]{SU04}
P.\ Stefanov, G.\ Uhlmann, Stability estimates for the X-ray transform of tensor fields and boundary rigidity,  Duke Math. J., 123(3):445--467, 2004.

\bibitem[SU05]{SU05}
P.\ Stefanov, G.\ Uhlmann, Stable determination of generic simple metrics from the hyperbolic Dirichlet-to-Neumann map, Int. Math. Res. Not., (17):1047-1061, 2005.

\bibitem[SU08]{SU08}
P.\ Stefanov, G.\ Uhlmann, Boundary rigidity and stability for generic simple metrics, J. Amer. Math. Soc., 18(4):975--1003, 2005.


\bibitem[SU09]{SU09}
P.\ Stefanov and G.\ Uhlmann.
\newblock Linearizing non-linear inverse problems and an application to inverse backscattering.
\newblock {\em Journal of Functional Analysis}, 256(9):2842--2866, 2009.

\bibitem[SU12]{StefanovUhlmann}
P.\ Stefanov, G. Uhlmann, \textit{{The geodesic X-ray transform with fold caustics}}, Analysis \& PDE 5(2), pp. 219-260, 2012.

\bibitem[SU13]{SU13}
P.\ Stefanov and G.\ Uhlmann.
\newblock Instability of the linearized problem in multiwave tomography of recovery both the source and the speed.
\newblock {\em Inverse Problems \& Imaging}, 7(4):1367, 2013.

\bibitem[SY18]{StefanovYang}
P.\ Stefanov, Y.\ Yang, The inverse problem for the Dirichlet-to-Neumann map on Lorentzian manifolds, Analysis \& PDE, 11, no. 6, 1381--1414, 2018.


\bibitem[St70]{Stein}
E. \ Stein, \textit{{Singular integrals and differentiability properties of functions.}} Princeton university press, 1970.



\bibitem[SU87]{SylvesterUhlmann}
J.\ Sylvester, G.\ Uhlmann,
\textit{{A global uniqueness theorem for an inverse boundary value problem}}, Ann. of Math. {\bf 125} (1987), 153--169.

\bibitem[TV82]{TV82}
G.\ Talenti, S.\ Vessella, 
\textit{{A note on an ill-posed problem for the heat equation}}, J. Austral. Math. Soc. Ser. A 32 (1982), no. 3, 358-368.

\bibitem[Ta84]{Taniguchi}
K. \ Taniguchi, 
\textit{{Fourier integral operators in Gevrey class on {$R^n$} and the fundamental solution for a hyperbolic operator}}, Publications of the Research Institute for Mathematical Sciences {\bf 20}(3) (1984), 491-542.

\bibitem[Ta99]{Tataru_wave}
D.\ Tataru,
\textit{{Unique continuation for operators with partially analytic coefficients}}, Journal de math{\'e}matiques pures et appliqu{\'e}es {\bf 78}(5) (1999), 505-521.


\bibitem[Ta81]{Taylor_PsiDO}
M. \ E. \ Taylor, \textit{{Pseudodifferential operators}}, volume 34 of Princeton Mathematical Series, 1981.

\bibitem[Ta96]{Taylor}
M. \ E. \ Taylor, \textit{{Partial differential equations}}, vols. I--III, Springer, 1996.


\bibitem[Ti43]{Tikhonov}
A.\ N.\ Tikhonov. On the stability of inverse problems. C. R. (Doklady) Acad. Sci. URSS (N.S.), 39:176--179, 1943.

\bibitem[Tr80]{Treves}
F. \ Tr{\`e}ves, \textit{{Introduction to pseudodifferential and Fourier integral operators Volume 1: Pseudodifferential operators}}, Vol. 1. Springer Science and Business Media, 1980.

\bibitem[Tr97]{Triebel_fractals}
H.\ Triebel, Fractals and spectra. Birkh\"auser, 1997.

\bibitem[Uh14]{Uhlmann_survey}
G.\ Uhlmann, \textit{{Inverse problems: seeing the unseen}}, Bull. Math. Sci. {\bf 4} (2014), 209--279.

\bibitem[Ve09]{Vessella}
S. \ Vessella, \textit{{Unique continuation properties and quantitative estimates of unique continuation for parabolic equations}}, Handbook of differential equations: evolutionary equations {\bf 5} (2009), 421-500.


\bibitem[Ya09]{Yamamoto}
M.\ Yamamoto.
\newblock Carleman estimates for parabolic equations and applications.
\newblock {\em Inverse problems}, 25(12):123013, 2009.

\bibitem[ZZ19]{ZZ19}
H.\ Zhao, Y.\ Zhong.
\newblock Instability of an inverse problem for the stationary radiative
  transport near the diffusion limit.
\newblock {\em SIAM Journal on Mathematical Analysis}, 51(5):3750--3768, 2019.


\bibitem[Zw12]{Zworski}
M.\ Zworski, Semiclassical analysis. AMS, 2012.




\end{thebibliography}

\end{document}